\documentclass[12pt,a4paper]{article}
\usepackage{inputenc}
\usepackage[english]{babel}
 \usepackage{mathrsfs}
\usepackage[T1]{fontenc}
\usepackage{amsmath}
\usepackage{amsfonts}
\usepackage{amssymb}
\usepackage{makeidx}
\usepackage{color}
\usepackage{amsthm}
\usepackage{graphicx}
\usepackage[left=2cm,right=2cm,top=2cm,bottom=2cm]{geometry}
  \usepackage[all]{xy}
\usepackage{hyperref}
  
    \usepackage{epic}
  \usepackage{autograph}
   \usepackage{lscape}
\DeclareMathOperator{\bbbO}{\bold{O}}

\newtheorem{thm}{Theorem}[section]

\newtheorem{lem}[thm]{Lemma}
\newcounter{counter_conj-lem}
\newtheorem{conj-lem}[thm]{Conjecture-Lemma}

\newtheorem{prop}[thm]{Proposition}
\newcounter{counter_conj-prop}
\newtheorem{conj-prop}[thm]{Conjecture-Proposition}

\newcounter{counter_conj-thm}
\newtheorem{conj-thm}[thm]{Conjecture-Theorem}

\newtheorem{cor}[thm]{Corollary}
\newtheorem{defi}[thm]{Definition}

\theoremstyle{remark}

\newtheorem{rem}[thm]{Remark}

\DeclareMathOperator{\pinf}{\boldsymbol{\infty}}

\DeclareMathOperator{\C}{\mathbb{C}}

\DeclareMathOperator{\Fs}{(F^{\sharp})}

\DeclareMathOperator{\Lf}{\mathbb{L}_f}

\DeclareMathOperator{\Lz}{\mathbb{L}_0}

\DeclareMathOperator{\Linf}{\mathbb{L}_{\infty}}
\DeclareMathOperator{\dyndeg}{\delta}

\DeclareMathOperator{\N}{\mathbb{N}}

\DeclareMathOperator{\NN}{\mathcal{N}}
\DeclareMathOperator{\ra}{\rightarrow}
\DeclareMathOperator{\si}{\sigma}
\DeclareMathOperator{\Jul}{\mathcal{J}}

\DeclareMathOperator{\cL}{\mathcal{L}}

\DeclareMathOperator{\Lo}{L^1}
\DeclareMathOperator{\Loloc}{L^1_{loc}}

\DeclareMathOperator{\Jac}{Jac}

\DeclareMathOperator{\Hf}{H_f}

\DeclareMathOperator{\Lc}{\mathbb{L}_c}
\DeclareMathOperator{\Li}{\mathbb{L}_i}
\DeclareMathOperator{\Oinf}{\bold{O}_\infty}

\DeclareMathOperator{\bIz}{\mathbb{I}_0}

\DeclareMathOperator{\bIinf}{\mathbb{I}_{\infty}}

\DeclareMathOperator{\bfz}{\beta_{0}}

\DeclareMathOperator{\bfzp}{\beta^{\prime}_{0}}

\DeclareMathOperator{\binf}{\beta_{\infty}}

\DeclareMathOperator{\binfp}{\beta^{\prime}_{\infty}}

\DeclareMathOperator{\Efz}{\text{E}_{0}}

\DeclareMathOperator{\Efzp}{\text{E}^{\prime}_{0}}

\DeclareMathOperator{\Einf}{\text{E}_{\infty}}

\DeclareMathOperator{\Einfp}{\text{E}^{\prime}_{\infty}}

\DeclareMathOperator{\dtop}{d_{top}}

\DeclareMathOperator{\bP}{\mathbb{P}}

\newcommand{\LL}{{\mathcal{L}}}

\newcommand{\ssk}{\smallskip}
\newcommand{\nin}{\noindent}
\DeclareMathOperator{\sm}{\setminus}
\DeclareMathOperator{\TT}{\mathbb{T}}

\DeclareMathOperator{\divi}{div}

\DeclareMathOperator{\R}{\mathbb{R}}

\DeclareMathOperator{\vol}{vol}

\DeclareMathOperator{\Pg}{\mathbb{P}}

\DeclareMathOperator{\bH}{\mathbb{H}}

\DeclareMathOperator{\cW}{\mathcal{W}}

\title{Spectrum of the basilica group and the associated real and complex 2D dynamics.}
\author{E. Bedford, N.-B. Dang, R. Grigorchuk, M. Lyubich}

\begin{document}

\maketitle

\begin{abstract}
In this paper, we study  the iteration  over the real and complex loci of a specific rational map, found by Grigorchuk and Zuk. 
This two dimensional map arises as the renormalization map which yields the density of states measure associated with the  iterated monodromy group of the Basilica Julia set. It  admits two natural complex currents, and we construct  their real analogs  and show that they are laminar. 
Finally, this yields some consequences on the properties of the density of states measures and its support.      
\end{abstract}

\tableofcontents

\overfullrule0pt

\centerline{\bf Introduction}
\bigskip

{ 

The iteration of rational functions has been actively pursued for over a century, and
now we have a deep understanding of rational dynamics. Although we would like to extend
our understanding to higher dimension, it has been impeded because many of the tools of
holomorphic dynamics on the Riemann sphere do not apply in higher dimension. Further,
starting in dimension dimension two, a rational map need not be everywhere holomorphic
since it can have points of indeterminacy. These are points where the rational map cannot
be defined as a continuous map, and a point of indeterminacy is "blown up" to an algebraic
curve of positive dimension. Compared to the one-dimensional case, the dynamics of
rational mappings in dimension two is in its early stages of development.

%
 } 
{ 
Among all the possible choice of rational maps to study, we focus on a particular one
because of its intimate connection to a general problem in spectral theory: the computation
of the spectrum of the Laplacian on a recursively defined sequence of finite graphs. These
graphs are obtained from a self-similar group using a general procedure (see e.g. [Nek]).
When this group is the iterated monodromy group of a post-critically finite polynomial,
these graphs converge to the Julia set of the corresponding polynomial. In our case, the group is called the Basilica group and the
sequence of graphs converge to the Basilica fractal (the Julia set of $z^2-1$). This group, is a particular self-similar group which 
was introduced in \cite{grigorchuk_zuk_torsion} as an example of a weakly branch group but not a branch group. It is also the first true example of a non-elementary amenable group with exponential growth \cite{bartholdi_virag} (see also \cite{kaimanovich_trick} for further developments)
 and its "spectrum" was conjectured to have unusual spectral and algebraic properties. 
}
{The spectrum of this group is obtained as follows. Take the sequence of finite graphs converging to the fractal and consider the characteristic polynomials $P_n(x,y)$ associated to some weighted adjacency matrix on the $n$-th graph (i.e putting a weight $x$ on the edges labeled by a group element $a$ instead of $1$) and the measures defined by their zero sets:
 }
\begin{equation} \label{eq_approx_density}
\omega_{h,n}:= {1\over 2^n} \sum_{P_n(\lambda,h)=0} \delta_\lambda   
\end{equation}
If we view $P_n$ in terms of its current of integration,  then the measures \eqref{eq_approx_density} are given by the intersection products
\begin{equation}
\omega_{h,n} = {1\over 2^n} [\{ P_n(x,y)=0\}]\wedge [y=h]
\end{equation}
%
so the current of integration {on $P_n=0$} gives an unfolding of the measures.
In the case of the Basilica group, it was shown by Grigorchuk and \v Zuk \cite{grigorchuk_zuk_torsion} that the polynomials are given by { a recursive formula}
\begin{equation} \label{eq_recur}
P_{n+1}(x,y) = y^{2^n} P_n(F(x,y))
\end{equation}
starting with $P_0=2-x-2y$, and where $F$ is the rational map
\begin{equation}
F(x,y) = \left( -2 +  {x(x-2)\over y^2} , {x-2\over y^2}        \right).
\end{equation}
{The above relation \eqref{eq_recur} shows that the map $F$ relates the eigenvalues of the weighted adjacency matrix of the $n$-th graph with the $(n+1)$-st, which "contains" it. Thus, this map can be thought of a renormalization map, as it relates the eigenvalues of the discrete Laplacian between different scales \cite{dang_grigorchuk_lyubich}.} 
The principal object  of interest in this approach is the {\it density of states measure} or KNS spectral measure
\begin{equation}\label{eq_density}
\omega_h:=\lim_{n\to\infty} \omega_{h,n} .
\end{equation}
{ It was shown in \cite{teplyaev_al_basilica} that the support of $\omega_1$ is a Cantor set and for each point of the support, the authors give a detailed study of the  generalized eigenfunctions. }
The { polynomials} $P_n$ arise by an iterative procedure in \eqref{eq_recur}, so we are led to focus on dynamical properties of  $F$.  For this we treat $F$ as a rational map defined over the complex numbers; and then we restrict to the real domain.  We recapture many of the results on the density of states measures  obtained  in \cite{teplyaev_al_basilica}, and we show how they can  also appear  in the context of rational dynamics.  
{ We restate some results taken  from theorems below in terms of the density of states measure. 

\bigskip

\noindent $\bullet$ \textbf{Main Theorem}. For any $h \in \R$, the measure $\omega_h$ is an atomic measure, supported on a Cantor set with positive Lebesgue measure. Furthermore, there exists a finite alphabet $A$ and a family (continuous in $h$) of measurable map $\varphi_h \colon A^{\mathbb{N}} \to \R$ such that for any smooth real function with compact support $\psi$: 
\begin{equation}
\langle \omega_{h,n} , \psi\rangle = \langle \omega_h ,\psi \rangle + \dfrac{\delta^n}{2^n} \langle {\varphi_h}_* \mu, \psi \rangle + o\left( \dfrac{\delta^n}{2^n}\right),
\end{equation}
where $\mu$ is a Bernoulli type measure associated with a particular Markov matrix and where $\delta \sim 1.8$ is the largest root of $t^3 -t^2 -2t +1    $.

\bigskip 

}
Earlier a similar program was carried out for three other groups in \cite{dang_grigorchuk_lyubich}.  The analysis in those cases was aided greatly by the fact that the mappings have invariant algebraic fibrations, and precise formulas are given for the density of states measures in these cases.  By Theorem \ref{thm_value_dyndeg} {and the product formula of Dinh-Nguyen-Truong \cite{dinh_nguyen_truong}}, our map $F$ does not have an invariant algebraic fibration, yet we find an invariant lamination inside $\Bbb R^2$, and this is the basis of much of our analysis.  Correspondingly, in the Basilica case, our description of the density of states measure is less precise.

By $F_{\Bbb P^2}$ we denote the extension of $F$ to projective space ${\Bbb P}^2$, where it takes the form
$$F_{\Bbb P^2}[x:y:z] = [x^2-2xz - y^2: xz-2z^2:y^2]$$  
as a map with (algebraic) degree equal to 2.  We note that ${\rm deg}(F^4)=14\ne {\rm deg}(F)^4$, so we are led to define the
 {\it dynamical degree} of $F$  the exponential rate of growth of the degrees of the iterates of $F$:
$${\delta= \rm ddeg}(F)= \lim_{n\to\infty} \left ( {\rm deg}(F^n)    \right)^{1/n}  \eqno(6)  $$

The inequality ${\rm ddeg}(F)<{\rm deg}(F)$ is due to the fact that there are collapsed curves that are eventually mapped inside the indeterminacy locus.  To determine $\delta$, we introduce a new complex manifold $\pi:X\to {\Bbb P}^2$, which is a blow up at four specially chosen points.  This induces a rational map
$$F_X:=\pi^{-1}\circ F_{\Bbb P^2} \circ\pi:X\ \dasharrow  X$$
With our special choice of $X$,  
$(F_X^n)^*=(F_X^*)^n$ on  $H^{1,1}(X;\Bbb R) $, and so the passage to cohomology is functiorial.   
The dynamical degrees of $F_{\Bbb P^2}$ and $F_X$ are the same and equal to the largest eigenvalue of $F_X^*$.  We will show that $\delta\sim 1.8$ is a root of $t^3 -t^2 -2t +1    $.

Parallel to our interest in the density of states measures is our interest in the dynamics of $F$ itself.  Although a rational map is holomorphic at most points, it fails to be continuous at any point of indeterminacy, and dealing with such points is a challenge.  $F$ cannot be made regular via a birational conjugacy, and passing to the improved model $F_X$ does not  remove indeterminacy, but it does add some understanding of the point of indeterminacy.   Let $E_0:=\pi^{-1}(2,0) \cong {\Bbb P^1}$ denote the exceptional (blowup) fiber over the indeterminate point.  The map $F_X^2$ then leaves $E_0$ invariant, and $F_X^2|_{E_0}$ is conjugate to the map $z\mapsto z^2-12 $ on ${\Bbb P}^1$.  We show in Theorem \ref{thm_repelling_measure} that $E_0$ admits a repelling measure.


Let us return to $F:= F_{\Bbb P^2}$.  From \eqref{eq_approx_density} and \eqref{eq_density} we are led to look for the existence of a limit current
\begin{equation}
S := \lim_{n\to\infty} {1\over 2^n} [P_n(x,y) =0]   = \lim_{n\to \infty} {1\over 2^n} (F^*)^n [L_f]
\end{equation}
where we write $L_f:=\{P_0=0\}$.  Such a current must satisfy $F^*(S) = 2S$.  
We may follow the arguments of Sabot \cite{sabot_spectral}  and apply  two general results of Sibony \cite{sibony_1999}.

\medskip 

\noindent $\bullet$ \textbf{ Theorem C.1 (Sibony).}  If  $F:{\Bbb P}^k\dasharrow {\Bbb P}^k$ is a dominant rational map with $1<\delta < {\rm deg}(F)$, then there is a unique positive closed current  $S$ on $\Bbb P^k$ with total mass 1 and $F^*S={\rm deg}(F)S$ such that if $V\subset\Bbb P^k$ is any variety, there exists $c>0$ such that

\begin{equation} \label{eq_sib}
\lim _{n\to\infty} {1\over {\rm deg}(F)^n} (F^*)^n[V] = cS 
\end{equation}


\medskip 

We will refer to $S$ as the {\it Sibony current}. Now we apply this theorem to our specific case.  Since the curve $L_c:=\{x=2\}$ is the only collapsed curve and  $(2,0)$ is the only point of indeterminacy, we find the constant $c=1$ in \eqref{eq_sib}:

\medskip

\noindent $\bullet$ \textbf{ Theorem C.2 (Sibony)}.  Since  $1<\delta<{\rm deg}(F)$, and $L_c=\{x=2\}$ is the only collapsed curve, 
and if $L$ denotes any line $V$ containing the point $(2,0)$, then
$$S = \lim_{n\to\infty}   {1\over 2^n}(F^*)^n[L]      \eqno(9)  $$
$$=\sum_{k=0}^\infty  c_k [F ^{-k} (L_c) ]   \eqno(10) $$

\medskip

Since $(2,0)\in L_f:=\{P_0=0\}$ we may apply this Theorem to the varieties $(F^*)^n(L) = \{P_n=0\}$ and conclude  that the slices give 
$$\omega_h = S\wedge [y=h]    \eqno(11)$$
for all $h$.  Thus we could also refer to the Sibony current as the {\it density of states current}.
Further, by Theorem C.2,  $S$ is a {\it purely atomic} current, by which we mean that it is a sum of weighted currents of integration.

The maps $F^*$ and $F_X^*$ act on different spaces, but they are related.   Let $\hat{\cal L}_c$ denote the set of all the varieties $W\subset {\Bbb P}^2$ which are irreducible components of $F^{-n}(L_c)$ for some $n\ge0$.  
If $V\subset {\Bbb P}^2$ is a variety, we may consider its inclusion (strict transform) $\overline V \subset X$.  Proposition \ref{prop_error_term} shows that we may compare the pullbacks of $L_f$ under $F^*$ and $F_X^*$ as follows:

$$\pi_*(F_X^{*n}([\overline{L_f}]) = F^{* n}([L_f]) \ \ {\rm mod} \ \ \hat{\cal L}_c  \eqno (12)$$
where  the right hand side of (12) is a sum of currents of integration, with all elements of $\hat{\cal L}_c$ set to zero.  Another version of this is:

\medskip 

\noindent $\bullet$ \textbf{ Theorem C.3.  }
$${1\over 2^n} [F^{-n}L_f] = {1\over 2^n} \sum _{k=0}^n c_{k,n}[F^{-n} L_c] + {\delta^n\over 2^n} E_n\eqno(13)$$
where the coefficients $c_{k,n}$ increase  to the  $c_k$ in (10) as $n\to\infty$, and $E_n=\delta^{-n}\pi_* F_X^{*n}[ \overline{ L_f } ] $ has total mass $\sim1$. 

\medskip 

\noindent $\bullet$ \textbf{ Theorem C.4 (Guedj).}   If $g:X\dasharrow X$ is  a dominant, rational map with ${\rm ddeg}(g)>1$, and $(g^*)^n=(g^n)^*$ on $H^{1,1}(X)$, then there is a unique positive, closed current $T$ of total mass 1 on $X$ such that $g^*(T) = \delta \cdot T$.

\medskip 

The current $T$ is known as the {\it Green current}.  A current  $T$ is said to have an {\it atom} if there is a variety $V$ such that $\epsilon [V]\le T$ for some $\epsilon>0$.

\medskip 

\noindent $\bullet$ \textbf{ Theorem C.5.}  The Green current $T$ is atom-free, and $T=\lim_{n\to\infty}\delta^{-n} F_X^{*n}[\overline{L_f}] $.

\medskip 
We consider the right hand side of (13) as an approximation to the Sibony current $S$.  The first term  is a monotone increasing sum of atoms; and the second term involves a quantity that is converging to something more ``diffuse" than $S$, and which is multiplied by a factor $\sim (\delta/2)^{n}$.  

\bigskip
Now we restrict our discussion to the real domain.
The lines $L_0:=\{y=0\}$ and $L_\infty: =\{z=0\}$ make a forward invariant 2-cycle $L_0\leftrightarrow L_\infty$.   Inside this, we have $I_0 = \{[x:0:1]:-2\le x\le 2\}=[-2,2]\subset \Bbb R^2$ and $I_\infty:=\{ [1:y:0]: -2\le y\le 2\}$, and $I_0\leftrightarrow I_\infty$.  
We refer to a line as {\it vertical} if it intersects both $I_0$ and $I_\infty$.  The endpoints of $I_0$ and $I_\infty$  are the centers of blowup that were used to define the blowup $\pi:X\to {\Bbb P}^2$ which was used above.  We now use the four pencils of vertical lines based at the these four endpoints
 to define a vertical cone field ${\cal C}_v$.  We write the complementary (horizontal) cones as ${\cal C}_h$ (see Figure \ref{fig_schematic_cone_field}).

\begin{figure}[h!]
\centering
\includegraphics[scale=0.3]{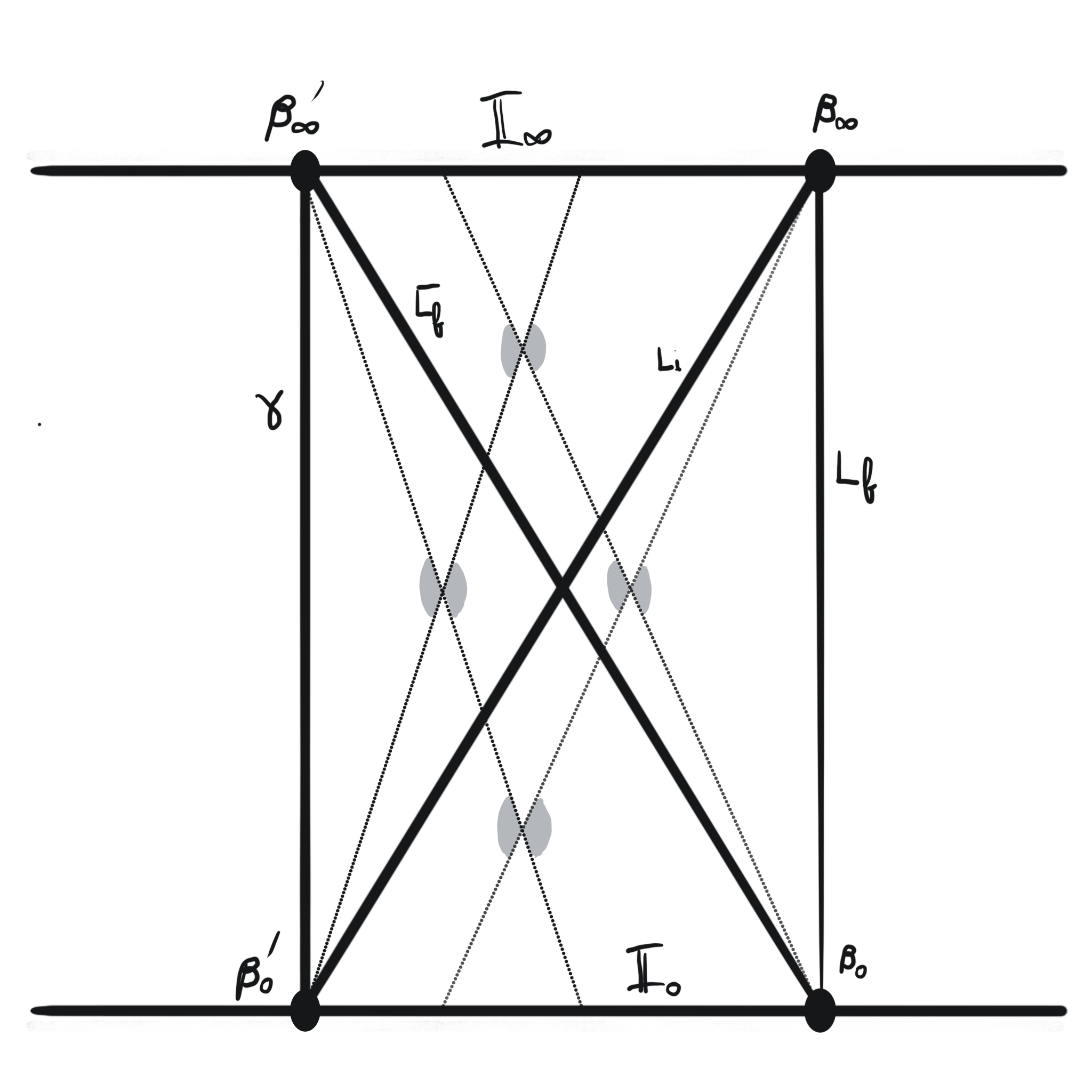} 
\caption{\label{fig_schematic_cone_field} Schematic picture of the horizontal cone field $\mathcal{C}_h$ in shaded gray, the points $\bfz,\bfzp,\binf,\binfp$ are the dots. The boundary curves are the invariant line $\Lf$, its reflection $\bar \Lf$, and the component $\gamma$ of $F^{-1} \bar \Lf$. The lines in the pencil are the dotted thin lines.}
\end{figure} 
 
    The following summarizes Theorem \ref{thm_invariant_cone_field}:
 
\smallskip 
 
\noindent $\bullet$ \textbf{ Theorem R.1.}  The cone field ${\cal C}_h$ is invariant under $dF$, and ${\cal C}_v$ is invariant under $dF^{-1}$.

\smallskip

During our discussion of the real dynamics, we write $f^{-n}(L_c)$ for $f^{-n}(L_c)\cap \Bbb R\Bbb P^2$, etc.  Let $\gamma$ denote a component of $f^{-n}(L_c)\cap \Bbb R^2 =f^{-n}(L_c)-L_\infty$, we may take $\gamma$ to be invariant by $(x,y) \mapsto (x,-y)$. It follows from Theorem R.1  that $\gamma=\{x=\varphi(y):y\in(0,\infty)\}$ is a graph over the $y$-axis.   We let ${\cal L}$ denote the limits of all such graphs.   

Let ${\cal S}_0$ denote the (singular) points of $I_0$, i.e., points that are mapped by $f^n$ to the point of indeterminacy $(2,0)\in I_0$ for some $n\ge0$.  We let ${\cal R}_0$ denote the (regular) points of $I_0$, meaning that their forward orbits are always well-defined.

\smallskip

\noindent $\bullet$ \textbf{ Theorem R.2.}  ${\cal L}- L_0$ is a lamination of $\Bbb R^2-L_0$.  If $x\in{\cal R}_0$, then there is a unique $\gamma\in{\cal L}$ which contains $x$.   If $x\in {\cal S}_0$, then the $\gamma\in{\cal L}$ which contain $x$ form a Cantor bouquet 

\smallskip

We note that Cantor bouquets were observed earlier by Yamagishi \cite{yamagishi} as a manifestation of indeterminate behavior similar to $F$.
Our main tool in  this Theorem will be the Kobayashi pseudo-metric.  Although the infinitesimal Kobayashi metric has a degenerate direction at every point, we may restrict it to the cones ${\cal C}_h$ at points of ${\cal L}$; and the expansion in this direction is sufficient to prove laminarity.

We let $|{\cal L}|$ denote the union of the  graphs in ${\cal L}$.   The point $\infty = [1:0:0]$ is a superattracting fixed point, and we let ${\cal B}(\infty)$ denote the real points in its basin of attraction.  The immediate basin of attraction is the region bounded by $L_f$, $\overline L_f$ and the hyperbola $\gamma$ as in Figure \ref{basilica fig}.    We have the following dichotomy between the basin and the support of the lamination:

\noindent $\bullet$ \textbf{ Theorem R.3.}  $|{\cal L}|=  \Bbb R^2-{\cal B}(\infty)$.   The gaps in ${\cal L}$  are the connected components of ${\cal B}(\infty)$ which are not the immediate basin.

The set $L_0$ consists of branch points for $F^2$, and the restriction of $F^2$ to $L_0$ is the Chebyshev map $x\mapsto x^2-2$.  Its Julia set is $I_0$, and we  define the {\it basin} of $I_0$ to be
$${\cal B}(I_0): = \{(x,y)\in{\Bbb R^2}: f^n(x,y) {\rm \ is\ defined\ for\ all\  }n\ge0, {\rm \ and\ }  {\rm dist}(f^{2n}(x,y), I_0)\to 0  \}$$

\smallskip

\noindent $\bullet$ \textbf{ Theorem R.4.}  The set  ${\cal B}(I_0)$ has positive planar area; it is disjoint from ${\cal B}(\infty)$,  and thus  $|{\cal L}|$ has positive area.

\smallskip

By Theorem R.1, we may orient each graph of $f^{-n}(L_c)$  in the direction of increasing $y$ and let $[f^{-n}(L_c)]$ denote the associated current of integration.  By $|\gamma|$ we mean the point set of $\gamma$, so  $|\gamma|$ is the support of the current $[\gamma]$.  By ${\bf M}_{\cal L}$ we denote the laminar currents generated by ${\cal L}$ in the following sense:  there is a measure $\eta$ on some set $\tau$ and for each $t\in\tau$ there is a $\gamma_t\in{\cal L}$, and the laminar current $$\int_{t\in\tau} [\gamma_t]\, \eta(t)\in {\bf M}_{\cal L}$$ 

We may define a ``real slice'' of the Sibony current as
$$S_{\Bbb R} : = \sum_{k=0}^\infty c_k [f^{-n}(L_c)]\in {\bf M}_{\cal L}$$
In fact, $S_{\Bbb R}$ is a real laminar current, and by the following Theorem, the transversal measures are the spectral measures $\omega_h$, so we could also call it the real spectral current.

\smallskip

\noindent $\bullet$ \textbf{ Theorem R.5.}  If $h\in{\Bbb R}$, then the real slicing agrees with  the complex intersection product:
$$[\Bbb R\times \{h\} ] \wedge_{\Bbb R} S_{\Bbb R} = S\wedge_{\Bbb C} [{\Bbb C}\times \{h\} ]$$

\smallskip

\noindent The `$\wedge_{\Bbb R}$' on the left hand side denotes the oriented intersection of real curves, and the `$\wedge_{\Bbb C}$' on the right is the complex wedge product.

\smallskip

It is evident that $|{\cal L}|$ is the support of $S_{\Bbb R}$.  Recall that the {\it local dimension} of a measure $\nu$ at a point $p$ is defined as a limit: 
$${\rm dim}_{\rm loc}\nu(p):=\lim_{r\to0}\nu(B(p,r))/\log r$$ 
where $B(p,r)$ denotes the ball of radius $r$ about $p$.  Now we summarize the  properties  we have obtained for the density of states measures $\omega_h$:

\smallskip

\noindent $\bullet$ \textbf{ Theorem R.6.}  For $h\in\Bbb R$, the slice ${\cal L}\cap \{y=h\}$ is a Cantor set with positive Lebesgue length, and this Cantor set is the support of $\omega_h$.  If $U$ is a gap in this Cantor set, then one of the endpoints of $U$ is mapped by some $f^n, n\ge0$ to the collapsed line $L_c$, and the other endpoint $p\in\partial U$ is mapped by some $f^m, m\ge0$ to the fixed line $L_f$.  The local dimension of $\omega_h$ at $p$ is $2/3$.

We divide part of ${\cal L}$ into subsets ${\cal L}_a$, with $a\in\Bbb A:= \{I, II, III_-, III_0, III_+, IV\}$,  as in Figure  \ref{fig_schematic_lamination}. 
\begin{figure}[h!]
\centering
\includegraphics[scale=0.3]{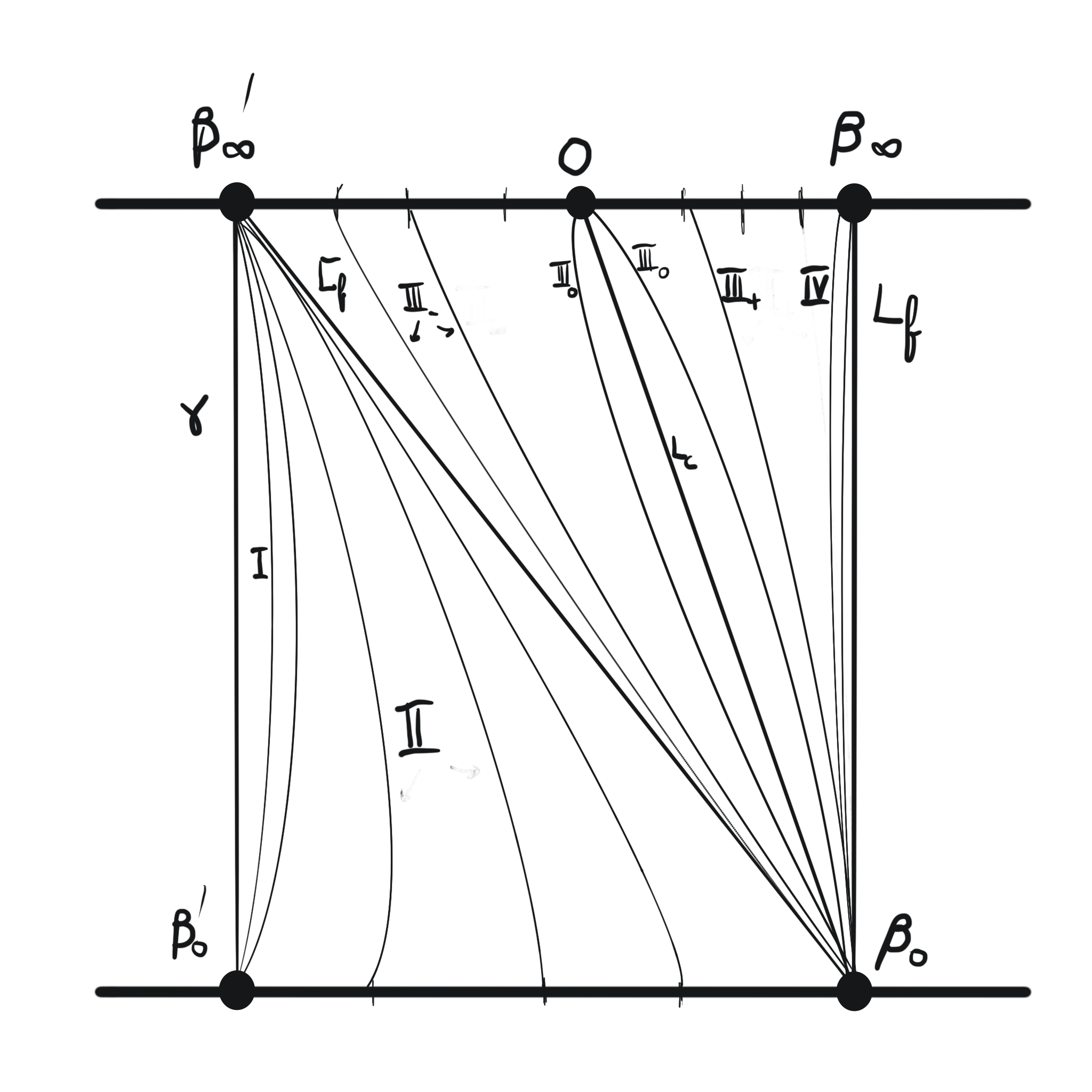} 
\caption{\label{fig_schematic_lamination} Schematic picture of the main leaves of the lamination. The thick lines are the boundary curves $\gamma$, $\bar  \Lf, \Lf$ and the curves of type $I,II,III_-, III_0, III_+ , IV$ are represented.}
\end{figure} 
 This decomposition is  $f^{-1}$-invariant, since for any $\gamma\in{\cal L}_a$, $f^{-1}|\gamma|$ will be the union of one or two curves in ${\cal L}$, and we let $M$ in \eqref{eq_matrix} be the matrix that records this.  Let $\Sigma$ denote the space of bilateral sequences  on the symbol set (alphabet) $\Bbb A$, and let $\Sigma_M$  denote the subshift defined by $M$.  We let $\Sigma^*_M$ denote the sequences $(\underline s)\in\Sigma_M$ such that the symbol  `$III_0$' appears infinitely often.  The largest eigenvalue of $M$ is $\delta$, and we let $\mu_M$ denote the shift-invariant Bernoulli measure on $\Sigma_M$ with entropy $\log \delta$.  We note that $\Sigma_M^*$ has full measure for $\mu_M$.

\medskip

\noindent $\bullet$ \textbf{ Theorem R.7.}  There is a coding map
$\Sigma_M^*\ni (\underline s) = (s_n)_{n\in\Bbb Z} \mapsto \gamma_{\underline s}\in{\cal L}_{s_0}$ with the property that $f(\gamma_{\underline s})  \subset \gamma_{\sigma(\underline s)}$.

\medskip

This coding map allows us to make a correspondence between measures on $\Sigma^*_M$ and currents in ${\bf M}_{\cal L}$.
We define a signed pullback map $f^{\sharp}:{\bf M}_{\cal L}\to {\bf M}_{\cal L}$, which has the property that if $f^{-1}(|\gamma|) = |\gamma'|\cup |\gamma''|$, then $f^{\sharp}[\gamma] = [\gamma'] + [\gamma'']$.  Thus $f^\sharp$ defines a map on the measures on $\Sigma^*_M$, which is a transfer operator for the shift.  Let us define
$$T_{\Bbb R}:=\int_{\underline s\in \Sigma_M^*} [\gamma_{\underline s}]\, \mu_M(\underline s)$$
The following shows that we may think of $T_{\Bbb R}$ as a sort of ``real slice'' of the Green current, { as we observe a real equidistribution phenomenon}.

\medskip

\noindent $\bullet$ \textbf{ Theorem R.8.}  $f^\sharp(T_{\Bbb R}) =\delta\, T_{\Bbb R}$, and if $\gamma\in\Sigma_M^*$, then
$$T_{\Bbb R}= \lim_{n\to\infty}\delta^{-n} f^{\sharp n}[\gamma]$$
If $h\in \Bbb R$, $h\ne0$, then
the complex slice  $T\wedge (\Bbb C\times \{y = h\})$ is the same as the real intersection product $(\Bbb R\times\{h\})  \wedge T$, and via  the coding map this is $\mu_M$.

\medskip

Unfortunately, the coding map is not H\"older continuous {with respect to the left shift}.  But it is still H\"older continuous with respect to a Poincar\'e return map, which is a consequence of the fact that $F$ is expanding for the first return to a set where the Kobayashi pseudo-metric is nondegenerate.  These H\"older conditions are exactly the hypotheses required in the Decay of Correlations results of Sarig and Gouezel \cite{sarig,gouezel}, {which imply the real equidistribution result}.

\begin{figure}[h!]
\centering
\includegraphics[scale=0.2]{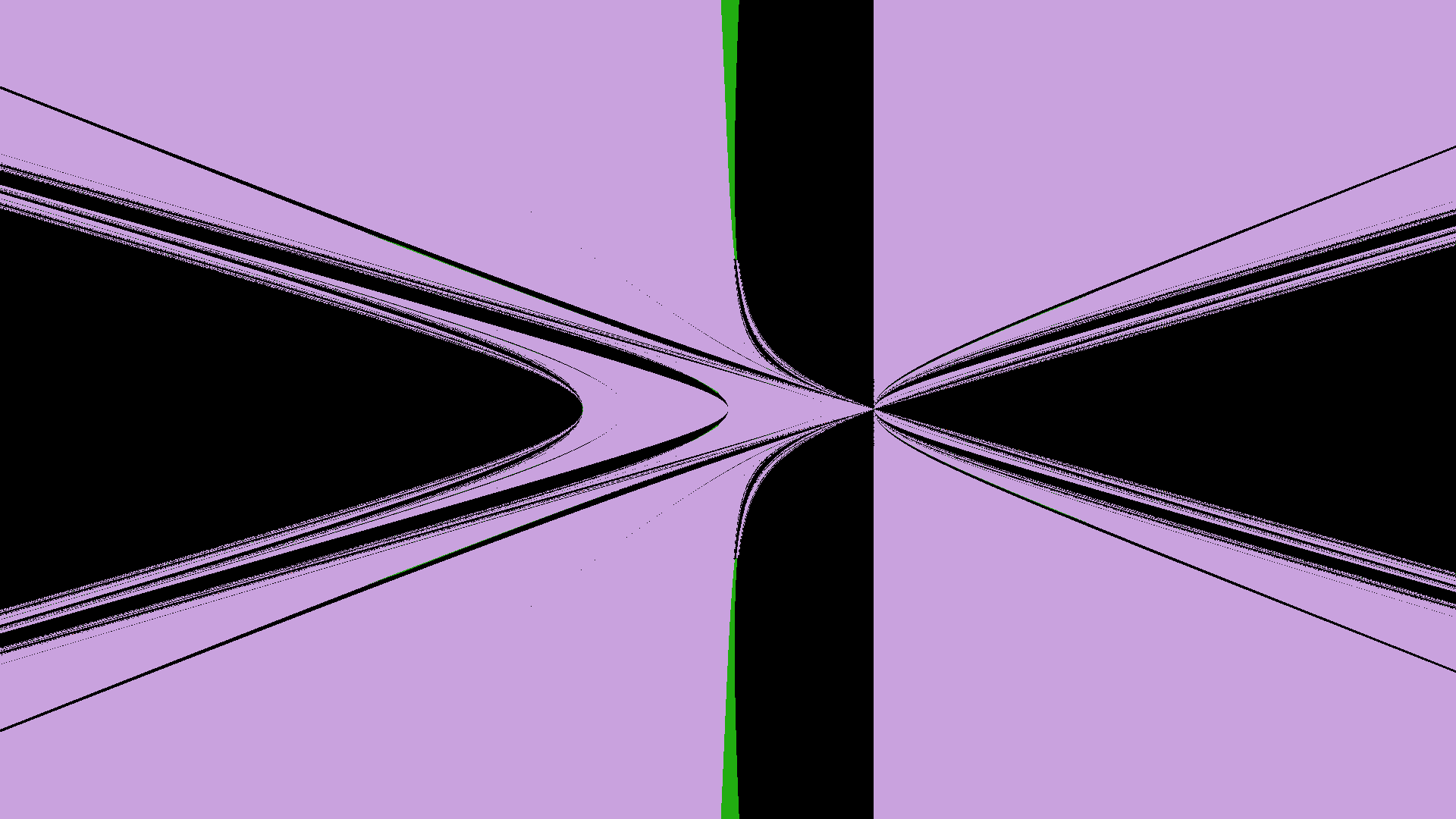}
\caption{ \label{basilica fig} Basins of two measure-theoretic attractors for the basilica renormalization:
In black, the basin of the point $\pinf$ at infinity, in violet are other points. The green area consist of computational errors.}
\end{figure}

\bigskip 

\noindent \textbf{Structure of the paper}: This paper is divided in $9$ sections, with $2$ appendices. In \S~\ref{section_notations}, we fix some notations and we start the study of the complex mapping in \S~\ref{section_general}. We then introduce and describe the Sibony and Green currents in \S~\ref{section_currents} and establish the equidistribution of complex curves. We focus on the real map starting from  \S~\ref{section_map_R} where we construct an invariant cone field, and a prelamination structure $\mathcal{L}$.  
In \S~\ref{section_chebyshev}, we describe the behavior near one of the attractor and show that $\mathcal{L}$ has a Cantor structure. This relies on the proof that $\mathcal{L}$ is a lamination away from the horizontal axis. 
Next, we define the real analog of the Green current in \S~\ref{section_real_green} and show an equidistribution property toward it. Similarly, we define the real Sibony current in \S~\ref{section_real_sibony}. The slicing of all these currents is discussed in \S~\ref{section_slicing} and finally we describe the spectral measures using all the previous result in \S~\ref{section_application}. The first  appendix concerns the relationship between this map and the iterated monodromy group of the Basilica fractal and the second is devoted to some local computations.

\bigskip 

\noindent \textbf{Aknowledgements}: The authors would like to thank Alexander Teplyaev, Roland Roeder, N-V. Dang for their encouragements, various comments, questions raised for the past 6 years concerning this project. 
{One crucial ingredient in the proof of Theorem R.8 is the use of theorem of Gou\"ezel on the decay of correlations, which adapts the Young tower construction together with a refinement of Sarig's application of the renewal equation, and it turned out to be among the suggested reference by  ChatGPT 5.2 to the second author, who asked where precisely in the litterature the treatment of non-uniformly expanding maps have been treated. Apart from this bibliographic reference suggestion, the authors have not used any AI in any shape or form. 
 }
The third author is supported by the Travel Support for Mathematicians grant MP-TSM-00002045 from the Simons Foundation.
This project is funded by ANR-24-CE40-1163, ANR-24-CE40-6184 and ANR-25-CE40-0070.


\section{Notations}
\label{section_notations}

The rest of the paper is devoted to the study of the map $F$ and its behavior under iteration. To that end, we
 will use the following conventions. 
The affine coordinates $(x,y)\in \mathbb{C}^2$ is the standard affine chart for the embedding $(x,y) \in \mathbb{C}^2 \mapsto [x:y:1] \in \mathbb{P}^2$. Moreover, the homogeneous coordinates on $\mathbb{P}^2$ will be of the form $[x: y : z]$.

Recall that the Chebyshev map $z \mapsto 2 z^2 -2$ has a fixed point $z = 2$ called the $\beta$-fixed point and this point has a preimage $z =-2$, the symmetric to the $\beta$-fixed point. 

\smallskip 

\begin{enumerate}
\item[$\bullet$] $\bbbO = [0:0:1]$ is the origin.
\item[$\bullet$]  $\Oinf = [0:1:0]$. 
\item[$\bullet$] $\pinf = [1:0:0]$ is the attractive fixed point at infinity. 
\item[$\bullet$] $\Lz = \{ \mu =0\}$ is the horizontal axis. 
\item[$\bullet$] $\bfz = [2:0:1] = (2,0)$ is the Chebyshev $\beta$-fixed point on the horizontal line $\Lz$. 
\item[$\bullet$] $\bfzp =[-2:0:1] = (-2,0)$ is the symmetric Chebyshev $\beta$-fixed point on the horizontal line $\Lz$.  
\item[$\bullet$] $\bIz = [-2,2]\times \{ 0\}$ is the Chebyshev interval on the horizontal line $\Lz$. 
\item[$\bullet$] $\bIinf = \{ [\alpha : 1 : 0] | \alpha \in [-2,2] \}$.
\item[$\bullet$] $\Linf= \{ w =0 \}$ the line at infinity in $\mathbb{P}^2$.  
\item[$\bullet$] $\binf = [2:1:0]$ is the Chebyshev $\beta$-fixed point on the line at infinity $\Linf$. 
\item[$\bullet$]  $\binfp = [-2:1:0]$ is the symmetric Chebyshev $\beta$-fixed point on the line at infinity $\Linf$.  
\item[$\bullet$] $\bIinf = \{ [u: 1 :0] \in \mathbb{P}^2 | u\in [-2,2] \}$ is the Chebyshev interval on the line at infinity $\Linf$. 

\item[$\bullet$] $\Lc= \{ x - 2 z =0  \}$ is the collapsed line. 
\item[$\bullet$] $I(F) = \{ \bfz \}$ the indeterminacy set of $F$. 

\item[$\bullet$] $ \Li = \{ x - 2 y + 2 z =0  \}$ is the indeterminate line.  
\item[$\bullet$] $\Li^+ = \{ (x, y) \in \R^2 | x -2 y + 2  \leqslant 0  \} \subset \mathbb{R}^2$ is the half space above the line $\Li$.   
\item[$\bullet$] $\sigma \colon (x, y) \in\C^2 \mapsto (x, -y) \in \C^2$ is the reflection with respect to the horizontal axis.
\item[$\bullet$] $\bH := \{-x^2 + 4 y^2 + 2 xz = 0  \} = F^{-1}(\Lc)$. 
\item[$\bullet$] $\Lf := \{ x -2z - 2y =0 \}$ the line of fixed points.
\item[$\bullet$] $\bar\Lf := \{ x -2z + 2y =0 \}$ the reflection of $\Lf$.
\item[$\bullet$] $\Hf := F^{-1}(\bar \Lf)$
\item[$\bullet$] $X$ blow-up of $\mathbb{P}^2$ at $\bfz, \bfzp, \binf, \binfp$. 
\item[$\bullet$] $\pi_X \colon X \to \bP^2$ the blow-down map. 
\item[$\bullet$] $\Efz \subset X$ exceptional divisor over $\bfz$.
\item[$\bullet$]$\Efzp \subset X$ exceptional divisor over $\bfz$.
\item[$\bullet$]$\Einf \subset X$ exceptional divisor over $\binf$.
\item[$\bullet$]$\Einfp \subset X$ exceptional divisor over $\binfp$.
\item[$\bullet$] $\dyndeg  $ the first dynamical degree of $F$. 

\item[$\bullet$] $\cW^s(\bIz \cup \bIinf)$ the stable set for $\bIz \cup \bIinf$. 
\item[$\bullet$] $i_0 \subset \Efz$ is the intersection of $\Efz$ with the strict transform by $\pi_X$ of the line $\Lc$. 
\item[$\bullet$] $i_0' \subset \Efzp $ is the point on $\Efzp$ corresponding to the vertical direction. 
\item[$\bullet$] $i_\infty$ is the preimage of the point $\Oinf$ by $\pi_X$. 
\end{enumerate}

\section{General properties of the map and of the spectral measures}
\label{section_general}

\subsection{The mapping and the density of states measures}

We start by recalling the map $F$. Since this section is devoted to the study of the dynamical properties of $F$, we shall rename the variables $(\lambda, \mu)$ to $(x,y)$ respectively. 

\begin{equation} \label{eq_def_F}
F \colon (x,y) \mapsto \left  ( -2 + \dfrac{x (x-2)}{y^2} , \dfrac{x-2}{y^2}  \right  ). 
\end{equation}

The map induces a map on $\bP^2(\C)$, given in homogeneous coordinates as: 
\begin{equation} \label{eq_def_F_homogeneous}
F \colon [x:y:z] \mapsto [x^2 - 2 x z -2 y^2: xz - 2 z^2 : y^2].
\end{equation}
We will need the expression of the second iterate $F^2 = G$ given by:
\begin{equation} \label{eq_G}
G (x, y) = \left ( x^2 - 2 - \dfrac{2y^2 (-6x + 3 x^2-4y^2)}{(x-2)^2} , \dfrac{y^2 (-2x+x^2 - 4y^2)}{(x-2)^2}  \right ).
\end{equation}

We introduce some sequence of polynomials inductively. 
\begin{equation} \label{eq_inductive}
 P_{n+1} (x , y) = y^{2^n} P_n(F(x, y)),
 \end{equation}
 where $P_0 (x,y) = x- 2 y  - 2$. 
The \textit{density of states} measure is defined as the weak limit (if it exists) of the measures: 
\begin{equation} \label{eq_density_states}
\omega = \lim_{n\rightarrow +\infty} \dfrac{1}{2^n} \sum_{P_n(a,1) = 0} \delta_a. 
\end{equation}
For more about how these polynomial arise, we refer to Appendix \ref{section_appendix_A}.

We summarize the general properties of $F$ on $\bP^2(\C)$ in the following proposition. 
\begin{prop} \label{prop_gen_prop_F_P2} The map $F$ satisfies the following properties: 
\begin{enumerate}
\item[(i)] The map $F$ is dominant, has topological degree and algebraic degree both equal to two. 
\item[(ii)] The map $F$ has one indeterminacy point $\bfz$, which is resolved by a succession of two blow-ups: the blow-up $\pi_1\colon X_1 \to \bP^2$ of $\bfz$, followed by the blow-up $\pi_2 \colon X_2 \to X_1$ of the intersection of the strict transform of $\Lc$ with the exceptional divisor $\pi_1^{-1}(\bfz)$.    
Every point of $\pi_1^{-1}(\bfz)$ except one    is mapped by $F\circ \pi_1$ to $\binf$, while the exceptional divisor of $\pi_2$ is mapped by $F\circ \pi_1 \circ \pi_2$ to the line $\Li$. 
\item[(iii)] Its critical locus consists of two lines, a line $\Lc$ which is collapsed by $F$ to the point $\bfzp$ and a transversely critical line $\Lz$ which is mapped to the line at infinity $\Linf$. 

\end{enumerate}
\end{prop}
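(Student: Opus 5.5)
The plan is to prove all three items by direct computation in the homogeneous form \eqref{eq_def_F_homogeneous},
$$F[x:y:z]=[x^2-2xz-2y^2:\ xz-2z^2:\ y^2],$$
with no dynamical input needed.

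\emph{Item (i).} First I find the common zeros of the three components. The equation $y^2=0$ forces $y=0$. Then $z(x-2z)=0$ and $x(x-2z)=0$ force either $x=z=0$, which is impossible in $\bP^2$, or $x=2z$. So the only common zero is $[2:0:1]=\bfz$, the components share no common factor, and the algebraic degree is $2$.

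For the topological degree I solve $F(x,y)=(a,b)$ for generic $(a,b)$. The second coordinate gives $y^2=(x-2)/b$. Substituting into the first gives $a+2=x(x-2)/y^2=bx$, so $x=(a+2)/b$ is uniquely determined. Then $y^2=(x-2)/b$ has two roots. Hence a generic point has exactly two preimages, so $F$ is dominant and of topological degree $2$. Dominance also follows from the Jacobian computed in (iii) not vanishing identically.

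\emph{Item (ii).} The computation above shows that $I(F)=\{\bfz\}$.

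\begin{itemize}
\item \textbf{First blow-up.} I use the chart $x-2=sy$ of $\pi_1$, where $s$ is the coordinate on $\pi_1^{-1}(\bfz)$. Setting $z=1$, the image becomes $[(2+sy)sy-2y^2:\ sy:\ y^2]$. Dividing by $y$ gives $[(2+sy)s-2y:\ s:\ y]$, which tends to $[2s:s:0]=\binf$ as $y\to 0$ whenever $s\neq 0$.
\item \textbf{Other chart.} The remaining direction, the horizontal one $s=\infty$, is checked in the chart $y=t(x-2)$ and also gives $\binf$.
\item \textbf{Exceptional point.} The only bad point is $s=0$, which is exactly the direction of $\Lc=\{x=2\}$, i.e.\ its strict transform meets the exceptional divisor there.
\item \textbf{Second blow-up.} I blow this point up via $s=vy$. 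The image becomes $[(2+vy^2)v-2:\ v:\ 1]$. At $y=0$ this is $[2v-2:v:1]$, a well-defined point satisfying $x-2y+2=0$. So the exceptional divisor of $\pi_2$ maps onto $\Li$.
\item \textbf{Resolution.} I must also check the remaining chart ($v=\infty$) to confirm that no new indeterminacy appears, so that $F\circ\pi_1\circ\pi_2$ is holomorphic.
\end{itemize}

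\emph{Item (iii).} I compute the Jacobian determinant of the homogeneous map. Its rows are $(2x-2z,-4y,-2x)$, $(z,0,x-4z)$ and $(0,2y,0)$. Expanding along the last row gives
$$-2y\big((2x-2z)(x-4z)+2xz\big)=-4y(x-2z)^2.$$
This has the expected degree $3$. So the critical locus is $\Lz\cup\Lc$.

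On $\Lc$ ($x=2z$) the image is $[-2y^2:0:y^2]=\bfzp$, so $\Lc$ is collapsed to $\bfzp$. On $\Lz$ the image $[x(x-2z):z(x-2z):0]$ lies in $\Linf$ and is nonconstant, so $\Lz$ is not collapsed. It is transversely critical because the third component is $y^2$.

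The only delicate point is the bookkeeping of charts in (ii). I need to make sure the second blow-up fully resolves the indeterminacy, i.e.\ that there is no residual indeterminate point on the new exceptional curve, including at $v=\infty$, where the limit must be checked to lie on $\Li$ consistently.
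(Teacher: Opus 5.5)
Your proposal is correct and follows essentially the paper's route: direct chart computations for the two blow-ups (your chart $s=vy$, i.e.\ $x-2=vy^2$, is exactly the paper's first chart on the exceptional divisor of $\pi_2$), and the factorization $\Jac(F)=-4y(x-2z)^2$ of the homogeneous Jacobian for (iii). The one check you left pending works out as in the paper: in the remaining chart $y=us$, $x-2=us^2$, one gets $F\circ\pi_1\circ\pi_2=[2-2u+us^2:1:u]$, whose components never vanish simultaneously and which sends $\{s=0\}$ onto $\Li$, so no indeterminacy remains after the second blow-up.
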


\begin{proof}
For assertion (i), the fact that the algebraic degree follows from \eqref{eq_def_F_homogeneous}. If we take any point $p=(x_0, y_0) \in \C^2$, the preimage of $p$ by $F$ consists of points $(x,y)$ satisfying the equations $x-2 = y^2 y_0$ and $-2 + x y_0 =0$. By Bezout's theorem, there are $2$ solutions counted with multiplicity in $\bP^2(\C)$, hence $F$ has topological degree $2$.  

\smallskip 

For assertion (ii), observe from the equations defining $F$ that $\bfz=[2:0:1]$ is the only indeterminacy point. 
Let us take the chart $(e,l)$  on $X_1$ so that the blow-down map is of the form $(e,l) \mapsto [2 + e: l e : 1]$. 
The map $F \circ \pi_1$ is given by:
\begin{equation*}
F\circ \pi_1 \colon (e,l) \mapsto  [- \left(2 e l^2-e-2\right):1:e l^2].
\end{equation*} 
In particular, this shows that $F\circ \pi_1$ is regular on this chart, and taking $e=0$ yields the image $\binf$. 

Denote by $i_\infty$ the intersection of the strict transform by $\pi_1$ of the line $\Lc$ with the exceptional divisor $\pi_1^{-1}(\bfz)$.
We have thus shown that $F \circ \pi_1$ is regular on  $\pi_1^{-1}(\bfz)\setminus \{ i_\infty \}$.

Let us take another  chart $(e,l)$ the exceptional divisor $\pi_2^{-1}(i_\infty)$ so that the map $\pi_1 \circ \pi_2$ is of the form $(e,l) \mapsto [2 + l e^2:e:1]$.   
The map $F\circ \pi_1 \circ \pi_2$ is then given as: 
\begin{equation*}
F \circ \pi_1 \circ \pi_2 \colon (e, l) \mapsto [ e^2 l^2+2 l-2: l:1 ].
\end{equation*}
The other chart $(e,l)$ is such that $\pi_1 \circ \pi_2$ is given by $(e,l) \mapsto [2 + l e^2: l e : 1]$ and $F \circ \pi_1 \circ \pi_2$ is of the form:
\begin{equation*}
F \circ \pi_1 \circ \pi_2 \colon (e,l) \mapsto [  2 - 2 l + e^2 l: 1:  l].
\end{equation*}
In particular, $F \circ \pi_1 \circ \pi_2$ is regular on the exceptional divisor $\pi_2^{-1}(i_\infty)$ and maps this divisor to the line parametrized by $l \mapsto [2 l -2: l:1]$, which is exactly $\Li$.  

\smallskip 

Let us prove assertion (iii). The jacobian of $F$ in homogeneous coordinates factors as:
\begin{equation*}
\Jac(F) = -4 x^2 y + 16 x y z - 16 y z^2 = - 4 y( x-2 z )^2.
\end{equation*}
This shows that the critical locus consists of the two lines $\Lc$ and $\Lz$. 
From \eqref{eq_def_F}, one sees that $F$ maps the line $\Lc$ to the point $(-2 , 0) = \bfzp$ while $\Lz$ is mapped to the line $\Linf$ and using \eqref{eq_def_F_homogeneous}, $F_{|\Lz}$ is of the form $[x: 0:1] \mapsto [x: 1:0 ]$ which is one to one. Since $\Lz$ is in the critical locus, one deduces that $F$ is transversely critical on $\Lz$.  

\end{proof}

\subsection{Model domains and algebraically stable models}

There is a priori not a unique space to study a rational map. However, there are some prefered birational models of $\bP^2(\C)$ for which the iteration is simplified. 

We will consider two models,  the blow-up $X$ of $\bP^2(\C)$ at the four points $\bfz, \bfzp , \binf, \binfp$, and the blow-up $Y$ of $\bP^1(\C) \times \bP^1(\C)$ at two points. 
These two spaces are adequate for our study as they are algebraically stable models for $F$. 

Recall that a map is called algebraically stable on a model $X$ if and only if $(F_X^n)^* = (F_X^*)^*$ in $H^{1,1}(X)$ where $F_X$ is the map induced by $F$. 

We will use the following geometric criterion for algebraic stability, due to Fornaess-Sibony and Diller-Favre.

\begin{thm} (see \cite[Theorem 1.14]{diller_favre})
Let $F_X : X \dashrightarrow X$ be a rational map on a projective surface $X$. 
If there are no algebraic curves $C \subset X$ and no iterates $n$ such that $F_X^n(C)$ is a point of indeterminacy of $F_X$, then $F_X$ is algebraically stable. 
\end{thm}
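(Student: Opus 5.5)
The approach is to reduce algebraic stability to the composition rule $(F_X\circ G)^*=G^*F_X^*$ on $H^{1,1}(X)$ for $G=F_X^{n-1}$. We then show that this rule can only fail along curves that $G$ sends into the indeterminacy locus $I(F_X)$.

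First recall how $F_X^*$ is defined. Let $\Gamma$ be a desingularization of the graph of $F_X$, with projections $p_1,p_2\colon\Gamma\to X$ that are holomorphic, $p_1$ birational. Set $F_X^*\theta:=(p_1)_*p_2^*\theta$ for a smooth closed $(1,1)$-form $\theta$. This is well defined on cohomology and independent of the choice of $\Gamma$. Equivalently, $F_X^*\theta$ is the smooth pullback on $X\setminus I(F_X)$, extended trivially across the finite set $I(F_X)$. On divisors this is the total transform. Since $I(F_X)$ is finite, it suffices to compare the two classes $(F_X\circ G)^*\theta$ and $G^*F_X^*\theta$ as currents.

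The key local step is as follows. Let $\theta$ be a smooth closed positive $(1,1)$-form, for instance a Kähler form. Both $(F_X\circ G)^*\theta$ and $G^*(F_X^*\theta)$ are positive closed currents whose local potentials are the pulled-back potentials. They agree on the Zariski-open set where $G$ is holomorphic and $G(x)\notin I(F_X)$, because there all maps are holomorphic and pullback of forms is functorial. Off a finite set, the complement of this set consists of the curves $C$ with $G(C)\subset I(F_X)$; since $I(F_X)$ is finite, these are exactly the curves collapsed by $G$ to an indeterminacy point of $F_X$. By Siu's decomposition, the difference of the two currents is therefore a combination $\sum_C a_C[C]$ over such curves. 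In fact it is effective, since $F_X^*\theta$ has positive Lelong number at points of $I(F_X)$. If no such curve exists, the difference is a closed current supported on a finite set, hence zero. This gives $(F_X\circ G)^*=G^*F_X^*$ on all of $H^{1,1}(X)$, because the classes of Kähler forms span $H^{1,1}(X;\R)$.

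Finally apply this inductively with $G=F_X^{n-1}$. The hypothesis says precisely that no curve $C$ has $F_X^{n-1}(C)$ equal to a point of $I(F_X)$, for any $n$. Hence $(F_X^n)^*=(F_X^{n-1})^*F_X^*=(F_X^*)^n$ for every $n$, which is algebraic stability.

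The main obstacle is the support statement in the local step. One must check carefully that on the locus where $G$ is not holomorphic, namely $I(G)$, and on curves where $G$ is holomorphic but lands in $I(F_X)$ only at isolated points, no divisorial discrepancy appears. I would handle this by working on a common resolution $\Gamma'$ dominating the graphs of $G$, $F_X$ and $F_X\circ G$. There the comparison becomes a comparison of total transforms under holomorphic maps. The discrepancy is then carried by exceptional curves of $\Gamma'\to X$, which push forward to points, and by strict transforms of curves mapped onto points of $I(F_X)$. Only the latter survive pushforward to $X$.
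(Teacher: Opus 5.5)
Your proposal is correct. The paper gives no proof of its own and simply cites \cite{diller_favre}; your argument is essentially the standard one behind that result: compare $(F_X\circ G)^*\omega$ and $G^*F_X^*\omega$ for a K\"ahler form $\omega$ off $I(G)\cup G^{-1}(I(F_X))$ to get $(F_X\circ G)^*=G^*F_X^*$ when no curve is sent by $G$ into $I(F_X)$, then induct with $G=F_X^{n-1}$.

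One small correction: the difference of the two currents is not known to be positive in advance, so Siu's decomposition does not apply directly. The step producing $\sum_C a_C[C]$, or zero when the support is finite, should instead invoke the support theorem for closed currents of order zero.
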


\begin{rem} Note that algebraic stability is a notion that depends on the surface $X$, a map $F : \Pg^2 \dashrightarrow \Pg^2$ might not be algebraically stable on $\Pg^2$, but after a few blow-up, then the lift $F_X : X \dashrightarrow X$ on $X$ becomes algebraically stable on $X$. 
\end{rem}

The aim of the rest of this section is to show that the lift $F_X$ to $X$ is algebraically stable on $X$. 
  Let us fix the class $L$ of the strict transform in $X$ of a general line on $\mathbb{P}^2$ and let $\Efzp, \Einfp, \Efz, \Einf$ be the exceptional divisors over those points.
Denote by $\pi \colon X \to \mathbb{P}^2$ the blow-down map.  
Near each exceptional divisor $\Efzp, \Efz, \Einf, \Einfp$, we use the following coordinate chart $ \pi_j \colon\C_{s_j,\eta_j} \mapsto \mathbb{P}^2$ defined by: 
\begin{align} \label{eq_coordinate_blow_up}
\pi_1(s_1 , \eta_1) = [s_1 \eta_1 -2:s_1 :1], & \quad \pi_2(s_2 , \eta_2) = [s_2 -2 :1 : s_2 \eta_2],\\
\pi_3 (s_3 , \eta_3) = [s_3 + 2 : \eta_3 s_3 : 1], & \quad \pi_4(s_4 , \eta_4) = [ 2 + s_4 : 1 : s_4 \eta_4]. 
\end{align} 
  
We first analyze the new indeterminacies of the lift $F_X$ of $F$. 

\begin{prop} \label{prop_indet_FX} The lift $F_X$ of $F$ to $X$ has exactly three indeterminacy points, they are  $i_0=\Lc\cap \Efz$,  $i_\infty = \pi_X^{-1}(\Oinf)$ and the point $i_0'$ on $\Efzp$ corresponding to the vertical direction.   
\end{prop}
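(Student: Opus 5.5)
The plan is to compute $F_X=\pi^{-1}\circ F\circ\pi$ in the charts \eqref{eq_coordinate_blow_up} and locate every point where the lifted map fails to be defined. Indeterminacy of $F_X$ can only occur at points $p\in X$ such that either $\pi(p)\in I(F)=\{\bfz\}$, or $F(\pi(p))$ is one of the four blown-up points $\bfz,\bfzp,\binf,\binfp$ and the lift of $F\circ\pi$ near $p$ does not extend continuously into the corresponding exceptional divisor. Accordingly, I would split the analysis into three parts.

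\textbf{Part 1: points over $\bfz$.} By Proposition \ref{prop_gen_prop_F_P2}(ii), $F\circ\pi$ is regular on $\Efz\setminus\{i_0\}$ and sends it to $\binf$. Since $\binf$ is blown up in $X$, I have to check that $\pi_4^{-1}\circ F\circ\pi_3$ is regular near $\Efz\setminus\{i_0\}$. For this I would compute the formula in the $(s_3,\eta_3)$ chart and observe that the ratio defining $\eta_4$ extends holomorphically at $s_3=0$. The expected outcome is that $\Efz$ maps onto $\Einf$ by a degree-two map, which is consistent with the conjugacy to $z^2-12$ mentioned in the introduction. At $i_0$ the point is genuinely indeterminate, because already downstairs it blows up to $\Li$.

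\textbf{Part 2: preimages of the blown-up points.} I would compute $F^{-1}$ of each centre.
\begin{itemize}
\item[$\bullet$] $F^{-1}(\bfzp)=\Lc$. Along $\Lc$ I check that the direction of approach to $\bfzp$ varies, so that the strict transform of $\Lc$ is mapped onto $\Efzp$ and no indeterminacy arises.
\item[$\bullet$] Preimages of $\binf,\binfp$ on $\Lz$. These are $\bfz,\bfzp$, via $[x:0:1]\mapsto[x:1:0]$, and $\Efzp$ has to be mapped into $\Einfp$.
\item[$\bullet$] Points on $\Linf$ mapping to $\binf$ or $\binfp$. On $\Linf$ one has $F[x:y:0]=[x^2-2y^2:0:y^2]$, which lies on $\Lz$, so these points map to $\Lz$ rather than to $\binf,\binfp$.
\item[$\bullet$] Preimages of $\bfz$. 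Since $\Linf$ maps to $\Lz$ via $u\mapsto u^2-2$, with $u=x/y$, the point $\Oinf$ (where $u=0$) is sent to $\bfzp$. Also $\binf\mapsto\bfz$.
\end{itemize}

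\textbf{Part 3: local checks.} For each candidate I compute in local charts.
\begin{itemize}
\item[$\bullet$] At $\Einfp$ and $\Einf$: verify regularity.
\item[$\bullet$] At $i_\infty=\pi^{-1}(\Oinf)$: $F$ is regular at $\Oinf$ on $\bP^2$, but $F(\Oinf)=[-2:0:1]=\bfzp$ is a centre. In the chart $\pi_1$, $\eta_1$ is a ratio whose numerator and denominator both vanish at $\Oinf$ to the same order without a common factor. Hence $F_X$ blows $i_\infty$ up onto $\Efzp$.
\item[$\bullet$] At $\Efzp$: the map $\Efzp\to\Einfp$ is obtained by restricting $\pi_4^{-1}\circ F\circ\pi_1$ to $s_1=0$. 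I expect the quotient to become $0/0$ exactly at the vertical direction $\eta_1=0$, reflecting that $\Lc$, which is collapsed, passes in a way linked to the vertical line $x=-2$. That point is $i_0'$.
\end{itemize}

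The main obstacle is making sure the list is exhaustive: one has to show that the remaining points of each exceptional divisor are regular. This amounts to showing that, after cancelling the maximal power of $s_j$, the three homogeneous components have no common zero except at the three named points. I would handle it by writing each restricted map explicitly and solving for the common zeros.
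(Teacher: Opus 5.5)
Your overall plan is sound and it lands on the right three points. However, the $\Lc$ bullet of your Part 2 is false as stated: the direction of approach to $\bfzp$ does \emph{not} vary along $\Lc$. The coordinates of $F(x,y)$ in the chart $\pi_1$ are $s_1=(x-2)/y^2$ and $\eta_1=x$ (see the proof of Proposition \ref{prop_lift_X_F}). Hence $\eta_1\equiv 2$ on $\Lc$, and $F_X$ collapses $\tilde{\Lc}$ to the single point $(s_1,\eta_1)=(0,2)=\tilde{\Li}\cap\Efzp$. This agrees with $\tilde{\Lc}$ being a collapsed, critical curve in Propositions \ref{prop_lift_X_F} and \ref{prop_crit_FX}.

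The conclusion you need from that bullet still holds, because $\bigl((x-2)/y^2,\,x\bigr)$ is holomorphic at every point of $\Lc$ other than $\bfz$ and $\Oinf$. But the check you describe would come out negative, and the picture contradicts your own Part 3: what is blown up onto $\Efzp$ is the point $i_\infty$, not the curve $\tilde{\Lc}$. The same computation corrects your remark at $i_\infty$. Homogeneously, $\eta_1=x(x-2z)/\bigl(z(x-2z)\bigr)$, so numerator and denominator \emph{do} share the factor $x-2z$, which is the equation of $\Lc$. Only after cancelling it is $\eta_1=x/z$ a genuine $0/0$ at $\Oinf$, and that cancellation is exactly what makes $\eta_1$ constant on $\Lc$.

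Some smaller points:
\begin{itemize}
\item The chart on $\Einfp$ is $\pi_2$, not $\pi_4$.
\item Both $\binf$ and $\binfp$ map to $\bfz$, not only $\binf$.
\item $\Lc$ plays no role in the indeterminacy at $i_0'$; the cause is the fold of $F$ along $\Lz$. Put $u=x+2$ and use the affine chart $[a:1:c]$ at $\binfp$. Then $a+2+2c=u$ and $c=y^2/(u-4)$. So every direction $u=ty$ at $\bfzp$ with $t\neq0$ is sent to the direction of $\Linf=\{c=0\}$, while the kernel direction $t=0$ (the vertical) gives $0/0$.
\end{itemize}

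Compared with your route, the paper's proof only shows that the three points are indeterminate, by soft arguments:
\begin{itemize}
\item for $i_\infty$, it uses $F(\Oinf)=\bfzp$;
\item for $i_0'$, it uses $F^{-1}(\binfp)=\{\bfzp\}$, so only $\Efzp$ can map into $\Einfp$; since $\Efzp\setminus\{i_0'\}$ collapses to a point, surjectivity of $F_X$ forces $i_0'$ to blow up onto $\Einfp$;
\item for $i_0$, it cites Proposition \ref{prop_gen_prop_F_P2}(ii).
\end{itemize}
It never argues that the list is exhaustive; the chart formulas in the proof of Proposition \ref{prop_lift_X_F} supply this only implicitly. Your enumeration of the loci where indeterminacy can occur is what actually proves ``exactly three''. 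Those loci are $\Efz$ together with $\Einf,\Einfp,\tilde{\Lc},\Efzp$, the lifts of the $F$-preimages of the four centres. Your reduced ratio $x/z$ also makes the paper's one-line argument at $i_\infty$ precise. So once the $\Lc$ bullet is repaired, your route is the more complete one.
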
  

\begin{proof} Since $F$ maps regularly the point $\Oinf$ to $\bfzp$, the blow-up $F_X$ must map $\Oinf$ to the exceptional divisor $\Efzp$, in particular $\Oinf$ is an indeterminacy point. 
Let us call $p$ the point on $\Efzp$ corresponding to the vertical direction. 
Note that since $F$ maps $\Lz$ to $\Linf$ and it is locally transversally critical, the exceptional divisor $F_X$ is regular on $\Efzp \setminus \{ p\}$ and its image is a point on $\Einfp$. Since the map $F_X$ is surjective, we deduce that $p$ must blow-up to $\Einfp$, hence it is an indeterminacy point. 

For the point $\Lc\cap \Efz$, the fact that it is an indeterminacy follows from assertion (ii) of Proposition \ref{prop_gen_prop_F_P2}. 
%
\end{proof}
  
We take a basis of $H^{1,1}(X)= \C L \oplus \C \Efzp \oplus \C \Einfp \oplus \C \Efz \oplus \C \Einf $.

We prove the following: 
\begin{prop} \label{prop_lift_X_F} The lift $F_X$ of $F$ to $X$ is algebraically stable and the matrix of the pullback action in $H^{1,1}(X)$ is of the form: 
\begin{equation}
(F_X)^{*} =  \left ( \begin{array}{lllll}
2&1&0&0&0\\
0&0&1&0&0\\
0&0&0&1&0\\
-1&-1&0&0&1\\
0&0&0&1&0
\end{array} \right ).
\end{equation}
Moreover, the pushforward ${F_X}_*$ on cohomology is given by the matrix: 
\begin{equation}
(F_X)_{*}  = \left(
\begin{array}{ccccc}
 2 & 0 & 0 & 1 & 0 \\
 -1 & 0 & 0 & -1 & 0 \\
 0 & 1 & 0 & 0 & 0 \\
 0 & 0 & 1 & 0 & 1 \\
 0 & 0 & 0 & 1 & 0 \\
\end{array}
\right) .
\end{equation}
\end{prop}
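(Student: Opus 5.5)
Algebraic stability will follow from the Diller--Favre criterion recalled above: by Proposition \ref{prop_indet_FX} the indeterminacy set of $F_X$ is $\{i_0, i_\infty, i_0'\}$, so it is enough to show that no curve $C\subset X$ is mapped by some iterate $F_X^n$ into this set. Since $F$ has topological degree two and its critical locus is $\Lc\cup\Lz$ (Proposition \ref{prop_gen_prop_F_P2}(iii)), the only curves that $F_X$ can contract are strict transforms of $\Lc$ and $\Lz$ and exceptional divisors. I will therefore follow the orbits of the images of these curves.

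I will use the charts \eqref{eq_coordinate_blow_up}, and I expect the orbits to be the following.
\begin{enumerate}
\item[$\bullet$] $\Lz$ is not contracted: it maps onto $\Linf$, and its strict transform meets $\Efzp,\Efz$ along the horizontal direction.
\item[$\bullet$] In $\bP^2$ the line $\Lc$ is collapsed to $\bfzp$. In $X$ its strict transform should go to a point of $\Efzp$ that is not vertical, hence different from $i_0'$. Then $F_X(\Efzp\setminus\{i_0'\})$ is a point of $\Einfp$, and I must follow $\Einfp\to$ point $\to\cdots$.
\item[$\bullet$] By Proposition \ref{prop_gen_prop_F_P2}(ii), $\Efz$ maps to $\binf$, so in $X$ it maps to a point of $\Einf$ or onto $\Einf$. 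The chart computation will decide which.
\end{enumerate}
From the shape of the matrix I expect the chain $\Lc \to q_1\in\Efzp \to q_2\in \Einfp \to \Efz\cap(\text{horizontal direction})$ or similar. Along this chain I will check that each point stays off $i_0$, $i_\infty$ and $i_0'$. In particular the point on $\Efz$ must be horizontal, while $i_0$ corresponds to the $\Lc$ direction, which is vertical; the chain should then close up using the invariance of $\Lz,\Linf$. Since $F_X^2|_{\Efz}$ behaves like $z\mapsto z^2-12$, the invariance $\Efz\leftrightarrow\Einf$ should also be confirmed in charts.

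Next I will compute $F_X^*$ column by column. Pulling back a general line $L$ gives $F^*L = 2L$ minus the multiplicities of the preimage conic at the four centers. These multiplicities are read off by checking which centers the conic $F^{-1}(\text{line})$ passes through: the indeterminacy point $\bfz$ always, together with $\bfzp$ because $\Oinf \mapsto \bfzp$. This should give the first column up to the basis ordering $L, \Efzp,\Einfp,\Efz,\Einf$.

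For an exceptional class $E$, the pullback $F_X^*E$ is the sum of the divisors mapped into or onto $E$, each with its multiplicity. For instance, $F_X^*\Efzp$ contains $\Lc$, i.e.\ $L-\Efz-\dots$, together with the strict transform of the fiber over $\Oinf$ if it is blown up. I will read these off from the list above. The pushforward will then be obtained as the adjoint of $F_X^*$ for the intersection form $\mathrm{diag}(1,-1,-1,-1,-1)$, and the resulting matrix compared with the stated one. As a consistency check, the characteristic polynomial should contain the factor $t^3-t^2-2t+1$.

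The hardest step will be the local bookkeeping at the centers, namely the exact images of $\Lc$, $\Efzp$, $\Einfp$ and the multiplicities, in particular at $\bfz$, where the indeterminacy needs a second blowup (Proposition \ref{prop_gen_prop_F_P2}(ii)). I will do these computations explicitly in the charts $\pi_j$.
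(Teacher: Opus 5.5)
Your overall plan is the paper's: apply the Diller--Favre criterion by following the orbits of the two contracted curves $\tilde{\Lc}$ and $\Efzp$, then compute $F_X^*$ column by column, and get $(F_X)_*$ as the adjoint for $Q=\mathrm{diag}(1,-1,-1,-1,-1)$. The orbit part is right. $F_X(\tilde{\Lc})$ is the point $(s_1,\eta_1)=(0,2)$ of $\Efzp$, the direction of $\Li$, so it is not $i_0'=(0,0)$. Next come $(s_2,\eta_2)=(0,0)\in\Einfp$ and then the horizontal point $(s_3,\eta_3)=(0,0)$ of $\Efz$, which is exchanged with $(s_4,\eta_4)=(0,0)\in\Einf$; none of these is indeterminate. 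The paper's displayed chain misprints the first point as $(s_1=0,\eta_1=0)$, which would be $i_0'$; your non-verticality check is the one that matters. The remaining columns also come out as you describe: $F_X^*\Efzp=L-\Efz$ (the ``$\dots$'' is empty, and $\Oinf$ is not blown up), $F_X^*\Einfp=\Efzp$, $F_X^*\Efz=\Einfp+\Einf$ and $F_X^*\Einf=\Efz$.

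The first column, however, rests on a false claim. For $\ell=\{ax+by+cz=0\}$ one has $F^{-1}(\ell)=\{ax^2+(b-2a)xz-2bz^2+(c-2a)y^2=0\}$. This conic always passes through $\bfz$, with multiplicity one for general $\ell$ since its gradient there is $(2a+b,0,-2(2a+b))$. But its value at $\bfzp=[-2:0:1]$ is $4(2a-b)$, which is nonzero for a general line: $\bfzp\in F^{-1}(\ell)$ exactly when $F(\bfzp)=\binfp\in\ell$. The relation $F(\Oinf)=\bfzp$ only says that $\Oinf\in F^{-1}(\ell)$ when $\bfzp\in\ell$ (the value at $\Oinf$ is $c-2a$), which also fails generically. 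Similarly the value at both $\binf$ and $\binfp$ is $2a+c$, nonzero unless $\bfz\in\ell$. Hence $F_X^*L=2L-\Efz$, i.e.\ first column $(2,0,0,-1,0)^T$.

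Your version $2L-\Efz-\Efzp$ puts a $-1$ in the $\Efzp$-entry and contradicts the stated matrix. Your own consistency check would also fail: the characteristic polynomial would become $t(t-1)(t^3-t^2-t+2)$, with spectral radius about $1.29$ instead of $\delta\approx1.80$. With this corrected, the rest of your argument goes through as in the paper.
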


\begin{proof}

Let us consider $\tilde{\Lc}$ the strict transform of the collapsed line on $X$. 
Near $\tilde{\Lc}$, $F_X$ is regular and is given by $\pi_1^{-1}\circ F $ and we find: 
\begin{equation*}
\pi^{-1}\circ F \colon [x:y:1] \mapsto \left  (s_1 = \dfrac{x-2}{y^2}  ,\  \eta_1= x \right ).
\end{equation*}
Thus $F_X(\tilde{\Lc})= (s_1= 0, \eta_1= 2)$ and $\tilde{\Lc}$ is a collapsed curve for $F_X$.
Near $\Efzp$: $F_X = \pi_2^{-1}\circ F\circ \pi_1$ is regular and given by
\begin{equation*}
(s_1,\eta_1) \mapsto \left ( s_2 =\frac{s_1 (s_1 \eta_1^2-2 s_1-4 \eta_1)}{s_1 \eta_1-4},\ \eta_2=\frac{s_1}{s_1 \eta_1^2-2 s_1-4 \eta_1} \right )
\end{equation*}
So we see that $\Efzp$, which is determined by the equation $s_1=0$, maps to the point ${(s_2 = 0, \eta_2 = 0)}$. This is expected because the point $\bfzp$ is a regular branch point for $F$. 

\smallskip

Near $\Einfp$, $F_X = \pi_3^{-1}\circ F \circ\pi_2$ is also regular and given by:
\begin{equation*}
(s_2,\eta_2) \mapsto \left (s_3 = -s_2 (2 s_2 \eta_2-s_2-4 \eta_2+4), \eta_3 = \frac{\eta_2 (2 s_2 \eta_2-s_2+2)}{2 s_2 \eta_2-s_2-4 \eta_2+4} \right )
\end{equation*}
so we see that $\Einfp = \{ s_2=0 \}$ is mapped to $\Efz = \{s_3=0 \}$ , taking $(s_2=0, \eta_2=0)$ to $(s_3=0, \eta_3=0)$. 

\smallskip

Near $\Efz,$ $F_X=\pi_4^{-1} \circ F \circ\pi_3$ is given by
\begin{equation} \label{eq_Efz_Einf}
 (s_3 , \eta_3) \mapsto \left (s_4=-s_3 (2 \eta_3^2-1),\ \eta_4=-\frac{\eta_3^2}{2 \eta_3^2-1} \right ). 
 \end{equation}
 Thus $\Efz$ is not exceptional and maps to $\Einf$. Moreover, $(s_3=0, \eta_3=0)$ maps to ${(s_4 =0, \eta_4 = 0)}$. 
 
 \smallskip
 
 Near $\Einf$, $F_X = \pi_3^{-1}\circ F\circ\pi_4$ is given by
 \begin{equation} \label{eq_Einf_Efz}
  (s_4,\eta_4) \mapsto \left (s_3= -s_4 (2 s_4 \eta_4-s_4+4 \eta_4-4) ,\  \eta_3=\frac{\eta_4 (2 s_4 \eta_4-s_4-2)}{2 s_4 \eta_4-s_4+4 \eta_4-4}\right )
  \end{equation} 
 so $\Einf$ is not exceptional and maps to $\Efz$, moreover the point $(s_4 = 0 , \eta_4=0)$ maps to $(s_3= 0, \eta_3=0)$. 
 The action of $F_X$ is then given by: 
 \begin{multline*}
 \tilde{\Lc} \mapsto (s_1=0,\eta_1=0)\in \Efzp \mapsto (s_2=0,\eta_2=0)\in \Einfp \mapsto (s_3=0,\eta_3=0) \in \Efz \\
 \mapsto (s_4=0, \eta_4=0)\in \Einf \mapsto (s_3=0,\eta_3=0) \in \Efz \mapsto \ldots
 \end{multline*}
The orbit of $\tilde{\Lc}$ is thus finite, and so is the orbit of the other collapsed curve $\Efzp$: 
\begin{multline*}
\Efzp \mapsto (s_2=0,\eta_2=0)\in \Einfp \mapsto (s_3=0,\eta_3=0) \in \Efz \\
 \mapsto (s_4=0, \eta_4=0)\in \Einf \mapsto (s_3=0,\eta_3=0) \in \Efz \mapsto \ldots 
\end{multline*}
None of these points of the orbits of $\tilde{\Lc}$ and $\Efzp$ are indeterminate points for $F_X$, so $F_X$ is an algebraically stable mapping.  

\medskip

Let us now compute the action of $F_X$ by pullback on $H^{1,1}(X)$. 
The divisor $\Efzp$ is defined by $\{ s_1=0 \}$, so we see that $F_X^{-1}(\{ s_1=0\})= \{(x-2)/y^2 =0 \}$ which defines $\Lc$ with multiplicity $1$. 
We conclude that $
 F_X^* \Efzp = \{ \Lc \}_X$. Next we observe that $\{\Lc \} = L - \Efz$ and we conclude that: 
\begin{equation*}
F_X^* \Efzp = L - \Efz. 
\end{equation*}
With similar arguments, we obtain that: 
\begin{align*}
&F_X^* \Einfp = \Efzp,\\
&F_X^* \Efz= \Einfp + \Einf,\\
&F_X^* \Einf = \Efz.
\end{align*}
The preimage of a general line of equation $\{  a x + b y + c z=0\}$ is a degree two curve of the form: 
\begin{equation*}
 F^{-1} \{ a x + b y + c z=0\} = \{ a x^2+(b - 2a) x z-2 b z^2+(c -2a) y^2 =0 \}.
 \end{equation*} 
 Note that it contains the point $\bfz$ but none of the other points $\bfzp, \binfp,\binf$. We pull this class in $H^{1,1}(X)$ using the identity $ \pi^* F^* = F_X^* \pi^*$: 
 \begin{equation*}
 \pi^* F^{-1}\{ a x + by + cz =0 \} = 2 L - \Efz  = F_X^* L. 
 \end{equation*}
 So $F_X^*$ is represented by the matrix: 
 \begin{equation*}
(F_X)_{| H^{1,1}(X)}^{*} =  \left ( \begin{array}{lllll}
2&1&0&0&0\\
0&0&1&0&0\\
0&0&0&1&0\\
-1&-1&0&0&1\\
0&0&0&1&0
\end{array} \right ).
\end{equation*}

To recover the pushforward matrix, we use the relation $(F_X)_* = ^t(Q F_X^*\cdot Q^{-1} ) $, where $Q$ is the matrix induced by the intersection form on $H^{1,1}(X)$ with respect to the basis $\C L \oplus \C \Efzp \oplus \C \Einfp \oplus \C \Efz \oplus \C \Einf$:  
\begin{equation*}
Q = \left ( \begin{array}{lllll}
1 & 0 & 0 & 0 & 0 \\
0 & -1 & 0 & 0 & 0 \\
0 & 0 & -1 & 0 & 0 \\
0 & 0 & 0 & -1 & 0 \\
0 & 0 & 0 & 0 & -1 \\
\end{array} \right )
\end{equation*}
We get: 
\begin{equation*}
(F_X)_* = \left(
\begin{array}{ccccc}
 2 & 0 & 0 & 1 & 0 \\
 -1 & 0 & 0 & -1 & 0 \\
 0 & 1 & 0 & 0 & 0 \\
 0 & 0 & 1 & 0 & 1 \\
 0 & 0 & 0 & 1 & 0 \\
\end{array}
\right)
\end{equation*}
\end{proof}

\subsection{Dynamical degree}

Recall that the sequence $(\deg(F^n))$ is submultiplicative \cite{russakovskii_shiffman}.
Thus we have the existence of a limit  defining the  dynamical degree $\dyndeg =\lim_{n\rightarrow +\infty} \deg(F^n)^{1/n}$, which is invariant under birational conjugacies (see \cite{dinh_sibony_une_borne_sup}). 

For general mappings, from the formula for the dynamical degree, computing the  dynamical degree explicitly is not straightforward. We often pass to another blow-up of $\mathbb{P}^2$ on which the calculations simplifies. 
Namely, when $F_X: X \dashrightarrow X$ is algebraically stable, the dynamical degree of $F$ is related to a spectral radius 
$$\dyndeg(F_X) = \rho (F_X^*),$$
where $F_X^*$ is the pullback action of $F_X$ on the Dolbeaut cohomology $H^{1,1}(X)$.
Furthermore, the  rate of convergence in cohomology to the eigenvector for the largest eigenvalue is exponential.

\begin{prop} \label{prop_cohomology} Let $F: X\dashrightarrow X$ be an algebraically stable map on a projective surface $X$. Assume that $\delta(F)^2 > \dtop(F)$, then there exists two nef classes $\theta_+ ,\theta_-\in H^{1,1}(X)$ such that the following properties hold.  
\begin{enumerate}
\item[(i)] $F^{*} \theta_+ = \lambda_1(F) \theta_+$. 
\item[(ii)] $F_* \theta_- = \lambda_1(F) \theta_-$. 
\item[(iii)] $\theta_+^2 =0$. 
\item[(iv)] $\theta_+ \cdot \theta_- >0$.
\item[(v)] For any $\alpha \in H^{1,1}(X)$ such that $\alpha \cdot \theta_- =0$, one has $|| (F^{n})^* \alpha || \leqslant C \dtop(F)^n $ where $C$ is a constant which only depends on $\alpha$. 
\end{enumerate}
\end{prop}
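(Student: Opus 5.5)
The plan is to work entirely in the finite-dimensional space $H^{1,1}(X;\R)$ with the intersection form, which has signature $(1,h^{1,1}-1)$ by the Hodge index theorem. Write $\lambda_1=\lambda_1(F)=\delta(F)$. Algebraic stability gives $(F^n)^*=(F^*)^n$ and $(F^n)_*=(F_*)^n$, so $\lambda_1$ is the spectral radius of $F^*$, and also of $F_*$, since $F_*$ is the adjoint of $F^*$ for the intersection form. Both $F^*$ and $F_*$ preserve the closed convex cone of nef classes. For $F^*$ this holds because the pullback of a nef class is nef for an algebraically stable, or even any dominant rational, map on a surface: $F^*\alpha\cdot C=\alpha\cdot F_*C\ge 0$ for irreducible curves $C$ not contracted, and for contracted $C$ one has $F_*C=0$. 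For $F_*$ one uses adjointness together with the fact that $F^*$ maps effective classes to effective classes. The nef cone is salient with nonempty interior, so the Perron--Frobenius (Krein--Rutman) theorem yields nonzero nef eigenvectors $\theta_+,\theta_-$ with eigenvalue $\lambda_1$. This proves (i) and (ii).

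For (iii), I will use the basic intersection identities for algebraically stable maps of surfaces (Diller--Favre): $F_*F^*\alpha=\dtop(F)\,\alpha$, and $(F^*\alpha)^2\ge \dtop(F)\,\alpha^2$, the difference being a nonnegative contribution from the exceptional curves of the resolution. Actually the key formula is $F^*\alpha\cdot F^*\beta=\dtop\,\alpha\cdot\beta+(\text{correction})$. The simplest route is the following. Compute
\[
\lambda_1^2\,\theta_+^2=(F^*\theta_+)^2=F^*\theta_+\cdot F^*\theta_+ .
\]
By the push--pull formula, $F^*\theta_+\cdot F^*\theta_+\le F_*F^*\theta_+\cdot\theta_+=\dtop\,\theta_+^2$, with the correction term having a definite sign because the classes are nef. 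Hence $(\lambda_1^2-\dtop)\theta_+^2\le 0$. Since $\theta_+$ is nef we have $\theta_+^2\ge 0$, so the hypothesis $\lambda_1^2>\dtop$ forces $\theta_+^2=0$. The same argument gives $\theta_-^2=0$. I expect this sign bookkeeping in the push--pull formula to be the main obstacle. I would follow Diller--Favre's Theorem 3.3 and Proposition 1.13 precisely for the direction of the inequality.

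For (iv), note that $\theta_+\cdot\theta_-\ge 0$ since both are nef. If $\theta_+\cdot\theta_-=0$, then the two nonzero isotropic nef classes would be orthogonal. In signature $(1,n)$ this forces $\theta_-=c\,\theta_+$ (the light-cone argument of the Hodge index theorem), so $\theta_+$ would be a common eigenvector of $F^*$ and $F_*$. Then
\[
\lambda_1^2\theta_+=F_*F^*\theta_+=\dtop\,\theta_+ ,
\]
which contradicts $\lambda_1^2>\dtop$. Therefore $\theta_+\cdot\theta_->0$.

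For (v), consider the hyperplane $V=\theta_-^{\perp}$. It is $F^*$-invariant, since $F^*\alpha\cdot\theta_-=\alpha\cdot F_*\theta_-=\lambda_1\,\alpha\cdot\theta_-$. By (iv), $V$ is complementary to $\R\theta_+$. On $\theta_+^\perp\cap\theta_-^\perp$ the form is negative definite. Combining this with the inequality $(F^*\alpha)^2\ge\dtop\,\alpha^2$ and $F_*F^*=\dtop$, one shows that $F^*$ restricted to this subspace has norm at most $\dtop^{1/2}$ with respect to minus the intersection form. On $V$ the only other direction is $\theta_+$, but $\theta_+\notin V$. The remaining point is to bound the spectral radius of $F^*|_V$ by $\dtop$, as stated, with constants depending on $\alpha$; this is the weaker, easier bound, and it follows from the above via a Jordan-block argument.
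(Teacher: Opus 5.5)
Your argument for (iii) has the key inequality backwards, and the gap cannot be closed. On the desingularized graph $\Gamma$ (with $\pi_1$ birational and $\pi_2$ of degree $\dtop$), set $E_\alpha=\pi_1^*\pi_{1*}\pi_2^*\alpha-\pi_2^*\alpha$. This is a class supported on $\pi_1$-exceptional curves, and $E_\alpha\cdot(\pi_2^*\alpha+E_\alpha)=0$. Then
\[
F_*F^*\alpha=\dtop\,\alpha+\pi_{2*}E_\alpha ,\qquad (F^*\alpha)^2=\alpha\cdot F_*F^*\alpha=\dtop\,\alpha^2-E_\alpha^2\;\ge\;\dtop\,\alpha^2 ,
\]
because the intersection form is negative definite on $\pi_1$-exceptional curves. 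So the correction term \emph{increases} $(F^*\theta_+)^2$. Your first sentence states the right inequality, but it is incompatible with the identity $F_*F^*\alpha=\dtop\,\alpha$, which you also state and then use; that identity fails as soon as $F$ has indeterminacy points. With $F^*\theta_+=\lambda_1\theta_+$ you only get $(\lambda_1^2-\dtop)\,\theta_+^2=-E_{\theta_+}^2\ge 0$, which gives no information. Concluding $\theta_+^2=0$ requires $E_{\theta_+}=0$, i.e.\ $\theta_+\cdot\pi_{2*}C=0$ for every $\pi_1$-exceptional curve $C$. This is exactly what the paper asserts, without justification, in order to get equality from Diller--Favre's Corollary 3.4.

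That orthogonality is false in this generality, and so are (iii) and your ``same argument'' for $\theta_-^2=0$.
\begin{itemize}
\item A quadratic H\'enon map $(x,y)\mapsto(x^2+c-ay,x)$ is algebraically stable on $\mathbb{P}^2$ with $\lambda_1^2=4>1=\dtop$. Here $\theta_+=\theta_-=H$ and $H^2=1$.
\item For the paper's own $F_X$, solving $F_X^*\theta_+=\delta\theta_+$ with the displayed matrix gives $\theta_+=L+(\delta-2)\bigl(E_0'+\delta E_\infty'+\delta^2E_0+\delta E_\infty\bigr)$. This class is nef: it is nonnegative on the ten $(-1)$-curves of this degree $5$ del Pezzo surface. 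Yet $\theta_+^2=1-(\delta-2)^2(1+\delta^2)^2\approx 0.29>0$. Consistently, $\theta_+\cdot E_0'=2-\delta\neq 0$, although $F_X$ blows up $i_\infty$ onto $E_0'$.
\end{itemize}

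Your (iv) and (v) also rely on $F_*F^*=\dtop$. Statement (iv) survives with the correct formula. If $\theta_+^2>0$, then $\theta_+^\perp$ is negative definite, so $\theta_+\cdot\theta_->0$ automatically. If $\theta_+^2=0=\theta_+\cdot\theta_-$, then $\theta_-\in\mathbb{R}\theta_+$, so $F_*F^*\theta_+=\lambda_1^2\theta_+$. On the other hand $0=(F^*\theta_+)^2=-E_{\theta_+}^2$ forces $E_{\theta_+}=0$, hence $F_*F^*\theta_+=\dtop\,\theta_+$, a contradiction.

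For (v), the subspace $\theta_+^\perp\cap\theta_-^\perp$ is not $F^*$-invariant in general; that would need $F_*\theta_+\in\mathbb{R}\theta_+$. The spectral bound has to be proved on the invariant hyperplane $\theta_-^\perp$ itself, using $(F^*v)\cdot\overline{F^*v}\ge\dtop\, v\cdot\bar v$ together with the Hodge index theorem on $\theta_-^\perp$.
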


\begin{proof}
The pullback $F^*$ on $H^{1,1}(X)$ preserves the cone of nef classes. By Perron-Frobenius theorem, there is a unique class $\theta_+$ (up to scaling) which is $\lambda_1(F)$-invariant by $F^*$ and belongs to this cone. Similarly, the same applies for $F_*$ and there exists a unique class $\theta_-$ (up to scaling) which is $\lambda_1(F)$-invariant and which is nef. This proves assertion (i) and (ii).
Consider the desingularization $\Gamma$ of the graph of $F$ in $X\times X$ and let us denote by $\pi_1,\pi_2 \colon \Gamma \to X$ the two  induced projections onto the first and second component respectively. 
The class $\theta_+$ has the property that for any exceptional divisor $E$ for $\pi_1$, one has $\theta_+ \cdot E =0$. 
By \cite[Corollary~3.4]{diller_favre}, we obtain that $\lambda_1(F)^2 \theta_+ \cdot \theta_+ =(F^* \theta_+ \cdot F^* \theta_+) = \dtop(F)\theta_+ \cdot \theta_+ $. 
Since $\lambda_1(F)^2 > \dtop(F)$, we obtain that $\theta_+ \cdot \theta_+ = 0$, thus assertion (iii) holds. 
Assertion (iv) follows from \cite[Theorem~5.1 (4)]{diller_favre} and the last assertion from the fact that $\lambda_1(F)$ is the only eigenvalue of $F^*$ on $H^{1,1}(X)$ which is outside the disc of radius $\sqrt{\dtop(F)}$ (see e.g \cite[Remark 5.2]{diller_favre}).
\end{proof}

\begin{thm} \label{thm_value_dyndeg}

The first dynamical degree $\dyndeg \simeq 1.801$ of $F$ is the largest root of the polynomial $X^3- X^2 - 2X +1 $. 
\end{thm}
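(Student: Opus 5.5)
The plan is to use Proposition \ref{prop_lift_X_F}. It gives algebraic stability of the lift $F_X$ on $X$, so $(F_X^n)^*=(F_X^*)^n$ on $H^{1,1}(X)$. Hence the dynamical degree of $F_X$ is the spectral radius $\rho(F_X^*)$ of the $5\times 5$ matrix displayed there. The dynamical degree is invariant under birational conjugacy (\cite{dinh_sibony_une_borne_sup}), and $\pi\colon X\to\bP^2$ is birational, so $\dyndeg(F)=\dyndeg(F_X)=\rho(F_X^*)$. It therefore suffices to compute the characteristic polynomial of $F_X^*$ and identify its largest root.

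I will compute $\det(t\,\mathrm{Id}-F_X^*)$ directly, expanding along rows and columns with few nonzero entries. The matrix $F_X^*$ has only eight nonzero entries, so the cofactor expansion is short. Alternatively, I can follow the action on the basis: $L\mapsto 2L-\Efz$, $\Efzp\mapsto L-\Efz$, $\Einfp\mapsto\Efzp$, $\Efz\mapsto\Einfp+\Einf$, $\Einf\mapsto\Efz$. This reading is with the columns of the matrix as images, which matches the computations $F_X^*L=2L-\Efz$ and $F_X^*\Efzp=L-\Efz$ in the proof. It is the same computation whichever convention (rows or columns) is used, since a matrix and its transpose have the same characteristic polynomial.

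I expect the result to factor as $(t-1)(t+1)(t^3-t^2-2t+1)$, or possibly $(t^2-1)$ replaced by another cyclotomic quadratic factor. The cyclotomic part would come from the $2$-cycle $\Efz\leftrightarrow\Einf$ and the periodic behaviour of the exceptional classes. To confirm the factorization, I will check that the product of the roots matches the determinant and that the degree-$4$ coefficient equals $-\mathrm{tr}(F_X^*)=-2$. The expected factorization is consistent with this trace condition: expanding $(t^2-1)(t^3-t^2-2t+1)$ gives $t^5-t^4-3t^3+2t^2+2t-1$, whose $t^4$ coefficient is $-1$. The main obstacle is only bookkeeping: determining the exact cyclotomic factor, so that the remaining cubic is $t^3-t^2-2t+1$. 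I will settle this by explicitly dividing the quintic by the cubic.

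Finally, I will locate the roots of $p(t)=t^3-t^2-2t+1$. The values $p(-2)<0$, $p(0)=1>0$, $p(1)=-1<0$ and $p(2)=1>0$ give three real roots, lying in $(-2,0)$, $(0,1)$ and $(1,2)$. The largest root satisfies $p(1.8)<0<p(1.81)$, so it is $\approx1.801$. The cyclotomic factors contribute only roots of modulus $1$. The root in $(-2,0)$ has modulus about $1.25$, since the product of the three roots is $-1$, so the root near $1.801$ dominates in modulus. Hence $\rho(F_X^*)\simeq1.801$ is the largest root of $X^3-X^2-2X+1$.

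As a consistency check, $\dyndeg^2\approx3.24>2=\dtop(F)$, so Proposition \ref{prop_cohomology} applies. I could also verify $\deg(F^4)=14$ by computing $L\cdot (F_X^*)^4L$ from the matrix.
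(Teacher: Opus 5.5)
Your route is the paper's: algebraic stability of $F_X$ gives $\dyndeg=\rho(F_X^*)$, and everything reduces to the characteristic polynomial of the $5\times 5$ matrix. However, the factorization you predict is wrong, and your own trace check already refutes it. The product $(t^2-1)(t^3-t^2-2t+1)$ has $t^4$-coefficient $-1$, while $-\mathrm{tr}(F_X^*)=-2$, so it is not ``consistent'' as you claim. Carrying out the expansion gives
\[
\det(t\,\mathrm{Id}-F_X^*)=t^5-2t^4-t^3+3t^2-t=t\,(t-1)\,(t^3-t^2-2t+1),
\]
which is the paper's polynomial.

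The extra eigenvalue is $0$, not $-1$. Rows $3$ and $5$ of the matrix coincide; equivalently $F_X^*(L-2\Efzp-\Einf)=(2L-\Efz)-2(L-\Efz)-\Efz=0$, so $F_X^*$ has a kernel. This vanishing determinant rules out every cyclotomic guess, and it has nothing to do with the $2$-cycle $\Efz\leftrightarrow\Einf$. Since $0$ and $1$ both have modulus at most $1$, your conclusion survives once the guessed factorization is replaced by the computed one. A side remark on your optional check: $L\cdot(F_X^*)^4L$ computed from this matrix equals $15$, not the $14$ quoted in the introduction.

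The statement ``the product of the roots is $-1$'' does not by itself bound the negative root by the largest one. You only know that the middle root $r_0$ lies in $(0,1)$, which gives no upper bound on $1/(r_+r_0)$. Use the sum of the roots instead: the negative root equals $1-r_+-r_0>-r_+$ because $r_0<1$. Alternatively, as in the proof of Proposition~\ref{prop_cohomology}, $F_X^*$ preserves the nef cone. By Perron--Frobenius, $\rho(F_X^*)$ is then itself an eigenvalue, hence the largest real root. Together with $P(1)<0$, this is exactly the paper's short argument.
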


\begin{proof}
Since $F$ is algebraically stable on $X$, the dynamical degree is equal to the spectral radius of the action of $(F_X)^*$ on $H^{1,1}(X)$. Using the previous proposition, the characteristic polynomial of the matrix induced by $(F_X)^*$ is of the form: 
\begin{equation*}
  -(-1 + x) x (1 - 2 x - x^2 + x^3).
  \end{equation*}
  Set $P = 1 - 2 x - x^2 + x^3$, observe that $P(1) <0$, thus $P$ has a root strictly larger than $1$. In particular, the dynamical degree is the largest root of $P$, as required.   
\end{proof}

\subsection{Behavior at point of indeterminacy}

We will refer to the first subsystem as the \textit{$\Efz$ subsystem}. 
\begin{prop} \label{prop_E0_dynamics} The map $F_X^2$ preserves $\Efz$ and the restriction $(F_X^2)_{|\Efz \setminus I(F_X)}$ is conjugate to the polynomial map $z \mapsto z^2 - 12$.  
\end{prop}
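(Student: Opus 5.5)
We will work entirely in the explicit charts \eqref{eq_coordinate_blow_up} and compose the two restrictions computed in the proof of Proposition \ref{prop_lift_X_F}. The map $F_X$ sends $\Efz$ to $\Einf$ and $\Einf$ back to $\Efz$, so $F_X^2$ preserves $\Efz$. It then suffices to compute the two one-dimensional restrictions and compose them.

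First we restrict \eqref{eq_Efz_Einf} to $\Efz=\{s_3=0\}$. This gives the map $\eta_3\mapsto \eta_4 = -\eta_3^2/(2\eta_3^2-1)$ onto $\Einf=\{s_4=0\}$. Next we restrict \eqref{eq_Einf_Efz} to $\{s_4=0\}$. This gives
\[
\eta_4\mapsto \eta_3=\frac{-2\eta_4}{4\eta_4-4}=\frac{\eta_4}{2(1-\eta_4)} .
\]
Before composing, we must check that these formulas genuinely describe $F_X$ on the divisors. The point to rule out is that the restriction degenerates, which would happen if a common factor $s_3$ or $s_4$ appeared in the chart expressions. We also need the restricted maps to extend holomorphically across the chart boundaries, except possibly at the indeterminacy point $i_0=\Lc\cap\Efz$ of Proposition \ref{prop_indet_FX}. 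In the chart $(s_3,\eta_3)$ the point $i_0$ corresponds to $\eta_3=\infty$, since $\Lc$ is vertical at $\bfz$. These checks justify the phrase ``$\Efz\setminus I(F_X)$'' in the statement.

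The composition is then a one-variable rational map. Substituting $\eta_4=-\eta_3^2/(2\eta_3^2-1)$, we get $1-\eta_4=(3\eta_3^2-1)/(2\eta_3^2-1)$, and hence
\[
\eta_3\mapsto \frac{-\eta_3^2}{2(3\eta_3^2-1)} .
\]
This is a degree-two rational map $R$ of $\bP^1$. It is conjugate to a polynomial exactly when it has a totally invariant point, namely a fixed critical point. The critical points of $R$ are $\eta_3=0$ and $\eta_3=\infty$. We have $R(0)=0$, and $R(\infty)=-1/6$. So $0$ is a superattracting fixed point, and a Möbius change $w=1/\eta_3$ moves it to $\infty$:
\[
w\mapsto \frac{2(3-w^2)}{-1}\cdot\frac{1}{1}\;\Rightarrow\; w\mapsto -2(3/w^2\cdot w^2\!\!\!\ldots)
\]
Concretely, $1/R = -2(3\eta_3^2-1)/\eta_3^2 = -6+2w^2$, so in the coordinate $w$ the map becomes $w\mapsto 2w^2-6$. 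The affine rescaling $z=2w$ then gives $z\mapsto z^2-12$, as claimed.

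The main obstacle is not the algebra but the justification that the restriction of $F_X^2$ to $\Efz$ equals the composition of the two chart restrictions. The danger is the indeterminacy point $i_0$. At $\eta_3=\infty$, that is $w=0$, the intermediate image lies at $\eta_4=1/2$, and we must confirm this is not where $\Einf$ meets indeterminacy. We also need $F_X$ to map $\Efz$ onto $\Einf$ dominantly, so that the composite of the restrictions is indeed the restriction of the composite. We would handle this by verifying, via Proposition \ref{prop_indet_FX}, that the only indeterminacy point on $\Efz\cup\Einf$ is $i_0$. We would then note that the formula extends continuously at $i_0$, which accounts for the exclusion in the statement.
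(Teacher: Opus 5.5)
Your argument is correct and is the paper's own route: restrict \eqref{eq_Efz_Einf} and \eqref{eq_Einf_Efz} to $s_3=s_4=0$, compose them to get $\eta_3\mapsto \eta_3^2/(2-6\eta_3^2)$, and conjugate by $\eta_3=2/z$ (your $z=2w=2/\eta_3$); your extra indeterminacy checks add rigor that the paper leaves implicit. There are two cosmetic slips to fix: the image of $\eta_3=\infty$ in $\Einf$ is $\eta_4=-1/2$, not $1/2$, and the garbled display just before ``Concretely'' should be deleted.
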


\begin{proof} Recall that $\Efz$ is mapped by $F_X$ to $\Einf$ which is mapped back to $\Efz$. We use the coordinate chart $\pi_3, \pi_4$ given in \eqref{eq_coordinate_blow_up}. 
We compose the maps induced by $F_X$ in these local coordinates given in  \eqref{eq_Efz_Einf} and \eqref{eq_Einf_Efz} restricted to $s_3 = s_4 =0$ so that 
\begin{equation*}
\pi_3^{-1} F_X^2 \pi_3 = \pi_3^{-1} \circ F_X \circ \pi_4 \circ \pi_4^{-1} \circ F_X \circ \pi_3 : (s_3 = 0 , \eta_3) \mapsto \left (0, \dfrac{\eta_3^2 }{2 - 6 \eta_3^2} \right ).
\end{equation*}
One then checks that this map is conjugate to $z \mapsto z^2 -12$ by setting $\eta_2 = 2/ z$. 
\end{proof}

For the specific map $F$, we take $X$ to be the blow-up of $\mathbb{P}^2$ at the four points
$\bfz, \bfzp, \binf, \binfp$.

In the course of the proof of the above proposition, we obtained  the critical locus of $F_X$. This will be used later on, so we state it below. 
\begin{prop} \label{prop_crit_FX} The critical locus of $F_X$ consists of three curves, the exceptional divisor $\Efzp$, the strict transform of the line $\Lc$, of the horizontal line $\Lz$.  
\end{prop}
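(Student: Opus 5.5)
The plan is to compute the critical locus of $F_X$ by comparing it with the critical locus of $F$ on $\bP^2$. Proposition \ref{prop_gen_prop_F_P2}(iii) gives $\Jac(F) = -4y(x-2z)^2$, so $F$ is critical exactly along $\Lc$ and $\Lz$. Since $\pi_X$ is an isomorphism outside the four exceptional divisors, the critical locus of $F_X$ off $\Efz\cup\Efzp\cup\Einf\cup\Einfp$ (and off the indeterminacy points of Proposition \ref{prop_indet_FX}) is the union of the strict transforms of $\Lc$ and $\Lz$. Two things then remain: decide which exceptional divisors are critical, and confirm that both strict transforms really remain critical after lifting.

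For the exceptional divisors I will use the local charts $\pi_1,\dots,\pi_4$ of \eqref{eq_coordinate_blow_up} together with the explicit local expressions of $F_X$ computed in the proof of Proposition \ref{prop_lift_X_F}. In each chart I compute the Jacobian determinant of the given map and factor out powers of the exceptional coordinate $s_j$.

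\begin{enumerate}
\item[$\bullet$] Near $\Efzp$ (chart $\pi_1\to\pi_2$), $\Efzp=\{s_1=0\}$ is collapsed to the point $(s_2,\eta_2)=(0,0)$. A collapsed curve is necessarily critical, so the Jacobian vanishes on $\{s_1=0\}$. A short computation should confirm a factor of $s_1$.
\item[$\bullet$] Near $\Einfp$ (chart $\pi_2\to\pi_3$), $\Efz$ (chart $\pi_3\to\pi_4$, formula \eqref{eq_Efz_Einf}) and $\Einf$ (formula \eqref{eq_Einf_Efz}), each divisor is mapped onto another exceptional divisor. In these cases I check that the Jacobian is not identically zero on $s_j=0$.
\item[$\bullet$] For \eqref{eq_Efz_Einf}, at $s_3=0$ one has $\partial s_4/\partial s_3=-(2\eta_3^2-1)$ and $\partial\eta_4/\partial s_3=0$, while $\partial\eta_4/\partial\eta_3=-2\eta_3/(2\eta_3^2-1)^2$. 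So the determinant restricted to $\{s_3=0\}$ is $2\eta_3/(2\eta_3^2-1)$, which vanishes only at isolated points and not along the whole divisor.
\item[$\bullet$] The cases of $\Einf$ and $\Einfp$ are handled in the same way.
\end{enumerate}

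It remains to check that the strict transforms of $\Lc$ and $\Lz$ are critical for $F_X$. For $\tilde\Lc$ this is immediate, since it is collapsed by $F_X$ (proof of Proposition \ref{prop_lift_X_F}). For $\Lz$, its image $\Linf$ does not pass through the blown-up points except at $\binf,\binfp$. So on a generic point of $\Lz$ the lift $F_X=\pi_X^{-1}\circ F$ is a local composition with a biholomorphism, and the vanishing order $1$ of $y$ in $\Jac(F)$ persists; the transversally critical behaviour of Proposition \ref{prop_gen_prop_F_P2}(iii) passes directly to $F_X$.

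The main obstacle is purely computational: making sure that no exceptional divisor other than $\Efzp$ carries an identically vanishing Jacobian. The cleanest path is the chart-by-chart computation above. As a cross-check, I would use the ramification formula $K_X = F_X^*K_X + R$ in $H^{1,1}(X)$ with $K_X=-3L+\Efz+\Efzp+\Einf+\Einfp$ and the matrix of $F_X^*$ from Proposition \ref{prop_lift_X_F}. The resulting class $R$ should equal the class of $\Efzp+\tilde\Lc+\tilde\Lz$, with the multiplicities read off from these local computations.
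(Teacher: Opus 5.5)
Your proposal is correct and follows essentially the paper's route. The paper reads the critical locus off $\Jac(F)=-4y(x-2z)^2$ together with the local chart computations in the proof of Proposition \ref{prop_lift_X_F}; your explicit Jacobian checks on $\Einfp$, $\Efz$, $\Einf$ (needed, since mapping onto a divisor does not by itself exclude transverse criticality) and the ramification cross-check $K_X-F_X^*K_X=2L-2\Efz=[\Efzp]+[\tilde{\Lc}]+[\tilde{\Lz}]$ only make this fully explicit. There is one harmless sign slip: at $s_3=0$ one has $\partial\eta_4/\partial\eta_3=+2\eta_3/(2\eta_3^2-1)^2$, so the determinant there is $-2\eta_3/(2\eta_3^2-1)$, which still vanishes only at $\Efz\cap\tilde{\Lz}$.
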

 
{Finally, let us analyze the typical behavior near $\Efz$. To that end, consider the Julia set $J_0$ of the map $(F_X^2)_{|\Efz}$ and let $\mu_{\Efz}$ be the measure of maximal entropy for this restriction. We claim that the following holds: 

\begin{thm} \label{thm_repelling_measure}
For $\mu_{\Efz}$-almost every point $z \in \Efz$, $F_X^2$ is repelling at $z$. In other words, $\mu_{\Efz}$ is a reppelling type measure.
\end{thm}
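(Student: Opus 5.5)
To show $\mu_{\Efz}$ is of repelling type, I would compute the differential of $F_X^2$ at points of $\Efz$ and show that both eigenvalues have modulus $>1$ for $\mu_{\Efz}$-a.e. point. Since $\Efz$ is $F_X^2$-invariant, $dF_X^2$ at a point $p=(s_3=0,\eta_3)$ is upper triangular in the chart $\pi_3$. One eigenvalue is the derivative along $\Efz$, i.e. the derivative of $\eta_3\mapsto \eta_3^2/(2-6\eta_3^2)$. The other is the normal multiplier $\partial s_3'/\partial s_3$ at $s_3=0$. By Proposition \ref{prop_E0_dynamics}, the tangential dynamics is conjugate to $z\mapsto z^2-12$. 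Its Julia set $J_0$ is a Cantor set contained in $\R$, since $12>2$, so the map is hyperbolic (uniformly expanding) on $J_0$. It therefore suffices to handle the normal direction.

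For the normal multiplier, I would compose \eqref{eq_Efz_Einf} and \eqref{eq_Einf_Efz}. To first order in $s_3$:
\[
s_4=-s_3(2\eta_3^2-1), \qquad \eta_4=-\frac{\eta_3^2}{2\eta_3^2-1},
\]
and then $s_3'=-s_4(4\eta_4-4)+O(s_4^2)=4s_4(1-\eta_4)+O(s_4^2)$. This gives the normal multiplier
\[
m(\eta_3)=-4(2\eta_3^2-1)\,(1-\eta_4)=-4(3\eta_3^2-1).
\]
In the coordinate $z=2/\eta_3$, this reads $m=-4(12-z^2)/z^2=4(z^2-12)/z^2$. The Lyapunov exponent in the normal direction is then
\[
\chi_n=\int\log|m|\,d\mu_{\Efz}=\log 4+\int\log|z^2-12|\,d\mu-2\int\log|z|\,d\mu .
\]
I would evaluate this using potential theory for $P(z)=z^2-12$. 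The measure $\mu=\mu_{\Efz}$ is the equilibrium measure of $J_0$, with Green function $g$ and capacity $1$ (the polynomial is monic). Hence $\int\log|z-a|\,d\mu=g(a)$. Invariance gives $\int\log|P(z)|\,d\mu=\int\log|w|\,d\mu(w)$, so the two middle terms combine to $-\int\log|z|\,d\mu=-g(0)$.

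Since $0$ is the critical point and $P(0)=-12$ escapes, $g(0)=\tfrac12 g(-12)>0$. Thus $\chi_n=\log 4-\tfrac12 g(-12)$. Here $g(-12)\approx\log 12+\tfrac12\log(132/144)+\dots$, which is approximately $\log 12$, giving $\chi_n\approx\log 4-\tfrac12\log 12=\log(4/\sqrt{12})>0$. The tangential exponent is $\log 2+\sum_c g(c)=\log2+g(0)>0$. Both exponents are positive, and a.e. point is repelling in the Lyapunov (Pesin) sense. If a pointwise statement is wanted instead, I would instead show $|m|>1$ on $J_0$, or $|m(z)m(P(z))|>1$ via uniform bounds $J_0\subset[-\beta,\beta]$ with $\beta=(1+\sqrt{49})/2=4$ and $\min_{J_0}|z|\ge\sqrt{12-4}=\sqrt8$. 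On $J_0$ we have $|z^2-12|\le4$ and $|z|^2\ge8$, so $|m|\le 2$; the lower bound would need checking.

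The main obstacle is precisely this normal direction: controlling $|m|$ along orbits, possibly near the indeterminacy point $i_0\in\Efz$ (which is not in $J_0$, as it corresponds to an escaping point). I would first try the exact integral formula, doing the numerical evaluation of $g(-12)=\lim 2^{-n}\log|P^n(-12)|$ carefully to certify $\chi_n>0$. I would then use the hyperbolicity of $J_0$ to upgrade positive exponents to repelling behaviour at $\mu$-a.e. point.
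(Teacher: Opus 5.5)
Your argument is correct and is essentially the paper's. Both routes obtain the tangential exponent $\log 2+g(0)$ and the transverse exponent $\log 4-g(0)$: the paper subtracts the Manning--Przytycki exponent from the constant Jacobian $8$ in the chart $(y,(x-2)/y)$, whose normal entry $2/l$ differs from your $m$ only by the coboundary $\eta_3'/\eta_3$, while you integrate $\log|m|$ directly using the capacity-one identity.

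The bound $g(0)<\log 4$ needs no careful numerics. Along the real critical orbit $-12,132,\dots$ every $z_n^2>6$, so $2^{-n}\log|z_n|$ is strictly decreasing and $g(0)<\tfrac12\log 12$, as in the paper. Drop your pointwise fallback: it cannot work, because at the $\beta$-fixed point $z=4$ (the direction of $\Lf$, which $F^2$ fixes pointwise) the normal multiplier is exactly $1$.
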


\begin{proof}
The maps $F_X^2$ admits only one indeterminacy on $\Efz$, which is the point with coordinates $s_3 = \eta_3 =0$. Its preimages by $F_X^2$ form a countable union of points with zero measure. Let us denote by $\lambda_1, \lambda_2$ the Lyapounov exponents at typical points for the measure $\mu_{\Efz}$. Since $\Efz$ is invariant, we can assume that $\lambda_1$ is the Lyapounov exponent  associated with the direction tangent to $\Efz$ and $\lambda_2$ the transversal one. 
An explicit computation shows that the differential of $F_X^2$  on $\Efz$ in the coordinates $(e,l)= (y , (x-2)/y)$ is of the form: 
\begin{equation*}
D_{(0,\eta_3)} F_X^2 = \left(
\begin{array}{cc}
 \frac{2}{l} & 0 \\
 -\frac{(l-2)^2 (l+2)^2}{2 l} & 4 l \\
\end{array}
\right).
\end{equation*}
In particular, the jacobian of $F_X^2$ at all points $(e=0, l)$ is contant equal to $8$. We get that $\lambda_1 + \lambda_2 = \log 8$. 
Let $G_{\Efz}(z)$ denote the Green function of $J_{\Efz}$ with poles at $z = \infty$. Note that here the linear conjugation imposes $l/2 =z$ to get the conjugation with $p(z) = z^2 -12$.
 The formula of Manning-Przyticki  says that: 
\begin{equation*}
\lambda_1(\mu_{\Efz}) = \log 2 + G_{\Efz}(0),
\end{equation*}
where we are evaluating the Green function at the critical point.
Let us now estimate the Green function at the critical point.
Recall that if $z_n = p^n(z)$, then 
\begin{equation*}
G_{\Efz}(z)= \lim_{n\rightarrow +\infty} \dfrac{1}{2^n} |z_n| = \lim_{n\rightarrow +\infty }G_n(z),
\end{equation*} 
where $G_n(z) = \dfrac{1}{2^n} \log |z_n| $. 
If $z $ is real, then 
\begin{equation*}
G_{n+1} (z) = \dfrac{1}{2^n} \log |z_{n+1}|^{1/2} = \dfrac{1}{2^n} \log \left ( |z_n| \left | 1 - \dfrac{12}{z_n^2} \right  |^{1/2} \right ) < G_n(z).
\end{equation*}
Thus we find that $$G_{\Efz}(0) < G_1(0) = \dfrac{1}{2} \log |p(0)| = \log(\sqrt{12}). $$
This means that $$\lambda_2 = \log 8 - \lambda_1 > \log 8 - \log(\sqrt{12})>0,$$
hence both Lyapounov exponents are positive and $\mu_{\Efz}$ is a repelling type measure.
\end{proof}
}

\subsection{Some local volume estimates}

In this section, we study the local properties of the map $F$ near each point under iteration. At the critical loci, the map $F$ collapses certain regions, one needs to control the frequency of this phenomenon and that the volume of a neighborhood decays under iteration faster that $\dyndeg(F)^{-n}$.

To any point $z \in X$, we associate the \textit{volume exponent} $\sigma(z,F_X)$ as the smallest positive number such that for any $\gamma > \sigma(z,F)$, there exists a constant $K>0$ and a neighborhood $N$ of $z$ such that for any measurable set $Y \subset X$,  
\begin{equation}
\vol(F_X^{-1}(Y)\cap N) \leqslant K \vol(Y)^{1/\gamma}. 
\end{equation}

The goal of this section is to show that $\sigma(z,F_X^n)$ is bounded above by a constant $\alpha< \dyndeg$.
The following two lemmas are used to estimate this quantity.

\begin{lem} \label{lem_jac_exponent}(\cite[Lemma 5.5]{bleher_lyubich_roeder_2}) Let $U,V$ be two Zariski open subsets of $X$ on which $F_X$ restricts to a holomorphic $d_{top}$-to-one map $F\colon U \to V$. If $\Jac(F_X) \sim x^a y^b$ in suitable local coordinates $(x,y)$ centered at $z\in U$, then: 
\begin{equation*}
 \sigma(z, F_X) \leqslant \max(a,b) +1. 
 \end{equation*} 
\end{lem}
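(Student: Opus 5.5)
The plan is to reduce the statement to a change of variables on the complement of the critical set. After that, only a one-dimensional integral bound in adapted coordinates remains.

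\textbf{Localization and change of variables.} Since $F_X\colon U\to V$ is holomorphic and $d_{top}$-to-one, shrink to a small polydisc neighborhood $N$ of $z$ on which the coordinates $(x,y)$ are defined. There we have $|\Jac(F_X)|^2 \asymp |x|^{2a}|y|^{2b}$, the real Jacobian of a holomorphic map being $|\Jac|^2$. For a measurable $Y$, set $A := F_X^{-1}(Y)\cap N$. Because each point of $V$ has at most $d_{top}$ preimages in $N$, the change of variables formula gives
$$\int_A |x|^{2a}|y|^{2b}\, d\vol \;\lesssim\; \int_A |\Jac F_X|^2\, d\vol \;\le\; d_{top}\,\vol(Y).$$
It therefore suffices to show the following: for every measurable $A\subset N$ and every $\gamma>\max(a,b)+1$,
$$\vol(A)\le K\Big(\int_A |x|^{2a}|y|^{2b}\Big)^{1/\gamma}.$$

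\textbf{The key estimate.} Assume without loss of generality $a\ge b$, and put $w=|x|^{2a}|y|^{2b}$ and $m=\max(a,b)$. For a threshold $t>0$, split $A$ into two pieces.

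The first piece is $A\cap\{w\ge t\}$. By Chebyshev's inequality its volume is at most $t^{-1}\int_A w$.

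The second piece is $A\cap\{w<t\}$. Its volume is at most $\vol\{|x|^{a}|y|^{b}<t^{1/2}\}\cap N$, which must be computed in polar coordinates in each variable. Write $r=|x|$ and $s=|y|$, so the volume form is $\asymp r\,s\,dr\,ds$. For $b=0$ the sublevel set has volume $\asymp t^{1/a}$. In general the volume is $\lesssim t^{1/m}|\log t|$: the estimate comes from integrating $\min(1,\,t^{1/a}s^{-2b/a})$ against $s\,ds$, and the logarithm appears only when $a=b$. Hence this piece has volume $\lesssim t^{1/m-\epsilon}$ for any $\epsilon>0$.

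Optimizing, choose $t=(\int_A w)^{\theta}$ so that the two terms balance. This gives $\vol(A)\lesssim (\int_A w)^{1/(m+1)-\epsilon'}$. Note that $\int_A w\le \vol(Y)$ is small when $Y$ is small, and is bounded in general, so only small values matter. This yields the bound for every $\gamma>m+1$, i.e. $\sigma(z,F_X)\le \max(a,b)+1$.

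\textbf{Main obstacle.} The delicate step is the sublevel-set volume estimate for the monomial $|x|^a|y|^b$, in particular the logarithmic loss when $a=b$. This loss is precisely why the statement only claims the bound for exponents $\gamma$ strictly above $\max(a,b)+1$, which matches the definition of $\sigma$ as an infimum. A secondary point is the use of $d_{top}$-to-one on the Zariski open set $U$. The critical and indeterminate curves have measure zero, so they can be discarded without affecting the volume computations.
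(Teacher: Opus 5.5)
Your proof is correct. The paper itself gives no proof of this lemma; it only quotes \cite[Lemma 5.5]{bleher_lyubich_roeder_2}. Your first step is the natural starting point: the area formula on a polydisc $N\subset U$, with fibres of cardinality at most $d_{top}$, gives $\int_A|\Jac F_X|^2\lesssim d_{top}\vol(Y)$. From there, the shortest finish is H\"older's inequality with exponents $\gamma$ and $\gamma/(\gamma-1)$:
\[
\vol(A)=\int_A |\Jac F_X|^{2/\gamma}\,|\Jac F_X|^{-2/\gamma}\le\Big(\int_A|\Jac F_X|^2\Big)^{1/\gamma}\Big(\int_N |x|^{-2a/(\gamma-1)}|y|^{-2b/(\gamma-1)}\Big)^{(\gamma-1)/\gamma}.
\]
The last integral is finite exactly when $\gamma>\max(a,b)+1$, so the threshold appears directly as a local integrability condition.

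Your Chebyshev/sublevel-set splitting with an optimized threshold $t$ encodes the same integrability fact, through the distribution function of $|x|^{2a}|y|^{2b}$. It costs a polar-coordinate computation and a balancing step. In return it gives slightly more: the endpoint bound $\vol(A)\lesssim\vol(Y)^{1/(\max(a,b)+1)}$ when $a\neq b$, and only a logarithmic loss when $a=b$. H\"older's inequality can never reach that endpoint.

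Two small remarks.
\begin{itemize}
\item The case $a=b=0$, where $t^{1/m}$ is meaningless, should be dismissed separately. There $w$ is bounded below on $N$, so $\{w<t\}\cap N=\emptyset$ for small $t$ and $\vol(A)\lesssim\vol(Y)$ directly.
\item The strict inequality $\gamma>\max(a,b)+1$ is built into the definition of $\sigma$ as an infimum. It is not forced by the logarithmic loss when $a=b$, as your last paragraph suggests.
\end{itemize}
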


\begin{lem} \label{lem_order_volume} Let $U,V$ be two Zariski open subsets of $X$ on which $F_X$ restricts to a holomorphic $d_{top}$-to-one map $F\colon U \to V$. If $\Jac(F_X)$ vanishes at order $k$ at $z$, then $\sigma(z,F_X) \leqslant k+1$.
\end{lem}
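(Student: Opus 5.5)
The plan is to reduce Lemma~\ref{lem_order_volume} to Lemma~\ref{lem_jac_exponent} by controlling the local Jacobian by a monomial, or else to reprove the estimate directly. The underlying bound is \'Lojasiewicz-type: if $\Jac(F_X)$ vanishes to order $k$ at $z$, then near $z$ we have $|\Jac(F_X)(w)| \gtrsim \mathrm{dist}(w, Z)^{k}$ on most of a neighborhood. Here $Z$ is the zero set of the Jacobian, a curve through $z$. Rather than relying on this, I would work with sublevel sets of $|\Jac|$.

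\textbf{Step 1.} Choose local coordinates $(x,y)$ centered at $z$ such that, after a generic linear change, the lowest-order homogeneous part of $J:=\Jac(F_X)$ has degree $k$ and does not vanish on the direction of the $y$-axis. By Weierstrass preparation, $J = u\cdot P$, where $u$ is a unit and $P$ is a Weierstrass polynomial in $y$ of degree $k$ with coefficients holomorphic in $x$. For each fixed $x$, factor $P(x,y)=\prod_{i=1}^{k}(y-y_i(x))$. The one-variable sublevel estimate then gives, for each $x$,
\[
\mathrm{Leb}\{y : |P(x,y)|<\epsilon\} \le C\,\epsilon^{2/k}.
\]
Integrating in $x$ yields $\vol\{w\in N : |J(w)|<\epsilon\} \le C\epsilon^{2/k}$.

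\textbf{Step 2.} Fix a measurable $Y$ and a threshold $\epsilon>0$, and split $F_X^{-1}(Y)\cap N$ into the part where $|J|\ge\epsilon$ and the part where $|J|<\epsilon$. On the first part, the change of variables for the holomorphic $d_{top}$-to-one map $U\to V$ gives
\[
\vol \le \epsilon^{-2}\int_{F_X^{-1}(Y)\cap N} |J|^2 \le d_{top}\,\epsilon^{-2}\vol(Y).
\]
Recall that the real Jacobian is $|J|^2$. The second part has volume at most $C\epsilon^{2/k}$ by Step~1. Choosing $\epsilon$ to balance the two terms, $\epsilon^{-2}\vol(Y)=\epsilon^{2/k}$, that is $\epsilon=\vol(Y)^{k/(2k+2)}$, gives
\[
\vol(F_X^{-1}(Y)\cap N) \le K\,\vol(Y)^{1/(k+1)}.
\]
Hence $\sigma(z,F_X)\le k+1$, and in fact every $\gamma\ge k+1$ works.

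\textbf{Main obstacle.} The main difficulty is the uniform sublevel-set estimate in Step~1. The roots $y_i(x)$ may coalesce and the factorization varies with $x$, but the constant has to be uniform in $x$. For a monic polynomial of degree $k$, the bound $\mathrm{Leb}\{|P|<\epsilon\}\le C_k\epsilon^{2/k}$ holds with $C_k$ independent of the coefficients. The reason is that $\{|P|<\epsilon\}$ is contained in the union of the discs of radius $\epsilon^{1/k}$ around the roots $y_i$, since at some root $|y-y_i|\le|P|^{1/k}$. This gives uniformity, and the unit $u$ only changes constants. Alternatively, one could invoke Lemma~\ref{lem_jac_exponent} directly in the points where the Jacobian is locally monomial. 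For the map at hand this is the relevant case, since by Proposition~\ref{prop_crit_FX} the critical locus consists of smooth curves $\Efzp$, $\tilde\Lc$ and $\Lz$, so that $J\sim x^ay^b$ with $a+b=k$ and $\max(a,b)+1\le k+1$.
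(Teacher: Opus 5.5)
The paper gives no proof of this lemma. Like Lemma~\ref{lem_jac_exponent}, it is stated as a tool imported from Bleher--Lyubich--Roeder, so there is nothing internal to compare against.

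Your argument is correct and complete for the stated bound.
\begin{itemize}
\item The uniform sublevel estimate $\vol\{|J|<\epsilon\}\le C\epsilon^{2/k}$ follows as you say. Weierstrass preparation together with the inclusion $\{|P(x,\cdot)|<\epsilon\}\subset\bigcup_i D(y_i(x),\epsilon^{1/k})$ gives a bound uniform in $x$, and the unit only changes constants.
\item Splitting $F_X^{-1}(Y)\cap N$ at the threshold $|J|=\epsilon$, using the area formula with real Jacobian $|J|^2$, and choosing $\epsilon=\vol(Y)^{k/(2k+2)}$ gives $\vol(F_X^{-1}(Y)\cap N)\le K\vol(Y)^{1/(k+1)}$.
\end{itemize}

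Three small points are worth making explicit.
\begin{itemize}
\item The area formula only needs almost every $w$ to have at most $d_{top}$ preimages near $z$. This holds off a proper analytic set.
\item You only use that the map is holomorphic near $z$. This is what lets your argument cover the compositions $G_X\circ\pi'$ to which the paper actually applies the lemma in Proposition~\ref{prop_volume_three_indet}.
\item For $\vol(Y)\ge 1$ the bound is trivial, since $\vol(N)<\infty$.
\end{itemize}

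The same mechanism also explains Lemma~\ref{lem_jac_exponent}. For $J=x^ay^b$ one has $\vol\{|J|<\epsilon\}\lesssim\epsilon^{2/\max(a,b)}\log(1/\epsilon)$, and the same balancing gives the exponent $\max(a,b)+1$.

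You should drop your closing \emph{alternatively} remark. It is not a proof of the general statement. Moreover, the claim that the Jacobian is locally monomial with $a+b=k$ at the relevant points does not follow from Proposition~\ref{prop_crit_FX}, for two reasons. First, the lemma is applied to iterates composed with further blow-ups, such as $G_X\circ\pi'$, whose critical loci are not just $\Efzp$, $\tilde{\Lc}$ and $\Lz$. Second, smooth critical curves need not cross normally. Your main argument does not need this remark.
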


Let us denote by $B_n$ the set: 
\begin{equation}
B_n = \{ p \in X | \sigma(p, F_X^n) \geqslant \dyndeg^n  \}.
\end{equation}

Our goal is to show that for sufficiently large $n$, $ \cap_{k\leqslant n} B_k$ is empty. 
The strategy is to stratify the critical locus according to the volume decay and  analyze the recurrence of this locus under iteration. 
One additionnal difficulty is that we have to control the behavior of the volume decay near indeterminacy points as there are some orbits that land on the indeterminacy points which are then mapped to the critical locus of $F_X$. We argue that these orbits are very special so that after some iterates, the volume exponents decays appropriately.   
Recall from Proposition \ref{prop_crit_FX} that $Crit(F_X) = \tilde{\Lc} \cup \tilde{ \Lz} \cup \Efzp$. We start by describing the behavior of $F_X$ on non-indeterminate points of the critical locus.

\begin{prop} \label{prop_vol_decay_FX} The following properties hold:
\begin{enumerate}
\item[(i)] For any point $p \in \Einf \cup \Einfp \cup \Efz \setminus (\{ i_0 \} \cup \Efz \cap \tilde{\Lz})$, one has $\sigma(F_X, p) = 1$. 
\item[(ii)] For any point $p \in \Efzp \setminus \{ i_0' \}$, one has $\sigma(F_X , p) \leqslant 2$. 
\item[(iii)] For any point $p \in \tilde{\Lc} \setminus \{ i_0 , i_\infty \}$, one has $\sigma(F_X,p) \leqslant 2$.
\item[(iv)] For any point $ p \in \tilde{\Lz}$, one has $\sigma(F_X , p) \leqslant 2$.  
\end{enumerate}
\end{prop}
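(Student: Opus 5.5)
The approach is to reduce every item to Lemmas \ref{lem_jac_exponent} and \ref{lem_order_volume}: compute the Jacobian of $F_X$ in the local charts $\pi_1,\dots,\pi_4$ of \eqref{eq_coordinate_blow_up} (or in affine coordinates away from the exceptional divisors), and read off its order of vanishing at the point $p$. Proposition \ref{prop_crit_FX} says the critical locus of $F_X$ is $\tilde{\Lc}\cup\tilde{\Lz}\cup\Efzp$. Hence at every point off this locus (and off $I(F_X)$) the Jacobian is nonvanishing. Then $F_X$ is a local biholomorphism there and $\sigma=1$ by Lemma \ref{lem_order_volume} with $k=0$; more precisely, $\vol(F_X^{-1}(Y)\cap N)\le K\vol(Y)$. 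This gives (i), provided we check that $\Einf$, $\Einfp$ and $\Efz$ meet the critical locus only at the excluded points.

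For (i), I would use the explicit formulas in the proof of Proposition \ref{prop_lift_X_F}. For instance, \eqref{eq_Efz_Einf} gives
\[
\Jac = \det\begin{pmatrix} -(2\eta_3^2-1) & -4s_3\eta_3\\ 0 & \partial_{\eta_3}\eta_4\end{pmatrix},
\qquad
\partial_{\eta_3}\eta_4 = \frac{2\eta_3}{(2\eta_3^2-1)^2},
\]
which vanishes only on $\{\eta_3=0\}$, i.e.\ on $\tilde{\Lz}\cap\Efz$. The same computation for \eqref{eq_Einf_Efz} and for the $\Einfp$ chart shows the Jacobian is a unit at points of $\Einf$ and $\Einfp$. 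The one exception is possible vanishing along strict transforms of the critical lines; those points either are excluded in the statement or are not on these divisors. So the Jacobian is nonzero on the remaining points, and $\sigma=1$.

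For (ii)–(iv), the Jacobian on $\Bbb P^2$ is $-4y(x-2z)^2$. In suitable local coordinates this means $\Jac\sim y$ at generic points of $\Lz$ (order $1$) and $\Jac\sim (x-2)^2$ at generic points of $\Lc$ (order $2$). Lemma \ref{lem_jac_exponent} with $(a,b)=(1,0)$ or $(2,0)$ would give only $\le 2$ and $\le 3$, so for $\Lc$ I need a sharper input. The key observation is that $\Lc$ is collapsed, so $F_X$ is not finite there. However, near $\tilde{\Lc}$ the proof of Proposition \ref{prop_lift_X_F} gives $\pi_1^{-1}\circ F:(x,y)\mapsto (s_1,\eta_1)=((x-2)/y^2,\,x)$. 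In the coordinates $u=x-2$, $y$ this map is $(u,y)\mapsto(u/y^2,\,u+2)$, whose Jacobian is $\sim 1/y^2$ times a unit.

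So the Jacobian of $F_X$ in these blown-up coordinates vanishes to order $0$ transversally, away from $y=0$ (the point $i_0$) and $y=\infty$ (the point $i_\infty$). Hence Lemma \ref{lem_order_volume} yields $\sigma\le 1\le 2$ on $\tilde{\Lc}\setminus\{i_0,i_\infty\}$. The order-two vanishing on $\Bbb P^2$ is absorbed by the exceptional divisor $\Efzp$; I would double-check this. On $\tilde{\Lz}$ away from the blown-up points, $\Jac\sim y$ gives $\sigma\le 2$. At the points $\tilde{\Lz}\cap\Efz$ and $\tilde{\Lz}\cap\Efzp$, I would compute the Jacobian in charts $\pi_3$ and $\pi_1$ and expect a monomial $s^a\eta^b$ with $\max(a,b)\le1$.

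For (ii), in chart $\pi_1$ near $\Efzp$, I would compute the determinant of the map $(s_1,\eta_1)\mapsto(s_2,\eta_2)$ from Proposition \ref{prop_lift_X_F}. I expect it to be $\sim s_1$ (order one along $\Efzp=\{s_1=0\}$), reflecting that $\bfzp$ is a branch point. Then Lemma \ref{lem_jac_exponent} gives $\sigma\le 2$ off $i_0'$, where the vertical direction is indeterminate.

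The main obstacle is verifying the hypothesis of the lemmas. They require $F_X$ to be a $d_{top}$-to-one holomorphic map between Zariski open sets containing $p$, and this is delicate precisely on collapsed curves such as $\tilde{\Lc}$ and $\Efzp$. I would handle it by applying the lemma to the local holomorphic map near $p$: the lemma's proof is local, using only the local Jacobian form and finiteness of fibres nearby. I would also check that the monomial normal form really holds at the intersection points, with the denominators in the charts nonzero there.
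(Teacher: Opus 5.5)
\paragraph{An error in item (iii).} Your overall strategy is the paper's: read off the vanishing order of the Jacobian of $F_X$ in blow-up charts and apply Lemma \ref{lem_jac_exponent}. Items (i), (ii) and (iv) are essentially right, modulo a chart issue noted at the end. Your determinant $-2\eta_3/(2\eta_3^2-1)$ in the $\Efz$ chart is exactly the one the paper uses at $\Efz\cap\tilde{\Lz}$.

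Item (iii), however, rests on a miscomputation. The map $(u,y)\mapsto(u/y^2,\,u+2)$ does not have unit Jacobian:
\[
\det\begin{pmatrix} 1/y^2 & -2u/y^3\\ 1 & 0\end{pmatrix}=\frac{2u}{y^3},
\]
which vanishes to order one along $\tilde{\Lc}=\{u=0\}$. This is forced. $F_X$ collapses $\tilde{\Lc}$ to the point $(s_1,\eta_1)=(0,2)\in\Efzp$, so $dF_X$ kills the tangent direction of $\tilde{\Lc}$. Your own first paragraph, via Proposition \ref{prop_crit_FX}, already places $\tilde{\Lc}$ in the critical locus. Equivalently, $\pi_1$ has Jacobian $-s_1$ and $s_1\circ F_X=u/y^2$. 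So only one of the two orders of vanishing of $-4y(x-2z)^2$ along $\Lc$ is absorbed by $\Efzp$, not both.

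\paragraph{The intermediate claim is false, and the fix.} Your claim $\sigma(F_X,p)\le 1$ on $\tilde{\Lc}\setminus\{i_0,i_\infty\}$ is actually false. Let $Y$ be a ball of radius $r$ about $(0,2)$ and $N$ a small polydisc about $p=(0,y_0)$. Then $F_X^{-1}(Y)\cap N\supset\{|u|\le cr\}\times\{|y-y_0|<\rho\}$ for some $c>0$. This set has volume $\asymp r^2\asymp\vol(Y)^{1/2}$, so in fact $\sigma=2$ there.

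The repair is immediate: feed $\Jac\sim u$ into Lemma \ref{lem_jac_exponent} with $(a,b)=(1,0)$. This gives $\sigma\le 2$, which is exactly the paper's argument for (iii).

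\paragraph{A smaller chart issue.} The chart $\pi_1$ does not contain $\tilde{\Lz}\cap\Efzp$, which sits at $\eta_1=\infty$. In that chart the Jacobian is $-2s_1/(s_1\eta_1^2-2s_1-4\eta_1)$, a unit times $s_1$ only for $\eta_1\neq 0$. To cover all of $\Efzp\setminus\{i_0'\}$ in (ii), and the point $\tilde{\Lz}\cap\Efzp$ in (iv), use the chart $(u,w)\mapsto[-2+u:wu:1]$ as the paper does. There the Jacobian is $uw$ times a unit, so still $\max(a,b)=1$.
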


\begin{proof}
For assertion (i), one observes that $F_X$ is locally invertible near $\Einf \cup \Einfp \cup \Efz \setminus(\{ i_0 \} \cup \Efz \cap \tilde{\Lz})$, hence $\sigma (F_X, p) = 1$ for each point $p \in \Einf \cup \Einfp \cup \Efz \setminus (\{ i_0 \} \cup \Efz \cap \tilde{\Lz})$ by Lemma \ref{lem_jac_exponent}. 
\smallskip

Let us prove assertion (ii). Take  a point $p \in \Efzp$ which is not indeterminate for $F_X$. Observe that the exceptional divisor $\Efzp$ gets collapsed to the point of intersection  of $\Einfp$ with the strict transform of $\Linf$.
 Consider some coordinates $(u,v) \mapsto [-2 + u: (l_0 +v)u : 1]$ on $\Efzp$ near $p$ where $l_0 \in \C$, the Jacobian of $F_X$ is of the form: 
 \begin{equation*}
  -\frac{2 u ({l_0}+v)}{2 {l_0}^2 u+4 {l_0} u v+2 u v^2-u+4} = u (l_0 +v) (-1/2 + o(1)).
 \end{equation*}
 By Lemma \ref{lem_jac_exponent}, this shows that $\sigma(F_X, p) \leqslant 2$ for all $p \in \Efzp\setminus \{ i_0' \}$, as required.
 \smallskip
 
 Let us prove the third assertion. Take $p = (2 , y_0)$ a point on $\tilde{\Lc} \setminus \{ i_0 , i_\infty \}$ with $y_0 \in \C^*$. Recall that $\tilde{\Lc}$ is collapsed to the intersection of the strict transform of $\Li$ with $\Efzp$. Using the coordinates $(s_1, \eta_1)$ on $\Efzp$ and the coordinates $(u,v) \mapsto (2 + u , y_0 +v)$ near $p$, the Jacobian of $F_X$ is then of the form: 
 \begin{equation*}
 -\frac{2 u}{(u+2) (v+{y_0})^3} =    u ( -1 + o(1)).
 \end{equation*}
 By Lemma \ref{lem_jac_exponent}, this gives $\sigma(F_X,p) \leqslant 2$, as required. 
 
 \smallskip 
 
 We prove the last assertion. Take $p = (x , 0) $ with $x \neq \pm 2$ so that $p \neq \bfz, \bfzp$ and fix some local coordinates $(u,v) \mapsto ( x + u , v)$ near $p$. Since $F_X$ swaps the line $\Lz$ and $\Linf$ and since $F(p) \neq \binf, \binfp$, we can compute the Jacobian of $F_X$ using the standard coordinates, it factors into: 
 \begin{equation*}
 \frac{2 v}{u+x-2} = v \left ( \dfrac{2}{x-2} + o(1) \right ).
 \end{equation*}
By Lemma \ref{lem_jac_exponent}, we have $\sigma(F_X,p) \leqslant 2$, so the assertion holds for all point of $\tilde{\Lz} \setminus (\Efz \cup \Efzp)$. Moreover, by assertion (ii), it is also true for $\tilde{\Lz} \cap \Efzp$. 
We thus need to find the bound at the point $\Efz \cap \tilde{\Lz}$. 
Using the coordinate $(s_3, \eta_3) , (s_4, \eta_4) $ near $\Efz$ and $\Einf$ respectively, we obtain that the Jacobian of $F_X$ at $\Efz \cap \tilde{\Lz}$ is of the form: 
\begin{equation*}
-\frac{2 \eta_3}{2 \eta_3^2-1} =  \eta_3( 2 + o(1)).
\end{equation*}
By Lemma \ref{lem_jac_exponent}, the volume exponent is also smaller than $2$, as required. 
\end{proof}

Set $G_X = F_X \circ F_X$. 
The above proposition shows that a priori $B_1$ is the entire critical locus of $F_X$. However, the bound on the volume exponent for $F_X$ yields some restrictions on $B_2$.

\begin{prop} \label{prop_vol_decay_GX} The following properties hold:
\begin{enumerate}
\item[(i)] For any point $p \in \Efzp \setminus \{ i_0' \}$, one has $\sigma(G_X , p) \leqslant 2$.
\item[(ii)] For any point $p\in \tilde{\Lc} \setminus \{ i_0 , i_\infty \}$, one has $\sigma(G_X, p) \leqslant 3$.
\item[(iii)] For any point $p \in \tilde{\Lz} \setminus F_X^{-1}\{ i_\infty \}$, one has $\sigma(G_X, p) \leqslant 2$.
\end{enumerate}
In particular, the inclusion  $B_1 \cap B_2 \subset \{ i_0, i_0', i_\infty \} \cup F_X^{-1}\{ i_\infty \}= I(F_X)\cup F_X^{-1}\{ i_\infty \}$ holds. 
\end{prop}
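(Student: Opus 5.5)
The plan is to use the submultiplicativity of volume exponents under composition, combined with Proposition \ref{prop_vol_decay_FX} applied at $p$ and at $F_X(p)$. If $F_X$ is holomorphic near $p$ and $\vol(F_X^{-1}(Y)\cap N)\le K\vol(Y)^{1/\gamma}$ holds near $p$, with the analogous bound with exponent $\gamma'$ near $q=F_X(p)$, then composing the two estimates gives
\begin{equation*}
\sigma(p,G_X)\le \sigma(p,F_X)\,\sigma(F_X(p),F_X).
\end{equation*}
This is legitimate because $G_X^{-1}(Y)\cap N_p\subset F_X^{-1}(F_X^{-1}(Y)\cap N_q)\cap N_p$ once $N_p$ is small enough that $F_X(N_p)\subset N_q$. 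It is sharper, however, to compute the order of vanishing of $\Jac(G_X)=\Jac(F_X)\circ F_X\cdot \Jac(F_X)$ directly in local coordinates and apply Lemma \ref{lem_jac_exponent}. I will use the product bound only where it suffices, and the direct Jacobian computation otherwise.

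\textbf{Case (i).} Let $p\in\Efzp\setminus\{i_0'\}$. By the proof of Proposition \ref{prop_lift_X_F}, $F_X(p)$ is the point $(s_2=0,\eta_2=0)\in\Einfp$. By Proposition \ref{prop_vol_decay_FX}(i), $F_X$ is locally invertible at that point, so $\sigma(F_X(p),F_X)=1$. Combined with $\sigma(p,F_X)\le 2$ from Proposition \ref{prop_vol_decay_FX}(ii), this gives $\sigma(p,G_X)\le 2$.

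\textbf{Case (ii).} Let $p\in\tilde{\Lc}\setminus\{i_0,i_\infty\}$. Then $F_X(p)=(s_1=0,\eta_1=2)\in\Efzp$, which is not $i_0'$, since $i_0'$ corresponds to the vertical direction (that is, $\eta_1=\infty$ in this chart). So the product bound gives $\sigma(p,G_X)\le 2\cdot 2=4$, which is too weak. Instead I will compute $\Jac(G_X)$ in the coordinates $(u,v)\mapsto(2+u,y_0+v)$. The factor $\Jac(F_X)$ contributes $u$. The first component of $F_X$ in the $(s_1,\eta_1)$ chart is $s_1=u/(y_0+v)^2$, and $\Jac(F_X)$ at a point of $\Efzp$ vanishes to first order in $s_1$, times the nonvanishing factor $(\eta_1 -1/2)$ near $\eta_1=2$. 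Hence $\Jac(G_X)\sim u^2$ times a unit, and Lemma \ref{lem_jac_exponent} gives $\sigma(p,G_X)\le 3$.

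\textbf{Case (iii).} Let $p\in\tilde{\Lz}\setminus F_X^{-1}\{i_\infty\}$. Then $F_X(p)$ lies on $\tilde{\Linf}$, and it is not $i_\infty$. I will check that $\tilde{\Linf}\setminus\{i_\infty\}$ is disjoint from the critical locus given in Proposition \ref{prop_crit_FX}. The only possible meeting points are $\Einf,\Einfp$ and the strict transform of $\Lc$, and the latter meets $\Linf$ exactly at $\Oinf$, which blows up to $i_\infty$. Thus $F_X$ is locally invertible at $F_X(p)$, and $\sigma(p,G_X)\le\sigma(p,F_X)\le 2$ by Proposition \ref{prop_vol_decay_FX}(iv).

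\textbf{The inclusion.} Take $B_1$ as the critical locus together with the indeterminacy points. Since $\dyndeg^2\approx 3.24>3$, parts (i)--(iii) exclude every point of $\Efzp\cup\tilde{\Lc}\cup\tilde{\Lz}$ from $B_2$, except points of $I(F_X)\cup F_X^{-1}\{i_\infty\}$. This gives $B_1\cap B_2\subset I(F_X)\cup F_X^{-1}\{i_\infty\}$.

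\textbf{Main obstacle.} The hardest step is the explicit Jacobian computation in (ii), because the crude product bound fails there. One must verify that the unit factors really do not vanish along the image point $\eta_1=2$; otherwise the bound degrades.
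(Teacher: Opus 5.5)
Your arguments for (i), (ii) and the final inclusion follow the paper's route. For (i) you use the product bound $\sigma(p,G_X)\le\sigma(p,F_X)\,\sigma(F_X(p),F_X)$. For (ii) you show $\Jac(G_X)=u^2\cdot(\text{unit})$ and apply Lemma \ref{lem_jac_exponent}. Organizing (ii) through the chain rule is fine: carried out, it gives $u^2/(2y_0^5)$ to leading order, which matches the paper's explicit formula. Two small slips there do not affect the conclusion:
\begin{itemize}
\item In the chart $\pi_1$ one has $\eta_1=(x+2)/y$, so $i_0'$ sits at $\eta_1=0$, not $\eta_1=\infty$. It is still $\neq 2$, so $F_X(p)\neq i_0'$.
\item To leading order, the unit factor of $\Jac(F_X)$ near $(s_1,\eta_1)=(0,2)$ is $1/(2\eta_1)$, not $(\eta_1-1/2)$.
\end{itemize}

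The genuine gap is in (iii). You claim that $\tilde{\Linf}\setminus\{i_\infty\}$ avoids the critical locus, and this is false. The curves $\tilde{\Lz}$ and $\tilde{\Linf}$ meet at $\pinf=[1:0:0]$. This point lies on the critical line $\{y=0\}$ (recall $\Jac(F)=-4y(x-2z)^2$) and is a superattracting fixed point. So $p=\pinf$ belongs to $\tilde{\Lz}\setminus F_X^{-1}\{i_\infty\}$, and $F_X(p)=\pinf$ is not a point of local invertibility. There the product bound gives only $\sigma(\pinf,G_X)\le 2\cdot 2=4>\dyndeg^2$. As written, (iii) is therefore unproved at $\pinf$, and $\pinf$ is not excluded from $B_2$.

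The fix is the same device you used in (ii). In the chart $[1:y:z]$,
\[
F(y,z)=\Bigl(\frac{z(1-2z)}{1-2z-2y^2},\ \frac{y^2}{1-2z-2y^2}\Bigr),
\]
so $\Jac(F)=y\cdot(\text{unit})$. The first component of $F$ is $z\cdot(\text{unit})$, hence $\Jac(G_X)=\Jac(F)\circ F\cdot\Jac(F)=yz\cdot(\text{unit})$. Lemma \ref{lem_jac_exponent} with $a=b=1$ then gives $\sigma(\pinf,G_X)\le 2<\dyndeg^2$.

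The paper's proof asserts the same local invertibility of $F_X$ on $\tilde{\Linf}\setminus\{i_\infty\}$ and shares this oversight. The statement itself survives once $\pinf$ is handled separately.
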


\begin{proof}
Assuming that the three assertions hold, this implies that the complement of $B_2$ contains $$\Efzp \cup  \tilde{\Lz} \cup \tilde{\Lc} \setminus ( \{ i_0', i_0, i_\infty\} \cup F_X^{-1}\{ i_\infty\}) = Crit(F_X) \setminus ( \{ i_0', i_0, i_\infty\} \cup F_X^{-1}\{ i_\infty\}).$$
Since $B_1 $ is contained in the critical locus of $F_X$, the inclusion follows.

 Since $F(\Efzp \setminus \{ i_0' \}) \subset \Einfp$ and as $\sigma(G_X , p) \leqslant \sigma(F_X, p) \sigma(F_X, F_X(p))$ for $p\in X \setminus I(F_X)$, 
we deduce assertion (i)  using the bounds given by (i) and (ii) of Proposition \ref{prop_vol_decay_FX}.
Similarly, assertion (iii) follows from the fact that $F_X (\tilde{\Lz} \setminus F_X^{-1}\{i_\infty \}) \subset \tilde{\Linf} \setminus \{ i_\infty \}$, from the local invertibility of $F_X$ on the latter set as well as item (iv) of Proposition \ref{prop_vol_decay_FX}. 
\smallskip

Let us prove assertion (ii).  Let us choose a point $p = (2 , y_0)$ on $\Lc$ where $y_0 \neq 0$. We choose a local chart $(u,v) \mapsto (2 + u , v + y_0)$, $G_X$ maps to a point on $\Einfp$ and we choose the local chart $(s_2,\eta_2)$ near $\Einfp$ and the second iterate is of the form $(u,v) \mapsto (u',v')$ where: 
\begin{align*}
s_2= & \frac{u \left(-u^3-4 u^2+4 u v^2+8 u v y_0+4 u y_0^2-2 u+8 v^2+16 v y_0+8 y_0^2\right)}{(v+y_0)^2 \left(-u^2-2 u+4 v^2+8 v y_0+4 y_0^2\right)},\\
 \eta_2= &\frac{u}{u^3+4 u^2-4 u v^2-8 u v y_0-4 u y_0^2+2 u-8 v^2-16 v y_0-8 y_0^2}.
\end{align*} 
And the Jacobian is given by: 
\begin{multline*}
-\frac{4 u^2}{(v+y_0)^3 \left(u^3+4 u^2-4 u v^2-8 u v y_0-4 u y_0^2+2 u-8 v^2-16 v y_0-8 y_0^2\right)} =\\
 -4u^2 \left ( - \dfrac{1}{8 y_0^5} + o(1)  \right ).
\end{multline*}
This shows that $\sigma(p,G) \leqslant 3$.
\end{proof}

Let us denote by $\tilde{\bbbO} = F_X^{-1}\{i_\infty\} = \pi^{-1}(\bbbO)$.
By Proposition \ref{prop_vol_decay_GX}, we are then reduced to study the properties of the volume under iteration near the four points $i_0, i_0', i_\infty , \tilde{\bbbO}$.  
We find an upper bound for $\sigma(p, G_X)$ at three points.

\begin{prop} \label{prop_volume_three_indet} The following assertions hold.
\begin{enumerate}
\item[(i)] One has $\sigma(\tilde{\bbbO}, G_X) \leqslant 3$. 
\item[(ii)] One has $\sigma(i_\infty, G_X) \leqslant 4$.
\item[(iii)] One has $\sigma(i_0', G_X) \leqslant 2$. 
\end{enumerate}

\end{prop}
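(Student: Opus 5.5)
The plan follows the pattern of Propositions \ref{prop_vol_decay_FX} and \ref{prop_vol_decay_GX}. At each of the three points, on a Zariski open set where $G_X$ is holomorphic and $d_{top}$-to-one, we bound the order of vanishing of the Jacobian, or its monomial type, and then apply Lemma \ref{lem_jac_exponent} or Lemma \ref{lem_order_volume}. We may also use the submultiplicativity $\sigma(p,G_X)\leqslant \sigma(p,F_X)\,\sigma(F_X(p),F_X)$, where it makes sense. At an indeterminacy point there is no image point, so instead we work with a local resolution: a blow-up $\hat\pi$ over the point, on which $G_X\circ\hat\pi$ is holomorphic. We then bound the Jacobian of $G_X\circ\hat\pi$, and hence the volume exponent, in the charts that cover the fibre $\hat\pi^{-1}(p)$.

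\textbf{(i) The point $\tilde{\bbbO}$.} $F$ is regular at $\bbbO=(0,0)$ and maps it to $\Oinf$, which is indeterminate for $F_X$. Near $\tilde{\bbbO}$, however, the composite $G=F^2$ can be computed directly from \eqref{eq_G}. Since $G(0,0)=(-2,0)=\bfzp$, the lift $G_X$ is holomorphic near $\tilde{\bbbO}$, with values in the chart $(s_1,\eta_1)$ near $\Efzp$. I would write $G_X=\pi_1^{-1}\circ G$ in the coordinates $(x,y)$ and compute its Jacobian. The expected form is $y^2\cdot(\text{unit})$: the factor $y$ comes from $\Lz$ being critical for $F$, and the further factor $y$ comes from $\Lz$ mapping into $\Linf$ near $\Oinf$, where the blow-up coordinate degenerates. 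Lemma \ref{lem_jac_exponent} then gives $\sigma\leqslant 3$.

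\textbf{(ii) The point $i_\infty$.} $F_X$ blows $i_\infty$ up to $\Efzp$. Here I would use the chart $(u,v)\mapsto[u:1:v]$ near $\Oinf$ and blow up once, with charts $(u,v)=(t,ts)$ and $(u,v)=(t's',t')$. In these charts $F_X$ becomes regular, landing in $\Efzp$ through $\pi_1$, and composing with the formulas near $\Efzp$ from Proposition \ref{prop_lift_X_F} gives $G_X$ landing near $\Einfp$. The Jacobian should be a monomial in $(t,s)$ of order at most $3$. After accounting for the one-point blow-up, which contributes a factor $t$ to the Jacobian, we get $\sigma\leqslant 4$. If some point of the exceptional curve turns out to be indeterminate for the composite, I would blow up once more there and repeat.

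\textbf{(iii) The point $i_0'$.} $F_X$ blows $i_0'$ up to $\Einfp$, which is then mapped regularly and invertibly to $\Efz$ (Proposition \ref{prop_vol_decay_FX}(i)). The same local resolution in the coordinates $(s_1,\eta_1)$, now with the vertical direction corresponding to $\eta_1=\infty$, should give a Jacobian that vanishes only to order one along the relevant curve. This yields $\sigma\leqslant 2$.

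The main obstacle is (ii). The point $i_\infty$ is indeterminate, its image passes next to the indeterminacy point $i_0'$, and obtaining the exact monomial form of the Jacobian of the composite requires careful chart bookkeeping.
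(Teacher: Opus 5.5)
\textbf{There is a genuine gap in (i).} The premise that $G_X$ is holomorphic at $\tilde{\bbbO}$ is false. $G$ is holomorphic at $\bbbO$ as a map into $\bP^2$, but $G(\bbbO)=\bfzp$ is a centre of the blow-up, and $\tilde{\bbbO}=F_X^{-1}(i_\infty)$ lies over the indeterminacy point $i_\infty$ of $F_X$. Concretely, from \eqref{eq_G}, writing $(x',y')=G(x,y)$,
\[
x'+2=\frac{(x^2-2x-2y^2)(x^2-2x-4y^2)}{(x-2)^2},\qquad y'=\frac{y^2(x^2-2x-4y^2)}{(x-2)^2},
\]
so $y'/(x'+2)=y^2/(x^2-2x-2y^2)$. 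This ratio tends to $0$ along $\Lz$, to $-1/2$ along $\{x=0\}$, and to $\infty$ along $\{x^2-2x=2y^2\}$. In your chart $(s_1,\eta_1)$ one gets $\eta_1\circ G=(x^2-2x-2y^2)/y^2$, which has a pole along $\Lz$. So $G_X$ is indeterminate at $\tilde{\bbbO}$, and Lemma \ref{lem_jac_exponent} cannot be applied there in the coordinates $(x,y)$. The predicted Jacobian is also wrong. For $G$ as a map into $\bP^2$ one has $\Jac(G)=4y(x^2-2x-4y^2)^2/(x-2)^3$. The extra vanishing is the square of the equation of the critical curve $F^{-1}(\Lc)=\{x^2-2x-4y^2=0\}$, which passes through the origin; it is not a second factor of $y$.

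\textbf{How to repair (i), and remarks on (ii)--(iii).} The paper treats (i) the way you treat (ii) and (iii): it blows up $\tilde{\bbbO}$. In the chart $x=u$, $y=(l_0+v)u$, with target chart $(x+2,\,y/(x+2))$ near $\Efzp$, the composite is holomorphic with Jacobian $4u^2(l_0+v)(-\tfrac12+o(1))$, which gives $\sigma\leqslant 3$; the vertical direction is then handled in a second chart.

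Alternatively, your shortcut can be rescued by not lifting the target. Near $\tilde{\bbbO}$ one has $G_X^{-1}(Y)\subset G^{-1}(\pi(Y))$. Since $\pi$ is holomorphic, $\vol(\pi(Y))\leqslant C\vol(Y)$, hence $\sigma(\tilde{\bbbO},G_X)\leqslant\sigma(\bbbO,G)$. In the coordinates $(x^2-2x-4y^2,\,y)$ the Jacobian above is a unit times $(x^2-2x-4y^2)^2y$, and Lemma \ref{lem_jac_exponent} gives $3$. The same comparison settles (ii) at once: $G$ is holomorphic at $\Oinf$, and in the chart $[u:1:v]$ its Jacobian is a unit times $v(u-2v)^3$, giving $4$.

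For (ii) and (iii) your route is the paper's, with two slips:
\begin{enumerate}
\item[$\bullet$] In the chart $\pi_1$, $i_0'$ is $(s_1,\eta_1)=(0,0)$, since $\eta_1=(x+2)/y$.
\item[$\bullet$] No separate blow-up factor should be added. One has $\sigma(p,G_X)\leqslant\sup_{q\in E}\sigma(q,G_X\circ\pi')$, and in the chart $[t:1:ts]$ the Jacobian of $G_X\circ\pi'$ is already $\approx -t^3s(1-2s)^2$, which gives $4$ directly.
\end{enumerate}
Your caution about a leftover indeterminate point in (ii) is justified. The direction of $\{x=0\}$ at $\Oinf$ is sent by $F_X\circ\pi'$ to $i_0'$. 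At that point $q$ one can bound $\sigma\leqslant\sigma(q,F_X\circ\pi')\,\sigma(i_0',F_X)\leqslant 2\cdot 2$, using the first step of (iii) for the second factor.
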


\begin{proof}
Let us prover the first assertion. 
Consider $\pi' \colon X' \to X$ the blow-up of $X$ at $\tilde{\bbbO}$ with exceptional divisor $E$. 
One has: 
\begin{equation*}
\sigma(\tilde{\bbbO} , G_X) \leqslant \sup_{p\in E} \sigma(p, G_X\circ \pi'). 
\end{equation*}
We choose some local coordinates $(u,v) \mapsto [u: (v+ l_0) u:1]$ near a point $p\in E$ where $l_0\in\C$. The map $G \circ \pi'^{-1}$ is given by  $(u , v) \mapsto (u',v')$ where
\begin{align*}
u'=& \frac{u^2 \left(2 l_0^2 u+4 l_0 u v+2 u v^2-u+2\right) \left(4 l_0^2 u+8 l_0 u v+4 u v^2-u+2\right)}{(u-2)^2}\\
v' =& -\frac{u (l_0+v)^2}{2 l_0^2 u+4 l_0 u v+2 u v^2-u+2}
\end{align*}

 and the Jacobian of the form: 
 \begin{equation*}
 \frac{4 u^2 (l_0+v) \left(4 l_0^2 u+8 l_0 u v+4 u v^2-u+2\right)}{(u-2) \left(2 l_0^2 u+4 l_0 u v+2 u v^2-u+2\right)} = 4 u^2 (l_0 +v) \left ( - \dfrac{1}{2} + o(1) \right ).
 \end{equation*}
  Using Lemma \ref{lem_jac_exponent}, $\sigma(p, G\circ \pi'^{-1}) = 3$ for all $l_0\in \C$. 
Let us calculate the volume exponent for the point $p$ on $E$ in the second chart, corresponding to the vertical direction. 
Choose some local coordinates $(u,v) \mapsto [u v : u:1]$, on $\Efzp$ we choose the local coordinates $(u',v') \mapsto [-2 +  u' v' :u' : 1] $ and the Jacobian of $G \circ \pi'^{-1}$ is given by: 
\begin{equation*}
  \frac{4 u \left(u v^2-4 u-2 v\right)}{u v-2}.
  \end{equation*}  
  In particular, the Jacobian vanishes at order $2$, by Lemma \ref{lem_order_volume}, $\sigma(p,G \circ \pi'^{-1})=3$.  
We have that $$\sup_{p\in E} \sigma(p, G\circ \pi'^{-1}) = 3,$$ hence 
$\sigma(\tilde{\bbbO}, G_X)=3$.  

\medskip

We prove assertion (ii) in a similar fashion. We consider the blow-up $\pi'\colon X' \to X$ of $X$ at the point $i_\infty$ with exceptional divisor $E$. We have: 
\begin{equation*}
 \sigma(i_\infty, G_X) \leqslant \sup_{p\in E} \sigma(p, G_X\circ \pi'^{-1}).  
 \end{equation*}
  On some local coordinates, the Jacobian of $G\circ \pi'^{-1}$ is of the form: 
  \begin{multline*}
  \frac{4 u^3 (l_0+v) (2 l_0+2 v-1)^2}{4 l_0^3 u^2+12 l_0^2 u^2 v-2 l_0^2 u^2+12 l_0 u^2 v^2-4 l_0 u^2 v-2 l_0 u^2+4 u^2 v^3-2 u^2 v^2-2 u^2 v+u^2-4} \\
  = 4 u^3 (l_0+v) (2 l_0+2 v-1) \left ( - \dfrac{(2l_0-1)^2}{4} + o (1) \right ),
  \end{multline*}
  hence Lemma \ref{lem_jac_exponent} shows that $\sigma(i_\infty, G\circ \pi'^{-1}) \leqslant 4$. 
\medskip

Let us prove the third assertion. Observe that the point $i_0'$ has coordinates $(s_1 = 0 , \eta_1=0)$ in the local coordinates $(s_1, \eta_1)$ from \eqref{eq_coordinate_blow_up}. Consider the blow-up $\pi'\colon X' \to X$ of $X$ at the point $i_0'$ and denote by $E$ the exceptional divisor. 
The two coordinate charts on $E$ are given by $(u_1, v_1) \mapsto   [-2 + u_1^2 v_1 : u_1:1]$ and $(u_2, v_2 ) \mapsto [-2 + u_2^2 v_2 : u_2 v_2:1]$. Recall that $F_X$ blows up $i_0'$ to the divisor $\Einfp$.
 We take the chart on $\Einfp$ determined by $(s_2', \eta_2') \mapsto [-2 + s_2' \eta_2' : 1 : s_2' ]$ and the map $F_X \circ \pi'$ is then of the form: 
\begin{equation*}
(u_1 , v_1) \mapsto \left( s_2' =\frac{u_1^2}{u_1^2 v_1-4},{\eta_2'} =u_1^2 v_1^2-4 v_1-2\right )
\end{equation*}
  The Jacobian is equal to: 
\begin{equation*}
-2 u_1, 
\end{equation*}
hence by Lemma \ref{lem_jac_exponent}, $\sigma(p, F_X \circ \pi') \leqslant 2$ for any point $p= (u_1= 0 , v_1)$ with $v_1 \in \C$. 
We are then estimate  the exponent at the point $(u_2 = 0, v_2 = 0)$. Taking the chart $(s_2, \eta_2)$ on $\Einfp$ from \eqref{eq_coordinate_blow_up}, the map $F_X \circ \pi'$ is of the form: 
\begin{equation*}
(u_2, v_2) \mapsto \left(s_2 = \frac{u_2^2 v_2 \left(u_2^2 v_2-2 v_2-4\right)}{u_2^2 v_2-4},\eta_2 = \frac{v_2}{u_2^2 v_2-2 v_2-4}\right).
\end{equation*}
The Jacobian  then factors into: 
\begin{equation*}
\frac{2 u_2 v_2}{u_2^2 v_2-2 v_2-4} = u_2 v_2 \left  ( - \dfrac{1}{2} + o(1) \right ). 
\end{equation*}
By Lemma \ref{lem_jac_exponent}, we get that at $p= (u_2 = 0, v_2=0)$,  $\sigma(p, F_X \circ \pi') \leqslant 2$. 
Overall, this shows that $\sigma(p, F_X \circ \pi') \leqslant 2$ for any point $p \in E$. 
By assertion (i) of Proposition \ref{prop_vol_decay_FX}, $\sigma(q, F_X) = 1$ on $\Einfp$, so we obtain $$\sigma(i_0', G_X) \leqslant \sup_{p \in E} \sigma(p, F_X\circ \pi') \sigma(F_X(\pi'(p)), F_X) \leqslant  2 ,$$ as required.
\end{proof}

 We now analyze the situation near the indeterminacy point $i_0$. 
To that end, we consider $\pi' \colon X' \to X$ the blow-up of the indeterminacy point $i_0$ whose exceptional divisor is $E$.  
In the proposition below, we will distinguish different orbits for the points $p\in E$ under $F_X \circ \pi'$, one generic behavior and three special ones. Denote by $p_c, p_\infty,p_0$ the points on $E$ which are mapped by $F_X \circ \pi'$ to $\tilde{\Lc} \cap \tilde{\Li}, \tilde{\Li} \cap \Einf, \tilde{\Li} \cap \Efzp$ respectively.  

\begin{prop} \label{prop_volume_exponent_i0} The following holds: 
\begin{enumerate}
\item[(i)] If $p\in E $ is distinct from $p_c, p_\infty, p_0$, then $\sigma (p,G_X\circ \pi') \leqslant 2$. 
\item[(ii)] For $ p \in \{ p_c, p_\infty \}$, one has $\sigma(p_c, G_X\circ \pi')\leqslant 2$. 
\item[(iii)] One has $\sigma(p_0, G\circ \pi') \leqslant 3 $. 
\end{enumerate}
In particular, this shows that the point $i_0$ does not belong to $B_2$. 
\end{prop}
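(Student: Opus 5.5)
We follow the same blow-up strategy as in Proposition \ref{prop_volume_three_indet}, using the submultiplicativity of volume exponents along orbits. By definition of $\pi'$ we have
\begin{equation*}
\sigma(i_0, G_X) \leqslant \sup_{p\in E} \sigma(p, G_X\circ \pi'),
\end{equation*}
so the final claim $i_0\notin B_2$ follows once (i)--(iii) give $\sup_{p\in E}\sigma(p,G_X\circ\pi')\leqslant 3<\dyndeg^2\simeq 3.24$. The essential input is Proposition \ref{prop_gen_prop_F_P2}(ii): in the chart of $\pi_1\circ\pi_2$ the map $F\circ\pi_1\circ\pi_2$ is regular on the second exceptional divisor and sends it onto $\Li$. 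Since $\Li$ does not pass through $\bfz,\bfzp,\binf,\binfp$ except where recorded, $F_X\circ\pi'$ is holomorphic near $E$ and maps $E$ isomorphically (degree one in $l$) onto $\tilde{\Li}$.

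We then estimate in two factors. For $p\in E$ write
\begin{equation*}
\sigma(p,G_X\circ\pi')\leqslant \sigma(p,F_X\circ\pi')\,\sigma(q,F_X),\qquad q=F_X\circ\pi'(p).
\end{equation*}
For the first factor we use the explicit charts $(e,l)\mapsto[2+le^2:e:1]$ and $(e,l)\mapsto[2+le^2:le:1]$. We compute the Jacobian of $F_X\circ\pi'$, composed with the chart of $\Efz$ relevant to the map. From the formulas $[e^2l^2+2l-2:l:1]$ and $[2-2l+e^2l:1:l]$, it should vanish to order exactly one along $E=\{e=0\}$, of the form $e\cdot(\text{unit})$. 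Lemma \ref{lem_jac_exponent} then gives $\sigma(p,F_X\circ\pi')\leqslant 2$ everywhere on $E$. For the second factor, $q$ ranges over $\tilde{\Li}$. At a generic $q$, $F_X$ is a local biholomorphism, because $\Li$ meets the critical locus $\tilde{\Lc}\cup\tilde{\Lz}\cup\Efzp$ (Proposition \ref{prop_crit_FX}) only at finitely many points. Hence $\sigma(q,F_X)=1$, and (i) follows. This is also why the special points are exactly those with $q\in\{\tilde{\Lc}\cap\tilde{\Li},\ \tilde{\Li}\cap\Einf,\ \tilde{\Li}\cap\Efzp\}$.

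The special points are handled as follows. For $p_\infty$, the image $q$ lies on $\Einf$, away from the indeterminacy, so Proposition \ref{prop_vol_decay_FX}(i) gives $\sigma(q,F_X)=1$ and the bound is $2$. For $p_0$, $q\in\Efzp\setminus\{i_0'\}$, because $\Li$ is not vertical at $\bfzp$. Proposition \ref{prop_vol_decay_FX}(ii) gives $\sigma(q,F_X)\leqslant 2$, so the product bound is $4$. This is too weak, and the main obstacle is to sharpen it to $3$. We will compute the Jacobian of the composite $G_X\circ\pi'$ directly near $p_0$, in the chart $(s_1,\eta_1)$ on $\Efzp$ composed with $(s_2,\eta_2)$ on $\Einfp$. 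The expectation is that the two vanishing divisors are both $\{e=0\}$ (the curve $\Efzp$ pulls back to $E$), which gives a Jacobian $\sim e^2\cdot$unit and hence $\sigma\leqslant 3$ by Lemma \ref{lem_jac_exponent}. The point $p_c$ also naively gives $2\cdot 2$, since $q\in\tilde{\Lc}$ is critical. Here too we compute the composite Jacobian directly, using Proposition \ref{prop_vol_decay_FX}(iii). We hope for the transversality claimed in (ii): $F_X\circ\pi'(E)=\tilde{\Li}$ meets $\tilde{\Lc}$ transversally, so the pullback of $\{u=0\}$ is a smooth curve transverse to $E$. The Jacobian then has the normal-crossing form $e\cdot w$, and Lemma \ref{lem_jac_exponent} yields $\max(1,1)+1=2$.
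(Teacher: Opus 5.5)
Your plan works and gives all three bounds. The final claim $i_0\notin B_2$ then follows, as you say, from $3<\dyndeg^2$.

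\textbf{Comparison with the paper.} The paper writes $G_X\circ\pi'$ explicitly in a chart at each type of point (generic $l\neq 0,2$, then $p_c$, $p_\infty$, $p_0$) and reads off the Jacobian. You instead factor $\sigma(p,G_X\circ\pi')\leqslant\sigma(p,F_X\circ\pi')\,\sigma(q,F_X)$, the same submultiplicativity used in Proposition \ref{prop_vol_decay_GX}. For generic points and for $p_\infty$ this avoids any second-iterate computation:
\begin{itemize}
\item the first factor has Jacobian $2(l+v)^2u$ at generic points, and $2e/(e^2-2)$ at $p_\infty$ in the $(s_4,\eta_4)$ chart;
\item the second factor is $1$, since $F_X$ is a local biholomorphism along $\tilde{\Li}$ away from $\tilde{\Lc}\cup\Efzp$, and along $\Einf$.
\end{itemize}
This is cleaner, and it shows why only $p_c$ and $p_0$ need real work. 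One small point: $p_\infty$ is special only for chart reasons, since $\Einf$ is not critical. Criticality does not explain all three special points, contrary to what you wrote.

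\textbf{The heuristic at $p_0$ is wrong, though the bound survives.} $E$ is mapped onto $\tilde{\Li}$, so it cannot lie in the pullback of $\Efzp$. In the chart $(u,v)\mapsto[2+u^2v:u:1]$ one has $F_X\circ\pi'(u,v)=(s_1,\eta_1)=(v,\,2+u^2v)$. Its Jacobian is $-2uv$, not $u\cdot(\text{unit})$. The reason is that the strict transform $\{v=0\}$ of $\Lc$ passes through $p_0$, is critical for $F_X$, and is exactly the pullback of $\Efzp=\{s_1=0\}$. Hence the composite Jacobian is $\sim uv^2$ (the paper's $4uv^2(-1/8+o(1))$), not $\sim e^2$. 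Lemma \ref{lem_jac_exponent} still gives $\max(1,2)+1=3$, but for this reason. The same observation shows that your blanket claim ``first factor $=e\cdot(\text{unit})$ on all of $E$'' fails at $p_0$. This is harmless, because $-2uv$ still gives $\sigma\leqslant 2$ and you treat $p_0$ directly anyway.

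\textbf{At $p_c$ your normal-crossing prediction is correct, and it is what the argument needs.} In the $(s_1,\eta_1)$ chart the composite is $(u,v)\mapsto(w',X)$, with $X=2+2v+u^2(2+v)^2$ and $w'=2v/(2+v)^2+u^2$. Its Jacobian is $4u\,(X-2)/(2+v)$, a product of two transverse local coordinates, so $\sigma\leqslant 2$. Note that the map the paper displays for $p_c$ sends the origin to $u'=4$, i.e.\ to the point $(2,2)=F_X\circ\pi'(p_c)$. So what is computed there is only $F_X\circ\pi'$, which is your first factor. Your normal-crossing step is what actually completes case (ii) at $p_c$.
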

 
\begin{proof}
Let us prove assertion (ii). Fix a chart near $p_c$ given by $(u,v) \mapsto [2 + u^2 (2+v): u : 1 ]$. Since $F_X\circ \pi'(p_c)$ belongs to the collapsed line $\tilde{\Lc}$, its image will be in $\Efzp$ and we choose the chart $(u',v') \mapsto [-2+ u': v' u' : 1]$ on $\Efzp$. 
The map $G_X\circ \pi'$ is then of the form: 
\begin{equation*}
(u,v) \mapsto \left ( u'=(v+2) \left(u^2 v+2 u^2+2\right),v'=\frac{1}{u^2 v+2 u^2+2} \right ).
\end{equation*}
Note that this map is locally holomorphic near $u=v=0$ and the Jacobian is given by: 
\begin{equation*}
\frac{2 u (v+2)}{u^2 v+2 u^2+2} = 2 u (1 + o(1)).
\end{equation*}
By Lemma \ref{lem_jac_exponent}, we obtain  $\sigma(p_c, G\circ \pi') \leqslant 2$.
\medskip

Similarly for $p= p_\infty$. We choose a chart near $p_\infty$ given by $(u,v) \mapsto [2+u^2 v : uv :1]$ and since $G_X\circ \pi' (p_\infty) \in \Efz$, we fix a chart at the target given by $(u',v') \mapsto [2+ u' v': u':1]$. 
The map $G_X \circ \pi'$ is then of the form: 
\begin{equation*}
(u,v) \mapsto \left (u'=  v \left(u^2 v-4 v+2\right),v'=\frac{u^4 v-6 u^2 v+4 u^2+8 v-12}{u^2 v-4 v+2}\right ). 
\end{equation*}
This map is locally holomorphic and its jacobian yields: 
\begin{equation*}
-4 u \left(u^2 v-4 v+2\right) = -4 u (2 + o(1)).
\end{equation*}
By Lemma \ref{lem_jac_exponent}, this shows that $\sigma(p_\infty, G_X\circ \pi') \leqslant 2$, as required. 
\medskip

Let us prove assertion (iii). Let us take a chart near $p_0$ given by $(u,v) \mapsto [2 + u^2 v : u: 1]$. Take the chart $(s_2,\eta_2)$ near $G\circ \pi'(p_0) \in \Einfp$.
 The map $G_X\circ \pi'$ is of the form: 
\begin{equation*}
(u,v) \mapsto \left (s_2 =\frac{v \left(u^4 v^3+4 u^2 v^2-4 u^2 v+2 v-8\right)}{u^2 v^2+2 v-4},\eta_2=\frac{v}{u^4 v^3+4 u^2 v^2-4 u^2 v+2 v-8} \right ).
\end{equation*}
It is also holomorphic near zero and the Jacobian is of the form: 
\begin{equation*}
\frac{4 u v^2}{u^4 v^3+4 u^2 v^2-4 u^2 v+2 v-8} = 4 u v^2 (-1/8 + o(1)).
\end{equation*}
In particular, Lemma \ref{lem_jac_exponent} shows that $\sigma(p_0, G\circ \pi') \leqslant 3$.

\medskip

Let us prove assertion (i). We choose a chart near $p \in E\setminus \{ p_0, p_c, p_\infty \}$ given by $(u,v) \mapsto [2 + u^2 (l+v): u: 1]$ where $l \in \C \setminus \{  0, 2\}$. Since $G_X\circ \pi'(p)$ is not on any divisor $\Efz, \Efzp, \Einf, \Einfp$, we choose the chart on the image determined by the affine chart in $\C^2$. The map $G_X\circ \pi'$ is then given by: 
\begin{align*}
 x' =& \frac{1}{(l+v)^2} ( 6 l^2 u^4 v^2+4 l^3 u^4 v+12 l^2 u^2 v+l^4 u^4+4 l^3 u^2-6 l^2 u^2+2 l^2+4 l u^4 v^3+12 l u^2 v^2-12 l u^2 v+ \\
& 4 l v-12 l+u^4 v^4+4 u^2 v^3-6 u^2 v^2+2 v^2-12 v+8), \\
 y' = &\frac{l^2 u^2+2 l u^2 v+2 l+u^2 v^2+2 v-4}{(l+v)^2}.
 \end{align*} 
The map is locally holomorphic and the Jacobian factors into: 
\begin{equation*}
 -\frac{4 u \left(l^2 u^2+2 l u^2 v+2 l+u^2 v^2+2 v-4\right)^2}{(l+v)^3} = - 4u \left (  \dfrac{( 2l- 4)^2}{l^3} + o(1) \right ).
 \end{equation*} 
 By Lemma \ref{lem_jac_exponent}, this shows that $\sigma(p, G\circ \pi')\leqslant 2$. 
\end{proof}

\begin{cor} \label{cor_vol_exponent_three} The inclusion $B_1 \cap B_2 \subset \{ i_\infty \} $ holds. Furthermore, the set $B_1 \cap B_2 \cap B_3$ is empty. 
\end{cor}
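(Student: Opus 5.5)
The plan is to combine Proposition \ref{prop_vol_decay_GX} with Propositions \ref{prop_volume_three_indet} and \ref{prop_volume_exponent_i0}, and then treat the one remaining point $i_\infty$ with a third iterate. Recall that $\dyndeg \simeq 1.801$, so $\dyndeg^2 \simeq 3.24$ and $\dyndeg^3 \simeq 5.84$. By Proposition \ref{prop_vol_decay_GX} we already have
\[
B_1\cap B_2 \subset \{i_0, i_0', i_\infty, \tilde{\bbbO}\}.
\]
Membership in $B_2$ means $\sigma(p,G_X)\geqslant \dyndeg^2 \approx 3.24$. So it suffices to show $\sigma(p,G_X)\leqslant 3$ at each of $i_0$, $i_0'$ and $\tilde{\bbbO}$:
\begin{itemize}
\item At $\tilde{\bbbO}$ this is item (i) of Proposition \ref{prop_volume_three_indet}.
\item At $i_0'$ this is item (iii) of Proposition \ref{prop_volume_three_indet}, which gives the bound $2$.
\item At $i_0$, Proposition \ref{prop_volume_exponent_i0} bounds $\sigma(p,G_X\circ\pi')$ by $3$ for all $p$ on the exceptional divisor $E$. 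Since $\sigma(i_0,G_X)\leqslant \sup_{p\in E}\sigma(p,G_X\circ\pi')$, this gives $\sigma(i_0,G_X)\leqslant 3$.
\end{itemize}
Only $i_\infty$ survives, and there the available bound $\sigma(i_\infty,G_X)\leqslant 4$ exceeds $\dyndeg^2$. This proves the first inclusion.

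For the emptiness of $B_1\cap B_2\cap B_3$, it remains to show $\sigma(i_\infty,F_X^3)<\dyndeg^3\approx 5.84$. I would again blow up $i_\infty$ to get $\pi'\colon X'\to X$ with exceptional divisor $E$. Then I would use submultiplicativity in the form
\[
\sigma(p,F_X^3\circ\pi')\leqslant \sigma(p,G_X\circ\pi')\,\sigma\bigl(G_X\circ\pi'(p),F_X\bigr),
\]
valid wherever $G_X\circ\pi'$ is holomorphic at $p$. The Jacobian computation in Proposition \ref{prop_volume_three_indet}(ii) gives the first factor at most $4$. The points where it reaches $4$ are those on $E$ with $u=0$ and $l_0\in\{0,1/2\}$, where the extra factors $(l_0+v)$ or $(2l_0+2v-1)$ vanish. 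At a generic $p\in E$ the Jacobian has the form $u^3\cdot(\text{unit})$, and I would check that $G_X\circ\pi'(p)$ then lands on a point where $F_X$ is locally invertible. The natural candidate is $\Einfp$ or $\Efz$ away from the bad points, since $i_\infty$ blows up under $F_X$ to $\Efzp$ and $\Efzp$ is then collapsed onto $\Einfp$. In that case $\sigma(\cdot,F_X)=1$ by Proposition \ref{prop_vol_decay_FX}(i), and the total is at most $4<5.84$.

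The main obstacle is the special points $l_0=0$, $l_0=1/2$ and the second chart at infinity on $E$. At these points the images under $G_X\circ\pi'$ could land on the critical locus, where $\sigma(\cdot,F_X)$ can be $2$; the crude product $4\cdot 2=8$ would then be too weak. At these points I would avoid the product bound and compute the Jacobian of $F_X^3\circ\pi'$ directly in local charts. The aim is to show that it has the form $u^a v^b\cdot(\text{unit})$ with $\max(a,b)+1\leqslant 5$, and then to conclude with Lemma \ref{lem_jac_exponent}. If $F_X^3\circ\pi'$ is still indeterminate at one of these points, I would perform one further blow-up there and repeat the estimate. It also helps that the vanishing orders in the first factor split into separate powers of $u$ and $v$, so the bounds do not compound as badly as the product suggests.
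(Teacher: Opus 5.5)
Your proof of the first inclusion $B_1\cap B_2\subset\{i_\infty\}$ is the paper's argument. Proposition \ref{prop_vol_decay_GX} reduces to four points. Items (i) and (iii) of Proposition \ref{prop_volume_three_indet}, together with Proposition \ref{prop_volume_exponent_i0}, give $\sigma\leqslant 3<\dyndeg^2$ at $\tilde{\bbbO}$, $i_0'$ and $i_0$.

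The second part has a gap. You never prove the bound at the ``special'' points of $E$. You only propose to compute Jacobians of $F_X^3\circ\pi'$ there and possibly blow up again, and you build this around an obstacle that does not exist.

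The missing observation is that everything over $i_\infty$ lands in $\Einfp$:
\begin{itemize}
\item $F_X$ blows $i_\infty$ up to $\Efzp$.
\item $F_X$ collapses $\Efzp\setminus\{i_0'\}$ to the point $\Einfp\cap\tilde{\Linf}$.
\item $F_X$ blows $i_0'$ up to $\Einfp$.
\end{itemize}
So the total transform of $i_\infty$ under $G_X$ lies in $\Einfp$. This holds for every starting point of $E$: $l_0=0$, $l_0=1/2$, the second chart, and even any point where $G_X\circ\pi'$ remains indeterminate.

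Moreover, $\Einfp$ meets neither $\mathrm{Crit}(F_X)=\Efzp\cup\tilde{\Lc}\cup\tilde{\Lz}$ (neither $\Lc$ nor $\Lz$ passes through $\binfp$) nor $I(F_X)=\{i_0,i_0',i_\infty\}$. Hence Proposition \ref{prop_vol_decay_FX}(i) gives $\sigma(q,F_X)=1$ for every $q\in\Einfp$, with no exceptions. In fact $F_X$ is a local biholomorphism on a whole neighborhood of $\Einfp$.

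So the images can never reach the critical locus, and the crude product bound works uniformly:
\[
\sigma(i_\infty,F_X^3)\leqslant \sigma(i_\infty,G_X)\cdot\sup_{q\in\Einfp}\sigma(q,F_X)\leqslant 4\cdot 1<\dyndeg^3 .
\]
This is the paper's one-line proof; no case analysis or further blow-up is needed.

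A smaller point: the first factor already equals $4$ at generic points of $E$. A Jacobian $u^3\cdot(\text{unit})$ gives $\max(3,0)+1=4$ by Lemma \ref{lem_jac_exponent}. The points $l_0\in\{0,1/2\}$ are no worse ($\max(3,1)+1=\max(3,2)+1=4$), since the lemma takes a maximum of the exponents, not a sum.
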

 
 \begin{proof}
 By Proposition \ref{prop_vol_decay_GX}, $B_1 \cap B_2 \subset \{ i_0, i_0' , i_\infty, \tilde{\bbbO}\}$. 
 However, as $3 < \dyndeg^2$, assertion (i) and (iii) of Proposition \ref{prop_volume_three_indet} as well as Proposition \ref{prop_volume_exponent_i0} imply that the three points $\tilde{\bbbO}, i_0', i_0$ are also not in $B_2$. This gives the inclusion $B_1 \cap B_2 \subset \{ i_\infty \}$, as required. 
 As for the additional statement, by assertion (ii) of Proposition \ref{prop_volume_three_indet} and since $G_X$ blows up $i_\infty$ to $\Einfp$ on which $F_X$ is invertible, we have: 
 \begin{equation*}
 \sigma(i_\infty, F_X^3) \leqslant \sigma(i_\infty, G_X) \sup_{p \in \Einfp} \sigma(p, F_X)  \leqslant 4 < \delta^3.
\end{equation*}  
This shows that $i_\infty \notin B_3$, so $B_1 \cap B_2 \cap B_3$ is empty. 
 \end{proof}

\begin{thm} \label{thm_vol_exponent} There exists a constant $\alpha < \dyndeg$ such that for any  open set $\Omega \subset X$,  
there exists a constant $K>0$ such that for any $n$, for any orbit $\underline{x}=(x_1, \ldots, x_n)$ in $\Omega^n$, 
\begin{equation}
 \sigma(\underline{x} , F^n) \leqslant K \alpha^n. 
 \end{equation} 
\end{thm}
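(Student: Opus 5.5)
The idea is to use the submultiplicativity of volume exponents along orbits together with Corollary \ref{cor_vol_exponent_three}. For a finite orbit $\underline{x}=(x_1,\dots,x_n)$ (with $x_{k+1}=F_X(x_k)$, or $x_{k+1}$ a point in the total transform when $x_k$ is indeterminate), the exponent of the composition satisfies
\begin{equation*}
\sigma(\underline{x},F_X^{a+b}) \leqslant \sigma((x_1,\dots,x_a),F_X^{a})\,\sigma((x_{a+1},\dots,x_{a+b}),F_X^{b}).
\end{equation*}
This holds because pulling back a set $Y$ first under $F_X^{b}$ near $x_{a+1}$ and then under $F_X^{a}$ near $x_1$ composes the Hölder-type volume bounds, and the exponents multiply. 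At indeterminacy points we replace $\sigma$ by the supremum over the exceptional divisor of a blow-up, as in Propositions \ref{prop_volume_three_indet} and \ref{prop_volume_exponent_i0}. I would first record this inequality as a lemma.

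Next I would get a uniform bound for blocks of length three. Outside the critical locus and the indeterminacy points, $F_X$ is a local biholomorphism, so $\sigma=1$. On the critical locus, Propositions \ref{prop_vol_decay_FX}--\ref{prop_volume_exponent_i0} and Corollary \ref{cor_vol_exponent_three} show that $\sigma(x,F_X)\leqslant 2$, $\sigma(x,G_X)\leqslant 4$, and $\sigma(x,F_X^3)\leqslant C_3$ at every point. More importantly, no point lies in $B_1\cap B_2\cap B_3$: at every $x$, some $j\in\{1,2,3\}$ has $\sigma(x,F_X^j)<\dyndeg^j$.

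The plan is then a greedy decomposition of the orbit. Starting at $x_1$, pick $j_1\in\{1,2,3\}$ with $\sigma(x_1,F_X^{j_1})\leqslant \beta^{j_1}$. Then restart at $x_{1+j_1}$, pick $j_2$, and so on. Here $\beta<\dyndeg$ is a uniform constant. To see that such a uniform $\beta$ exists, note that all the bounds obtained are explicit integers. Concretely, $\sigma(\cdot,F_X)\leqslant 2>\dyndeg$ is not good enough alone, but $\sigma(\cdot,G_X)\leqslant 3<\dyndeg^2\approx 3.24$ away from $i_\infty$, and $\sigma(i_\infty,F_X^3)\leqslant 4<\dyndeg^3\approx5.84$. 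Generic points have $\sigma(\cdot,F_X)=1$. So we can take $\beta=\max(1,\sqrt3,4^{1/3})<\dyndeg$. The blocks cover $\{1,\dots,n\}$ except possibly a final segment of length at most $2$, which contributes a bounded factor $K\leqslant 4$. Multiplying gives $\sigma(\underline{x},F^n)\leqslant K\beta^{n}$, so $\alpha=\beta$ works.

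The main obstacle is making the bound $\sigma\leqslant\beta^j$ hold uniformly in a neighborhood rather than pointwise. The definition of $\sigma(z,\cdot)$ is local, but the constants $K$ and the neighborhood $N$ depend on $z$. I would handle this by compactness of $X$ (or of $\overline\Omega$, which explains the dependence of $K$ on $\Omega$). The relevant Jacobian factorizations in the cited propositions hold on open charts, so the exponent bounds are upper semicontinuous, and a finite cover gives uniform constants. A second delicate point is the compatibility of the composition lemma with indeterminacy points, where the orbit passes through blow-ups. I would treat this by working on the resolution given in Proposition \ref{prop_gen_prop_F_P2}(ii) and the blow-ups used above, where the lifted maps are holomorphic and Lemma \ref{lem_jac_exponent} applies directly.
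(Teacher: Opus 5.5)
Your proposal is correct and is essentially the paper's proof. The paper likewise splits $\Omega$ into $\Omega\setminus B_1$, $\Omega\cap(B_1\setminus B_2)$ and $\Omega\cap B_1\cap B_2$ (using $B_1\cap B_2\cap B_3=\emptyset$ from Corollary \ref{cor_vol_exponent_three}), peels off blocks of length $1,2,3$ by submultiplicativity of $\sigma$ along the orbit, absorbs a final block of length at most $2$ into $K=\max_{k=1,2}\sup_{\Omega}\sigma(\cdot,F_X^k)$, and takes $\alpha=\sqrt3$ as in the remark after the statement. The uniformity issue you flag is not needed for the statement as written, because $\sigma$ is a pointwise exponent and its submultiplicativity is pointwise; on the other hand, your use of the explicit bounds $1,3,4$ is what actually guarantees a uniform $\alpha<\dyndeg$, which the paper asserts only ``by definition of $B_i$''.
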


\begin{rem} If one keeps track of the bound on the volume exponent, we see that $\alpha  $ can be taken equal to $ \max(\sqrt{3}, 4^{1/3})= \sqrt{3}$.
\end{rem}

\begin{proof}
Let us take any open set $\Omega$. 

Take any orbit $\underline{x}=(x_0, \ldots, x_{n}) \in \Omega^n$. 
We denote by $\Omega_1 = \Omega \setminus B_1 $, $\Omega_2 = \Omega \cap (B_1 \setminus B_2)$, $\Omega_3 = \Omega \cap B_1 \cap B_2$. 
By Corollary \ref{cor_vol_exponent_three}, $B_1 \cap B_2 \cap B_3$ is empty so we have $\Omega = \Omega_1 \sqcup \Omega_2 \sqcup \Omega_3$. 
By definition of $B_i$, there exists $1 \leqslant \alpha_i < \dyndeg$ such that for any $p \in \Omega_i$, $\sigma(p, F_X^i)\leqslant \alpha_i^i$. 
Take $\alpha = \max_{i=1,2,3}(\alpha_i)$ and denote by $T$ the left shift so that $T \underline{x} = (x_1, x_2, \ldots, x_n)$.  Given an orbit $\underline{x}$, we denote by $\sigma(\underline{x}, F_X^n)$ the volume exponent of the first point $x_0$ under the local map  $F_X^{n}$ sending a neighborhood of $x_0$ to $x_{n}$. Note that this notation is relevant as we can encounter an indeterminacy point so one must specify the target point. 
The key fact is that if $x_0 \in \Omega_i$, then $\sigma(\underline{x}, F_X^i) \leqslant \alpha_i^i \leqslant \alpha^i$. In particular, this implies: 
\begin{equation*}
\sigma(\underline{x}, F_X^n) \leqslant \sigma(\underline{x}, F_X^i) \sigma(T^i \underline{x}, F_X^{n-i}) \leqslant \alpha^i \sigma(T^i \underline{x}, F_X^{n-i}), 
\end{equation*}
whenever $x_0 \in \Omega_i$ for $i=1,2,3$. 
By induction, we obtain: 
\begin{equation*}
\sigma(\underline{x}, F_X^n) \leqslant \sigma(T^{n-k} \underline{x}, F_X^k) \alpha^{n-k} \leqslant \sigma(T^{n-k} \underline{x}, F_X^k) \alpha^n,
\end{equation*}
with $k \in \{ 0,1,2 \}$. Consider 
 $K = \max_{k=1,2} \sup_{x \in \Omega} \sigma(x, F_X^k)$, we have proved: 
 \begin{equation*}
 \sigma(\underline{x}, F_X^n) \leqslant K \alpha^{n}, 
 \end{equation*}
 as required.
\end{proof}

\section{Two currents}
\label{section_currents}

\subsection{The Sibony current}
\label{section_sibony}

We recall the construction of a particular current due to Sibony. 

\begin{thm} \cite[Theorem 1.9.1]{sibony_1999} Given any dominant rational map $F \colon  \bP^2 \dashrightarrow \bP^2$ which is not algebraically stable on $\bP^2$. Choose a lift $\tilde{F}$ of $F$ on $\C^3$ and write $\tilde{F}^n = h_n \tilde{F}_n$ where $\tilde{F}_n$ is homogeneous without any common factors. Then the limit in the sense of currents on $\bP^2$: 
\begin{equation}
S = \lim_{n\rightarrow +\infty} \dfrac{[h_n=0]}{\deg(F)^n} 
\end{equation}
exists. 
\end{thm}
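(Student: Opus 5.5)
We would prove this by writing $[h_n=0]/\deg(F)^n$ as a telescoping sum of positive closed currents and showing that the series converges in mass. Write $d=\deg(F)$. Choose a lift $\tilde F$ of $F$ to $\C^3$, homogeneous of degree $d$ with no common factor, so that $h_1=1$. Then
\begin{equation*}
\tilde F^{n+1}=\tilde F\circ \tilde F^n = \tilde F(h_n \tilde F_n)=h_n^{d}\,\tilde F(\tilde F_n).
\end{equation*}
Since $\tilde F\circ\tilde F_n$ is homogeneous, it factors as $\tilde F\circ \tilde F_n = g_n \tilde F_{n+1}'$ with $\tilde F'_{n+1}$ free of common factors. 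By uniqueness of the reduced representative up to constants, $\tilde F_{n+1}=\tilde F'_{n+1}$ and
\begin{equation*}
h_{n+1}=h_n^{d}\, g_n .
\end{equation*}
Here $g_n$ is a homogeneous polynomial whose zero set is exactly the union of the curves mapped by $F_n$ (the reduced $n$-th iterate) into $I(F)$. Taking currents of integration (Poincaré–Lelong, $[h=0]=dd^c\log|h|$ on $\bP^2$ after the usual normalisation), we get
\begin{equation*}
\frac{[h_{n+1}=0]}{d^{n+1}}=\frac{[h_n=0]}{d^{n}}+\frac{[g_n=0]}{d^{n+1}}.
\end{equation*}
Hence the sequence $[h_n=0]/d^n$ is \emph{increasing} in the cone of positive closed $(1,1)$-currents, and
\begin{equation*}
\frac{[h_n=0]}{d^n}=\sum_{k<n}\frac{[g_k=0]}{d^{k+1}}.
\end{equation*}

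To conclude, it suffices to bound masses, which for positive closed currents on $\bP^2$ are just degrees. The degree count $\deg \tilde F^n = d^n=\deg h_n+\deg F_n$ gives
\begin{equation*}
\frac{\deg h_n}{d^n}=1-\frac{\deg F_n}{d^n}\le 1.
\end{equation*}
So the masses of this increasing sequence of positive closed currents are uniformly bounded by $1$. An increasing sequence of positive currents with bounded mass converges weakly: test against a smooth form $\phi$, write $\phi$ as a difference of two strongly positive forms each bounded by $C\omega_{FS}$, and apply monotone convergence of the real numbers $\langle [h_n=0]/d^n,\psi\rangle$ for positive $\psi$. The limit $S$ is then positive and closed, with mass $1-\lim \deg(F^n)/d^n$. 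This mass is positive precisely when $F$ is not algebraically stable, because then some $g_k\neq$ const. This is where the hypothesis enters.

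The main point to be careful about is the factorization step. One must check that $h_n^{d}$ indeed divides $\tilde F^{n+1}$ and that the remaining common factor $g_n$ is a genuine polynomial, not a rational function. This holds because $\tilde F(h_n\tilde F_n)=h_n^d\tilde F(\tilde F_n)$ by homogeneity of degree $d$ of each component. Positivity of $[g_n=0]$ is then automatic, and everything else is a mass count.
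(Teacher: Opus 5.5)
Your argument is correct. The identity $h_{n+1}=h_n^{d}g_n$ makes $d^{-n}[h_n=0]$ an increasing sequence of positive closed currents of mass $\deg h_n/d^n\le 1$, hence weakly convergent, and, as you note, non-stability only serves to make the limit nonzero. The paper gives no proof of its own and simply cites Sibony; your monotone-convergence argument is essentially the standard one behind that cited result.
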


In our situation, the map $F$ is of algebraic degree $2$ and is not algebraically stable on $\bP^2$. So the construction of Sibony can be applied and yields a current we denote by $S$, the \textit{Sibony current}.  
We explain how it arises naturally in this context. 
Recall from Lemma \ref{lem_recursion} that the sequence of homogeneous characteristic polynomial $\tilde{P}_n \in \C[x, y , z]$ of interest satisfies the relation $\tilde{P}_{n+1} = \tilde{P}_n \circ \tilde{F}$, where $\tilde{F}$ is the lift of $F$ to $\C^3$.   And the spectral current is an accumulating current of the sequence $[ \tilde{P}_n = 0 ]$ in $\bP^2$.  

\begin{thm} \label{thm_sibony_spectral}Let $\tilde{P}_n$ be the sequence of polynomials defined in Lemma \ref{lem_recursion} and let $S$ be the Sibony current associated to $F$.  Then one has: 
\begin{equation*}
S = \lim_{n\rightarrow +\infty} \dfrac{[\tilde{P}_n = 0]}{2^n} 
\end{equation*}
\end{thm}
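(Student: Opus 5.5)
Since $\tilde P_{n+1} = \tilde P_n\circ \tilde F$, we get $\tilde P_n = \tilde P_0\circ \tilde F^n = \tilde P_0\circ (h_n \tilde F_n) = h_n^{\,\deg \tilde P_0}\,(\tilde P_0\circ \tilde F_n)$, and here $\deg\tilde P_0 = 1$. Hence
$$[\tilde P_n=0] = [h_n=0] + [\tilde P_0\circ\tilde F_n = 0].$$
By Sibony's theorem, $2^{-n}[h_n=0]\to S$. It therefore suffices to show that
$$R_n:=2^{-n}[\tilde P_0\circ \tilde F_n=0]\to 0,$$
i.e.\ that the mass of $R_n$ tends to zero. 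On $\bP^2$ the mass equals the degree, so the mass of $R_n$ is $\deg(F^n)/2^n$, where $\deg \tilde F_n=\deg(F^n)$.

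To control this ratio, use algebraic stability of $F_X$ (Proposition \ref{prop_lift_X_F}). The degree is the intersection number $\deg(F^n) = (F_X^n)^*L\cdot L = ((F_X^*)^n L)\cdot L$, which grows like $C\dyndeg^n$, with $\dyndeg\approx 1.80<2$ by Theorem \ref{thm_value_dyndeg}. Thus $\deg(F^n)/2^n\le C(\dyndeg/2)^n\to 0$. Since positive closed currents of mass tending to $0$ converge to $0$, we obtain $2^{-n}[\tilde P_n=0]\to S$.

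The main point to verify carefully is the factorization, i.e.\ that $\tilde P_0\circ\tilde F_n$ is a genuine polynomial of degree $\deg(F^n)$ that is not identically zero. One must also address the possibility that $\tilde P_0\circ\tilde F_n$ shares further factors with $h_n$; this is harmless, because we only split the divisor into two effective pieces. The polynomial $\tilde P_0\circ\tilde F_n$ is not identically zero because $F^n$ is dominant, so its image is not contained in the line $\{\tilde P_0=0\}$. Its degree is exactly $\deg\tilde F_n$, since $\tilde P_0$ is linear.

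One subtlety is that $\tilde P_n$ might be normalized with an extra factor of $y$ (the $y^{2^n}$ in the recursion). This is already absorbed: the homogeneous recursion is $\tilde P_{n+1}=\tilde P_n\circ\tilde F$, and the $y^2$ appearing in $\tilde F$ is exactly what produces the $y^{2^n}$. No further correction is needed beyond checking that Lemma \ref{lem_recursion} gives this homogeneous form.
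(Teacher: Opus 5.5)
Your argument is correct, but it follows a different route from the paper. The paper proves the statement in one line. It cites the fourth assertion of Sibony's Theorem~1.9.1 for the current $dd^c\log|\tilde P_0|$, after noting that $2^{-n}dd^c\log|\tilde P_n|=2^{-n}dd^c\log|\tilde P_0\circ\tilde F^n|$, and never mentions degree growth.

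You use only the defining assertion $2^{-n}[h_n=0]\to S$. This is elementary in its own right: $\tilde F^{n+1}=h_n^2\,\tilde F(\tilde F_n)$ gives $h_n^2\mid h_{n+1}$, so these currents increase and have mass at most $1$. You then split $[\tilde P_n=0]=[h_n=0]+[\tilde P_0\circ\tilde F_n=0]$ using that $\tilde P_0$ is linear. Finally you kill the second piece by its mass $\deg(F^n)/2^n$, using $\delta<2$ from Theorem~\ref{thm_value_dyndeg}.

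What your route buys:
\begin{itemize}
\item It shows exactly where $\delta<2$ enters.
\item It works verbatim for any line, and for a curve of degree $k$ it gives $kS$.
\item Together with the monotonicity, it gives an explicit rate. Both $S-2^{-n}[h_n=0]$ and $2^{-n}[\tilde P_0\circ\tilde F_n=0]$ are positive closed currents of mass $\deg(F^n)/2^n=O((\delta/2)^n)$. This is consistent with the error term in Corollary~\ref{cor_sibony_series}.
\end{itemize}
The paper's citation is shorter and covers general initial currents, not just divisors.

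A minor point: you do not need to pass through $X$ and algebraic stability. Since $\deg(F^n)^{1/n}\to\delta<2$ by definition, you already have $\deg(F^n)=o(2^n)$. Your identity $\deg(F^n)=((F_X^*)^nL)\cdot L$, with $L$ the pullback of a general line, is nonetheless correct.
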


\begin{proof}
This is a consequence of the fourth assertion in \cite[Theorem 1.9.1]{sibony_1999}. The current $dd^c\log |\tilde{P}_0|$ defines a closed positive current in $\bP^2$ and $2^{-n}dd^c \log |\tilde{P}_n| =2^{-n} dd^c \log |\tilde{P}_0 \circ \tilde{F}^n|$. The statement implies that the limit of these currents exists and coincide with $S$, as required.  
\end{proof}

We explain in the next technical but crucial proposition  the relationship between the spectral current and the successive preimages by $F$ of certain lines. 

We also denote by $\pi \colon X \to \bP^2$ the blow-down morphism and by $\pi^o$ the strict transform by $\pi$.  
Recall that $\Efzp, \Einfp, \Efz, \Einf$ are the exceptional divisors above $\bfzp, \binfp, \bfz$ and $\binf$ respectively.

We lift this relation on the blow-up. 
\begin{prop} \label{prop_error_term}
The following equality of currents holds on $\bP^2(\C)$:
\begin{equation*}
[\tilde{P}_n  = 0] =\sum_{k=0}^{n-2} c_{n-1-k} \pi_*(F_X^k)^* [\pi^o\Lc] +  \pi_*(F_X^n)^* [\pi^o\Lf], 
\end{equation*}
where the coefficients $c_n$  are given by the formulas:
\begin{equation*}
c_n= \dfrac{1}{3} \left (2^{n-1} + \dfrac{3 + (-1)^{n+1}}{2}\right ).
\end{equation*}
for $n \geq 2$ and $c_1 = 0$. 
\end{prop}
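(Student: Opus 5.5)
The plan is to compare, one step at a time, the pullback $F^*$ of currents on $\bP^2$ with the pullback $F_X^*$ on $X$, and to track exactly which atoms appear in the discrepancy. Since $\tilde P_{n+1}=\tilde P_n\circ\tilde F$, we have $[\tilde P_{n+1}=0]=F^*[\tilde P_n=0]$, with $[\tilde P_0=0]=[\Lf]$. So the statement follows by induction on $n$ once we have a commutation formula.

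\textbf{Commutation formula.} For an effective divisor (or current of integration) $D$ on $X$, write $\pi^*\pi_*D=D+\sum_{E}m_E(D)\,E$, the sum running over $E\in\{\Efzp,\Einfp,\Efz,\Einf\}$. Here $m_E(D)=-(D\cdot E)$ is the multiplicity at the centre of $E$ (when $D$ has no exceptional components). Using $\pi_*\circ F_X^*\circ\pi^*=F^*$ outside indeterminacy, and the computations in the proof of Proposition \ref{prop_lift_X_F}, we get
\begin{equation*}
F_X^*\Efzp=[\pi^o\Lc],\qquad F_X^*\Einfp=\Efzp,\qquad F_X^*\Efz=\Einfp+\Einf,\qquad F_X^*\Einf=\Efz .
\end{equation*}
The last three are exceptional, so they disappear after applying $\pi_*$. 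This gives the key identity
\begin{equation*}
F^*\pi_*[D]=\pi_*F_X^*[D]+m_{\Efzp}(D)\,[\Lc].
\end{equation*}
We also use that $\pi_*[\pi^o\Lc]=[\Lc]$. I would verify this identity at the level of currents, not only of classes. It holds because both sides are currents of integration that agree off the finite set of indeterminacy points and the centres of the blow-ups, and no positive closed $(1,1)$ current charges a point.

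\textbf{Iteration and the coefficients.} Apply the identity first to $D=\pi^o\Lf$, and then repeatedly to each $D=F_X^{k*}\pi^o\Lc$ produced along the way. By induction,
\begin{equation*}
[\tilde P_n=0]=\pi_*F_X^{n*}[\pi^o\Lf]+\sum_k a_{n,k}\,\pi_*F_X^{k*}[\pi^o\Lc].
\end{equation*}
The coefficients $a_{n,k}$ are assembled from the numbers $m_{\Efzp}$ of the iterates. Each such number is an intersection number computable in cohomology:
\begin{equation*}
m_{\Efzp}(F_X^{j*}D)=-\bigl((F_X^*)^j\{D\}\cdot\Efzp\bigr),
\end{equation*}
which by $Q$ is the $\Efzp$-coordinate of $(F_X^*)^j\{D\}$. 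Algebraic stability (Proposition \ref{prop_lift_X_F}) guarantees $(F_X^j)^*=(F_X^*)^j$, so these are obtained by iterating the $5\times5$ matrix. Both $\{\pi^o\Lf\}$ and $\{\pi^o\Lc\}$ equal $L-\Efz$, since $\Lf$ and $\Lc$ each pass through $\bfz$ only. Hence both produce the same sequence of $\Efzp$-coordinates $e_j$ of $(F_X^*)^j(L-\Efz)$, and $a_{n,k}$ depends only on $n-k$. One then checks that $c_m$ satisfies a linear recurrence whose characteristic roots are $2$ and $-1$. Presumably the Perron root $\delta$ cancels out: a convolution of the $e_j$ gives $c_m=\sum_j e_j c_{m-1-j}$-type relations. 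Solving with initial data $c_1=0$ yields $\tfrac13\bigl(2^{m-1}+\tfrac{3+(-1)^{m+1}}2\bigr)$. As a sanity check, compare total masses: $2^n=\deg F_X^{n*}(L-\Efz)\cdot L+\sum c\cdot(\ldots)$.

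\textbf{Main obstacle.} I expect the hardest step to be justifying that $F_X^{k*}$ of the strict transforms never acquires hidden exceptional components other than the ones listed. In particular, near the indeterminacy points $i_0,i_0',i_\infty$, a fibre could be blown up onto $\Efzp$ or onto $\pi^o\Lc$ and create extra atoms. I would handle this with the indeterminacy analysis of Proposition \ref{prop_indet_FX}. Pullback of currents does not charge points, and the images of the indeterminacy points ($\Einfp$, $\tilde\Li$, $\binf$) contribute only through divisors whose $\pi$-pushforward vanishes, or through terms already counted in the cohomological intersection numbers.
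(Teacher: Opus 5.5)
Your commutation identity is a genuinely different route from the paper's. The paper instead tracks $\divi(P_n\circ\pi)$ on $X$ via $\divi(P_{n+1}\circ\pi)=2^{n+1}\pi^*\divi(y)+F_X^*\divi(P_n\circ\pi)$, keeping track of the coefficients of all four exceptional divisors and of $L_\infty$. As you set it up, however, the coefficient step fails.

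\textbf{The gap.} It is not true that $\{\pi^o\Lf\}=L-\Efz$. The line $\Lf=\{x-2z-2y=0\}$ also passes through $\binf=[2:1:0]$; the paper uses this when it writes $\divi(P_0\circ\pi)$. Hence $\{\pi^o\Lf\}=L-\Efz-\Einf$. The two seeds then produce different sequences:
\[
\bigl((F_X^*)^j\{\pi^o\Lf\}\bigr)\cdot\Efzp=0,0,1,2,3,5,8,\dots,\qquad \bigl((F_X^*)^j\{\pi^o\Lc\}\bigr)\cdot\Efzp=0,0,1,1,3,4,9,\dots
\]
The single-sequence renewal you sketch therefore gives $c_3=1$ instead of $2$. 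It computes the expansion for a line through $\bfz$ only. Your total-mass sanity check would not catch this, because that wrong model is also degree-consistent.

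The coefficient of $\pi_*F_X^{k*}[\pi^o\Lc]$ does still depend only on $n-k$, but only through a convolution involving both sequences. Beyond that, the closed form is never actually derived. The recurrence has characteristic roots $2,1,-1$, not just $2,-1$. There is also a sign slip: the multiplicity is $m_{\Efzp}(D)=D\cdot\Efzp$, which is minus the $\Efzp$-coordinate, not plus.

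\textbf{Your ``main obstacle'' is not one.} For every divisor $D$ on $X$,
\[
\pi^*\pi_*D=D+\sum_E (D\cdot E)\,E .
\]
To see this, intersect with each $E$ and use $\pi^*(\cdot)\cdot E=0$, $E^2=-1$. Hence $F^*\pi_*D=\pi_*F_X^*D+(D\cdot\Efzp)[\Lc]$ holds whether or not $D$ has exceptional components. No local analysis at $i_0,i_0',i_\infty$ is needed.

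\textbf{How to fix it.} Apply the identity to the aggregate rather than to the pieces. Put $R_0=\pi^o\Lf$ and $R_{n+1}=F_X^*R_n+(R_n\cdot\Efzp)\,\pi^o\Lc$.
\begin{itemize}
\item Induction gives $\pi_*R_n=[\tilde P_n=0]$.
\item Unrolling gives the stated formula with $c_n=R_n\cdot\Efzp$; algebraic stability identifies $(F_X^*)^k$ with $(F_X^k)^*$.
\item In the basis $(L,\Efzp,\Einfp,\Efz,\Einf)$, the class map $v\mapsto F_X^*v+(v\cdot\Efzp)(L-\Efz)$ is $(a,b,c,d,e)\mapsto(2a,c,d,e-a,d)$. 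Its eigenvalues are $2,\pm1,0,0$, which is why $\delta$ disappears.
\end{itemize}
Write $\{R_n\}=2^nL-c_n\Efzp-g_n\Einfp-d_n\Efz-h_n\Einf$. Then $c_{n+1}=g_n$, $g_{n+1}=h_{n+1}=d_n$ and $d_{n+1}=2^n+h_n$. Hence $c_n=d_{n-2}$ for $n\ge2$, and $d_{n+1}=2^n+d_{n-1}$ with $d_0=1$, $d_1=2$. The value $d_1=2$ is exactly where the $\Einf$ in $\{\pi^o\Lf\}$ enters. This is the same recursion the paper extracts from the divisor of $P_n\circ\pi$, and it yields $c_n=\frac13\bigl(2^{n-1}+\frac{3+(-1)^{n+1}}{2}\bigr)$.
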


We postpone the proof of the above proposition.

First let us show how one can deduce an asymptotic expansion of the spectral current. 

\begin{cor} \label{cor_sibony_series} The limit  $S=\dfrac{1}{2^n} [\tilde{P}_n=0]$ in the sense of currents exists and is equal to the following series: 
\begin{equation} \label{eq_density_asymp}
\lim_{n\rightarrow +\infty}  \dfrac{1}{2^n} [\tilde{P}_n=0] =  \dfrac{1}{12} \pi_* \left ( \sum_{k\geqslant 0} \dfrac{1}{2^k} (F_X^k)^* [\pi^o \Lc] \right ).
\end{equation}
\end{cor}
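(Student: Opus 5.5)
The plan is to divide the identity of Proposition \ref{prop_error_term} by $2^n$ and pass to the limit term by term. This gives
\begin{equation*}
\frac{1}{2^n}[\tilde{P}_n=0] = \sum_{k=0}^{n-2} \frac{c_{n-1-k}}{2^n}\, \pi_*(F_X^k)^*[\pi^o\Lc] + \frac{1}{2^n}\pi_*(F_X^n)^*[\pi^o\Lf].
\end{equation*}
We treat the two pieces separately and prove weak convergence of currents on $\bP^2$. Since all currents involved are positive and closed, it suffices to control masses via cohomology classes. For a positive closed $(1,1)$-current on $X$, the mass is computed by pairing its class with a fixed Kähler class, and $\pi_*$ does not increase mass up to a constant.

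\textbf{The error term.} By algebraic stability (Proposition \ref{prop_lift_X_F}), the class of $(F_X^n)^*[\pi^o\Lf]$ is $(F_X^*)^n\{\pi^o\Lf\}$. Its norm is $O(\dyndeg^n)$, since by Theorem \ref{thm_value_dyndeg} the spectral radius is $\dyndeg\simeq 1.8$ and the remaining eigenvalues (roots of $P$ and $0,1$) are smaller. A Jordan block could add at most a polynomial factor. Hence the mass of $2^{-n}\pi_*(F_X^n)^*[\pi^o\Lf]$ is $O((\dyndeg/2)^n)\to 0$, and this term tends to $0$ as a current.

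\textbf{The main sum.} Write $c_{n-1-k}/2^n = 2^{-k}\cdot c_{n-1-k}2^{-(n-k)}$. From the explicit formula, $c_m 2^{-(m+1)} \to \frac{1}{3}\cdot\frac14=\frac1{12}$ as $m\to\infty$, and $c_m2^{-(m+1)}\le C$ uniformly. Set $A_k:=\pi_*(F_X^k)^*[\pi^o\Lc]$. Its mass is again $O(\dyndeg^k)$ by the same cohomological argument, so $\sum_k 2^{-k}\|A_k\|<\infty$. For a fixed test form $\phi$, the sum $\sum_{k\le n-2} 2^{-k} a_{n,k}\langle A_k,\phi\rangle$ has $a_{n,k}=c_{n-1-k}2^{-(n-k)}\to 1/12$ for each fixed $k$. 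The terms are also dominated by $C\,2^{-k}\dyndeg^k\|\phi\|$, which is summable. Dominated convergence for series therefore gives the limit $\frac1{12}\sum_{k\ge0}2^{-k}\langle A_k,\phi\rangle$. The same summability shows that the series on the right of \eqref{eq_density_asymp} converges as a positive closed current. Combining this with the error-term estimate yields the existence of the limit and the formula. By Theorem \ref{thm_sibony_spectral}, the limit equals $S$.

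\textbf{Expected obstacle.} The main point needing care is the uniform mass bound $\|(F_X^k)^*[\pi^o\Lc]\|\lesssim \dyndeg^k$ with $\dyndeg<2$. This requires that pullback of currents is compatible with pullback of classes. That holds for rational maps of surfaces, because $(F_X^k)^*$ on currents is defined via the graph and respects cohomology classes; algebraic stability ensures $(F_X^k)^*=(F_X^*)^k$ on classes. One must also check that the Perron–Frobenius eigenvalue is $\dyndeg$ and not something of modulus $\ge 2$ coming from the factor $x(x-1)$. This is immediate since those eigenvalues are $0$ and $1$. Everything else is bookkeeping with the explicit $c_n$.
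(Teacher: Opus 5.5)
Your proposal is correct and follows the paper's proof. Both divide the identity of Proposition \ref{prop_error_term} by $2^n$ and use the cohomological mass bound $C\delta^k$, which holds because $\delta<2$ under algebraic stability. The only difference is in the last step. The paper substitutes the explicit formula for $c_m$, writing $\frac{c_{n-1-k}}{2^n}=\frac{1}{12\cdot 2^k}+\frac{3+(-1)^{n-k}}{6\cdot 2^n}$, which gives the quantitative remainder $O(\delta^n/2^n)$ used afterwards. Your dominated-convergence argument yields the same limit but without a rate.
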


\begin{proof}
Using the formula in Proposition \ref{prop_error_term} and the previous formula, we obtain the identity:
\begin{align*}
\dfrac{1}{2^n} [P_n = 0] & = \sum_{k=0}^{n-3} \dfrac{c_{n-1-k}}{2^n} \pi_* (F_X^k)^* [\pi^o \Lc] + \dfrac{c_1}{2^n}   \pi_* (F_X^{n-2})^* [\pi^o \Lc] + \dfrac{1}{2^n} \pi_* (F_X^n)^* [\pi^o \Lf] \\
& = \dfrac{1}{12} \left ( \sum_{k=0}^{n-3} \dfrac{1}{2^k} \pi_* (F_X^k)^* [\pi^o \Lc] \right ) + \dfrac{1}{6 \cdot 2^n} \left ( \sum_{k=0}^{n-3} (3+ (-1)^{n-k})\pi_* (F_X^k)^* [\pi^o \Lc] \right ) \\
&+ \dfrac{1}{2^n} \left ( c_1 \pi_* (F_X^{n-2})^* [\pi^o \Lc] + \pi_* (F_X^n)^* [\pi^o \Lf] \right ) .
\end{align*}
Using the fact that the mass of  $(F_X^k)^* \pi^o\Lc,(F_X^k)^* \pi^o\Lf $ is bounded by $C \delta^k$, we get: 
\begin{equation*}
 \dfrac{1}{2^n} [P_n =0] = \dfrac{1}{12} \pi_* \sum_{k\geqslant 0} \dfrac{1}{2^k} (F_X^k)^* [\pi^o \Lc]  + O\left  ( \dfrac{\delta^n}{2^n}  \right )
\end{equation*}
Taking the limit as $n\rightarrow +\infty$ yields: 
\begin{equation}
\lim_{n\rightarrow +\infty} \dfrac{1}{2^n} [P_n =0] = \dfrac{1}{12} \pi_* \sum_{k\geqslant 0} \dfrac{1}{2^k} (F_X^k)^* \pi^o \Lc, 
\end{equation}
as required
\end{proof}

So the error term is controlled by:
\begin{equation*}
S  - \dfrac{1}{2^n} [P_n = 0] =  \dfrac{\dyndeg^n}{2^n}  \left (  \sum_{k \geqslant n}  \dfrac{1}{2^{k-n}\dyndeg^n} \alpha_k (F_X^k)^* \pi^o \Lc -  \dfrac{1}{\dyndeg^n} (F_X^n)^* \pi^o [P_0= 0] \right ). 
\end{equation*}

If we show that the curves equidistribute to the Green current $T_F$, then the right hand side of the equation is controlled by $ \lambda_1^n/2^n T_F$.
\begin{proof}[Proof of Proposition \ref{prop_error_term}]
Given any meromorphic function $f$ on $X$ or $\bP^2$, we shall write by $\divi(f)$ the difference of current $[f=0] - [f=\infty]$. 
We shall use the following equality of currents on the blow-up $X$ of $\Pg^2$.
\begin{align*}
\pi^*\divi(y) &= \pi^o[y=0] + [\Efzp] + [\Efz] - [L_\infty] - [\Einfp] - [\Einf].\\
F_X^* [L_\infty] &= 2 \pi^o [y = 0] +  [\Efzp].\\
F_X^* [\Efzp] &= [\pi^o \Lc]. \\
F_X^* [\Einfp] &= [\Efzp]. \\
F_X^* [\Efz] &= [\Einfp] + [\Einf].\\
F_X^* [\Einf] &= [\Efz], 
\end{align*}
where $L_\infty$ is the strict transform by $\pi$ of the line of infinity in $\bP^2(\C)$. 
Note also that since the line $P_0 = 0$ passes through the points $\bfz, \binf$, we have:
\begin{equation*}
\divi(P_0 \circ \pi) = \pi^o[P_0 = 0]  + [\Efz] - [L_\infty] - [\Einfp].
\end{equation*}
We get using the recursive formula from Lemma \ref{lem_recursion} together with the previous relations:
\begin{align*}
\divi(P_1 \circ \pi) =& 2 \pi^o [y = 0] + 2 [\Efzp] + 2 [\Efz] - 2 [L_\infty] - 2 [\Einfp] - 2 [\Einf] \\
& + F_X^* \pi^o[P_0 = 0] + [\Einf] + [\Einfp] - 2 \pi^o[y=0] - [\Efzp] - [\Efzp] \\
 = &   F_X^* \pi^o[P_0 = 0] + 2 [\Efz] - 2 [L_\infty]   -  [\Einfp] -[ \Einf]. 
\end{align*}
In particular, this shows that the degree of $P_1$ in the two variables $x, y$ satisfies $\deg(P_1) = 2$. 

Since $P_n$ is a polynomial, it has no pole in $\C^2$ and we can write 
\begin{equation*}
\divi P_n \circ \pi = R_n + c_n [\Efzp] + d_n [\Efz] - e_n [\Einfp] - f_n [\Einf] - \deg(P_n) [L_\infty],
\end{equation*}
where $c_n , d_n , e_n , f_n$ are non-negative integers and where $R_n$ is a closed positive current and $R_1 = F_X^*\pi^o\Lf$.
Using the inductive relation \eqref{eq_inductive} and the previous relations, we obtain the following recursion formula:
\begin{equation*}
\left  \lbrace
\begin{array}{lll}
R_{n+1} = F_X^* R_n + c_n \pi^o\Lc,\\
\deg(P_{n+1}) = 2 \deg(P_n),\\
c_{n+1} = 2^{n+1} -   e_n - \deg(P_n) , \\
d_{n+1} = 2^{n+1} - f_n , \\
e_{n+1} = 2^{n+1} - d_n , \\
f_{n+1} = 2^{n+1} - d_n.
\end{array} \right .
\end{equation*}
Together with the fact that $e_1 = f_1 = 1, c_1 = 0 , d_1 = 2 $, the above relation implies that $e_n = f_n $ for all $n\geqslant 1$. And we obtain:
\begin{equation*}
\left  \lbrace
\begin{array}{lll}
c_{n+1} = 2^n - e_n , \\
d_{n+2} = 2^{n+1} + d_n , \\
e_{n+2}= f_{n+2} = 2^{n+1} + e_n ,
\end{array} \right .
\end{equation*} 
for all $n \geqslant 1$. 
We get:
\begin{equation*}
\left \lbrace  \begin{array}{ll}
d_{2n} = 2 \dfrac{2^{2n} - 4}{3} + 3, \\
d_{2n -1} = 2 \dfrac{2^{2n-1}  - 2}{3} + 2,\\
e_{2n} = 2 \dfrac{2^{2n} -4}{3} + 2 = \dfrac{2}{3} (2^{2n} -1), \\
e_{2n-1} = 2 \dfrac{2^{2n - 1} - 2}{3} + 1= \dfrac{1}{3} (2^{2n} - 1),\\
c_{2n} = \dfrac{1}{3} (2^{2n-1}+1 ),\\
c_{2n +1} = \dfrac{1}{3} (2^{2n} + 2) .
\end{array} \right .
\end{equation*}
for all $n \geqslant 1$.
We thus conclude that $c_n$ satisfies is given by the formula: 
\begin{equation*}
c_n= \dfrac{1}{3} \left (2^{n-1} + \dfrac{3 + (-1)^{n+1}}{2}\right )
\end{equation*}
for all $n\geqslant 2$ and this gives the required formula. 
Furthermore, the recursive formula on $R_n$ yields the equality of currents:
\begin{equation*}
R_n = \sum_{k=0}^{n-3} c_{n-1-k} (F_X^k)^* \pi^o\Lc + (F_X^n)^* \pi^o\Lf. 
\end{equation*}
Finally, we deduce the formula for the current $[\tilde{P}_n=0]$ by pushing forward by $\pi$.
\end{proof}

\begin{rem} Our proof of the Proposition \ref{prop_error_term} also yields the limit of the current $2^{-n}[\tilde{P}_n \circ \pi =0]$ on the blow-up $X$. Precisely, continuing the computation, we also have that $d_n$ is a sequence of the form $2^{n+1}/3 + \epsilon_n$ where $\epsilon_n$ is a periodic sequence so that both $c_n/2^n$ and $d_n/2^n$ converge toward $2/3$ and this yields: 
\begin{equation*}
\lim_{n\rightarrow +\infty} \dfrac{1}{2^n} [\tilde{P}_n \circ \pi = 0] = \lim_{n\rightarrow +\infty} \dfrac{[R_n]}{2^n} + \dfrac{c_n[\Efzp] + d_n [\Efz]}{2^n} =  \dfrac{1}{12} \sum_{k \geqslant 0} \dfrac{1}{2^k} (F_X^k)^* [\pi^o(\Lc)] + \dfrac{2}{3} ([\Efz] + \Efzp).
\end{equation*}
\end{rem}

Let us study the current $S_X$ on $X$ given by: 
\begin{equation}
S_X = \dfrac{1}{12} \sum_{k\geqslant 0} \dfrac{1}{2^k} (F_X^k)^* \pi^o \Lc.  
\end{equation}

\begin{thm} The current $S_X$ is the unique closed positive current such that 
\begin{equation} \label{eq_functional_current}
(2 - F_X^*) S_X = \dfrac{1}{6} \pi^o \Lc.
\end{equation}
\end{thm}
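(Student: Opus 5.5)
We prove existence and uniqueness separately. For existence we check that the series defining $S_X$ converges and then verify the functional equation \eqref{eq_functional_current} by a telescoping computation. For uniqueness we show that the difference of two solutions is a current $R$ with $F_X^*R=2R$, and then use the growth of masses, $\dyndeg<2$, to force $R=0$.

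\textbf{Convergence of $S_X$.} Since $F_X$ is algebraically stable on $X$ (Proposition \ref{prop_lift_X_F}), the class of $(F_X^k)^*[\pi^o\Lc]$ is $(F_X^*)^k\{\pi^o\Lc\}$ in $H^{1,1}(X)$. Its mass with respect to a fixed Kähler form $\omega$ is therefore bounded by $C\,\rho(F_X^*)^k=C\dyndeg^k$. By Theorem \ref{thm_value_dyndeg}, $\dyndeg\simeq1.8<2$. Hence the series $\frac1{12}\sum_k 2^{-k}(F_X^k)^*[\pi^o\Lc]$ converges in mass, and $S_X$ is a well-defined closed positive current.

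\textbf{The functional equation.} The key point is that $F_X^*$ is continuous on closed positive currents along sequences of bounded mass, and compatible with iteration: $F_X^*(F_X^k)^*=(F_X^{k+1})^*$ on currents of integration of curves. The latter holds because algebraic stability and the Diller--Favre criterion give $(F_X^{k+1})^*=F_X^*(F_X^k)^*$. Both properties follow from writing $F_X^*T=(\pi_1)_*\pi_2^*T$ on the desingularized graph, as in the proof of Proposition \ref{prop_cohomology}, or locally via potentials. We then apply $F_X^*$ term by term:
\begin{equation*}
F_X^*S_X=\frac1{12}\sum_{k\ge0}2^{-k}(F_X^{k+1})^*[\pi^o\Lc]=2\Bigl(S_X-\frac1{12}[\pi^o\Lc]\Bigr).
\end{equation*}
This gives $(2-F_X^*)S_X=\frac16[\pi^o\Lc]$, which is \eqref{eq_functional_current}.

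\textbf{Uniqueness.} Let $S'$ be another closed positive solution and set $R=S_X-S'$. Then $R$ is a closed real current of order zero with $F_X^*R=2R$, so $R=2^{-n}(F_X^n)^*R$ for every $n$. Write $S'=\frac16\sum_{k<n}2^{-k-1}(F_X^k)^*[\pi^o\Lc]+2^{-n}(F_X^n)^*S'$; this follows by iterating $S'=\frac12(\frac16[\pi^o\Lc]+F_X^*S')$. The remainder $2^{-n}(F_X^n)^*S'$ is positive and its mass equals $2^{-n}\langle (F_X^*)^n\{S'\},\omega\rangle\le C(\dyndeg/2)^n\|S'\|$, again by algebraic stability and since cohomology controls the mass of positive closed currents. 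It therefore tends to $0$, and $S'$ equals the series, which is $S_X$.

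The main obstacle is justifying the identity $(F_X^n)^*=(F_X^*)^n$ and the continuity of pullback at the level of currents rather than classes. This is delicate because $F_X$ has indeterminacy points $i_0,i_0',i_\infty$, and pullback is only defined for currents with local potentials. The route I would try first is to work with quasi-potentials: write $T=\theta+dd^c u$ with $\theta$ smooth, and set $F_X^*T=F_X^*\theta+dd^c(u\circ F_X)$. Pullback is then continuous under $L^1$ convergence of $u$, and it composes correctly because no curve is mapped by an iterate into $I(F_X)$ (Proposition \ref{prop_lift_X_F}).
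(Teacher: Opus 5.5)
Your proof is correct. The existence half is the same telescoping computation as in the paper, which phrases it through the resolvent $(z-F_X^*)^{-1}=\sum_k z^{-k-1}(F_X^k)^*$ at $z=2$. You do not actually need weak continuity of $F_X^*$ there: the series converges in mass, and $F_X^*$ is linear on positive closed currents with mass controlled by the class.

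For uniqueness you take a genuinely different route. The paper argues in three steps:
\begin{itemize}
\item it uses the invertibility of $2-F_X^*$ on $H^{1,1}(X)$ to put any competitor $S'$ in the class of $S_X$;
\item it writes $S_X-S'=dd^c\varphi$ with $\varphi$ DSH and deduces $\varphi\circ F_X=2\varphi$;
\item it kills $\varphi$ using the volume-exponent bound of Theorem \ref{thm_vol_exponent} (exponent $<2$) together with the exponential integrability of Lemma \ref{lem_kiselman}.
\end{itemize}
You instead unroll the equation for $S'$ itself,
\[
S'=\tfrac1{12}\textstyle\sum_{k<n}2^{-k}(F_X^*)^k[\pi^o\Lc]+2^{-n}(F_X^*)^nS',
\]
and observe that the tail is positive and closed. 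Its mass is therefore the cohomological quantity $2^{-n}(F_X^*)^n\{S'\}\cdot\{\omega\}=O((\dyndeg/2)^n)$.

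This step needs only positivity, the fact that the class of $F_X^*T$ is $F_X^*\{T\}$, and $\rho(F_X^*)<2$. Algebraic stability enters only to identify iterated pullbacks with pullbacks by iterates, so that the series matches the definition of $S_X$. Your route bypasses both the class-uniqueness step and the delicate local Jacobian analysis near the indeterminacy points.

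The paper's potential-theoretic argument is the one that survives when the eigenvalue does not exceed $\dyndeg$. For the Green current, $\dyndeg^{-n}(F_X^n)^*$ does not shrink masses, and one only needs the eigenvalue to beat the volume exponent $\sqrt3$. For the eigenvalue $2>\dyndeg$ that appears here, your mass argument is simpler and more robust.
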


\begin{proof}
We first argue that the class of  $[S_X] \in H^{1,1}(X)$ is unique. 
Indeed, the pullback action $F_X^*$ on $H^{1,1}(X)$ has spectral radius $\delta$, so the matrix $2Id - F_X^*$ is invertible, so the class $[S_X]$ is uniquely determined. 

Recall that a DSH function on $X$ is outside a pluripolar set a difference of quasi-plurisubharmonic functions (i.e quasi-psh which means a plurisubharmonic function minus a smooth function).  
Take another closed positive current $S'$ satisfying the same functional equation. Note that it must be in the same cohomology class as $S$, and by \cite[Proposition 2.2]{dinh_sibony_distribution}, there exists a DSH-function $\varphi$ such that $dd^c \varphi = [S_X]- [S']$.
Moreover, we have $\varphi = \varphi_+ - \varphi_-$ with $\varphi_{\pm}$ quasi-plurisubharmonic and a constant $C>0$ such that $dd^c \varphi_{\pm} \geqslant -C \omega$.   
The functional equation is of the form: 
\begin{equation*}
0 = (2 -F_X^*) (S_X - S') = 2 dd^c \varphi - dd^c \varphi \circ F_X=dd^c\left ( \dfrac{1}{2}  \varphi \circ F_X - \varphi \right ). 
\end{equation*} 
 This implies that $1/2 \varphi \circ F_X - \varphi$ is constant. By removing an appropriate constant we can then infer that $1/2\varphi \circ F_X = \varphi$. 
 Let us show that $\varphi$ is equal to zero. Since $\varphi_{\pm}$ are quasi-psh functions, we can assume that they are non-positive, and by Theorem \ref{thm_vol_exponent} and the inclusion $\{\varphi < -t\} \subset \{ \varphi_+ < -t \}$, one has for any $t>0$:
\begin{equation*}
\vol(\{ \varphi \leqslant -t \}) \leqslant \vol(\{\varphi_+ \leqslant -t\}) =  \vol(\{ \varphi_+ \circ F_X^n \leqslant -2^n t \}) \leqslant K \vol( \{\varphi_+ \leqslant -2^n t\})^{1/\sigma^n}.
\end{equation*}
Since $\varphi_+$ is $C\omega$-psh and  using Lemma \ref{lem_kiselman}, we have that for all $t>0$, $\vol(\{ \varphi_+ \leqslant -t\}) \leqslant A e^{-Bt}$ with $A,B >0$ and  we obtain:
\begin{equation*}
\vol(\{ \varphi \leqslant - t\}) \leqslant K A^{1/\sigma^n} \exp\left( - Bt 2^n/\sigma^n\right). 
\end{equation*}
As $2> \sigma$, taking the limit as $n \rightarrow +\infty$ shows that for all $t>0$, $\vol(\{\varphi \leqslant -t \})=0$. Similarly, one obtains that $\vol(\{-\varphi \leqslant -t \})=0)$ and we conclude that $\varphi=0$ almost everywhere, hence $[S]=[S']$.

Let us now establish the functional equation for $S$. 
 Consider the resolvant $R(z)= (z Id - F_X^*)^{-1}$ acting on closed positive $(1,1)$-currents when $z > \delta$. 
Observe that we have: 
\begin{equation*}
z R(z) = \sum_{k\geqslant 0} \dfrac{1}{z^k} (F_X^k)^*, 
\end{equation*}
So when $z= 2$, we get in particular 
\begin{equation*}
(2 - F_X^*) S_Xs = \dfrac{1}{6} (2 - F_X^*)  R(2) \pi^o \Lc = \dfrac{1}{6} \pi^o \Lc, 
\end{equation*}
as required.
\end{proof}

\begin{cor}
	The sliced measures $  [y=t] \wedge S$ for $t\in \C$ vary continuously in $t$. 
\end{cor}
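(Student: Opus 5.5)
The plan is to use the series representation of Corollary \ref{cor_sibony_series},
\begin{equation*}
S = \dfrac{1}{12}\sum_{k\geqslant 0} \dfrac{1}{2^k}\, \pi_*(F_X^k)^*[\pi^o\Lc],
\end{equation*}
and slice it term by term. Each term is a current of integration on an algebraic curve $C_k:=\pi(F_X^{-k}(\pi^o\Lc))$ of degree at most $C\dyndeg^k$, by the mass bound used in that proof. Its slice $[y=t]\wedge[C_k]$ is a sum of Dirac masses at the points of $C_k\cap\{y=t\}$, counted with multiplicity.

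\textbf{Step 1: each slice is continuous.} The slices $[y=t]\wedge[C_k]$ are well defined for every $t$ except when $\{y=t\}$ is a component of $C_k$. Since $\{y=t\}$ is never in $\hat{\mathcal L}_c$, we need to check that no horizontal line is a component of a preimage of $\Lc$ (the horizontal lines, apart from $\Lz$, are not mapped into $\Lc$). The line $\Lz$ maps to $\Linf$ and then back onto $\Lz$, so it never lands in $\Lc$.

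For such a curve, the roots of $Q_k(x,t)=0$, where $Q_k$ is the defining polynomial, depend continuously on $t$ as an unordered tuple. Roots escaping to $x=\infty$ are allowed if we slice in $\bP^2$, where $\{y=tz\}$ is a line through $\pinf$. So $t\mapsto [y=t]\wedge[C_k]$ is weakly continuous as measures on $\bP^1$. Its total mass is exactly $\deg C_k\leqslant C\dyndeg^k$, by B\'ezout.

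\textbf{Step 2: uniform convergence.} The partial sums $S_N(t)=\frac1{12}\sum_{k\leqslant N}2^{-k}[y=t]\wedge[C_k]$ are continuous in $t$. The tail has total mass at most $\sum_{k>N}C(\dyndeg/2)^k\to 0$, uniformly in $t$, because $\dyndeg<2$. A uniform limit in total variation of weakly continuous families of measures is weakly continuous. Hence $t\mapsto S_N(t)$ converges, uniformly in $t$, to a continuous family $\omega_t$.

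\textbf{Step 3: identification.} The main obstacle is to identify $\omega_t$ with the intersection product $[y=t]\wedge S$. I would argue through potentials. We have $S=dd^c u$ locally, with
\begin{equation*}
u=\frac1{12}\sum_k 2^{-k}\log|Q_k|,
\end{equation*}
suitably normalized in homogeneous coordinates. The wedge product with $[y=t]$ is defined as $dd^c(u|_{\{y=t\}})$ whenever $u$ is not identically $-\infty$ on the line, which holds since no $C_k$ contains it. The restricted series converges in $L^1_{loc}$ of the line, by the uniform mass control on the normalized potentials $2^{-k}\log|Q_k|$. Therefore $dd^c$ commutes with the sum, which gives $[y=t]\wedge S=\omega_t$. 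This step is where the care lies: I would check that normalizing each $\log|Q_k(\cdot,t)|$ (say by $\sup$ on the unit sphere in $\C^3$) gives quasi-psh functions on $\bP^1$ with uniformly bounded $L^1$ norms in $t$ on compacts. That follows from the bounded degree growth and compactness of normalized psh functions with bounded mass.
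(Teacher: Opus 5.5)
Your Steps 1 and 2 are the paper's argument. You truncate the series for $S$, use that finitely many algebraic curves have continuously varying slices, and use that the tail has slice mass $O((\dyndeg/2)^N)$ uniformly in $t$. The paper takes for granted two things: that no horizontal line is a component of some $C_k$, and that the slice of $S$ is the limit of the slices of the partial sums. Your Steps 1 and 3 address genuine points, but two of your justifications there do not work as written.

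\textbf{Step 1.} The remark ``horizontal lines are not mapped into $\Lc$'' only covers one iterate. You need $F_X^k(\ell_t)\neq\pi^o\Lc$ for all $k$, where $\ell_t=\{y=tz\}$.

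\textbf{Step 3.} The implication ``no $C_k$ contains $\ell_t$, hence $u|_{\ell_t}\not\equiv-\infty$'' fails for an infinite series. Compactness of normalized $\omega$-psh functions on $\bP^2$ also gives no control on their restrictions to a fixed line. For example, $\frac1d\log\bigl(|(y-tz)^d+\varepsilon z^d|/\|(x,y,z)\|^d\bigr)$ has supremum on $\bP^2$ bounded independently of $d$ and of $\varepsilon\in(0,1]$. On $\ell_t$, however, it equals $\frac1d\log\varepsilon+\log(|z|/\|(x,y,z)\|)$, which is as negative as you like. So the uniform $L^1$ bounds on $\ell_t$ that you assert do not follow from degree growth and compactness.

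\textbf{A common fix.} One observation repairs both points.
\begin{itemize}
\item The point $\pinf=[1:0:0]$ lies on every $\ell_t$, and it is a superattracting fixed point near which $F$ is holomorphic.
\item Its basin $\mathcal B$ is an open, forward-invariant set disjoint from $\Lc$. Indeed $F(\Lc\setminus\{\bfz\})=\{\bfzp\}$, and the orbit $\bfzp\mapsto\binfp\mapsto\bfz$ ends at the indeterminacy point. Hence no $C_k$ meets $\mathcal B$.
\item Consequently $F_X^k(\ell_t)\ni\pinf\notin\pi^o\Lc$. So $\ell_t$ is never a component of $C_k$, and no intersection point escapes to infinity.
\item Also $S|_{\mathcal B}=0$, so the global potential $u$ of $S$ is pluriharmonic, hence finite, near $\pinf$.
\end{itemize}
Normalize the terms so that the partial-sum potentials $u_N$ decrease, for instance using $\log|Q_k|-\max_{\|v\|=1}\log|Q_k|\le 0$ on the unit ball of $\C^3$. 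Then $u_N|_{\ell_t}\downarrow u|_{\ell_t}\not\equiv-\infty$, so the convergence holds in $L^1(\ell_t)$ and $dd^c(u_N|_{\ell_t})\to dd^c(u|_{\ell_t})=[y=t]\wedge S$. Combined with your Step 2, this identifies $\omega_t$ with $[y=t]\wedge S$ without any per-term bound.
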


\begin{rem} A priori, the intersection between the pullbacks of $\Lc$ and a horizontal line $\{ y=t\}$ for $t\in \R$ is a measure supported on $\C \times \{t \}$. However, we will see that the measure is actually supported in $\R \times \{ t\}$.
\end{rem}

\begin{proof}
For any $\epsilon > 0$, we truncate the series
\begin{equation}
	S = S_N + \dfrac{1}{12} \pi_* \left ( \sum_{k\geqslant N} \dfrac{1}{2^k} (F_X^k)^* \pi^o \Lc \right ),
\end{equation}
	where $N$ is chosen large enough so that the mass of $S - S_N$ satisfies $|| S - S_N|| \leqslant \epsilon $.
	Since $S_N$ is a laminar current since it is the integration along finitely many algebraic curves, the sliced measures $t \mapsto [y=t] \wedge S_N$ vary continuously. 
	This shows that the sliced measures $[y=t] \wedge S_N$  converge uniformly toward the sliced measure $[y=t] \wedge S$ as $N\rightarrow +\infty$, hence the measures $S \wedge [y=t]$ vary continuously. 
	\end{proof}

\subsection{Green current}

In the rest of the paper, we will denote by $\dyndeg$ the dynamical degree of the map $F$.

\begin{thm} (see \cite[Theorem 2.1]{diller_dujardin_guedj_I}) Let $F : X \dashrightarrow X$ be an algebraically stable  rational map on a compact K\"ahler surface and let $\theta \in H^{1,1}(X)$ such that:
\begin{enumerate}
\item[(i)] the dynamical degree $\dyndeg$ of $F$ satisfies $\dyndeg > 1$; 
\item[(ii)] $F^* \theta = \dyndeg \theta$ in $H^{1,1}(X)$;
\item[(iii)] $\dyndeg^2 > \dtop(F)$ where $\dtop(F)$ is the topological degree.  
\end{enumerate}
Then there exists a unique current $T_F$ representing $\theta$. Moreover, for any smooth form $\alpha $ representing $\theta$, one has the convergence in the sense of currents
\begin{equation*}
\lim_{n\rightarrow +\infty} \dfrac{1}{\dyndeg^n} (F^n)^* \alpha =T_F. 
\end{equation*}
\end{thm}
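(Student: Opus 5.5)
We follow the strategy of Diller--Dujardin--Guedj. Fix a smooth closed $(1,1)$-form $\alpha$ representing $\theta$. Since $F$ is algebraically stable, $(F^n)^*=(F^*)^n$ on $H^{1,1}(X)$. Together with $F^*\theta=\dyndeg\theta$, this gives that $\dyndeg^{-1}F^*\alpha$ is cohomologous to $\alpha$. By the $dd^c$-lemma we write
\begin{equation*}
\dyndeg^{-1}F^*\alpha=\alpha+dd^c\gamma ,
\end{equation*}
where $\gamma$ is quasi-psh: it is smooth off the indeterminacy set $I(F)$ and has logarithmic poles there, so it is $C\omega$-psh for some $C$. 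Iterating and using $(F^n)^*=(F^*)^n$ yields
\begin{equation*}
\dyndeg^{-n}(F^n)^*\alpha=\alpha+dd^c G_n,\qquad G_n=\sum_{j=0}^{n-1}\dyndeg^{-j}\gamma\circ F^j .
\end{equation*}
Normalize so that $\gamma\le 0$. Then $G_n$ is a decreasing sequence of $\omega$-psh-type functions, with uniformly bounded quasi-psh constants (because $(F^j)^*\omega$ has mass $O(\dyndeg^j)$). Hence $G_n$ decreases either to a quasi-psh function $G$ or to $-\infty$ identically.

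The key step is to rule out $G\equiv-\infty$, that is, to show that $\sum_j \dyndeg^{-j}\gamma\circ F^j\in L^1$. First we have $\int|\gamma\circ F^j|\,\omega^2=\int|\gamma|\,(F^j)_*\omega^2$. Next, $(F^j)_*\omega^2$ has mass $\dtop^j$, but this bound is too crude. Instead, we use that $\gamma$ has log poles only at finitely many points. This gives the estimate
\begin{equation*}
\int|\gamma\circ F^j|\,\omega^2\lesssim \sup_{t}\ \vol(\{\gamma\circ F^j<-t\})\text{-type estimates}.
\end{equation*}
Alternatively, following DDG, we write $\int \gamma\circ F^j\,\omega^2=\int\gamma\, (F^j)_*\omega^2$. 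We bound $(F^j)_*\omega$ against $\omega$ in mass by $\lambda_1(F_*)^j=\dyndeg^j$, and then use $\omega\wedge (F^j)_*\omega$ estimates via the hypothesis $\dyndeg^2>\dtop$. This makes the series converge geometrically like $(\dtop/\dyndeg^2)^{j/2}$, or like $j\,\dyndeg^{-j}\cdot\dyndeg^{j}\cdot\dtop^{j}/\dyndeg^{2j}$ in the worst case. This $L^1$ bound is the main obstacle. Here condition (iii) is essential, since it prevents the preimages of small neighborhoods of $I(F)$ from carrying too much mass. I would try first the DDG argument bounding $\int |\gamma|\,d(F^j)_*\omega^2$ by splitting into neighborhoods $B(I,r)$ and using Lelong numbers of $(F^j)_*\omega$. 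A fallback is volume exponent bounds in the spirit of Theorem \ref{thm_vol_exponent}.

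Granted convergence, $T_F:=\alpha+dd^cG$ is a closed positive current in $\theta$ with $\dyndeg^{-1}F^*T_F=T_F$, and $\dyndeg^{-n}(F^n)^*\alpha\to T_F$ by monotone convergence of potentials. For independence from $\alpha$, take another smooth representative $\alpha'=\alpha+dd^c u$ with $u$ smooth. Then $\dyndeg^{-n}(F^n)^*\alpha'-\dyndeg^{-n}(F^n)^*\alpha=dd^c(\dyndeg^{-n}u\circ F^n)$, and the potential tends uniformly to $0$. For uniqueness, let $T'$ be any invariant positive current in $\theta$, so $T'=\alpha+dd^c\varphi$ with $\varphi$ quasi-psh, $\varphi\le 0$. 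Invariance gives $\varphi=G+\dyndeg^{-n}\varphi\circ F^n+c$. The term $\dyndeg^{-n}\varphi\circ F^n\to0$ in $L^1$ by the same integrability estimate as above, so $T'=T_F$.
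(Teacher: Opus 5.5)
The paper gives no proof of this statement; it is quoted from Diller--Dujardin--Guedj, so your argument has to stand on its own. The formal part is standard and correct: the telescoping potentials $G_n=\sum_{j<n}\delta^{-j}\gamma\circ F^j$, which use $(F^n)^*=(F^*)^n$, and independence of $\alpha$ via $\|\delta^{-n}u\circ F^n\|_\infty\to 0$ for smooth $u$.

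\textbf{The gap.} You never show that $G_n$ does not decrease to $-\infty$; you only list things to try. The sketch also rests on a confusion. $(F^j)_*\omega^2$ is the push-forward of a measure, so it has mass $\int_X\omega^2$ for every $j$; it is $(F^j)^*\omega^2$ that has mass $\dtop^j\int_X\omega^2$. The real issue is therefore how much of this mass $F^j$ piles up near $I(F)$, where $\gamma$ has its logarithmic poles. What is needed is an estimate such as $\vol(F^{-j}(E))\le K\,\vol(E)^{1/\sigma^j}$ with $\sigma<\delta$, which bounds $\int|\gamma|\,d(F^j)_*\omega^2$ by something of order $\sigma^j$. Neither the mass growth $\|(F^j)_*\omega\|\sim\delta^j$ nor the cohomological spectral gap coming from (iii) sees where that mass sits. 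The rates $(\dtop/\delta^2)^{j/2}$ you write down are not derived from anything. The volume-exponent route is what the paper carries out by hand for the specific map $F_X$ (Theorem~\ref{thm_vol_exponent}, used in the proof of Theorem~\ref{thm_equidistribution}), but it does not follow from (i)--(iii).

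\textbf{A cheaper argument when $\alpha\ge 0$.} In that case $\alpha+dd^cG_n=\delta^{-n}(F^n)^*\alpha\ge 0$, so $(G_n)$ is a decreasing sequence of $\alpha$-psh functions. A decreasing limit of $\alpha$-psh functions on the connected manifold $X$ is either $\alpha$-psh or $\equiv-\infty$. So it suffices to find one point $x_0$ with $G(x_0)>-\infty$. Any point whose orbit stays at positive distance from $I(F)$ works, since $\gamma$ is smooth off $I(F)$; for $F_X$, the fixed point $\pinf$ will do.

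\textbf{The uniform bound.} This also corrects your justification of the uniform quasi-psh constants. From $dd^c\gamma\ge -C\omega$ you only get $dd^c(\delta^{-j}\gamma\circ F^j)\ge -C\delta^{-j}(F^j)^*\omega$. The fact that $(F^j)^*\omega$ has mass $O(\delta^j)$ does not make it $\le C'\delta^j\omega$, since it has positive Lelong numbers at $I(F^j)$. Summing over $j$ only bounds $dd^cG_n$ below by a current whose mass grows like $n$. The correct reason is $\delta^{-n}(F^n)^*\alpha\ge0$, which needs a semipositive representative. For a general smooth $\alpha$, the negative part of $F^*\alpha$ blows up near $I(F)$ and $\gamma$ is only a difference of quasi-psh functions. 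Positivity of the limit then has to be argued separately, and so does nefness of $\theta$, which the hypotheses do not impose ($-\theta$ satisfies them as well).

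\textbf{Uniqueness.} The uniqueness you try to prove is false under (i)--(iii), and so is the lemma you invoke for it, that $\delta^{-n}\varphi\circ F^n\to 0$ in $L^1$ for every quasi-psh $\varphi$. Take $f(x,y)=(x^2,y)$ on $\mathbb{P}^1\times\mathbb{P}^1$. It is holomorphic, $\delta=2$, $\dtop=2<\delta^2$, and $\theta$ is the class of a fiber $\{x=c\}$. With $\alpha=\pi_1^*\omega_{\mathrm{FS}}$, the currents $[x=0]$, $[x=\infty]$ and $\lim 2^{-n}(f^n)^*\alpha=\pi_1^*(\text{normalized arclength on } |x|=1)$ are all positive, closed, in $\theta$, and satisfy $f^*T=2T$. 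Correspondingly, for $\varphi=\log|x|-\frac12\log(1+|x|^2)$ one finds $2^{-n}\varphi\circ f^n\to\min(\log|x|,0)\neq 0$. So ``unique'' in the statement can only mean independence of the smooth representative $\alpha$, which your argument does give. Ruling out extra invariant currents needs additional input, such as volume exponents uniformly below $\delta$. That is the role Theorem~\ref{thm_vol_exponent} is meant to play for $F_X$, and it is not a consequence of (i)--(iii).
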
 

The current $T_F$ is called the \textit{Green current} of $F_X$. 
The previous theorem shows that this current is canonical as pullbacks of smooth forms converge to it. 

We investigate the regularity properties of the Green current.

Recall that by Siu's decompositiion theorem, the Green current $T_F$ can be decomposed into $T_F = A + R$ where $A$ is a countable sum of currents of integration on $X$ and where $R$ is a positive current with the property that for each $c>0$, the locus of points  $x$ where the Lelong number at $x$ is larger or equal to $c$ is of dimension $<1$. When $A=0$, we say that $T_F$ has no \textit{atomic part}. 

\begin{prop} The Green current $T_F$ has no atomic part. 
\end{prop}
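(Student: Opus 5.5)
We argue by contradiction using the invariance $F_X^* T_F = \dyndeg T_F$ and Siu's decomposition $T_F = A + R$ with $A = \sum_j a_j [C_j]$, $a_j>0$ and $\sum_j a_j \|C_j\| \leqslant \|T_F\|<\infty$. The first step is to check that the atomic part is itself invariant. Pullback of a current of integration on an irreducible curve $C$ is $F_X^*[C] = \sum m_i [C_i']$, where the $C_i'$ are the irreducible components of the total transform. This includes the curves collapsed by $F_X$ onto points of $C$, namely $\tilde\Lc$ and $\Efzp$. Pullback of the diffuse part $R$ can create no new atoms except along curves collapsed to points where $R$ has positive Lelong number. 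Since $R$ has Lelong number $\geqslant c$ only on a countable set, only $\tilde\Lc$ and $\Efzp$ can arise this way. So, up to adding atoms supported on $\tilde\Lc\cup\Efzp$, the relation $F_X^*A = \dyndeg A$ holds between atomic parts.

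The second step is to exploit the mass. Write $w(C)$ for the coefficient of $[C]$ in $T_F$. Invariance gives $\dyndeg\, w(C') \geqslant m\, w(C)$ for every component $C'$ of $F_X^{-1}(C)$. Iterating backwards from a curve $C$ with $w(C)>0$ produces atoms on $F_X^{-n}(C)$ with total weight comparable to $w(C)\,\dyndeg^{-n}\|(F_X^n)^*[C]\|$. This stays bounded below by a constant times $w(C)$, because the class $\{C\}$ pairs positively with $\theta_-$ and Proposition \ref{prop_cohomology}(v) then gives $\|(F_X^n)^*\{C\}\|\sim c\,\dyndeg^n$. This alone is not contradictory, so we need a finer mechanism.

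The third step is to show $\nu(T_F,x)=0$ for generic points. The natural route is the standard one (Favre, Diller--Dujardin--Guedj), which uses the volume estimate of Theorem \ref{thm_vol_exponent}. Write $T_F=\theta+dd^c g$ with $g$ quasi-psh and $g = \dyndeg^{-1} g\circ F_X + u$, where $u$ is bounded above and DSH. Iterating gives
\[
g = \sum_{k<n}\dyndeg^{-k} u\circ F_X^k + \dyndeg^{-n} g\circ F_X^n .
\]
Suppose $T_F$ had an atom $a[C]$. Then $g\leqslant a\log \mathrm{dist}(\cdot,C)+O(1)$ near $C$, so $\mathrm{vol}\{g<-t\}\geqslant c\,e^{-2t/a}$ for all large $t$. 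On the other hand, Kiselman's estimate (Lemma \ref{lem_kiselman}) for $u$ and for $g$, combined with the volume bound $\mathrm{vol}(F_X^{-n}(Y))\leqslant K\,\mathrm{vol}(Y)^{1/(K\alpha^n)}$ with $\alpha<\dyndeg$, gives an upper bound of the following form. Choose $n\approx \log t/\log\dyndeg$, and run the same argument as in the uniqueness proof for $S_X$, where $2>\sigma$ is replaced by $\dyndeg>\alpha$. This yields $\mathrm{vol}\{g<-t\}\leqslant A\exp(-B\,t\,(\dyndeg/\alpha)^{n})$, i.e. a decay faster than any exponential $e^{-Mt}$. That contradicts the lower bound $c\,e^{-2t/a}$, so $A=0$.

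The main obstacle is the third step. We must get the superexponential decay rather than merely exponential decay, and we must handle the orbits that pass through the indeterminacy points $i_0,i_0',i_\infty$. The latter is exactly what Theorem \ref{thm_vol_exponent} was designed for, since its bound is uniform along orbits, including those targeting indeterminate points. The delicate part is the bookkeeping of the term $\dyndeg^{-n}g\circ F_X^n$: on $\{g<-t\}$ either one of the $u\circ F_X^k$ terms is below $-ct\,\dyndeg^{k}/n^2$, or $g\circ F_X^n<-ct\,\dyndeg^n$. Each alternative is then controlled by pulling back a sublevel set of exponentially small volume.
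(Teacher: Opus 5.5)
\textbf{The gap is in your third step, and better bookkeeping cannot fix it, because the estimate you are aiming for is false.}

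A bound $\vol\{g<-t\}\leqslant A\exp(-Bt(\dyndeg/\alpha)^{n})$ with $n\approx\log t/\log\dyndeg$ decays faster than every exponential. That forces $\nu(T_F,x)=0$ at \emph{every} point $x$: indeed $g\leqslant \nu\log|z-x|+O(1)$ near a point of Lelong number $\nu>0$, which gives $\vol\{g<-t\}\gtrsim e^{-4t/\nu}$.

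But $T_F$ does have positive Lelong numbers at points of indeterminacy. Take $i_\infty$, which $F_X$ blows up onto $\Efzp$. From the matrix of $F_X^*$, the invariant class is $L+(\dyndeg-2)\Efzp+\cdots$, so $\{T_F\}\cdot\{\Efzp\}=2-\dyndeg>0$. Hence $\nu(T_F,i_\infty)=\dyndeg^{-1}\nu(F_X^*T_F,i_\infty)>0$.

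In your splitting the failure occurs at $k=0$ and small $k$. Since $g=\dyndeg^{-1}g\circ F_X+u\leqslant u+C$, you have $\{u<-t-C\}\subset\{g<-t\}$. The sublevel sets of $u$, which carries the singularities of $F_X^*\theta$ at $I(F_X)$, are only exponentially small, and pulling back by $F_X^0$ gains nothing. The threshold $ct/n^2$ even makes things worse.

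The analogy with the uniqueness of $S_X$ breaks exactly here. There, $\varphi=\tfrac12\varphi\circ F_X$ holds with no source term, and that is what lets $n\to\infty$ at fixed $t$. Comparing an honest exponential upper bound with your lower bound $c\,e^{-2t/a}$ would require knowing a priori that the Lelong numbers of $g$ are smaller than $a$, which is circular. Localizing near a generic point of the atom does not rescue the volume argument either: the neighbourhood on which $\sum_{k<n}\dyndeg^{-k}u\circ F_X^k$ stays bounded shrinks with $n$.

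\textbf{What works is transporting generic Lelong numbers forward along orbits}, the direction your Step 2 did not use. This is the paper's proof, starting from $F_X^*A=\dyndeg A$.

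\emph{Collapsed curves.} If $A$ charges a collapsed curve, it charges $C=\pi^o\Lc$, since $F_X^*[\Efzp]=[\pi^o\Lc]$. Because $C$ is collapsed, no curve lies in $F_X^{-p}(C)$ for two different $p$. Hence $A\geqslant c\sum_p\dyndeg^{-p}(F_X^p)^*[C]$. Your own count, that each generation has mass $\asymp c$, now gives infinite mass once summed over $p$. Your remark that this is ``not contradictory'' is only correct for curves whose backward generations overlap, such as the invariant line $\Lf$.

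\emph{Non-collapsed curves.} Otherwise, take an atom $V$ of maximal weight. Comparing the generic Lelong number of $F_X^*A$ along $V$ with that of $A$ along $F_X(V)$ gives $\dyndeg\,w(V)=w(F_X(V))\leqslant w(V)$, which is impossible since $\dyndeg>1$.

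\emph{The transversally critical line needs one extra step.} There $F_X^*[\Linf]=2[\pi^o\Lz]+[\Efzp]$ only gives $\dyndeg\,w(\Lz)=2\,w(\Linf)$, which is harmless on its own since $\dyndeg<2$. Combined with $\dyndeg\,w(\Linf)=w(\Lz)$, it yields $\dyndeg^2=2$, which is false.
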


\begin{proof}
Applying Siu's decomposition to $F^* T_F = \dyndeg T_F$, we get that $F^* A = \dyndeg A$ where $A$ is the atomic part of $T_F$. 
Let us decompose $A$ into 
$A =  \sum_{i\geq 0} c_i [V_i],
$ where $c_i > 0$ and where $V_i$ are irreducible curves in $X$. 
Up to a reordering of the indices, we can assume that the sequence $c_i$ is decreasing and $c_0 = 1$. 
The number $c_i$ can be determined geometrically as follows, given any regular point $x \in V_i \setminus \cup_{j\neq i} V_j$, the Lelong number $\nu(T_F,x)$ of $T_F$ at $x$ is equal to $c_i$ since $\nu([V_i],x)$ is equal to the multiplicity of  $V_i$ at $x$.

Choose any point $x' \in F_X^{-1}(V_i \setminus \cup_{j\neq i}V_j)$ which is a regular point for $F_X$. Such point exists if $F^{-1}(V_i)$ does  not contain the branch locus of $F_X$. Since the branch locus of $F_X$ is $\Lz$ which is forward invariant by $F_X^2$, we deduce that such point $x'$ exists.  
By construction, $x'$ belongs to a curve $V_{j}$ for some $j$. 
 Since $F_X$ is a local isomorphism near $x'$ from $F_X^{-1}(V_i)$ to $V_i$, in particular, the Lelong number of $F_X^{-1}(V_i)$ at $x'$ is equal to $c_{j}$.  
 
We now claim that for any two distinct irreducible components $V_i, V_j$ , the preimage $F_X^{-1}(V_i)$ and  $ F_X^{-1}(V_j)$ have no common irreducible components apart from  collapsing ones. Indeed, any non collapsing curve would map to both $V_i$ and $V_j$, which are irreducible, so $V_i$ would be equal to $V_j$. 

\smallskip 

In order to show that $A = 0$, we argue depending on the nature of certain components of $A$. 
Assume first that $A$ charges a collapsing curve. Since there are only two collapsing curves $\Efzp$ and the strict transform $\pi^o[\Lc]$ of $\Lc$, and since $\Efzp$ pulls back this curve, we deduce that $A$ contains $C =\pi^o(\Lc)$.  

Observe that  none of the pullback of $C$ by $F_X^n$ contain any collapsing curve. Indeed, both $\Efzp$ and $C$ are collapsed to the same preperiodic orbit which disjoint from $C$. 
  If  $c$ is the Lelong number of $A$ at a generic point of $C$ and write by $b_n$ the Lelong number of $A$ at a generic point of $F_X^{-1}(C)$, we have: 
\begin{equation*}
A = c [C] + \sum_{p\geqslant 1} b_p (F_X^p)^* [C] + R,
\end{equation*}
where $R$ is a positive current which consists of the part of $A$ supported outside the pullbacks of $C$. 
We have: 
\begin{equation*}
F_X^* A = \delta A = \delta  c[C] + \sum_{p\geqslant 1} \delta b_p (F_X^p)^* [C] + \delta R = c F_X^* [C] + \sum_{p\geqslant 1} b_p (F_X^{p+1})^* [C] + F_X^* R.  
\end{equation*}
This gives that $\delta b_1 = c$ and $b_{p-1}= \delta b_p$ for all $p\geqslant 2$ which gives $b_p = c / \delta^p$. 
We get: 
\begin{equation*}
A =  c \sum_{p\geqslant 0} \dfrac{1}{\delta^p} (F_X^p)^* [C] + R. 
\end{equation*}
We get a contradiction as the mass of  $\sum_{p\geqslant 0} (F_X^p)^* [C]/\delta^p$ is infinite: 
\begin{equation*}
Mass(A) \geqslant Mass \left (c \sum_{p\geqslant 0} \dfrac{1}{\delta^p} (F_X^p)^* [C]\right ) = +\infty.
\end{equation*} 
This shows that $A$ does not charge any collapsing curve. 

\bigskip

Now this means that $A = \sum_{i\geqslant 1} c_i [V_i]$ where each curve $V_i$ is not collapsed. 
Recall that the weights $c_i$ are decreasing in $i$ and $c_1 = 1$. 
Take $V_j$ the image by $F_X$ of $V_1$. 
The Lelong number of $A$ at a generic point of $V_j$ is equal to $c_j \leqslant 1$. Taking the pullback, the Lelong number of $F_X^*A$ at a generic point of $V_1$ is also equal to $c_j$. On the other hand, $F_X^* A = \delta A$ so its Lelong number at a generic point of $V_1$ is equal to $\delta$, this gives $ \delta = c_j \leqslant 1$, which is a contradiction. 
\end{proof}

We now  show that this property holds for currents of integration along any algebraic curve.

\subsection{Equidistribution of curves}
We now prove the equidistribution statement, compare with  \cite{diller_dujardin_guedj_I} for equidistribution of smooth forms towards the Green current and with \cite{bleher_lyubich_roeder_2} for the equidistribution of curves. In fact, we adapted the techniques of both papers in our specific setting.    


\begin{lem}  Let $u_n$ be a sequence of quasi-psh functions which are 
$ \theta$-psh where $\theta$ is a smooth semi-positive form, then $(u_n)$ is uniformly bounded in $\Lo$ and relatively compact in $\Lo$.  
\end{lem}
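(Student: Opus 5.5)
The statement needs a normalization: $u_n+n$ is $\theta$-psh whenever $u_n$ is, yet it is unbounded in $\Lo$. I will therefore prove it under the standing normalization $\sup_X u_n=0$; any uniform bound $-C\le\sup_X u_n\le C$ works the same way. This is the setting in which the lemma is used for the equidistribution argument, where the potentials are normalized.

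The first step is a reduction. Since $X$ is compact Kähler with Kähler form $\omega$ and $\theta$ is smooth, there is $C_0>0$ with $\theta\le C_0\omega$. Hence every $\theta$-psh function is $C_0\omega$-psh, and we may assume $\theta=C_0\omega$; semipositivity of $\theta$ plays no further role. Fix a finite cover of $X$ by coordinate balls $B_j$, with slightly smaller concentric balls $B_j'$ still covering $X$. On each $B_j$ choose a smooth local potential $\rho_j$ with $dd^c\rho_j=C_0\omega$. Then $u_n+\rho_j$ is plurisubharmonic on $B_j$ and bounded above uniformly in $n$.

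The second step, which I expect to be the main obstacle, is the uniform $\Lo$ bound. I would use the Green function of the Laplacian $\Delta_\omega$. Let $G(x,y)$ be normalized so that $\int_X G(x,y)\,dV(y)=0$, and add a constant so that $G\ge 0$ while staying integrable uniformly in $x$; this is possible since $G\ge -A$. For any quasi-psh $u$ one has
\begin{equation*}
u(x)=\frac{1}{\vol(X)}\int_X u\,dV-\int_X G(x,y)\,\Delta_\omega u(y)\,dV(y)
\end{equation*}
up to sign conventions. The $C_0\omega$-psh condition gives $\Delta_\omega u\ge -nC_0$ as measures, where $n=\dim X$. This yields
\begin{equation*}
u(x)\le \frac{1}{\vol(X)}\int_X u\,dV+C_1
\end{equation*}
with $C_1$ depending only on $C_0$ and $\sup_x\int_X|G(x,\cdot)|\,dV$. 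Taking the supremum over $x$ and using $\sup u_n=0$ gives $\int_X u_n\,dV\ge -C_1\vol(X)$. Since $u_n\le 0$, this is a uniform bound on $\|u_n\|_{\Lo}$.

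If one prefers to avoid the Green function, the same bound follows from the local mean value inequality. Pick $x_n$ with $u_n(x_n)$ close to $0$; the sub-mean value property on a ball around $x_n$ bounds $\int u_n$ from below there. Then propagate this lower bound along chains of overlapping balls, using that a psh function bounded above with $\Lo$ mass bounded on one ball is $\Lo$-bounded on overlapping balls. This is the usual Harnack-type argument, and connectedness of $X$ is used here.

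The third step is compactness. On each $B_j$ the functions $v_{n,j}=u_n+\rho_j$ are psh, uniformly bounded above, and uniformly bounded in $\Lo(B_j)$ by the second step. So they do not converge uniformly to $-\infty$. By Hörmander's compactness theorem for psh functions (Theorem 3.2.12 in \emph{The analysis of linear partial differential operators I}), some subsequence converges in $\Loloc(B_j)$, hence in $\Lo(B_j')$. Extracting diagonally over the finitely many $j$ and subtracting the fixed smooth $\rho_j$ gives a subsequence of $(u_n)$ converging in $\Lo(X)$. The limit agrees almost everywhere with a $\theta$-psh function: take its upper semicontinuous regularization.
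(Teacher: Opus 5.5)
The paper states this lemma without proof, as a standard fact, so there is no argument of the paper's to compare against. Your proof is correct, and it is the standard one: a Green-function bound for $\sup$-normalized $\theta$-psh functions (as in Guedj--Zeriahi), followed by H\"ormander's compactness theorem applied in finitely many charts.

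Your normalization remark is a genuine correction, not a convenience. When $\theta\geq 0$ every constant is $\theta$-psh, so $u_n\equiv -n$ already contradicts the statement as written.

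The normalization is also compatible with how the paper uses the lemma in Theorem~\ref{thm_equidistribution}. There $u_n$ is determined only through $dd^c u_n$, that is, up to an additive constant, so imposing $\sup_X u_n=0$ is legitimate. It is exactly this normalization that makes the constants in Lemma~\ref{lem_kiselman}, which depend on $\|u_n\|_{L^1}$, uniform in $n$.

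The one step you gloss over is the representation formula for non-smooth $u$. It holds almost everywhere, for two reasons. First, both sides have the same mean. Second, both sides have the same distributional Laplacian, using $\int_X \Delta_\omega u\, dV=0$. Their difference is therefore a harmonic function of mean zero on a compact connected manifold, hence zero.

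An almost-everywhere upper bound then becomes a pointwise bound, because a quasi-psh function equals the limit of its averages over small coordinate balls. That gives the inequality for $\sup_X u$ that you need.

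A cosmetic point: you use $n$ both as the sequence index and as $\dim X$.
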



The next lemma is an estimate on the sublevel sets of quasi-psh functions. 

\begin{lem} \label{lem_kiselman} Let $u$ be a quasi-psh function which is $\theta$-psh  where $\theta$ is a smooth semi-positive form, then there exists $A,B$  where $A$ and $B$ only depend on $||u||_{\Lo}$ and  $\theta$ respectively, such that: 
\begin{equation}
\vol \left  ( \{u < -t \} \right ) \leqslant A e^{-B t}
\end{equation}
\end{lem}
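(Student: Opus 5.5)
The plan is to reduce to a local estimate for plurisubharmonic functions and then patch the local estimates together using compactness of $X$. The local input is Skoda's integrability theorem in the form given by Kiselman (hence the name of the lemma). First normalize: write $\|u\|_{\Lo}$ for the $\Lo$ norm with respect to a fixed volume form on $X$. It suffices to find $\epsilon>0$, depending only on $\theta$, and a constant $C$, depending only on $\theta$ and $\|u\|_{\Lo}$, such that
\begin{equation*}
\int_X e^{-\epsilon u}\, dV \leqslant C.
\end{equation*}
Chebyshev's inequality then gives $\vol(\{u<-t\}) \leqslant C e^{-\epsilon t}$, so we take $A=C$ and $B=\epsilon$.

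To prove the integral bound, cover $X$ by finitely many coordinate balls $U_j \Subset U_j'$. On each $U_j'$ choose a smooth potential $g_j$ with $dd^c g_j = \theta$ and $|g_j|\leqslant M$; this choice depends only on $\theta$. Then $v_j = u + g_j$ is plurisubharmonic on $U_j'$, and $\|v_j\|_{\Lo(U_j')} \leqslant \|u\|_{\Lo} + M\vol(X)$. The sub-mean value property applied to $v_j$ gives a uniform upper bound $\sup_{U_j''} v_j \leqslant C_1$ on an intermediate ball, with $C_1$ depending only on the $\Lo$ norm. Subtracting $C_1$, we may assume $v_j \leqslant 0$ with $\Lo$ norm controlled.

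The key uniform ingredient is a bound on Lelong numbers. For $x\in U_j$, the Lelong number of $v_j$ at $x$ is bounded by a constant times the mass of $dd^c v_j=\theta$ on a fixed ball around $x$. This quantity depends only on $\theta$, and one could even use the cohomological bound $\nu \leqslant \int \theta\wedge\omega$. Call the resulting bound $\nu_0$.

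Skoda's theorem, in its uniform version (Hörmander, Thm 4.4.5, or Zeriahi's uniform Skoda), then says the following. The family of nonpositive psh functions on a ball with $\Lo$ norm bounded by $N$ is compact in $\Lo_{loc}$. For every $\epsilon < 2/\nu_0$, there is therefore a constant $C(N,\epsilon)$ with $\int_{U_j} e^{-\epsilon v_j} \leqslant C(N,\epsilon)$. Summing over $j$ and absorbing the bounded $g_j$ gives the desired integral bound.

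The main obstacle is the uniformity of the Skoda constant in the family. The standard statement is for a single function. To get uniformity, I would argue by compactness of the $\Lo$-bounded family of $\theta$-psh functions, which is exactly the preceding lemma, combined with the upper semicontinuity in Hörmander's proof. Alternatively, I would use the direct Hörmander estimate via the Riesz representation $v = \int \log|z-w|\,d\mu(w) + h$. Here $\mu$ has mass bounded by $\theta$ and $h$ is harmonic with controlled norm. Jensen's inequality applied to $\exp\bigl(-\epsilon\int \log|z-w|\,d\mu\bigr)$ then yields an explicit constant depending only on the mass of $\mu$ and on the bounds for $h$.
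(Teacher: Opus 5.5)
Your argument is correct and is essentially the paper's: the paper likewise bounds every Lelong number of $u$ by $C\{\theta\}\cdot\{\omega\}$ and then invokes Zeriahi's uniform Skoda theorem (his Theorem 4.2, which directly gives the volume estimate that you obtain from the exponential integral via Chebyshev). Two small corrections: $dd^c v_j$ is $\theta+dd^c u\geqslant 0$, not $\theta$, so the uniform Lelong bound comes only from the global identity $\int_X(\theta+dd^c u)\wedge\omega=\{\theta\}\cdot\{\omega\}$, which is the cohomological bound you mention; and your alternative via a $\log|z-w|$ Riesz representation works only in one complex variable, so on the surface you must rely on the several-variable uniform Skoda theorem, as the paper does.
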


\begin{proof} We observe that the Lelong number of $u$ at a point $p$ is given by 
\begin{equation}
\nu(u,p) = \lim_{r\rightarrow 0} \int_{|z-p|< r} dd^c u \wedge dd^c (\log |z - p|).
\end{equation}
Since $dd^c u + \theta = P$ where $P$ is a positive current, we get: 
\begin{equation}
\nu(u,p)  \leqslant \lim_{r\rightarrow 0} \int_{|z-p|< r} P \wedge dd^c (\log |z - p|) \leqslant C \int_X P\wedge \omega = C\{ \theta\} \cdot \{\omega \},  
\end{equation}
where $C>0$ is a constant which only depends on $\omega$. 
Overall, this shows that the Lelong number of $u$ is bounded by a constant which only depends on $\theta$. 

By \cite[Theorem 4.2]{zeriahi}, we get for any compact set $K$ and any $\alpha < 2/ (C \{ \theta\}\cdot \{ \omega \})$
\begin{equation}
\vol (\{ u < -t\}) \leqslant \exp \left (c_1 \int_K |u| \omega^2 + c_2 \right ) e^{-\alpha t}, 
\end{equation}
where $c_1, c_2$ are absolute constants. 
Taking $A =  \exp \left (c_1 \int_K |u| \omega^2 + c_2 \right )$ and $B = \alpha$ gives the result.
\end{proof}

\begin{thm} \label{thm_equidistribution} For any curve $C \subset X$, 
  $$\lim_{n\rightarrow +\infty }\dfrac{1}{\dyndeg^n}[F^{-n}(C)] = \deg(C) T_F, $$
in the sense of currents where $T_F$ is   the Green current of $F$ on $X$. 
\end{thm}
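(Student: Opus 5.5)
We follow the strategy of \cite{diller_dujardin_guedj_I} and \cite{bleher_lyubich_roeder_2}, reducing the equidistribution of curves to a statement about quasi-potentials. Here $\deg(C)$ denotes the normalized coefficient of the class $\{C\}$ along $\theta_+$, namely $\{C\}\cdot\theta_-$ with $\theta_+\cdot\theta_-=1$.

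\textbf{Step 1: cohomology and potentials.} Fix a smooth closed form $\alpha$ representing $\{C\}$. By algebraic stability (Proposition \ref{prop_lift_X_F}) and Proposition \ref{prop_cohomology}(v), we have
\begin{equation*}
\dyndeg^{-n}(F_X^n)^*\{C\} = (\{C\}\cdot\theta_-)\,\theta_+ + O\bigl((\sqrt{2}/\dyndeg)^n\bigr).
\end{equation*}
The error decays because the remaining eigenvalues of $F_X^*$ are bounded in modulus by $\sqrt{\dtop}=\sqrt 2<\dyndeg$. Write $[C]=\alpha+dd^c u$, where $u$ is a quasi-psh function, $\theta$-psh for some smooth semipositive $\theta$, normalized so that $u\le 0$. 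Pulling back gives
\begin{equation*}
\dyndeg^{-n}[F_X^{-n}C] = \dyndeg^{-n}(F_X^n)^*\alpha + dd^c\bigl(\dyndeg^{-n}\, u\circ F_X^n\bigr).
\end{equation*}
This holds once we check that $(F_X^n)^*[C]=[F_X^{-n}C]$ with the correct multiplicities. The only subtlety is collapsed curves, and these carry zero weight for a non-collapsed $C$, or are handled by the explicit relations in the proof of Proposition \ref{prop_error_term}. By the theorem of \cite{diller_dujardin_guedj_I} quoted above, the first term converges to $\deg(C)\,T_F$. It therefore remains to show that
\begin{equation*}
v_n:=\dyndeg^{-n}\, u\circ F_X^n \longrightarrow 0 \quad\text{in } \Lo.
\end{equation*}

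\textbf{Step 2: uniform quasi-psh control.} Each $v_n$ is $\dyndeg^{-n}(F_X^n)^*\theta$-psh, and these forms have uniformly bounded cohomology classes. Using a common dominating smooth form and the compactness lemma above, the family $(v_n)$ is relatively compact in $\Lo$ and bounded above by $0$. By Hartogs, any limit $v$ is either $\equiv -\infty$ or quasi-psh with $v\le 0$. Hence it suffices to show that $\int_X v_n\,\omega^2 \to 0$, since then every limit equals $0$.

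\textbf{Step 3: the volume estimate (main obstacle).} Write
\begin{equation*}
\int |v_n| = \int_0^\infty \vol\bigl(\{u\circ F_X^n<-\dyndeg^n t\}\bigr)\,dt.
\end{equation*}
We have $\{u\circ F_X^n<-s\}=F_X^{-n}\{u<-s\}$. By Theorem \ref{thm_vol_exponent}, together with the definition of the volume exponent and a finite cover of $X$, we get $\vol(F_X^{-n}Y)\le K'\vol(Y)^{1/(K\alpha^n)}$ with $\alpha<\dyndeg$. Lemma \ref{lem_kiselman} gives $\vol\{u<-s\}\le Ae^{-Bs}$. Combining these,
\begin{equation*}
\vol\bigl(\{v_n<-t\}\bigr)\le K' A^{1/(K\alpha^n)}\exp\bigl(-Bt\,\dyndeg^n/(K\alpha^n)\bigr).
\end{equation*}
Integrating in $t$ yields $\|v_n\|_{\Lo}\lesssim (\alpha/\dyndeg)^n\to0$, completing the proof. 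This mirrors the uniqueness argument for $S_X$, with $2$ replaced by $\dyndeg$.

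The delicate point is applying Theorem \ref{thm_vol_exponent} uniformly along all orbits, including those passing near the indeterminacy points $i_0,i_0',i_\infty$. One must make the local exponent bounds hold on fixed neighborhoods, so that the constant $K$ does not depend on the orbit, and then patch them via a finite cover. Near indeterminacy this is done by working on the blow-ups of Propositions \ref{prop_volume_three_indet} and \ref{prop_volume_exponent_i0}, where the local inverse branches are holomorphic. I would first try to establish the estimate for $G_X=F_X^2$ and $F_X^3$ on such a cover, and then iterate, as in the proof of Theorem \ref{thm_vol_exponent}.
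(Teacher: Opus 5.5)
Your analytic core is sound, and your route is more direct than the paper's.

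\textbf{How the two routes differ.} The paper first writes $\delta^{-n}(F^n)^*[C]=a\theta_++\eta_n+dd^cu_n$ with $-\epsilon_n\omega\le\eta_n\le\epsilon_n\omega$. It then pulls back $k$ more times and dominates the $\eta_n$-part by $\epsilon_n\delta^{-k}(F^k)^*\omega$, which is decomposed again with a potential $v_k$. It kills $\delta^{-k}u_n\circ F^k+\epsilon_nv_k$ along subsequences using Theorem \ref{thm_vol_exponent}, Lemma \ref{lem_kiselman} and Borel--Cantelli. It concludes from ``limit $\le aT_F$'' plus equality of cohomology classes.

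You instead keep one fixed potential $u$ and obtain $\|\delta^{-n}u\circ F_X^n\|_{L^1}=O((\alpha/\delta)^n)$ directly from the layer-cake formula. Because $u$ is fixed, the constants of Lemma \ref{lem_kiselman} are trivially uniform, and you need no subsequences, no Borel--Cantelli and no domination argument. This also makes your Step 2 unnecessary, which is fortunate: the currents $\delta^{-n}(F_X^n)^*\theta$ have positive Lelong numbers at indeterminacy points, so no smooth form dominates them.

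You use Theorem \ref{thm_vol_exponent} exactly as the paper does. Even constants growing exponentially in $n$ would be harmless, since after integrating in $t$ they enter only through a logarithm. Your cohomological rate $(\sqrt2/\delta)^n$ is the correct one. Proposition \ref{prop_cohomology}(v) as literally stated, with $d_{top}^n=2^n$, would give no decay because $\delta<2$.

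\textbf{The step that needs repair: Step 1.} You apply the Diller--Dujardin--Guedj convergence to a smooth representative $\alpha$ of $\{C\}$. The theorem as quoted in the paper covers only smooth representatives of the eigenclass $\theta_+$. Knowing that the class of $\delta^{-n}(F_X^n)^*(\alpha-a\theta_+)$ tends to $0$ does not by itself force these currents to $0$.

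You can either cite the general statement, $\delta^{-n}(F_X^n)^*\alpha\to(\{\alpha\}\cdot\theta_-)T_F$ for every smooth closed $\alpha$, or patch it with the idea behind the paper's two-index argument. Fix $n_0$ and write $\delta^{-n_0}(F_X^{n_0})^*[C]=a\theta_++\eta+dd^cu_{n_0}$ with $\eta$ smooth and $-\epsilon\omega\le\eta\le\epsilon\omega$. Then
\[
\delta^{-n_0-k}(F_X^{n_0+k})^*[C]=a\,\delta^{-k}(F_X^k)^*\theta_++\delta^{-k}(F_X^k)^*\eta+dd^c\bigl(\delta^{-k}u_{n_0}\circ F_X^k\bigr).
\]
The three terms behave as follows:
\begin{itemize}
\item The first tends to $aT_F$ by the quoted theorem.
\item The last tends to $0$ by your Step 3 applied to the fixed quasi-psh function $u_{n_0}$.
\item The middle one lies between $\pm\epsilon\,\delta^{-k}(F_X^k)^*\omega$, so against any fixed test form it is $O(\epsilon)$, uniformly in $k$.
\end{itemize}
Letting $n_0\to\infty$, so that $\epsilon\to0$, finishes the proof.

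\textbf{A side remark to drop.} Your claim that collapsed curves carry zero weight is false in general. For example $F_X^*[\Efzp]=[\pi^o\Lc]$, and any curve through $F_X(\pi^o\Lc)$ pulls back with a $\pi^o\Lc$ component. The claim is not needed anyway, since you work with $(F_X^n)^*[C]$ throughout, as the paper does.
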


\begin{proof} 
 Set $[C]$ the current of integration on the strict transform of $C$.
Let us fix a smooth form $\theta_+$ representing $\{ T_F\}$ and let us consider the class $\theta_-$ satisfying the condition of Proposition \ref{prop_cohomology}.
We normalize $\theta_-$ so that $\theta_+ \cdot \theta_- =1$. 
Set $a = [C]\cdot \theta_-$ we write: 
\begin{equation}
\dfrac{1}{\dyndeg^n}(F^n)^*[C] = a \theta_+ + \eta_n + dd^c u_n. 
\end{equation}
where $\eta_n$ is a smooth closed $(1,1)$ form satisfying $\{\eta_n \}\cdot \theta_-=0$ and $u_n$ is a quasi-psh function. 
 We show that  $\eta_n$ can be chosen so that $\eta_n $ is close to zero in the sense of currents as follows. 
 Let us choose some smooth closed $(1,1)$ forms $\alpha_1, \alpha_2, \alpha_3,\alpha_4$ such that 
 $(\{ \theta\}, \{\alpha_1 \}, \{\alpha_2 \}, \{\alpha_3 \}, \{\alpha_4 \})$ form a basis of $H^{1,1}(X)$ and such that: 
 \begin{equation*}
  \theta_- \cdot \{  \alpha_i \} =0, 
 \end{equation*}
 for all $i$.
 By definition, the cohomology class $(1/\dyndeg^n) \{(F^n)^* [C]\}$ can be decomposed into: 
 \begin{equation}
\dfrac{1}{\dyndeg^n}(F^n)^*[C] = a \{ \theta_+\} + \sum_{i=1}^4 m_{i,n} \{\alpha_i\} \in H^{1,1}(X), 
 \end{equation}
 where $a , m_{i,n} \in \R$. 
 We thus set $\eta_n = \sum m_{i,n} \alpha_i$. Applying assertion (v) of Proposition \ref{prop_cohomology}, the class $[C] - a \{ \theta_+\}$ is orthogonal to $\theta_-$ so 
 \begin{equation}
 \dfrac{1}{\dyndeg^n} ||(F^n)^* ([C] - a \{ \theta_+ \} )|| \leqslant M \dfrac{2^n}{\dyndeg^n},
 \end{equation}
 where $M>0$ is a constant which depends only on the curve $C$. 
This shows that the coefficients $m_{i,n}$ converge to zero as $n\rightarrow +\infty$. We thus have: 
\begin{equation}
 -\epsilon_n \omega \leqslant  \eta_n \leqslant \epsilon_n \omega, 
 \end{equation} 
 where $0 < \epsilon_n $ decreases to zero.  
 
 We pullback $k$ more times : 
 \begin{equation}
 \dfrac{1}{\dyndeg^{n+k}} (F^{n+k})^* [C] \leqslant a \dfrac{1}{\dyndeg^k} (F^k)^* (\theta_+) + \dfrac{ \epsilon_n}{\dyndeg^k} (F^k)^* \omega + \dfrac{1}{\dyndeg^k} dd^c u_n \circ F^k. 
 \end{equation}
Normalizing $\theta_+$ so that $\int_X \theta^+ \wedge \omega =1 $, we decompose the pullback $(F^k)^* \omega$ as: 
 \begin{equation}
 \dfrac{1}{\dyndeg^k} (F^k)^* \omega = \theta^+  +  \sum m_{i,k}'  \eta_i   + dd^c v_{k}\leqslant \theta_+ + \epsilon_k \omega + dd^c v_k, 
 \end{equation}
 where $v_k$ is a quasi-psh function, which is $\theta_+ + \epsilon_k \omega$-psh. 
 Combining with the previous equation, we get: 
 \begin{equation*}
 \dfrac{1}{\dyndeg^{n+k}} (F^{n+k})^* [C] \leqslant \dfrac{a}{\dyndeg^k} (F^k)^* \theta_+ +  \epsilon_n \theta_+ + \epsilon_n \epsilon_k \omega + dd^c \left ( \dfrac{u_n\circ F^k}{\dyndeg^k} + \epsilon_n v_{k} \right )
 \end{equation*}
 Let us set $w_{n,k} = u_n \circ F^k/\dyndeg^k +  \epsilon_n v_k$, we show that $w_{n,k} $ converges in $\Loloc$ to a function $W$  and that $W = 0$.
 Since the function $u_n$ are all $a \theta_+ + \epsilon_0 \omega$-psh, they are uniformly bounded in $\Loloc$, hence the sequence $(u_n) $ is relatively compact in $\Loloc$.  
 Similarly, $v_k$ is $\theta_+ + \epsilon_0 \omega$-psh, and $(v_k)_k$ is also relatively compact in $\Loloc$. 

By Theorem \ref{thm_vol_exponent}, the volume exponent is well behaved, we have: 
\begin{equation*}
\vol( \{ u_n \circ F^k < - \dyndeg^k t \})= \vol(F^{-k} \{ u_n < - \dyndeg^k t \})  \leqslant  C \vol( u_n < -\dyndeg^k t)^{1/ \sigma^k} 
 \end{equation*} 
 where $\sigma < \dyndeg$.
 By Lemma \ref{lem_kiselman}, the inequality $\vol (u_n < -\dyndeg^k t) \leqslant A e^{-  B \dyndeg^k t}$ yields: 
\begin{equation*}
 \vol( \{ u_n \circ F^k < - \dyndeg^k t \}) \leqslant C' \exp \left (-  tB \dfrac{\dyndeg^k}{\sigma^k} \right ),
 \end{equation*} 
where $C' >0$ only depends on $||u_n||_{\Lo}$ (which is uniformly bounded on $n$).  
Choosing a sequence $t_k = 1/(1+\epsilon)^k$ where $\epsilon > 0$ satisfies the condition  $(1+\epsilon) \sigma < \delta$, the series $\sum_k \vol(u_n \circ F^k < -\delta^k t_k)$ is finite, and by Borel-Cantelli lemma, 
\begin{equation}
\vol \left (  \left \{ \lim_k \dfrac{u_n \circ F^k}{\dyndeg^k} < 0 \right  \} \right ) =0 
\end{equation}
This shows that $u_n \circ F^k / \dyndeg^k$ converges to zero almost everywhere. 

Recall that there exists $A',B'$ such that: 
\begin{equation}
\vol \left ( \{ v_k < -t \} \right ) \leqslant A' e^{-B't}.
\end{equation}
Observe that $A'$ is a constant which does not depend on  $k$. 
Choose a sequence $t_n =   n\epsilon_n / B'$ where $\lambda$ satisfies the condition $1 > \lambda$. 
Using the previous inequality, we have: 
\begin{equation}
\vol \left ( \{ \epsilon_n v_k < -t_n \} \right ) = \vol \left ( \{  v_k < -t_n/\epsilon_n \} \right ) \leqslant A' e^{-n }.  
\end{equation}
Choose a subsequence $n_p, k_p$ such that $\epsilon_{n_p} v_{k_p}$ converges  $V$. 
The series $\sum_p \vol \left ( \{ \epsilon_{n_p} v_{k_p} < -t_{n_p} \} \right ) $ is finite by the previous inequality, Borel-Cantelli's lemma implies that $V =0$ almost everywhere. 
We have shown that $\omega_{n_p, k_p}$ converges to zero in $\Lo$. 
We obtain: 
\begin{equation}
\dfrac{1}{\dyndeg^{n_p+k_p}} (F^{n_p+k_p})^* [C] \leqslant  \dfrac{a}{\dyndeg^{k_p}} (F^{k_p})^* \theta_+ + \epsilon_{n_p} \theta_+ + \epsilon_{n_p} \epsilon_{k_p} \omega + dd^c w_{n_p,k_p}
\end{equation}
Taking the limit as $p\rightarrow +\infty$ and using the convergence of $\theta_+$ toward the Green current, we have: 
\begin{equation}
\lim_{p\rightarrow +\infty} \dfrac{1}{\dyndeg^{n_p+k_p}}(F^{n_p + k_p})^* [C] \leqslant a T_F. 
\end{equation}
As any limiting current must be in the same cohomology class as $a T_F$, we conclude that the limit must be $a T_F$. 
\end{proof}

\section{Induced dynamics on $\R^2$: invariant cone field}
\label{section_map_R}

\subsection{The mapping between regions of $\R^2$}
\label{section_regions}

$\bullet$ We subdivide the upper half plane $\R \times \R^+$ into 6 open regions, denoted $R_1, R_2,R_3,R_4, R_5, R_6$ whose boundaries are delimited by the lines $\Lc, \Lf ,  \bar \Lf$, the vertical line through $\bfzp$ and the branch of the hyperbola $ \gamma= F^{-1} (\bar \Lf)$ passing through $\bfzp$. The regions $R_2, R_3,R_4$ are subdivided into two subregions, $R_2^{\pm}, R_3^{\pm} ,R_4^{\pm}$ situated below and above the indeterminate line $\Li$. 
We get the following picture in Figure \ref{fig_subdivision}. 

\begin{figure}[h!]
\centering
\def\svgwidth{8cm}
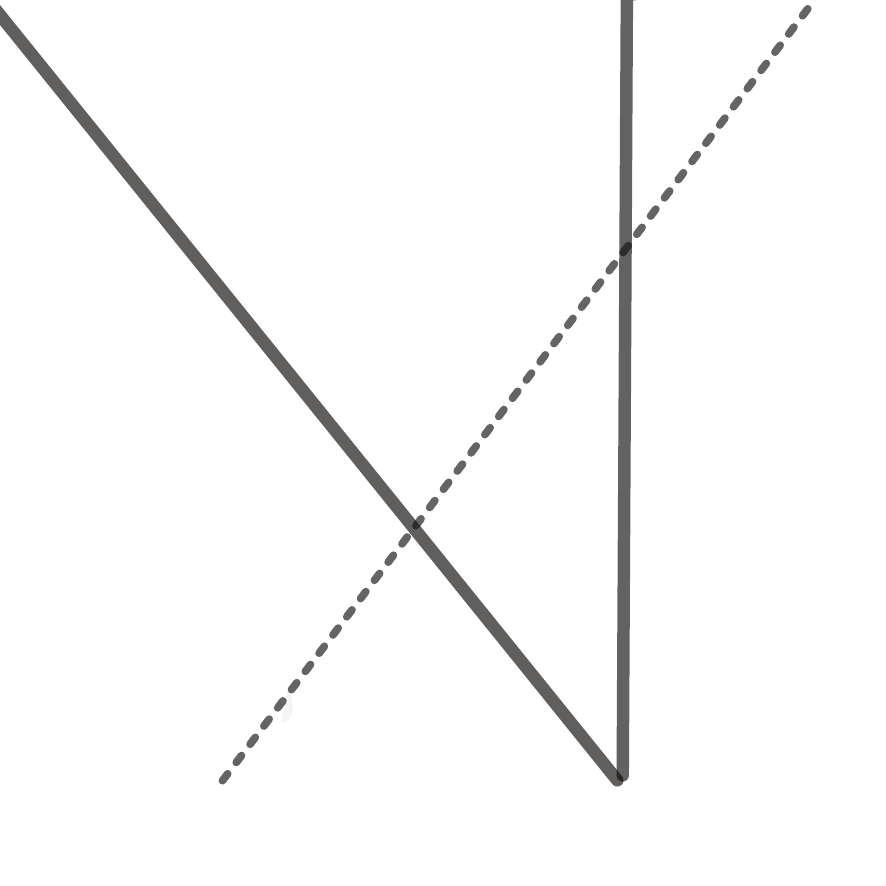
\caption{\label{fig_subdivision} Subdivision of the upper-half plane into 6 regions.}

\end{figure}

We obtain a subdivision of the lower half-plane $\mathbb{R} \times \mathbb{R}^-$ by taking the image of the  decomposition of the upper half-plane by the reflection $\sigma$ by the horizontal axis.

Our goal is to describe how $F$ acts on this subdivision of the real plane. To that end, we will use the properties of the image by $F$ of certain lines of specific pencils. 

\begin{enumerate}
\item The pencil of lines $\mathcal{P}_0$ of lines through $\bfz$. An element of this pencil through $p$ will be denoted $L_p$.
\item The pencil of lines $\mathcal{P}_0'$ of lines through $\bfzp$. An element of this pencil through $p$ will be denoted $L_p'$.
\item The pencil of lines $\mathcal{P}_\infty$  through $\binf$. 
An element of this pencil through $p$ will be denoted $l_p$.
\item The pencil of lines $\mathcal{P}_\infty'$ through $\binfp$.
An element of this pencil through $p$ will be denoted $\bar l_p$.
\item The pencil of lines $\mathcal{P}_c$ through the critical point at infinity. An element of this pencil through $p$ is a vertical line which is denoted $V_p$.
\end{enumerate}


\begin{lem} \label{lem_forward_curves} For any point $p \in \R^2 \setminus \Lz$. The following properties hold.
\begin{enumerate}
\item[(i)] Except when $L_p = \Lc$, one has $F(L_p) = l_{F(p)}$. 
\item[(ii)] Except when $l_p = \Lf$ (resp. $\bar l_p = \bar \Lf$),
 the curve $F(l_p)$  (respectively $F(\bar l_p)$) is a parabola tangent to the indeterminate line $\Li$ at $\bfzp$ which passes through $\bfz$. 
\item[(iii)] Except when $V_p = \Lc$, one has $F(V_p) = L_{F(p)}'$. 
\item[(iv)] The curve $F(L'_p)$ is a parabola tangent to the indeterminate line $\Li$ at $\bfzp$ which passes through $\binfp$. 
Moreover, the convex set enclosed by this parabola contains the half-line $\sigma(\Li) \cap \bH_- \setminus \{ \bfzp \}$.
 \end{enumerate}
\end{lem}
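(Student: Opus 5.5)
Each of the four assertions is a direct computation: parametrize a line of the pencil, substitute it into the homogeneous formula \eqref{eq_def_F_homogeneous} $F[x:y:z]=[x^2-2xz-2y^2: xz-2z^2: y^2]$, and identify the image curve. Throughout we use the preimage formula from the proof of Proposition \ref{prop_lift_X_F}: $F^{-1}\{ax+by+cz=0\}=\{ax^2+(b-2a)xz-2bz^2+(c-2a)y^2=0\}$. This lets us answer ``is $F(C)$ contained in the line $M$?'' by checking whether $C$ is a component of the conic $F^{-1}(M)$.

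For (i), the line $L_p$ through $\bfz$ has the form $x-2z=ty$ with $t\in\R\cup\{\infty\}$. The conic $F^{-1}\{ax+by+cz=0\}$ contains $L_p$ exactly when $a(x-2z)x+b(x-2z)z+(c-2a)y^2\equiv 0$ on $x-2z=ty$. That is, we need $(ax+bz)t+(c-2a)y\equiv0$ on the line. Writing $x=2z+ty$, this becomes $t(2a+b)z+(at^2+c-2a)y\equiv 0$. It is solved by $b=-2a$ and $c=a(2-t^2)$, i.e. by the line $x-2y+(2-t^2)z=0$, which passes through $\binf=[2:1:0]$. So $F(L_p)\subset l_{F(p)}$ for $t\neq 0$; the case $t=0$ is $L_p=\Lc$, which is excluded. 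A nonconstant image of a line is a curve, hence the whole line. Assertion (iii) goes the same way with $V_p=\{x=sz\}$: the required identity $(s^2-2s)az+(s-2)bz-2bz+(c-2a)y^2/z\equiv0$ forces $c=2a$ and $a s(s-2)+b(s-4)=0$. This gives a line $ax+by+2az=0$ through $\bfzp=[-2:0:1]$, valid unless $s=2$, i.e. $V_p=\Lc$.

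For (ii) and (iv), $l_p$ is $x-2y=uz$ and $L'_p$ is $x+2z=vy$. Parametrize by $[y:z]$ and substitute. Each image is a conic, namely the image of a degree-$2$ parametrization that does not factor through a line. We verify three facts for each. First, the image passes through $\bfzp$: the point of the line on $\Lz$ maps to $\Linf$ and not to $\bfzp$, so instead we take the point at infinity of the line, $[2:1:0]$ resp.\ $[v:1:0]$, and check that $F[v:1:0]=[v^2-2:0:1]$; for $l_p$, $F(\binf)$ is indeterminate-free, $F[2:1:0]=[2:0:1]$. Second, the image passes through the other prescribed point: $\bfz$ for (ii) and $\binfp$ for (iv). Third, the tangent line at the double/contact point is $\Li$, which we check by computing the image of the tangent vector via $dF$, or equivalently by checking that $F^{-1}(\Li)$ restricted to the line has a double root. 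A conic is a parabola iff it is tangent to $\Linf$, so we also check that the restriction of $F^{-1}(\Linf)=\{y^2=0\}$ to the line has a double root. This is automatic, since $y^2$ vanishes doubly where the line meets $\Lz$. That gives the parabola property, and it identifies $\Lz$-points as mapping to the point at infinity of the parabola.

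The main obstacle is the extra convexity claim in (iv): the region enclosed by $F(L'_p)$ contains $\sigma(\Li)\cap\bH_-\setminus\{\bfzp\}$. To prove it, we write the parabola explicitly as $Q_v(x,y)=0$, a quadratic form, with the interior given by $Q_v<0$ after fixing the sign. We then restrict $Q_v$ to the half-line $\sigma(\Li)=\{x+2y+2=0\}$ on the relevant side of $\bfzp$ (the part inside $\bH_-$). On this half-line $Q_v$ becomes a one-variable quadratic vanishing at $\bfzp$. The tangency with $\Li$, which differs from $\sigma(\Li)$, makes this a simple root, so it suffices to check the sign of the remaining linear factor on the half-line. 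That sign must be uniform in $v\in\R$, and this uniformity is the delicate step; we expect it to reduce to a discriminant or positivity inequality in $v$ that holds for all real $v$.
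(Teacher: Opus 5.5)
Your route for (i), (iii), and for the parabola and tangency parts of (ii) and (iv), is a sound alternative to the paper's. You use the pullback formula for $F^{-1}$ of a line and the identities $F^{-1}(\Linf)=\{y^2=0\}$ and $F^{-1}(\Li)=\{(x-2z)^2=0\}$, which is cleaner than the paper's local multiplicity computation and blow-up chart. The real gap is the convexity claim in (iv). You defer the sign check to an expected inequality in $v$. Moreover, the structure you anticipate, a genuine quadratic on $\sigma(\Li)$ whose second root must be kept off the half-line uniformly in $v$, is not what happens. The missing observation is that $\sigma(\Li)=\{x+2y+2z=0\}$ passes through $\binfp=[-2:1:0]=F(\bfzp)$. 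This is exactly where the parabola $F(L'_p)$ touches $\Linf$, so $\sigma(\Li)$ is parallel to its axis. Hence $\sigma(\Li)$ meets the parabola in the single affine point $\bfzp$, transversally since the tangent there is $\Li$. One half of $\sigma(\Li)\setminus\{\bfzp\}$ is therefore inside and the other outside. It only remains to see that the parabola opens downward, which the paper gets from $y\circ F=(x-2)/y^2\to-\infty$ as one approaches $\bfzp$ along $L'_p$. In your computational language: with $L'_p=\{x+2=vy\}$ and $W=x+2y+2$, the image is $\{W^2-v^2W+4v^2y=0\}$, and its convex side is where this expression is negative. On $\{W=0\}$ it reduces to $4v^2y$, which is negative exactly on the lower half-line when $v\neq0$. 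The restriction has degree one, so there is no second root to control.

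Several other steps need repair. First, your reason that the images pass through $\bfzp$ is wrong. The point at infinity of $l_p$ goes to $\bfz$, and that of $L'_p$ goes to $(v^2-2,0)$. The correct reason is that the line meets $\Lc$ away from $\bfz$, and $\Lc\setminus\{\bfz\}$ is collapsed to $\bfzp$; your tangency test already encodes this, since $F^{-1}(\Li)=\{(x-2z)^2=0\}$. Second, the claim that the parametrization does not factor through a line must actually be checked. For $L'_p$ the components $(v^2-2)y^2-6vyz+8z^2$, $vyz-4z^2$, $y^2$ are linearly dependent exactly when $v=0$, i.e.\ for the vertical line $x=-2$. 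Its image lies in the line $\sigma(\Li)$ (Remark~\ref{rem_special_vert_bfzp}), so (iv) is false for that line and needs this exception. The paper's own proof also misses this, though the later uses (Corollary~\ref{cor_image_parabola}, Subcase D.c) respect it. Third, in (iii) the restriction of the conic to $x=sz$ is $(s-2)(as+b)z^2+(c-2a)y^2$; you turned $bs$ into $b(s-2)$. The correct form gives $c=2a$ and $b=-as$, i.e.\ the line $x-sy+2z=0$ through $\bfzp$, with the exception $s=2$ coming out naturally.
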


\begin{proof}
Let us prove assertion (i). 
Take $(x,y) \in L_p$, this means that $(x-2)/y$ is a constant that we denote by $C$. Using the fact that $x(x-2)/y^2  = C^2 + 2C/y$, the restriction of $F$ to $L_p$ is then of the form: 
\begin{equation*}
F(x,y) = \left ( -2 + C^2 + \dfrac{ 2 C}{y} , \dfrac{C}{y}  \right).
\end{equation*}
This shows that $F(x,y)=(x',y')$ with $2 y'- x' $ equal to a constant, hence $F$ maps $L_p$ to the line $l_{F(p)}$.

\bigskip

Let us prove assertion (ii). Let us observe that the equality $F\circ \sigma = F$ implies that $l_p$ and $\bar l_p$ have the same image by $F$ when $p$ belongs to the horizontal line $\Lz$. 
We claim that $F(l_p) $ is a degree $2$ curve. 
 Observe that $l_p$ intersects the horizontal axis at a certain point $q \neq \bfz$ and 
 since $F^{-1} (\Linf ) = \Lz $ by assertion (v) of Lemma \ref{lem_compute}, we obtain that $F(l_p)$ intersects the line at infinity at exactly one point $F(q)$.  
Let us compute the multiplicity at $F(q)$ of the intersection $F(l_p) \cap \Linf$. Write $q = (x_0, 0)$ where $x_0 \neq 2$. 
The local map from a neighborhood of $q$ to a neighborhood of $F(q)$ is given by: 
\begin{equation*}
[x: y : 1] \mapsto [ x' : 1 : z'  ].
\end{equation*}
where $$ x' =\dfrac{x^2 - 2 y^2 - 2 x}{x - 2} = x  - 2 \dfrac{y^2}{x - 2}$$
and where $$z'= \dfrac{y^2 }{x - 2}.$$
When we parametrize $l_p$ by setting $x = x_0 + t$ and $y = t/2$, one obtains
\begin{equation*}
\left \lbrace \begin{array}{l}
x' = x_0 + t + O(t^2), \\
z' = \dfrac{t^2 }{x_0 -2} + O(t^3).
\end{array} \right .
\end{equation*}
In particular, this shows that the multiplicity of the intersection $F(l_p) \cap \Linf$ is $2$, hence $F(l_p)$ is a degree $2$ curve by Bezout's theorem, and it is a parabola. 
Note that the line $l_p$ (resp. $\bar l_p$) passes through $\binf$ (resp. $\binfp$) so its image passes through $\bfz$, since these lines both intersect $\Lc$ in $\mathbb{P}^2$, the parabolas $F(l_p), F(\bar l_p)$ also pass through $\bfzp$. We have thus shown that $F(l_p), F(\bar l_p)$ are parabolas going through $\bfz$ and $\bfzp$. 
The fact that $F(l_p)$ and $F(\bar l_p)$ are tangent to the indeterminate line follows from the computation in the proof of Proposition \ref{prop_lift_X_F}. 
 
\medskip 
 
Let us prove assertion (iii), we calculate
$$(y - t(x +2)) \circ F = -\dfrac{(-2 + x) (-1 + t x)}{y^2}, $$
where $t \in \R$. In particular, the image of the vertical line $\{x= t\}$ is mapped to the line passing through $\bfzp$ of slope $t$. 

Let us prove assertion (iv). Take a line $L_p'$. 
 We see that $F(\bfzp) = \binfp$ and since $F$ is transversely critical at $\bfzp$, $F(L_p')$ is a parabola going through $\binfp$. Moreover, since $L_p'$ intersects the line $\Lc$ (in the projective space), the same argument as in the proof of Proposition \ref{prop_lift_X_F} shows that $F(L_p')$ is tangent to $\Li$ at $\bfzp$.
Let $K$ be the convex set enclosed by the parabola $F(L_p')$.
 We show that $\sigma(\Li) \cap \bH_-$ is contained in $K$. Observe that $\sigma(\Li)$ intersects $F(L_p')$ at exactly one point $\bfzp$ on $\R^2$ because it intersects $F(L_p')$ at infinity. 
 By convexity, this implies that $\sigma(\Li) \cap \bH_+$ or $\sigma(\Li) \cap \bH_+$ is contained in $K$. Note that $F(L_p')$ diverges toward infinity in the lower half-plane, this is because $y \circ F = (x- 2)/y^2$ and since $L_p'$ passes through $\bfzp$, nearby points have an image whose   $y$-coordinates is close to $-\infty$. By convexity, we deduce that $\sigma(\Li) \cap \bH_-$ is contained in $K$, as required.
\end{proof}

\begin{prop} \label{prop_mapping_regions}
\begin{enumerate}
\item[(i)] The subset $R_6\cup R_1 \cup \sigma(R_1\cup R_6) \setminus \{\bfz\} $ is forward invariant by $F$. 
\item[(ii)] One has $F(R_2 \setminus \{ \bfz \})  \subset R_2^- \cup R_3^- \cup R_4^-  $. 
\item [(iii)] One has $F(R_3  \setminus \{ \bfz \}) \subset \sigma(R_4 \cup R_5 \cup R_6)$. 
\item[(iv)] One has $F(R_4 \cup R_5 \setminus \{ \bfz \}) \subset \sigma (R_2\cup R_3)$.
\item[(v)] If $B = [-2,2] \times \R$, $F(B\cap R_3 \setminus \{ \bfz \}) \subset \sigma(R_4^+ \cup R_5 \cup R_6 )$ and $F^{-1}(\sigma(R_4^-)) \cap R_3 = R_3 \setminus B$.
\item[(vi)] One has $F(R_4^+  \setminus \{ \bfz \} ) \subset \sigma(R_2^+ \cup R_3^+)$. 
\item[(vii)] One has $F(R_5  \setminus \{ \bfz \}) \subset \sigma(R_2^- \cup R_3^-)$.
\end{enumerate}
\end{prop}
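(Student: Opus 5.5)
The plan is to treat each region through its boundary curves. $F$ restricted to the open upper half-plane $\R\times\R^+$ (minus $\Lc$) is a local diffeomorphism: by Proposition \ref{prop_gen_prop_F_P2}(iii), the critical locus is $\Lc\cup\Lz$. Each $R_j$ is connected, so its image is a connected set avoiding the images of the other regions' boundaries as computed. It therefore suffices to do three things for each region: (a) compute the images of the boundary curves $\Lc$, $\Lf$, $\bar\Lf$, the vertical line $\{x=-2\}$, the hyperbola branch $\gamma\subset F^{-1}(\bar\Lf)$, and $\Li$; (b) show the image of the open region meets none of the target boundary curves; (c) check a single sample point to decide which side it lands on. The symmetry $F\circ\sigma=F$ lets us restrict to the upper half-plane. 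The factor $y\circ F=(x-2)/y^2$ decides at once the sign of the image's second coordinate: points with $x<2$ go to the lower half-plane and points with $x>2$ to the upper one. This explains the appearance of $\sigma(\cdot)$ in (iii), (iv), (vi), (vii).

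For the boundary images I would use Lemma \ref{lem_forward_curves}. The fixed line satisfies $F(\Lf)\subset\Lf$. The line $\bar\Lf$ maps into $\Lf$ as well, because $F\circ\sigma=F$. The curve $\Lc$ collapses to $\bfzp$. The line $\{x=-2\}$ is a vertical line, so by Lemma \ref{lem_forward_curves}(iii) it maps to a line of the pencil $\mathcal P_0'$; here the explicit formula $(y-t(x+2))\circ F$ with $t=-2$ identifies it as $\sigma(\Li)$ (or $\Li$), which is what lets $\Li$ split the image regions into $\pm$ parts. The hyperbola $\gamma$ maps to $\bar\Lf$ by definition. For $\Li$, I would restrict $F$ to $\{x=2y-2\}$ directly, or use that $\Li$ lies in the pencil $\mathcal P_\infty$ through $\binf$ and apply Lemma \ref{lem_forward_curves}(ii). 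This yields a parabola through $\bfz$ and $\bfzp$ tangent to $\Li$, and I would locate it relative to $\Lf$ and $\bar\Lf$.

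With these images in hand, each item follows. For (i), I would show the region between $\Lf$, $\bar\Lf$ and $\gamma$ (containing $\pinf$) maps into itself: its boundary maps into $\Lf\cup\bar\Lf$ and the region near $\pinf$ is attracted. For (ii)–(iv), (vi), (vii), I would check sample points and the position of the images of the pencil lines (Lemma \ref{lem_forward_curves}(iv), which places $\sigma(\Li)\cap\bH_-$ inside the parabolas) to decide which side of $\Li$ or $\sigma(\Li)$ the image lies on.

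For (v), the strip $B$ is bounded by $x=\pm2$. Its side $x=2$ is $\Lc$, and its side $x=-2$ maps to the line of slope $-2$ through $\bfzp$, namely $\sigma(\Li)$. So $F(B\cap R_3)$ lies on one side of $\sigma(\Li)$, the $\sigma(R_4^+)$ side, while $R_3\setminus B$ falls on the other. The equality $F^{-1}(\sigma(R_4^-))\cap R_3=R_3\setminus B$ then follows from (iii) together with this dichotomy.

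The main obstacle is bookkeeping near $\bfz$ and near infinity. Regions touching the indeterminacy point $\bfz$, and the unbounded ends of the regions, could a priori have images crossing boundaries through the blown-up directions. I would handle this with the blow-up charts (\ref{eq_coordinate_blow_up}) and Proposition \ref{prop_gen_prop_F_P2}(ii). These show that directions at $\bfz$ go to $\binf$ except along $\Lc$, where the image sweeps $\Li$. The image of a small sector at $\bfz$ is then controlled by which side of $\Lc$ and $\Li$ the sector lies on, and the side of $\Li$ on which points of $F(R_2)$ and $F(R_4^+)$ land near $\bfz$ must be checked in the second blow-up chart.
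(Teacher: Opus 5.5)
Your boundary-tracing route differs from the paper's. The paper does not argue via the boundary of $F(R_j)$. Instead it sweeps each region by segments of vertical lines and of lines through $\bfz$, whose images are explicit segments of lines through $\bfzp$ and of slope $1/2$ through $\binf$ (Lemma~\ref{lem_forward_curves}(i),(iii)). Each segment is anchored at a point of $\Lf$, $\bar\Lf$, $\gamma$ or a corner whose image is known.

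Your plan can be made to work, but as written the step that carries all the weight is unjustified. Knowing that $F$ is a local diffeomorphism on $\{y>0\}\setminus\Lc$ and that $R_j$ is connected does not imply that $F(R_j)$ avoids the images of boundary arcs. A local diffeomorphism can fold a region over the image of its own boundary, e.g. $z\mapsto z^2$ on a sector of opening larger than $\pi$.

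More seriously, step (b) concerns the \emph{target} boundary curves, and some of them are not images of any boundary arc of $R_j$, so (a) says nothing about them. In (ii) the target $R_2^-\cup R_3^-\cup R_4^-$ is the strip between $\Lf$ and $\Li$ above $\Lz$. But $\Li$ contains no point of $F(\R^2\setminus\Lc)$ at all; it appears only as the image of the second exceptional curve over $\bfz$ (Proposition~\ref{prop_gen_prop_F_P2}(ii)). The bottom segment $[-2,2]\times\{0\}$ comes from the boundary at infinity, $F([u:1:0])=(u^2-2,0)$, which you never compute. Your blow-up analysis is local at $\bfz$, so nothing in the sketch prevents $F(R_2)$ from crossing $\Li$ far from $\bfz$. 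Likewise, ``the region near $\pinf$ is attracted'' does not prove (i).

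The missing ingredient is global injectivity. It holds, it makes your plan rigorous, and it removes the need for blow-up charts altogether. From $y'=(x-2)/y^2$ and $x'=-2+xy'$ one recovers $x=(x'+2)/y'$ and $y=\sqrt{x'-2y'+2}/|y'|$. Hence $F$ maps $\{y>0,\ x>2\}$ and $\{y>0,\ x<2\}$ diffeomorphically onto $\{x-2y+2>0,\ y>0\}$ and $\{x-2y+2>0,\ y<0\}$ respectively.

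Under these diffeomorphisms the dividing curves $\Lf,\bar\Lf,\{x=-2\},\gamma$ go onto the corresponding rays of $\Lf,\Lf,\sigma(\Li),\bar\Lf$. So each $F(R_j)$ is \emph{exactly} one complementary component of these rays in the target quarter, identified by one sample point. $\Li$ and $\Lz$ enter only as the boundary of that quarter. For instance this gives $F(R_1)=R_1$ and $F(R_6)=\sigma(R_1)$, hence (i).

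Equivalently, you can do (b) by pulling back the defining forms:
\[
(x-2y+2)\circ F=\frac{(x-2)^2}{y^2},\quad (x+2y+2)\circ F=\frac{x^2-4}{y^2},\quad (x-2y-2)\circ F=\frac{(x-2)^2-4y^2}{y^2},
\]
together with $(x+2y-2)\circ F=(x^2-4-4y^2)/y^2$ and $y\circ F=(x-2)/y^2$. Every item then becomes a sign check on the source region, and (v)--(vii) reduce to (iii), (iv) plus the sign of $x^2-4$.

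Finally, correct the slope slip. Since $(y-t(x+2))\circ F=(x-2)(1-tx)/y^2$, the line $\{x=1/t\}$ goes into the line of slope $t$ through $\bfzp$. So $\{x=-2\}$ corresponds to $t=-1/2$, which is why its image is $\sigma(\Li)$; it is not slope $-2$.
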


\begin{rem} \label{rem_special_vert_bfzp} The proof of the proposition also shows that the image of the vertical line through $\bfzp$ is contained in the line $\sigma(\Li)$.  
\end{rem}

\begin{proof}

Let us prove assertion (i). First we observe that the line $\Lf$ is invariant by $F$. Moreover, since 
\begin{equation}
y \circ F = \dfrac{x-2}{y^2},
\end{equation}
  the region $\{ x \geq 2 \}$ maps to the upper-half plane by $F$ and in particular, the half line $R_1 \cap \Lf$ is preserved by $F$.  Using assertion (iii) of Lemma \ref{lem_forward_curves}, any vertical segment joining $p \in \Lf\cap R_1$ and a point $q$ on the horizontal axis maps to a half line directed by the line joining $F(p)$ and $\bfzp$. Since this half line starts on the line $\Lf$ at $F(p)$ and cannot contain $\bfzp$, we deduce that this half-line is contained in $R_1$. Using the fact that $F\circ \sigma = F$, we obtain that $R_1\cup \sigma(R_1)  $ is forward invariant. 
  To conclude, we also use the fact that $R_5\cup \sigma
 (R_5)$ maps by $F$ to $R_1\cup \sigma(R_1)$.   

\smallskip

Let us prove assertion (ii). Let us choose any point $p \in R_2\cap \Lf$ and take $L$ the vertical half line starting from $p$ going up toward infinity. Since the half-line $\Lf \cap R_2$ is forward invariant, $F(p)$ belongs also belongs to this half-line and assertion (v) shows that $F(L)$ is exactly the segment joining $F(p)$ and $\bfzp$. We have thus proved that $F(R_2) \subset R_2^- \cup R_3^- \cup R_4^-$.  

\smallskip

Let us prove assertion (iii) and (iv). Fist we observe that  $\mu\circ F$ is non-positive on $R_3\cup R_4 \cup R_5$ so $F(R_3 \cup R_4 \cup R_5) \subset \R \times \R^-$. 
Observe that the equality $F\circ \sigma = F$ and the invariance of the line $\Lf$ shows that $\bar \Lf\cap R_3$ is mapped by $F$ to  $\Lf\cap \R\times \R^-$. For any vertical half-line starting from a point $p\in \bar \Lf$ and contained in $R_3$, its image is by assertion (iii) of Lemma \ref{lem_forward_curves} is a segment joining $F(p) \in \Lf$ and the point $\bfzp$. This shows that $F(R_3)\subset \sigma(R_4\cup R_5 \cup R_6)$. 
Similarly, the image of any  segment joining a point $p $ on the hyperbola in the boundary $\partial R_5$   and $\bfz$ maps by $F$ to a half-line in the lower half plane, passing through $F(p)\in \bar \Lf$ and of slope $1/2$.  This shows that $F(R_4 \cup R_5) \subset \sigma( R_2\cup R_3  )$. 

\smallskip 

Let us prove assertion (v). Note that the vertical line $V$ through $\bfzp$ is mapped to $\sigma(\Li)$, this means that the half line from $V \cap \bar \Lf$ going toward infinity in the upper-half plane is mapped by $F$ to the segment from $\sigma(\Li \cap \Lf)$ to $\bfz$. Take any point $p$ on this half line, the segment $[p , \bfz]$ is contained in $R_3$ and maps by $F$ in the lower half plane and is directed by a line of slope $1/2$. This means it starts at $F(p)$ on the segment from  $\sigma(\Li \cap \Lf)$ to $\bfz$ and goes toward infinity with slope $1/2$ in the lower half plane. This shows that $F(B\cap R_3) \subset \sigma(R_4^+ \cup R_5 \cup R_6)$. For the second equality, one observes that the image by $F$ of any half-line in $\mathcal{P}_0$ starting at $p$ on the vertical line $V$ and going toward infinity in the upper-half plane is a segment contained in $\sigma(R_4^-)$. One obtains $F(R_3 \setminus B) \subset \sigma(R_4^-)$ and the equality follows.

\smallskip

Let us prove assertion (vi). Note that $F(-2,2) = (0,-1)= x_0$ which is the intersection between $\sigma(\Li)$ and $\Lf$. Moreover, $F(0,1) = (-2, -2) = x_1$ is the reflection of the corner of $R_4^+$. Choose any point $p\in R_4^+$ and consider the vertical segment $S$ starting from $\bar \Lf$, passing through $p$ and ending on the line $\Li$. 
By assertion (iii) of Lemma  \ref{lem_forward_curves}, it is mapped by $F$ to a half-line in the lower half-plane starting at $t\in [x_0,x_1]$ and is contained in  $L_{t}' \cap \sigma(R_2 \cup R_3)$. Since $t\in \sigma(R_2^+ \cup R_3^+)$, we deduce that $F(R_4^+ ) \subset \sigma(R_2^+ \cup R_3^+)$, as required.   

\smallskip 

Let us prove assertion (vii). Since the corner $p=(-2,2)$ is mapped to $x_0$ the intersection between $\sigma(\Li) $ and $\Lf$, the half line from $p$ of slope $-1/2$ going toward infinity in the upper-half plane is mapped by $F$ to the segment joining $x_0$ to $\bfz$. If $q \in R_5$, we take the vertical segment going through $q$ and contained in $R_5$, it is mapped to a segment starting from $t \in [x_0, \bfz]$, ending at the point $\bar \Lf \cap L_t'$. This shows that $F(R_5) \subset \sigma(R_2^- \cup R_3^-)$, as required.     
\end{proof}

 \begin{cor} \label{cor_image_parabola} For any line $L \in \mathcal{P}_0' \cup \mathcal{P}_\infty \cup \mathcal{P}_\infty'$ which is not vertical, the image $F(L)$ is a parabola containing $\bfzp$, tangent to $\Li$ at $\bfzp$ and diverging to infinity in the lower half plane.
 \end{cor}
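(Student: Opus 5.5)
We combine assertions (ii) and (iv) of Lemma \ref{lem_forward_curves} with a sign computation for the second coordinate $y\circ F=(x-2)/y^2$ near the horizontal axis. I have not found an argument that covers the lines of $\mathcal{P}_\infty\cup\mathcal{P}_\infty'$ crossing $\Lz$ to the right of $\bfz$; the gap is described in the last paragraph.

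\textbf{Step 1: the parabola.} Let $L\in\mathcal{P}_0'\cup\mathcal{P}_\infty\cup\mathcal{P}_\infty'$ be non-vertical.
\begin{itemize}
\item If $L\in\mathcal{P}_\infty\cup\mathcal{P}_\infty'$ and $L\neq\Lf,\bar\Lf$, assertion (ii) of Lemma \ref{lem_forward_curves} says that $F(L)$ is a parabola through $\bfz$, through $\bfzp$, and tangent to $\Li$ at $\bfzp$.
\item If $L\in\mathcal{P}_0'$ is non-vertical, assertion (iv) gives a parabola through $\binfp$, tangent to $\Li$ at $\bfzp$.
\end{itemize}
The tangency is the local computation of Proposition \ref{prop_lift_X_F}. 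The point $L\cap\Lc$ is sent into $\Efzp$ near the point $\tilde\Li\cap\Efzp$, so the image curve passes through $\bfzp$ with the direction of $\Li$. Thus the first three properties (parabola, contains $\bfzp$, tangent to $\Li$ there) only require collecting these facts. The lines $\Lf,\bar\Lf$ are degenerate: $\Lf$ is fixed and $F(\bar\Lf)=\Lf$ since $F\circ\sigma=F$. They must be set aside or understood as excluded, as in the Lemma.

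\textbf{Step 2: the branch at infinity.} Since $F^{-1}(\Linf)=\Lz$ and $\Lz$ is transversely critical, the unique point at infinity of the parabola is $F(q)$, where $q=L\cap\Lz=(x_0,0)$. The two branches going to infinity are the images of the two half-germs of $L$ at $q$. In the affine chart,
\[
y\circ F=\frac{x-2}{y^2}.
\]
On both half-germs of $L$ at $q$ we have $y\to0$ and $x\to x_0$. If $x_0<2$, then $y\circ F\to-\infty$ on both sides. Hence both ends of the parabola go to infinity inside the lower half plane, which is the claimed divergence.

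For $L\in\mathcal{P}_0'$ we have $q=\bfzp$ and $x_0=-2<2$; this is exactly the reasoning already used in the proof of Lemma \ref{lem_forward_curves}(iv). For $L\in\mathcal{P}_\infty\cup\mathcal{P}_\infty'$, the line $\{x-2y=c\}$ or $\{x+2y=c\}$ meets $\Lz$ at $x_0=c$. So the argument applies whenever $c<2$, and $c=2$ gives exactly $\Lf$ and $\bar\Lf$.

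\textbf{Main obstacle: lines with $c>2$.} For these lines the same computation gives $y\circ F\to+\infty$ near $q$, so the branch at infinity lies in the upper half plane. I would first try to rule these lines out by the context in which the corollary is used, namely lines meeting the regions $R_2,\dots,R_5$. Failing that, I would read ``diverging in the lower half plane'' as a statement about the arc of $F(L)$ coming from the part of $L$ with $x<2$. On that part $y\circ F<0$, so it lies in $\{y<0\}$, and it is unbounded when that part of $L$ contains the side through $q$.

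If neither reading is acceptable, the statement has to be restricted to $c<2$ for $L\in\mathcal{P}_\infty\cup\mathcal{P}_\infty'$. I expect this case analysis, rather than the parabola and tangency part, to be the real difficulty.
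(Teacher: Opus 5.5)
Your argument is correct wherever the statement is true. What you call the main obstacle is not a gap in your proof but a genuine counterexample to the literal statement. For $L=\{x\mp 2y=c\}$ with $c>2$ one has $y\circ F\to+\infty$ near $q=(c,0)$, so the unbounded part of the parabola $F(L)$ lies in the upper half plane (take $c=4$). Your alternative reading does not rescue this case: the image of $L\cap\{x<2\}$ is then the bounded arc from $\bfzp$ to $\bfz$, so it does not diverge.

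The paper's proof makes the needed restriction tacitly. It asserts that $L$ meets $\Lz$ at a point of $\bIz\setminus\{\bfz\}$, so it only treats the lines bounding the cone field of Definition~\ref{def_cone_field}. That is also the setting in which this divergence is used in the proof of Theorem~\ref{thm_invariant_cone_field}. Note that $\Lz\in\mathcal{P}_0'$ must be excluded too, since $F(\Lz)=\Linf$ is not a parabola; you excluded $\Lf,\bar\Lf$ but missed this member.

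Under that restriction your proof coincides with the paper's. The parabola and the tangency at $\bfzp$ come from Lemma~\ref{lem_forward_curves}(ii),(iv), and the divergence comes from the behaviour of $F$ near $q=L\cap\Lz$. The paper phrases the last step by saying that near $q$ the line lies in $R_4$ (and $\sigma(R_4)$), which $F$ maps into $\sigma(R_2\cup R_3)$ by Proposition~\ref{prop_mapping_regions}(iv). That inclusion is itself proved from the sign of $(x-2)/y^2$.

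Your direct computation is cleaner and gives the sharp condition $x_0<2$. It also covers $q=\bfzp$ (the pencil $\mathcal{P}_0'$) and $c<-2$, where a neighbourhood of $q$ is not contained in $R_4$ and the paper's wording does not literally apply.
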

 
 \begin{proof}
By assertion (ii), (iv) of Lemma \ref{lem_forward_curves}, the image $F(L)$ is a parabola which is tangent to $\Li$ at $\bfzp$. 
Moreover, $L$ intersects the horizontal line $\Lz$ at a point on $\bIz \setminus \{ \bfz \}$. In particular, a neighborhood of the intersection point is contained in $R_4$, hence by assertion (iv) of Proposition \ref{prop_mapping_regions}, its  image by $F$ will have in the lower half plane near infinity. This shows that the parabola diverges to infinity in the lower half plane, as required.    
 \end{proof}

\subsection{Construction of a cone field}

Let us set $E = \R^2 \setminus (R_1 \cup R_6 \cup \sigma(R_1 \cup R_6))$ and let $S = \Lf \cup \bar \Lf \cup \Li \cup \sigma(\Li) $ be the union  of four lines.

\begin{defi} \label{def_cone_field} For each point $p \in E \setminus S$, the cone field at $p$ contains the horizontal direction and its boundary is determined by the only two pencils among $\{ \mathcal{P}_0, \mathcal{P}_0', \mathcal{P}_\infty, \mathcal{P}_\infty'  \}$ such that the member $L$ passing through $p$ intersects $\Lz \cup \Linf$ at a point on $\bIz \cup \bIinf \setminus \{ \bfz, \bfzp, \binf, \binfp \}$. 
\end{defi}

We refer to Figure \ref{fig_cone_field} for an illustration of the cone field at various points.

\begin{figure}[h!] \label{fig_cone_field}
\centering
\includegraphics[scale=0.6]{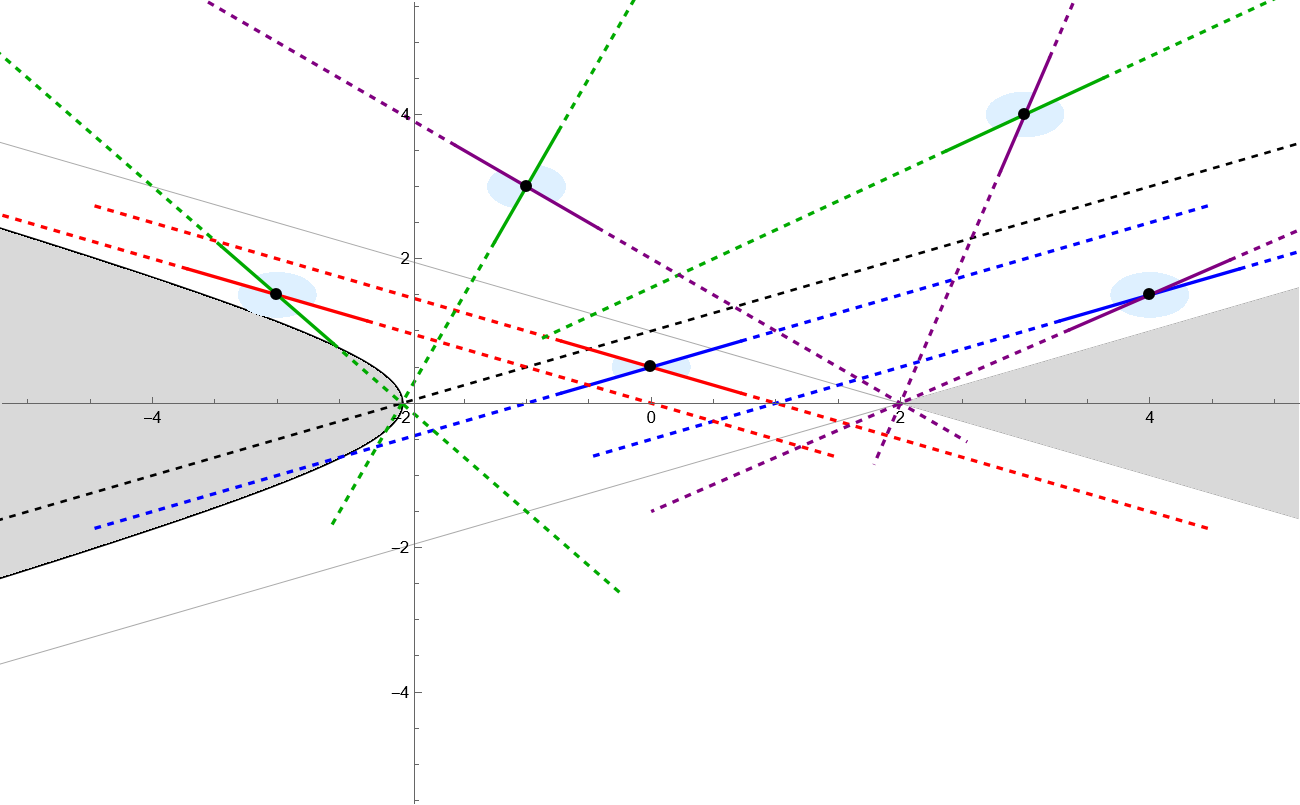}
\caption{Illustration of the cone field at the points $(-3,1.5), (-1,3) ,(0,0.5), (3,4),  (4,1.5) $. Line in  the pencils $\mathcal{P}_0, \mathcal{P}_0', \mathcal{P}_\infty, \mathcal{P}_{\infty}' $ are drawn in purple, green, blue and red respectively. }
\end{figure}

\begin{rem} Observe that the cone field is symmetric with respect to the reflection $\sigma$. We extend the cone field by continuity to $E$. 
\end{rem}

We also put an orientation of the cone field, a tangent vector $v$ at $p$ belongs to $\mathcal{C}^+_p$ (resp. $\mathcal{C}_p^-$) if it belongs to $\mathcal{C}_p$ in the corner of $\mathcal{C}$ containing the vector $(1,0)$ (resp. the vector $(-1,0)$). 

A line through $p$ of one of the pencil $\mathcal{P} \in \{\mathcal{P}_0, \mathcal{P}_0' ,\mathcal{P}_\infty, \mathcal{P}_\infty' \}$ cuts the plane into two half-spaces, which will be denoted $\mathcal{P}^{\epsilon}(p)$ with $\epsilon \in \{ +, -\}$ respectively, with the compatibility condition $\mathcal{C}_p^\epsilon \subset \mathcal{P}^\epsilon(p) $.

For the next proof, we will use the following lemma that will ensure that the intersection between a parabola and a line is transverse. 

\begin{lem} \label{lem_parabola_line_transverse} Let $P$ be a parabola and let $Conv(P)$ the convex set enclosed by $P$. If $L$ is any line which passes through a point in the interior of $Conv(P)$, then $L$ and $P$ intersect transversly. 
\end{lem}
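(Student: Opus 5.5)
We work in an affine chart in which $P$ is the graph $\{y = ax^2+bx+c\}$ with $a>0$. Every real parabola can be brought to this form by an affine change of coordinates, and such a change preserves lines, convexity, interiors and transversality. Then $\mathrm{Conv}(P)=\{y\geqslant ax^2+bx+c\}$, and its interior is $\{y>ax^2+bx+c\}$. Let $q=(x_q,y_q)$ be an interior point of $\mathrm{Conv}(P)$ lying on $L$.

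We split into two cases according to whether $L$ is vertical in these coordinates.

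\emph{Vertical case.} If $L=\{x=x_q\}$, then $L$ meets $P$ at exactly one point, $(x_q, ax_q^2+bx_q+c)$. The tangent line to $P$ at any point has finite slope $2ax+b$, so it is never vertical. Hence the intersection is transverse.

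\emph{Non-vertical case.} Write $L=\{y=mx+k\}$ and set $g(x)=ax^2+bx+c-mx-k$. The intersection points of $L$ and $P$ correspond to the real zeros of $g$. Moreover, $L$ is tangent to $P$ at an intersection point exactly when the slopes agree there, i.e.\ when $g'(x)=0$ as well, which means the zero is a double root of $g$. Since $q\in L$ lies in the interior of $\mathrm{Conv}(P)$, we have $g(x_q)=ax_q^2+bx_q+c-y_q<0$. Because the leading coefficient of $g$ is $a>0$, $g\to+\infty$ as $x\to\pm\infty$, so $g$ has two distinct real roots, one on each side of $x_q$. A quadratic with two distinct simple roots has nonzero derivative at each of them, so every intersection point is transverse. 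The same argument also shows that $L$ does meet $P$, in exactly two points.

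If the paper's convention is instead projective, so that $P$ is a conic tangent to the line at infinity and $L$ may also meet $P$ there, we add one remark. Only intersections in $\mathbb{R}^2$ are relevant in the applications. In the non-vertical case, Bezout's theorem shows that both intersection points are accounted for in the affine part. In the vertical case, the second intersection lies at the point at infinity of $P$; this is the only point where tangency to $L_\infty$ occurs, and the line $L$ meets $P$ there transversally anyway.

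There is no real obstacle here. The only step needing care is translating ``$q$ is in the interior'' into the strict sign condition $g(x_q)<0$, which is what forces two simple roots.
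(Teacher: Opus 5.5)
Your proof is correct, but it takes a different route from the paper's. The paper's proof is a single synthetic sentence: because $L$ passes through an interior point, it cuts $Conv(P)$ into two components, so it cannot be tangent to $P$. The implicit fact is that a tangent line to the boundary of a convex region is a supporting line and therefore misses the interior.

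You instead argue analytically. You normalize $P$ to $y=ax^2+bx+c$ with $a>0$ and convert interiority into the strict inequality $g(x_q)<0$. That inequality forces the quadratic $g$ to have two simple roots, so the slopes of $L$ and $P$ differ at every intersection point. The vertical case you handle separately, and it is immediate.

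The paper's argument is shorter and coordinate-free. Yours is fully explicit and gives more information: it counts the intersection points. Your case split also shows that a second affine intersection point exists exactly when $L$ is not parallel to the axis of $P$. This matters because, in the proof of Theorem~\ref{thm_invariant_cone_field} (Cases B and C.a), the paper uses this lemma to produce a second intersection point $q\neq F(p)$. Transversality alone does not give that second point when $L$ is parallel to the axis, so your version makes explicit the extra hypothesis those applications implicitly rely on.
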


\begin{proof}
Seeing that the line cuts the convex set $Conv(P)$ into two connected components, the intersection between $L$ and $P$ is transverse. 
\end{proof}

\begin{prop} \label{prop_transversal} Let $p \in E\setminus S$ be any point  for which $F(p)$ is distinct from $\bfz,\bfzp$ and take any non vertical line $L$ through $p$ which is a member of $\mathcal{P}_0', \mathcal{P}_\infty, \mathcal{P}_\infty'$. Then the image $F(L)$ is a curve which is transverse to the lines defining the cone field at $F(p)$.
\end{prop}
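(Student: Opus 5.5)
By Corollary \ref{cor_image_parabola}, for a non vertical line $L\in\mathcal{P}_0'\cup\mathcal{P}_\infty\cup\mathcal{P}_\infty'$ through $p$, the image $F(L)$ is a parabola through $\bfzp$, tangent to $\Li$ there, and diverging to infinity in the lower half plane. The plan is to reduce the statement to Lemma \ref{lem_parabola_line_transverse}. Let $L_1,L_2$ be the two boundary lines of the cone field at $q=F(p)$ from Definition \ref{def_cone_field}; they belong to two pencils among $\mathcal{P}_0,\mathcal{P}_0',\mathcal{P}_\infty,\mathcal{P}_\infty'$. It suffices to show that each $L_i$ passes through an interior point of the convex region $Conv(F(L))$. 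The hypothesis $F(p)\neq\bfz,\bfzp$ guarantees that $L_1,L_2$ are well defined at $q$. It also prevents $q$ from being the tangency point $\bfzp$, where transversality would fail.

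I would exhibit an explicit interior point of $Conv(F(L))$ on each candidate boundary line. Since $F(L)$ is tangent to $\Li$ at $\bfzp$ and opens toward the lower half plane, points of $\sigma(\Li)$ near $\bfzp$ on the appropriate side lie in $Conv(F(L))$. For $L\in\mathcal{P}_0'$, Lemma \ref{lem_forward_curves}(iv) already gives that the half line $\sigma(\Li)\cap\bH_-\setminus\{\bfzp\}$ lies in the convex set. For $L\in\mathcal{P}_\infty\cup\mathcal{P}_\infty'$, I would run the same convexity argument. The line $\sigma(\Li)$ meets the parabola only at $\bfzp$ in $\R^2$, because its other intersection point is at infinity. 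Since $F(L)$ diverges downward, the lower half of $\sigma(\Li)$ lies in the convex set.

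Then I would check the boundary lines at $q$. Any line through $q$ other than the tangent $\Li$ that points "into" the opening of the parabola will hit an interior point. Concretely, I would verify that each $L_i$ crosses the half line $\sigma(\Li)\cap\bH_-$, or else the open segment of $\Lz$ inside $Conv(F(L))$, away from $\bfzp$. For lines of $\mathcal{P}_0$ and $\mathcal{P}_0'$ this means intersecting with $\Lz$ and with $\sigma(\Li)$. For lines of $\mathcal{P}_\infty$ and $\mathcal{P}_\infty'$, which have slopes $\pm 1/2$, it means computing the intersection with $\sigma(\Li)$ and locating it. I would organize this by the region containing $q$, using Proposition \ref{prop_mapping_regions} to know where $q$ lies: $\sigma(R_2\cup R_3)$, $\sigma(R_4\cup R_5\cup R_6)$, $R_2^-\cup R_3^-\cup R_4^-$, and so on. Each region fixes which two pencils bound the cone at $q$, so finitely many elementary sign checks remain.

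The main obstacle is this case analysis. One must make sure the chosen interior point of $Conv(F(L))$ really lies on $L_i$, and not on the portion of $L_i$ beyond the parabola where $L_i$ is tangent or exterior. This is most delicate when $q$ is near $\Li$ or near the boundary lines $S$, where the cone degenerates. There I would first try a continuity argument, since transversality is open. I would also use the fact that $L_i\neq\Li$ for $q\notin S$, together with the fact that a line through $\bfzp$ or a point of $\bIz$ distinct from the tangent direction always enters the parabola's convex side.
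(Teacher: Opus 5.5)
Reducing to Lemma~\ref{lem_parabola_line_transverse} is a sound frame, and two of your ingredients are correct: the half-line from Lemma~\ref{lem_forward_curves}(iv) for $L\in\mathcal{P}_0'$, and the open segment of $\Lz$. However, the interior points you propose fail in several cases, and one claim is false.

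The false claim concerns $L\in\mathcal{P}_\infty\cup\mathcal{P}_\infty'$. If $L$ meets $\Lz$ at $(x_0,0)$, the point at infinity of $F(L)$ is $F(x_0,0)=[x_0:1:0]$, and this equals $\binfp$ only when $x_0=-2$. So $\sigma(\Li)$, whose point at infinity is $\binfp$, generally meets $F(L)$ a second time in $\R^2$. Only a bounded segment of $\sigma(\Li)\cap\bH_-$ then lies in $\mathrm{Conv}(F(L))$. For example, $L=\{y=x/2\}$ gives $F(L)=\{y=\frac{2-x}{2}-\frac{(2-x)^2}{8}\}$, which meets $\sigma(\Li)$ again at $(6,-4)=F(-2,-1)$. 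The ``other intersection at infinity'' argument is valid only for $L\in\mathcal{P}_0'$, where $\binfp\in F(L)$.

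Your test points also fail pencil by pencil:
\begin{itemize}
\item A line of $\mathcal{P}_\infty'$ is parallel to $\sigma(\Li)$, since both pass through $\binfp$, so it never meets $\sigma(\Li)$ in $\R^2$.
\item A line of $\mathcal{P}_0'$ meets both $\Lz$ and $\sigma(\Li)$ only at $\bfzp$, which lies on the parabola, not in its interior.
\item A line of $\mathcal{P}_0$ meets $\Lz$ at $\bfz$, which lies on $F(L)$ when $L\in\mathcal{P}_\infty\cup\mathcal{P}_\infty'$.
\end{itemize}
Sorting by the region of $F(p)$ via Proposition~\ref{prop_mapping_regions} does not repair this. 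What you defer as ``elementary sign checks'' is in fact the whole content of the statement.

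The missing idea is that the base points of the pencils often lie on the conic $F(L)$ itself, and Bezout then settles those cases at once. For $L\in\mathcal{P}_\infty\cup\mathcal{P}_\infty'$ the parabola contains $\bfz$ and $\bfzp$; for $L\in\mathcal{P}_0'$ it contains $\bfzp$ and $\binfp$. A boundary line $C$ through $F(p)$ and through a second point of the conic meets it at two distinct points, each with multiplicity one, hence transversally at $F(p)$. Your closing remark about non-tangent lines through $\bfzp$ is exactly this observation, but you apply it to one base point only.

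The paper organizes the proof by the pencil of $C$, not by the region of $F(p)$:
\begin{itemize}
\item For $L\in\mathcal{P}_\infty\cup\mathcal{P}_\infty'$: lines $C\in\mathcal{P}_0\cup\mathcal{P}_0'$ are handled by Bezout. Lines $C\in\mathcal{P}_\infty\cup\mathcal{P}_\infty'$ meet $\bIz\setminus\{\bfz,\bfzp\}$ by the definition of the cone field, and that point lies inside the chord $[\bfzp,\bfz]$ of $F(L)$, hence in the interior of the convex set.
\item For $L\in\mathcal{P}_0'$: lines $C\in\mathcal{P}_0'\cup\mathcal{P}_\infty'$ are handled by Bezout. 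Only $C\in\mathcal{P}_0\cup\mathcal{P}_\infty$ need the half-line $\sigma(\Li)\cap\bH_-\setminus\{\bfzp\}$ from Lemma~\ref{lem_forward_curves}(iv), which such $C$ cross in the lower half-plane.
\end{itemize}
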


\begin{proof}
By Lemma \ref{lem_forward_curves}, $F(L)$ is a non-degenerate parabola which is tangent to the indeterminate line at $\bfzp$. 

$\bullet$ Assume that $L \in \mathcal{P}_\infty \cup \mathcal{P}_\infty'$, then $F(L)$ passes through $\bfz, \bfzp$.  
Take a line $C$ of the cone field at $F(p)$.

 Assume that $C$ also passes through $\bfz$ or $\bfzp$. As it also contains the point $F(p)$ which is distinct from $ \bfz, \bfzp$,  Bezout's theorem ensures that $F(L)$ and $C$ are transverse at $F(p)$  and the result holds.  

We thus assume that $C$ is a member of $\mathcal{P}_\infty \cup \mathcal{P}_\infty'$. 
Consider $q$ the intersection between $C$ and $\bIz \setminus \{\bfzp, \bfz\}$. Since $F(L)$ intersects $\Lz$ at $\bfz, \bfzp$, $q$ does not belong to the parabola $F(L)$ by Bezout's theorem. The segment $\bIz$ has its boundaries on the parabola $F(L)$, in particular, the point $q$ is in the interior of the convex set  $Conv(F(L))$.  By Lemma \ref{lem_parabola_line_transverse}, the line $C$ must intersect $F(L)$ transversely at $F(p)$ and the result holds.    
\smallskip

$\bullet$ Assume that $L \in \mathcal{P}_0'$ is not vertical, then the  parabola $F(L)$ passes through $\bfzp$ and another point in $ \bIz$. Take a line $C$ of the cone field at $F(p)$.

 Assume that $C$ also passes through $\binfp$ or $\bfzp$. As $C$ contains a point $F(p)$ distinct from $\binfp,\bfzp$, the line $C$ and $F(L)$ are transverse at $F(p)$ by Bezout's theorem and the result holds.
  
  We are thus reduced to the case where $C \in \mathcal{P}_\infty \cup \mathcal{P}_0$. 
 Denote by $Conv(F(L))$ the convex set enclosed by the parabola $F(L)$. 
By assertion (iv) of Lemma \ref{lem_forward_curves}, $Conv(F(L))$ contains the half-line $l =\sigma(\Li) \cap \bH_- \setminus \{\bfzp \}$ in its interior. 
We claim that any member $C $ of $\mathcal{P}_0 \cup \mathcal{P}_\infty$ through $F(p)$ will intersect $l$. 
Indeed, if $C \in \mathcal{P}_\infty$, then it is a line through $q' \in \bIz\setminus \{ \bfz, \bfzp \}$ and $F(p)$, of slope $1/2$. As $\sigma(\Li)$ is a line of slope $-1/2$,  it intersects $C$ in the lower half plane, hence on $l$. Otherwise, if $C \in \mathcal{P}_0$,  it  does not intersect $\sigma(\Li)$ on the line at infinity and belongs in the interior of the region $R_2 \cup R_3 \cup \sigma(R_2\cup R_3)$. Since the latter  only intersects $\sigma(\Li)$ on the lower half plane. This implies that $C$ and $\sigma(\Li)$ intersect on the lower half plane, hence on $l$.  
 By Lemma \ref{lem_parabola_line_transverse}, as any member $C $ of $\mathcal{P}_0 \cup \mathcal{P}_\infty$ intersects a half line in interior of $Conv(F(L))$, $F(L) $ and $C$ must intersect transversely at $F(p)$.       
\end{proof}

\subsection{Forward invariance of the cone field}

The main result of the section is the invariance of the cone field, stated below. 
\begin{thm} \label{thm_invariant_cone_field}
For any point $p \in E \setminus \Lc$ such that $F(p) \in E$, 
the differential of $F$ maps $\mathcal{C}_p$ into $\mathcal{C}_{F(p)}$. 
Moreover, when $p, F(p) \in E \setminus \Lc$ and $F^2(p) \in E $ , then  $D F^2$ maps $\mathcal{C}_p$ strictly into $\mathcal{C}_{F(p)}$. 
\end{thm}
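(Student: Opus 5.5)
The strategy is to exploit the fact that the cone $\mathcal{C}_p$ is bounded by tangent directions of two lines from the pencils $\mathcal{P}_0,\mathcal{P}_0',\mathcal{P}_\infty,\mathcal{P}_\infty'$. Lemma \ref{lem_forward_curves} describes the images of such lines explicitly. A line of $\mathcal{P}_0$ through $p$ maps to the line $l_{F(p)}\in\mathcal{P}_\infty$. Lines of $\mathcal{P}_\infty,\mathcal{P}_\infty',\mathcal{P}_0'$ map to parabolas tangent to $\Li$ at $\bfzp$. Vertical lines map to lines of $\mathcal{P}_0'$. So $dF_p$ sends the two boundary directions of $\mathcal{C}_p$ to tangent directions of these image curves at $F(p)$. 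Since $dF_p$ is a linear isomorphism for $p\notin \Lc\cup\Lz$, $dF_p(\mathcal{C}_p)$ is the sector bounded by the two image directions that contains $dF_p(1,0)$. The task is therefore to show that both image directions lie in $\mathcal{C}_{F(p)}$, and that the correct sector is selected.

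I would proceed region by region, using Proposition \ref{prop_mapping_regions} to know which region contains $F(p)$. Definition \ref{def_cone_field} then says which two pencils bound $\mathcal{C}_{F(p)}$. For a boundary line of $\mathcal{P}_0$, its image lies in $\mathcal{P}_\infty$ and passes through $F(p)$. Using the explicit form $F(x,y)=(-2+C^2+2C/y,\,C/y)$, one checks that its intersection with $\bIz\cup\bIinf$ is an admissible point. Hence it is one of the defining lines of $\mathcal{C}_{F(p)}$, or at least lies inside the cone, and the boundary direction lands on or in the target cone. For a boundary line $L$ of the other pencils, the image $F(L)$ is a parabola. Proposition \ref{prop_transversal} says $F(L)$ is transverse at $F(p)$ to both lines defining $\mathcal{C}_{F(p)}$. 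Hence the tangent direction of $F(L)$ at $F(p)$ lies strictly in one of the two complementary sectors: either in $\mathcal{C}_{F(p)}$ or in its complement.

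To rule out the complement, I would use a continuity and connectedness argument. Transversality holds on each connected component of the relevant region of $E\setminus S$, so the tangent direction of $F(L)$ cannot cross the boundary of $\mathcal{C}_{F(p)}$ as $p$ moves. It then suffices to check a single convenient point per region, for instance a point approaching $\Lz$ or one of $\bfz',\binf,\binfp$. There the parabola's tangency to $\Li$ at $\bfzp$ and its divergence into the lower half plane (Corollary \ref{cor_image_parabola}) show the direction is nearly horizontal. The orientation, meaning that $dF(1,0)$ lies in the image sector, follows similarly from the symmetry $F\circ\sigma=F$ and from the sign of $dF$ on horizontal vectors at one test point. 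Finally, continuity extends the statement from $E\setminus S$ to $E\setminus\Lc$.

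For strict invariance under $DF^2$, I would argue that the boundary lines of $\mathcal{C}_p$ cannot be mapped twice onto boundary lines of the cones. A boundary line of $\mathcal{P}_0$ goes to $\mathcal{P}_\infty$, and at the next step becomes a parabola. By strict transversality, a parabola's tangent lies in the open interior of the cone. A parabola boundary is already strictly inside after one step. Thus after two iterates both boundary directions are interior, which gives strict inclusion.

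The main obstacle will be the exhaustive case analysis across the regions $R_2^\pm,\dots,R_5$ and their reflections. One must confirm that in each case the image line from $\mathcal{P}_0$ really meets $\bIz\cup\bIinf$ inside the admissible segment, since this depends on the region. One must also confirm that the base-point check in the continuity argument picks the interior sector. I would first try to organize this using the images of the boundary curves $\Lf,\bar\Lf,\gamma$ computed in Proposition \ref{prop_mapping_regions}.
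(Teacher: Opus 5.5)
Your setup is sound. Boundary lines of $\mathcal{C}_p$ go to the curves of Lemma \ref{lem_forward_curves}. The $\mathcal{P}_0$ boundary goes to the $\mathcal{P}_\infty$ line through $F(p)$, which is a defining line of $\mathcal{C}_{F(p)}$ because $F$ restricted to $\Linf$ is the Chebyshev map and $p\notin\Lc$. Proposition \ref{prop_transversal} makes the image parabolas transverse to the defining lines at $F(p)$.

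But transversality only says the image direction avoids $\partial\mathcal{C}_{F(p)}$. The real content of the theorem is which side it falls on, and that $dF_p(\mathcal{C}_p)$ is the double sector inside $\mathcal{C}_{F(p)}$ rather than the one containing its complement. You defer exactly this to a continuity argument, and it does not go through as proposed:
\begin{itemize}
\item The sets on which the source pencil and the pair of target pencils stay fixed have the form $R_i^{\pm}\cap F^{-1}(R)$ for a target region $R$. You never address their connectedness. Proposition \ref{prop_transversal} also says nothing when $F(p)\in S$, so you cannot propagate across target regions for free.
\item The base points you suggest are degenerate. As $p$ tends to $q=(x_q,0)\in\bIz$, $F(p)$ escapes to $[x_q:1:0]$. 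The two defining lines at $F(p)$ (through $\bfz$ and $\bfzp$) and the tangent to $F(L)$ then all converge to the same direction $(x_q,1)$. Deciding the side is a second-order computation, and the tangent is certainly not ``nearly horizontal''.
\item As $p$ approaches $\bfzp$, $\binf$ or $\binfp$, $F(p)$ approaches $\binfp$ or $\bfz$. These are base points of the pencils, where the cone field is not even continuous.
\item Saying that $dF(1,0)$ lies in the image sector is tautological. It does not decide which of the two sectors bounded by the image lines is $dF_p(\mathcal{C}_p)$.
\end{itemize}

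The missing idea is the one the paper uses. Work with the two oriented boundary rays of $\mathcal{C}_p^{+}$: the segment from $p$ to $\bIz$ and the upward half-line. Then locate the whole image arc $F(L)$ issuing from $F(p)$ using global convexity of the image parabola $\Gamma$. $\Gamma$ passes through $\bfzp$ and $\bfz$ (or through $\binfp$ and a point of $\bIz$) in a known order relative to $F(p)$, and diverges into the lower half plane. Lemma \ref{lem_parabola_line_transverse} and Bezout then place $F(L)$ on a definite side of each defining line. Once both boundary rays are shown to land in the same half-cone $\mathcal{C}^{\pm}_{F(p)}$, linearity of $dF_p$ and convexity of that half-cone give the inclusion and settle the orientation at once.

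For strictness you also overlooked one non-parabolic case. When $p$ lies on the vertical line through $\bfzp$, the $\mathcal{P}_0'$ boundary is vertical and maps to the line $\sigma(\Li)$, which lies on $\partial\mathcal{C}_{F(p)}$. Like the $\mathcal{P}_0$ boundary, it becomes interior only at the next iterate.
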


\begin{proof} Using the fact that $F \circ \sigma = F$ and using the fact that the cone field at $\sigma(p)$ is the image by $\sigma$ of the cone field at $p$, it is enough to prove that the cone field at $p$ is forward invariant for each point $p \in \bH_+$. The structure of the proof is as follows, we need to study  the image of the lines of each of the four pencils $\mathcal{P}_0, \mathcal{P}_0', \mathcal{P}_\infty, \mathcal{P}_\infty'$. This corresponds to the cases labeled A,B,C,D. Since the lines of the cone field at a given point $p$ are chosen depending on the location of that point, this provides some further restriction on the point $p$, and using Proposition \ref{prop_mapping_regions} on its image by $F$. We will first distinguish a subcase for which $F(p)$ is on the upper half plane which occurs in case A, C and D. Ultimately, we exhaust all the possible locations of $F(p)$ and show that the image of a segment or half-line of the four pencils are mapped in the cone field at $F(p)$. This consists in showing that the image belongs to the intersection of two half spaces cut by the lines of the cone field at $F(p)$. We have included some figures in each cases and subcases (except the last one) to illustrate the arguments.

\medskip

We first take a segment $L$ between $p$ and a point on $\bIz$ which determines a boundary direction for the cone field $\mathcal{C}_p^+$ at $p$. 

$\bullet$ Case A (see Figure \ref{caseA} for an illustration): let us first treat the special case where $L \in \mathcal{P}_0^+(p)$. 
By assertion (i) of Lemma \ref{lem_forward_curves}, the image $F(L)$ is a half-line starting at $F(p)$ diverging to infinity, which belongs to $\mathcal{P}_\infty$. 

If $p \in R_2$, then  by  assertion (ii) of Proposition  \ref{prop_mapping_regions}, $F(L) \subset \bH_+$, hence $F(L) \in \mathcal{P}_\infty^+(F(p))$.

Otherwise $p \in E \cap R_3 \subset E \setminus R_2$, then $F(L) \in \bH_-$ using assertions (iii), (iv) of Proposition \ref{prop_mapping_regions}. 
This implies that $F(L) \subset \mathcal{P}_{\infty}^-(F(p))$. 

\begin{figure}
\centering
\includegraphics[scale=0.7]{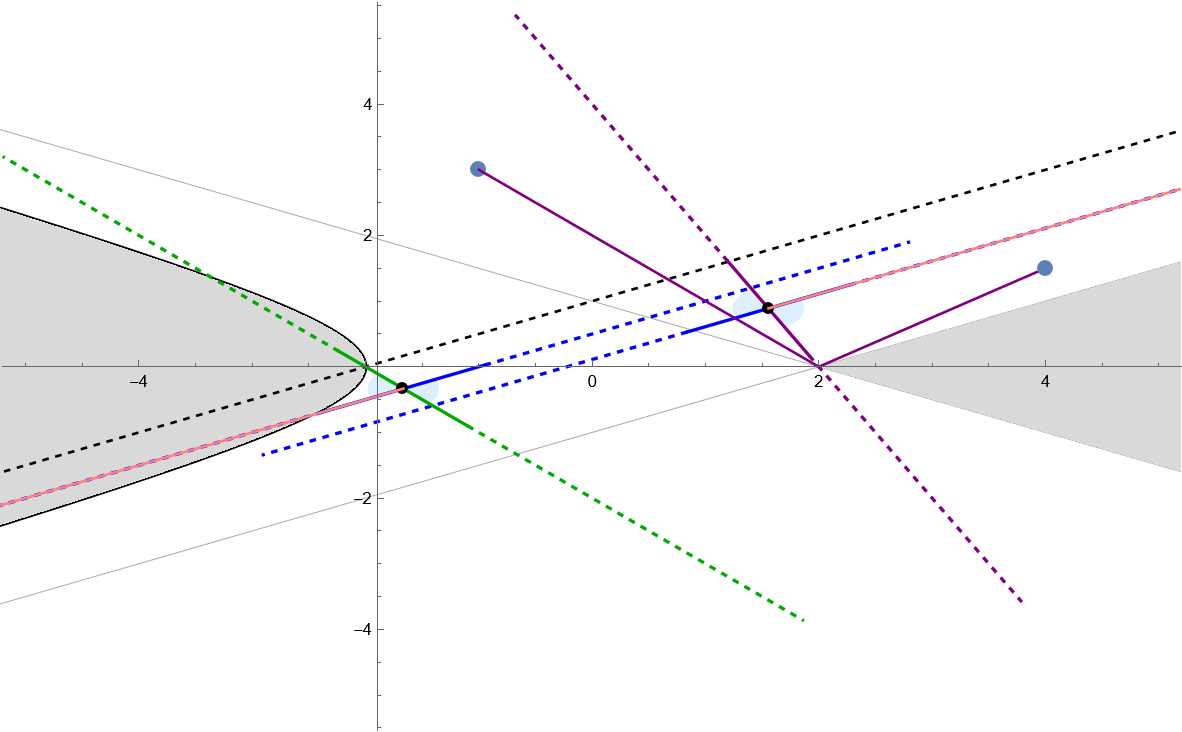}
\caption{\label{caseA}Case A illustration in the case where $p=p_{A,1} = (4, 1.5) \in R_2$, $F(p_{A,1})\simeq (1.55,0.88)$ and  $p=p_{A,2} = (-1,3)$. The segment $L$ in each situation is drawn in solid purple and its image is in solid pink.}
\end{figure}

\smallskip 

$\bullet$ Case B (see Figure \ref{caseB} for an illustration): let us assume that $L \notin \mathcal{P}_0$, which means that $L \in \mathcal{P}_\infty^{'+}(p)$ passes through a point on $\bIz \setminus \{ \bfz\}$ and that $p \in R_4\cup R_5$.
 By assertion (ii) and (iv) of  Lemma \ref{lem_forward_curves}, the image $F(L)$ is an arc  of a parabola $\Gamma$ that starts at $F(p)$ and diverges to infinity in the lower half plane. 
 Moreover, by assertion (iv) of Proposition \ref{prop_mapping_regions}, we have $F(p) \in \sigma(R_2\cup R_3) \subset \bH_-$. We claim that $F(L) \subset \mathcal{C}_{F(p)}^-$. 
Since $F(p) \in \sigma (R_2\cup R_3)$, one of the boundary line for the cone field at $F(p)$ belongs to the pencil $\mathcal{P}_0$. 
Take $C \in \mathcal{P}_0$ a line through $F(p)$, by Proposition \ref{prop_transversal}, $C$ and $F(L)$ are transversal at $F(p)$.  
 As a result, the arc $F(L)$ is either on the left hand side or on the right hand side of $C$ \footnote{The right hand side of $C$ corresponds to the half plane separated by $C$ which contains the horizontal direction $(1,0)$}. By Lemma \ref{lem_forward_curves}, the parabola $\Gamma$ starts at infinity in the lower half plane and passes successively through the points $\bfzp, \bfz, F(p)$ and diverges toward infinity in the lower half plane. By convexity, $F(L)$ is then on the left hand side of $C$.
  
  Assume that the other line $C'$ defining the cone field at $F(p)$ belongs to $\mathcal{P}_\infty'$, which means that $F(p) \in \sigma(R_2^- \cup R_3^-)$. By Lemma \ref{lem_parabola_line_transverse}, $C'$ and $\Gamma$ are transverse at $F(p)$, in particular the two curves intersect at another point $q \neq F(p)$ and the segment $[F(p) q]$ is contained in $Conv(\Gamma)$. By convexity, $F(L)$ is below\footnote{below means it belongs to the half space separated by $C'$ containing the direction $(0,-1)$} the line $C'$. 
  We have thus shown that $F(L)$ is on the left hand side of $C$ and below the line $C'$, hence $F(L) \in \mathcal{C}_{F(p)}^-$. 
  
  Assume now that the other line $C'$ defining the cone field at $F(p)$ belongs to $\mathcal{P}_0'$ so that $F(p) \in \sigma(R_2^+ \cup R_3^+)$.  By Lemma \ref{lem_parabola_line_transverse}, $\Gamma$ and $C'$ intersect transversely at $F(p)$. By convexity, the line $C'$ cuts the parabola into $3$ connected components, a bounded component,  $F(L)$ and another unbounded component. This shows that $F(L)$ is on the left hand side of $C'$. 
  We have thus shown that $F(L)$ is on the left hand side of $C$ and $C'$, hence $F(L) \in \mathcal{C}_{F(p)}^-$.  
In either cases, we have shown that $F(L) \in \mathcal{C}_{F(p)}^-$.   

\begin{figure}
\centering
\includegraphics[scale=0.7]{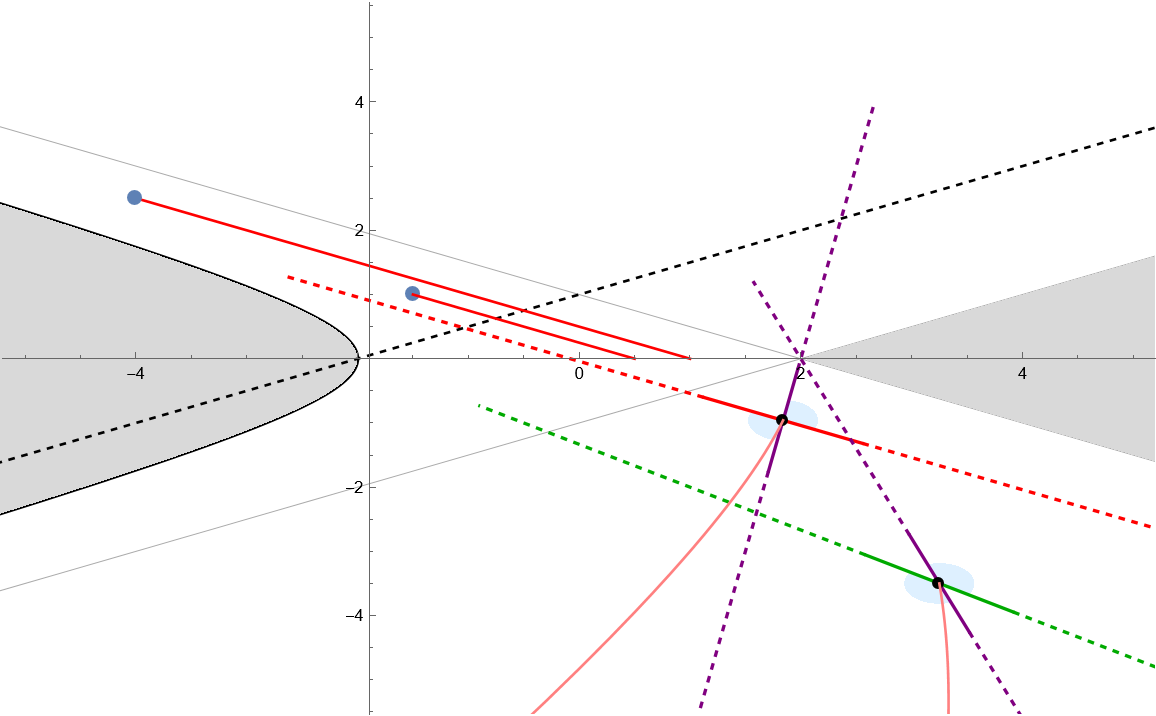}
\caption{\label{caseB}Case B illustration in the case when $p=p_{B,1} = (-4,2.5)$, $F(p_{B,1})=(1.84, -0.96)$ and when $p=p_{B,2} = (-1.5,1)$, $F(p_{B,2})=(3.25, -3.5) $. The segment $L$ in each case is in solid red while its image is in solid pink.}
\end{figure}

  \bigskip
 
We now take a half line $L \subset \bH_+$ starting at $p$ which determines a boundary direction for the cone field $\mathcal{C}_p^+$.

$\bullet$ Case C: let us assume that $L \in \mathcal{P}_\infty$ so that $p \in R_4^-  \cup R_3^- \cup R_2^- $. 
By assertion (ii) of Lemma \ref{lem_forward_curves}, the image $F(L)$ is an arc of a parabola $\Gamma$, which goes from $F(p)$ and ends at $\bfz$. 
 Note that the parabola $\Gamma$ also diverges toward infinity in the lower half plane. 
 
 $\bullet \bullet$ Subcase C.a: assume that $F(p) \in \bH_+$ so that $p \in R_2^-$. Take the  line  $C \in \mathcal{P}_\infty$ defining the cone field at $F(p)$. By Lemma \ref{lem_parabola_line_transverse}, the curves $\Gamma$ and $C$ intersect transversely at $F(p)$, moreover by convexity, the arc $F(L)$ is on the right hand side of $C$. Let us take $C'$ the other line defining the cone field at $F(p)$. Either $C' \in \mathcal{P}_0 $ and $F(p) \in R_2^- \cup R_3^-$, or $C' \in \mathcal{P}_\infty'$ and $F(p) \in R_4^-$. 
 
 Assume that $C' \in \mathcal{P}_0$. By convexity, the segment $[F(p) \bfz]$ belongs to $Conv(\Gamma)$ and lies in the upper-half plane, hence the arc $F(L)$ is above $C'$. We have thus obtained that $F(L)$ is on the right hand side of $C$ and above $C'$, hence $F(L) \in \mathcal{C}_{F(p)}^+$. 
 
 Assume that $C' \in \mathcal{P}_\infty'$. By Lemma \ref{lem_parabola_line_transverse}, the curves $\Gamma$ and $C'$ intersect transversely so there is another point of intersection $q \neq F(p)$. Observe that the parabola $\Gamma$ starts from infinity in the lower half plane, passes successively through $\bfzp, F(p), \bfz$ and diverges toward infinity in the lower half plane.  
  By convexity the segment $[F(p)q]$ is below the arc $F(L)$, hence  $F(L)$ is above $C'$. This shows that $F(L)$ is on the right hand side of $C$ and above $C'$, hence $F(L) \subset \in\mathcal{C}_{F(p)}^+$. 
In both situations of subcase C.a, we have shown that $F(L) \in \mathcal{C}_{F(p)}^+$, as required. 

\begin{figure}
\centering 
\includegraphics[scale=0.7]{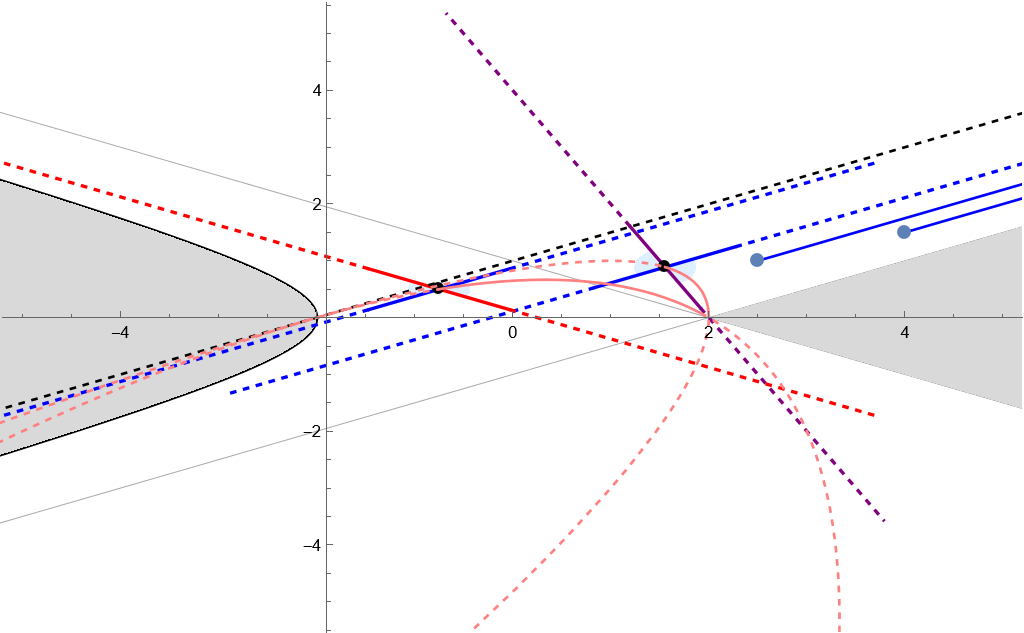}
\caption{\label{caseCa} Subcase C.a illustration when $p = p_{C_a,1} = (4,1.5)$ and when $p=p_{C_a,2} = (2.5,1)$. In both situations, the corresponding half-line $L$ is in solid blue and its image in solid pink, while the rest of the parabola $\Gamma$ is in dashed pink. }
\end{figure}

In the following subcases, $F(p)$ is on the lower half plane so that $p \in R_3^- \cup R_4^-$. In particular, the arc $F(L)$ passes successively to $F(p), \bfzp, \bfz$.  

$\bullet \bullet $ Subcase C.b (see figure \ref{caseCb}): 
assume that $p \in R_3^-$ so that $F(p) \in \sigma(R_4 \cup R_5) $ by Proposition \ref{prop_mapping_regions}. 
Take $C \in \mathcal{P}_\infty$ a line defining the cone field at $F(p)$. By Lemma \ref{lem_parabola_line_transverse}, $C$ and $\Gamma$ intersect transversely at $F(p)$. 
Observe that the subarc of $F(L)$ starting from $\bfzp$ to $\bfz$ intersects $C$ at a point $q \neq F(p)$. This shows that the segment $[F(p) q]$ belongs to $Conv(\Gamma)$, hence the arc $F(L)$ is locally at $F(p)$ on the left hand side of $C$. 
Let us take $C'$ the other line defining the cone field at $F(p)$. 
Since $F(p) \in \sigma(R_4 \cup R_5)$, either $C' \in \mathcal{P}_\infty'$ or $C' \in \mathcal{P}_0'$. By Lemma \ref{lem_parabola_line_transverse}, $C'$ and $\Gamma$ intersect transversely at $F(p)$. We claim that the line $C'$ cannot be in $\mathcal{P}_{\infty}'$. Indeed, the point $p$ belongs to $R_3^-$ which is bounded by a segment contained in $\Li$ going from $\bar \Lf$ to $\Lc$, a segment contained in $\Lc$ ending at $\bfz$ and the segment form $\bfz$ to $(0,1)$, contained in $\bar \Lf$. Since $F(\Li)$ is a parabola passing through $\bfzp$ by assertion (ii) of Lemma \ref{lem_forward_curves}, the image of $R_3^-$ is a region bounded by an arc contained in $F(\Li)$ joining $\bfzp$ and $F(0,1) = (-2,-1)$, by the half-line contained in $\Lf$ going from $(-2,-1)$ toward infinity in the lower half-plane. In particular, this shows that $F(R_3^-) \subset \sigma(R_5^-)$, hence the curve $C'$ belongs to $\mathcal{P}_0'$. 
%
Note that the point $\bfz$ is on the right hand side of $C'$. In particular, this shows that $F(L) $ is locally on the left hand side of $C'$. 
In both situations, $F(L)$ is locally near $F(p)$ on the left hand side of $C$ and $C'$, hence $F(L) \in \mathcal{C}_{F(p)}^-$.

\begin{figure}
\centering
\includegraphics[scale=0.7]{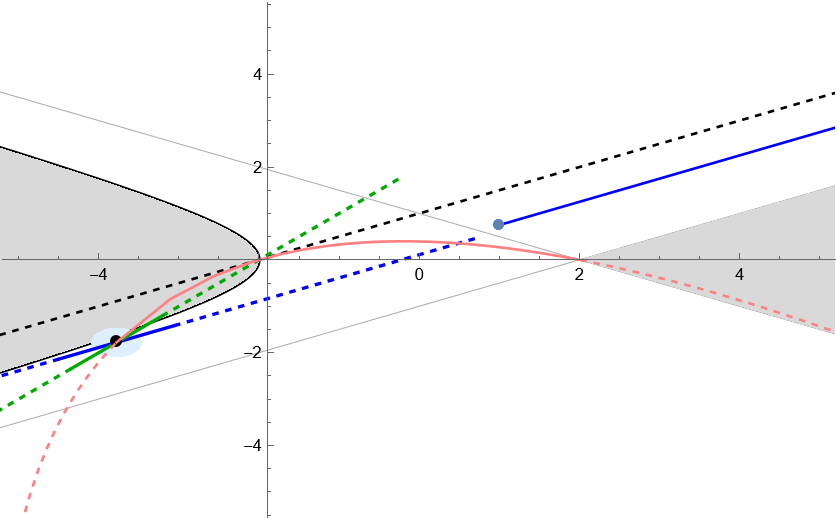}
\caption{\label{caseCb}Subcase C.b illustration with $p=(1,0.75)$ and $F(p) \simeq (-3.7,-1.7)$. The half-line $L$ is in solid blue and its image in solid pink. The rest of the parabola $\Gamma$ is in dashed pink. }
\end{figure}

$\bullet \bullet $ Subcase C.c: 
assume that $p \in R_4^-$ so that $F(p) \in \sigma(R_2 \cup R_3) $ by Proposition \ref{prop_mapping_regions}. 
Take $C \in \mathcal{P}_0$ be a line defining the cone field at $F(p)$. The arc $F(L)$ passes through $F(p)$ and $\bfz$, hence it is either on the left or right hand side of $C$. Since the point $\bfzp$ is on the left hand side of $C$ and since the arc $F(L)$ passes successively to the point $F(p), \bfzp, \bfz$, it lies also on the left hand side of $C$. Take $C'$ the other line defining the cone field at $F(p)$. Note that $C' \in \mathcal{P}_\infty'$ or $C' \in \mathcal{P}_0'$ and that $C'$ and $\Gamma$ are transversal by Proposition \ref{prop_transversal}.

Assume that $C' \in \mathcal{P}_\infty'$ (see figure \ref{caseCc2}) so that $F(p) \in \sigma(R_2^- \cup R_3^-)$. Note that the point $\bfzp$ is on the left hand side of $C'$ and that $F(L)$ intersects $C'$ at another point $q \neq F(p)$. In particular, the subarc of $F(L)$ from $F(p)$ to $q$ passing through $\bfzp$ is on the left hand side of $C'$, hence $F(L)$ is locally near $F(p)$ on the left hand side of $C'$. 
\begin{figure}[h!] 
\centering
\includegraphics[scale=0.7]{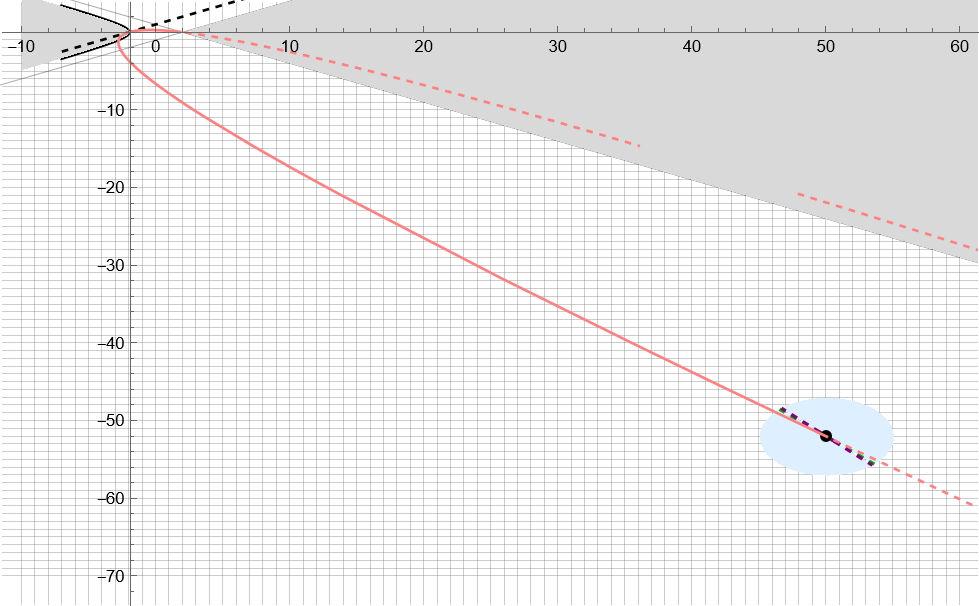}
\caption{\label{caseCc2} Subcase C.c when $p=(-1,0.24)$ and $F(p)\simeq (50.0.8, -52.08)$. The image of the half-line $L$ is in solid pink while the rest of the parabola $\Gamma$ is in dashed pink.}
\end{figure}

Assume that $C' \in \mathcal{P}_0'$ (see figure \ref{caseCc1}) so that $F(p) \in \sigma(R_2^+ \cup R_3^+)$. Note that the point $\bfz$ is on the right hand side of $C'$ while the arc $F(L)$ passes successively through the points $F(p), \bfzp, \bfz$. Hence, $F(L)$ is locally near $F(p)$ on the left hand side of $C'$. 
\begin{figure}
\centering
\includegraphics[scale=0.7]{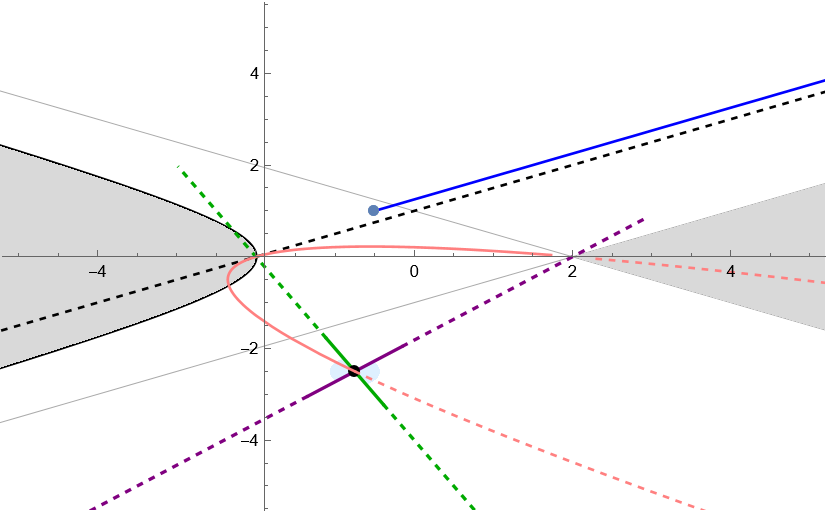}
\caption{ \label{caseCc1} Subcase C.c illustration in the case where $p = (-0.5,1)$ }
\end{figure}

In both situations, $F(L)$ is locally on the left hand side of $C$ and $C'$, hence $F(L) \in \mathcal{C}_{F(p)}^-$. 

$\bullet $ Case D: let us assume that $L \in \mathcal{P}_0'$ so that $p \in R_5 \cup R_4^+ \cup R_3^+ \cup R_2^+$. By assertion (iv) of Lemma \ref{lem_forward_curves}, $F(L)$ is an arc of a parabola $\Gamma$, starting from $F(p)$ going toward a point on $\bIz$. Note that the parabola passes through the point $\binfp$ and diverges toward infinity in the lower half plane.

$\bullet \bullet$ Subcase D.a (see Figure \ref{caseDa}): assume that $F(p) \in \bH_+$ so that $p \in R_2^+$. Note also that since $L$ does not intersect the collapsed line, the image $F(L)$ does not contain the point $\bfzp$. By Proposition \ref{prop_mapping_regions}, we have $F(p) \in R_2^- \cup R_3^- \cup R_4^-$. 
Take $C \in \mathcal{P}_{\infty}$ a line through $F(p)$.  By Proposition \ref{prop_transversal}, the curve $\Gamma$ and $C$ intersect transversely. In particular, $C$ cuts the convex set $Conv(\Gamma)$ into a bounded component and an unbounded one. 
As $\Gamma$ diverges to infinity in the lower half plane and as $\bfzp$ is above the line $C$, it must be in the bounded component. This shows that $F(L)$ belongs to the other component, hence $F(L) $ lies below the line $C$. 
Take $C'$ the other line defining the cone field at $F(p)$. Since $F(p) \in R_4^- \cup R_2^- \cup R_3^-$, this shows that $C' $ belongs to $   \mathcal{P}_0 $ or to $ \mathcal{P}_{\infty}'$. 

Let us show that $F(p) \in R_4^-$ so that $C' \in \mathcal{P}_\infty'$. 
Assume by contradiction that $F(p) \in R_2^- \cup R_3^-$. Take the line $l \in \mathcal{P}_\infty'$ through $F(p)$. Since $\Gamma$ is a parabola through $\binfp, F(p)$, by Bezout's theorem and by convexity, the portion of $\Gamma$ on the right hand side of $C$ is unbounded and intersects the horizontal line $\Lz$ at a point $q \notin \bIz$. However, $\Gamma$ already intersects $\Lz$ at the points $\bfzp$ and another point on $\bIz$, this contradicts Bezout's theorem and we conclude that $F(p) \in R_4^-$ so that $C' \in \mathcal{P}_\infty'$. 

Note that $\bfzp$ is on the left hand side of $C'$ but is not on $F(L)$. Hence by convexity, $F(L)$ is in the on the right hand side of $C'$. We have shown that $F(L)$ is below the line $C$ and on the right hand side of $C'$, hence $F(L) \in \mathcal{C}_{F(p)}^+$, as required. 

\begin{figure}
\centering
\includegraphics[scale=0.8]{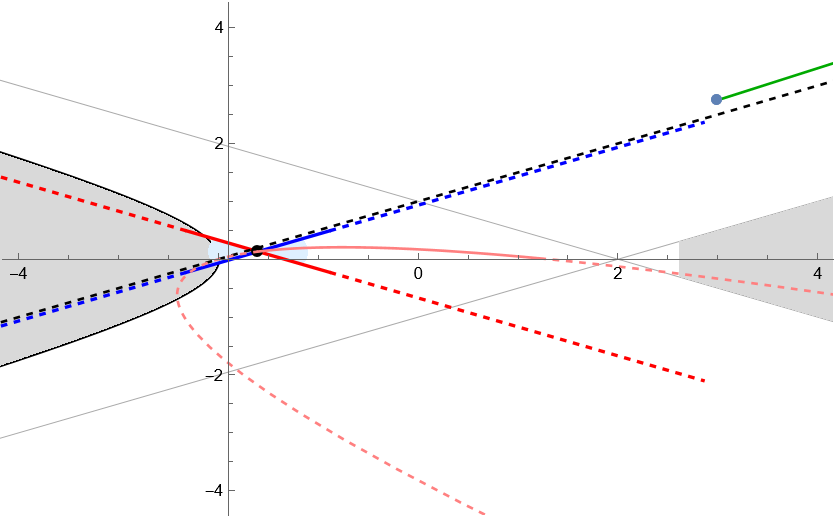}
\caption{\label{caseDa} Subcase D.a illustration where $p=(3,2.75)$ and $F(p) \simeq (-1.60, 0.13)$. The half-line $L$ is in solid green and its image in solid pink. The rest of the parabola $\Gamma$ is in dashed pink.}
\end{figure}

$\bullet \bullet$ Subcase D.b (see Figure \ref{caseDb}): assume that $F(p) \in \bH_-$ so that $p \in R_3^+$ and that the line $L$ is not vertical. 
 By assertion (iii) of Proposition \ref{prop_mapping_regions}, this implies that $F(p) \in \sigma(R_4 \cup R_5)$. Take $C \in \mathcal{P}_{\infty}$ a line determining the cone field at $F(p)$.  
By Proposition \ref{prop_transversal}, the curves $\Gamma$ and $C$ are transverse. In particular, they intersect at another point $r \neq F(p)$. By convexity and since $\Gamma$ diverges to infinity in the lower half plane, the subarc of $\Gamma$ joining $F(p)$ and $r$ is above the line $C$. This shows that $F(L)$ is locally near $F(p)$ above the line $C$. 
Let us take $C'$ the other line defining the cone field at $F(p)$. 
Since $F(p) \in \sigma(R_4 \cup R_5) $,  $C'$ belongs to $\mathcal{P}_\infty'$ or to $\mathcal{P}_0'$, so by Proposition \ref{prop_transversal}, $C'$ and $\Gamma$ are also transverse at $F(p)$.  

Assume that $C'\in \mathcal{P}_\infty'$ so that $F(p) \in \sigma(R_4^-)$. By assertion (v) of Proposition \ref{prop_mapping_regions}, the point $p$ is not in the band $[-2,2] \times \R$. As a result, the half line $L$ does not intersect $\Lc$, hence the arc $F(L)$ does not contain the point $\bfzp$ and is entirely contained in the lower half plane. 
Since the parabola $\Gamma$ passes to the point $\binfp$, the arc $F(L)$ remains on one side of $C'$. Furthermore, this arc cannot be in the right hand side of $C'$. Otherwise one considers   the continuation of arc $F(L)$ along $\Gamma$, after the point $q \in \bIz$, the parabola would continue and go toward $\bfzp$, which is on the left hand side of $C'$. This means that it would intersect $C'$ at a third point, which is impossible. This shows that $F(L)$ is on the left hand side of $C'$.     

Assume that $C' \in \mathcal{P}_0'$ so that $F(p) \in \sigma(R_4^+ \cup R_5)$. By assertion (v) of Proposition \ref{prop_mapping_regions}, this implies that $p \in [-2,2] \times \R$. As a result, the half-line $L$ intersects the collapsed line $\Lc$ and $F(L)$ passes through $\bfzp$. By convexity, the arc $F(L)$ must be locally near $F(p)$ on the left hand side of $C'$ because it ends at a point on $\bIz$ the right hand side of $C'$. 
In both situations, we have shown that $F(L)$ is locally near $F(p)$ above $C$ and on the left hand side of $C'$, hence $F(L) \in \mathcal{C}_{F(p)}^-$.
\begin{figure}
\centering
\includegraphics[scale=0.7]{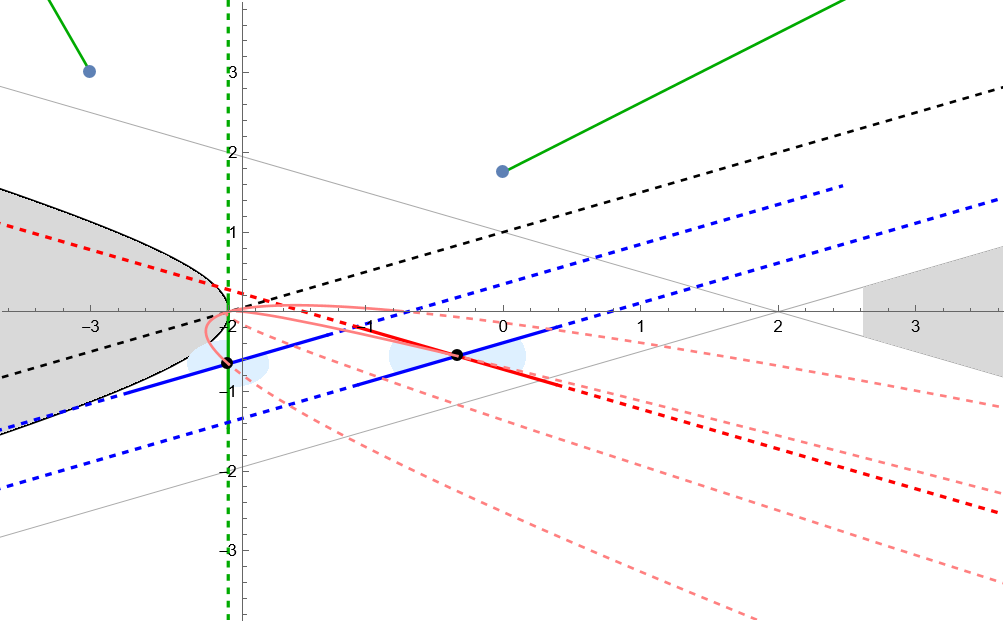}
\caption{\label{caseDb} Subcase D.b illustration in the case where $p= p_{D_b, 1} = (0,1.75)$, $F(p)\simeq (-2, 0.65)$ and where $p=p_{D_b,2} = (-3,3)$ and $F(p)\simeq (-1/3,-5/9)$. The half-line $L$ in each situation is in solid green and its image in solid pink while the rest of the parabola is in shaded pink.}
\end{figure}

$\bullet \bullet$ Subcase D.c: assume that $F(p) \in \bH_-$ and $p$ is on the vertical line through $\bfzp$ so that $L$ is a vertical half-line.  Using Remark \ref{rem_special_vert_bfzp}, we deduce that $F(L)$ is a segment $[F(p) \bfzp]$ contained in $\sigma(\Li)$. It is then immediate that $F(L) \in \mathcal{C}_{F(p)}^-$.
\bigskip

We now argue why $F^2$ maps the cone field strictly into itself. Observe that in case A and D.c, the lines are in the pencil are mapped to boundary lines in $\mathcal{P}_\infty \cup \mathcal{P}_\infty'$ of the cone field. In all the other cases, we showed that the image of the boundary line is transversal to any line defining the cone field. In other word, the boundary vectors are mapped strictly inside the cone field except in cases A and D.c. If one applies $F$ a second time in these two special situations, we have to apply cases C or B for which $F$ maps the cone field strictly into itself. This shows that $F^2$ maps strictly into itself.  
\end{proof}

The above result shows that one can consider an invariant family of curves, defined as follows. 
\begin{defi} \label{def_cL} The subset $\mathcal{L} = \overline{\cup_{n\geqslant 0} F^{-n}(\Lc)}$ defines a family of vertical curves for the cone field. 
\end{defi}

A priori, we do not know yet that $\cL$ is a lamination outside of $\Lz \cup \Linf$. We will establish later on using some expansion properties of the map.

\section{The Chebyshev attractors and Cantor structure}
\label{section_chebyshev}

\subsection{Cantor structure}

We show that any slice of $\mathcal{L}$ by a horizontal line of the form $\{y=c\}$ is a Cantor set. 

We will need  the following simple but crucial observation.
\begin{lem}
	The curve $\Lc(\R)$ is on the boundary of the basin $\cW^s(\pinf)$. 
\end{lem}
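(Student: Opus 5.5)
We need to show that every point of $\Lc(\R)$ lies in the closure of $\cW^s(\pinf)$, i.e. that every neighborhood of a point $p=(2,y_0)$ contains points whose forward orbit converges to $\pinf$. The plan is to exploit the fact that $\Lc$ is collapsed to $\bfzp$, and that the vertical pencil through $\Lc$ is mapped by Lemma \ref{lem_forward_curves} (iii) onto the pencil $\mathcal{P}_0'$ through $\bfzp$. Points slightly to the right of $\Lc$ land in region $R_1\cup\sigma(R_1)$ near $\bfzp$, and we will show that this forward invariant region lies in the basin of $\pinf$.

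First, I would treat $y_0\neq 0$ and take the point $q_\epsilon=(2+\epsilon,y_0)$ with $\epsilon>0$ small. Then
\[
F(q_\epsilon)=\Bigl(-2+\tfrac{(2+\epsilon)\epsilon}{y_0^2},\ \tfrac{\epsilon}{y_0^2}\Bigr).
\]
So $F(q_\epsilon)$ approaches $\bfzp$ along a ray of slope $1/2$ from $\bfzp$ (up to $O(\epsilon)$), and $q_\epsilon$ itself lies in the region $\{x>2\}$. Since $\{x\ge 2\}$ maps into the upper half plane, I would use assertion (i) of Proposition \ref{prop_mapping_regions}: $q_\epsilon\in R_1\cup R_6\cup\sigma(R_1\cup R_6)$, or lands there after one iterate. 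Concretely, $q_\epsilon$ lies to the right of $\Lc$ and, for $\epsilon$ small, between $\Lf$ and its reflection when $|y_0|$ is large, or in $R_1$ otherwise. This gives a forward invariant region containing points arbitrarily close to $p$.

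Second, the key step is to show that $R_1\cup R_6\cup\sigma(R_1\cup R_6)$ lies in $\cW^s(\pinf)$, i.e. that this is essentially the immediate basin bounded by $\Lf$, $\bar\Lf$ and $\gamma$. I would do this with a Lyapunov-type function. On this region $x-2>0$ and $|y|$ is comparable to or smaller than $(x-2)/2$, so $x\circ F=-2+x(x-2)/y^2\ge -2+4x/(x-2)\cdot\ldots$. The idea is to check that the ratio $u=(x-2)/|y|$ increases strictly: one has $u\circ F=(x(x-2)/y^2-4)\,y^2/(x-2)=x-4y^2/(x-2)$, and on the region $u>2$ this should exceed $u$ by a definite factor, forcing $u\to\infty$. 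Combined with $x\to\infty$ and $y/x\to 0$, this gives convergence to $\pinf=[1:0:0]$. If a direct inequality fails near the boundary line $\Lf$, where $u=2$ and the points are fixed, I would restrict to points strictly inside and use that $\Lf$ is repelling transversally. This follows from the strict cone invariance of Theorem \ref{thm_invariant_cone_field} near $\Lf$, or from a direct computation of $DF$ on the fixed line.

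Third, for $y_0=0$, i.e. $p=\bfz$, and at the point at infinity of $\Lc$, I would pass to the limit. The basin's closure is closed, and it contains $\Lc(\R)\setminus\{\bfz\}$ by the previous step, so it also contains $\bfz$.

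The main obstacle is the second step: proving that the whole region $R_1\cup R_6$ escapes to $\pinf$, rather than merely staying invariant. This is especially delicate near the fixed line $\Lf$ and near $\bfzp$, where the points $F(q_\epsilon)$ accumulate, and it is where I expect a careful estimate on $u\circ F - u$ to be needed.
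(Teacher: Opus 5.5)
Your first step approaches $\Lc$ from the wrong side, and this is a genuine gap. For fixed $y_0\neq 0$ and small $\epsilon>0$, the point $q_\epsilon=(2+\epsilon,y_0)$ satisfies $|y_0|>\epsilon/2=(x-2)/2$. The wedge between $\Lf$ and $\bar\Lf$ with $x>2$ is exactly $\{|y|<(x-2)/2\}$, so $q_\epsilon$ lies in $R_2\cup\sigma(R_2)$, not in $R_1\cup\sigma(R_1)$.

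Your own formula confirms this. With $t=\epsilon/y_0^2$ you get $F(q_\epsilon)=(-2+(2+\epsilon)t,\,t)$, which has $x>-2$ and $y>0$. It lies just below $\Li$, which at that abscissa has height $(2+\epsilon)t/2>t$. So $F(q_\epsilon)\in R_4^-$, as assertion (ii) of Proposition \ref{prop_mapping_regions} predicts. The next iterate lands in $\sigma(R_2)$ near $\binfp$. None of these points is in $R_1\cup R_6\cup\sigma(R_1\cup R_6)$, which near $\bfzp$ only contains points with $x<-2$.

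The right side also cannot be rescued by a soft argument. Proposition \ref{prop_good_intervals}, as used in the Cantor-set proof and in Theorem \ref{thm_L_positive}, shows that components of $F^{-n}(\Lc)$ accumulate on $\Lc$ from the right. Such points are eventually sent to $\Lc$ and then along $\bfzp\mapsto\binfp\mapsto\bfz$, so they never converge to $\pinf$. Hence the claim that $q_\epsilon\in\cW^s(\pinf)$ for all small $\epsilon>0$ is false.

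The missing idea is to approach from the left, which is what the paper does. For $q=(2-\epsilon,y_0)$ one gets $F(q)=(-2-(2-\epsilon)t,\,-t)$, so $F(q)\to\bfzp$ from the lower left along the direction of $\Li$. Near $\bfzp$ the boundary of $R_6\cup\sigma(R_6)$ is the branch $x^2-4y^2=4$ of $F^{-1}(\bar\Lf)$, which has a vertical tangent at $\bfzp$. At abscissa $-2-(2-\epsilon)t$ this branch has $|y|\approx\sqrt{2t}\gg t$. Hence $F(q)\in\sigma(\overset{\circ}{R_6})$ for small $\epsilon$.

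The paper takes $\overset{\circ}{R_6}\cup\sigma(\overset{\circ}{R_6})\subset\cW^s(\pinf)$ as known (see also Lemma \ref{lem_recurrence}). With that, the points $(2-\epsilon,y_0)$ lie in the basin, and your closure argument handles $\bfz$ and the point at infinity of $\Lc$. So your second step is not where the real difficulty lies.

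Separately, the one-step inequality you propose there is false as stated. At $(3,0.4)\in R_1$ one has $u=(x-2)/|y|=2.5$, but $u\circ F=x-4y^2/(x-2)=3-0.64=2.36$. If you wanted to prove the basin claim yourself, you would need $F^2$ or a different function.
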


\begin{proof}
	For any point $p = (2,c) \in \Lc$ with $c \in \R^+ \setminus {0}$, 
the image of a horizontal line $\{y=c\}$ is a parabola passing through $\bfzp$ and tangent to $\Li$ at $\bfzp$ because $F$ maps $\R^2$ to the half-space below the inderminate line $\Li$.  
In particular, since $\overset{\circ}{R_6} \subset \cW^s(\pinf)$ and is a region delimited by the component of the hyperbola $F^{-1}(\bar \Lf)$ through $\bfzp$, whose tangent at $\bfzp$ is vertical, we deduce that an open interval  on one side of the form $]\alpha, 2[\times \{ c \}$ is mapped by $F$ inside $\overset{\circ}{R_6} $, while $F(p) =\bfzp \in \partial \cW^s(\pinf)$, as required.\end{proof}

Recall that we have subdivided the half-plane into $6$ regions. 

Next, we study the behavior of the family $\mathcal{L}$ near the Chebyshev interval. To that end, we make another  subdivision of $R_4^-$: 
\begin{equation}
R_4^- = R_{4,l}^- \cup R_{4,r}^-,
\end{equation} 
where $R_{4,l}^-$ is the closed region on the left hand side of $F^{-1}(\Lc)$ while $R_{4,r}^-$ is the one on the right hand side. 
We will use the following feature of the leaves of $\mathcal{L}$. 

\begin{prop} \label{prop_leaf_decreasing} Any leaf of $\mathcal{L}$ contained in the region $R_{4,l}^-$ is a graph of the form  $\{ (u(y),y) | y\in \R^+ \}$ where  $u$ is an decreasing function. 
\end{prop}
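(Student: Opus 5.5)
The plan is to read off the shape of the leaves in $R_{4,l}^-$ from the vertical cone field $\mathcal{C}_v$, the complement of $\mathcal{C}_h$ from Definition~\ref{def_cone_field}. I first work with the curves $F^{-n}(\Lc)$ and then pass to limits, since by Definition~\ref{def_cL} every leaf of $\mathcal{L}$ is a limit of arcs of these curves. The first step is to record which pencils bound the cone at a point $p \in R_{4,l}^- \subset R_4^-$. For $p\in R_4^-$ the two relevant pencils are $\mathcal{P}_\infty$ (lines of slope $1/2$ through $\binf$) and $\mathcal{P}_\infty'$ (lines of slope $-1/2$ through $\binfp$). This is consistent with the description used in Case C and in Subcase C.a of the proof of Theorem~\ref{thm_invariant_cone_field}, where the boundary line for $R_4^-$ lies in $\mathcal{P}_\infty'$. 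So $\mathcal{C}_{h,p}$ consists of directions with slope in $[-1/2,1/2]$, and a vertical direction, i.e. one outside $\mathcal{C}_h$, has slope $|dy/dx|>1/2$.

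The second step is to show that each component $\gamma$ of $F^{-n}(\Lc)$ meeting $R_{4,l}^-$ is tangent to the vertical cone. The tool is Theorem~\ref{thm_invariant_cone_field}, which says $\mathcal{C}_h$ is forward invariant by $dF$, so $\mathcal{C}_v$ is backward invariant. The tangent lines of $\Lc$ are vertical, hence lie in $\mathcal{C}_v$, and so by induction the preimage curves have tangents in $\mathcal{C}_v$ wherever the orbit stays in $E$. I will check that the relevant orbit does stay in $E$ using Proposition~\ref{prop_mapping_regions}: $R_4^-$ maps into $\sigma(R_2\cup R_3)$, which lies in $E$. Points in the invariant set $R_1\cup R_6\cup\sigma(R_1\cup R_6)$ never reach $\Lc$ except at the boundary, so they do not interfere. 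It follows that $\gamma\cap R_{4,l}^-$ is transverse to every horizontal line and is a graph $x=u(y)$ with $|u'(y)|<2$.

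The third and main step is monotonicity, which requires excluding the branch of the cone with positive slope. Since $\mathcal{C}_v$ at $p$ has two components, slopes in $(1/2,\infty]$ and in $[-\infty,-1/2)$, and these meet only at the vertical direction, continuity along a connected arc forces a fixed sign of $u'$ unless the arc passes through an exactly vertical tangent. I will fix the sign by comparing with the boundary curve $F^{-1}(\Lc)=\bH$ of $R_{4,l}^-$. Using the equation $-x^2+4y^2+2x=0$, the branch through $\bfz$ and $\bfzp$ on the left of the region has $x$ decreasing as $y$ increases, so $u'<0$ there. Any leaf in $R_{4,l}^-$ is trapped between this hyperbola branch and $\Li$ and $\Lz$. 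I expect the hardest part to be the claim that the arc ends on $\bIz$ and that the sign $u'<0$ propagates to every leaf. I would argue this by following the leaf down to its endpoint on $\bIz$, which the paper's setup says leaves reach. At that endpoint the direction lies in $\mathcal{C}_v$ and points toward $\bfz$ or $\bfzp$, as forced by the pencil $\mathcal{P}_0'$ tangency dynamics near the Chebyshev interval. Away from $\Lz$, non-vertical tangency then keeps the sign, with a strict cone obtained from the $F^2$ strictness in Theorem~\ref{thm_invariant_cone_field}.

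Finally, limits of decreasing graphs with uniformly bounded slopes are decreasing graphs, which handles the closure in Definition~\ref{def_cL}. The domain is all of $y\in\R^+$ because vertical curves cannot end inside the region: they exit only through $\Lz$ or at infinity, and the bounding curves $\Li$ and $\bH$ confine them, giving the full graph over $\R^+$.
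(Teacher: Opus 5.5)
Steps 1 and 2 are fine as far as they go. At points of $R_4^-$ the cone is bounded by $\mathcal{P}_\infty$ and $\mathcal{P}_\infty'$, so leaves are graphs $x=u(y)$ with $|u'|\le 2$. The gap is Step 3. The vertical direction $u'=0$ lies in the interior of $\mathcal{C}_v$ at every point of $R_4^-$, so monotonicity is exactly what remains, and none of your three devices supplies it.

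\emph{Confinement.} Being trapped between the decreasing left branch of $\bH$ and the lines $\Li,\Lz$ says nothing about the sign of $u'$. (That branch passes through the origin, not through $\bfz$ or $\bfzp$.)

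\emph{The endpoint on $\bIz$.} There the tangent is exactly vertical, since leaves are $\sigma$-symmetric; compare $\bH$ at the origin, or the expansions $x-x_0\sim c\,y^2$ in Appendix B. So the endpoint carries no sign information. The sign of $u'$ nearby is the sign of the curvature coefficient $c$, which the cone field does not control. Also, ``pointing toward $\bfz$ or $\bfzp$'' has no meaning for a direction based at a point of $\bIz$.

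\emph{Non-vertical tangency.} ``Non-vertical tangency keeps the sign'' presupposes that the leaf never has a vertical tangent inside the region. The $F^2$-strictness of Theorem \ref{thm_invariant_cone_field} does not give this: it shrinks the cone into the interior of $\{|u'|\le 2\}$, which still contains $u'=0$.

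The missing idea, which is how the paper argues, is to push forward once. Take $p=(x,y)\in R_{4,l}^-$; then $F(p)\in\sigma(R_2)$. By Lemma \ref{lem_forward_curves}(iii), $dF_p$ sends the vertical direction to the direction of the line through $\bfzp$ and $F(p)$; indeed $\partial_y F\propto (x,1)$. Since $x>-2$, $F(p)$ lies below $\sigma(\Li)$. So this line is one of the two boundary lines of the cone at $F(p)$, the other being the line through $\bfz$.

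The image leaf $F(C)$ is vertical at $F(p)$, so its tangent lies on one side of $dF_p(\partial_y)$. Pulling back by $dF_p$ puts the tangent of $C$ at $p$ in $\{dx/dy\le 0\}$; explicitly, the pulled-back cone is $dx/dy\in[\,8y(x-2)/((x-2)^2+4y^2),\,0\,]$. In other words, $C$ lies to the left of the vertical through $p$ above $p$.

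Because the image of the vertical direction sits exactly on the boundary of the cone at $F(p)$, strict decrease needs one more step. Leaves are vertical for the iterated cone fields and the inclusions are strict, so the tangent of $F(C)$ lies strictly inside the cone and $dx/dy<0$. This is the role of $\mathcal{C}^{(3)}$ in the paper's proof.
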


\begin{proof} Observe that by Proposition \ref{prop_mapping_regions}, the image by $F$ of $R_{4,l}^-$ is $\sigma(R_2)$. 
Take a leaf $C$ of $\mathcal{L}$ going through a point $p$. Take a vertical line $V$ through $p$, we assume that $C$ can be viewed as an arc starting on $\bIz$, passing through $p$ and  going toward $\binfp$. Let us show that the subarc $\gamma$ of $C$ going from $p$ to $\binfp$ is on the left hand side of $V$. 

Assume that $F(p)$ is above the line $\sigma(\Li)$, then observe that the image $F(C)$ is a vertical curve for the cone field $\mathcal{C}$, hence $F(\gamma)$ is a curve going from $F(p)$ to $\bfz$ which lies in the convex region bounded by the segment from $\bfz$ to $F(p)$ and the half-line of slope $-1/2$ starting at $F(p)$ and going to infinity on the upper-half plane. 
By Lemma \ref{lem_forward_curves}, the image $F(V)$ is a line joining $\bfzp$ and $F(p)$. 
In particular, $F(\gamma)$ is above the line $F(V)$, this shows that $\gamma$ is on the left hand side of $V$, as required. 

\smallskip

Assume now that $F(p)$ is below the line $\sigma(\Li)$, the lines through $\bfzp$ are the boundaries of the cone field $\mathcal{C}$. However a leaf of $\mathcal{L}$ is also  vertical for the cone field $\mathcal{C}^{(3)}$, this proves that the same argument shows that the image $F(\gamma)$ lies above the line $F(V)$. Hence $\gamma$ is also on the left hand side of $V$, as required. 

In both cases, we have shown that a leaf of $\mathcal{L}$ contained in $R_{4,l}^-$ is a graph of a decreasing function. 
\end{proof}

Recall that the Chebyshev map is semi-conjugate  to the multiplication by $2$ via the map $\theta \mapsto z=2 \cos(\theta)$.
Let us consider the connected subregion $\mathcal{U}$ contained in $R_{4,l}^-$ delimited by $\Li, \Lz$ and by the branch of $F^{-3}(\Lc)$ going through the point $(-2 \cos(\pi/8),0)$. 

In the following lemma, we obtain some good  control for certain orbits under $F^2$.
\begin{lem}
\label{lem_orbit_left_Lc}  
Consider a point $p= (x,y) \in R_4^-$, the following properties are satisfied. 
\begin{enumerate}
\item[(i)]  Assume that $ 0 \leqslant |x| \leqslant 2$, $ 0 < |y| \leqslant  1/2$ and $ |y/(x-2)| \leqslant 1/3$. Write by   
 $F^2(p)= (x',y')$, then one has $|y'/y| \leqslant  8/9 $. 
\item[(ii)] If $p\in R_{4,l}^-$, then $|y/(x-2)|\leqslant  1/4$.
\item[(iii)] If $p\in \mathcal{U}$, then $|y| < 1/8 < 1/2$ and $|y/(x-2)|< 1/24$.
\item[(iv)] If $p \in \mathcal{U}$, then $|y'/(x'-2)| \leqslant 1/49$.  
\end{enumerate}
\end{lem}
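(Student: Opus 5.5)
The plan is to work in the coordinates adapted to the blow-up at $\bfz$, namely $t = y/(x-2)$ together with $y$, and to use the explicit formula \eqref{eq_G} for $G=F^2$. For (i), write
\[
\dfrac{y'}{y} = \dfrac{y\,(x^2-2x-4y^2)}{(x-2)^2} = t\,\dfrac{x(x-2)-4y^2}{x-2} = t\,x - 4 t^2 y ,
\]
so that $|y'/y| \leqslant |t|\,|x| + 4t^2|y|$. Under the hypotheses $|x|\leqslant 2$, $|t|\leqslant 1/3$ and $|y|\leqslant 1/2$, this gives $|y'/y|\leqslant 2/3 + 4\cdot\tfrac19\cdot\tfrac12 = 8/9$. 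This step is routine.

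For (ii), I would describe $R_{4,l}^-$ explicitly. It lies below $\Li$, above $\Lz$, and to the left of the branch of $\bH=F^{-1}(\Lc)=\{-x^2+4y^2+2x=0\}$ through $\bfzp$. On this branch we have $4y^2 = x^2-2x = x(x-2)$, hence $t^2 = x/(4(x-2))$. For $x\in[-2,0]$ the function $x/(x-2)$ is decreasing in $x$ toward $0$ and equals $1/2$ at $x=-2$, so on the boundary curve $t^2\leqslant 1/8$. The relevant part of $R_{4,l}^-$ meets $|x|\leqslant 2$, and there the slope $t$ is maximized on the boundary; where it is bounded by $\Li$ one checks $t$ directly at the corner. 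The level sets of $t$ are exactly the lines of the pencil $\mathcal{P}_0$. The cleanest argument is therefore that each line of $\mathcal{P}_0$ with $|t|>1/4$ misses $R_{4,l}^-$, which I would verify by checking the endpoints of the region, i.e. its intersections with $\Li$ and $\bH$. This may require a slightly sharper look at where $\Li$ meets $\bH$. If the constant $1/4$ fails at a corner, I would instead restrict to the part of $R_{4,l}^-$ with $|x|\leqslant 2$.

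For (iii), the region $\mathcal{U}$ is cut off by the branch of $F^{-3}(\Lc)$ through $(-2\cos(\pi/8),0)$. I would compute this branch by pulling back twice more, using that leaves in $R_{4,l}^-$ are decreasing graphs (Proposition~\ref{prop_leaf_decreasing}). Hence the maximal height of $\mathcal{U}$ is attained at its intersection with $\Li$, and the maximal $|t|$ is attained on the line of $\mathcal{P}_0$ through that corner. It then suffices to locate the intersection point of this branch with $\Li$ numerically or by an explicit estimate, and to check $|y|<1/8$ and $|t|<1/24$ there. I expect this to be the main obstacle, since it requires controlling a degree-$8$ curve. I would first try to bound it using the monotonicity above combined with explicit images of $\Li$ under $F$, namely the parabolas from Lemma~\ref{lem_forward_curves}.

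For (iv), I would compute $t' = y'/(x'-2)$ from \eqref{eq_G}:
\[
x'-2 = x^2-4 - 2t^2(3x^2-6x-4y^2), \qquad y' = t^2(x^2-2x-4y^2).
\]
With $|t|<1/24$ and $|y|<1/8$, the quantity $x'-2$ is close to $x^2-4$. Since $x$ is near $-2\cos(\pi/8)$ or further left within $\mathcal{U}$, we get $|x^2-4|$ bounded below. Then $|t'|\leqslant t^2(|x^2-2x|+4y^2)/|x'-2|$, and a direct numerical bound should give $1/49$.
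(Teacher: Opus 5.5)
Part (i) is correct and is the paper's computation, $y'/y=tx-4t^2y$.

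\textbf{Part (iv) has a genuine gap.} Your claim that $|x^2-4|$ is bounded below on $\mathcal U$ is false. The corner $\bfzp=(-2,0)$ lies in $\overline{\mathcal U}$, and $F^2(\bfzp)=F(\binfp)=\bfz$. So $x'-2\to 0$ as $p\to\bfzp$, and $t'=y'/(x'-2)$ degenerates to $0/0$ there.

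The bounds you plan to use ($|t|<1/24$, $|y|<1/8$, $x\in[-2,-2\cos(\pi/8)]$) do not imply (iv). Take $(x,y)=(-2,1/10)$: then $t=-1/40$, $x'-2=-2t^2(24-4y^2)\approx -0.030$ and $y'=t^2(8-4y^2)\approx 0.005$, so $|t'|\approx 1/6$.

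What rescues the statement is that $\mathcal U$ lies below $\Li$. Write $s=x+2\in(0,2-2\cos(\pi/8)]$; then $0<y\le s/2$.
\begin{itemize}
\item On $\mathcal U$ both terms of $x'-2=(x^2-4)-2t^2(3x^2-6x-4y^2)$ are nonpositive, so $|x'-2|\ge 4-x^2=s(4-s)$.
\item Also $|y'|=t^2|x^2-2x-4y^2|\le 8t^2\le 2s^2/(4-s)^2$.
\end{itemize}
Therefore
\[
|t'|\le \frac{2s}{(4-s)^3}\le 0.006<\frac{1}{49}.
\]
Do not follow the paper's printed argument here, because it has the same blind spot. It rewrites $(x^4-4x^3+2x^2+8x-8)/(x-2)^2$ as $(x^2-2)/(x-2)$, but the quotient equals $x^2-2$. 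That is close to $2$ near $\bfzp$, so its bound $|h-2|\ge 7/8$ fails there. The constraint from $\Li$ is genuinely needed.

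\textbf{Parts (ii) and (iii) are unfinished but close with one observation each.}

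In (ii) the boundary curve is misidentified. $\bH=\{x^2-2x=4y^2\}$ does not pass through $\bfzp$. Its left branch passes through the origin, and inside $R_4^-$ it runs from $(0,0)$ to $\Li\cap\bH=(-2/3,2/3)$ (substitute $x=2y-2$). So $R_{4,l}^-$ is the convex curvilinear triangle with vertices $\bfzp$, $(0,0)$ and $(-2/3,2/3)$. Your bound $t^2\le 1/8$ on $x\in[-2,0]$ concerns the wrong arc, and is too weak anyway.

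The fix, which is the paper's argument:
\begin{itemize}
\item Since $|t|=y/(2-x)$ increases with $y$, its maximum is on the upper boundary.
\item Along $\Li$, $|t|=(x+2)/(2(2-x))$ increases in $x$.
\item Along the arc, $t^2=x/(4(x-2))$ decreases in $x$.
\item So both maxima sit at the corner $(-2/3,2/3)$, where $|t|=1/4$.
\end{itemize}
Your fallback of restricting to $|x|\le 2$ is vacuous, since $R_4^-\subset[-2,2]\times\R$ already.

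In (iii) no degree-$8$ computation is needed. By Proposition~\ref{prop_leaf_decreasing}, the bounding leaf is a graph $x=u(y)$ with $u$ decreasing and $u(0)=-2\cos(\pi/8)$. Hence $\mathcal U$ lies in the triangle cut out by $\Lz$, $\Li$ and the vertical line $x=-2\cos(\pi/8)$. Its top corner is $(-2\cos(\pi/8),1-\cos(\pi/8))$, which gives $|y|\le 1-\cos(\pi/8)<1/8$ and $|t|\le (1-\cos(\pi/8))/(2+2\cos(\pi/8))<1/24$, as in the paper. This also supplies the bound $s\le 2-2\cos(\pi/8)$ used in (iv) above.
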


\begin{proof} Let us prove assertion (i).
We compute explicitly $F^2(x,y)$ and write $y'$ as: 
\begin{equation*}
y' = \dfrac{y^2(x^2 - 2 x - 4y^2)}{(x-2)^2}  = \dfrac{x}{x-2} y^2 - \dfrac{4y^4 }{(x-2)^2}.  
\end{equation*}

We get using the fact that $0 \leqslant y \leqslant 1/2$, $|y/(x-2)|\leqslant 1/3$ and $  0  \leqslant x \leqslant 2$:
\begin{equation*}
\left | \dfrac{y'}{y} \right |  \leqslant    \left  |\dfrac{x y }{x-2} \right |  +  \left |  4 y \dfrac{y^2}{(x-2)^2} \right |   \leqslant \dfrac{2}{3} + \dfrac{2}{9} = \dfrac{8}{9},
\end{equation*} 
as required.

\smallskip 

Let us now prove assertion (ii). Observe that the region $R_{4,l}^-$ is convex and bounded by the hyperbola $F^{-1}(\Lc)$ and $\Li$. Since the point $(-2/3,2/3)$ is in the intersection $F^{-1}(\Lc) \cap \Li$, the maximal value of $|y/(x-2)|$ is reached at this point, hence $|y/(x-2)| \leqslant 1/4$, as required. 

\smallskip 

We prove assertion (iii). By Proposition \ref{prop_leaf_decreasing}, any leaf of $\mathcal{L}$ in $\bar{\mathcal{U}}$ is the graph of a decreasing function. 
Since the boundary curve $F^{-3}(\Lc)$ is a leaf of $\mathcal{L}$ and is symmetric with respect to the reflection $\sigma$, it must lie on the left hand side of the vertical line $V$ through $(-2 \cos (\pi/8), 0) \in F^{-3}(\Lc)$, this shows that $\mathcal{U}$ is contained in the convex region $T$ delimited by the lines $\Li, \Lz$ and $V$. Let $q = (x',y') $ be the intersection point $\Li \cap V$. By definition, we have: 
\begin{equation*}
|y''|  = \dfrac{2 - 2 \cos(\pi/8)}{2} = 1 - \cos(\pi/8) \leqslant \dfrac{\pi^2}{2\cdot 8^2} < \dfrac{4}{2^7}= \dfrac{1}{8}.  
\end{equation*} 
This proves the first inequality. For the second, since $\mathcal{U} \subset T$ and by convexity of $T$, one observes that the maximal value of the function $|y/(x-2)|$ in the region $T$  is reached at the point $q$, this gives: 
\begin{equation*}
\left  | \dfrac{y}{(x-2)} \right | \leqslant \dfrac{1-\cos(\pi/8)}{2 + 2 \cos(\pi/8)} \leqslant \dfrac{\pi^2}{3\cdot 2 \cdot 8^2} < \dfrac{1}{24}.
\end{equation*}

\smallskip

We finally prove assertion (iv). Write explicitely 
\begin{align*}
\dfrac{y'}{x'-2} = \dfrac{y^2 (-2 x + x^2 - 4y^2)}{(x-2)^2 (-2 + h(x,y))},
\end{align*}
where 
\begin{align*}
h(x,y) &= \dfrac{x^4-4 x^3-6 x^2 y^2+2 x^2+12 x y^2+8 x+8 y^4-8}{(x-2)^2} \\
 &= \frac{x^4-4 x^3+2 x^2+8 x-8}{(x-2)^2}+ \frac{\left(12 x-6 x^2\right) y^2}{(x-2)^2}+\frac{8 y^4}{(x-2)^2} \\
 &= \dfrac{x^2 -2}{x-2} - \dfrac{6x y^2}{x-2} + \dfrac{8y^4}{(x-2)^2}.
\end{align*}
Since $x \in [-2, -\sqrt{2}]$, $y \leqslant 1/8$ and $|y/(x-2)| \leqslant 1/24$ as $p \in \mathcal{U}$ by assertion (iii), we get: 
\begin{align*}
|h(x,y)|  & \leqslant \left | \dfrac{x^2 -2}{x-2} \right | + |6xy| \left | \dfrac{y}{x-2} \right | + |8y^2| \left  | \dfrac{y^2}{(x-2)^2} \right | \\
  & \leqslant \dfrac{2}{2} + \dfrac{6 \cdot 2 }{8 \cdot 24} + \dfrac{8}{8^2 \cdot 24^2} \\
  & \leqslant 1+ \dfrac{1}{16} + \dfrac{1}{3^2 \cdot 8^3} \leqslant \dfrac{9}{8}.
\end{align*} 
This implies that $|-2 + h(x,y)| \geqslant 7/8$ and we get: 
\begin{align*}
	\left |\dfrac{y'}{x'-2}\right |  \leqslant \dfrac{1}{24^2} \cdot \dfrac{8^2}{7^2} |-2 x + x^2 - 4 y^2| .    
\end{align*}
Since $|y| < 1/8 $ and $x \in [-2, -\sqrt{2}]$, we have: 
\begin{equation*}
		\left |\dfrac{y'}{x'-2}\right |  \leqslant \dfrac{1}{3^2 \cdot 7^2 } (8 + 4/8^2) \leqslant \dfrac{1}{7^2}, 
\end{equation*}
as required.
\end{proof} 

Recall from Proposition \ref{prop_E0_dynamics} that the map $F_X^2$ maps $\Efz$ to itself and its restriction is given by $g_0 \colon l \mapsto l^2/(2-6l^2)$ where $l = y/(x-2)$. This map is conjugate to $z^2 -12$ and the point $l=0$ is a super-attracting fixed point, whose immediate basin is given by $|l| < 1/2$.    
In order to control the forward orbits of points in $R_{4,r}^-$, we shall introduce the  conic region $T_l$ below the lines of slope $\pm l$ through $\bfz$:   
\begin{equation*}
T_{l}= \left \{  (x,y) \in \R^2 | \left | \dfrac{y}{(x-2)} \right | \leqslant l \right \}, 
\end{equation*}
and where  $0 \leqslant l < 1/2 $. 

The next statement allows one to control the behavior under $F^2$ of points on $R_{4,r}^-$. 

\begin{lem} \label{lem_orbit_right_Lc}
For all $0 \leqslant l < 1/2 $, one has: 
\begin{equation*}
F^2(R_{4,r}^- \cap T_l) \subset \sigma(R_4 \cup R_5 \cup R_6) \cap T_{g_0(l)}. 
\end{equation*}
%
\end{lem}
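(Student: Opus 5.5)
The proof splits into two parts. The region part says that $F^2(R_{4,r}^-\cap T_l)$ lands in $\sigma(R_4\cup R_5\cup R_6)$; we read this off from the mapping of regions. The cone part says that the image lies in $T_{g_0(l)}$; we get it from a direct computation with $G=F^2$ in the coordinate $l=y/(x-2)$, the same coordinate used to describe the $\Efz$ subsystem in Proposition \ref{prop_E0_dynamics}.

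For the region part, we track the point through two steps of Proposition \ref{prop_mapping_regions}. A point $p\in R_{4,r}^-$ lies to the right of the branch of $F^{-1}(\Lc)$ through $\bfz$. By assertion (iv), $F(p)\in\sigma(R_2\cup R_3)$, and since $p$ is on the right of $F^{-1}(\Lc)$, $F(p)$ lies on the right of $\sigma(\Lc)=\Lc$. Using $F\circ\sigma=F$, the second iterate is the image under $F$ of a point of $R_2\cup R_3$ to the right of $\Lc$, that is, with $x>2$. Applying $y\circ F=(x-2)/y^2$ to $\sigma(F(p))$ would put the image in the upper half plane, but this is exactly what the symmetrization records: $F^2(p)=F(\sigma F(p))$ with $\sigma F(p)\in R_3$ near $\bfz$. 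Assertion (iii) then gives $F^2(p)\in\sigma(R_4\cup R_5\cup R_6)$. If $F(p)$ falls into $\sigma(R_2)$ instead, we use the fact that within $T_l$ with $l<1/2$ the point is close to the wedge at $\bfz$ bounded by $\Lf,\bar\Lf$. That wedge is $T_{1/2}$, and the part of $R_2$ inside it is excluded, so only the $R_3$ branch occurs.

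For the cone estimate, write $l=y/(x-2)$ and $s=x-2$, so $y=ls$. From \eqref{eq_G},
\begin{equation*}
y'=\frac{y^2(x^2-2x-4y^2)}{(x-2)^2}=l^2\bigl(x(x-2)-4l^2s^2\bigr), \qquad x'-2=x^2-4-2l^2\bigl(3x(x-2)-4l^2s^2\bigr).
\end{equation*}
Both numerator and denominator are divisible by $s$:
\begin{equation*}
\frac{y'}{x'-2}=\frac{l^2\,(x-4l^2 s)}{(x+2)-2l^2(3x-4l^2 s)}.
\end{equation*}
Setting $s=0$, i.e. $x=2$, recovers $g_0(l)=l^2/(2-6l^2)$. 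So the task is to show that for $(x,y)\in R_{4,r}^-\cap T_l$ the absolute value of this expression is at most $g_0(l)$. It suffices to treat the boundary $|y/(x-2)|=l$: since $g_0$ is increasing on $[0,1/2)$, the boundary bound implies the bound on all of $T_l$. On the boundary, the claim is a one-variable inequality in $x$, with $x$ ranging over the part of $R_{4,r}^-$ on the ray of slope $l$ through $\bfz$. That part lies between the hyperbola $F^{-1}(\Lc)$ and $\bfz$, so $x$ is in a subinterval of $[-2/3,2]$ or so.

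The main obstacle is verifying this monotonicity-type inequality, namely that the ratio above is maximized at $s=0$. We would try to show that the derivative in $s$ of the rational function has a fixed sign on the relevant range. Failing that, we would bound numerator and denominator separately, using the explicit constraints on $R_{4,r}^-$ (the position of $F^{-1}(\Lc)$ and $\Li$, and $y>0$), in the same style as the estimates in Lemma \ref{lem_orbit_left_Lc}. If the sharp bound $g_0(l)$ fails slightly away from $\bfz$, we would check whether the intended statement only needs the inclusion in $T_{g_0(l)}$ up to the region constraint, and adjust the case split accordingly.
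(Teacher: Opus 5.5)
There are two problems; the second is the substantive gap. First, the region argument has a sign error. A point $p=(x,y)\in R_{4,r}^-$ lies to the right of the branch of $F^{-1}(\Lc)=\{x^2-2x-4y^2=0\}$ through the origin (not through $\bfz$). So $x^2-2x-4y^2\le 0$, and $x\circ F(p)-2=(x^2-2x-4y^2)/y^2\le 0$: $F(p)$ lies to the \emph{left} of $\Lc$. This is precisely what places $F(p)$ in $\sigma(R_3)$ rather than $\sigma(R_2)$, after which $F\circ\sigma=F$ and assertion (iii) conclude, as in the paper. You instead derive $x>2$ for $\sigma F(p)$. That would place it in $R_2$ and, by (ii), put $F^2(p)$ in the upper half plane, yet you then assert $\sigma F(p)\in R_3$ anyway. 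The fallback via the wedge $T_{1/2}$ constrains $p$, not $F(p)$, so it does not rule out $\sigma(R_2)$.

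Second, the cone estimate is the real content of the lemma, and you leave it as the ``main obstacle'', with a contingency in case it fails. It does not fail, and your own formula settles it at once. Fix $p\in R_{4,r}^-\cap T_l$, and put $\lambda=|y/(x-2)|\le l$, $a=1-4\lambda^2>0$, $x=2+s$. Your ratio equals $\lambda^2(2+as)/D$ with $D=4(1-3\lambda^2)+(1-2\lambda^2)as$, and
\begin{equation*}
g_0(\lambda)-\frac{y'}{x'-2}=\frac{-\,\lambda^2a^2 s}{2(1-3\lambda^2)\,D}.
\end{equation*}
On $R_{4,r}^-$ one has $s<0$ and $x(x-2)-4y^2=s(2+as)\le 0$, so $2+as\ge 0$. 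Hence $D\ge 4(1-3\lambda^2)-2(1-2\lambda^2)=2a>0$, and therefore $0\le y'/(x'-2)\le g_0(\lambda)\le g_0(l)$. Moreover $y'=\lambda^2s(2+as)\le 0$, $x'-2=sD<0$ and $g_0(l)<1/2$, so the same computation gives $F^2(p)\in\overline{\sigma(R_4\cup R_5\cup R_6)}$. It therefore proves the whole lemma, and the region argument can be dropped. Completed this way, your route is a more explicit alternative to the paper's, which recognizes $g_0(l)$ as the slope at $\bfz$ of the parabola $F^2(L)$ through the $\Efz$ dynamics and concludes by convexity. As submitted, however, the decisive inequality is missing.
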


\begin{proof} Take $p \in R_{4,r}^- \cap T_l$. 
Since $p\in R_{4,r}^-$, its image $F(p)$ belongs to $\sigma(R_2\cup R_3)$ by assertion (iv) of Proposition \ref{prop_mapping_regions}. 
Furthermore, since the image $F(R_{4,r}^-)$ is a region bounded by 
$\Lc$ and the line $\Lf$, we deduce that $F(p)$ belongs to $\sigma(R_3)$. Using the fact that $F\circ \sigma = F$ and  assertion (iii) of Proposition \ref{prop_mapping_regions}, this shows that $F^2(p) \in \sigma(R_4\cup R_5 \cup R_6)$.   

\smallskip 

We argue that the image is in $T_{g_0(l)}$. To do so, consider the line $L \in \mathcal{P}_0$ through $p$ and $\bfz$ of slope $-l$.   
By assertion (i) of Lemma \ref{lem_forward_curves}, the image is a line through $\binf$.  Assertion (ii) of Lemma \ref{lem_forward_curves} ensures that $F^2(L)$ is a parabola which is tangent to $\Li$ at $\bfzp$. 
Consider the half-line through contained in $F(L)$ starting from $\Lc$ and going to infinity in the lower half-plane. 
This half-line is contained in $\sigma(R_3)$ so its image by $F$ is a portion $\gamma$ of the parabola $F^2(L)$ contained in $\sigma(R_4\cup R_5 \cup R_6)$.
 Since $\Efz$ is preserved by $F^2$, the slope of the parabola $F^2(L)$ at $\bfz$ corresponds to $g_0(l)$. This shows that the region $R_{4,r}^- \cap T_l$ is mapped by $F^2$ to a region in the lower-half plane above $\gamma$. Since the parabola $F^2(L)$ is tangent to the line through $\bfz$ of slope $g_0(l)$ and lies above it, we deduce that $F^2(R_{4,r}^- \cap T_l) \subset T_{g_0(l)}$, as required.   
\end{proof}

Consider the line $L= \{ y =c \}$. We now show that sufficiently near the collapsed line, there are leaves that are pullback of leaves over half of the Chebyshev interval.  
\begin{prop} \label{prop_good_intervals} For any real number $c> 0$, there exists $h>0$ and some real numbers $r_n>0$ converging to zero and a sequence $k_n$  such that $I_n= [2 , 2 + r_n] \times [c-h, c+h]$ and a sequence of integers $k_n$ such that $F^{k_n}(I_n)$ converges to  $[0, 2] \times \{0 \}$.    
\end{prop}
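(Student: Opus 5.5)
We follow a small box sitting just to the right of the collapsed line $\Lc$ through the blow-up at $\bfzp$. We then land in the region $\mathcal{U}$ near the left end of the Chebyshev interval, and let the contraction estimates of Lemma \ref{lem_orbit_left_Lc} drive the orbit onto $\Lz$. The plan has four steps.

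\textbf{Step 1: first iterate.} Fix $c>0$ and choose $h<c/2$. For $(x,y)\in[2,2+r]\times[c-h,c+h]$ the formula \eqref{eq_def_F} gives
\begin{equation*}
F(x,y)=\Bigl(-2+\tfrac{x(x-2)}{y^2},\ \tfrac{x-2}{y^2}\Bigr).
\end{equation*}
Writing $u=x-2\in[0,r]$, this is $(-2+(2+u)u/y^2,\ u/y^2)$. Hence $F(I_n)$ is a thin region near $\bfzp$. It lies in the upper half plane, since $x\ge 2$, and along each horizontal slice its slope relative to $\bfzp$ is
\begin{equation*}
\frac{u/y^2}{(2+u)u/y^2}=\frac{1}{2+u}\approx \frac12 .
\end{equation*}
So $F(I_n)$ is a small curvilinear wedge at $\bfzp$, tangent to the direction of $\Li$. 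It has width $O(r_n/c^2)$, and its points lie near $\Li$.

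\textbf{Step 2: second iterate.} On this wedge we pass to the chart $(s_1,\eta_1)$ around $\Efzp$. We know $\Efzp$ maps to the point $(s_2,\eta_2)=(0,0)\in\Einfp$, which in turn maps into $\Efz$; the orbit computed in the proof of Proposition \ref{prop_lift_X_F} is $\tilde{\Lc}\mapsto(0,0)\in\Efzp\mapsto(0,0)\in\Einfp\mapsto(0,0)\in\Efz$. Rather than going all the way into $\Efz$, we use the images in $\mathbb{P}^2$ directly. A point of $F(I_n)$ at distance $\sim u/c^2$ from $\bfzp$ is mapped by $F$ near $\binfp$, and then by $F^2$ near $\bfz$, since $\binfp\mapsto\bfz$ under the chart formula. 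This is the wrong end: we want to land near $\bfzp$ with slope small.

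\textbf{Step 3: correcting the route.} The fix is to use the Chebyshev coordinate. Since $F^2|_{\Lz}$ is $x\mapsto x^2-2$ and $\bfz$ is its repelling fixed point, points near $\bfz$ with $|y/(x-2)|$ small escape along $\Lz$ towards the whole interval. Concretely, I would track the pair $(x-2,\ y/(x-2))$ along the route $\tilde{\Lc}\to\Efzp\to\Einfp\to\Efz$. Three facts drive this. The iterate $F^2$ preserves $\Efz$ and acts there by $g_0$. The transverse coordinate $s_3$ is multiplied by the factor $|2\eta_3^2-1|\cdot|\ldots|\approx 4$, using \eqref{eq_Efz_Einf} and \eqref{eq_Einf_Efz} at $\eta=0$. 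And $g_0$ super-attracts $\eta_3$ to $0$. Therefore a point entering with $s_3\sim r_n$ and $\eta_3$ in the basin $|\eta_3|<1/2$ has $\eta_3\to0$ doubly exponentially while $s_3$ grows like $4^k$. Choosing $k_n\approx \log_4(1/r_n)$, the point $F^{k_n}$ has $x-2$ of order one and $y$ negligible, so it lies in $[0,2]\times\{0\}$ up to small error.

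\textbf{Step 4: hitting exactly $[0,2]$.} The $x$-coordinate at time $k_n$ is a continuous function of $u\in[0,r_n]$, and it covers the interval via the intermediate value theorem. It equals $2$ at $u=0$, where the point is exactly on $\Lc$ and its orbit follows the invariant pattern; near $u=0$ one would need to use the strict transform $\tilde{\Lc}$ in place of $u=0$ itself. The last iterates, done with $G$ from \eqref{eq_G}, give $x\mapsto x^2-2$ up to $O(y^2)$. We therefore pick $r_n$ so that the image of $[2,2+r_n]$ at time $k_n$ is precisely the arc from $\beta$ to $\cos^{-1}$-preimage $0$, namely $2\cos(\pi/2)=0$. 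Lemma \ref{lem_orbit_right_Lc} keeps the orbit inside $T_{g_0^j(l)}$, which bounds the $y$-error by $O(\prod g_0^j(l))\to0$.

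\textbf{Main obstacle.} The hard step is Step 3. We need uniform control on the transverse expansion near $\Efz$ together with the vertical contraction, uniformly in $y\in[c-h,c+h]$. This amounts to a quantitative version of the repelling-in-$s_3$, super-attracting-in-$\eta_3$ picture, and I would get it from explicit bounds in the $(s_3,\eta_3),(s_4,\eta_4)$ charts.
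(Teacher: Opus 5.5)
This is essentially the paper's argument. $F$ puts the box into the thin wedge at $\bfzp$ under $\Li$, which lies inside $\mathcal{U}$ for small $r_n$. $F^2$ then lands it near $\bfz$ with small $y/(x-2)$: you get this from the orbit $\tilde{\Lc}\to\Efzp\to\Einfp\to(0,0)\in\Efz$ in $X$, the paper from Lemma \ref{lem_orbit_left_Lc}(iv). Finally, $F^2$-iteration under the slope control of Lemma \ref{lem_orbit_right_Lc} spreads the image along $\Lz$ up to $F^{-1}(\Lc)$, and your intermediate-value choice of $r_n$ replaces the paper's cut-off by the curves $F^{-2n+1}(\Lc)$. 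Make that choice on the slice $y=c-h$, where the effective parameter $u/y^2$ is largest, so that the image of the whole rectangle stays between $\bfz$ and $F^{-1}(\Lc)$ up to $o(1)$.

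Two corrections. First, the Step~2 claim that arriving near $\bfz$ is ``the wrong end'' is mistaken: landing at $\bfz$ with small slope is exactly what your Step~3 and the paper use. Second, the final $y$-error is $O(g_0^{j}(l))$, coming from membership in $T_{g_0^{j}(l)}$, not a product; this still tends to $0$, which is all you need.
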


\begin{proof}
We shall construct the intervals $I_n$ inductively. 
We start with $I_0 = \cup_{t\in [c-h,c+h]} [2,2 + 2 t] \times \{t\}$ which is a region bounded by $\Lf$, $\Lc$ and some horizontal lines. 
Denote by $L_t$ the horizontal line $[2,2+ 2t] \times \{t \}$ for $t\in [c-h,c+h]$. 
Since the line $L_t$ passes through $u_0 \in \Lc$ and through the point at infinity $\pinf$, the image of $F(I_0)$ is an arc of the parabola $F(L_t)$ passing through $\bfzp$ and through the point  $ F(v_0)=(6, 2) \in\Lf$.  
Since $I_0$ is contained in the region $\bar{R_2}$, by assertion (ii) of Proposition \ref{prop_mapping_regions}, $F(I_0)$ is a union of  arcs of the parabolas $F(L_t)$ contained in $R_2^- \cup R_3^- \cup R_4^-$.  In particular, by convexity, these arcs intersects the region $\mathcal{U}$. 
We set $I_1 = F^{-1}(\mathcal{U}) \cap I_0$. By assertion (iv) of Lemma \ref{lem_orbit_left_Lc}, $F^2(\mathcal{U}) \subset T_{l}$ with $l = 1/49$. 
This shows that $J_1=F^3(I_1) \subset T_{7^{-2}}$ and is an arc joining $\bfz$ and the curve $F^{-1}(\Lc)$ contained in $R_4^-$. Up to taking the preimage by the restriction of $F^3$ to $I_1$, we are reduced to finding some subregions $J_n \subset J_1$ whose images by $F^{2n}$ "cover half the Chebyshev interval and converge to it". Precisely, we find some subregions $J_n$ for which the image $F^{2n}(J_n)$ converge toward the segment $[0,2]\times \{ 0 \}$ and such that $F^{2n}(J_n)$ is a union of horizontal curve joining $\bfz$ and a point on $F^{-1}(\Lc)$.  
Note that $\bfz$ is a fixed point for  $F^2$, we set $J_n$ the smallest subregion contained in $J_1 = F^3(I_1)$ and containing $\bfz$ and bounded by $F^{-2n +1}(\Lc)$. 
Since the pullbacks $F^{-2n +1}(\Lc)$ cross the horizontal interval at the points $( 2 \cos(\pi/(2n-2)),0)$ and intersect $J_1$, we deduce that $J_{n+1} \subset J_n$.  
By construction $ F^{2n}(J_n)$ is subregion containing $\bfz$ and a point on $F^{-1}(\Lc)$. 
Furthermore, $F^{2n-2}(J_n)\subset \sigma(R_{4,r}^{-}) \cap T_{g_0^{n-1}(l)}$.
 Indeed, this follows using Lemma \ref{lem_orbit_right_Lc} inductively. Since $l=1/49$, $g_0^n(l)$ converges to zero and this shows that $F^{2n-2}(J_n)$ converges to the interval $[0,2] \times \{ 0\}$. Moreover, observe that the area of $J_n$ converges to zero as the distance between $F^{-1 - 2n}(\Lc)$ and $\bfz$ converge to zero.  
 Taking the pullback $F^{-3}(J_n) \cap I_1$, the result holds. 
\end{proof}

\begin{prop}
	For any $c \in \R\setminus \{0 \}$, the slice $\mathcal{L} \cap \{ y=c\}$ is a Cantor set. 
\end{prop}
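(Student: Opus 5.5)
To show that $K_c:=\mathcal{L}\cap\{y=c\}$ is a Cantor set, I will check the three defining properties in turn: it is compact, it has no isolated points, and it has empty interior. By the symmetry $F\circ\sigma=F$ it is enough to treat $c>0$.

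\textbf{Compactness.} Closedness is immediate from Definition \ref{def_cL}, since $\mathcal{L}$ is a closure. For boundedness, every curve of $F^{-n}(\Lc)$ is vertical for the cone field by Theorem \ref{thm_invariant_cone_field}. Hence it is a graph over the $y$-axis trapped in $E$. In particular it avoids the regions $R_1\cup R_6\cup\sigma(R_1\cup R_6)$, which lie in the basin of $\pinf$. So the slice $\{y=c\}\cap E$ is a bounded interval containing $K_c$.

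\textbf{No interior.} By the Lemma just proved, $\Lc(\R)$ lies on the boundary of $\cW^s(\pinf)$. Pulling back, every point of $F^{-n}(\Lc)\cap\{y=c\}$ is accumulated by points of the open set $\cW^s(\pinf)$. Each component of that basin is disjoint from $\mathcal{L}$: its points eventually enter the open region $\overset{\circ}{R_6}$, while $\mathcal{L}$ is backward invariant and contained in $E$. It follows that the open set $\cW^s(\pinf)\cap\{y=c\}$ is dense in a neighbourhood of every point of the dense subset $\bigcup_n F^{-n}(\Lc)\cap\{y=c\}$ of $K_c$. Hence $K_c$ contains no interval.

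\textbf{No isolated points.} It suffices to show that each point $q\in F^{-n}(\Lc)\cap\{y=c\}$ is a limit of other points of $K_c$, since such points are dense in $K_c$. I first treat $q=(2,c)\in\Lc$ and apply Proposition \ref{prop_good_intervals}. It gives boxes $I_n=[2,2+r_n]\times[c-h,c+h]$ with $r_n\to0$ and iterates $F^{k_n}$ carrying $I_n$ onto a region stretching across $[0,2]\times\{0\}$. That region is crossed by infinitely many leaves $F^{-m}(\Lc)$, namely those through the points $(2\cos(\pi/2^j),0)$. By the construction in that proposition, the horizontal curves $F^{k_n}(\{y=c\}\cap I_n)$ join $\bfz$ to $F^{-1}(\Lc)$, and so they intersect these leaves. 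Pulling back, $\{y=c\}\cap I_n$ meets $F^{-k_n-m}(\Lc)$, giving points of $K_c$ in $(2,2+r_n]\times\{c\}$, and these accumulate at $q$. For a general $q$ on a component $\gamma$ of $F^{-n}(\Lc)$, I transport this picture by $F^n$. The map $F^n$ is a local diffeomorphism near $q$ off $\Lz$, and it sends $\{y=c\}$ to a curve transverse to $\Lc$, since the cone field is preserved. The argument at $\Lc$ then works for any transverse arc through a point of $\Lc$, with the same proof of Proposition \ref{prop_good_intervals}, and pulling back by $F^n$ gives accumulation at $q$.

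The main obstacle is this last transport step. Proposition \ref{prop_good_intervals} is stated only for horizontal slices at points of $\Lc$. I expect to handle it by noting that its proof only uses that the slice is a short arc in $\bar R_2$ starting on $\Lc$ that is horizontal for the cone field. A small horizontal-cone arc through $(2,c')$ maps under $F$ into the same parabola-arc configuration, so the construction goes through unchanged. Combining compactness, empty interior and perfectness shows that $K_c$ is a Cantor set.
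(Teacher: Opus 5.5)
Your proposal is correct and follows essentially the same route as the paper. It obtains closedness from the definition, empty interior because points of $\bigcup_n F^{-n}(\Lc)$ on $\{y=c\}$ are accumulated along the slice by the open basin $\cW^s(\pinf)$ (via $\Lc(\R)\subset\partial\cW^s(\pinf)$ and transversality of the leaves to horizontal lines), and perfectness via Proposition \ref{prop_good_intervals} after reducing to a point of $\Lc$. You also check boundedness of the slice and make explicit the reduction to $\Lc$, which the paper dispatches with ``up to taking a certain iterate''. The only slip is citing backward invariance of $\mathcal{L}$: what you actually need is that the basin is open and disjoint from every $F^{-n}(\Lc)$, hence from their closure $\mathcal{L}$.
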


\begin{proof} Fix $c \in \R\setminus \{ 0\}$ and denote by $L$ the horizontal line $\{ y =c \}$. It follows from the definition that $L \cap \mathcal{L}$ is a closed set. 
 
Let us first show that $\mathcal{L} \cap L$ has no interior. Assume by contradiction that it contains an open interval $I$. 
 By the previous lemma, any accumulation point of $\mathcal{L} \cap L$ belongs to the boundary $\partial \cW^s(\pinf)$. 
 Moreover, since $\Lc$ and all its pullbacks are vertical curves for the cone field $\mathcal{C}$, the boundary of the basin $\cW^s(\pinf)$ is directed by vertical curves for the cone field. 
  In particular $I$ contains a point $p$ on the boundary $\partial \cW^s(\pinf)$, but since $L$ is a horizontal line for the cone field, any leaf of $\mathcal{L}$ through $p$ is a local curve in $\partial  \cW^s(\pinf)$, hence it must be vertical for the cone field $\mathcal{C}$ and transversal to $L$. This shows that $I \cap \cW^s(\pinf) $ is non-empty, which contradicts the fact that $\mathcal{L} \subset \partial \cW^s(\pinf)$. 
  We have thus proved that $\mathcal{L} \cap L$ has no interior. 
  
  \smallskip 
  
  Let us now prove that $\mathcal{L} \cap L$ is perfect i.e all its points are accumulation points of the set. 
  Take a point $p$ on $\mathcal{L} \cap L $. If $p $ does not belong to $\cup_{k} F^{-k} \Lc$, then by construction, $p$ is accumulated by point on the preimages of $\Lc$, which belong to $\mathcal{L}$. 
  Otherwise, $p$ is one pullback of $\Lc$, up to taking a certain iterate, we are reduced to the proof in the case where $p \in \Lc$. Assume that $p$ is not accumulated by any pullbacks of $\Lc$, then we can take the largest open interval $I \subset L$ containing $p$ and no pullbacks of $\Lc$. 
  By construction, $F^3(I)$ is an arc of a horizontal curve for the cone field $\mathcal{C}$ going through $\bfzp$. 
  Let us show that $F^3(I)$ intercepts a pullback of $\Lc$. By Proposition \ref{prop_good_intervals}, there is a sequence of intervals $I_n$ contained in $L$ on the right hand side of $\Lc$ for which $F^{k_n}(I_n)$ converges to $[0,2]\times \{ 0\}$. Since the length of $I_n$ decreases to zero, we can choose $N$ such that $I_N\subset I$ and we have $I_n\subset I$ for $n\geqslant N$ and  $F^{k_n}(I_n) $ converge to $[0,2 ]\times \{ 0 \}$. In particular, $F^{k_n}(I_n)$ must intercept $F^{-3}(\Lc)$ as it passes through the point $(2\cos(\pi/4),0)$ for $n$ large enough. We have thus shown that $I_n$ is contained in $I$ and intercepts a pullback of $\Lc$, which is a contradiction with the definition of $I$. We have thus shown that there are no open intervals in $L$ around $L \cap \Lc$ which do not intercept any pullback of $\Lc$. Hence $L \cap \mathcal{L}$ is a perfect set.   
\end{proof}

\subsection{A subset of positive area}

The orbit of a point under $F$ is a priori hard to track once it intersects the indeterminacy locus. However, an orbit can behave well on the blow-up $X$ of $\bP^2$ so we introduce the following subset $U$, whose lift to $X$ is far from the indeterminacy point on $\Efz$. 

Take the image $\gamma'$ of $\Li$ by $F^2$, it locally passes through $\bfz$ and is tangent the line $\Lz$ at this point. Parametrize locally $\gamma'$ as the graph of a function $g(x)$.  
Consider $U$ the subset
\begin{equation}
U := \{ (x,y) | |y| \leqslant \min( g(x), \epsilon , |x+2|/2 ) \}.
\end{equation}
Note that both $\Li$ and $\sigma(\Li)$ have the same image by $F^2$ since $F\circ \sigma = F$. 
Observe that $U$ is not forward invariant, but we will remove a subset which will be forward invariant. 
To that end, we consider the conic region above the point $\bfzp$
\begin{equation} 
W= \{ (x,y) | |y| \geqslant |x+2|/2 \}. 
\end{equation}

\begin{prop} \label{prop_forward_invariant_U} The set  $U \setminus \cup_{n\geqslant 1} F^{-n}(W)$ is forward invariant by $F^2$.  
\end{prop}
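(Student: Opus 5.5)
The plan is to reduce the statement to the single inclusion $F^2(p)\in U$, and then check the three inequalities defining $U$ one by one.

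Write $V:=U\setminus \bigcup_{n\geqslant 1}F^{-n}(W)$ and take $p\in V$. Since $p\notin F^{-n}(W)$ for every $n\geqslant 1$, the points $F(p)$ and $F^2(p)$ are well defined and avoid $\bfz$. Indeed, the orbit of $p$ stays off the collapsed line and off the indeterminacy point, because $\bfzp$ and its neighbourhood near the vertical direction lie in $W$. Moreover $F^2(p)\notin F^{-m}(W)$ for all $m\geqslant 1$, since $F^{-m}(W)$ evaluated at $F^2(p)$ is the condition $p\in F^{-(m+2)}(W)$. So it only remains to show $F^2(p)\in U$. One inequality comes for free: $F^2(p)\notin W$ (take $n=2$), that is $|y'|<|x'+2|/2$, where $F^2(p)=(x',y')$. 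Hence I only have to prove $|y'|\leqslant \epsilon$ and $|y'|\leqslant g(x')$, with $\epsilon$ chosen small enough at the start.

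For the bound $|y'|\leqslant\epsilon$ I would use the explicit formula \eqref{eq_G},
$$y'=\frac{y^2(x^2-2x-4y^2)}{(x-2)^2}.$$
I split into two cases according to whether $p$ is far from or close to $\bfz$.

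\emph{Far from $\bfz$.} If $|x-2|\geqslant \eta$, then $|y'|\leqslant C_\eta y^2\leqslant C_\eta\epsilon^2\leqslant \epsilon$ for $\epsilon$ small. Here I use that $x$ stays bounded on $U$; this holds because $|y|\leqslant g(x)$ and $\gamma'=F^2(\Li)$ meets $\Lz$ only at points of $\bIz$.

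\emph{Near $\bfz$.} Since $\gamma'$ is tangent to $\Lz$ at $\bfz$, one has $|y/(x-2)|\leqslant g(x)/|x-2|\to 0$, so $p\in T_l$ with $l$ small. Then $|y'|=l^2|x^2-2x-4y^2|$ is again $O(\epsilon^2)$. This is the same mechanism as in Lemma \ref{lem_orbit_right_Lc}, where $F^2$ maps $T_l$ into $T_{g_0(l)}$ and $g_0(l)=l^2/(2-6l^2)$ contracts near $0$.

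For the bound $|y'|\leqslant g(x')$ I would argue with the geometry of $F^2$ near $\Lz$, which I expect to be the main obstacle. Near $\bIz$, $F^2$ restricts to the Chebyshev map on $\Lz$ and is transversally superattracting, as $y'\sim \frac{x}{x-2}y^2$. By the symmetry $F\circ\sigma=F$, the curves $\Li$ and $\sigma(\Li)$ have the same image $\gamma'$. The region of $U$ is squeezed between $\Li$ and $\sigma(\Li)$: it lies under $\gamma'$ and, near $\bfzp$, under the cone $|y|\leqslant|x+2|/2$, which lies strictly between these two lines. By continuity and connectedness, the image under $F^2$ of the strip between $\Li$ and $\sigma(\Li)$, restricted to points avoiding $W$ and $F^{-1}(W)$, is bounded by the images of its boundary, namely $\gamma'$ and $\Lz$. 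Hence $|y'|\leqslant g(x')$ wherever $g$ is defined.

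The delicate points are near $\bfz$, where I would pass to the blow-up $X$ and use the coordinate $l=y/(x-2)$ on $\Efz$ together with Proposition \ref{prop_E0_dynamics}, and near $\bfzp$, where the excluded set $\bigcup_n F^{-n}(W)$ is exactly what prevents the orbit from being blown up toward $\Linf$ by the transversally critical line. I would try this boundary-image argument first, using Lemma \ref{lem_forward_curves} to identify the images of the relevant line pieces.
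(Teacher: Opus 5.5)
Your reduction is sound and matches the paper's ``removing the successive preimages of $W$''. The cone inequality $|y'|<|x'+2|/2$ follows from $F^2(p)\notin W$, and the excluded set is carried along by the shift. The bound $|y'|\le\epsilon$ also follows from \eqref{eq_G}, but near $\bfz$ it is not $O(\epsilon^2)$: the term $y^2|x|/|x-2|$ is not controlled by $\epsilon^2$ as $x\to 2$. Write instead $|y'|\le |y|\cdot\frac{|y|}{|x-2|}\cdot\frac{|x^2-2x-4y^2|}{|x-2|}\le C\eta\,\epsilon$, using $|y|\le g(x)=O((x-2)^2)$. The genuine gap is the inequality $|y'|\le g(x')$, which is the whole content of the proposition.

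\emph{Why the boundary heuristic fails.} Continuity and connectedness do not imply that the image of a region lies in the region cut out by the image of its boundary. That requires the map to have no folds, collapses or singularities inside, and on the symmetric strip between $\Li$ and $\sigma(\Li)$ all of these occur:
\begin{itemize}
\item $F^2$ is at least $2$-to-$1$ there since $F\circ\sigma=F$, and it folds along $\Lz$. So $\Lz$ enters as a fold image, not as the image of a boundary piece.
\item $F^2$ collapses the arc of $F^{-1}(\Lc)$ through the origin to $\bfzp$.
\item $F^2$ is undefined at $\bfz$, an interior point of the strip.
\item The strip is unbounded and contains fixed points of $F^2$ such as $(4,1)\in\Lf$, so your conclusion is false for it as stated.
\end{itemize}

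\emph{Why estimates alone do not close it.} Points of $U$ landing near $\bfz$, where $g$ is the binding constraint, come from near $\bfz$ or near $\bfzp$, since $x'\approx x^2-2$. Near $\bfz$ a direct estimate works: $x'-2\approx 4(x-2)$ and $|y'|=O(|x-2|^3)$, while $g(x')\asymp(x'-2)^2$. Near $\bfzp$, however, the inequality is tight, because $F^2$ maps $\Li$ onto $\gamma'$ itself. There $|y'|\approx y^2/2$ and $g(x')\approx (x+2)^2/8$; these agree to leading order and coincide exactly on $\Li$. The excluded set $\bigcup_n F^{-n}(W)$ only supplies the cone inequality. It says nothing about where $F^2(p)$ sits relative to $\gamma'$, so it is not what controls the points near $\bfzp$.

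\emph{The missing idea: injectivity, as in the paper.}
\begin{itemize}
\item $F$ is injective on each open half-plane off $\Lc$: if $F(x,y)=(x',y')$ with $y'\neq 0$, then $x=(x'+2)/y'$ and $y^2=(x-2)/y'$.
\item Take the small triangle $T'$ bounded by $[\bfzp,p]\subset\Li$, a vertical segment $I=[p,q]$, and $[\bfzp,q]\subset\bIz$. Then $F$ maps $T'$ into the lower half-plane, away from $\Lc$, so $F^2|_{T'}$ is injective.
\item By Lemma \ref{lem_forward_curves}(iii),(iv), $F(I)$ lies on a line through $\bfzp$, and $F^2(I)$ is a parabolic arc from $F^2(p)\in\gamma'$ to $F^2(q)\in\bIz$ near $\bfz$.
\item Injectivity is what prevents this arc from meeting $\gamma'$ a second time. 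Hence $F^2(T')$ lies between $\bIz$ and $\gamma'$, and the lower half follows from $F\circ\sigma=F$.
\end{itemize}
Replace your boundary heuristic by this argument and keep your direct estimates near $\bfz$ (which the paper does not spell out); you then recover the paper's proof.
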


\begin{proof} We first establish that $F^2$ maps the region below $W$ in $U$ inside $U$. Take a vertical segment $I$ from a point $p$ on $\Li$ ending at a point $q$ on $\bIz$ near $\bfzp$. Its image by $F$ is contained in  the line through $\bfzp$ and through the point $F(p)$ on the parabola  $F(L_i)$.
More precisely, it is a half line going from $F(p)$ diverging to infinity in the lower half plane toward $F(q)$. 
The second iterate $F^2(I)$ is an arc of a  parabola going from a point $F^2(p)$ on $\gamma'$ to a point on the Chebyshev interval $F^2(q)$, close to $\bfz$.  Since $F$ restricted to the upper half plane is injective onto its image, we deduce that the restriction of $F$ to the filled triangle $T'$ delimited by the segment $I$, the segment $[\bfzp, p]$ and $[\bfzp,q]$ is also injective. 
Since $T'$ is mapped to the lower half plane. We deduce that $F^2$ restricted to $T'$ is also injective. This shows that the arc $F^2(I)$ does not cross $\gamma'$ other than at the point $F^2(p)$. 
Hence the portion below $W$ is mapped by $F^2$ inside $U$.    
Removing the successive preimages of $W$ yield a forward invariant subset. 
\end{proof}

A priori, the pullbacks $\cup_{n \geqslant 0} F^{-n}(W)$ could fill in the whole space $U$. The next step is to estimate the area of these pullback in $U$. 
To that end, we first compute the first pullback $F^{-2}(W)$ by $F^2$. This lemma is somewhat surprising if one compares to the behaviour of the skew-product maps $(x,y) \mapsto (x^2 - 2, y^2)$ for which the pullback $F^{-2}(W)$ is a conic region. 

\begin{lem} \label{lem_first_pullback} The pullback $U\cap F^{-2}(W)$ is a region bounded by the vertical line of equation $ x = 0 $ and an algebraic curve whose power series expansion at zero is  of the form $$x(y) = -2 y^2  + \sum_{n\geqslant 2} (-1)^n 2^{n-1} y^{2n}.$$  
\end{lem}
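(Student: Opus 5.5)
The plan is to compute $F^{-2}(W)$ directly from the explicit formula \eqref{eq_G} for $G=F^2$, and to observe that the boundary equations factor completely. Write $G(x,y)=(X,Y)$. Since $W=\{|y|\geqslant |x+2|/2\}$ is bounded by the two lines $Y=\pm(X+2)/2$ through $\bfzp$, the set $\partial F^{-2}(W)$ lies in the union of the two curves $\{X+2=2Y\}$ and $\{X+2=-2Y\}$. The relevant part of $U$ is a neighbourhood of the origin, because $G(0,0)=\bfzp$: on $\Lz$ the map $G$ is the Chebyshev map $x\mapsto x^2-2$, and $0\mapsto -2$.

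From \eqref{eq_G},
\begin{equation*}
(x-2)^2(X+2)=x^2(x-2)^2-2y^2(3x^2-6x-4y^2), \qquad (x-2)^2 Y = y^2\,(x(x-2)-4y^2).
\end{equation*}
I will clear the denominator $(x-2)^2$ in each of the two equations $X+2=\pm 2Y$ and expand.

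With one choice of sign, the $y^2$ and $y^4$ terms recombine into a perfect square:
\begin{equation*}
x^2(x-2)^2-8y^2x(x-2)+16y^4=\bigl(x(x-2)-4y^2\bigr)^2 .
\end{equation*}
Its zero set is the curve $\{Y=0\}=\{X+2=0\}$, where the inequality $|Y|\geqslant|X+2|/2$ is only touched and not crossed. So this piece carries no boundary of $W$ except the preimage of $\bfzp$.

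With the other sign, the $y^4$ terms cancel and one obtains
\begin{equation*}
x(x-2)\bigl(x(x-2)-4y^2\bigr)=0 .
\end{equation*}
This gives the vertical line $x=0$ and the branch of the conic $x^2-2x=4y^2$ through the origin, namely $x=1-\sqrt{1+4y^2}$. Expanding $\sqrt{1+4y^2}$ as a binomial series gives $x(y)=-2y^2+2y^4-4y^6+\dots$. I will check that this coincides with $-2y^2+\sum_{n\geqslant2}(-1)^n2^{n-1}y^{2n}$; this is a routine coefficient comparison.

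It then remains to show that $U\cap F^{-2}(W)$ is exactly the region between these two boundary curves. I will evaluate the sign of $2|Y|-|X+2|$ at one test point in each complementary component near the origin, within $|y|\leqslant\epsilon$. A convenient choice is points on the parabola-shaped region, for instance $x=-y^2$, compared with points having $x>0$ or $x<-3y^2$.

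There is a point to take care of: the factor $x(x-2)-4y^2$ appears in both sign cases. On that curve $Y=0$ and $X+2=0$, and both vanish to second order. Hence $F^{-2}(W)$ pinches there, and I must confirm that this curve is the one bounding the region and is not spurious.

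I expect this sign and side bookkeeping, rather than the algebra, to be the main obstacle. The first step I will try is to use the identity, valid for the relevant choice of sign,
\begin{equation*}
(x-2)^2(X+2\mp 2Y)=\text{(explicit product)},
\end{equation*}
and read off the sign of each factor near the origin: $x-2<0$, while $x(x-2)-4y^2$ changes sign across the conic. Finally, intersecting with the constraints $|y|\leqslant\min(g(x),\epsilon,|x+2|/2)$ that define $U$ only truncates this region near the origin, which gives the stated description.
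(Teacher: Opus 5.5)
Your route is essentially the paper's: you pull back the two boundary lines $\Li=\{X+2=2Y\}$ and $\sigma(\Li)=\{X+2=-2Y\}$ of $W$ by $F^2$. Both of your factorizations are correct, and the boundary curve is indeed the branch $x=1-\sqrt{1+4y^2}$ of $x^2-2x=4y^2$.

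The step that fails is the one you call a routine coefficient comparison: it does not come out. Expanding,
\begin{equation*}
1-\sqrt{1+4y^2}=\sum_{n\geqslant 1}(-1)^n\,\frac{2}{n}\binom{2n-2}{n-1}\,y^{2n}=-2y^2+2y^4-4y^6+10y^8-28y^{10}+\cdots,
\end{equation*}
so the coefficients are twice the Catalan numbers, not $2^{n-1}$. You can see this directly: substituting $x=-2y^2+2y^4-4y^6+c\,y^8+O(y^{10})$ into $x^2-2x-4y^2$ leaves $(20-2c)\,y^8+O(y^{10})$. This forces $c=10$, while the displayed series has $c=8$.

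So the statement as written is false from order $y^8$ on, and no argument can close this step. The paper's Appendix B obtains the series from the linear recursion $a_k=-2a_{k-2}$. That recursion keeps only the cross terms of $x^2$ involving $-2y^2$ and drops the rest of the convolution; the correct value is $a_8=-2a_6+\tfrac12 a_4^2=10$. What you can prove is the lemma with the curve $x=1-\sqrt{1+4y^2}$. It agrees with the displayed series through $y^6$, and the leading behaviour $x\sim-2y^2$ is unaffected. That leading behaviour is what the later arguments need, e.g.\ $U\cap F^{-2}(W)\subset\{|y|\geqslant x^2\}$.

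For the side bookkeeping you flagged, no test points are needed. We have $F^{-2}(W)=\{(X+2-2Y)(X+2+2Y)\leqslant 0\}$, and your identities give
\begin{equation*}
(X+2-2Y)(X+2+2Y)=\frac{x(x-2)\bigl(x(x-2)-4y^2\bigr)^3}{(x-2)^4}.
\end{equation*}
Hence, off $\Lc$, one has $F^{-2}(W)=\{0\leqslant x(x-2)\leqslant 4y^2\}$, which on $\{x<2\}$ is exactly $\{1-\sqrt{1+4y^2}\leqslant x\leqslant 0\}$. The conic is a genuine boundary because the product vanishes to odd order along it.

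Two small corrections. Along the conic, $Y$ and $X+2$ each vanish only to first order; it is $X+2-2Y$ that vanishes to second order. Also $\{Y=0\}\neq\{X+2=0\}$, since the former contains $\Lz$.

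The conic is $\bH=F^{-1}(\Lc)$, which $F^2$ collapses to $\bfzp$. Nearby points are mapped close to $\bfzp$, approaching it along the direction of $\Li$. This is why the zero sets alone cannot tell you which side lies in $F^{-2}(W)$.
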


\begin{proof}
Since the boundary of $W$ consists of the curves $\Li$ and $\sigma(\Li)$. We compute explicitly the pullback of the lines $\Li$ and $\sigma(\Li)$. 
Observe that the line $x=0$ passes through the point $(0,-1) \in \Lf$ and by assertion (iii) of Lemma \ref{lem_forward_curves}, its image is a line through $\bfzp$ and $F(0,-1) = (-2,-2)$. In particular, the image is  a vertical line through $\bfzp$. 
Applying again assertion (iii) of Lemma \ref{lem_forward_curves}, we deduce that the second iterate $F^2( \{ x=0\})$ is a line passing through $\bfzp$ and $F^2(0,-1)= (0,-1)$. This is exactly the line $\sigma(\Li)$. 

We now compute the preimage of  $\Li = \{ x+2 - 2y =0 \}$. 
$$ ((x+2) - 2  y) \circ F^2  = \frac{\left(x^2-2 x-4 y^2\right)^2}{(x-2)^2}.  $$
By a formal computation (see Lemma \ref{lem_expansion_explicit_1}), we obtain the required expansion of the curve of equation $x^2 - 2 x -4y^2=0$ near $(0,0)$. 
\end{proof}

\begin{lem} \label{lem_vertical_parabola} 
There exists a constant $C_-$ such that for any point $p\in U$ and for any $\epsilon \leqslant \epsilon_0$ small enough.  If  $\gamma$ is any vertical curve through $p$ for the first iterate of cone field $\mathcal{C}^{(1)}$. Then $\gamma$ is  in between a vertical line and a curve of the form $ q_-  -x = C_- y^{3/2} $ and $ q_+   =x $ where $q_{\pm} \in \bIz$.  
\end{lem}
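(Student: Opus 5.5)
We read the statement as follows. For $p\in U$ near $\bfzp$, any curve $\gamma$ through $p$ tangent to $\mathcal{C}^{(1)}$ is trapped, as it descends to $\bIz$, between two barriers: a vertical line $\{x=q_+\}$ and a curve $\{q_--x=C_-y^{3/2}\}$, with $q_\pm\in\bIz$. Our strategy is to identify the boundary directions of $\mathcal{C}^{(1)}=(DF)^{-1}\mathcal{C}$ explicitly near $\bfzp$ and then integrate them.

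\textbf{Step 1: pull back the boundary lines.} By Lemma~\ref{lem_forward_curves}, $F(L_p)=l_{F(p)}$ and $F(V_p)=L'_{F(p)}$. Hence the preimage of a boundary line of $\mathcal{C}_{F(p)}$ is a line only in these two cases. In the other cases, namely boundary lines in $\mathcal{P}'_0$ or $\mathcal{P}'_\infty$, the preimage is a conic through the relevant points. For $p\in U$ the image $F(p)$ lies near $\binfp$ on the far side. Tracking the two boundary pencils at $F(p)$ through Proposition~\ref{prop_mapping_regions}, we expect that one boundary of $\mathcal{C}^{(1)}_p$ is exactly the vertical direction, since $F(V_p)=L'_{F(p)}$ is a cone boundary there. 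This gives the barrier $x=q_+$: a curve tangent to the cone cannot cross the vertical line, so $\gamma$ stays on one side of it.

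\textbf{Step 2: the second boundary.} The other boundary of $\mathcal{C}^{(1)}_p$ is the tangent at $p$ to $F^{-1}(C')$, where $C'\in\mathcal{P}'_\infty$ or $C'\in\mathcal{P}_\infty$ passes through $F(p)$. We will compute this preimage conic explicitly. Using the formulas for $F$, the preimage of a line of slope $1/2$ has the form
\[
x^2+(b-2)x-2b+(c-2)y^2=0,
\]
which is a conic through $\bfz$. For the pencil through $\binfp$ we expect a conic through $\bfzp$ whose slope relative to the vertical degenerates like $y^{1/2}$ as $y\to 0$. Concretely, we aim to show that the boundary direction has
\[
\left|\frac{dx}{dy}\right|\le \tfrac32\,C_-\,|y|^{1/2}
\]
uniformly for $p\in U$. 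The key input is that $F$ is transversely critical along $\Lz$: the defining parameter of the line through $F(p)$ is quadratic in $y$, and differentiating contributes the half power.

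\textbf{Step 3: integrate.} Integrating the slope bound from $p$ down to $\bIz$ gives $q_--x\le C_-y^{3/2}$, with $q_-$ the foot point. Choosing $\epsilon_0$ small keeps the $o(1)$ corrections, in the quantities $y$, $y/(x+2)$ and $x+2$, below half the leading constant.

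\textbf{Main obstacle.} The hard step is the uniform derivative estimate in Step 2 near the corner $\bfzp$. There both $y$ and $x+2$ are small, and the cone field in $U$ degenerates as $|y|\le |x+2|/2$. We would first try to work in the blow-up chart $\pi_1$, with $(s_1,\eta_1)$ and $x=s_1\eta_1-2$, and expand the conic to leading order there. This should yield a slope of order $s_1^{1/2}$ with bounded coefficient.
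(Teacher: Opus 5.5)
\noindent There is a genuine gap. The second boundary of $\mathcal{C}^{(1)}_p$ comes from the wrong pencil, and you look for the loss of exponent at the wrong end of $\bIz$.

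\textbf{The second boundary.} Take $p=(x,y)\in U$; this is a thin region along all of $\bIz$, not a neighbourhood of $\bfzp$. Then $F(p)=(-2+x(x-2)/y^2,(x-2)/y^2)$ lies far out in the lower half-plane, in the direction $[x:1:0]\in\bIinf$, below $\sigma(\Li)$. The lines of slope $\pm 1/2$ through $F(p)$ meet $\Lz$ at $-2+(x\mp 2)(x-2)/y^2$, which lies outside $[-2,2]$. So $\mathcal{P}_\infty$ and $\mathcal{P}'_\infty$ do not define the cone there. The two boundary pencils at $F(p)$ are $\mathcal{P}_0'$ and $\mathcal{P}_0$:
\begin{itemize}
\item The $\mathcal{P}_0'$-line pulls back to the vertical line through $p$; you have this.
\item The $\mathcal{P}_0$-line $x-2=\lambda y$ pulls back to the hyperbola $(x-2)(x-\lambda)=4y^2$, with $\lambda=x-4y^2/(x-2)$. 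It meets $\bIz$ at $(\lambda,0)$ with a vertical tangent.
\end{itemize}
Your candidates cannot play the role of this hyperbola:
\begin{itemize}
\item A $\mathcal{P}_\infty$-line pulls back to the pair of lines $(x-2)^2=(2-c)y^2$ through $\bfz$. Your conic degenerates to exactly this when $b=-2$.
\item A $\mathcal{P}'_\infty$-line pulls back to $x^2+ky^2=4$, which meets $\bIz$ only at $\bfz$ and $\bfzp$.
\end{itemize}
Your Step 1 is also inconsistent. $F(V_p)=L'_{F(p)}$ says that $\mathcal{P}_0'$-lines pull back to lines, namely vertical lines together with $\Lc$. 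The conic cases are $\mathcal{P}_0$ and $\mathcal{P}'_\infty$.

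\textbf{Where the exponent $3/2$ comes from.} With the correct hyperbola, the foot satisfies $|x-\lambda|=4y^2/|x-2|$.
\begin{itemize}
\item Near $\bfzp$ this is $O(y^2)$. The corner $\bfzp$ is harmless, so your planned analysis in the $\pi_1$ blow-up chart is aimed at the wrong place.
\item The loss happens near $\bfz$, where $x-2\to 0$. The ingredient that controls it is the parabolic constraint built into $U$, and your proposal misses it. Since $F^2(\Li)$ is tangent to $\Lz$ at $\bfz$, one has $|y|\le g(x)\le \eta(x-2)^2$. Hence $|x-2|\ge (|y|/\eta)^{1/2}$, and so $|x-\lambda|\le 4\sqrt{\eta}\,|y|^{3/2}$. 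This is exactly the paper's proof.
\end{itemize}
Transverse criticality along $\Lz$ only accounts for the factor $y^2$. Without the parabolic bound, the slope $dx/dy\approx 8y/(x-2)$ is merely bounded near $\bfz$. So the uniform estimate $|dx/dy|\le C|y|^{1/2}$ in your Step 2 cannot be obtained by the mechanism you describe.

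Once you use the $\mathcal{P}_0$-hyperbola together with this bound, your Step 3 integration does yield $C|y|^{3/2}$. The paper's route is more direct, though. The vertical lines and these hyperbolas are the pulled-back foliations tangent to the two cone boundaries, so the hyperbola through $p$ is itself a barrier, and its foot $\lambda$ can simply be read off.
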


\begin{proof}
The cone field $\mathcal{C}^{(1)}$ is the first pullback of the cone field at $p$.
Recall that $U$ is a region below the line $\Li$ so that $F(p)$ lies under the parabola $F(\Li)$. Since $\Li$ intersects the line $\bar \Lf$ at the point $M=(0,1)$ whose image is $F(M) = (-2,-2)$, we deduce that the half line from $F(M)$ to $\binfp$ going to infinity in the lower half plane is in the convex envelope of $F(\Li)$. 
Since particular, we deduce that the domain $F(U)$ lines below this half line, hence $F(U)$ is contained in the region $\sigma(R_2^+ \cup R_3^+)$, i.e below the line $\sigma(\Li)$. 
Observe that since $F(p)$ is below the line $\sigma(\Li)$, the lines delimiting the cone field $\mathcal{C}$ at $F(p)$ are by definition the lines through $\bfz,\bfzp$. The pullback by $F$ of the line through $\bfzp$ and $F(p)$ is a vertical line through $p$ by assertion (iii) of Lemma \ref{lem_forward_curves}. Similarly, the pullback $\gamma'$ by $F$ of a line through $\bfz$ and $F(p)$ is a degree two curve, which intersects the line at infinity at $\binf, \binfp$. In particular, it is a hyperbola through $p$ which admits a vertical tangent at $\bIz$. We thus control the local curvature of $\gamma'$ at the intersection point with $\bIz$. 
The line through $F(p)$ and $\bfzp$ is of the form: 
\begin{equation*}
(x-2) - \lambda y =0,
\end{equation*}
where $\lambda \in (-2,2)$. 
In particular, using \eqref{eq_def_F}, the equation of $\gamma'$ is given by: 
\begin{equation*}
\dfrac{4 y^2}{x-2} =  (x - \lambda).
\end{equation*}
Since $p \in U$, it lies in  certain parabolic region near $\bfz$, hence  $|y| \leqslant   \eta(x-2)^2$ and we get: 
\begin{equation*}
|x- \lambda| \leqslant \dfrac{4 \sqrt{\eta}y^2}{\sqrt{|y|}} = 4 \sqrt{ \eta} y^{3/2},
\end{equation*}
as required.
\end{proof}

We will now apply some distortion estimates, first we compare with the distortion for the Chebyshev map via the following lemma. 
Denote by $T$ the Chebyshev map. 

\begin{lem} \label{lem_Chebyshev_derivative} Let $x_n \in [-2,2]$ be a point such that $T^n(x_n)\in [-\epsilon, \epsilon]$. Then one has: 
\begin{equation}
|(T^{n-1})'(x_n)| \geqslant \sin(\pi/4-\epsilon)\dfrac{2^{n-1}}{\sqrt{1 - x_n^2/4}}.
\end{equation}
In particular, there exists a constant  $C(\epsilon)>0$ such that for all $n$, $|(T^{n-1})'(x_n)| \geqslant C(\epsilon) 2^{n-1}$. 
\end{lem}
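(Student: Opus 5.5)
The plan is to use the semiconjugacy of the Chebyshev map $T(x)=x^2-2$ with angle doubling, through $x=2\cos\theta$ with $\theta\in[0,\pi]$. The iterates are then explicit, so the derivative of $T^{n-1}$ can be written in closed form. After that, the hypothesis $T^n(x_n)\in[-\epsilon,\epsilon]$ only needs to be turned into a lower bound on a single sine factor.

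\emph{Step 1 (closed form).} Write $x_n=2\cos\theta$ with $\theta\in(0,\pi)$. The endpoints $\theta=0,\pi$ give $T^n(x_n)=2$, which is excluded for $\epsilon<2$. From $T(2\cos\theta)=2\cos 2\theta$ we get $T^{k}(2\cos\theta)=2\cos(2^k\theta)$. Differentiating $T^{n-1}(2\cos\theta)=2\cos(2^{n-1}\theta)$ in $\theta$ gives
\begin{equation*}
(T^{n-1})'(x_n)\,(-2\sin\theta)=-2^{n}\sin(2^{n-1}\theta),
\end{equation*}
that is,
\begin{equation*}
(T^{n-1})'(x_n)=2^{n-1}\,\frac{\sin(2^{n-1}\theta)}{\sin\theta},
\end{equation*}
and $\sin\theta=\sqrt{1-x_n^2/4}$.

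\emph{Step 2 (locating the angle).} The hypothesis says $|\cos(2^n\theta)|\le \epsilon/2$. Hence $2^n\theta=\pi/2+k\pi+\eta$ for some integer $k$ and some $|\eta|\le \arcsin(\epsilon/2)$. Halving, $2^{n-1}\theta=\pi/4+k\pi/2+\eta/2$. Therefore $|\sin(2^{n-1}\theta)|=|\sin(\pm\pi/4+\eta/2)|$ up to a multiple of $\pi$, and this is at least $\sin(\pi/4-|\eta|/2)$. For $\epsilon$ small, say $\epsilon\le 1$, we have $\arcsin(\epsilon/2)\le \epsilon$, so $|\eta|/2\le\epsilon$. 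Since $\sin$ is increasing on $[0,\pi/4]$, this gives $|\sin(2^{n-1}\theta)|\ge\sin(\pi/4-\epsilon)$.

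\emph{Step 3 (conclusion).} Combining Steps 1 and 2 yields
\begin{equation*}
|(T^{n-1})'(x_n)|\ge \sin(\pi/4-\epsilon)\,\frac{2^{n-1}}{\sqrt{1-x_n^2/4}}.
\end{equation*}
Since $\sqrt{1-x_n^2/4}\le 1$, the uniform bound follows with $C(\epsilon)=\sin(\pi/4-\epsilon)>0$ for $\epsilon<\pi/4$.

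There is no real obstacle here. The only care needed is the bookkeeping in Step 2: the $\pi/2$-ambiguity from halving the angle must leave the sine bounded away from zero, and the perturbation $\eta$ must be bounded by $\epsilon$, which is where a smallness restriction on $\epsilon$ enters.
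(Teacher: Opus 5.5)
Your proposal is correct and follows the same route as the paper: conjugate $T$ to angle doubling via $x=2\cos\theta$, obtain the closed form $(T^{n-1})'(x_n)=2^{n-1}\sin(2^{n-1}\theta)/\sin\theta$, and bound the numerator using the hypothesis on $T^n(x_n)$. There are two small differences, both to your credit. The paper gets the closed form from the chain-rule product $\prod_{p=0}^{n-2}4\cos(2^p\theta)$, telescoped with $2\cos\alpha=\sin 2\alpha/\sin\alpha$, whereas you differentiate the conjugacy identity directly. Your Step 2 also explicitly handles the mod-$\pi$ ambiguity of $2^n\theta$ and the $\arcsin(\epsilon/2)$ conversion, which the paper passes over when it asserts $2^n\theta_n\in[\pi/2-\epsilon,\pi/2+\epsilon]$.
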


\begin{proof}
Write $x_n = 2 \cos(\theta_n)$. Since the Chebyshev map is semi-conjugate via $\theta \mapsto z=2 \cos(\theta)$ to the doubling map on the circle and using the fact that $2\cos(2^p \theta_n) = \sin(2^{p+1}\theta_n)/\sin(2^p \theta_n)$, we have: 
\begin{multline*}
|(T^{n-1})'(x_n)| = \left  | \prod_{p=0}^{n-2} T'(T^p(x_n)) \right |  = \left  | \prod_{p=0}^{n-2} 4 \cos(2^{p} \theta_n) \right | = 2^{n-1} \left  | \prod_{p=0}^{n-2} \dfrac{\sin(2^{p+1}\theta_n)}{\sin(2^p\theta_n)} \right | \\ = \left | \dfrac{2^{n-1}\sin(2^{n-1} \theta_n)}{\sin(\theta_n)} \right |.
\end{multline*}

Since $T^{n}(x_n) \in [2 \cos(\pi/2)-\epsilon, 2\cos(\pi/2)+\epsilon]$, we get $2^{n} \theta_n \in  [\pi/2-\epsilon, \pi/2+\epsilon]$ hence $2^{n-1}\theta_n \in [\pi/4 - \epsilon, \pi/4 + \epsilon]$ and we obtain the required formula. 
The lower bound follows directly from the inequality $|\sin(\theta_n)| \leqslant 1$. 
\end{proof}

We can now estimate the rate of convergence toward the horizontal line. 
\begin{prop} \label{prop_good_decay}There exists two constants $C >0$ such that for all $n\geqslant 3$,  if $(x_n, y_n) \in  U$ with $F^2(x_{n+1}, y_{n+1}) = (x_n , y_n) \in U$, then one has for all $k\leqslant n-3$: 
\begin{equation}
|y_{n-k}| \leqslant (C  y_n)^{\alpha^{k+1}},
\end{equation} 
where $\alpha=3/2$.
In particular, for any point $p = (x_n , y_n)$ on $F^{-2n}(W)\cap U$ whose orbit under $F^2$ stay in $U$, their images converges super exponentially fast  toward $\bIz$.  
\end{prop}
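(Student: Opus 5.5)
The plan is to read the statement as a backward-orbit estimate. We are given a backward $F^2$-orbit $(x_n,y_n),(x_{n-1},y_{n-1}),\dots$ inside $U$, with $F^2(x_{j+1},y_{j+1})=(x_j,y_j)$. Going forward from $(x_n,y_n)$ to $(x_{n-k},y_{n-k})$ is $k$ applications of $F^2$. The core of the proof is therefore a one-step superlinear contraction of the vertical coordinate under $F^2$ inside $U$: an inequality of the form
\begin{equation*}
|y'| \leqslant C_0 |y|^{3/2}, \qquad (x',y') = F^2(x,y), \ (x,y)\in U .
\end{equation*}
From this, iterating gives $|y_{n-k}| \leqslant C_0^{1+\alpha+\dots+\alpha^{k}}|y_n|^{\alpha^{k+1}}$ up to the indexing. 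Since $\sum_{j\le k}\alpha^j \leqslant 2\alpha^{k+1}$ for $\alpha = 3/2$, this is bounded by $(C|y_n|)^{\alpha^{k+1}}$ with $C = C_0^{2}$. That yields the displayed inequality, and the super-exponential convergence toward $\bIz$ follows immediately.

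To obtain the one-step bound I would start from the explicit formula for $F^2$ in \eqref{eq_G}:
\begin{equation*}
y' = \frac{y^2(x^2-2x-4y^2)}{(x-2)^2}.
\end{equation*}
Near $\bfz$ the denominator is small, so the factor $y^2$ alone is not enough, and the geometry of $U$ has to enter. The definition $|y|\leqslant g(x)$, with $\gamma' = F^2(\Li)$ tangent to $\Lz$ at $\bfz$, gives a parabolic control of the type $|y|\leqslant \eta (x-2)^2$; this is exactly the control used in the proof of Lemma \ref{lem_vertical_parabola}. Hence $|y|/(x-2)^2 \leqslant \eta$, and $|y'| \leqslant |y|\cdot |y|/(x-2)^2 \cdot |x^2-2x-4y^2|$ is only $O(|y|)$, not yet superlinear. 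To gain the extra $|y|^{1/2}$, I would split $y^2/(x-2)^2 = |y|^{1/2}\cdot |y|^{3/2}/(x-2)^2$ and use $|y|^{1/2}\leqslant \sqrt{\eta}\,|x-2|$. This gives $|y|^{3/2}\cdot |y|^{1/2}/(x-2)^2 \leqslant \sqrt{\eta}\,|y|^{3/2}/|x-2|$, which is still not bounded as $x\to 2$, so it is not sufficient by itself.

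The real mechanism is probably the one suggested by Lemma \ref{lem_vertical_parabola} and Lemma \ref{lem_Chebyshev_derivative}. Points of the backward orbit lie on vertical curves of $\mathcal{C}^{(1)}$ that are squeezed between $x=q_+$ and $q_- - x = C_- y^{3/2}$. So in the $x$-direction the orbit shadows a Chebyshev backward orbit $q$ up to an error $O(|y|^{3/2})$, and the factor $x^2-2x$ vanishes at $x=0$. My concrete plan is therefore to work on the first return to the region away from $\bfz$, that is, near the preimages where $T^n(x)\in[-\epsilon,\epsilon]$. There $|x-2|$ is bounded below, and the estimate $|y'|\leqslant C_0|y|^2$ holds trivially, which is even better than $3/2$. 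The excursions near $\bfz$ are controlled by Lemma \ref{lem_orbit_right_Lc}: the slope $l=y/(x-2)$ obeys $l\mapsto g_0(l)\approx l^2/2$, which is again quadratic contraction in the slope variable. I would combine this with the Chebyshev derivative bound $|(T^{n-1})'|\geqslant C(\epsilon)2^{n-1}$, which converts the $x$-distance to $\bfz$ back into the $y$-size.

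The main obstacle is making this uniform as the orbit passes arbitrarily close to $\bfz$, where $(x-2)^{-2}$ blows up. My fallback is to prove the bound with the exponent $\alpha=3/2$ extracted from the $y^{3/2}$ width in Lemma \ref{lem_vertical_parabola}: on the vertical curve, $|x-q|\lesssim |y|^{3/2}$. Doing this makes the Chebyshev comparison lossless up to $O(|y|^{3/2})$, and the constant $C$ absorbs $C(\epsilon)$. The restriction $k\leqslant n-3$ accounts for the first few steps, before the orbit is in the regime where Lemma \ref{lem_vertical_parabola} applies.
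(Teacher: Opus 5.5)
You have the right structure: a uniform one-step estimate $|y'|\leqslant C_0|y|^{3/2}$ for $(x',y')=F^2(x,y)$ with $(x,y)\in U$, followed by iteration. That is exactly how the paper argues. The genuine gap is that you never prove the one-step estimate, and your reason for abandoning the direct computation is an algebra slip.

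You bounded $|x^2-2x-4y^2|$ by a constant. But $x^2-2x=x(x-2)$ vanishes at $\bfz$ and cancels one factor of the denominator:
\begin{equation*}
y'=y^2\left(\frac{x}{x-2}-\frac{4y^2}{(x-2)^2}\right).
\end{equation*}
On $U$ you have $|y|\leqslant \eta (x-2)^2$. This gives $|x-2|^{-1}\leqslant \sqrt{\eta}\,|y|^{-1/2}$ and $(x-2)^{-2}\leqslant \eta |y|^{-1}$. Since $|x|$ is bounded and $|y|\leqslant\epsilon$ on $U$, you get
\begin{equation*}
|y'|\leqslant \sqrt{\eta}\,|x|\,|y|^{3/2}+4\eta |y|^3\leqslant C_0|y|^{3/2}.
\end{equation*}
This holds uniformly, including arbitrarily close to $\bfz$. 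Your own bound $\sqrt{\eta}\,|y|^{3/2}/|x-2|$ is off by exactly this cancelled factor, so the ``main obstacle'' you describe does not exist. This is precisely the paper's proof.

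What you propose instead does not supply the estimate:
\begin{itemize}
\item Lemma \ref{lem_orbit_right_Lc} controls the slope $y/(x-2)$, not $|y|$. The quadratic slope map $g_0$ gives no inequality of the form $|y'|\lesssim |y|^{3/2}$.
\item Lemma \ref{lem_vertical_parabola} bounds the horizontal position $|x-q|$ of vertical curves, not the decay of $y$ under iteration.
\item Lemma \ref{lem_Chebyshev_derivative} concerns horizontal expansion.
\end{itemize}
No step turns these into a bound on $|y'|$ in terms of $|y|$. Moreover, in the paper these ingredients are combined in Proposition \ref{prop_distortion}, which itself takes the super-exponential decay of $y_k$ from the present proposition as input. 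Building on them here would invert the logical order.

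A smaller point about the exponent. Iterating the one-step bound $k$ times gives exponent $\alpha^{k}$, not $\alpha^{k+1}$. The version with $\alpha^{k+1}$ already fails at $k=0$ for small $y_n$. The paper's telescoping product, computed correctly, has $|y_3|^{\alpha}$ on the left, so it too yields $|y_3|\leqslant (C'|y_n|)^{\alpha^{n-3}}$. Aim for $\alpha^{k}$ rather than trying to recover the extra power.
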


\begin{proof}
Let us show the statement for $k=n-3$, as the proof is similar for all other cases.
We have using \eqref{eq_G} that: 
\begin{equation*}
|y_{p-1}| = |y_p^2| \left | \dfrac{x_p}{x_p -2}  - \dfrac{4 y_p^2}{(x_p - 2)^2}  \right |.
\end{equation*}
Recall that $U$ is a region bounded by the parabola through $\bfz$ given by $y = \pm \eta (x-2)^2$ where $\eta > 0$ is some constant. In particular, we have
$\sqrt{|y_p|} \leqslant \sqrt{\eta} |x_p -2|$ for all $p$. Hence, we obtain a constant $C>0$ such that for any $p \leqslant n$, one has: 
\begin{equation*}
|y_{p-1}| \leqslant C |y_p|^{3/2} 
\end{equation*}
 Raising at the power $\alpha^{p-3}$ with $\alpha=3/2$ and taking the product, one obtains a uniform constant $C'= C^3>0$ such that for all $n$: 
 \begin{equation*}
\left  | \dfrac{y_3}{y_n^{\alpha^{n-2}}} \right | =  \left |  \prod_{p=4}^{n} \dfrac{y_{p-1}^{\alpha^{p-3}}}{y_p^{\alpha^{p-2}}} \right |  \leqslant \prod_{p=4}^{n} C^{\alpha^{p-3}} = C^{3 (\alpha^{n-3}-1)} \leqslant  (C^3)^{\alpha^{n-2}},
\end{equation*}
as required.  
\end{proof}

We can now compare the horizontal derivative between the orbit of point on a leaf $\gamma_x$ and the one of $x$. This bound is not satisfied everywhere as it is violated above the critical point, but we will ensure its validity below a parabola. 

\begin{prop} \label{prop_distortion} 
Fix $0<\alpha<1$.
Consider any point  $p \in U$ and any integer $n$.
Let us denote by  $p_k = F^{2k}(p) = (x_k,y_k)$ for all $k \leqslant n$ the forward orbit of $p$. We consider the pullback $\gamma_{p_k,n}$ by $F^{2n-2k}$ of the vertical line through $F^{2n}(p)$ and consider $(q_k,0)$ the intersection of $\gamma_{p_k,n}$ with $\bIz$. 
If we assume that for all $k \leqslant n$, $p_k \in U$ and that for all $k\leqslant n-1$, $|x_k| \geqslant \alpha^k$.  
Then one has for all $k\leqslant n-1$:
\begin{equation}
e^{-2 \alpha^k}\leqslant  \left | \dfrac{ \partial_x (x\circ F^2)(p_k)}{2 q_k} \right |  \leqslant e^{  \alpha^k}.  
\end{equation}   
\end{prop}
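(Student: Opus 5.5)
The plan is to compare the horizontal derivative of $F^2$ at $p_k$ with the derivative $2q_k$ of the Chebyshev map $x\mapsto x^2-2$ at the foot point $q_k$. We factor the ratio as
$$\frac{\partial_x(x\circ F^2)(p_k)}{2q_k}=\frac{\partial_x(x\circ F^2)(p_k)}{2x_k}\cdot\frac{x_k}{q_k},$$
and show that each factor has the form $1+u$ with $|u|\leqslant \alpha^k/2$. The stated bounds then follow from $-2|u|\leqslant\log(1+u)\leqslant |u|$, valid for $|u|\leqslant 1/2$.

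\emph{First factor.} Differentiate the explicit formula \eqref{eq_G}:
$$x\circ G=x^2-2-\frac{2y^2(3x^2-6x-4y^2)}{(x-2)^2}.$$
This gives $\partial_x(x\circ G)=2x+E(x,y)$. The error term $E$ is a sum of terms of the form $y^2\cdot(\text{polynomial})/(x-2)^2$ and $y^2\cdot(\text{polynomial})/(x-2)^3$. On $U$ we have $|y|\leqslant \eta(x-2)^2$, which is the parabolic bound already used in Proposition \ref{prop_good_decay}, and also $|y|\leqslant\epsilon$. These give $|E(p_k)|\leqslant C|y_k|$, with $C$ depending only on $\eta$; near $\bfz$ we use $y^2/|x-2|^3\leqslant \eta^{3/2}|y|^{1/2}$, so in any case $|E(p_k)|\leqslant C|y_k|^{1/2}$. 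Hence the first factor is $1+u_k$ with $|u_k|\leqslant C|y_k|^{1/2}/|x_k|\leqslant C|y_k|^{1/2}\alpha^{-k}$.

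\emph{Second factor.} By construction, $\gamma_{p_k,n}$ is a pullback of a vertical line, so it is vertical for the pulled-back cone field. Lemma \ref{lem_vertical_parabola} then gives $|x_k-q_k|\leqslant C_-|y_k|^{3/2}$. It follows that $x_k/q_k=1+v_k$ with $|v_k|\leqslant 2C_-|y_k|^{3/2}\alpha^{-k}$, provided we first check that $|q_k|\geqslant |x_k|/2\geqslant\alpha^k/2$. That inequality comes from the same estimate once $C_-|y_k|^{3/2}\leqslant \alpha^k/2$.

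\emph{Reduction to decay of $y_k$.} Everything now reduces to the single bound
$$C|y_k|^{1/2}\leqslant \alpha^{2k}/2 \quad\text{for all } k\leqslant n-1.$$
Proposition \ref{prop_good_decay}, applied to the $F^2$-orbit that stays in $U$, gives super-exponential decay: up to reindexing, $|y_k|\leqslant (C'|y_0|)^{(3/2)^k}$. Since $|y_0|\leqslant\epsilon$, choosing $\epsilon\leqslant\epsilon_0(\alpha)$ small makes $(C'\epsilon)^{(3/2)^k/2}\leqslant \alpha^{2k}/(2C)$ for every $k\geqslant 0$. This works because a doubly exponential quantity beats the geometric sequence $\alpha^{2k}$ uniformly once the base $C'\epsilon$ is small enough. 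I expect this to be the main obstacle: the constant must be uniform in both $k$ and $n$. The statement of Proposition \ref{prop_good_decay} only covers $k\leqslant n-3$, and its indexing runs backwards. So I would treat the last two or three indices separately, using the crude bound $|y|\leqslant\epsilon$ together with $|x_k|\geqslant\alpha^{k}$ and absorbing them into the choice of $\epsilon$. I would also check that the parabolic constraint near $\bfz$ controls the denominators $(x-2)^{-3}$ uniformly.
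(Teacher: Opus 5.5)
Your proposal is correct and follows the paper's proof. You control the error in $\partial_x(x\circ F^2)=2x+O(|y|)$ by the parabolic shape of $U$, use Lemma \ref{lem_vertical_parabola} to get $|x_k-q_k|\leqslant C_-|y_k|^{3/2}$, and let the super-exponential decay of $y_k$ from Proposition \ref{prop_good_decay} beat $\alpha^{k}$. Factoring through $x_k$ and choosing $\epsilon=\epsilon_0(\alpha)$ so that every $k$ is covered (the paper only argues for $k\geqslant N$) are harmless refinements.

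Drop your fallback of treating the last few indices with the crude bound $|y|\leqslant\epsilon$. It is not uniform in $n$, and it is also unnecessary. The one-step estimate $|y_{j+1}|\leqslant C|y_j|^{3/2}$ in the proof of Proposition \ref{prop_good_decay} holds at every step with $p_j\in U$, and this is assumed for all $j\leqslant n$.
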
 

\begin{proof} Fix $\epsilon'>0$. 
Using \eqref{eq_G}, we have: 
\begin{equation} \label{eq_derivative}
\partial_x (x\circ G)(x,y) = 2x  - \dfrac{6x y^2}{(x-2)^2} - \dfrac{8 y^4}{(x-2)^3}.  
\end{equation}
Since $p_k$ belongs to $U$ it is below a parabola of the form $y = \eta (x-2)^2$ for some $\eta >0$ so we have $|y_k|/(x_k-2)^2 \leqslant \eta$. This gives: 
\begin{equation} \label{eq_error_1}
\left | \dfrac{6 x_k y_k^2}{(x_k-2)^2} - 8 \dfrac{y_k^4}{(x_k-2)^3} \right | \leqslant C |y_k|,
\end{equation}
where $C = 6 \eta + \eta^{3/2} 8$.
Using Proposition \ref{prop_good_decay}, we get $|y_k| \leqslant (C' M_{\epsilon'})^{1.5^k}$ for some constant $C'>0$. Shrinking $M_{\epsilon'}$ appropriately, we can assume that $C' M_{\epsilon'} < 1$ to get a super-exponential decay. 
Moreover, using the fact that  $|x_k| \geqslant \alpha^k$ for all $k \leqslant n-1$, we obtain that for $N \leqslant k \leqslant n-1$ large enough $|y_k| \leqslant  |x_k|^2$, hence $|x_k| \geqslant \sqrt{|y_k|}$.
 Moreover, since $\gamma_{p_k,n}$ is a vertical curve for the cone field $\mathcal{C}^{(1)}$,  Lemma \ref{lem_vertical_parabola} shows that each $\gamma_{p_k,n}$ is in between two curves of the form $ q_{k,-} - x = C_- y^2$ and $q_{k,+} = x$ where $q_{k, \pm } \in \bIz$.  
Since the curve $\gamma_{p_k, n}$ is in between these two curves, 
we conclude that the intersection point $(q_k,0)$ between $\bIz$ and $\gamma_{p_k,n}$ is in the segment $[q_{k,-}, q_{k,+}]$. 
This gives:
\begin{equation*}
 x_k + C_- y_k^{3/2} = q_{k,-} \leqslant q_k \leqslant q_{k,+ } =x_k.
\end{equation*}
By Proposition \ref{prop_good_decay}, $|y_k|$ decays super-exponentially fast toward zero, hence for $k\geqslant N$ large enough, we have $C_{-} y_k^{3/2} \leqslant \alpha^k/2 \leqslant  |x_k|$, so we obtain: 
\begin{equation} \label{eq_bound_qk}
|q_k | \geqslant \alpha^k - C_- y_k^{3/2} \geqslant \alpha^k/2.
\end{equation}
 We obtain by combining with \eqref{eq_error_1}, \eqref{eq_derivative},\eqref{eq_bound_qk} together with the fact that $(C+4)|y_k|/\alpha^k \leqslant   (C+4)(C' M_{\epsilon'})^{1.5^k}/\alpha^k  \leqslant \alpha^k$ 
 that for $n-1 \geqslant k \geqslant N$: 
\begin{multline*} \label{eq_upper_bound_distortion}
\Delta=\left  | \dfrac{\partial_x (x\circ G)(p_k)}{2q_k} \right | \leqslant  \dfrac{|2q_k|+  |(C+4) y_k|}{|2q_k|} = 1 + \dfrac{(C+4)|y_k|}{|2q_k|} \leqslant 1 + \dfrac{{(C+4) |y_k|}}{\alpha^k}  
 \leqslant 1 + \alpha^k \leqslant e^{\alpha^k}. 
\end{multline*}
Similarly, using the inequality $1 -  x \geqslant e^{-2  x} $ for all $0 \leqslant x \leqslant \log 2 / (2 )$,  we also get a lower bound: 
\begin{equation}
\Delta \geqslant \dfrac{||2q_k| - (C+4) |y_k| |}{|2q_k|} \geqslant 1 - \dfrac{(C+4)|y_k|}{|2q_k|} \geqslant 1 - \dfrac{(C+4) |y_k|}{\alpha^k} \geqslant 1 - \alpha^k  \geqslant e^{ -2 \alpha^k},
\end{equation}
provided the inequality $\alpha^k \leqslant \log 2 /2 $ holds true.
Since $\alpha<1$, this is always satisfied for $k$ large enough  and this gives the lower bound, as required.
\end{proof}

\begin{thm} \label{thm_positive_measure_subset} The subset $ U \setminus \cup_{n\geqslant 1} F^{-2n}(W)$ has positive Lebesgue measure.  Moreover, for any $\epsilon>0$, there exists $t >0$ such that for any $|h| < t$, the length of $ L_h \cap U\setminus \cup_{n\geqslant 1} F^{-2n}(W)$ is larger or equal to $4-\epsilon$. 
\end{thm}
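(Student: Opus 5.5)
We prove the slice statement; positive area then follows by Fubini over $|h|<t$. Fix $h$ with $|h|$ small and the horizontal segment $L_h\cap U$, whose length tends to $4$ as $h\to 0$ (for fixed $\epsilon_0$, $U$ contains points with $|x+2|\geqslant 2|h|$ and $|y|\leqslant g(x)$, so only neighbourhoods of $\bfzp$ and $\bfz$ of size $O(|h|)$ and $O(\sqrt{|h|})$ are lost). By Proposition \ref{prop_forward_invariant_U} it suffices to bound, for each $n$, the length of $L_h\cap U\cap F^{-2n}(W)$ restricted to points whose first $n-1$ iterates under $F^2$ stay in $U\setminus W$. We aim for
\[
\operatorname{length}\bigl(L_h\cap U\cap F^{-2n}(W)\bigr)\leqslant C\,2^{-n}\,\beta_n(h),
\]
with $\sum_n 2^{-n}\beta_n(h)\to 0$ as $h\to 0$. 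Summing over $n$ then gives removed length $<\epsilon/2$.

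\textbf{Step 1: shape of $F^{-2n}(W)\cap U$.} By Lemma \ref{lem_first_pullback}, $F^{-2}(W)\cap U$ is a thin cusp-shaped region. It lies between the vertical line $x=0$ and the curve $x=-2y^2+O(y^4)$, so on $L_h$ it has width $O(h^2)$ around the preimage $0$ of $\bfzp$ under the Chebyshev map. Pulling back further by $F^{2k}$ inside $U$, each component of $F^{-2n}(W)\cap U$ is a strip bounded by two vertical curves for the cone field $\mathcal{C}^{(1)}$, by Theorem \ref{thm_invariant_cone_field}. By Lemma \ref{lem_vertical_parabola}, these curves meet $\bIz$ at points $q$ with $T^{n-1}(q)$ close to $0$. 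There are about $2^{n-1}$ such components, indexed by the Chebyshev preimages of $0$.

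\textbf{Step 2: width of each component.} A component's intersection with $L_h$ is mapped by $F^{2(n-1)}$ onto a horizontal slice of $F^{-2}(W)\cap U$ at height $y_1$. By Proposition \ref{prop_good_decay}, $|y_1|\leqslant (C|h|)^{(3/2)^{n-3}}$, so the image has width $O(y_1^2)$. We control the inverse contraction by comparing the horizontal derivative of $F^{2(n-1)}$ along the orbit with $(T^{n-1})'(q)$, using Proposition \ref{prop_distortion}. The product of the factors $e^{\pm 2\alpha^k}$ is bounded independently of $n$, and Lemma \ref{lem_Chebyshev_derivative} gives $|(T^{n-1})'(q)|\geqslant C(\epsilon)2^{n-1}$. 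Hence each component has width at most $C\,2^{-n}y_1^2$. Summing over roughly $2^{n}$ components gives total length $O\bigl((C|h|)^{2(3/2)^{n-3}}\bigr)$. This is summable in $n$ and tends to $0$ with $h$, which gives the desired bound with room to spare.

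\textbf{Main obstacle.} The hard part is the hypothesis $|x_k|\geqslant\alpha^k$ in Proposition \ref{prop_distortion}, and the claim that the horizontal derivative stays comparable on the whole slice and not just at one point. Orbits that pass close to the critical point $x=0$ of $T$ violate this hypothesis. To handle them, I would split the orbits of points of the slice by the first time $k$ at which $|x_k|<\alpha^k$. The set of points with such a first time has Chebyshev measure $O(\alpha^k)$, which gives a length bound $O(\alpha^k)$ via Lemma \ref{lem_Chebyshev_derivative}. These points are not counted as good, but their total contribution is $\sum_{k\geqslant K}\alpha^k$, small once $K$ is large. The finitely many early times $k<K$ are handled by choosing $t$ small, using continuity in $h$ and the fact that at $h=0$ nothing is removed. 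Combining the two estimates yields length $\geqslant 4-\epsilon$ for $|h|<t$, and positive area follows.
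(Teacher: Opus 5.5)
Your core mechanism is the paper's. You compare the horizontal derivative of $F^{2(n-1)}$ with $(T^{n-1})'$ via Proposition \ref{prop_distortion} and Lemma \ref{lem_Chebyshev_derivative}, bound the width of the target region, and sum over $n$. Your Step 2 is sharper than the paper's bound $l(\xi_n)\leqslant K2^{-n}$, because it uses the cusp shape of $F^{-2}(W)\cap U$ together with Proposition \ref{prop_good_decay}. You are also right that the hypothesis $|x_k|\geqslant\alpha^k$ of Proposition \ref{prop_distortion} is the real issue. The paper's proof does not address it: its sets $\xi_n$ contain only orbits satisfying that hypothesis before entering $\tilde W$, and every other point is tacitly counted as good.

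The gap is your treatment of early bad times. Continuity in $h$ controls only finitely many iterates. A point with $|x_k|<\alpha^k$ for some $k<K$ can still enter $W$ at an arbitrarily late time. For such points Step 2 is unavailable, since the distortion estimate breaks at time $k$, and you do not discard them. Their total length is not small: $\{|x_0|<1\}$ alone has length about $2$, so $\sum_{k\geqslant0}\alpha^k\geqslant1$ can never be made small. The remark that nothing is removed at $h=0$ transfers to small $h$ only with a bound uniform in the entry time, which is exactly what is to be proved. (Restarting at time $K$ and pulling back through $F^{2K}|_{L_h}\to T^K$ could work, but it needs a measure estimate near the critical points of $T^K$, not just continuity.)

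A simple repair is to shrink the threshold to $\eta\alpha^k$ with $\eta$ small.
\begin{itemize}
\item In the proof of Proposition \ref{prop_distortion}, the error at step $k$ is $O(|y_k|/(\eta\alpha^k))$.
\item Along these orbits $|y_k|\leqslant(C|h|)^{(3/2)^k}$ by Proposition \ref{prop_good_decay}, so for $|h|<t(\eta)$ the distortion bounds hold at every time.
\item Your Chebyshev-measure argument, summed over the first bad time, then bounds the length of the points having some $k$ with $|x_k|<\eta\alpha^k$ by $O(\eta)$.
\item By Lemma \ref{lem_first_pullback}, $F^{-2}(W)\cap U\subset\{-3y^2\leqslant x\leqslant0\}$ for small $y$.
\item Hence a point of $L_h\cap U$ first entering $W$ at time $n$ stays in $U$ until then, by Proposition \ref{prop_forward_invariant_U}. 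It satisfies $|x_{n-1}|\leqslant3y_{n-1}^2<\eta\alpha^{n-1}$, so it already lies in the bad set.
\end{itemize}
The complement of the bad set therefore misses $\bigcup_{n\geqslant1}F^{-2n}(W)$. This gives length at least $l(L_h\cap U)-O(\eta)$, and Step 2 is not needed at all.
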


\begin{proof}

For all $p \in L_h$, we denote by $(x_n,y_n)$ the coordinates of $F^{2n}(p) = p_n$ and we set:  
\begin{equation}
\xi_n = \{ p \in L_h \ | \ \forall k\leqslant n-1,  |x_k| \geqslant  \alpha^k, p_k \in U \text{ and } |x_n|^2< |y_n|   \}.
\end{equation}
Geometrically, the points in $\xi_n$ land after $n$ iterates on a region above the parabola $y= x^2$. Let us denote by $\tilde{W}$ this region, defined as $\tilde{W} = \{(x,y) \ | \ |y| \geqslant x^2 \}$. 
Since $\tilde{W}$ contains $F^{-2}(W)$, we get that $U \setminus \cup_{n\geqslant 1} F^{-2n}(\tilde{W}) \subset  U \setminus \cup_{n\geqslant 1} F^{-2n} (W)  $. In particular, Proposition \ref{prop_good_decay} shows that $U\setminus \cup_{n\geqslant 1} F^{-2n}(\tilde{W})$ is contained in the basin of $\bIz$.  

\smallskip 

It remains to show that $ L_h\cap U \setminus \cup_{n} \xi_n \subset \cW^s(\bIz)$ has positive length. 
To that end, we estimate the length of $\cup_{n\geqslant 0}\xi_n$.

\medskip

We first construct some approximate stable manifolds, then estimate the horizontal expansion of $F^{2n-2}$. 
For each $p \in \xi_n$, for each iterate $k\leqslant n$, we consider the pullback $\gamma_{F^{2k}(p),n}$ of the vertical line through $F^{2n}(p)$ by $F^{2n-2k}$. 
For each $k \leqslant n-1$, $\gamma_{F^{2k}(p),n}$ is a vertical curve through $p_k$ for the first iterate of the cone field $\mathcal{C}^{(1)}$, which intersects $\bIz$ at the point $(q_k,0)$ and we are in the setting of the Proposition \ref{prop_distortion}. 
Observe also that the base point $(q_n,0)$ lands in a neighborhood of the critical point of radius $\epsilon$ since $|x_n|^2 < |y_n| \leqslant \epsilon^2$ by assumption and since $y_k$ decays super-exponentially fast toward zero. 
Lemma \ref{lem_Chebyshev_derivative} then gives: 
\begin{equation}
\left  | \prod_{k=0}^{n-1} 2 q_k \right | \geqslant C(\epsilon) 2^{n-1} 
\end{equation}
 Combining with Lemma \ref{lem_Chebyshev_derivative}, we get:
\begin{equation} \label{eq_expansion}
| \partial_x (x\circ F^{2n-2})(p)| \geqslant e^{-2 \sum_{k=0}^{n-1} \alpha^k} \left | \prod_{k=0}^{n-2} 2q_k \right  | \geqslant C_1  C(\epsilon) 2^{n-1} = C'' 2^{n-1},    
\end{equation}
where $C_1 = e^{-2/(1-\alpha)}$ and $C''= C_1 C(\epsilon)$. 
We have thus shown that $F^{2n-2}$ expands in the $x$-direction uniformly by a factor $2^{n-1}$, hence $F^{2n}$ does the same as well.  

\medskip

The portion $\xi_n$ of $L_h$ enters the portion $\tilde{W}$ above the parabola after exactly $2n$-steps, so $F^{2n}(\xi_n) \in \tilde{W}$. 
Let $\pi$ be the horizontal projection and consider $\pi(F^{2n}(\xi_n))$. 
Using the uniform expansion of $x\circ F^{2n}$ given in \eqref{eq_expansion} together with the fact that $F^{2n}(\xi_n) \subset \tilde{W} \cap U$, we get:
\begin{equation*}
C'' 2^n l(\xi_n) \leqslant l(\pi (F^{2n}(\xi_n)))  \leqslant l(\pi(\tilde{W}\cap U)).
\end{equation*}
This shows that $l(\xi_n) \leqslant K 2^{-n}$ decays exponentially fast where $K = l(\pi(\tilde{W}\cap U))/C''$. 
In particular, we have:
\begin{equation}
l(\cup \xi_n) \leqslant \sum_{n\geqslant 0} 2 \dfrac{l(\pi(\tilde{W}\cap U))}{C''}.
\end{equation}
Recall that $U$ depends on the choise of a height $\epsilon$ and as $\epsilon$ goes to zero, the length of $\pi(\tilde{W} \cap U)$ converges toward zero. In particular, this shows that as $L_h$ approaches the length of  $L_h\cap U\setminus \cup_{n\geqslant 0} \xi_n$ approaches the horizontal line, the length of $L_h \cap U$  converges to $4$, as required.

This shows that $U\setminus \cup_{n\geqslant 1} F^{-2n}(W)$ has positive measure using Fubini's theorem.
\end{proof}

At this stage, one cannot conclude that $\cL$ has positive area, namely because even though we have shown that $U \setminus \cup_n F^{-n}(W)$ is invariant and has positive area. There are some natural stable manifolds through each point $p$ of this set. We will show later on that these stable manifold belong to $\cL$, but to claim this, we will need to show that a subset of $\cL$ is a lamination.

\subsection{Kobayashi expanding region and laminar structure}
\label{section_kobayashi}

We define the elements of the family $\mathcal{L}$ which intersect a point on the Chebyshev interval whose orbit avoids $\bfz$ as elements of $\mathcal{L}^*$.

Consider the complex manifold $H(\C) = \bP^2(\C) \setminus (\Lf \cup \bar \Lf \cup \Lz \cup \Linf)$, the interior $\mathcal{D} \subset \R^2$ of $R_2 \cup R_3 \cup \sigma(R_2 \cup R_3)$ and the punctured sphere $\mathfrak{h} = \Linf(\C) \setminus \{ \pinf,\binf, \binfp\} $.  
There is a  natural radial projection $\pi_0$ from $\bfz$ to the line at infinity whose fibers are elements of the pencil $\mathcal{P}_0$ of lines through $\bfz$. We view the quotient $N(\mathcal{P}_0) =TH(\C)/ T\mathcal{P}_0$  as a bundle over $\mathfrak{h}$ via $\pi_0$.  

We endow $H(\C)$ with the Kobayashi pseudo-metric denoted $d_{Kob}$ (see \cite[p.45]{kobayashi_book} for a definition) and by $|| \cdot ||_{Kob}$ the infinitesimal Kobayashi pseudo-metric.

In order to obtain  the expansion of $F$ for a particular metric, we first show that the Kobayashi pseudo-metric is non-vanishing on a large class of tangent vectors on $\mathcal{D}$. Then prove that these vectors expand for the pseudo-metric. 
 For the first statement, we will work with $\mathcal{C}^{(2)}$  the second iterate  of the cone field.  
\begin{lem} \label{lem_non_deg} For any point real point $p \in \mathcal{D}$ and any real non-zero tangent vector $v \in \mathcal{C}_p^{(2)}$, $|| v||_{Kob}$ is non-vanishing. 
\end{lem}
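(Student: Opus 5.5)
The plan is to bound the Kobayashi pseudo-norm of $v$ from below by pushing it forward under a holomorphic map to a hyperbolic Riemann surface. Holomorphic maps are contractions for the Kobayashi pseudo-metric. So if $\phi\colon H(\C)\to \mathfrak{h}'$ is holomorphic, with $\mathfrak{h}'$ a hyperbolic Riemann surface, then
\begin{equation*}
\|v\|_{Kob}\geqslant \|d\phi(v)\|_{\mathfrak{h}'},
\end{equation*}
and the right-hand side is a Poincar\'e norm, which is nonzero as soon as $d\phi(v)\neq 0$. The natural candidate is the radial projection $\pi_0$ from $\bfz$ onto $\Linf$, whose fibers are the lines of $\mathcal{P}_0$.

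First I would check that $\pi_0$ maps $H(\C)$ holomorphically into $\mathfrak{h}$. The line $\Lz$ passes through $\bfz$ and projects to $\pinf$, and it has been removed. The lines $\Lf$ and $\bar\Lf$ both pass through $\bfz$, since $2-2=0$, and they meet $\Linf$ at $\binf$ and $\binfp$; both have been removed. The point $\bfz$ itself lies on $\Lz$, so it is excluded as well. Hence every fiber of $\pi_0$ that survives in $H(\C)$ avoids these three directions, and $\pi_0(H(\C))\subset \mathfrak{h}$. The target $\mathfrak{h}=\Linf(\C)\setminus\{\pinf,\binf,\binfp\}$ is a sphere minus three points, so it is hyperbolic. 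Its Kobayashi metric is the complete Poincar\'e metric, which is nondegenerate everywhere.

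Next I would show that $d\pi_0(v)\neq 0$, i.e.\ that $v$ is not tangent to the line of $\mathcal{P}_0$ through $p$. For real $p\in\mathcal{D}$ and a real vector $v$, this is a statement about the real cone $\mathcal{C}^{(2)}_p$. By Definition \ref{def_cone_field}, for $p\in R_2\cup R_3\cup\sigma(R_2\cup R_3)$ one boundary of $\mathcal{C}_p$ is the $\mathcal{P}_0$ direction. Theorem \ref{thm_invariant_cone_field} says $DF^2$ maps the cone strictly into itself, so $\mathcal{C}^{(2)}_p$ is strictly contained in $\mathcal{C}_p$. More precisely, the interior of $\mathcal{C}_p$ contains $\mathcal{C}^{(2)}_p\setminus\{0\}$, and this interior does not contain the $\mathcal{P}_0$ boundary direction. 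Therefore $v$ is transverse to the fiber of $\pi_0$, and $\|v\|_{Kob}\geqslant \|d\pi_0 v\|_{\mathfrak{h}}>0$.

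The main obstacle is the relation between $\mathcal{C}^{(2)}_p$ and $\mathcal{C}_p$. Theorem \ref{thm_invariant_cone_field} is phrased as a forward statement: $DF^2$ maps $\mathcal{C}_p$ strictly into $\mathcal{C}_{F^2(p)}$, and it requires $p,F(p),F^2(p)\in E$. What is needed here is a statement about the cone at $p$ itself. If $\mathcal{C}^{(2)}$ denotes the pullback of $\mathcal{C}$ at $F^2(p)$ (the convention used for $\mathcal{C}^{(1)}$ in Lemma \ref{lem_vertical_parabola}), then in the horizontal cones strict forward invariance gives exactly the strict inclusion required. If instead $\mathcal{C}^{(2)}$ refers to the vertical cones, the same argument should be run with $DF^{-2}$ applied to the $\mathcal{P}_0$ line. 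That line is vertical at $p$, and its preimage structure is known from Lemma \ref{lem_forward_curves}(i), which states $F(L_p)=l_{F(p)}$.

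A second point to check is that $F(p)$ and $F^2(p)$ stay in $E$ and avoid $\Lc$, so that the theorem applies. If some points violate this, I would treat them separately using Lemma \ref{lem_forward_curves} and Proposition \ref{prop_transversal}. Failing that, I would argue directly: the image under $DF^2$ of the $\mathcal{P}_0$ boundary line is a parabola transverse to the $\mathcal{P}_0$ line, which again keeps $v$ off the fiber direction.
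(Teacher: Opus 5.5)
Your proposal is essentially the paper's proof: project by $\pi_0$ onto the thrice-punctured sphere $\mathfrak{h}$, use that holomorphic maps decrease the Kobayashi pseudo-metric, and use the strict inclusion of $\mathcal{C}^{(2)}_p$ in $\mathcal{C}_p$, whose boundary contains the $\mathcal{P}_0$ direction, to get $d\pi_0(v)\neq 0$. The paper additionally proves the reverse inequality via a horizontal disc followed by a disc in a fiber, which is not needed for non-vanishing. One correction to your hedging paragraph: pulling the horizontal cone back by $DF^2$ gives a \emph{larger} cone containing $\mathcal{C}_p$, so the required inclusion holds for the push-forward of horizontal cones (or the pullback of vertical cones), not for the pullback of horizontal ones.
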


\begin{proof}
Take $p$ such a point and a tangent vector in the cone field. Observe that $v$ is in the kernel of $\pi_0$ if and only if the vector $v$ belongs to  $T\mathcal{P}_0$. By Theorem \ref{thm_invariant_cone_field}, the cone field $\mathcal{C}^{(2)}_p$ is strictly inside the cone $\mathcal{C}_p$ for which one boundary vector is exactly in the kernel of the projection. This implies that $v$ is not in the pencil $\mathcal{P}_0$, hence $D\pi_0(v)$ is a non-zero vector on $N(\mathcal{P}_0)$.
We prove that on $N(\mathcal{P}_0)$, the Kobayashi pseudo-metric coincides with the metric induced by the hyperbolic metric on $\mathfrak{h}$.   
Following \cite[\S~3.1]{kobayashi_book}, a disc between $p,q \in N(\mathcal{P}_0)$ is a holomorphic embedding $\gamma \colon D \to N(\mathcal{P}_0)$ whose image contains $p,q$. 
 The Kobayashi pseudo-metric between $p, q$ is the infimum over the sum of the Poincare metric:
\begin{equation}
d_{Kob}(p,q) = \inf \sum_{i=0}^{n-1} \rho( \gamma_i^{-1}(p_i) , \gamma_i^{-1} (p_{i+1}) )
\end{equation} 
where the infimum is taken over all chains of holomorphic discs $\gamma_i$ between $p_0 = p , p_1, \ldots , p_{n} = q$. 
Since the Kobayashi pseudo-metric is decreasing under holomorphic mappings, for any two points $p,q \in N(\mathcal{P}_0)$, $d_{Kob}(p,q) \geqslant d_{Kob}(\pi_0(p), \pi_0(q))$. 
Furthermore, for any two points $p, q \in N(\mathcal{P}_0)$, we choose an open set $U \subset \mathfrak{h}$ containing $\pi_0(p), \pi_0(q)$ which trivializes the bundle over $U$, i.e $\pi_0^{-1}(U) \sim U \times \C$. We join the points $p$ and $q$ by a chain of two discs, a horizontal  disc $\gamma_1$ contained in $U \times \{ \lambda\} \subset \pi_0^{-1}(U)$ from $p$ to $p_1 \in \pi_0^{-1}(\pi_0(q))$ and a vertical $\gamma_2$ contained in a fiber over $\pi_0(q)$ from $p_1$ to $q$.  
Since the second disc is contained on a fiber over $\pi_0^{-1}(q)$ which is isomorphic to $\C$, the Kobayashi pseudo-metric is zero on the fiber. Since the first disc is horizontal, we have $\rho(\gamma_1^{-1}(p), \gamma_1^{-1}(p_1)) = d_{Kob}(\pi_0(p), \pi_0(q)) = \rho(\pi_0(p), \pi_0(q))$ by \cite[assertion (2) of Proposition 3.1.7]{kobayashi_book}.
This shows that $d_{Kob}(p,q) \leqslant d_{Kob}(\pi_0(p), \pi_0(q))$, hence $d_{Kob}(p,q) = d_{Kob}(\pi_0(p), \pi_0(q))= \rho(\pi_0(p), \pi_0(q))$.
Finally, a tangent vector $v \in \mathcal{C}^{(2)}$ maps to a non-vertical vector on $N(\mathcal{P}_0)$ so its infinitesimal Kobayashi pseudo-metric is non-zero.    
\end{proof}

\begin{lem} \label{lem_recurrence} Any point $p \in \overline{R_2 \cup R_3} \cup \sigma(\overline{R_2\cup R_3})$ which does not belong to the basin of $\pinf$ and whose orbit avoids the indeterminacy locus has a recurrent orbit in $\overline{R_2 \cup R_3} \cup \sigma(\overline{R_2\cup R_3})$.  
\end{lem}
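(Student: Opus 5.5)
The plan is to combine the region map of Proposition \ref{prop_mapping_regions} with the set $\mathcal{D}' := \overline{R_2 \cup R_3} \cup \sigma(\overline{R_2\cup R_3})$. Let $p$ lie in $\mathcal{D}'$, avoid the indeterminacy locus along its orbit, and lie outside $\cW^s(\pinf)$. I will show that the forward orbit of $p$ returns to $\mathcal{D}'$ infinitely often, within a uniformly bounded number of steps.

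First I will use Proposition \ref{prop_mapping_regions} to get a transition graph between the regions. By (i), $R_1\cup R_6\cup\sigma(R_1\cup R_6)$ is forward invariant. Its interior lies in $\cW^s(\pinf)$: for $R_6$ this was used in the proof that $\Lc(\R)\subset\partial\cW^s(\pinf)$, and $R_1$ should be handled the same way, via the invariance of $\Lf$ and the super-attracting point $\pinf$. Consequently an orbit outside the basin never enters the interior of these regions. It can meet their boundaries only along the invariant lines $\Lf,\bar\Lf$ or the hyperbola, and these are either contained in $\mathcal{D}'$ after closure or map into it. Next I will list the allowed transitions:
\[
R_2\to R_2^-\cup R_3^-\cup R_4^-,\qquad R_3\to\sigma(R_4\cup R_5\cup R_6),\qquad R_4\cup R_5\to\sigma(R_2\cup R_3),
\]
together with the $\sigma$-symmetric versions, which follow from $F\circ\sigma=F$. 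So from $R_3$ the orbit goes into $\sigma(R_4\cup R_5)$, since $\sigma(R_6)$ is excluded, and then back into $R_2\cup R_3$ up to symmetry. From $R_2$ it either stays in $R_2\cup R_3$ or passes through $R_4^-$, and from there it returns in one step. Hence every visit to $\mathcal{D}'$ is followed by another visit within at most $2$ iterates. That proves recurrence in the combinatorial sense: infinitely many returns.

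If ``recurrent'' is meant in the topological sense, namely that $p$ is accumulated by its own orbit inside $\mathcal{D}'$, I would restrict to the first-return map to $\mathcal{D}'$. On this return map the orbit stays in a region where the invariant cone field is strictly contracted (Theorem \ref{thm_invariant_cone_field}). I would then appeal to compactness in $\bP^2(\R)$ together with Poincaré recurrence for $\omega$-limit points. This is the part I expect to be hardest. Points accumulating on $\bIz\cup\bIinf$, or on the boundary points $\bfz,\bfzp,\binf,\binfp$, need separate care.

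The main obstacle is the boundary bookkeeping. Points on $\Lc$, $\Li$ and the hyperbola $\gamma$ sit at the interfaces between regions, and Proposition \ref{prop_mapping_regions} only handles open regions with $\bfz$ removed. For these I would check directly, using the closures and the explicit images in Lemma \ref{lem_forward_curves}, that a boundary point either lies on a preimage of $\Lc$ (excluded, since its orbit would hit the indeterminacy point) or maps into $\mathcal{D}'$ or into $\sigma(R_4\cup R_5)$. The orbit of $p$ avoids $\bfz$ by hypothesis, so the excluded set is exactly the union of the preimages of $\Lc$.
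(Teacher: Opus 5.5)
Your argument is correct and is essentially the paper's. Since $R_1\cup R_6$ and its reflection lie in the basin of $\pinf$, any point of a non-basin, regular orbit outside $\overline{R_2\cup R_3}\cup\sigma(\overline{R_2\cup R_3})$ must lie in $\overline{R_4\cup R_5}$ up to $\sigma$, and assertion (iv) of Proposition~\ref{prop_mapping_regions} returns it to that set in one step; the paper runs this by contradiction and does not need assertions (ii), (iii). You can drop the paragraph on topological recurrence: the lemma is only used to make the first-return map well defined, and Poincar\'e recurrence without an invariant measure would not give recurrence of every point anyway.
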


\begin{proof}
Setting $V = \overline{R_2 \cup R_3} \cup \sigma(\overline{R_2\cup R_3})$. 
 Assume by contradiction that for some iterate, $F^n(p) \notin V$ for all $n \geqslant N$. 
Since $\R^2 = V \cup \overline{R_4 \cup R_5 \cup R_1 \cup R_6 \cup \sigma(R_4 \cup R_5 \cup R_1 \cup R_6)}$ and since $R_1$ and $R_6$ are contained in the basin of $\pinf$,
 this means that $F^N(p) \in \overline{R_4 \cup R_5 \cup \sigma(R_4 \cup R_5)} \setminus \{ \bfz \}$ since the orbit of $p$ avoid the indeterminacy locus. 
 By assertion (iv) of Proposition \ref{prop_mapping_regions}, $F^{N+1}(p) \in \sigma(\overline{R_2 \cup R_3 }) \subset V$, which contradicts our hypothesis, as required. 
\end{proof}

Let us denote by $\mathcal{R}$ the return map in the set $V=R_2 \cup R_3 \cup \sigma(R_2 \cup R_3)$. 
\begin{prop} \label{prop_expansion_kob} There exists a constant $M>1$ such that for any point $p \in V $ whose orbit avoids the indeterminacy locus and any vector $v \in \mathcal{C}_p^{(2)}$, one has,  $||\mathcal{R}^n v||_{Kob} \geqslant M^n || v ||_{Kob}$ for all $n \geqslant 1$.   
\end{prop}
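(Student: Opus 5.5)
The plan is to compare the Kobayashi pseudo-metric of $H(\C)=\bP^2(\C)\setminus(\Lf\cup\bar\Lf\cup\Lz\cup\Linf)$ with its pullback under $F$, using the fact that $F$ maps a proper subdomain of $H(\C)$ into $H(\C)$. First, note that $F^{-1}(\Lf)=\Lf\cup\Hf'$, where $\Hf'$ is the extra component, and $F^{-1}(\bar\Lf)=\Hf$. By Proposition \ref{prop_gen_prop_F_P2}, $F^{-1}(\Linf)=\Lz$, and $F(\Lz)=\Linf$. Set $H'=H(\C)\setminus(\Hf\cup F^{-1}(\Lf)\cup\Lc)$. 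Then $F\colon H'\to H(\C)$ is a holomorphic map; since $F$ has topological degree two and its branch locus lies inside the removed lines, it is moreover a covering map onto its image. Kobayashi pseudo-distances are nonincreasing under holomorphic maps and preserved by coverings. So for $v$ tangent at $p\in H'$ we get $\|DF v\|_{Kob,H(\C)}=\|v\|_{Kob,H'}\geqslant\|v\|_{Kob,H(\C)}$. The inequality will be strict wherever the inclusion $H'\subsetneq H(\C)$ is "seen" by the vector.

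The second step is to make this strict with a uniform constant on the first-return map. For $p\in V$, the orbit up to the return $\mathcal{R}(p)$ visits regions according to Proposition \ref{prop_mapping_regions}. I would write $\|D\mathcal{R}v\|_{Kob}=\|v\|_{Kob,H^{(r)}}$, where $H^{(r)}=F^{-r}(H(\C))\cap H(\C)$ and $r$ is the return time. Then I would bound the ratio $\|v\|_{Kob,H^{(1)}}/\|v\|_{Kob,H(\C)}\geqslant M>1$ for $v\in\mathcal{C}^{(2)}_p$ with $p$ in the closure of $V$. For this I use the explicit description from Lemma \ref{lem_non_deg}: on $H(\C)$ the metric on $\mathcal{C}^{(2)}$ is bounded below by the hyperbolic metric of $\mathfrak{h}$ pulled back via $\pi_0$, because the vectors are transverse to $\mathcal{P}_0$. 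On the smaller domain $H'$, the removed curves $\Hf$ and $F^{-1}(\Lf)$ are not unions of fibers of $\pi_0$. Hence for each nonvertical direction the projection controls a strictly smaller hyperbolic domain, and the ratio of hyperbolic metrics of a proper subdomain is greater than $1$ at each point. Continuity of the infinitesimal Kobayashi metric together with compactness then gives uniformity. Compactness comes from the projectivized cone field over $\overline V$, working on the blow-up $X$ near the corners $\bfz,\bfzp,\binf,\binfp$.

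Finally, iterating gives $\|\mathcal{R}^n v\|_{Kob}\geqslant M^n\|v\|_{Kob}$. This uses invariance of $\mathcal{C}^{(2)}$ under $D\mathcal{R}$ (Theorem \ref{thm_invariant_cone_field}), the fact that the intermediate iterates outside $V$ do not decrease the metric (by the first step), and the recurrence from Lemma \ref{lem_recurrence}, which guarantees that $\mathcal{R}$ is defined along the orbit.

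The main obstacle is the uniformity of $M$ near the boundary of $V$, specifically near $\bfz$, $\bfzp$ and the points at infinity. There both Kobayashi metrics degenerate, and the ratio might tend to $1$. I would first try to handle this by passing to the blow-up $X$ and using the local dynamics on $\Efz$ (the map $z\mapsto z^2-12$ of Proposition \ref{prop_E0_dynamics}). Failing that, I would accept a constant that holds only after finitely many returns near those corners and absorb the loss into a slightly smaller $M$.
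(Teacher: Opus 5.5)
You follow the paper's own route: the covering gives a Kobayashi isometry, an inclusion contracts strictly, and then you pass to the first return. That route has a genuine gap at its first step, in the paper's proof as in yours.

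Note first that $F^{-1}(\Lf)=\Lf\cup\bar \Lf\subset E$, so $H'=H(\C)\setminus(\Hf\cup\Lc)$. On $H'$ the map $F$ is a local biholomorphism, but it is \emph{not} a covering of $H(\C)$. Indeed $(x-2y+2)\circ F=(x-2)^2/y^2$, so $F(H')$ misses $\Li\cap H(\C)$: both preimages of a point of $\Li$ sit at the indeterminacy point $\bfz$. The actual covering is $F\colon H'\to H(\C)\setminus\Li$, and what it gives is $\|DFv\|_{Kob,H(\C)\setminus\Li}=\|v\|_{Kob,H'}$. Since $\|\cdot\|_{Kob,H(\C)}\leqslant\|\cdot\|_{Kob,H(\C)\setminus\Li}$, this only bounds $\|DFv\|_{Kob,H(\C)}$ from \emph{above}. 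So the identity $\|DFv\|_{Kob,H(\C)}=\|v\|_{Kob,H'}$ is not available, and neither is your claim that intermediate iterates never decrease the metric.

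The uniformity problem you flag is also not a technicality: the estimate is false near $\Lf\cup\bar \Lf\subset\partial V$. In the coordinates $t=(x-2)/y$, $w=y$ one has $H(\C)\cong(\C\setminus\{\pm2\})\times\C^*$. Hence $\|v\|_{Kob}=\rho(t)|dt(v)|$, with $\rho$ the hyperbolic density of $\C\setminus\{\pm2\}$, and $F(t,w)=\bigl(2+w(t^2-4)/t,\ t/w\bigr)$. Near $t=2$ (the line $\Lf$) one has $t'-2=2w(t-2)(1+O(t-2))$ and $\partial_w t'=O(t-2)$. In a cusp coordinate $\zeta=c(t-2)+O((t-2)^2)$ the metric is exactly $|d\zeta|/(|\zeta|\log(1/|\zeta|))$. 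Hence for $v$ with $|dw(v)|\lesssim|dt(v)|$ (e.g.\ near the expanding eigendirection of $DF^2$ along $\Lf$):
\[
\frac{\|DF v\|_{Kob}}{\|v\|_{Kob}}=\frac{\ell}{\ell-\log(2w)}\bigl(1+O(|t-2|)\bigr),\qquad \ell=\log\frac{1}{|t-2|}-\log|c| .
\]
Take $p\in R_2$ close to $\Lf$ with $0<y<1/2$. Then $F(p)$ is again in $R_2$ near $\Lf$, so $\mathcal{R}(p)=F(p)$ and this ratio is $<1$. Over two steps the gain is only $\ell/(\ell-\log 8)=1+O(1/\ell)$, and such orbits shadow $\Lf$ for arbitrarily many returns.

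So no uniform $M>1$ exists on $V$. Compactness cannot help, because $\|v\|_{Kob,H'}/\|v\|_{Kob,H(\C)}$ becomes arbitrarily close to $1$ near $\Lf\cup\bar \Lf$. Nor can the loss be absorbed after finitely many returns. A correct statement must restrict to returns to a region bounded away from $\Lf\cup\bar \Lf$, such as the $K_\epsilon$ of Proposition~\ref{prop_laminar_good_region}, which is where the estimate is actually used. It also needs an expansion mechanism other than the covering isometry.

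A smaller point: the gain of $H'$ over $H(\C)$ in the $t$-direction comes from removing the fibre $\Lc=\{t=0\}$. It does not come from $\Hf$, which is the graph $w=4t/(4-t^2)$ over the base and only affects the fibre factor.
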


\begin{proof} Note that the Kobayashi pseudo-metric is decreasing under holomorphic maps. Fix a point $p$ and a tangent vector $v $ in $\mathcal{C}_p^{(2)}$. Observe that the set $E = \Lf \cup \bar \Lf \cup \Lz \cup \Linf$ is forward invariant by $F$. Furthermore, 
 consider the restriction $\psi$ of $F$ \begin{equation*}
\psi = F_{| H(\C) \setminus F^{-1}(E)} \colon H(\C) \setminus F^{-1}(E) \mapsto H(\C).
\end{equation*} 
It is holomorphic and the preimage of $E$ contains $E$, the line $\Lc$ as well as the hyperbola $F^{-1} \bar \Lf$, so in particular $\Psi$ is a covering map. Hence it realizes an isometry between $H(\C) \setminus F^{-1}(E)) $ and $H(\C)$ for the Kobayashi pseudo-metric.  
The injection of $\bP^2(\C) \setminus F^{-1}(E) \hookrightarrow \bP^2(\C) \setminus E$ is a contraction of rate $M^{-1}$ for the Kobayashi pseudo-metric for some $M>1$, hence its inverse is expanding. This shows that the map $\psi$ is expanding for the Kobayashi pseudo-metric on $H(\C)$ of rate $M>1$. 
Using the first return map on $V$ which is well-defined by Lemma \ref{lem_recurrence}, we get the uniform expansion.
\end{proof}

We shall now introduce a good region where the pseudo-metric is bounded. This curves will bound a good region contained in $R_2\cup R_3 \cup \sigma(R_2\cup R_3)$. 

\begin{lem} \label{lem_good_curves} For any $\epsilon>0$ there exists two curves $C_1,C_2$ in $\mathcal{L}$ or  in $\mathcal{L}^*$ satisfying the following conditions: 
\begin{enumerate}
\item[(i)] The curve $C_1 $ is contained in $R_3 \cup \sigma(R_3)$ while $C_2$ is  in $R_2 \cup \sigma(R_2)$. 
\item[(ii)] The curve $C_1$ starts from infinity at a point on $p=[a:1:0]$ where $a \in (-2, -2 + \epsilon)$, then goes through $\bfz$ with a slope distinct from $-1/2$ and diverges to infinity in the lower half plane toward the point $\sigma(p)$.
\item[(iii)] The curve $C_2$ starts from infinity at a point $p' =[b:1:0] $ with $b \in [2- \epsilon,2)$ and passes through $\bfz$ and diverges to infinity toward $\sigma(p')$ in the lower half plane. 
\end{enumerate}
\end{lem}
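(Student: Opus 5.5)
The plan is to build both curves as explicit pullbacks of the collapsed line, $C=F^{-n}(\Lc)$ for suitable $n$, and to locate them using the region map of Proposition~\ref{prop_mapping_regions} together with the Chebyshev dynamics on $\Linf$. Recall that $F$ maps $\Lz$ to $\Linf$ via $[x:0:1]\mapsto[x:1:0]$, and $F^2$ acts on $\Lz$ as $x\mapsto x^2-2$. A component of $F^{-n}(\Lc)$ is a vertical curve for $\mathcal{C}$ (Theorem~\ref{thm_invariant_cone_field}), symmetric under $\sigma$. Such a component meets $\bIz$ at a point $x_n$ whose orbit reaches $\bfz$, and it meets $\Linf$ at $[a:1:0]$, where $a$ is a preimage point of the same Chebyshev type. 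Since the preimages of $\pm2$ under the Chebyshev map accumulate at every point of $[-2,2]$, in particular at $\pm 2$, for any $\epsilon>0$ we can choose $n$ and a branch so that the endpoint at infinity satisfies $a\in(-2,-2+\epsilon)$, respectively $b\in[2-\epsilon,2)$.

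The key steps are as follows.

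First, I use the pencil structure. By Lemma~\ref{lem_forward_curves}(i), lines of $\mathcal{P}_0$ through $\bfz$ map to lines of $\mathcal{P}_\infty$. Combined with the fact that $F_X$ maps $\Efz$ to $\Einf$ and back (Proposition~\ref{prop_E0_dynamics}), this shows that a leaf passing through $\bfz$ corresponds to a point of $\Efz$ whose $F_X^2$-orbit (the map $g_0$, conjugate to $z^2-12$) hits the indeterminate point. I pick leaves through $\bfz$ by choosing preimages under $g_0$ of the slope of $\Lc$ (that is, of $i_0$). Their slopes at $\bfz$ are determined by $g_0^{-k}$. The slope $-1/2$ corresponds to $\bar \Lf$, which is invariant and so is never such a preimage, and this gives the slope condition in (ii).

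Second, I decide which region each curve lies in. Proposition~\ref{prop_mapping_regions}(iii),(iv) gives $F(R_3)\subset\sigma(R_4\cup R_5\cup R_6)$ and $F(R_4\cup R_5)\subset\sigma(R_2\cup R_3)$. Taking preimages, a leaf in $R_4\cup R_5$ pulls back to leaves in $R_3$, and a leaf in $R_2$ pulls back within $R_2$ by (ii). I will therefore obtain $C_2$ by pulling back $\Lc\cap R_2$-type curves along the forward-invariant half-line $\Lf\cap R_2$, which keeps the end at infinity near $\binf$, i.e.\ $b$ near $2$. I will obtain $C_1$ from a leaf in $R_4\cup R_5$ whose endpoint at infinity is close to $\binfp$, pulled back once into $R_3$; the endpoint is then pushed near $[-2:1:0]$. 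Symmetry under $\sigma$ produces the lower-half-plane branch ending at $\sigma(p)$.

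Third, for the curves in $\mathcal{L}^*$ I take limits of such leaves. These limits still pass through $\bfz$ and have the required endpoints. Here I use that $\mathcal{L}$ is closed and that vertical curves for the cone field form a compact family (graphs over $y$, by Theorem~\ref{thm_invariant_cone_field}).

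The main obstacle is the simultaneous control of two things: that the chosen component actually passes through $\bfz$ (rather than merely near it), and that its endpoint at infinity lands in the prescribed $\epsilon$-window. Both are determined by combinatorics of preimages along the indeterminacy and along $\Linf$, and they must be matched via the blow-up. I would first try to work directly on $X$, tracking the strict transforms through $\Efz$, $\Einf$, $\Einfp$ using the explicit chart formulas \eqref{eq_Efz_Einf}, \eqref{eq_Einf_Efz}. If that fails, I would fall back on a limiting argument inside the invariant closed set $\mathcal{L}$.
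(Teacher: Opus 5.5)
\textbf{Verdict: there is a genuine gap.} You have the right ingredients: finite pullbacks of $\Lc$, Proposition~\ref{prop_mapping_regions}, density of Chebyshev preimages, and the map $g_0$ on $\Efz$. But the step that actually proves the lemma is the one you leave open as ``the main obstacle''. That step is to produce a real component of some $F^{-n}(\Lc)$ which at the same time lies in $R_3\cup\sigma(R_3)$ (resp.\ $R_2\cup\sigma(R_2)$), passes through $\bfz$, and ends at infinity inside the $\epsilon$-window. The mechanisms you sketch for it do not work as stated.
\begin{itemize}
\item Pulling back within $R_2$ (taking preimages in $R_2$ of arcs in $R_2^-\cup R_3^-$) only produces type $IV$ leaves. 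These end exactly at $\binf$, so $b=2$, which is excluded, and they are vertical at $\bfz$. The invariant half-line $\Lf\cap R_2$ gives no control on the endpoint.
\item For $C_1$, every leaf in $R_4\cup R_5$ ends at $\binfp$ in the upper half-plane, so ``endpoint close to $\binfp$'' is not a free parameter. What matters is where the leaf crosses $\bIz$.
\item Choosing a direction in $g_0^{-k}(i_0)$ does not tell you which real component carries it, which region it lies in, or where it meets $\Linf$.
\end{itemize}

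\textbf{The missing idea} (the paper's construction) is to start from a type $II$ leaf and pull back once. Everything can then be read off from Proposition~\ref{prop_mapping_regions} and from the action on $\Lz\cup\Linf$, namely $[x:0:1]\mapsto[x:1:0]$ and $[u:1:0]\mapsto[u^2-2:0:1]$.
\begin{itemize}
\item Precritical points are dense in $\bIz$, so take a precritical $q=(a,0)$ with $a$ close to $2$. Pull back the branch of $F^{-1}(\Lc)$ through $(0,0)$ along the orbit of $q$, using Theorem~\ref{thm_markov}(ii). This gives a type $II$ leaf $\gamma\subset F^{-(2N+1)}(\Lc)$ through $q$.
\item The lower half of $\gamma$ joins $q$ to $\binf$ inside $\sigma(R_4\cup R_5)$. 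By (iii), its preimage lies in $R_3\cup\sigma(R_3)$. It joins $[-\sqrt{a+2}:1:0]$ to the only point sent to $\binf$, namely $\bfz$ (through its exceptional divisor).
\item So passing through $\bfz$ is automatic. The endpoint lands in $(-2,-2+\epsilon)$ because it is a Chebyshev preimage of $q$, not $F(q)$. This is why $q$ must be taken near $\bfz$, not near $\bfzp$.
\item Likewise, the part of $\gamma$ in $R_4^-$ pulls back by (ii) into $R_2\cup\sigma(R_2)$. It joins $[\sqrt{a+2}:1:0]$ to $\bfz$, because the curve over $i_0$ is sent onto $\Li$, which is where $\gamma$ leaves $R_4^-$. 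This gives $C_2$.
\end{itemize}

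\textbf{Your slope argument also needs a fix.} $\bar\Lf$ is not invariant. What is true is $F(\bar\Lf)=\Lf$, i.e.\ $g_0(-1/2)=1/2$ is a fixed point, so the direction $-1/2$ never reaches $i_0$. This works only for finite pullbacks. The direction $\eta=-1/2$ corresponds to $z=-4$, a preimage of the repelling fixed point $4$ of $z\mapsto z^2-12$. So it lies in the Julia set, which is in the closure of the $g_0$-preimages of $i_0$. Your limiting step towards $\mathcal{L}^*$ could therefore lose the slope condition. Since the statement allows leaves of $\mathcal{L}$, stop at the finite pullbacks.
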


\begin{proof}
The proof is constructive. Observe that the pullback of $\Lc$ by $F$ is a hyperbola and contains a branch in $R_4\cup R_5$ going from $[0:0:1]$ and diverging toward $\binf$ in the upper half plane. 
One checks that the next pullback by $F$ of this curve yields a curve contained in $F^{-2}(\Lc)$ which is not of slope $\pm 1/2$ at $\bfz$. 
Fix $\epsilon>0$, there exists an iterate $N$ and some  preimages $q,q'$ in $\bIz$ of $[0:0:1]$ by $F^{2N}$ such that $q = [a: 0:1]$ and $q'= [b:0:1]$ with $a \in (-2, -2 + \epsilon)$ and $b\in (2-\epsilon, 2)$.   
The preimage of $\Lc$ by $F^{2N+1}$ contains thus two branches $\gamma,\gamma'$ in $\sigma(R_4\cup R_5) \cup R_4\cup R_5$ going through $\binf$ and through $q, q'$ respectively. 
By Proposition \ref{prop_mapping_regions}, the pullback of $\gamma \cup \sigma(R_4\cup R_5) $ by $F$ yields a curve $C_1$ in $R_3\cup \sigma(R_3)$ which is not of slope $-1/2$ at $\bfz$ and goes through $p=[a:1:0]= F(q)$. 
Indeed, the map induced by $F^2$ on the blow-up $\Efz$ at $\bfz$ is conjugate to $z^2 - 12$. If the tangent at $\bfz$ was equal to $-1/2$, then it would correspond to the point $z=-4$ in the blow-up $\Efz$. Since this point is prefixed, we would obtain that $F^{2N-2}(C_3)\subset F^{-2}(\Lc)$ is tangent to $\Lf$ at $\bfz$, which is not possible. 
Similarly, the pullback of $\gamma' \cap R_4\cup R_5$ yields a curve which has a vertical tangent at $\bfz$ and goes toward $p'= F(q')= [b:1:0]$.
We have thus found some pullbacks of $\Lc$ satisfying the conditions of the statement. Taking a limit of these yield some leafs of $\mathcal{L}^*$. 
\end{proof}

In the next statement, we will consider \textit{some regions bounded by two 
curves $C_1, C_2$} provided by the previous lemma. They will precisely consist in the connected component of $R_2\cup R_3\cup \sigma(R_2\cup R_3) \setminus (C_2\cup C_3)$ containing the collapsing line. 

\begin{prop} \label{prop_laminar_good_region} For any $\epsilon> 0$, choose two curves $C_1,C_2$ in $\mathcal{L}^*$ satisfying the conclusion of Lemma \ref{lem_good_curves} and a region $K_\epsilon \subset R_2 \cup R_3 \cup \sigma(R_2 \cup R_3)$ bounded by these two curves.  
Then for any point $p $ in the interior of $K_\epsilon$ whose orbit is regular and  returns infinitely many times in $K_\epsilon$, there exists a unique element of $\mathcal{L}$ through $p$. 
\end{prop}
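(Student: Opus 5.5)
Existence comes first. The point $p$ lies in the closure of $\bigcup_n F^{-n}(\Lc)$ intersected with $K_\epsilon$, because the leaves of $\cL$ are dense in $K_\epsilon$ away from the basin of $\pinf$. So I would take a sequence of components $\gamma_j$ of $F^{-n_j}(\Lc)$ that accumulate at $p$. By Theorem \ref{thm_invariant_cone_field} each $\gamma_j$ is a graph over the $y$-axis, tangent to the vertical cone field. These are uniformly Lipschitz graphs, so Arzel\`a--Ascoli gives a limit graph through $p$, and it belongs to $\cL$ by Definition \ref{def_cL}.

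Uniqueness is the real content. I would argue by contradiction: suppose two distinct leaves $\gamma,\gamma'\in\cL$ pass through $p$. Both are vertical for $\mathcal{C}^{(2)}$, and (after splitting into the case where they branch apart) there is a height $h$ at which they are distinct. Join $\gamma$ to $\gamma'$ by a horizontal segment $I$ inside $K_\epsilon$ near $p$. I would rather work with a short curve $I$ tangent to $\mathcal{C}^{(2)}$ starting at $p$ itself. Let $n_k$ be the return times of $p$ to $K_\epsilon$. Because the leaves of $\cL$ are mapped into leaves of $\cL$ (forward images of preimages of $\Lc$ stay in the family, up to collapse), the images $F^{n_k}(\gamma)$ and $F^{n_k}(\gamma')$ are two distinct leaves through $F^{n_k}(p)\in K_\epsilon$. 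The region $K_\epsilon$ is bounded by the leaves $C_1,C_2$ of Lemma \ref{lem_good_curves}. Since leaves of $\cL$ cannot cross each other (preimages of $\Lc$ are disjoint graphs), the region between $F^{n_k}(\gamma)$ and $F^{n_k}(\gamma')$ stays inside $K_\epsilon$. Hence any horizontal curve joining them has Kobayashi length bounded by some constant $D_\epsilon$. This uses Lemma \ref{lem_non_deg} together with compactness: $K_\epsilon$ stays away from $\Lf\cup\bar\Lf\cup\Lz\cup\Linf$ except near $\bfz$, where the non-$-1/2$ slope of $C_1$ and the vertical tangency of $C_2$ keep the projection $\pi_0$ onto $\mathfrak{h}$ bounded away from the punctures $\binf,\binfp$.

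Now pull back. The horizontal curve $F^{n_k}(I)$ joining the two image leaves has length at most $D_\epsilon$. By Proposition \ref{prop_expansion_kob}, applied to the return map $\mathcal{R}$, horizontal tangent vectors in $\mathcal{C}^{(2)}$ are expanded by the factor $M^k$. The branch of $F^{-n_k}$ along the orbit is well defined because the orbit is regular, so the pulled-back curve joining $\gamma$ and $\gamma'$ has Kobayashi length at most $D_\epsilon M^{-k}$. Letting $k\to\infty$, the Kobayashi distance along horizontal curves between $\gamma$ and $\gamma'$ is $0$. On the other hand, $\gamma$ and $\gamma'$ differ at height $h$, and they project under $\pi_0$ to distinct points of $\mathfrak{h}$, where the hyperbolic metric is nondegenerate. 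This gives a positive lower bound, which is a contradiction. To make this rigorous I would do the whole argument with the projected distance $\rho(\pi_0(\cdot),\pi_0(\cdot))$, which by the proof of Lemma \ref{lem_non_deg} equals the Kobayashi distance on $N(\mathcal{P}_0)$. The separation at $p$ then has to be transported to a separation near $p$, using the fact that two distinct vertical graphs through $p$ differ at nearby heights.

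The main obstacle is the uniform bound $D_\epsilon$. Near $\bfz$ the region $K_\epsilon$ touches the removed lines, and the pencil direction degenerates there. I would handle this by shrinking to returns at which $F^{n_k}(p)$ lies in a compact subset of the interior of $K_\epsilon$, which is possible since the orbit returns infinitely often; passing to a subsequence, I would take the returns accumulating at an interior point. I would also need to check carefully that the expansion in Proposition \ref{prop_expansion_kob} applies to the whole connecting curve and not just at $p$. For this I would note that the entire curve $I$ has the same itinerary of returns, since it lies between leaves and the return regions are bounded by leaves.
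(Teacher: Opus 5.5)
Your uniqueness argument is the paper's. At each return time the images of the two leaves, and hence of the bubble between them, stay in $K_\epsilon$ because leaves of $\cL$ cannot cross $C_1,C_2$. A curve joining the two leaves therefore has images of bounded Kobayashi length, while Proposition~\ref{prop_expansion_kob} forces that length to grow like $M^k$.

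The details you add are left implicit in the paper and are worth keeping:
\begin{itemize}
\item Take the connecting curve tangent to $\mathcal{C}^{(2)}$, so that Lemma~\ref{lem_non_deg} gives it positive length.
\item Take it at a height where $\gamma$ and $\gamma'$ differ; a curve ``starting at $p$'' does not join two distinct points.
\item Check that the whole curve shares the return times of $p$.
\item Justify the uniform bound $D_\epsilon$.
\end{itemize}
For the last point, your earlier observation already suffices, and the ``main obstacle'' paragraph is not needed. In fact its two ingredients do not work. $K_\epsilon$ is not compact: its closure meets $\Linf$ along a subarc of $\bIinf$. And choosing returns of $p$ that accumulate at an interior point says nothing about where $F^{n_k}(I)$ lies. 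What works is this: the conditions on $C_1,C_2$ at $\bfz$ (slope $\neq -1/2$, resp.\ vertical) and at their landing points $[a:1:0]$, $[b:1:0]$ put $\pi_0(K_\epsilon)$ inside a compact subarc of $\bIinf\setminus\{\binf,\binfp\}\subset\mathfrak{h}$. A curve tangent to $\mathcal{C}^{(2)}$ is transverse to $\mathcal{P}_0$, so it projects monotonically under $\pi_0$. Its Kobayashi length is therefore at most the hyperbolic length $D_\epsilon$ of that subarc, wherever it sits in $K_\epsilon$.

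The genuine gap is in existence. You assert that leaves of $\cL$ are dense in $K_\epsilon$ off the basin of $\pinf$. This is essentially $|\cL|=\mathbb{R}^2\setminus\mathcal{B}(\infty)$ (Theorem R.3 of the introduction), which is not established at this point. The paper's proof does not supply existence either; it only concludes that there is at most one leaf through $p$. So you need your own argument, and the same mechanism works:
\begin{itemize}
\item Suppose $p\notin|\cL|$, and let $J\ni p$ be the maximal open horizontal segment disjoint from $|\cL|$. It is bounded, by $C_1$ or $C_2$ on one side and $\Lc$ on the other.
\item $J$ avoids $\bigcup_n F^{-n}(\Lc)$, $\Lz$ and $\Linf$. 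Since $F^{-1}(\Lz)=\Lc\cup\Linf$ and $F^{-1}(\Linf)=\Lz$, all forward images of $J$ avoid these sets too, so they are well-defined horizontal curves.
\item At a return time $n_k$, $F^{n_k}(J)$ passes through $F^{n_k}(p)\in K_\epsilon$. It cannot cross $C_1$ or $C_2$: these are limits of preimages of $\Lc$, and a transverse crossing would hit a nearby preimage. Hence $F^{n_k}(J)\subset\overline{K_\epsilon}$, and its Kobayashi length is at most $D_\epsilon$.
\item This contradicts the expansion by $M^k$.
\end{itemize}
So $p\in|\cL|$, and only then does your Arzel\`a--Ascoli step produce a leaf through $p$.
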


\begin{proof}
We first observe that the pullback of $\Lc$ are two by two disjoint away from $\Lz \cup \Linf$.
Also, any leaf of $\mathcal{L}$, except $\Lc$ is mapped under $F$ to another leaf of $\mathcal{L}$.  
We claim that any two element of $\mathcal{L}$ \textit{do not cross} (i.e one curve intersects both connected component of the complement of the second curve). Indeed, if two elements cross  at a point $p$, then we would find two nearby pullback of $\Lc$ that would cross, which is impossible. 

\smallskip

Assume by contradiction that there exists two curves $C,C'$ in $\mathcal{L}$ through $p$. In particular, both $C$ and $C'$ are in $K_\epsilon$. Indeed, since $C,C'$ cannot cross the boundary curves $C_1,C_2$, they are on the right hand side of $C_1$ and on the left hand side of $C_2$, hence in $K_\epsilon$..
For any iterate $F^{n_i}$ such that $F^{n_i}(p) \in K_\epsilon$, the same argument shows  that the curves $F^{n_i}(C)$ and $F^{n_i}(C')$ are also in $K_\epsilon$. 
Consider the bubble $B$ consisting of the connected component of $\R^2\setminus (C\cup C')$ in between $C,C'$, then its image $F^{n_i}(B)$ also belongs in $K_\epsilon$. Take any horizontal line $L$ in $B$, its image $F^{n_i}(L)$ is in $K_\epsilon$, hence its length for the Kobayashi pseudo-metric is bounded. 
However, Proposition \ref{prop_expansion_kob} shows that the length  of $F^{n_i}(L)$ grows exponentially in $i$. This gives a contradiction and there exists at most one curve through $p$, as required.  
\end{proof}

The above statement shows that the family $\mathcal{L}$ is well-behaved for orbits that are recurrent in a good region $K_\epsilon$. Our aim is to analyze whether orbits avoiding consistently $K_\epsilon $ exist.

\begin{prop} \label{prop_recurrence_good_orbit} There exists $\epsilon>0$ and a region $K_\epsilon$ bounded by two curves satisfying the conclusion of Lemma \ref{lem_good_curves} such that any point $p\in \R^2$ whose orbit is regular is recurrent in $K_\epsilon$. 
\end{prop}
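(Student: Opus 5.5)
We need to show: there exist $\epsilon>0$ and $K_\epsilon$ such that every regular orbit (not in the basin of $\pinf$, since otherwise it is eventually in $R_1\cup R_6$) returns to $K_\epsilon$ infinitely often. The statement should be read for points not in $\cW^s(\pinf)$. The plan is to combine Lemma \ref{lem_recurrence}, which gives infinitely many returns to $V=\overline{R_2\cup R_3}\cup\sigma(\overline{R_2\cup R_3})$, with a description of the part $V\setminus K_\epsilon$ that is not covered by $K_\epsilon$.

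\textbf{Step 1: locate the part of $V$ outside $K_\epsilon$.} By Lemma \ref{lem_good_curves}, $K_\epsilon$ is bounded by the curves $C_1\subset R_3\cup\sigma(R_3)$, ending at infinity near $\binfp$, and $C_2\subset R_2\cup\sigma(R_2)$, ending at infinity near $\binf$. So $V\setminus K_\epsilon$ consists of two thin strips. The first lies between $C_2$ and $\Lf\cup\bar\Lf$; it accumulates on $\Lf$ near infinity, i.e. near $\binf$. The second lies between $C_1$ and $\bar\Lf$ (and its reflection); it accumulates near $\binfp$, on the side of $\Li$. As $\epsilon\to0$, these strips shrink to the invariant lines $\Lf,\bar\Lf$ and to neighbourhoods of $\binf,\binfp$ and $\bfz$.

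\textbf{Step 2: show that an orbit in a thin strip must leave it.} Suppose an orbit stays in the strips of $V\setminus K_\epsilon$ at all return times from some time on. Using Proposition \ref{prop_mapping_regions} and the fact that leaves of $\cL$ are mapped to leaves, the strip near $\Lf$ in $R_2$ is mapped into the region between $F(C_2)$ and $\Lf$. On $\Lf$ the map is the restriction of $F$ to this invariant line, and there $\binf$ is repelling. I would compute this restriction explicitly; it should be conjugate to a Chebyshev-type or affine expanding map, for which $\binf=[2:1:0]$ is the $\beta$-type fixed point. Hence points near $\Lf$ but off it drift away from $\binf$ along $\Lf$. They either reach the bounded part of $K_\epsilon$ or enter $R_1$, and entering $R_1$ puts them in the basin of $\pinf$. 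The strip near $\binfp$ is handled the same way, using the 2-cycle $\binfp\to\bfz$ in $X$ via $\Efz\leftrightarrow\Einf$. Here we use that $F_X^2|_{\Efz}$ is $z\mapsto z^2-12$, whose Julia set is repelling by Theorem \ref{thm_repelling_measure}. Orbits approaching $\bfz$ along directions other than the super-attracting direction are therefore pushed away from it. The super-attracting direction $l=0$ corresponds to points tending to $\bIz$. Those points enter the neighbourhood of the Chebyshev interval, which lies in the interior of $K_\epsilon$ away from $\bfz$, by Lemma \ref{lem_orbit_right_Lc} and Proposition \ref{prop_good_intervals}.

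\textbf{Step 3: choose $\epsilon$.} Make $\epsilon$ small enough that the strips lie inside these repelling neighbourhoods, so that any orbit staying in them is forced onto the invariant curves. Such orbits are either in $\Lf\cup\bar\Lf$ themselves (treated directly, on $\Lf$ the dynamics is known) or in the indeterminacy orbit, which is excluded by regularity.

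\textbf{Main obstacle.} The hard step is making Step 2 uniform near $\bfz$. There $F$ is not defined, and the returns to $V$ can pass close to the indeterminacy point. I would first lift to $X$ and use the local formulas \eqref{eq_Efz_Einf} and \eqref{eq_Einf_Efz}. The coordinate $s_3$ is roughly squared-expanded transversally by a factor whose Jacobian is $8$. This should give a uniform escape time from a neighbourhood of $\Efz$ minus the neighbourhood of $i_0$ and $l=0$.
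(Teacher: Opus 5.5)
Your proposal has a genuine gap. Step 2 rests on a wrong picture of the dynamics on $\Lf$. Parametrizing $\Lf$ by $x=2+2y$ gives $F(2+2y,y)=(2+4/y,\,2/y)$. So $F|_{\Lf}$ is the involution $y\mapsto 2/y$, hence $F^2|_{\Lf}=\mathrm{id}$, and $\binf$ is not a fixed point at all: it is sent to $\bfz$. There is therefore no drift along $\Lf$. What repels is the direction transverse to $\Lf$, where $DF^2$ has eigenvalue $8$ (the eigenvalue along $\Lf$ is $1$).

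The paper uses exactly this transverse expansion. For an orbit that stays in $R_2\setminus K_\epsilon$:
\begin{itemize}
\item the point lies in $R_2\cap F^{-1}(R_2)$, the region between $\Lf$ and the branch of $\bH=F^{-1}(\Lc)$ through $\bfz$, where the paper shows $\partial_x(x\circ F^2)\geq 2$ below $y=1$;
\item the horizontal line through $p$ meets $\Lf$ at a point fixed by $F^2$, and following its images confines the orbit to a compact set;
\item the horizontal expansion then forces $p\in\Lf$.
\end{itemize}
The points of $\Lf$ themselves have regular period-two orbits that never enter $K_\epsilon$. They must be excluded from the statement together with $\cW^s(\pinf)$; they are not ``treated directly''.

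Steps 1 and 3 also fail, because $V\setminus K_\epsilon$ does not shrink to $\Lf\cup\bar\Lf$ as $\epsilon\to 0$. $C_2$ lands at $[b:1:0]$ with $b<2$, whereas every type-$IV$ leaf lands at $\binf$; an example is the branch of $\bH$ through $\bfz$, asymptotic to $y=(x-1)/2$. Since leaves do not cross, all type-$IV$ leaves lie between $C_2$ and $\Lf$ for every $\epsilon$.

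Near $\bfz$ your blow-up argument gives no escape either. By the paper's formula in the coordinates $(e,l)=(y,(x-2)/y)$, the transverse multiplier of $F_X^2$ at the point $l$ of $\Efz$ is $2/l$. This has modulus $1$ at $l=\pm 2$, the directions of $\Lf$ and $\bar\Lf$, as it must since $\Lf$ is a curve of fixed points of $F^2$ through $\bfz$. Moreover, Theorem~\ref{thm_repelling_measure} is only an almost-everywhere statement for $\mu_{\Efz}$, so it gives no uniform escape time.

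Also, $\bIz$ is not contained in $K_\epsilon$: the closure of $V$ meets $\Lz$ only at $\bfz$. This is why the paper works with $K_\epsilon\cup F^{-1}(K_\epsilon)$.

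For the parts in $R_3$, $R_4$, $R_5$ the paper never analyses $\bfz$ directly. It uses two facts:
\begin{itemize}
\item the leaf through the point lands on $\bIz$, where $F^2$ acts as $x\mapsto x^2-2$ and $\bfz$ is repelling;
\item leaves to the left of a leaf $C_1'$ landing at $2-\epsilon$ stay to the left of its $F^2$-images.
\end{itemize}
The landing point is thus pushed away from $\bfz$ until the leaf falls into $F^{-1}(K_\epsilon)$. This use of the lamination together with the one-dimensional Chebyshev dynamics on $\bIz$ is the idea missing from your proposal.
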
 

\begin{proof}
Note an orbit that is recurrent in $K_\epsilon \cup F^{-1}(K_\epsilon)$ is recurrent in $K_\epsilon$. 
Denote by $K' = K_\epsilon \cup F^{-1}(K_\epsilon)$. 

Let us first choose $\epsilon$ accordingly. 
Note that the leave of $\mathcal{L}^*$ are recurrent on any domain $K_\epsilon$ because their orbit land on generic point of the Chebyshev interval at infinity. 
For $\epsilon$ small enough, the point $\bfz$ is repelling on $\bIz$ so we can choose a leaf $C_1' \in \mathcal{L}^*$ landing at  $[2-\epsilon : 0 :1]$, any leaf in $R_4\cup R_5\cup \sigma(R_4\cup R_5)$ on the left hand side of $C_1'$ maps by $F^2$ to curves in the same domain on the left hand side of $F^2(C_1')$.   
This shows that any point in $R_4\cup R_5 \cup \sigma(R_4\cup R_5) \setminus F^{-1}(K_\epsilon)$ eventually lands in $F^{-1}(K_\epsilon)$ under some iterates of $F^2$. Indeed, given any point $p$ near $\bar \Lf \cup \Lf$ in $R_4\cup R_5 \cup \sigma(R_4\cup R_5) \setminus F^{-1}(K_\epsilon)$. 
 We choose a curve $C_1'$ on the right hand side of $p$ and a element $\mathcal{L}_p$ through $p$. But for some iterate $F^{2n}(C_1')$ will land near $[\sqrt{2}:0:1]$, hence in $F^{-1}(K_\epsilon)$. So for that first iterate $n$ , $F^{2n}(C_1')$ is on the left hand side of $F^{2n}(C_1')$, hence also in $K_\epsilon$.  
This shows any point $p \in R_4\cup R_5\cup \sigma(R_4\cup R_5) \setminus F^{-1}(K_\epsilon)$ near $\bar \Lf \cup \Lf$ eventually lands in $K_\epsilon$. 
The case where $p$ is in $R_4\cup R_5 \cup \sigma(R_4\cup R_5)$ is near $F^{-1}(\bar \Lf) \cup \Lf$ is similar as this region is mapped by $F^2 $ near $\bar \Lf$ for which the same argument applies. 

\smallskip 
Assume that $p \in R_2\setminus K_\epsilon$. By Proposition \ref{prop_mapping_regions}, its image is either in $R_2$, $R_3$ or in $R_4$. We shall analyze each of these three cases. 
 If its image $F(p)$ belongs to $R_4 \setminus F^{-1}(K_\epsilon)$, then this case was treated previously.

\smallskip

Assume that $p \in R_2\setminus K_\epsilon$ and  that its image $F(p)$ is  in $R_3 \setminus K_\epsilon$ in a neighborhood of $\bar \Lf$. Then its second iterate $F^2(p)$ belongs to $\sigma(R_4\cup R_5)$ by Proposition \ref{prop_mapping_regions} and is in a neighborhood of $\Lf$. This case was treated previously. 

\smallskip 

Finally, if $p$ is in $R_2\setminus K_\epsilon$ and its image $F(p)$ remains in $R_2\setminus K_\epsilon$.
Our aim is to show that $p$ belongs to $\Lf$. Assume the contrary.
 By the previous argument, all iterates of $p$ must remain in $R_2\setminus K_\epsilon$. 
Consider a horizontal line $H$ through $p$, it intersects the line $\Lf$ at a point $q$, which is fixed by $F^2$. In particular, $F^{2n}(H)$ is a sequence of horizontal curves for the cone field and $F^{2n}(H) \cap R_2 \setminus K_\epsilon$ is then a sequence of horizontal curves through $q$. Since these curves have bounded slope, this shows that $F^{2n}(p)$ is in a compact set. By applying $F$, we can assume that $q$ is below the horizontal line $y=1$ (since $F$ is an involution on $\Lf$ with fixed point at $[2:1:1]$).
In particular, the image $F^{2n}(p)$ is below $y=1$. We estimate the horizontal derivative $\partial_x x\circ F^2$ on this compact set. 
Let us first observe that if $p \in R_2$ and $F(p) \in R_2$, then this implies that $p$ is in the region bounded by the hyperbola $F^{-1}(\Lc)$ and $\Lf$. 
Since the equation of the hyperbola $F^{-1}(\Lc)$ is given by $4y^2 = x(x-2)$, we get for any point $p= (x,y) \in R_2 \cap F^{-1}(R_2)$, we have $(x-2)^2 = x(x-2) + 4 \geqslant  4 + 4y^2 $ and $(x-2) \geqslant 4 y^2 / x$ together with the fact that $x\geqslant 2$, we get:
\begin{multline*}
\partial_x (x\circ F^2)(x,y) = 2x + \dfrac{12 x y^2}{x-2} - \dfrac{16 y^4}{(x-2)^3} \geqslant 2x  - \dfrac{16 y^4}{4(x-2)(1+y^2)} \geqslant  2x - \dfrac{4 y^4}{(x-2)} \\ \geqslant 2x - \dfrac{4 y^4x}{4y^2} \geqslant 2x - x y^2 = x(2 - y^2) \geqslant 2. 
\end{multline*} 
This shows that $F^2$ is expanding exponentially in the horizontal direction for the euclidean metric for all infinite orbits that remain below the line $y=1$ and in $R_2\setminus K_\epsilon$. In particular, this region cannot be invariant by $F^2$ and any point $p$ not on the line $\Lf$ must escape $R_2\setminus K_\epsilon$, as required.

This shows that there are no regular orbits which is consistently in the complement of $K_\epsilon \cup F^{-1}(K_\epsilon)$, as required. 
\end{proof}

\begin{thm} \label{thm_lamination} The family $\mathcal{L}$ is a lamination away from the lines $\Lz \cup \Linf$.  
\end{thm}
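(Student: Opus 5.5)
The plan is to derive Theorem \ref{thm_lamination} from Propositions \ref{prop_laminar_good_region} and \ref{prop_recurrence_good_orbit}, together with the structural facts already known about $\mathcal{L}$. By Theorem \ref{thm_invariant_cone_field}, every element of $\mathcal{L}$ is a vertical curve for the cone field. Hence, away from $\Lz \cup \Linf$, each such curve is locally a graph $x = \varphi(y)$ with uniformly bounded slope. Two elements of $\mathcal{L}$ never cross, since pullbacks of $\Lc$ are pairwise disjoint off $\Lz \cup \Linf$ and crossing is an open condition. These two facts give an equicontinuous family of graphs over any compact interval of $y \ne 0$. The lamination property will therefore follow once we know that through each point of $|\mathcal{L}| \setminus (\Lz\cup\Linf)$ there passes exactly one element of $\mathcal{L}$. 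The local product charts are then obtained as follows: parametrize leaves by their intersection with a horizontal transversal $\{y=h\}$ and use the Arzel\`a--Ascoli limit to get continuity of $t \mapsto \varphi_t$.

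\textbf{Step 1: points with regular orbits.} Fix $\epsilon$ and $K_\epsilon$ as in Proposition \ref{prop_recurrence_good_orbit}, so every regular orbit returns to $K_\epsilon$ infinitely often. For $p \in |\mathcal{L}|$ off $\Lz\cup\Linf$ with regular orbit, some iterate $F^{n}(p)$ lies in the interior of $K_\epsilon$; if it lands on the boundary curves $C_1, C_2$, shrink or perturb the choice of these curves using Lemma \ref{lem_good_curves}. Proposition \ref{prop_laminar_good_region} then gives a unique element of $\mathcal{L}$ through $F^n(p)$.

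We pull uniqueness back as follows. Suppose two distinct elements $C, C' \in \mathcal{L}$ pass through $p$. They bound a bubble $B$, and $F^n$ is a local diffeomorphism along the orbit off $\Lc \cup \Lz$. So $F^n(C)$ and $F^n(C')$ are elements of $\mathcal{L}$ through $F^n(p)$, by invariance of $\mathcal{L}$ under $F$ away from $\Lc$. They must coincide, so $F^n$ would collapse the bubble $B$. This is impossible: $F^n$ expands horizontal segments of $B$ in the Kobayashi pseudo-metric by Proposition \ref{prop_expansion_kob} and Lemma \ref{lem_non_deg}.

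\textbf{Step 2: points whose orbit hits $\bfz$ or $\Lc$.} These points lie on some $F^{-k}(\Lc)$. Here the plan is to show that the element of $\mathcal{L}$ through such a point is still unique away from $\Lz\cup\Linf$. Near $\Lc$ itself, the two sides behave differently. On one side, the lemma that $\Lc(\R)$ lies on $\partial \cW^s(\pinf)$ shows that the side mapped into $\overset{\circ}{R_6}$ contains no other leaves. On the other side, leaves accumulate onto $\Lc$ by Proposition \ref{prop_good_intervals}, and any limit curve must coincide with $\Lc$ by the non-crossing and Kobayashi-expansion argument applied to nearby regular points. The same reasoning transfers along preimages by local diffeomorphisms. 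Finally, the set of points with regular orbit is dense in each horizontal slice. Since leaves are graphs with uniformly bounded slope that do not cross, uniqueness at a dense set of heights forces uniqueness everywhere off $\Lz\cup\Linf$.

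The main obstacle is Step 1's reliance on recurrence to $K_\epsilon$ with uniform control. Proposition \ref{prop_laminar_good_region} needs the bubble's images to stay inside $K_\epsilon$, where the Kobayashi length is bounded, along the return times. I will try to handle this by noting that images of bubbles are squeezed between images of $C_1, C_2$, which are again leaves. If that fails, I will replace $K_\epsilon$ by the first-return region $V$ of Lemma \ref{lem_recurrence} and use the return-map expansion directly.
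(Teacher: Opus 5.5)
Your Step~1 is exactly the paper's proof. The paper deduces the theorem in two lines from Proposition~\ref{prop_laminar_good_region} and Proposition~\ref{prop_recurrence_good_orbit}, and it only ever deals with points whose orbit is regular. Your transfer back along $F^n$ and the Arzel\`a--Ascoli charts are details the paper leaves implicit. For the transfer, local injectivity of $F^n$ at $p$ already gives $C=C'$ near $p$ once $F^n(C)=F^n(C')$; no Kobayashi argument is needed there.

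The gap is Step~2. It treats the points of $\bigcup_k F^{-k}(\Lc)$ off $\Lz\cup\Linf$, which are exactly the non-regular points there since $\Lc\to\bfzp\to\binfp\to\bfz$. The paper's proof does not address these points at all, and your argument for them does not work as written.
\begin{itemize}
\item Your closing principle is false. Two non-crossing graphs of bounded slope can touch at a single point and separate on both sides of it. Uniqueness at the regular points of $C$ near $p$ says nothing about this, because $C'$ does not pass through those points. The same reasoning would also rule out the Cantor bouquets at points of $\bIz$ eventually mapped to $\bfz$.
\item Accumulation of leaves onto $\Lc$ (Proposition~\ref{prop_good_intervals}) together with non-crossing does not prevent a leaf from touching $\Lc$ at one point.
\item The bubble argument of Proposition~\ref{prop_laminar_good_region} cannot be applied to a bubble with a side on $\Lc$. $F$ collapses that side to $\bfzp$, the orbit then reaches the indeterminacy point $\bfz$, and Proposition~\ref{prop_expansion_kob} only concerns orbits avoiding indeterminacy.
\end{itemize}

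What does close Step~2 is a direct local computation at $q=(2,c)$ with $c>0$; the case $c<0$ follows by $\sigma$. Write $s=u/y^2$.
\begin{itemize}
\item Left side. One has $F(2-u,y)=(-2-2s+s^2y^2,-s)$. This point lies to the left of the branch of $F^{-1}(\bar\Lf)$ through $\bfzp$, which has a vertical tangent there. So a whole one-sided neighbourhood of $q$ in $\{x<2\}$ lies in the basin of $\pinf$. Hence a leaf $C$ through $q$ is locally a graph $x=2+u(y)$ with $u\ge 0$.
\item Right side. One has $F(2+u,y)=(-2+2s+s^2y^2,s)$, which lies in $R_4^-$ near $\bfzp$. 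There the cone is bounded by the pencils $\mathcal{P}_\infty$ and $\mathcal{P}_\infty'$, so vertical directions are those with slope of modulus at least $1/2$.
\item Consequence. Since $F(C)$ is again a leaf, the curve $y\mapsto F(2+u(y),y)$ is vertical. On an interval where $u>0$, this forces $-s/y\le s'<0$ almost everywhere. Hence $s$ is decreasing and $\log s$ is Lipschitz there, so $u$ cannot vanish at an endpoint of such an interval.
\end{itemize}
Thus $u\equiv 0$ near $c$, and $\Lc$ is the only leaf through $q$. The orbit of $p$ avoids $\Lc\cup\Lz$ before time $k$, so $F^k$ is a local diffeomorphism at $p$ and carries this uniqueness to the points of $F^{-k}(\Lc)$.
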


\begin{proof}
By Proposition \ref{prop_laminar_good_region}, $\mathcal{L}$ defines a lamination on any region $K_\epsilon$ for $\epsilon>0$. By Proposition \ref{prop_recurrence_good_orbit}, there exists a region $K_\epsilon$ such that every regular orbit is recurrent in $K_\epsilon$, hence $\mathcal{L}$ is a lamination.   
\end{proof}

\subsection{Positive area of $\cL$}

In order to show that $\cL$ has positive area, we will use the following consequence of the previous results. 

\begin{cor} \label{cor_L_star_leaf} Any leaf in $\mathcal{L}^*$ lands uniquely at a point of the Chebyshev interval. Moreover, given any point $p\in \R^2$ and any stable manifold $\cW^s(p)$ which intersect $\bIz$ at a regular point belongs to $\mathcal{L}^*$. 
\end{cor}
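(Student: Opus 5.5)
The first assertion should follow from Theorem \ref{thm_lamination} together with Propositions \ref{prop_laminar_good_region} and \ref{prop_recurrence_good_orbit}. Let $\gamma \in \mathcal{L}^*$, so $\gamma$ meets $\bIz$ at a point $q$ whose forward orbit avoids $\bfz$. The cone field is vertical, so $\gamma$ is a graph over the $y$-axis and has a well-defined landing point on $\Lz$; by the definition of $\mathcal{L}^*$ this point lies in $\bIz$. To prove uniqueness, suppose two distinct leaves $\gamma,\gamma'$ of $\mathcal{L}$ land at the same regular point $q$. Elements of $\mathcal{L}$ do not cross (shown in the proof of Proposition \ref{prop_laminar_good_region}), so $\gamma$ and $\gamma'$ bound a bubble $B$ whose closure touches $\bIz$ only at $q$.

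Choose a short horizontal segment $L \subset B$ near $q$. Since the orbit of $q$ is regular, every point of $B$ near $q$ has a regular orbit. By Proposition \ref{prop_recurrence_good_orbit} these orbits return infinitely often to $K_\epsilon$, and there the first-return map $\mathcal{R}$ expands vectors of $\mathcal{C}^{(2)}$ for the Kobayashi pseudo-metric by $M^n$ (Proposition \ref{prop_expansion_kob}). The segment $L$ is horizontal for the cone field, so after two iterates its tangent vectors lie in $\mathcal{C}^{(2)}$ (Theorem \ref{thm_invariant_cone_field}), and Lemma \ref{lem_non_deg} shows that they have nonzero Kobayashi length. The images of $B$ stay between the images of the boundary leaves, hence between the curves $C_1,C_2$ bounding $K_\epsilon$, at each return time. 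On $K_\epsilon$ the Kobayashi length of a horizontal arc joining two leaves is bounded, since it is controlled by the hyperbolic metric on $\mathfrak{h}$ through the projection $\pi_0$. This contradicts the exponential growth, exactly as in Proposition \ref{prop_laminar_good_region}. Hence the leaf through $q$ is unique.

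For the second assertion, take a stable manifold $\cW^s(p)$ meeting $\bIz$ at a regular point $q$. I would first show that $\cW^s(p)$ is tangent to the vertical cone field, using Theorem \ref{thm_invariant_cone_field} and the invariance of $\mathcal{C}_v$ under $dF^{-1}$; then $\cW^s(p)$ is a vertical graph landing at $q$. Next, approximate $q$ by points $q_k \in \bIz$ that are preimages of $\bfzp$, equivalently preimages of $\Lc\cap\bIz$ under the Chebyshev map; these are dense in $\bIz$ by Proposition \ref{prop_good_intervals} and the Chebyshev dynamics. Through each $q_k$ passes a component of some $F^{-n}(\Lc)$, which is a vertical graph. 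By compactness of graphs with slopes in the cone field, a subsequence converges to a leaf $\gamma \in \mathcal{L}$ landing at $q$, and by definition $\gamma \in \mathcal{L}^*$.

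It remains to show that $\gamma = \cW^s(p)$. Suppose they differ. They cannot cross, since both are limits of non-crossing curves, or since the stable manifold is backward invariant and would force a crossing of pullbacks of $\Lc$. So they bound a bubble landing at $q$, and the expansion argument of the first part rules this out, because points of $\cW^s(p)$ have regular orbits that remain recurrent in $K_\epsilon$.

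The main obstacle is the Kobayashi boundedness step near $\Lz$: the bubble touches $\bIz$, where $H(\C)$ degenerates. To handle it, I would apply the argument only to the part of the bubble at height $\ge h>0$, and iterate until it enters $K_\epsilon$ far from $\Lz$. Using $F\circ\sigma=F$ and the swap $\Lz\leftrightarrow\Linf$, the bubble's image is a genuine region between two leaves inside $K_\epsilon$.
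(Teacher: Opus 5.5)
Your proposal is correct and follows essentially the paper's route. In both assertions the key step is the bubble argument of Proposition \ref{prop_laminar_good_region}: the Kobayashi expansion of Proposition \ref{prop_expansion_kob} at returns to $K_\epsilon$ is played against the bounded Kobayashi length of horizontal arcs between leaves inside $K_\epsilon$. The differences are minor. The paper gets the returns from the recurrence of the Chebyshev orbit of $q$ near $\Oinf$, after pushing $q$ to $\bIinf$ by $F$, whereas you use Proposition \ref{prop_recurrence_good_orbit}; you also explicitly construct the leaf of $\mathcal{L}^*$ landing at $q$, which the paper takes for granted. The degeneracy near $\Lz$ that you flag at the end never arises, since Kobayashi lengths are only compared for arcs at positive height and for their images inside $K_\epsilon$.
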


\begin{proof}
Take two leaves $C, C'$ in $\mathcal{L}^*$ landing on the same regular point $q$ of the Chebyshev interval $\bIz\cup \bIinf$. Without loss of generality (up to considering its image by $F$), we can assume that  $q \in \bIinf$. By definition, its orbit is dense on the Chebyshev interval, we can choose a subsequence $n_k$ such that $F^{2n_k}(q)$ is arbitrarily close to $\Oinf$. In particular, $F^{2n_k}(C), F^{2n_k}(C')$ will belong to the region constructed in Proposition \ref{prop_laminar_good_region}. The same proposition shows that the two leaves must be equal.
For the second statement, we observe that any stable manifold $\cW^s(p)$ is a vertical curve for the cone field $\mathcal{C}^{(1)}$. Furthermore, assuming that the landing point $q$ on $\bIz$ is regular, then we can compare the orbit of $\cW^s(p)$ and of the unique leaf $C$ through $q$ in $\mathcal{L}^*$. The same argument shows that the orbit of these two curves enter infinitely many times the region constructed in Proposition \ref{prop_laminar_good_region} and so they coincide, as required.  
\end{proof}

\begin{thm} \label{thm_L_positive} The support of $\cL$ has positive area and any slice of $\mathcal{L}$ by a horizontal line $L_h = \{ y =h \}$ with $h \in \R$ has positive length.  
\end{thm}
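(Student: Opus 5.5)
The plan is to show that the invariant set of positive area from Theorem \ref{thm_positive_measure_subset} lies inside the support of $\cL$, and then to spread this information to every horizontal slice by iterating. Set $A := U \setminus \cup_{n\geqslant 1} F^{-2n}(W)$. By Proposition \ref{prop_forward_invariant_U}, $A$ is forward invariant under $F^2$, and by Proposition \ref{prop_good_decay} the $F^2$-orbit of every $p\in A$ converges super-exponentially fast to $\bIz$.

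\textbf{Step 1: stable curves through $A$.} For each $p\in A$, I would take the approximate stable curves $\gamma_{p,n}$, namely the pullbacks by $F^{2n}$ of the vertical line through $F^{2n}(p)$. These are vertical for $\mathcal{C}^{(1)}$. Lemma \ref{lem_vertical_parabola} traps them between a vertical line and a curve $q_- - x = C_- y^{3/2}$, so they are equicontinuous graphs over the $y$-axis. Extracting a limit gives a vertical curve $\cW^s(p)$ through $p$ that lands on $\bIz$ at a point $q(p)$. I would then discard the countable set of $p$ whose landing point is singular, i.e.\ eventually mapped to $\bfz$. This set has measure zero by Fubini: each leaf meets $L_h$ in one point, and the singular landing points are countable.

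\textbf{Step 2: membership in $\cL$.} For a regular landing point, Corollary \ref{cor_L_star_leaf} shows that $\cW^s(p)$ is the unique leaf of $\mathcal{L}^*\subset\cL$ through $q(p)$. Hence $A$ minus a null set lies in $|\cL|$, and $|\cL|$ has positive area by Theorem \ref{thm_positive_measure_subset}. The same theorem gives more: for $|h|<t$, the slice $L_h\cap A$ has length at least $4-\epsilon$, so $L_h\cap|\cL|$ has positive length for all small $|h|$.

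\textbf{Step 3: arbitrary heights.} For general $h$ I would transport this by preimages. Since $\cL$ is backward invariant ($F^{-1}$ of a leaf is a union of leaves) and $F$ is a local diffeomorphism off $\Lc\cup\Lz$, the set $F^{-k}(A)\cap|\cL|$ has positive area for every $k$. To reach a given $L_h$, I would use Proposition \ref{prop_good_intervals}: it gives intervals $I_n=[2,2+r_n]\times[c-h',c+h']$ around height $c=h$ with $F^{k_n}(I_n)$ converging to $[0,2]\times\{0\}$. For $n$ large, $F^{k_n}(I_n\cap L_h)$ is then a horizontal curve for the cone field lying in $U$ near $\bIz$. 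It crosses the stable leaves through $A$ transversally, and by Theorem \ref{thm_positive_measure_subset} applied at the small heights it crosses them in a set of positive length. Pulling back by the diffeomorphism $F^{k_n}$ restricted to $I_n$ then gives positive length of $L_h\cap|\cL|$. The case $h=0$ is immediate, since the slice contains $\bIz$ as a limit of leaves, of length $4$.

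\textbf{Main obstacle.} The transport in Step 3 requires that the curve $F^{k_n}(L_h\cap I_n)$, once close to $\bIz$, meets the stable leaves through $A$ in positive one-dimensional measure. Theorem \ref{thm_positive_measure_subset} is stated only for horizontal lines. To handle this I would show that holonomy along the stable curves, between a horizontal line and a nearby $\mathcal{C}_h$-horizontal curve, is absolutely continuous. I would derive this from the distortion bounds of Proposition \ref{prop_distortion}, which compare horizontal derivatives along leaves with those of the Chebyshev map at the landing points $q_k$. Alternatively, I would redo the length estimate of Theorem \ref{thm_positive_measure_subset} directly for horizontal curves in $U$, since its proof uses only the horizontal expansion bound \eqref{eq_expansion} and should apply verbatim to curves in the horizontal cone.
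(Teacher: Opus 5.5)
Your proposal follows the paper's proof essentially step for step: the positive-area set $U\setminus\cup_{n}F^{-2n}(W)$ of Theorem \ref{thm_positive_measure_subset}, its stable curves identified with leaves of $\mathcal{L}^*$ via Corollary \ref{cor_L_star_leaf}, small heights read off directly, and large heights pulled back through Proposition \ref{prop_good_intervals}; the obstacle you isolate in Step~3 is exactly where the paper is terse (it only says the arcs $F^{a_n}(I_n)$ ``intersect a subset of positive area'' and infers positive length), so rerunning the length estimate along horizontal-cone curves is the right way to make that step rigorous. The one slip is Step~1: the Fubini discard is unjustified as written, since a precritical point can carry a whole bouquet of leaves, but it is also unnecessary, because these orbits never enter $W$ while a vertical-cone curve landing at a precritical point would, after finitely many iterates of $F^2$, land at $\bfzp$ and lie in $W$, so the landing points are automatically regular, which is what the paper uses.
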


\begin{proof}
By Theorem \ref{thm_positive_measure_subset}, there exists a $F$-invariant subset $U \setminus \cup_{n\geqslant 0} F^{-n}(W)$ of positive area. 
For any point $p $ in that set, its orbit converges to $\bIz$ and we can construct a stable manifold $\cW^s(p)$ through it. Observe that $\cW^s(p)$ is a vertical curve for the cone field. Let $q$ be the intersection point between $\cW^s(p)$ and $\bIz$. By definition, the point $q$ has an infinite orbit which avoids the critical point and $\bfz$. 
By Corollary \ref{cor_L_star_leaf}, each stable manifold $\cW^s(p)$ then belongs to $\mathcal{L}^*$. In particular, this shows that $U \setminus \cup_{n\geqslant 0} F^{-n}(W)$ belongs to the stable set of $\bIz$ and that it consists of leaves in $\mathcal{L}^*$. 
This shows that $\mathcal{L}^* \cap U\setminus \cup_{n\geqslant 0} F^{-n}(W)$ has positive area, hence $\mathcal{L}$ has positive area.
The next statement follows also from Theorem \ref{thm_positive_measure_subset}. Indeed, for small $h$, the length of $ \mathcal{L}^*\cap L_h \cap U \setminus \cup_{n\geqslant 1} F^{-2n}(W) $ is positive and converges to $4$. 
If $h$ is large, we proceed as follows. By Proposition \ref{prop_good_intervals}, we can find a sequence of subintervals $I_n$ in $L_h$ and some iterates $a_n$ such that $F^{a_n}(I_n)$ maps into the region $U$ and converging toward the interval $[0,2] \times \{0\}$. In particular, these arcs will intersect a subset of positive area $\mathcal{L}^* \cap U \setminus \cup_{n\geqslant 1} F^{-2n}(W)$, so their preimage by $F^{a_n}$ will have positive length, as required.   
\end{proof}

\section{The real trace of the Green current}
\label{section_real_green}

\subsection{Combinatorial description of $\mathcal{L}$}
\label{section_combinatorics}
In this section, our aim is to give a combinatorial description of the leaves of $\mathcal{L}$. Recall from \S~\ref{section_kobayashi} that we have established that $\mathcal{L}$ is a lamination. 
In the following statements, we will call a point on $\bIz\cup \bIinf$ \textit{precritical} if some iterates of $F$ map to $\Oinf$, otherwise, it will be called \textit{regular}. 
 
We first subdivide certain leaves into 9 classes (see Figure \ref{fig_coding}) and defined as follows: 
\begin{enumerate}
\item[$\bullet$] \textbf{Type $I$}: A leaf is of type $I$ if it is $\sigma$-symmetric and is contained in $R_5\cup \sigma(R_5)$. More precisely, it starts from infinity in the upper-half-plane at $\binfp$ then goes to $\bfzp$ with a vertical tangent and diverges to infinity in the lower half plane toward the point $\binf$. 
\item[$\bullet$] \textbf{Type $II$}: A  leaf is of type $II$ if it is $\sigma$-symmetric, is contained in $R_4\cup R_5 \cup \sigma(R_4 \cup R_5)$ and passes through a point on $\bIz$ which is precritical. 
\item[$\bullet$] \textbf{Type $III_-$}: A leaf is of type $III_-$ if it is $\sigma$ symmetric, is contained in $R_3^+ \cup R_3^- \cup \sigma(R_3^+ \cup R_3^-)$ and passes through $\bfz$ and a precritical point on $\bIinf$  distinct from $\Oinf$. 

\item[$\bullet$] \textbf{Type $III_0$}: A leaf is of type $III_0$ if it is $\sigma$ symmetric, is contained in $R_3^+ \cup R_3^- \cup \sigma(R_3^+ \cup R_3^-)$ and passes through the point $\Oinf$ on $\bIinf$.

\item[$\bullet$] \textbf{Type $III_+$}: A leaf is of type $III_+$ if it is $\sigma$ symmetric, is contained in $R_2^+ \cup R_2^- \cup \sigma(R_2^+ \cup R_2^-)$ and passes through a precritical point on $\bIinf$  distinct from $\Oinf$. 

\item[$\bullet$] \textbf{Type $IV$}: A leaf is of type $IV$ if it is $\sigma$ symmetric, is contained in $R_2^- \cup \sigma(R_2^-)$  and goes from $\binf$ in the upper half-plane toward $\bfz$ with a vertical tangent and diverges to infinity toward $\binfp$ in the lower half-plane. 

\item[$\bullet$] \textbf{Type $\mathcal{V}_{II}$}: A leaf is of type $\mathcal{V}_{II}$ if it is $\sigma$-symmetric, is contained in $R_4\cup R_5 \cup \sigma(R_4 \cup R_5)$ and passes through a regular point on $\bIz$. 

\item[$\bullet$] \textbf{Type $\mathcal{V}_{III_-}$}: A leaf is of type $\mathcal{V}_{III_-}$ if it is $\sigma$-symmetric, is contained in $R_3^+\cup R_3^- \cup \sigma(R_3^+\cup R_3^-)$ and passes through $\bfz$ and a regular point on $\bIinf$.
 
\item[$\bullet$] \textbf{Type $\mathcal{V}_{III_+}$}: A leaf is of type $\mathcal{V}_{III_+}$ if it is $\sigma$-symmetric, is contained in $R_2^+\cup R_2^- \cup \sigma(R_2^+\cup R_2^-)$ and passes through $\bfz$ and a regular point on $\bIinf$.
 
\end{enumerate}
The first $6$ classes of leaves will be related to the Green current, while the last $3$ consists of curves which create a forward invariant subsystem of leaves. 
Recall from \S~\ref{section_kobayashi} that we have defined a subset $\mathcal{L}^*$ of $\cL$, which are all the leaves going through a regular point on the Chebyshev intervals. 
The reason why we will distinguish these last three classes is because they belong to $\mathcal{L}^*$ while the first 6 are on the complement.

\begin{figure}[h!]
\centering
\includegraphics[scale=0.35]{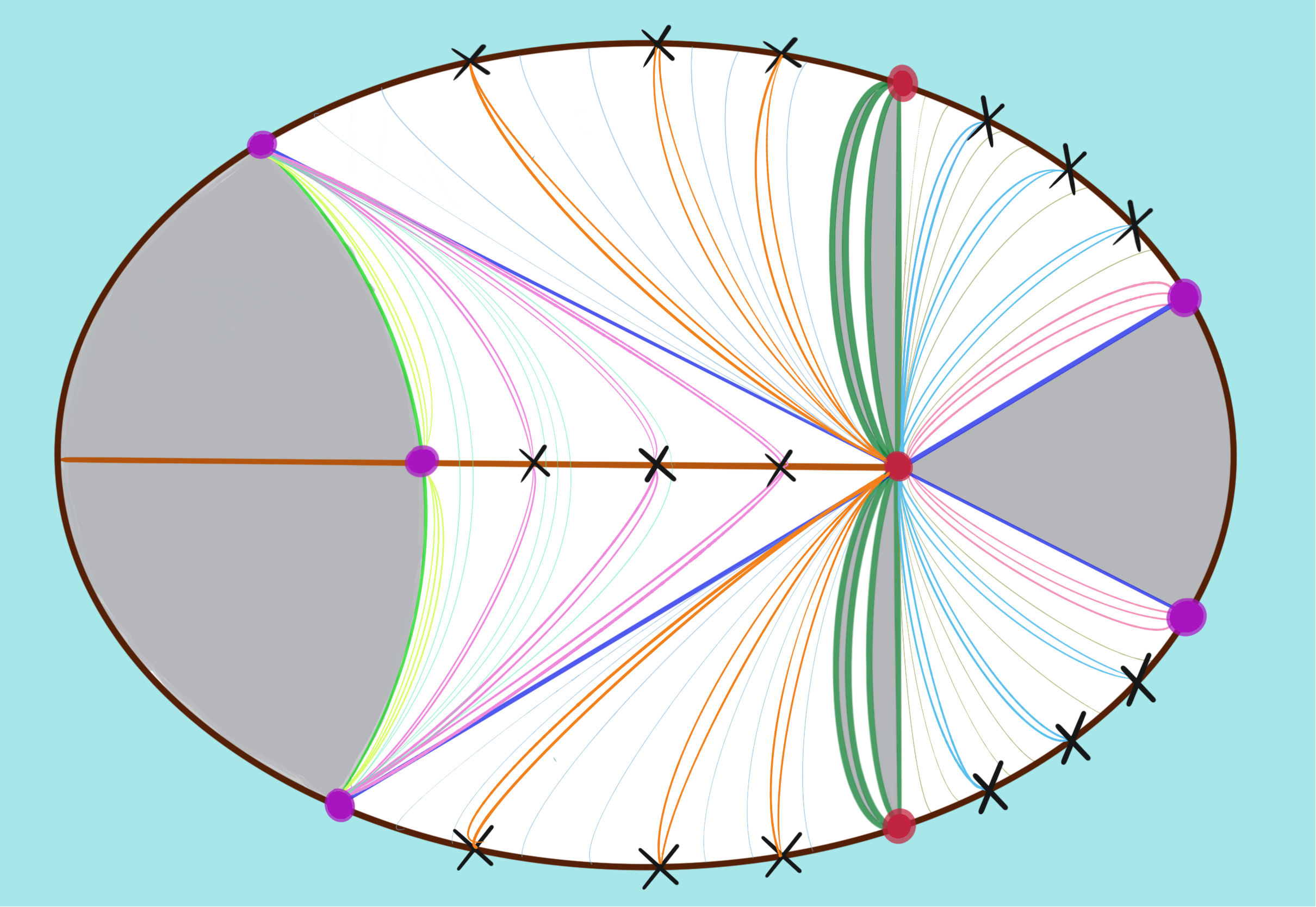}
\caption{\label{fig_coding} Schematic picture of the coding of the leaves of $\mathcal{L}$ in $\bP^2(\R)$, the circled points are the $\beta$-fixed points and the critical point at infinity, the ellipsis in black is the line at infinity. The crosses are precritical points and the curves through it of purple, orange and light blue color are of type $II$, $III_-$, $III_+$ respectively. The curves of type $I$, $III_0$, and $IV$   are in yellow, dark green and pink respectively. Last, the curves (in between the crosses and dotes) are of type $\mathcal{V}_{II}, \mathcal{V}_{III_-}$ and $\mathcal{V}_{III_+}$ are represented in light green, blue, green/gray.}
\end{figure}


 One would expect that the leaves of $\mathcal{L}^*$  to be charged by the Green current, but we will show that this is not the case. 

\begin{thm} \label{thm_markov} Given any leaf $\mathcal{L}_x$ of $\mathcal{L}$ and any connected component of $C \subset \mathcal{L}_x \setminus \Linf$, then the closure $\overline{C} $ in $\bP^2(\R)$ belongs  to exactly  one of the classes  $I, II, III_-, III_0, III_+ , IV , \mathcal{V}_{II}, \mathcal{V}_{III_-}$ and  $\mathcal{V}_{III_+}$. 
Moreover, fix a bijection $\varphi \colon \{1 ,\ldots , 9\} \to \{ I , II, III_-, III_0, III_+, IV, \mathcal{V}_{II}, \mathcal{V}_{III_-},\mathcal{V}_{III_+} \}$, then the  $9 \times 9$ matrix $M$ given by: 
\begin{equation} \label{eq_matrix}
M_* = (m_{ij})_{i,j \leqslant 9} := \left  ( 
\begin{array}{lllllllllll}
  0 & 0 &  0     & 1     & 0     &  0 & 0    & 0      & 0 \\ 
  0 & 0  &  1    & 0     & 1     & 0  & 0    & 0      & 0   \\ 
0   & 1  & 0     & 0     & 0     & 1  & 0    & 0      & 0   \\
0   & 1  & 0     & 0     & 0     & 1  & 0    & 0      & 0   \\
0   & 1  & 0     & 0     & 0     & 1  & 0    & 0      & 0   \\
  1 & 0  &0      & 0     & 0     & 1  & 0    & 0      & 0 \\ 
 0  & 0  & 0     & 0     & 0     & 0  & 0    & 1      & 1   \\
 0  & 0  & 0     & 0     & 0     & 1  & 1    & 0      & 0   \\
 0  & 0  & 0     & 0     & 0     & 1  & 1    & 0      & 0   \\
\end{array}
  \right )
  \begin{array}{l}
  I \\
  II \\
  III_- \\
  III_0 \\
  III_+ \\
  IV \\
  \mathcal{V}_{II}\\
  \mathcal{V}_{III_-} \\ 
  \mathcal{V}_{III_+}
  \end{array}
\end{equation}
 satisfies the following properties: 
\begin{enumerate}
\item[(i)] (Compatibility with $F_*$) For any $j \leqslant 9$, given any curve $C$ of $\mathcal{L}$ of type $\varphi(j)$, its image by $F$ is a leaf of $\mathcal{L}$ of type $\varphi(i)$ for some index $i \leqslant 9$ with $m_{ij} \neq 0$.  
\item[(ii)] (Surjection) For any $i \leqslant 9$, for any leaf $C$ of $\mathcal{L}$ of type $\varphi(i)$,  for any $j \leqslant 9$ with $m_{ij} \neq 0$, there exists a leaf of type $\varphi(j)$ whose image by $F$ is $C$. 
\end{enumerate}  
In other words, these classes of curves form a Markov partition of $\mathcal{L}$ whose matrix is given by $M$. 
Moreover, the pullback by $F_{\R}$ is encoded by the transpose of the matrix $M_*$. 
\end{thm}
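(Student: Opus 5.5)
The proof splits into a classification of the curves and a computation of $F$ on each class. For the classification, take a leaf $\mathcal{L}_x$ and a component $C$ of $\mathcal{L}_x\setminus\Linf$. By Theorem \ref{thm_invariant_cone_field} and Theorem \ref{thm_lamination}, $\overline{C}$ is a vertical curve for the cone field. Hence it is a graph over the $y$-axis, symmetric under $\sigma$, meeting $\Lz$ in exactly one point of $\bIz$ and $\Linf$ in a point of $\bIinf$. Non-crossing of leaves (as in the proof of Proposition \ref{prop_laminar_good_region}) traps $\overline{C}$ between the boundary curves $\Lf,\bar\Lf,\Lc,F^{-1}(\Lc)$ and the hyperbola. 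So $\overline{C}$ lies in one of $R_5\cup\sigma(R_5)$, $R_4\cup R_5\cup\sigma(R_4\cup R_5)$, $R_3\cup\sigma(R_3)$ or $R_2\cup\sigma(R_2)$. Curves in the last two pass through $\bfz$, because the lines of $\mathcal{P}_0$ bound the cone there. The class is then decided by the landing point: $\bfzp$ or $\bfz$ with vertical tangent (types $I$, $IV$), $\Oinf$ (type $III_0$), or a precritical versus regular point of $\bIz\cup\bIinf$ (types $II,III_\pm$ versus $\mathcal{V}_{II},\mathcal{V}_{III_\pm}$). These alternatives are mutually exclusive, which gives uniqueness of the class. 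Corollary \ref{cor_L_star_leaf} guarantees that a regular landing point determines the leaf.

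For compatibility (i), apply Proposition \ref{prop_mapping_regions} region by region, and track the landing point using $F|_{\Lz}\colon [x:0:1]\mapsto[x:1:0]$ and the Chebyshev dynamics on $\Linf\to\Lz$. For example, a type $I$ curve in $R_5$ maps into $\sigma(R_2^-\cup R_3^-)$ by (vii). Its endpoint $\bfzp$ goes to $\binfp$ and $\binf$ goes to $\bfz$, and using $F\circ\sigma=F$ together with the tangency computations of Lemma \ref{lem_forward_curves} one lands on $III_0$. Likewise, $R_4\cup R_5$ goes to $\sigma(R_2\cup R_3)$ by (iv), so type $II$ goes to $III_\pm$ and $\mathcal{V}_{II}$ goes to $\mathcal{V}_{III_\pm}$. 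Assertions (iii) and (ii) send $R_3$ to $R_4\cup R_5\cup R_6$ and $R_2$ into $R_2\cup R_3\cup R_4$; combined with Chebyshev dynamics on $\bIinf\to\bIz$, this gives the rows for $III_\pm$, $III_0$, $IV$ and $\mathcal{V}_{III_\pm}$. Precriticality or regularity is preserved forward, except that landing at $\Oinf$ produces the passage to $II$ or $I$. I would check each column of $M_*$ this way.

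For surjectivity (ii), use that $F$ restricted to the upper half-plane is injective onto its image, and that $F^{-1}$ of a vertical curve splits into two symmetric branches via the preimage of the landing point under the degree-two map $x\mapsto x^2-2$ on the Chebyshev intervals. For each target class, I would exhibit which preimage branch lies in which region; for instance, the two preimages of a point of $\bIinf$ lie in $\bIz$ on either side of $\Lc$. Each such branch is then shown to lie in $\mathcal{L}$ by taking the closure of the preimages of $\Lc$. The statement about $F_{\R}^*$ follows because it counts preimage branches, which is the transpose.

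The main obstacle is the bookkeeping at the singular endpoints $\bfz$, $\binf$ and $\Oinf$. There $F$ is indeterminate or collapsing, so determining the class of the image (for example, whether a curve through $\bfz$ comes out with vertical tangent, giving $IV$) requires the blow-up description on $\Efz$ from Proposition \ref{prop_E0_dynamics}. I would handle this by working on $X$ in the charts \eqref{eq_coordinate_blow_up} and tracking slopes via $g_0$.
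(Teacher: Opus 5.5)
Your overall plan is essentially the paper's: classify leaves by region and landing point, then check the combinatorics region by region using Proposition \ref{prop_mapping_regions} and the action of $F$ on $\Lz\cup\Linf$. The paper proves (ii) type by type via pullbacks and deduces (i) from it, and your classification step is essentially its partition argument. However, the two concrete mechanisms you give for the combinatorics are wrong. Since the statement is exactly the combinatorics of $M_*$, this is a genuine gap.

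\textbf{Direction of the matrix.} You read $M_*$ the wrong way. By (i), the possible images of a type $\varphi(j)$ curve are read off column $j$. Row $i$ lists the types of the preimage branches of a type $\varphi(i)$ curve.

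Your worked case, type $I$ mapping to $III_0$, violates this: column $I$ has its only nonzero entry in row $IV$. Your own endpoint tracking already shows the problem. You have $\bfzp\mapsto\binfp$ and $\binf,\binfp\mapsto\bfz$, and the image lies in $\sigma(R_2^-\cup R_3^-)$. So the image is the lower half of a type $IV$ leaf, whereas a $III_0$ leaf passes through $\Oinf$.

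The correct relations run the other way:
\begin{itemize}
\item $III_0$ maps to $I$, since $\Oinf\mapsto\bfzp$ and $F(R_3)\subset\sigma(R_4\cup R_5\cup R_6)$.
\item $III_\pm$ map only to $II$.
\item $IV$ maps onto sub-arcs of leaves of types $III_-,III_0,III_+,IV,\mathcal{V}_{III_\pm}$.
\end{itemize}
Checking ``the rows'' as forward images, as you propose, would produce ${}^tM_*$ and the check would fail.

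\textbf{Surjectivity mechanism.} Your mechanism cannot produce most of the preimage branches. The restriction $F|_{\Lz}\colon[x:0:1]\mapsto[x:1:0]$ is injective, so a point of $\bIinf$ has a single preimage on $\bIz$, not two ``on either side of $\Lc$''. The two-to-one Chebyshev map goes from $\bIinf$ to $\bIz$. Splitting by preimages of the landing point therefore only accounts for $II\leftarrow III_-,III_+$ and $\mathcal{V}_{II}\leftarrow\mathcal{V}_{III_\pm}$.

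For a leaf $C$ landing on $\bIinf$ (types $III_-,III_0,III_+,\mathcal{V}_{III_\pm}$), the two branches come from different pieces of $C$:
\begin{itemize}
\item The lower half of $C$, in $\sigma(R_2\cup R_3)$, pulls back by (iv) through the unique preimage of its landing point. This gives a leaf of type $II$ or $\mathcal{V}_{II}$.
\item The arc of the upper half between $\bfz$ and the point $C\cap\Li$ pulls back by (ii) to an arc through $\bfz$ with vertical tangent, ending at $F^{-1}(\bfz)=\{\binf,\binfp\}$. This is the type $IV$ branch.
\end{itemize}

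The missing ingredient is Proposition \ref{prop_gen_prop_F_P2}(ii). The exceptional curve of the second blow-up over $i_0$, which is the vertical direction at $\bfz$, is mapped onto $\Li$. Since $i_0$ is an indeterminacy point of $F_X$ (Proposition \ref{prop_indet_FX}), tracking slopes on $\Efz$ via $g_0$, as you plan, does not determine what happens to curves through $i_0$.

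Similarly, $IV\leftarrow IV,I$ comes from the pieces $C\cap R_2^-$ and $C\cap\sigma(R_2)$, via (ii) and (vii) respectively. Finally, the part of a leaf above $\Li$ has no real preimage, because $F(\R^2)$ lies below $\Li$. So the Markov property holds for these pieces, not for whole leaves.
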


\begin{proof}
Let us first prove that these give a partition of $\mathcal{L}$. 
Recall that $\Lf$ and $\Lc$ are vertical curves for the cone field and that $\mathcal{L}$ is a lamination away from $\Lz \cup \Linf$ by Theorem \ref{thm_lamination}. 
In particular, any leaf of $\mathcal{L}$ does not intersect $\Lf $, $\Lc$ and its preimages away from $\Lz \cup \Linf$. 
Recall also that the plane $\R^2$ is subdivided into regions bounded by $\bar \Lf$, $\Lf$, a component of the preimage $\gamma$ of $\bar \Lf$ by $F$. Moreover, every pullback of $\Lc$ is $\sigma$-symmetric since $F\circ \sigma = F$, hence every leaf of $\mathcal{L}$ is $\sigma$-symmetric and is in a region bounded by $\gamma$, $\Lf, \Lc , \bar \Lf$. Furthermore, the interval at infinity $\bIinf$ is partitionned into points that are precritical/regular points on the interval $(-2, 0)$, precritical/regular points in the interval $(0,2)$ and the singleton $\{ 0\}$. As a result, the leaves that diverge to infinity toward a point distinct from $\binfp$ must be intersect $\bIinf$ at one of these classes of points. This shows that the leaves in $R_3^+ \cup R_3^- \cup \sigma(R_3^+ \cup R_3^-)$ are of type $III_-,III_0$ or $\mathcal{V}_{III_-}$. Similarly, for all the other curves in each region, this shows that these 9 classes of curves form a partition.
 
\medskip
 
We will prove assertion (ii) directly by showing that the pullback of a curve of type $\varphi(i)$ is a union of curves of type $\varphi(j)$ for all $j$ with $m_{ij}\neq 0$. We prove it case by case. 

$\bullet$ Let $C$ be a leaf of type $I$. Then the arc $C\cap \sigma(R_5)$ admits a preimage, which is a vertical $\sigma$-symmetric curve contained in $R_3 \cup \sigma(R_3)$ by assertion (iii) of Proposition \ref{prop_mapping_regions}. Moreover, since $C$ passes through the point $\bfzp$, its preimage must pass through the point $\Oinf$, and this proves that the pullback of $C\cap \sigma(R_5)$ is a curve of type $III_0$, as required. 

\smallskip

$\bullet$ Let $C$ be a leaf of type $II$. Then by assertion (ii) of Proposition \ref{prop_mapping_regions}, the  pullback of $R_4^- \cap C$ is a symmetric leaf in $R_2\cup \sigma(R_2)$. Since the map induced by $F$ from $\bIinf$ to $\bIz$ is the Chebyshev map, this pullback must intersect of point at infinity which is precritical, but distinct from $\Oinf$ because $C$ does not pass through $\bfzp,\bfz$. 
This shows that the pullback of $C\cap R_4^-$ is a curve of type $III_+$. 
Next we analyze the pullback of $C\cap \sigma(R_4 \cup R_5)$.  
By assertion (iii) of Proposition \ref{prop_mapping_regions}, the pullback of $C\cap \sigma(R_4 \cup R_5)$ is contained in $R_3\cup \sigma(R_3)$. By the same argument, since $C$ intersects $\bIz$ at a precritical point distinct from $\bfz, \bfzp$, the pullback intersects $\bIinf$ at a precritical point distinct from $\Oinf$. This pullback is thus a curve of type $III_-$.  This shows that the pullback of $C$ is a curve of type $III_-$ and a curve of type $III_+$. 

\smallskip 

$\bullet$ The argument is similar if $C$ is a leaf of type $\mathcal{V}_{II}$, the only difference is that the pullback curves is always going to intersect $\bIinf$ at a regular point. So the pullback of a leaf of type $\mathcal{V}_{II}$ is one curve of $\mathcal{V}_{III_-}$ and a curve of type $\mathcal{V}_{III_+}$.

\smallskip

$\bullet$ Let $C$ be a leaf of type $III_-$. Observe that the map induced by $F$ from $\bIz$ to $\bIinf$ is the identity map, so the pullback of $C\cap \sigma(R_3)$ is a curve going through a precritical point on $\bIz$ which is distinct from $[0:0:1]$. 
By assertion (iv) of Proposition \ref{prop_mapping_regions}, the pullback of $C\cap \sigma(R_3)$ is  $\sigma$-symmetric leaf of $\mathcal{L}$ contained in $R_4\cup R_5 \cup\sigma(R_4\cup R_5)$. It is a curve of type $II$. 
Similarly assertion(ii) of Proposition \ref{prop_mapping_regions} shows that the pullback of $C\cap R_3^-$ is contained in $R_2\cup \sigma(R_2)$ and yields a curve joining $\bfz$ (the preimage of the intersection point of $\Li$ with $C$) and $\binf,\binfp$. This is a curve of type $IV$. This shows that the pullback of a curve of type $III_-$ is a curve of type $IV$ and a curve of type $II$.

\smallskip 

$\bullet$ The case where $C$ is a leaf of type $\mathcal{V}_{III_-}$ is similar, except that the pullback of the part in $C\cap \sigma(R_3)$ by $F$ is a curve going through a regular point in $\bIz$. In particular, the pullback of $C$ is a curve of type $\mathcal{V}_{II}$ and a curve of type $IV$. 

\smallskip

$\bullet$ Let $C$ be a leaf of type $III_0$. This is similar to the previous two cases. The pullback is one curve of type $II$ and a curve of type $IV$. 

\smallskip 

$\bullet$ If $C$ be a leaf of type $III_+$. Then this case is similar to the previous cases.

\smallskip

$\bullet$ If $C$ is a leaf of type $\mathcal{V}_{III_+}$, then the argument is similar except that one curve will be a regular leaf, in $\mathcal{V}_{II}$ and the other will be a curve of type $IV$. 

\smallskip 

$\bullet$ Let $C$ be a leaf of type $IV$. Then by assertion (ii) of Proposition \ref{prop_mapping_regions}, the pullback of $C\cap R_2^-$ is a curve  in $R_2\cup \sigma(R_2)$ which passes through $\bfz$ and $\binf,\binfp$. In particular, this is a curve of type $IV$. 
Note that $C$ is contained in $R_2^- \cup \sigma(R_2^-)$, so the pullback of $C\cap \sigma(R_2)$ is by assertion (vii) of Proposition \ref{prop_mapping_regions} a $\sigma$-symmetric curve in $ R_5 \cup \sigma( R_5)$, which passes through $\bfzp$ and diverges toward infinity toward the points $\binf,\binfp$. This pullback is a curve of type $I$. The pullback of $C$ is thus one curve of type $I$ and a curve  of type $IV$, as required.   

This finishes the proof of assertion (ii) and yields a matrix encoding the pullback on $\mathcal{L}$, its transpose is exactly the matrix $M$. We conclude as assertion (ii) implies assertion (i).   
\end{proof}

\begin{cor} \label{cor_count_real_curves} The matrix $M_*$ from Theorem \ref{thm_markov} has characteristic polynomial $x^4 (x^2 -2)(x^3 -x^2 - 2 x +1)$. The vectors $ \pm \sqrt{2} e_7 + e_8 + e_9 $ are the eigenvectors of the eigenvalue $\pm \sqrt{2}$.  
Moreover, the eigenvector for the transpose of $M_*$ associated with the eigenvalue $\delta$ belongs to the span $\R e_1 + \ldots + \R e_6$.
\end{cor}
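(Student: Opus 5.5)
The plan is to exploit the block structure of $M_*$ visible in \eqref{eq_matrix}. Order the basis as $e_1,\dots,e_9$ following the bijection $\varphi$, and split it as $A=\mathrm{span}(e_1,\dots,e_6)$ (types $I,\dots,IV$) and $B=\mathrm{span}(e_7,e_8,e_9)$ (types $\mathcal V$). Look at rows $7$–$9$ of $M_*$. Their only nonzero entries in columns $1$–$6$ lie in column $6$. Rows $1$–$6$ have zero entries in columns $7$–$9$.

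Here $e_6$ is the type-$IV$ index, which is shared because the pullbacks of $\mathcal V$-leaves contain type-$IV$ curves. So $M_*$ is block lower-triangular in the form
\[
M_*=\begin{pmatrix} P & 0\\ Q & N\end{pmatrix},
\]
where $P$ is the upper-left $6\times 6$ block and $N$ is the lower-right $3\times3$ block. This gives $\chi_{M_*}=\chi_P\,\chi_N$.

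For $N$ I will compute directly. Its rows are $(0,1,1)$, $(1,0,0)$, $(1,0,0)$, so
\[
\chi_N(x)=x^3-2x=x(x^2-2).
\]
Checking eigenvectors of the full $M_*$: for $v=\pm\sqrt2 e_7+e_8+e_9$, rows $1$–$6$ of $M_*$ vanish on $B$. Row $7$ gives $1+1=2=\pm\sqrt2\cdot(\pm\sqrt2)$. Rows $8$ and $9$ give $\pm\sqrt2\cdot 1$, consistent with eigenvalue $\pm\sqrt2$. Hence these are honest eigenvectors of $M_*$, because column $6$ does not enter.

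For $P$ I will expand $\det(xI-P)$. Rows $3$, $4$, $5$ of $P$ are identical, namely $(0,1,0,0,0,1)$. So $P$ has rank at most $4$, and subtracting rows shows $x^2\mid\chi_P$. To get the remaining quartic, I will pass to the quotient dynamics. Setting $w=e_3$-row value, one sees the image of $P$ lies in the span of $e_4$ (from $I$), $e_3+e_5$ (from $II$), $e_2+e_3+e_4+e_5+e_6$ (from $III_\ast$ and $IV$ columns), and $e_1+\dots$. Rather than guess the exact subspace, I will do a direct cofactor expansion along column $1$, which has a single entry in row $6$. I expect this to yield $\chi_P=x^2\cdot x\,(x^3-x^2-2x+1)$.

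Combining, $\chi_{M_*}=x^3(x^3-x^2-2x+1)\cdot x(x^2-2)=x^4(x^2-2)(x^3-x^2-2x+1)$, as claimed. I expect this cofactor bookkeeping to be the main, though purely computational, obstacle, and I will cross-check it against Theorem \ref{thm_value_dyndeg}.

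For the final claim, consider ${}^tM_*=\begin{pmatrix}{}^tP & {}^tQ\\ 0&{}^tN\end{pmatrix}$. The subspace $A$ is invariant under ${}^tM_*$, since the lower-left block vanishes. The cubic factor comes from $P$, and $\delta$ is not an eigenvalue of $N$, because $\delta\neq 0,\pm\sqrt2$. Therefore ${}^tP$ has an eigenvector $u\in A$ for $\delta$, and then $(u,0)$ is an eigenvector of ${}^tM_*$. Since $\delta$ is a simple root of $\chi_{M_*}$, this eigenvector is unique up to scaling, so it lies in $\R e_1+\dots+\R e_6$.
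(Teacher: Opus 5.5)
Your proposal is correct and follows the paper's route. The block lower-triangular splitting gives $\chi_{M_*}=\chi_P\,\chi_N$ with $\chi_N=x(x^2-2)$, the characteristic polynomial of $P$ is a direct computation, and the $\delta$-eigenvector of ${}^tM_*$ lies in $\R e_1+\dots+\R e_6$ because that subspace is ${}^tM_*$-invariant; your observation that $\delta$ is a simple root (being $\neq 0,\pm\sqrt2$) makes the uniqueness a bit more explicit than the paper does. The cofactor expansion you only anticipate does come out as $\chi_P=x^3(x^3-x^2-2x+1)$, so carry it out, and delete the garbled ``quotient dynamics'' sentence, which conflates rows of $P$ with its image and plays no role in the argument.
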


\begin{proof}
This follows from a direct computation. As for the second statement, the transpose of $M_*$ is a block-upper triangular matrix where the first block is $6\times 6$ while the other $3\times 3$ block is of the form 
\begin{equation*}
N = \left  ( \begin{array}{lll}
0&1&1 \\
1 & 0 &0 \\
1& 0 & 0 
\end{array} \right ).
\end{equation*} 
In particular, it is symmetric and its eigenvalues are $0, \pm \sqrt{2}$. 
As a result, the spectral radius  of the $6\times 6$ block is equal to $\delta$ and its associated eigenvector gives an eigenvector of the transpose of $M_*$ in $\sum_{i=1}^6 \R e_i$, as required.
\end{proof}

\subsection{Counting real pullback and complex pullback of curves}

In this section, we investigate the properties of the pullback by $F$ in the real domain. Recall that the Green current $T_F$ is a complex current obtained by pulling back the line $\Lc$ and divided by $\delta^n$. 
First we analyze how the  real pullback  behaves.
For each type of vertical curves indexed via the bijection $\varphi \colon \{1, \ldots, 9 \} \to \{ I, II, III_-, III_0, III_+, IV, \mathcal{V}_{II}, \mathcal{V}_{III_-} , \mathcal{V}_{III_+}  \}$, we will associated a corresponding vector $v_i \in H^{1,1}(X, \C)$ as follows: 
\begin{align*}
v_1 &  = L - \Efzp - \dfrac{\Einf}{2} - \dfrac{\Einfp}{2}, & v_2=v_7 & = L - \dfrac{\Einf}{2} - \dfrac{\Einfp}{2} , & v_3 = v_4 = v_5 = v_8=v_9 &  =  L - \Efz, \\
v_6 &= L - \Efz - \dfrac{\Einf}{2} - \dfrac{\Einfp}{2}.  
\end{align*}

\begin{thm} \label{thm_magic} Take $C= \Lf \cup \bar \Lf$ or $C= \Lc$.
 For any $n \geqslant 2$,  the real  pullback $(F_X^n)^* C$ is a union of real curves $C \cup \cup_{i=1}^p  C_i$ where each curve $C_k$ is a leaf of type $\varphi(i_k)$. Then the cohomology of $(F_X^n)^* C$ is equal to $\sum_{i=1}^p v_{i_k} + \{ C\}$.  
\end{thm}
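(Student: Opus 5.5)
The plan is to compute the cohomology class of each real leaf directly, and then to show that this class does not depend on the stage $n$ at which the leaf appears. Every leaf $C_k$ of $(F_X^n)^*C$ other than $C$ is, by Theorem \ref{thm_markov}, a $\sigma$-symmetric real curve of type $\varphi(i_k)$. Its complexification is an irreducible component of $F^{-m}(\Lc)$ or $F^{-m}(\Lf\cup\bar\Lf)$ for some $m\le n$. On $X$ its class can be written as
\begin{equation*}
\{C_k\} = d L - a\Efzp - b\Einfp - c\Efz - e\Einf ,
\end{equation*}
where $d$ is the degree of the plane curve and $a,b,c,e$ are its multiplicities at $\bfzp,\binfp,\bfz,\binf$. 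The core of the proof is to show that these numbers depend only on the type of the leaf, and that they give exactly the vectors $v_i$. The half-integer coefficients force a particular reading: a real leaf of type $II$ has two branches through $\binf,\binfp$ (one in each half plane), and these belong to a single complex curve shared with its $\sigma$-partner. So I would interpret $v_i$ as the class counted per real component, with a complex conic through $\binf,\binfp$ contributing $\frac12$ to each real piece.

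The proof would proceed by induction on $n$ using the pullback matrix of Proposition \ref{prop_lift_X_F}. For the base case I would compute the first pullbacks explicitly. For $C=\Lc$, we have $F^{-1}(\Lc)=\bH$, the conic $-x^2+4y^2+2xz=0$, which passes through $\binf,\binfp$ and through $\bO$. Its two real branches are of types $II$ (or $\mathcal V_{II}$) and the type of the branch through $\bfzp$ (type $I$). Applying $F_X^*$ to $\{\pi^o\Lc\}=L-\Efz$ gives $2L-\Efz-(L-\Efz)-\Einfp-\Einf$, i.e. $L-\Einfp-\Einf$ up to the $\Efz$ correction. Splitting this evenly between the two real branches gives $v_2$ plus the class of the other branch, and I would check that the latter equals $v_1$ or $v_6$ as appropriate. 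The analogous computation starting from $\Lf\cup\bar\Lf$ has class $2L-2\Efz-\Einf-\Einfp$, because $\Lf$ passes through $\bfz,\binf$ and $\bar\Lf$ through $\bfz,\binfp$.

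For the inductive step, the claim to prove is that whenever $F_X^*$ applied to the class $v_i$ is decomposed along the transpose Markov rule of Theorem \ref{thm_markov}, one gets
\begin{equation*}
F_X^* v_i = \sum_j m_{ji} v_j ,
\end{equation*}
possibly with an extra exceptional correction coming from collapsed curves. Concretely, I would verify the identity $F_X^*v_i=\sum_j (M_*)_{ji}v_j$ column by column with the explicit matrix. For example, using the matrix one gets $F_X^*(L-\Efz)=L-\Einfp-\Einf=v_2+v_6$, which should match the pullback of type $III$ (namely $II+IV$) up to the expected correction. Where the identity fails by a multiple of $\{\pi^o\Lc\}$ or $\Efzp$, I would account for this using the leaf $C$ itself: $C$ reappears in $(F_X^n)^*C$ because $\Lf$ is invariant and $\Lc$ pulls back through $\Efzp$. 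This bookkeeping is exactly why the statement is written as $\{C\}+\sum v_{i_k}$.

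I expect the main obstacle to be the justification of the multiplicities, and specifically two points. First, the pullback of a real leaf must acquire no extra complex (non-real) components. Second, the multiplicities at the four blown-up points must be exactly those recorded in the $v_i$. I would attack this with Lemma \ref{lem_forward_curves} and Proposition \ref{prop_mapping_regions}: since every real preimage branch lies in a prescribed region, its endpoints at $\bfz,\bfzp,\binf,\binfp$ and its tangencies there are forced. A degree count in cohomology against $\theta_-$, together with Bezout intersection numbers with $\Lz$ and $\Linf$, would then show that the real components exhaust the complex pullback. If that exhaustion fails, the statement would have to be weakened to an inequality of classes.
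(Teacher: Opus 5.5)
Your inductive step rests on the column-by-column identity $F_X^* v_i=\sum_j (M_*)_{ji}v_j$, and this identity is false for the types $I$, $II$, $\mathcal{V}_{II}$ and $IV$.

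For instance, $F_X^*v_1=(2L-\Efz)-(L-\Efz)-\tfrac12\Efz-\tfrac12\Efzp=L-\tfrac12\Efz-\tfrac12\Efzp$. A type $I$ leaf, however, pulls back to a single type $III_0$ leaf, whose vector is $v_4=L-\Efz$. The defect is therefore $\tfrac12(\Efzp-\Efz)$. The same defect occurs for $II$ and $\mathcal{V}_{II}$, and for $IV$ it is $-\tfrac12(\Efzp-\Efz)$.

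This defect is neither a multiple of $\{\pi^o\Lc\}$ nor of $\Efzp$. Nor is it absorbed by the term $\{C\}$, which appears in the statement only because $\Lf\cup\bar\Lf\subset F^{-1}(\Lf\cup\bar\Lf)$. So the mechanism you propose for closing the induction does not work.

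The missing idea is the following. On a vector $n$ of leaf counts, the defect equals $\tfrac12(n_I+n_{II}+n_{\mathcal{V}_{II}}-n_{IV})(\Efzp-\Efz)$. Every count vector produced by the Markov rules satisfies $n_{IV}=n_I+n_{II}+n_{\mathcal{V}_{II}}$. Concretely:
\begin{itemize}
\item the image $W$ of ${}^tM_*$ is spanned by $e_4$, $e_3+e_5$, $e_2+e_6$, $e_1+e_6$, $e_8+e_9$, $e_6+e_7$;
\item $W$ is ${}^tM_*$-invariant and lies in the kernel of $U\,{}^tM_*-F_X^*U$;
\item the initial counts lie in $W$: $F^{-1}(\bar\Lf)$ gives one $I$ and one $IV$ leaf, and $F^{-1}(\Lc)=\bH$ gives one $II$ and one $IV$ leaf.
\end{itemize}
On $W$ one has $F_X^*U=U\,{}^tM_*$ exactly, and the induction closes. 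This is the paper's argument.

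Relatedly, the plan to read off $d,a,b,c,e$ for each leaf separately cannot be carried out, because a single real leaf is not a complex curve. The hyperbola $F^{-1}(\bar\Lf)$ carries one $I$ leaf and one $IV$ leaf, and $v_1$ has $L$-coefficient $1$ although the curve is a conic. The $v_i$ are only a bookkeeping split of total classes, so the argument must be run on count vectors, which is exactly where the cancellation above is needed.

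Your worry about non-real components is also unnecessary. By algebraic stability the class of $(F_X^n)^*C$ is $(F_X^*)^n\{C\}$, and the real counts evolve by ${}^tM_*$ by Theorem \ref{thm_markov}. The statement is just the compatibility of these two linear recursions; the absence of non-real components is a consequence, not an input.

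Finally, two computations in your base case are wrong.
\begin{itemize}
\item $F_X^*\Efz=\Einfp+\Einf$, so $F_X^*(L-\Efz)=2L-\Efz-\Einfp-\Einf$, which does equal $v_2+v_6$. It is not $L-\Einfp-\Einf$.
\item $\bH$ passes through $\bfz$ and $(0,0)$, not through $\bfzp$, so its real branches are of types $II$ and $IV$, not $I$.
\end{itemize}
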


\begin{proof} Since the two arguments are similar, we assume that $C = \Lf \cup \bar \Lf$. 
We proceed by induction on $n\geqslant 1$. For $n=1$, observe that the pullback of $\Lf$ is the union $\Lf \cup \bar \Lf \cup \gamma$ where $\gamma = F^{-1}(\bar\Lf)$. 
Since $\gamma$ is a hyperbola going through $\binf, \binfp, \bfz, \bfzp$, its cohomology is equal to $2 L - \Efz - \Efzp - \Einf - \Einfp = v_1 + v_6$. This shows that the equality holds for $n=1$. 
\smallskip 

Let us assume that  $n \geqslant 1$ holds. Note that pullback is controlled by the transpose of matrix $M_*$ given in Theorem \ref{thm_markov}. 
Take any  curve $V$ in $(F^{n}_X)^* \Lf \setminus (F_X^{n-1})^* \Lf$.
Note that since the cohomology class of $V$ has integer coefficients, this means that its real part is a union of $n_k$ leaves of type $\varphi(k)$ and by the induction hypothesis, its  cohomology class is: 
\begin{equation*}
\{ V\} = \sum_{k=1}^9 n_k v_{k}. 
\end{equation*} 
Since $\{ V\}$ is integral, we deduce that  
$n_1 + n_2 + n_6 + n_7$ is an even integer. 
The real pullback of $V$ consists of $n_k'$ leaves of type $\varphi(k)$ where $ ^t (n_1', n_2', \ldots, n_9') = ^t M_* \  ^t (n_1, \ldots, n_9)$.
 We thus need to show  $F_X^* \{V \} = \sum_{k=1}^9 n_k' v_k $. 
To that end, we take the difference of matrices $\Delta = U \ ^t M_* - (F_X)^* U$ where $U$ is the matrix whose column $j$ correspond to the cohomology class  $v_j$ for each $j$, where $ \ ^t M_*$ is the transpose of the matrix $M_*$ corresponding to the pullback action on real curves and where $(F_X)^*$ is the matrix of the pullback action in $H^{1,1}(X,\C)$. 
  We have the matrix equality 
\begin{multline*}
\Delta =\left(
\begin{array}{cccccccccc}
 1    & 1    & 1  & 1  & 1  & 1    & 1    & 1  & 1\\
 -1   & 0    & 0  & 0  & 0  & 0    & 0    & 0  & 0\\
 -1/2 & -1/2 & 0  & 0  & 0  & -1/2 & -1/2 & 0  & 0\\
 0    & 0    & -1 & -1 & -1 & -1   & 0    & -1  & -1\\
 -1/2 & -1/2 & 0  & 0  & 0  & -1/2 & -1/2 & 0   & 0 \\
\end{array}
\right) \left(
\begin{array}{ccccccccc}
 0 & 0 & 0 & 0 & 0 & 1 & 0 & 0 & 0 \\
 0 & 0 & 1 & 1 & 1 & 0 & 0 & 0 & 0 \\
 0 & 1 & 0 & 0 & 0 & 0 & 0 & 0 & 0 \\
 1 & 0 & 0 & 0 & 0 & 0 & 0 & 0 & 0 \\
 0 & 1 & 0 & 0 & 0 & 0 & 0 & 0 & 0 \\
 0 & 0 & 1 & 1 & 1 & 1 & 0 & 1 & 1 \\
 0 & 0 & 0 & 0 & 0 & 0 & 0 & 1 & 1 \\
 0 & 0 & 0 & 0 & 0 & 0 & 1 & 0 & 0 \\
 0 & 0 & 0 & 0 & 0 & 0 & 1 & 0 & 0 \\
\end{array}
\right)
 \\ -  \left(
\begin{array}{ccccc}
 2 & 1 & 0 & 0 & 0 \\
 0 & 0 & 1 & 0 & 0 \\
 0 & 0 & 0 & 1 & 0 \\
 -1 & -1 & 0 & 0 & 1 \\
 0 & 0 & 0 & 1 & 0 \\
\end{array}
\right) \left(
\begin{array}{cccccccccc}
 1    & 1    & 1  & 1  & 1  & 1    & 1    & 1  & 1\\
 -1   & 0    & 0  & 0  & 0  & 0    & 0    & 0  & 0\\
 -1/2 & -1/2 & 0  & 0  & 0  & -1/2 & -1/2 & 0  & 0\\
 0    & 0    & -1 & -1 & -1 & -1   & 0    & -1  & -1\\
 -1/2 & -1/2 & 0  & 0  & 0  & -1/2 & -1/2 & 0   & 0 \\
\end{array}
\right)
\\ = \left(
\begin{array}{ccccccccc}
 0 & 0 & 0 & 0 & 0 & 0 & 0 & 0 & 0 \\
 \frac{1}{2} & \frac{1}{2} & 0 & 0 & 0 & -\frac{1}{2} & \frac{1}{2} & 0 & 0 \\
 0 & 0 & 0 & 0 & 0 & 0 & 0 & 0 & 0 \\
 -\frac{1}{2} & -\frac{1}{2} & 0 & 0 & 0 & \frac{1}{2} & -\frac{1}{2} & 0 & 0 \\
 0 & 0 & 0 & 0 & 0 & 0 & 0 & 0 & 0 \\
\end{array}
\right).
\end{multline*}
Most of the entries of $\Delta$ are zero, but there are some non-zero entries, hence the identity $U \ ^t M_* = (F_X)^* U$ is not always true. 
The key point is that the range $W = \ ^t M_* (\R e_1 + \ldots + \R e_9)$ is in the span of $\R e_3 + \R e_4 + \R e_5 + \R (e_6 + e_1) + \R (e_6 + e_2) + \R (e_8 + e_9) + \R(e_6 + e_7)$. 
  In particular, since all the vectors $e_3, e_4, e_5, (e_6 + e_1), (e_6 + e_2) , (e_6 + e_7), e_8, e_9$ are in the kernel of $\Delta$, we get that the equality 
  $U  \ ^t M_* v = (F_X)^* U v$ holds for any vector $v \in W$. 
Observe that when $n=1$, the pullback $F^{-1}(\bar \Lf)$ yields a union of two curves, one of type $I$ and the other of type $IV$. 
This shows that the class of $\{ F^{-1}(\bar \Lf)\}$ belongs to $W$.
By an immediate induction, since $W$ is stable by $\ ^t M_*$, we get that $\{ V\} = \sum_{k} n_k v_k$ where the vector $ \underline{n} =  ^t (n_1, \ldots, n_9) \in W$. In particular, we get $U\ ^t M_* \underline{n} = (F_X)^* U \underline{n}$, and this gives the required equality. 
\end{proof}

\subsection{Signed pullback $F^{\sharp}$ of real vertical currents}

In this section, we show that a real Green current potentially exists. While the pullback of complex closed positive currents by arbitrary rational maps is well-defined, the pullback of real currents is not automatically well-behaved in a general setting. 
The main difficulty in our case is that the line $\Lz$ is transversally critical and that the line $\Lc$ has a collapsing behavior.   
To that end, we endow each curve on $\R^2$ with an orientation for which the \textit{positive} direction is upward, i.e the $y$ increasing direction. 
When an oriented curve is not oriented positively, we say the orientation is \textit{negative}. 

\medskip 

Let us now define an oriented pullback as follows. 
Given any positively oriented vertical curve $\gamma$ in $\mathcal{L}$, its strict transform by $F_{\R}$ is a union of vertical curves. These curves are $\sigma$-symmetric. 
For each component $\gamma'$ of the strict transform of $\gamma$ by $F$, we say that the oriented pullback by $F_{\R}$ of $\gamma$ is $\gamma'_\epsilon$ where $\epsilon$ corresponds to the orientation of $\gamma'$ and where  the sign of $\epsilon$ coincides with the one of the jacobian of $F$ (on the canonical basis of $\R^2$) at every point of $\gamma'$.
We denote by $ \Fs$ this oriented pullback. 

\medskip

We now establish a current of integration on an oriented curve $\gamma$. Given any oriented curve, we take a parametrization $\varphi \colon (0,1) \to \gamma$ whose orientation coincides with the one of $\gamma$. 
Given any smooth $1$-form $\alpha$ on $\R^2$ with compact support, the current of integration along $\gamma$, denoted $[\gamma]$, applied at $\alpha$ is defined as: 
\begin{equation}
\langle [\gamma], \alpha \rangle = \int_{(0,1)} \varphi^* \alpha. 
\end{equation}
When $\gamma$ is a vertical oriented curve for the cone field $\mathcal{C}$, the parametrization $\varphi$ is Lipschitz, hence the derivative of $\varphi$ exists almost everywhere by Rademacher's theorem (see \cite[\S~3.2 p.21]{morgan_gmt}) and $[\gamma]$ is a rectifiable current (see \cite[p. 39]{morgan_gmt}). 

\smallskip

Any rectifiable $1$-current $T$ has a notion of mass, denoted as $M(T)$ and defined as: 
\begin{equation}
M(T) = \sup \{ |\langle T, \alpha \rangle | \  || \alpha||_\infty \leqslant 1 \},
\end{equation}
where the supremum is taken over all smooth compactly supported $1$ forms $\alpha$ which are uniformly bounded by $1$ at all points. 
For a graph of a Lischitz function, the mass of the corresponding integration current is the one dimensional Hausdorff measure of the corresponding set. 
 
 Naturally, we endow the space of rectifiable currents with the weak topology, the topology of pointwise convergence on each compactly supported forms.    
  
First, we get some mass estimates. 
\begin{prop} \label{prop_bound_mass} Let $V$ be a curve in $\mathcal{L}$, we consider its positively oriented current of integration $[V]$. 
 Then the currents: 
\begin{equation}
 T_n =\dfrac{ \Fs^n [V]}{\delta^n} 
\end{equation}
have uniformly bounded mass (i.e $M(T_n) < C < +\infty$ for all $n$ and some $C>0$). 
\end{prop}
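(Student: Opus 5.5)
The bound is meant in a fixed smooth metric on the compactification: the leaves of $\mathcal{L}$ are unbounded in $\R^2$ and have infinite Euclidean length. So I measure mass with the Fubini--Study metric on $\bP^2(\R)$ (equivalently any smooth metric on $X(\R)$), tested on forms bounded in that metric. The strategy is to write $\Fs^n[V]$ as a signed sum of positively oriented currents of integration on leaves, count the leaves using the Markov matrix of Theorem \ref{thm_markov}, and show that each individual leaf has uniformly bounded length.

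\emph{Step 1: reduction to counting.} By Theorem \ref{thm_markov}(ii), the strict transform by $F_{\R}$ of a leaf of type $\varphi(i)$ is exactly the union of the leaves of types $\varphi(j)$ with $m_{ij}\neq 0$, one leaf per such $j$. The signed pullback $\Fs$ only attaches a sign $\pm1$ to each component. By the triangle inequality,
\begin{equation*}
M(\Fs^n[V]) \leqslant \sum_{\gamma} M([\gamma]),
\end{equation*}
where the sum runs over the leaves $\gamma$ occurring in the $n$-th iterated strict transform of $V$. If $V$ has type $\varphi(i_0)$, the number of such leaves of type $\varphi(j)$ is the $j$-th entry of $({}^tM_*)^n e_{i_0}$. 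By Corollary \ref{cor_count_real_curves} the characteristic polynomial of $M_*$ is $x^4(x^2-2)(x^3-x^2-2x+1)$. Its eigenvalues are therefore $0$, $\pm\sqrt 2$ and the three roots of $x^3-x^2-2x+1$, of which $\delta$ has the largest modulus. Since $\delta$ is a simple root, the total number of leaves is $N_n\leqslant C_0\delta^n$. Note that a leaf in the block $\mathcal{V}_{II},\mathcal{V}_{III_\pm}$ also produces type $IV$ leaves, so the two blocks are coupled and I use the whole $9\times 9$ matrix, not just one block. It therefore suffices to prove
\begin{equation*}
\sup_{\gamma\in\mathcal{L}} M([\gamma]) \leqslant c_0<+\infty,
\end{equation*}
which gives $M(T_n)\leqslant c_0C_0$.

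\emph{Step 2: uniform length of leaves (the main obstacle).} Each leaf is a vertical curve for the cone field $\mathcal{C}$ of Definition \ref{def_cone_field}, by Theorem \ref{thm_invariant_cone_field}. The difficulty is near $\bfz,\bfzp,\binf,\binfp$ and near $\Lz\cup\Linf$, where the cones degenerate and Lipschitz constants over a fixed chart blow up.

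I would use the real Crofton formula on $\bP^2(\R)$:
\begin{equation*}
\mathrm{length}_{FS}(\gamma)=c\int \#(\gamma\cap\ell)\,d\ell ,
\end{equation*}
the integral being over the space of real lines $\ell$. It is then enough to show that a leaf meets a generic real line in a uniformly bounded number of points, say at most $6$. By the earlier remarks, a leaf splits into at most two $\sigma$-symmetric arcs, one in each half-plane. Each arc is a graph over the $y$-axis and is transverse to all lines of the pencils bounding the cone. Hence each arc meets every horizontal line at most once, and every line of the relevant pencils at most once.

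For a general line $\ell$, I would use the fact that the arc lies in a region delimited by $\Lf$, $\bar\Lf$, $\Lc$ and the hyperbola $\gamma=F^{-1}(\bar \Lf)$. On this region the slope of a vertical tangent vector is controlled by the two pencil lines through the point. Subdividing $\ell$ into at most a bounded number of pieces on which its slope lies outside the cone at every point gives at most one intersection per piece. Convexity of the regions $R_i$, together with the explicit description of the cone boundaries, should make this count uniform.

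If this uniform transversality fails for some pairs $(\ell,\gamma)$, the fallback is to use the coding of Step 1. Every leaf is a limit of components of $F^{-k}(\Lc)$ of the same type, and mass is lower semicontinuous under weak limits. So it suffices to treat components of $F^{-k}(\Lc)$. These are real parts of algebraic curves whose classes are the $v_i$ of Theorem \ref{thm_magic}, and the number of real intersections with a line is at most the complex intersection number $v_i\cdot L\leqslant 1$ per component. Crofton then gives $M([\gamma])\leqslant c\,\pi$ uniformly.

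\emph{Step 3: conclusion.} Combining Steps 1 and 2 yields $M(T_n)\leqslant c_0 C_0$ for all $n$, which is the statement.
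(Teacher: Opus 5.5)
\textbf{Step 1 is fine; the gap is in Step 2.} Your Step 1 is the paper's counting argument: by Theorem~\ref{thm_markov} the number of leaves in the $n$-th pullback of $V$ is given by ${}^tM_*^n$, hence is at most $C_1\delta^n$.

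Step 2 asserts a uniform bound on the Fubini--Study length of every leaf, but does not prove it.
\begin{itemize}
\item \emph{Crofton argument.} It needs a uniform bound on $\#(\gamma\cap\ell)$ for almost all real lines $\ell$. The cone field only controls lines whose direction is horizontal for $\mathcal{C}$. A line whose direction lies inside the vertical cone along a segment is tangent to the same cone field as the leaf. Nothing, convexity of the $R_i$ included, prevents it from crossing the leaf many times.
\item \emph{Fallback.} The B\'ezout step is invalid. The $v_i$ are not classes of algebraic curves; they have half-integer coefficients such as $L-\Einf/2-\Einfp/2$. A single leaf is not the real locus of an algebraic curve, and Theorem~\ref{thm_magic} only computes the class of the whole pullback. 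B\'ezout applies to an irreducible complex component, which may carry several leaves; for example the hyperbola $F^{-1}(\bar\Lf)$, of class $v_1+v_6$, carries two. So it only bounds the total number of real intersections of all leaves of that component by its degree, which grows with the pullback order. It gives no per-leaf bound, and says nothing about non-algebraic limit leaves $V\in\mathcal{L}$, which the statement allows.
\item The aggregate version does give total length $\leqslant C_V\delta^n$ for algebraic $V$, but not for arbitrary $V\in\mathcal{L}$.
\end{itemize}

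\textbf{The global bound is not what is needed.} With the paper's definition of $M$ (compactly supported forms on $\R^2$), a single leaf already has infinite mass. The paper's proof bounds the mass on each fixed compact $K\subset\R^2$, which is all that weak compactness requires. On such $K$, Step 2 is immediate:
\begin{itemize}
\item Every line used in Definition~\ref{def_cone_field} satisfies $|dx/dy|\leqslant 2$. Lines through $\binf,\binfp$ have $dx/dy=\pm 2$. A line through $\bfz$ (resp.\ $\bfzp$) is used only when it meets $\bIinf$, i.e.\ when $|x-2|\leqslant 2|y|$ (resp.\ $|x+2|\leqslant 2|y|$), and its slope is $dx/dy=(x\mp 2)/y$.
\item Hence the vertical cone lies in $\{|dx/dy|\leqslant 2\}$. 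Each leaf is a graph $x=u(y)$ with $|u'|\leqslant 2$, so it has length at most $2\sqrt5\,R$ in $\{|y|\leqslant R\}$.
\item Combined with your Step 1, this gives $M_K(T_n)\leqslant C_K$.
\end{itemize}
If you do want the statement on $\bP^2(\R)$, Crofton with general lines is the wrong tool. Use the monotonicity of the two pencil parameters along a vertical arc: each parameter has zero derivative only along its own pencil line, i.e.\ on the boundary of the vertical cone. Add a separate analysis near $\bfz,\bfzp,\binf,\binfp$.
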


\begin{proof}
Fix a compact subset $K$ in $\R^2$. For any vertical curve $\gamma$ for the cone field $\mathcal{C}$, the length of $\gamma \cap K$ is bounded since $\gamma$ is vertical for the cone field and the bound is uniform, i.e there exists a constant $C_K> 0$ such that for any curve $\gamma$, the length of $\gamma \cap K$, or the $1$ Hausdorff measure is bounded by $C_K$. 
 By Theorem \ref{thm_markov}, the number of vertical curves in $F^{-n}(V)$ can be computed using the transpose of the matrix $M_*^n$. As the spectral radius of $M_*$ is $\delta$, the number of vertical curves is bounded by $ C_1 \delta^n$ where $C_1 > 0$. 
 This shows that the mass of $\Fs^n [V]$ is bounded above by $C C_1 \delta^n$, hence the quotient has uniformly bounded mass, as required.
\end{proof}

\subsection{Coding of real vertical currents} \label{section_coding}
 
 Proposition \ref{prop_bound_mass} shows that the sequence of pullbacks has bounded mass, hence weak limits for subsequence exists. 
Now, we construct some components of the limit candidate as follows. 
These components will be very singular near the Chebyshev interval, namely they will consists of bouquets of curves going through every precritical point. The part of the currents going through the same point $p$ is what we call the \textit{wakes} at $p$.  
We start with the construction of the pieces of the fundamental wake at $(0,0)$, as it will generate all the other wakes and we fix an alphabet $A = \{ I, II, III_-, III_0, III_+ , IV\}$.

\begin{lem} For any generic infinite admissible word $\underline{w}= (w_1, \ldots, w_n , \ldots)$ in the symbol $A$ such that $w_1 = II$ and $w_2 = III_0$, there exists a current $[\gamma_{\underline{w}}]$ supported by one vertical curve in $\mathcal{L}$ whose coding under the image by $F$  coincides with the word $\underline{w}$.  
\end{lem}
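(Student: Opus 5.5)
We construct $\gamma_{\underline{w}}$ as a nested intersection of "cylinder" regions obtained by pulling back along the word, and then use the lamination structure to show the intersection is a single leaf. The word is read backwards in time: admissibility means $m_{w_{k}w_{k+1}}\neq 0$ in the pullback matrix ${}^tM_*$ of Theorem \ref{thm_markov}. By the surjection property (ii) of that theorem, every leaf of type $w_k$ has a preimage leaf of type $w_{k+1}$, and this preimage is unique. Indeed, the proof of Theorem \ref{thm_markov} shows that the pullback of a leaf of a given type consists of exactly one curve of each allowed type, each lying in a definite region of Proposition \ref{prop_mapping_regions}.

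\textbf{Step 1 (finite words).} Start with the unique leaf of type $III_0$ through $\Oinf$, which is the pullback of the type $I$ leaf $\overline{F^{-1}(\bar\Lf)}$ branch through $\bfzp$. Since $w_1=II$ follows it, we take its type $II$ preimage leaf, which passes through $(0,0)$. For each $n$, pulling back successively according to $w_3,\dots,w_n$ yields a curve $\gamma_n\in\mathcal{L}$ of type $w_n$. Each such curve is a finite pullback of $\Lc$ or $\bar\Lf$, and $F^{k}(\gamma_n)$ has type $w_{n-k}$. The orientation is chosen upward, so $[\gamma_n]$ makes sense as in the definition of $\Fs$.

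\textbf{Step 2 (consistent region sequence).} We do not compare the curves $\gamma_n$ directly, since they live in different regions. Instead, define $\Omega_n$ as the set of points $p$ in the closed region of type $w_1$ such that $F^{k}(p)$ lies in the closure of the region attached to $w_{k+1}$ for $k<n$. These are the regions $R_5\cup\sigma(R_5)$, $R_4\cup R_5\cup\sigma(\cdot)$, $R_3\cup\sigma(R_3)$, $R_2\cup\sigma(R_2)$, $R_2^-\cup\sigma(R_2^-)$, with the relevant side of $\Li$. Then $\Omega_n$ is nonempty, decreasing and closed, and it is bounded by two vertical curves of $\mathcal{L}$, by the cone invariance of Theorem \ref{thm_invariant_cone_field} and the mapping rules of Proposition \ref{prop_mapping_regions}.

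Take $\gamma_{\underline w}$ to be a limit leaf of the boundary curves. It exists because $\mathcal{L}$ is closed and the vertical curves are uniformly Lipschitz graphs over $y$, so Arzel\`a--Ascoli applies. By construction $F^k(\gamma_{\underline w})$ has type $w_{k+1}$ for all $k$, which is the required coding.

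\textbf{Step 3 (uniqueness of the limit) — the main obstacle.} It remains to show that $\bigcap_n\Omega_n$ contains a single leaf, so that $[\gamma_{\underline w}]$ is supported on one curve. Suppose two distinct leaves $C\neq C'$ lie in $\bigcap\Omega_n$, and let $B$ be the bubble between them. Genericity of $\underline w$ means that $III_0$, or more generally a return to the good region, appears infinitely often. Hence, by Proposition \ref{prop_recurrence_good_orbit}, the orbit of $B$ visits $K_\epsilon$ infinitely often. Proposition \ref{prop_expansion_kob} then gives exponential growth of the Kobayashi length of a horizontal segment in $B$. This contradicts the boundedness of that length inside $K_\epsilon$, exactly as in Proposition \ref{prop_laminar_good_region}.

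The delicate point is that the leaves pass through the precritical point $(0,0)$ and its images. There the lamination property of Theorem \ref{thm_lamination} fails, and the Cantor bouquet appears. The argument must therefore be run on $\mathbb{R}^2\setminus(\Lz\cup\Linf)$, using that distinct leaves of the bouquet are separated away from the axis. We would also check that the non-degeneracy of Lemma \ref{lem_non_deg} holds along the returns.
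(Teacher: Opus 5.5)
Your Step 1 reads the coding in the wrong direction and, as written, cannot be carried out. In the convention of Theorem \ref{thm_markov} (and of the remark after the lemma), $w_{k+1}$ is the type of the $F$-image of a leaf of type $w_k$. Since $F$ sends type $III_0$ leaves to type $I$ leaves, admissibility forces $w_3=I$. The preimage leaves of a type $II$ leaf, however, are only of types $III_-$ and $III_+$, so after taking the type $II$ preimage you cannot ``pull back according to $w_3$''.

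Your claimed uniqueness of the type $III_0$ leaf through $\mathbf{O}_\infty$ is also false. $\Lc$ is itself of type $III_0$, and the $III_0$-preimage of the type $I$ branch of $F^{-1}(\bar\Lf)$ is a different one: it cannot be $\Lc$, since $\Lc$ is collapsed by $F$. The number of type $III_0$ components of $F^{-n}(\Lc)$ keeps growing with $n$, and these leaves form exactly the bouquet the coding is meant to resolve.

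The right finite approximants are the paper's. For $n$ with $w_n=III_0$, pull $\Lc$ back successively along $w_{n-1},\dots,w_1$. Since $M_*$ has $0$--$1$ entries, this singles out one component $\gamma_n$. All these $\gamma_n$ are of type $w_1=II$, pass through $(0,0)$, and lie in the same region. So your reason for abandoning them (``they live in different regions'') is an artifact of the reversal, and as it stands nothing in your proposal shows that the sets $\Omega_n$ are nonempty.

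Step 2 has a further gap: $\Omega_n$ is cut out by regions, and regions do not record the type. The types $III_0$, $III_-$ and $\mathcal{V}_{III_-}$ all live in $R_3\cup\sigma(R_3)$, and likewise $II/\mathcal{V}_{II}$ and $III_+/\mathcal{V}_{III_+}$ share regions. They differ only by whether the leaf meets $\bIz\cup\bIinf$ at $\mathbf{O}_\infty$, at a precritical point, or at a regular point. That is not a region condition, and being precritical is not even closed under limits. So ``by construction $F^k(\gamma_{\underline w})$ has type $w_{k+1}$'' does not follow. What must be carried to the limit is that $F^{k-1}(\gamma)$ passes through $\mathbf{O}_\infty$ at every time $k$ with $w_k=III_0$. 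Genericity (infinitely many such $k$) then forces every intermediate crossing point to be precritical, which fixes the type at each step.

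Your Step 3 is the right mechanism, and it actually supplies more than the paper's proof of this lemma. That proof only invokes invariance of the cone field, i.e.\ compactness, for the convergence of the $\gamma_n$. The uniqueness you argue is what the paper obtains afterwards, in Proposition \ref{prop_flat_norm_est}, by the same Kobayashi expansion. Two corrections are needed, though.

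First, the returns to $K_\epsilon$ must come from the common coding, not from Proposition \ref{prop_recurrence_good_orbit}. That proposition gives returns of individual points at their own times. The length argument needs the whole horizontal segment of the bubble inside $K_\epsilon$ at the same times, and this is exactly what the $III_0$-times provide, since both leaves are then of type $III_0$ and such leaves lie in $K_\epsilon$.

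Second, to apply Proposition \ref{prop_expansion_kob} along that segment you must check that no pullback of $\Lc$ separates $C$ from $C'$. Otherwise points of the segment hit the indeterminacy locus. This is where the equality of the codings has to be used.
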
   

\begin{rem} Concretely, if the coding of a curve starts with $II, III_0$, then this means the curve lies in $R_4$ but maps to a curve that goes through the point $\Oinf$. In particular, it goes through the point $p=(0,0)$. 
\end{rem}

\begin{proof} Take $\underline{w}$ a generic word satisfying the hypothesis of the Lemma. 
Let us construct $\gamma_{\underline{w}}$ explicitly. Since $\underline{w}$ is a generic word, the letter $III_0$ is recurrent. In particular, we can choose infinitely many indices $n$ such that $w_n = III_0$. For such $n$, we take the component $\gamma_n$ of $F_{\R}^{-n}(\Lc)$ whose orbit coincides with the first $n$ words. By Theorem \ref{thm_markov}, there is only one such component as the entries of the matrices $M_*$ and its transpose are only zeros and ones. 
This means that on a neighborhood of $p=(0,0)$ or radius $R$, we have: 
 $$ \lim_{R \rightarrow 0} \dfrac{l(\gamma_n \cap B(p,R))}{2R} = 1.  $$
 The sequence of curves $\gamma_n$ converges to a curve $\gamma_{\underline{w}}$ as $\gamma_n$ is vertical for the cone field and since this cone field is invariant.          
\end{proof}

We now construct a current associated to an arbitrary generic word. 

\begin{prop} \label{prop_coding_single} For any generic admissible word $\underline{w}$ in $A$, there exists a vertical curve $\gamma_{\underline{w}}$ in  $\cL$ whose orbit under $F$ coincide with $\underline{w}$.  
\end{prop}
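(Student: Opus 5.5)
We follow the construction of the preceding lemma for words beginning with $II, III_0$. The strategy is to build $\gamma_{\underline{w}}$ as a limit of finite-level components of $F_{\R}^{-n}(\Lc)$. The preceding lemma handles the case of the fundamental wake at $(0,0)$, so here we must remove the restriction on the first two letters.

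\textbf{Step 1: finite approximants.} Fix a generic admissible word $\underline{w}=(w_1,w_2,\ldots)$ in $A$. By genericity the letter $III_0$ occurs infinitely often, so we pick indices $n_1<n_2<\cdots$ with $w_{n_j}=III_0$. For each $j$ we consider the cylinder of curves whose coding under $F$ begins with $w_1\ldots w_{n_j}$. By assertion (ii) of Theorem \ref{thm_markov}, we can pull back inductively. We start from a leaf of type $III_0$ (for instance a component of $F^{-1}$ of a type $I$ leaf, ultimately a pullback of $\Lc$). Applying the surjection property backwards along $w_{n_j},w_{n_j-1},\ldots,w_1$, we obtain a vertical curve $\gamma_j\subset F_{\R}^{-n_j}(\Lc)$ of type $w_1$ with $F^k(\gamma_j)$ of type $w_{k+1}$ for $k<n_j$. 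Since $M_*$ has $0/1$ entries, each backward step selects a unique component, so $\gamma_j$ is uniquely determined by the prefix. By construction, $\gamma_{j+1}$ and $\gamma_j$ share the same coding up to length $n_j$.

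\textbf{Step 2: convergence.} Every $\gamma_j$ is a vertical curve for the cone field $\mathcal{C}$ and is $\sigma$-symmetric. Theorem \ref{thm_invariant_cone_field} makes them graphs $x=\varphi_j(y)$ with uniformly bounded slopes on compact sets. Arzel\`a--Ascoli then gives a subsequence converging locally uniformly on $\R^2\setminus \Lz$ to a vertical graph $\gamma_{\underline{w}}$, which lies in $\mathcal{L}$ because $\mathcal{L}$ is closed (Definition \ref{def_cL}). Coding along the region partition of Proposition \ref{prop_mapping_regions} is determined by open regions bounded by $\Lf,\bar\Lf,\Lc$, $\gamma$ and their preimages, all of which are leaves that cannot be crossed. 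Hence the nested constraints pass to the limit: for each $k$, $F^k(\gamma_{\underline{w}})$ lies in the closure of the region of type $w_{k+1}$. Identifying the type exactly, rather than a boundary type, uses the partition statement of Theorem \ref{thm_markov} together with the endpoint behaviour on $\bIz\cup\bIinf$. The Chebyshev coding of the landing point is a limit of the landing points of the $\gamma_j$.

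\textbf{Step 3: uniqueness.} Independence of the subsequence is the main obstacle. To show that the whole sequence converges and that the limit curve is well defined, we use the Kobayashi expansion argument of Proposition \ref{prop_laminar_good_region}. Suppose two limits $C\neq C'$ had the same infinite coding. The recurrence of $III_0$ forces infinitely many returns to the good region $K_\epsilon$ of Proposition \ref{prop_recurrence_good_orbit}, since type $III_0$ leaves pass through $\Oinf$ and lie in $R_3\cup\sigma(R_3)$. A horizontal segment in the bubble between $C$ and $C'$ would then have images of bounded Kobayashi length in $K_\epsilon$. Proposition \ref{prop_expansion_kob}, however, makes that length grow like $M^n$, a contradiction. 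Hence $C=C'$ and $\gamma_{\underline{w}}$ is well defined.

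The delicate point in this step is that leaves with the same coding may all pass through a common precritical point, and the bubble between them need not avoid the Chebyshev intervals. I would handle this by working away from $\Lz\cup\Linf$, where Theorem \ref{thm_lamination} applies, and by noting that bubbles between two distinct leaves always contain points of $\mathcal{D}$ with cone-field horizontal vectors. Lemma \ref{lem_non_deg} then guarantees that the Kobayashi pseudo-metric is nondegenerate on these vectors.
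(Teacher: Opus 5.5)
Your proposal is correct in outline, but it takes a different route from the paper.

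\textbf{The paper's route.} The paper does not construct $\gamma_{\underline w}$ directly for every generic word. It first handles words beginning with $II,III_0$: this is the fundamental wake through $(0,0)$, built in the preceding lemma as a limit of components of $F^{-n}(\Lc)$. For a general generic word it locates the first occurrence of the subword $II\,III_0$. This subword must occur, since otherwise each $III_0$ would be preceded by a block $III_0\,I\,IV\cdots IV$. The paper then shifts to that position and pulls the resulting curve back along the prefix, using the surjectivity (ii) of Theorem~\ref{thm_markov}.

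\textbf{Your route.} You instead pull back $\Lc$ (a type $III_0$ leaf) from every recurrence time of $III_0$. This uses only the recurrence of $III_0$ and bypasses the reduction to the wake at the origin. You also treat the limit more carefully: Arzel\`a--Ascoli for the cone-vertical graphs, then the Kobayashi-expansion argument of Proposition~\ref{prop_laminar_good_region} to exclude two distinct limits with the same coding. The paper simply asserts convergence from cone invariance. It obtains this uniqueness only later, quantitatively, in Proposition~\ref{prop_flat_norm_est} and Corollary~\ref{cor_def_gamma}. Your Step~3 is more than the existence statement needs, but it is what makes $\gamma_{\underline w}$ well defined.

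\textbf{One point to tighten.} The exact identification of types in Step~2 still needs an argument rather than a pointer. Since $III_0$ recurs, the point where the stage-$k$ approximants meet $\bIz\cup\bIinf$ is independent of $j$, because it is fixed by the itinerary up to the next $III_0$. Combine this with continuity of $F$ on compact pieces of the curves away from $\bfz$. Together these rule out degenerate limits such as $\Lc$, or the curve formed by the upper half of $\Lf$ and the lower half of $\bar\Lf$, neither of which carries a generic forward coding. The paper passes over this point too.
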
 
 
\begin{proof} Since $\underline{w}$ is generic, we can take  the first moment $n$  for which the subword $II, III_0$ appears. Indeed, if this sequence never appears, then $\underline{w}$ is not generic as this means that the word subword before this first occurrence is of the form $(w_{n-k},\ldots, w_{n-1} ,w_n, w_{n+1}) = (III_0 , I , IV , \ldots,  IV , III_0)$.      
As this must happen everytime $III_0$ appears, the word $\underline{w}$ is not generic. 
Assume that $i_n$ is the $n$-th moment the subword $ III_0$ appears, let $\tilde{w}$ the shift of the $\underline{w}$ by $n$ letters. 
We set $\gamma_{\underline{w}}$ to be the component of the pullback of $\gamma_{\tilde{w}}$ by $F^{i_n}$ whose coding coincide with $\underline{w}$ for the first $i_n$ iterates, this exists by Theorem \ref{thm_markov}. We then take a weak limit as $n \rightarrow +\infty$ 
\end{proof}

Recall that $\mathcal{L}$ is a lamination away from $\bIz \cup \bIinf$ so in particular, so if two curves have a distinct coding for their orbit, then they are distinct curves because for some iterate, the curves are not in the same domain any more. 
We analyze the situation near the intervals and control the difference between these curves. Recall that the difference of two currents $T_1, T_2$ can be controlled via the flat norm $N_F(T_1 - T_2)$ which is defined as: 
\begin{equation*}
N_F(T_1 - T_2) = \inf \{  M(A) + M(B) \ | \ T_1 - T_2 = A + \partial B \} ,
\end{equation*} 
where the infimum is taken over all such decompositions (see \cite[p.41]{morgan_gmt}).

\begin{lem}
Consider two admissible words $\underline{w},\underline{w'}$ whose first $n$ letters coincide. Let $p$ be the intersection point between $\gamma_{\underline{w}}, \gamma_{\underline{w'}}$ on $\bIz$, then for a  bump function $\chi$ near $p$,
\begin{equation}
N_F(\chi [\gamma_{\underline{w}}] - \chi[\gamma_{\underline{w'}}]) \leqslant  C \theta^n,
\end{equation}
 where $\theta < 1$, where $C$ is a uniform constant and where $N_F$ is the flat norm.
\end{lem}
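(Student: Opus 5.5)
The plan is to bound the flat norm by the area of the region enclosed between the two curves near $p$, and then show that this area decays geometrically in $n$ by a pullback/expansion argument. Both $\gamma_{\underline{w}}$ and $\gamma_{\underline{w'}}$ are vertical curves for the cone field $\mathcal{C}$ and pass through the same point $p\in\bIz$. Orient both positively and restrict them by $\chi$ to a small ball $B$ around $p$. Then $\chi[\gamma_{\underline{w}}]-\chi[\gamma_{\underline{w'}}]$ splits as $\partial B'+A$. Here $B'$ is the $2$-current of integration on the bounded region (a union of at most two \emph{bubbles}, one in each half plane) lying between the two graphs inside $\operatorname{supp}\chi$. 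The term $A$ collects the error from $d\chi$ and the two horizontal closing segments at the top and bottom of $\operatorname{supp}\chi$. Thus it suffices to bound the area of $B'$ by $C\theta^n$, and the horizontal width $w_n$ of the bubbles (at height $|y|\sim$ the radius of $\operatorname{supp}\chi$) by $C\theta^n$. Since both curves are graphs $x=\varphi(y)$ with uniformly Lipschitz $\varphi$ (by verticality), $M(A)\lesssim w_n$ and $M(B')\lesssim \sup_y|\varphi_{\underline w}(y)-\varphi_{\underline w'}(y)|$. So everything reduces to a $C^0$ estimate on the difference of the two graphs.

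For that estimate I would use that the first $n$ letters coincide. By Theorem \ref{thm_markov}, $F^k(\gamma_{\underline w})$ and $F^k(\gamma_{\underline w'})$ then lie in the same region for $k\le n$, and they are bounded by the same pair of consecutive pullbacks of $\Lc$ (or $\Lf,\bar\Lf$) that realize the common prefix. Hence the horizontal segment $H$ between them at height $y$ is mapped by $F^k$ onto a horizontal curve (Theorem \ref{thm_invariant_cone_field}) joining two curves of the same type.

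Each time the orbit visits the good region $K_\epsilon$ of Proposition \ref{prop_recurrence_good_orbit}, Proposition \ref{prop_expansion_kob} gives a Kobayashi expansion by a factor $M>1$. By Lemma \ref{lem_non_deg} the Kobayashi length of horizontal curves in $K_\epsilon$ is comparable to their Euclidean length, and it is bounded from above since the images stay between $C_1,C_2$. Consequently the Kobayashi length of $H$ is at most $C M^{-r}$, where $r$ is the number of returns before time $n$.

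Combinatorially, the symbol $III_0$ forces a passage through $K_\epsilon$, and admissible words cannot avoid $K_\epsilon$ for long away from the cycle $IV\to IV$ (which stays near $\Lf$, where Proposition \ref{prop_recurrence_good_orbit}'s horizontal expansion $\partial_x(x\circ F^2)\ge 2$ applies instead). This should give $r\ge cn$ with uniform $c>0$, hence $\theta=M^{-c}$. Away from $p$ the graphs are in $K_\epsilon$-comparable metric; near $p$ itself the width tends to $0$ anyway since both pass through $p$.

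The main obstacle is the last step: making the number of returns linear in $n$ uniformly in the words, and converting Kobayashi length back into Euclidean width near $\bIz$, where the pseudo-metric degenerates. For the first, I would try to refine the Markov partition so that every block of bounded length contains either a return to $K_\epsilon$ or a step of the uniformly expanding $R_2$-regime. For the second, I would only measure widths at heights $|y|\ge \rho$ inside a compact part of $K_\epsilon$ where the metrics are comparable. The contribution of the strip $|y|<\rho$ I would handle by choosing $\rho=\theta^{n}$, using that the Lipschitz graphs through the common point $p$ differ by $O(\rho)$ there.
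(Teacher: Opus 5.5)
\textbf{There is a genuine gap: your mechanism gives decay per visit to a good region, not per common letter.} Your first step is fine. Localizing by $\chi$ and writing $\chi[\gamma_{\underline w}]-\chi[\gamma_{\underline w'}]=A+\partial B'$ reduces the lemma to a bound on $\sup|\varphi_{\underline w}-\varphi_{\underline w'}|$ near $p$.

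Proposition \ref{prop_expansion_kob} expands at every return to $V=R_2\cup R_3\cup\sigma(R_2\cup R_3)$. By the Markov matrix such returns occur at least every second step, since types $I$, $II$ are mapped to types $IV$, $III_\bullet$, which lie in $V$. So counting expansions is not the problem. The problem is the other end of your argument: an upper bound on the Kobayashi length of the image segment at some final time. You only have that bound when the images lie between $C_1$ and $C_2$, i.e.\ essentially at occurrences of $III_0$.

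Such times are not linear in $n$. For example, $II,III_0,I,IV^{n-3}$ is an admissible common prefix for two curves of the fundamental wake at $(0,0)$, and it visits $K_\epsilon$ only once. Your proposed remedy also fails. A block $(II,III_+)^j$ occurs in the wake of any precritical point whose Chebyshev orbit stays near the $\beta$-fixed point for $j$ steps. Such a block contains no visit to $K_\epsilon$, because the $III_+$ curves land on $\bIinf$ between the endpoint of $C_2$ and $\binf$. It also contains no step $R_2\to R_2$, because type $III_+$ curves, which lie in $R_2$, are mapped to type $II$ curves in $R_4\cup R_5\cup\sigma(R_4\cup R_5)$.

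The final conversion near $p$ is also unavailable as stated. A neighbourhood of $p\in\bIz$ is not contained in $K_\epsilon$. Moreover, the horizontal Kobayashi norm degenerates near $\Lz$, since such points project under $\pi_0$ close to the puncture $\pinf$ of $\mathfrak h$.

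What your argument actually proves is Proposition \ref{prop_flat_norm_est}: a bound $C_{w,h}\theta^k$, with $k$ the number of occurrences of $III_0$ and a constant depending on the prefix. The paper obtains that proposition by exactly this Kobayashi/$K_\epsilon$ mechanism, and calls it the weaker estimate away from the Chebyshev intervals.

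\textbf{The missing idea is local.} The paper derives this lemma from the contraction of the cone field under pullback by $F$ at the point $p$ of the Chebyshev interval. It does not use expansion at returns to a good region. Since $\Lz$ is transversally critical ($\Jac F=-4y(x-2z)^2$), $dF$ annihilates the vertical direction along $\Lz$. Pulling back the vertical cone field near $\bIz$ therefore contracts it, with constants depending only on the cone field. This gives a gain at each of the $n$ common letters, so the germs of $\gamma_{\underline w}$ and $\gamma_{\underline w'}$ at $p$ are squeezed together at a rate $\theta^n$ that is uniform in the words. Your area-plus-boundary reduction then yields $C\theta^n$. Without such a per-letter mechanism you do not get the uniformity in $n$, and that uniformity is exactly what distinguishes this lemma from Proposition \ref{prop_flat_norm_est}.
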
 

\begin{proof}
This is a consequence of the contraction of the cone field under the pullback of $F$ at the point $p$. 
\end{proof}

We obtain some weaker estimates outside of the Chebyshev intervals as follows. 

\begin{prop} \label{prop_flat_norm_est} For any $k \leqslant n$, for any admissible word $w$ of length $n$ for which the symbol $III_0$ appears $k$ times, for any compactly supported function $h$ on $\R^2$, there exists a constant $C_{w,h}>0$ (depending only on $w$ and $h$) and $\theta<1$ such that for any two admissible words $w',w''$ with the same prefix $w$, one has:
\begin{equation}
N_F(h ([\gamma_{w'}] - [\gamma_{w''}])) \leqslant C_{w,h} \theta^k. 
\end{equation} 
Moreover, the same statement holds for any two vertical curves in $\mathcal{L}$ whose orbit coding under $F$ coincide in the first $n$ letters and for which the letter $III_0$ appears $k$ times.
\end{prop}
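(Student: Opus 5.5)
The plan is to combine two ingredients. The first is the Kobayashi expansion along returns to the good region $K_\epsilon$ (Proposition \ref{prop_expansion_kob} together with Proposition \ref{prop_laminar_good_region}). The second is the local contraction of the cone field near the Chebyshev intervals, as in the preceding lemma. Each occurrence of the letter $III_0$ in the coding corresponds to a passage of the curve through the point $\Oinf$ on $\bIinf$, hence through the region where the fundamental wake is produced. These occurrences are where the curves are forced to return to a fixed compact piece of $K_\epsilon$ on which $\|\cdot\|_{Kob}$ is comparable to the Euclidean metric. The intended mechanism is therefore a uniform contraction by a factor $\theta<1$ of the transverse separation of $\gamma_{w'}$ and $\gamma_{w''}$, gained at each $III_0$-return and pulled back along the common prefix $w$.

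The first step is to fix $h$ and the word $w$, and to set $\gamma=\gamma_{w'}$, $\gamma'=\gamma_{w''}$. Both are graphs $x=\varphi(y)$, $x=\varphi'(y)$ over the $y$-axis by Theorem \ref{thm_invariant_cone_field}, and they do not cross by the non-crossing argument in Proposition \ref{prop_laminar_good_region}. The difference $h([\gamma]-[\gamma'])$ can then be written as $\partial B+A$. Here $B$ is $h$ times the current of integration on the bubble region between the two graphs. The remainder $A=-dh\wedge[\text{bubble}]$, together with the horizontal caps where $\operatorname{supp}h$ cuts the bubble, has mass bounded by $C_h\sup_{\operatorname{supp}h}|\varphi-\varphi'|$. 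Consequently
\[
N_F\big(h([\gamma]-[\gamma'])\big)\leqslant C_h\,\sup_{y\in\pi_y(\operatorname{supp}h)}|\varphi(y)-\varphi'(y)|,
\]
and it suffices to bound the horizontal width of the bubble over a compact range of heights.

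The second step bounds that width by $C_{w,h}\theta^k$, using horizontal segments $L$ joining $\gamma$ to $\gamma'$ inside the bubble. Let $n_1<\dots<n_k\leqslant n$ be the times at which the letter $III_0$ occurs. At each time $n_i$ the images $F^{n_i}(\gamma)$ and $F^{n_i}(\gamma')$ are leaves of type $III_0$, so both lie in $R_3\cup\sigma(R_3)$ and pass through $\Oinf$. Since $F$ maps the bubble to the bubble, $F^{n_i}(L)$ is a horizontal curve for the cone field joining these two leaves. Taking the pieces of $F^{n_i}(L)$ in a fixed compact subregion $K'\subset K_\epsilon$ (chosen through the wake at $(0,0)$), the Kobayashi length of $F^{n_i}(L)$ is bounded by a constant $D$, namely the Kobayashi diameter of a transverse slice of $K'$; this constant does not depend on $w$ by Lemma \ref{lem_non_deg}. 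Between consecutive $III_0$-times the orbit returns at least once to $V$, so Proposition \ref{prop_expansion_kob} gives expansion by at least $M$ at each such stage. Running the argument backward yields $\|L\|_{Kob}\leqslant D M^{-k}$. The Kobayashi metric restricted to $\mathcal{C}_h$ is bounded below by $c\cdot$Euclidean on $\operatorname{supp}h\cap K_\epsilon$, so the Euclidean width satisfies $|\varphi-\varphi'|\leqslant C_{w,h}M^{-k}$ there, and one takes $\theta=M^{-1}$. The portion of $\operatorname{supp}h$ that lies outside $K_\epsilon$, or near $\Lz$ where the metric degenerates, is handled by pushing forward a bounded number of iterates (fixed by $w$) into $K_\epsilon$. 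Alternatively, near $\bIz$ one uses the preceding lemma on contraction at the Chebyshev point. In either case the loss is absorbed into $C_{w,h}$.

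The third step is the \emph{moreover} clause. Its proof is identical, since it uses only the common coding of the first $n$ letters and the non-crossing of leaves of $\mathcal{L}$ from Theorem \ref{thm_lamination}.

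The main obstacle is the uniformity of the constant $D$ at the $III_0$-returns. The curves pass through $\Oinf$ at infinity, where the Kobayashi pseudo-metric on $H(\C)$ is not obviously bounded. To handle this, one should measure the bubble at heights bounded away from $\Linf$, concretely at time $n_i+1$, where the leaves are of type $II$ through the precritical point $(0,0)$. The plan is to show there that the bubble lies between two fixed leaves $C_1$ and $C_2$ from Lemma \ref{lem_good_curves}, which gives a compact transverse slice with bounded Kobayashi diameter.
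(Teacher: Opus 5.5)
Your proposal is essentially the paper's proof. The bubble between $\gamma_{w'}$ and $\gamma_{w''}$ gives $h([\gamma_{w'}]-[\gamma_{w''}])=A+\partial B$, with masses controlled by the horizontal width of the bubble; that width is made $O(M^{-k})$ by the Kobayashi expansion of Proposition~\ref{prop_expansion_kob} at the $k$ returns to $K_\epsilon$ forced by the letter $III_0$, and the initial iterates and the Kobayashi--Euclidean comparison are absorbed into $C_{w,h}$.

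One correction to your last paragraph. By the matrix $M_*$, a leaf of type $III_0$ maps to a leaf of type $I$, so the type $II$ leaves through $(0,0)$ are its preimages, not its images, and your proposed shift to time $n_i+1$ is misdescribed. No shift is needed anyway. By Lemma~\ref{lem_non_deg}, the Kobayashi length of a horizontal curve is the hyperbolic length of its radial projection from $\bfz$. The point $\Oinf$ lies in the interior of $\bIinf$, which is contained in $\mathfrak{h}$. By the choice of $C_1,C_2$ in Lemma~\ref{lem_good_curves}, the projection of $K_\epsilon$ is a compact subinterval of that interior, so your constant $D$ is uniform.
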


\begin{proof} The proof is general and applies to any two vertical curves in $\mathcal{L}$. For simplicity, let us assume the two curves are $\gamma_{w'}, \gamma_{w''}$. 
Denote by $s$ the shift in one letter on words.  
Let $p,q$ be the first and last moment among the indices $i$ such that $w_i = III_0$. Observe that  the image of $\gamma_{ w'}$ by $F^{q-1}$ is exactly $\gamma_{s^{q-1} (w')}$. In particular, the curves $F^{p-1} \gamma_{w'} ,F^{q-1} \gamma_{w'} , F^{p-1} \gamma_{w''} ,F^{q-1} \gamma_{w''}$ are all of type $III_0$.
Recall that we have defined in Proposition \ref{prop_laminar_good_region} a region $K_\epsilon$ bounded by two curves in $\mathcal{L}^*$ that by definition contains all curves of type $III_0$. By Proposition \ref{prop_expansion_kob}, the Kobayashi length of the horizontal curves for the cone field grows  exponentially fast under the first return map.
Set  $\gamma' = F^{p-1} \gamma_{w'}$ and $\gamma'' = F^{p-1} \gamma_{w''}$. One has $\mathcal{R}^k \gamma' = F^{q-p} \gamma'$ and $\mathcal{R}^k \gamma'' = F^{q-p} \gamma''$.  
Consider $V$ the union of all the horizontal segment joining $\gamma'$ and $\gamma''$ contained in the support of $h$. By construction, this region return $k$ times to $K_\epsilon$, choose any compact set $V'$ in $V$ which is mapped to a compact set $V''$ in $K_\epsilon$.
Any horizontal segment $I$ in $V''$ joining $\gamma'$ with $\gamma''$ is mapped to a horizontal curve for the cone field, using the comparison between the angular metric with the Kobayashi pseudo metric in $V', V''$, we have
\begin{equation*}
l(F^{q-p}(I)) \geqslant C M^k l(I),
\end{equation*}
where $M>1$
In particular, using Fubini's theorem, the area of the $V \cap V'$ is bounded by: 
\begin{equation} \label{eq_decay_area}
Area(V\cap V') \leqslant  \dfrac{1}{C M^n} C' Area(F^{q-p}(V')) \leqslant \dfrac{C'}{C M^k} Area(K_\epsilon),
\end{equation}
where $C'>0$ is a constant which controls the distortion between the angular length  of a horizontal line and a horizontal curve in $K_\epsilon$. 
Similarly, the length of the boundary horizontal curve $I$ in $\partial V$ is controlled using the same estimates: 
\begin{equation} \label{eq_decay_length}
l (I ) \leqslant \dfrac{C'}{C M^n} l(F^{q-p}(I)) \leqslant \dfrac{C''}{M^k}, 
\end{equation}   
where $C''$ is a uniform constant.  
The two inequalities \eqref{eq_decay_area} and \eqref{eq_decay_length} yields: 
\begin{equation*}
N_F(h [\gamma'] - h[\gamma'']) \leqslant \dfrac{C'''}{M^k},
\end{equation*}
where $C'''>0$ is a uniform constant. We then conclude by taking the pullback by $F^{q-1}$.
\end{proof}

We investigate the properties of the associated function $\underline{w} \mapsto \gamma_{\underline{w}}$.
We shall define a transfer operator for the  first return map on the symbol $III_0$. 
Consider $\tilde{A}$ the set containing subwords $w \in A^{\N}$,   starting with $III_0$, such that no other symbol is equal to $III_0$  and such that the concatenation $w III_0$ is also admissible. 
We set $\tilde{A}^{\N}$ the words in $\tilde{A}$, which can be also embedded inside $A^{\N}$. 
Fix $\theta < 1$. We recall the definition of Holder regular functions as in \cite[p. 12]{parry_pollicott}.

A real function $g$ on $\tilde{A}^{\N}$ is $\theta$-\textit{Holder regular}   if it is continuous and there exists a constant $C>0$ such that for any $n$ and any two words $w',w'' \in \tilde{A}^{\N}$ coinciding in the first $n$ subwords, one has: 
\begin{equation*}
\dfrac{|g(w')- g(w'')|}{\theta^{n}} \leqslant C.
\end{equation*}  
A function from $A^{\N}$ to $\R$ induces a function on $\tilde{A}^{\N}$ via the embedding from $\tilde{A}^{\N}$ into $A^{\N}$. 
This space is denoted $\mathcal{F}_{\theta}$ and it is a Banach space when endowed with the norm: 
\begin{equation}
||g||_{\theta} = \sup |g| + \sup_n \sup \left \{\dfrac{|g(w')- g(w'')|}{\theta^{n}} \ | \ w_i'=w_i'' \in \tilde{A} , \forall i \leqslant n \right  \} 
\end{equation}
We extend the notion of Holder regularity to current valued functions, namely  any function $g \colon A^{\N} \to \mathcal{D}^1(\R^2) $ is in $\mathcal{F}_\theta'$ if for any test smooth $1$-form $\alpha$ with compact support, the function $w \mapsto \langle g(w),\alpha \rangle$ belongs to $\mathcal{F}_\theta$. 

Recall that the curves $\gamma_w$ are constructed by pulling back the curve $\Lc$ and taking the component with the appropriate coding. 
As a corollary of the above statements, we show that the resulting function is universal, in the following sense.

\begin{cor} \label{cor_def_gamma} The function $ \gamma : \underline{w} \mapsto  [\gamma_{\underline{w}}]$ belongs to $\mathcal{F}_\theta'$ and its value for $m$-almost every point $w$ does not depend on the choice of any starting vertical curve in $\mathcal{L}$, where $m$ is the invariant measure associated to the matrix $M_*$. 
\end{cor}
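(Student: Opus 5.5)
Two things have to be shown: the Hölder regularity of $\underline{w}\mapsto[\gamma_{\underline{w}}]$ with respect to the induced alphabet $\tilde{A}$, and independence from the starting curve for $m$-almost every word. Both reduce to Proposition \ref{prop_flat_norm_est}, once we note that agreement of the first $n$ letters of $\tilde{A}$ forces the symbol $III_0$ to appear at least $n$ times.

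\textbf{Regularity.} Fix a smooth compactly supported $1$-form $\alpha$. Choose a cutoff $h$ equal to $1$ on $\operatorname{supp}\alpha$. Take $w',w''\in\tilde{A}^{\N}$ that coincide in their first $n$ letters of $\tilde{A}$. Viewed in $A^{\N}$, they share a prefix $w$ in which $III_0$ occurs $k\ge n$ times, since each letter of $\tilde{A}$ begins with $III_0$. The flat-norm duality gives
\[
|\langle [\gamma_{w'}]-[\gamma_{w''}],\alpha\rangle| \le \max(\|\alpha\|_\infty,\|d\alpha\|_\infty)\, N_F(h([\gamma_{w'}]-[\gamma_{w''}])).
\]
Proposition \ref{prop_flat_norm_est} bounds the right-hand side by $C\theta^{n}$.

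The main obstacle is that the constant $C_{w,h}$ in Proposition \ref{prop_flat_norm_est} depends on the prefix $w$. I would inspect its proof to check that the constant comes only from two sources: the distortion constants $C,C',C''$ inside $K_\epsilon$, and the pullback by $F^{p-1}$ up to the first occurrence of $III_0$. For words in $\tilde{A}^{\N}$ we have $p=1$, so this pullback is trivial and the constant is uniform. I would then absorb the finitely many $h$-dependent factors into $C$.

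Continuity of $\underline{w}\mapsto\langle[\gamma_{\underline{w}}],\alpha\rangle$ follows from the same estimate. The sup norm is bounded by the uniform mass bound on vertical curves in a compact set, as in the proof of Proposition \ref{prop_bound_mass}. Hence $\gamma\in\mathcal{F}'_\theta$.

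\textbf{Independence.} Let $V$ be another vertical curve in $\mathcal{L}$. Repeating the construction of Proposition \ref{prop_coding_single} with $V$ in place of $\Lc$ produces curves $\gamma^V_n$ whose codings agree with $\underline{w}$ for the first $i_n$ steps. This uses the surjection property (ii) of Theorem \ref{thm_markov}, and the components are unique because $M_*$ has $0/1$ entries. The second part of Proposition \ref{prop_flat_norm_est} then gives
\[
N_F(h([\gamma^V_n]-[\gamma_n]))\le C\theta^{k_n},
\]
where $k_n$ is the number of occurrences of $III_0$ before $i_n$. For $m$-almost every $\underline{w}$, $k_n\to\infty$: by ergodicity (Poincaré recurrence) of the Markov measure $m$, which gives positive weight to the cylinder $III_0$, the symbol $III_0$ recurs infinitely often. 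Therefore both sequences converge to the same limit $[\gamma_{\underline{w}}]$.

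The exceptional null set consists of non-generic words, those in which $III_0$ occurs only finitely often. It is exactly where the construction could depend on the starting curve.
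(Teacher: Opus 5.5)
Your proposal is correct and follows the same route as the paper. The Hölder seminorm and the independence from the starting curve both come from Proposition~\ref{prop_flat_norm_est}, the sup term from the uniform local mass bound, and the almost-everywhere statement from the infinite recurrence of $III_0$. Your remark that $C_{w,h}$ is uniform on $\tilde{A}^{\N}$ because such words begin with $III_0$ (so $p=1$) makes explicit a point the paper leaves implicit.
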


\begin{proof}
 Proposition \ref{prop_flat_norm_est} shows that the second term in the definition of the norm is bounded when one pairs with a compactly supported test-form. As for the first term, it follows from the fact that   the mass of each element $[\gamma_w]$ is one.
 If one constructed another function $\gamma'$ at generic points by pulling back another vertical curve $V$. Then Proposition \ref{prop_flat_norm_est} shows that for generic word $w$, the flat norm $N_F(h[\gamma_w] - h[\gamma'_{w}])$ tends to zero, hence $[\gamma_w]$ and $[\gamma'_w]$ coincide. 
\end{proof}

\subsection{Transfer operator, first return and decay of correlations}
\label{section_transfer}

We shall relate the properties of the function $\gamma$ defined previously with a transfer operator associated to the shift. Then we investigate the spectral properties of the transfer operator associated to a first return map. 

Consider the operator $L$ given by: 
\begin{equation}
L g(w) = \sum_{\substack{a \in A \\ m_{a, w_1} \neq 0}} \dfrac{1}{\delta} g(a w).
\end{equation}
Denote by $C$ the cylinder associated to the symbol $III_0$. 
Similarly, we consider $R$  the transfer
operator such that for any function $g \colon C \to \mathcal{D}^1(\R^2)$: 
\begin{equation}
R g(w) =  \sum_{\alpha \in \tilde{A}} \dfrac{1}{\delta^{|\alpha|}} g(\alpha w),
\end{equation} 
where $w= (w_1, \ldots, w_n , \ldots) \in C$.

The operator $L$ is the transfer operator associated with the shift $\tau$ while $R$ is associated to the first return map $\tau_{C}$ to the symbol $III_0$.

\begin{prop} \label{prop_main_transfer} Given any curve $V$ of type $III_0$ in $\mathcal{L}$.
Let $\gamma_n \colon  A^{\N} \to \mathcal{D}^1(\R^2)$ be the  constant function on each cylinder of length $n$ such that for each admissible word $\underline{w} \in A^{\N}$, $u_n$ is equal to  the component $\gamma_{\underline{w},n}$ of $F^{-n}(V)$ whose orbit coincides with the first $n$ letters of $\underline{w} $.
The functions $\gamma : \underline{w} \mapsto [\gamma_{\underline{w}}] \in \mathcal{D}^1(\R^2)$ and $\gamma_n \colon  A^{\N} \to \mathcal{D}^1(\R^2)$ satisfy the following properties: 
\begin{enumerate}
\item[(i)] one has for any $\underline{w} \in A^{\N}$: 
\begin{equation}
L \gamma (\underline{w}) = \dfrac{1}{\delta} F^\sharp\gamma (\underline{w}),
\end{equation}
\item[(ii)] For any compactly supported, smooth $1$-form $\alpha$, the functions $ \langle \gamma_n, \alpha\rangle$ converge in $\mathcal{F}_\theta$ toward $\langle \gamma,\alpha\rangle$ as $n$ tends to $\infty$. 

\item[(iii)] one has for all $n$ and all admissible $\underline{w} \in A^{\N}$: 
\begin{equation}
L \gamma_{n+1} (\underline{w}) = \dfrac{1}{\delta} F^{\sharp} \gamma_n(\underline{w}).
\end{equation}
\end{enumerate}
\end{prop}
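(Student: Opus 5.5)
The plan is to prove (iii) first by a purely combinatorial argument, then obtain (ii) from the flat-norm estimates of Proposition \ref{prop_flat_norm_est}, and finally deduce (i) from (ii) and (iii) by passing to the limit.

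\emph{Step 1: assertion (iii).} By Theorem \ref{thm_markov}, all entries of $M_*$ are $0$ or $1$. So for a curve $\gamma_{\underline{w},n}$ of type $w_1$, the signed pullback $F^\sharp[\gamma_{\underline{w},n}]$ is a sum of exactly one component of each type $a$ with $m_{a,w_1}\neq 0$, each mapping onto $\gamma_{\underline{w},n}$. By the surjection property (ii) of Theorem \ref{thm_markov}, the component of type $a$ is the unique component of $F^{-(n+1)}(V)$ coded by $a w_1\cdots w_n$, namely $\gamma_{a\underline{w},n+1}$. The orientation convention in the definition of $F^\sharp$ makes every pulled-back component carry the upward orientation up to the Jacobian sign. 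We must check that this sign is consistent with the positive orientation used for $\gamma_{n+1}$. I would verify this using the $\sigma$-symmetry of the leaves and $F\circ\sigma=F$: the Jacobian $-4y(x-2)^2$ changes sign across $\Lz$, which matches the fact that the upper and lower halves of a pulled-back leaf come from halves of opposite orientation. Summing over $a$ and dividing by $\delta$ gives $L\gamma_{n+1}=\delta^{-1}F^\sharp\gamma_n$.

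\emph{Step 2: assertion (ii).} Fix a test form $\alpha$ with compact support in $\operatorname{supp} h$. For $\underline{w}$ generic, $\gamma_{\underline{w},n}$ and $\gamma_{\underline{w}}$ are two curves of $\mathcal{L}$ whose codings agree on the first $n$ letters. The "moreover" part of Proposition \ref{prop_flat_norm_est} then gives $|\langle\gamma_n-\gamma,\alpha\rangle(\underline{w})|\le C\|\alpha\|_{C^1}\theta^{k(n)}$, where $k(n)$ is the number of occurrences of $III_0$ among the first $n$ letters. Here we use that $|\langle T,\alpha\rangle|\le N_F(T)\max(\|\alpha\|,\|d\alpha\|)$.

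Measured in the return-time coding on $\tilde A^{\N}$, $k(n)$ is exactly the number of blocks, so this is a uniform bound in sup norm. The Hölder seminorm is controlled the same way: for $w',w''$ agreeing on $m$ blocks, both $\langle\gamma_n,\alpha\rangle$ and $\langle\gamma,\alpha\rangle$ differ by $O(\theta^m)$ by Corollary \ref{cor_def_gamma}. For the difference $\langle\gamma_n-\gamma,\alpha\rangle$, I would split into two cases. If $m\ge k(n)$, then $\gamma_n(w')=\gamma_n(w'')$, so the difference is $O(\theta^m)$ using $\gamma$ alone. Otherwise, the difference is bounded by $2C\theta^{k(n)}$. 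In either case the quotient by $\theta'^m$ is $O((\theta/\theta')^{k(n)})$ after replacing $\theta$ by a slightly larger $\theta'$, which gives convergence in $\mathcal{F}_{\theta'}$.

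\emph{Step 3: assertion (i).} We pass to the limit in (iii). $L$ is a finite sum of compositions with prepending a letter, so it is continuous on $\mathcal{F}_\theta$, and $L\gamma_{n+1}\to L\gamma$ pointwise after pairing with $\alpha$. On the other side, $F^\sharp$ acts on each single curve through the finitely many preimage components. Pairing with $\alpha$ amounts to pairing with the pulled-back form on a compact region away from $\Lc$, where $F$ is smooth. Near $\Lc$ and $\Lz$ we use the uniform local mass bound for vertical curves (as in Proposition \ref{prop_bound_mass}). Together these give $F^\sharp[\gamma_{\underline{w},n}]\to F^\sharp[\gamma_{\underline{w}}]$ weakly.

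The main obstacle is this last continuity of $F^\sharp$ under weak (flat) convergence near the collapsed line $\Lc$ and the critical line $\Lz$, where $F^*\alpha$ is not compactly supported and smooth. I would handle it by cutting off near these lines with a bump function, using the flat-norm bound away from them, and using the uniform length bound $C_K$ for vertical curves in a thin neighbourhood to make the remainder arbitrarily small.
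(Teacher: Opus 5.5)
Your three-step architecture is the same as the paper's: (iii) from the Markov property of Theorem~\ref{thm_markov}, (ii) from Proposition~\ref{prop_flat_norm_est}, and (i) from (ii) and (iii). The problem is a false uniformity claim in the detail you add in Step~2.

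\textbf{The gap in Step~2.} Proposition~\ref{prop_flat_norm_est} gives
\[
|\langle\gamma_n-\gamma,\alpha\rangle(\underline w)|\le C\theta^{k(n,\underline w)},
\]
where $k(n,\underline w)$ is the number of letters $III_0$ among the first $n$ letters of $\underline w$. This bound is \emph{not} uniform in $\underline w\in\tilde A^{\N}$, because blocks of $\tilde A$ have unbounded length. For instance, $III_0\,I\,IV^N$ is a block for every $N$, since $IV$ may follow $IV$ and $IV$ may be followed by $III_0$. Likewise $III_0\,I\,IV\,(III_-\,II)^N$ is a block for every $N$.

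Consequently, for every $n$ some words in $\tilde A^{\N}$ contain a single $III_0$ among their first $n$ letters, so $\sup_{\underline w}\theta^{k(n,\underline w)}=\theta$. Since $\gamma_n$ is constant on cylinders of $n$ \emph{letters} of $A^{\N}$, not of $n$ blocks, saying that $k(n)$ counts blocks does not help. Your Hölder-seminorm bound $O((\theta/\theta')^{k(n)})$ has the same defect, so convergence in $\mathcal F_{\theta'}$ is not established.

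Proposition~\ref{prop_flat_norm_est} only counts passages through $III_0$ and says nothing about $III_0$-free excursions. To get (ii) as stated you need a separate uniform contraction estimate along such excursions. Re-indexing $\gamma_n$ by blocks is not a way out, because it is incompatible with (iii).

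\textbf{What your argument does give.} It proves pointwise convergence for $\mu$-a.e.\ $\underline w$, since $k(n,\underline w)\to\infty$ there. Together with the uniform length bound for vertical curves on $\mathrm{supp}\,\alpha$, dominated convergence gives
\[
\int|\langle\gamma_n-\gamma,\alpha\rangle|\,d\mu\to0 .
\]
This is all that Theorem~\ref{thm_equid_real} actually uses, via the duality $\int (L^nf)\,g\,d\mu=\int f\,(g\circ\tau^n)\,d\mu$. The paper's one-line appeal to Proposition~\ref{prop_flat_norm_est} skips the same point.

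\textbf{Step~3 can be simplified.} You can avoid the continuity of $F^\sharp$ altogether by applying your Step~1 argument directly to the limit leaf. By Theorem~\ref{thm_markov}, the preimage components of $\gamma_{\underline w}$ are leaves of $\mathcal L$, one of each admissible type $a$, with coding $a\underline w$. Since $a\underline w$ is generic, the last sentence of Proposition~\ref{prop_flat_norm_est} (uniqueness of the leaf with a given generic coding; cf.\ Corollary~\ref{cor_def_gamma}) identifies these components with $\gamma_{a\underline w}$. Hence $F^\sharp[\gamma_{\underline w}]=\sum_a[\gamma_{a\underline w}]=\delta\,L\gamma(\underline w)$, with no limit taken.

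Your proposed cut-off would also not work near $\Lc$. A thin neighbourhood of $\Lc$ is a thin \emph{vertical} strip, and vertical curves inside it need not have small length. The small-length trick only works for the horizontal strip around $\Lz$.
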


\begin{proof}
Observe that assertion (ii) and (iii) imply the first one. Assertion (iii) follows by definition. Since for each word $w$ of length $n$, the restriction of $\gamma_n$ to the cylinder $C_w$ is equal to the positive current of integration along the component of $F^{-n}(V)$ whose itinerary is given by $w$. Its pullback by $F$ is are  all the components of $F^{-n-1}(V)$ whose itinerary are $aw$ where $aw$ is admissible and $a \in A$, which correspond exactly the value of $\gamma_{n+1}$ on the cylinder $C_{aw}$. We then conclude by dividing by $\delta$.  As for assertion (ii), it follows directly from Proposition \ref{prop_flat_norm_est}.    
\end{proof}

We endow  the space $A^{\N}$  with a natural probability measure $m$ (see e.g \cite[Remark 1 p.27]{parry_pollicott}) defined  as follows. Let $v= (v_{I}, \ldots, v_{IV}) $ be the eigenvector for the matrix $M$ associated to the eigenvalue $\delta$, then for any cylinder $C_{\alpha}$ where $\alpha = (\alpha_1, \ldots , \alpha_n) \in A^n$ is an admissible word, one has:
\begin{equation*}
m(C_{\alpha}) = \dfrac{v_{\alpha_1}}{ v_{\alpha_n} \delta^{n-1}} p_{\alpha_n},
\end{equation*}
where $p = (p_I, \ldots, p_{IV})$ is the normalized eigenvector for the stochastic matrix $(p_{ij})_{i,j \in A} = (m_{ij} v_i/v_j)_{i,j \in A} $.
We thus get a probability space $(A^{\N}, m)$ on which the shift $\tau$ acts.
The measure $m$ is not invariant by the transpose of $L$ but by a conjugate $\tilde{L}$. 
Consider the conjugate matrix $P = D^{-1} M_* D$ where $D$ is the diagonal matrix whose entries are the one of the eigenvector  $(v_i)_{i\in A}$. Then the transfer operator associated to $P$, denoted $\tilde{L}$ is conjugate to $L$ and the measure $m$ is invariant by $^t \tilde{L}$. 
As a consequence, the measure $\mu=\ ^t D^{-1} m$ is finite and invariant by $ ^t L$.  

\smallskip

 The total space can be partitioned into cylinders of length $1$ such that the shift maps each cylinder $C_{\alpha_1}$ into the union of all the cylinders $C_\beta$ for which $\alpha_1 \beta$ is admissible. Moreover, the restriction of the shift to each cylinder is invertible onto its image. 
In other words, the shift $\tau$ acting on $(A^{\N}, m)$ is a \textit{Markov map}. We consider $\tau_{III_0}$ the return map to the symbol $III_0$. The following statement is a direct application of some results due to Saarig, Gouezel on the decay of correlations for certain transfer operators.  

\begin{thm} \label{thm_decay_correlation} Let $\tau $ be a the shift map acting on a measured space $(A^{\N},\mu)$ associated with the matrix $M_*$. Then for any $0 <\theta<1$, for any H\"older function $f \in \mathcal{F}_\theta$ supported in the cylinder $C_{III_0}$ and for any $g \in L^{\infty}(\mu)$, one has: 
\begin{equation*}
\lim_{n\rightarrow +\infty }\int (L^n f)(w) g(w) d\mu(w) = \int fd\mu \int g dm,
\end{equation*}
where $m$ is the probability measure associated with the matrix $M$. 
\end{thm}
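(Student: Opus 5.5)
The plan is to reduce the statement to the decay of correlations for the first return map $\tau_{III_0}$ to the cylinder $C=C_{III_0}$, using the renewal-type results of Sarig and Gou\"ezel for transfer operators of Markov maps with a return time.

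\textbf{Step 1: the induced system.} Since $\Sigma^*_M$ has full measure, $\tau_{III_0}$ is defined $m$-almost everywhere on $C$. The induced map is a full-branched Markov map on the countable alphabet $\tilde A$: each branch $\alpha\in\tilde A$ maps its cylinder $C_\alpha$ bijectively onto $C$. Its transfer operator is the operator $R$ defined above. On $\mathcal{F}_\theta$ this is a Gibbs--Markov system. The potential is the constant $-\log\delta$ per symbol, which is locally constant, so the distortion constants are trivial.

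\textbf{Step 2: spectral gap for $R$.} I would show that $\sum_{\alpha\in\tilde A}\delta^{-|\alpha|}$ is the $m$-measure of $C$ normalized by $m(C)$, and hence finite; this comes from the eigenvector relation defining $m$. Then the standard Lasota--Yorke inequality for Gibbs--Markov maps gives quasi-compactness of $R$ on $\mathcal{F}_\theta$, with a simple isolated eigenvalue $1$. The eigenfunction is the density of $m|_C$, and there are no other eigenvalues on the unit circle, because $\tau_{III_0}$ has full branches and is therefore mixing.

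\textbf{Step 3: renewal equation and return-time tails.} Set $T_n f=1_C L^n(f1_C)$ and $R_n f=1_C L^n(f1_{\{\varphi=n\}})$, where $\varphi$ is the return time. These satisfy the renewal equation
\[
(I-T(z))^{-1}=(I-R(z))^{-1}, \qquad R(z)=\sum_n z^nR_n .
\]
The key estimate needed is $\|R_n\|_\theta=O(\kappa^n)$ for some $\kappa<1$. This should follow from the structure of $M_*$. Words avoiding $III_0$ after the first letter cannot loop indefinitely through $I, IV$ without passing through $III_0$; the only cycles avoiding $III_0$ are of the form $II\to III_\pm\to II$ and $IV\to IV$. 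These cycles have growth rates $\sqrt2$ and $1$, both strictly less than $\delta$. Hence the weights satisfy $\sum_{|\alpha|=n}\delta^{-n}\le C(\sqrt2/\delta)^n$.

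With exponential tails, Gou\"ezel's theorem (or Sarig's) applies and yields
\[
T_n f\to \Big(\int f\,d\mu\Big)\frac{1}{\int\varphi\,dm}
\]
in $\mathcal{F}_\theta$, which is in fact exponentially fast. Here $\int\varphi\,dm<\infty$ by Kac's lemma, and this factor is absorbed by the normalization of $m$.

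\textbf{Step 4: from $T_n$ back to $L^n$.} I would decompose
\[
L^n f=\sum_{k\le n} A_{n-k}T_kf,
\]
where $A_j g=L^j(g1_{\{\text{no return in }j\text{ steps}\}})$ accounts for the last excursion outside $C$. The tails of $A_j$ decay exponentially by the same count as in Step 3. Pairing with $g\in L^\infty(\mu)$ and applying dominated convergence then gives $\int f\,d\mu\int g\,dm$.

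I expect the main obstacle to be Step 3: verifying the exponential tail bound for the return time to $III_0$ directly from the subshift structure, and checking that the hypotheses of Gou\"ezel's theorem hold on $\mathcal{F}_\theta$ despite the non-invariance of $\mu$ (which is handled via the conjugation by $D$ to $\tilde L$). The first thing I would try is an explicit count of paths in $M_*$ avoiding $III_0$.
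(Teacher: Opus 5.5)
Your proposal is correct and follows the paper's route. You induce on $C_{III_0}$ (full branches, Jacobian $\delta^{|\alpha|}$ on each branch), get exponential tails for the return time, apply Gou\"ezel's renewal theorem, and pass between $L$ and $\tilde L$ via the conjugation by $D$; the only real difference is that you bound $m(\{\varphi>N\})$ by counting paths in the graph of $M_*$ with $III_0$ deleted (spectral radius $\sqrt2<\delta$), which is more direct than the paper's generating-function identity $R(z)=1-1/T(z)$.

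Fix the renewal equation: it should read $T(z)=(I-R(z))^{-1}$ with $T(z)=\sum_{n\ge 0}z^nT_n$ and $T_0=I$. Note also that since only a limit (no rate) is claimed, the statement already follows, after the same conjugation, from plain mixing of the $\tau$-invariant Markov measure $m$ on the primitive $6\times 6$ block, even for $f\in L^1(\mu)$.
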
 

\begin{rem} Observe the slight discrepancy with usual statements on the convergence of the correlation. This discrepancy comes from the fact that $L$ is not a normalized operator, i.e the constant function $1$ is not invariant by $L$, so its natural measure is $\mu$. 
\end{rem}

\begin{proof} We first observe that since the symbol is finite and since  $m(C_{III_0})>0$, the image $\tau_{III_0}(C_{III_0, \alpha , III_0})$ where $\alpha$ is any admissible word in $A^{\N}$ with  $III_0 \alpha III_0$ admissible has mass $\geqslant \epsilon >0$. 
This shows that the first return map $\tau_{III_0}$ has the big image property (see e.g \cite[p.52]{gouezel} for a definition).

\smallskip 

We now study the properties of the potential associated to the transfer operator and its iterate. 
Let us consider $\phi_{k}$ the Radon-Nikodyn derivative at $\underline{w} = w_1 \ldots w_n \ldots \in A^{\N}$ and any integer $k$
\begin{equation} \label{eq_radon_nikodyn}
\phi_k (\underline{w}) = \dfrac{dm}{d \tau^{k}_* m} (\underline{w})= \dfrac{v_{w_1} \delta^{k}}{v_{w_{k+1}}}.  
\end{equation}    
In particular, this shows that $\phi_k$ is piecewise-constant, hence it is $\theta$ H\"older for any $\theta \in (0,1)$. 

\smallskip 

We now compute the rate of decay of the first return time $\varphi$ toward $III_0$. This is a consequence of the following lemma. 
\begin{lem} \label{lem_return_decay} If $\varphi$ is the first return time toward $III_0$, then $m( \{\varphi > N \}) \leqslant C \alpha^N$ for some uniform constant $C>0$ and for $\alpha <1$. 
\end{lem}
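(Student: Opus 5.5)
The plan is to read the return time off the combinatorics of the $6\times 6$ block of $M_*$ on the alphabet $A=\{I,II,III_-,III_0,III_+,IV\}$. For $m$, the measure of a cylinder $C_\alpha$ of length $n$ is $v_{\alpha_1}p_{\alpha_n}/(v_{\alpha_n}\delta^{n-1})$. So the measure of $\{\varphi>N\}$ is controlled by the number of admissible words of length $N$ that avoid the symbol $III_0$, times $\delta^{-N}$, up to a uniform constant $C_0=\max v_i/\min v_i\cdot \max p_i$. Concretely, let $M'$ be the $5\times5$ matrix obtained from the $A$-block of $M_*$ by deleting the row and column of $III_0$. Then I would first show
\begin{equation*}
m(\{\varphi>N\})\leqslant C_0\,\delta^{-N}\sum_{i,j}\bigl((M')^{N}\bigr)_{ij}\leqslant C_1\,\delta^{-N}\rho(M')^N N^{4},
\end{equation*}
where $\rho(M')$ is the spectral radius. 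The problem is then reduced to proving $\rho(M')<\delta$, after which any $\alpha\in(\rho(M')/\delta,1)$ works.

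To get the strict inequality I would compute $M'$ directly from the matrix in Theorem \ref{thm_markov}. Removing $III_0$ leaves the transitions
\begin{equation*}
I\to IV,\quad II\to III_-,III_+,\quad III_\pm\to II,IV,\quad IV\to I,IV.
\end{equation*}
Here $I\to III_0$ is the only exit from $I$, so $I$ now only receives mass from $IV$. The characteristic polynomial of $M'$ is then an explicit quintic, and I would check that its largest root is strictly below $\delta\simeq1.80$. This follows from Perron--Frobenius monotonicity, since $M'$ is a principal submatrix of the irreducible nonnegative $6\times6$ block, whose spectral radius is $\delta$ by Corollary \ref{cor_count_real_curves}.

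The strict inequality needs irreducibility of the $6\times6$ block. I would verify this by exhibiting a cycle through every symbol, for instance $III_0\to II\to III_-\to IV\to I\to III_0$ together with $II\to III_+\to II$. Since a proper principal submatrix of an irreducible nonnegative matrix has strictly smaller spectral radius, no explicit root computation is needed. As a sanity check, the $\{II,III_-,III_+\}$ part alone has radius $\sqrt2$, and the $\{I,IV\}$ part has radius the golden ratio $\approx1.618$.

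The main obstacle is the bookkeeping in the first reduction. I must make sure that "$\varphi>N$" really corresponds to avoiding $III_0$ in positions $2,\dots,N+1$ for words starting in $C_{III_0}$ (or positions $1,\dots,N$ otherwise), and that the first letter's weight is absorbed into the constant. I also must check that $m$ is defined on one-sided admissible words, so that the cylinder formula and the nonnegativity of $p$ give the upper bound. Once that is set, the estimate is uniform, giving $C$ and $\alpha<1$ as claimed.
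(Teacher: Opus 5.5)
Your argument is correct, but it takes a different route from the paper. The paper works with generating functions. It sets $R(z)=\sum_k m(\{\varphi=k\})z^k$ and $T(z)=\sum_n m(\{w_1=w_n=III_0\})z^n$ and uses the renewal identity $R=1-1/T$. It then uses the spectral gap of $M$: the coefficients of $T$ are $C+O(((\delta-\epsilon)/\delta)^n)$, so $T$ is meromorphic beyond the unit circle with a simple pole at $z=1$. Hence $R$ continues holomorphically to a disc of radius $1+\epsilon'$, and its coefficients decay exponentially.

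You bypass the renewal equation entirely. You bound $\{\varphi>N\}$ by the cylinders of admissible words avoiding $III_0$, using the uniform estimate $m(C_\alpha)\le C_0\delta^{-(n-1)}$. You get the gap $\rho(M')<\delta$ from irreducibility of the $6\times 6$ block and strict Perron--Frobenius monotonicity for proper principal submatrices. This is shorter and more elementary. It needs only irreducibility, not that $\delta$ is simple and strictly dominates the other eigenvalues, and no analytic continuation. It also gives an explicit rate: any $\alpha\in(\rho(M')/\delta,1)$. The paper's version yields slightly more, namely holomorphy of $R$ on a disc of radius $>1$, but only the exponential tail is used afterwards.

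One slip, harmless for the argument: your list of surviving transitions mixes the two readings of $M_*$. Your irreducibility cycle uses the convention $i\to j$ iff $m_{ij}\neq 0$. In that convention row $I$ of the block is $(0,0,0,1,0,0)$, so after deleting $III_0$ the symbol $I$ has no exit at all. The transition ``$I\to IV$'' is the column reading and should not appear; what survives is $IV\to I$. Consequently $I$ lies on no cycle avoiding $III_0$, and the $\{I,IV\}$ part has spectral radius $1$, not the golden ratio. In fact $\rho(M')=\sqrt2$, coming from $\{II,III_-,III_+\}$. Your strict inequality comes from Perron--Frobenius rather than from this computation, so nothing breaks; you can simply take any $\alpha\in(\sqrt2/\delta,1)$.
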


\begin{proof} Let us consider the generating function for the first return time to $III_0$, we set $R(z) = \sum m(\{\varphi =k \}) z^k$. 
We consider the generating function for the return  time to $III_0$, given by $T(z) = \sum m(\{w_1 = w_n = III_0\}) z^n$. 
Since each path starting at $III_0$ and returning to $III_0$ in $n$-step can be decomposed into a path returning to $III_0$ in $k\leqslant n-1$ step and a first return in $n-k$ step, we obtain the equation $R(z) = 1 - \dfrac{1}{T(z)}$. 
Observe that the general term of the series $T$ has an asymptotic of the form: 
\begin{equation}
m(\{w_1 = w_n = III_0\}) = \dfrac{ p_{III_0}\langle M^n e_{III_0} , e_{III_0}\rangle}{\delta^n} =  C + O\left (  \left (\dfrac{\delta- \epsilon}{\delta} \right )^n\right ),
\end{equation}
where $\epsilon > 0$ and $C$ is a constant. 
This shows that $T(z)$  is meromorphic with a pole of order $1$ at $z=1$ and we get that $R(z)$ is holomorphic at $z=1$ and in particular, the series $R(z)$ converges for $|z| > 1 + \epsilon'$ with $\epsilon'>0$. 
This shows that $m (\{ \varphi =n \}) \leqslant C' / (1+ \epsilon')^n $ for some constant $C'>0$ and we obtain that $m( \{\varphi > N\}) = O ((1+\epsilon')^{-N})$, as required. 
\end{proof}

Lemma \ref{lem_return_decay} shows that the measures $m(\{ \varphi > N\})$ decay faster than $1/N^\beta$ for $\beta >1$.  
We are thus in the setting of \cite[Theorem 6.9]{gouezel}, which gives: 
\begin{equation*}
\left  |\int (\tilde{L}^n f)(w) g(w) dm  - \left ( \int f dm \right ) \left (\int g dm\right ) \right | \leqslant \dfrac{C}{n^{\beta}} || f||_{\theta} ||g||_\infty .
\end{equation*}
Let us apply this to $L$ instead of $\tilde{L}$. Let $D$ the operator induced by the diagonal matrix whose entries are given by $(v_i)$ the components of the eigenvector of $M$. 
We have using the previous estimate together with the definition of $\mu$: 
\begin{multline*}
\lim_{n\rightarrow +\infty } \int (L^n f) g d\mu =\lim_{n\rightarrow +\infty } \langle (L^n f) g, \mu \rangle =\lim_{n\rightarrow +\infty }  \langle (D \tilde{L}^n D^{-1} f) g , \ ^t D^{-1} m \rangle \\= \lim_{n\rightarrow +\infty } \langle (\tilde{L}^n D^{-1 } f ) g  , m \rangle  = \langle D^{-1} f ,m \rangle \langle g , m  \rangle = \langle f, \  ^t D^{-1} m \rangle \langle g, m \rangle = \langle f, \mu \rangle \langle g, m \rangle,
\end{multline*}
where we write the pairing $\langle f, \mu \rangle$ as the integral $\int f d\mu$. 
The above formula yields the desired conclusion. 
\end{proof}

\subsection{Equidistribution of vertical curves (Proof of Theorem E)}

Following the construction in Proposition \ref{prop_coding_single} as well as the definition of the invariant measure $\mu$ for $L$, 
we  define the limiting candidate $T_{\R}$ by setting: 
\begin{equation} \label{eq_real_current}
T_{\R} := \int \gamma(w) d\mu(w).
\end{equation}

We state the main properties of this current. Recall that we have defined  $\mathcal{L}^*$ as the subset of leaves of $\mathcal{L}$ that are regular (of type $\mathcal{V}_{II}, \mathcal{V}_{III_{\pm}}$).  
\begin{prop} \label{prop_real_current} The current $T_{\R}$ is a real positive woven current, invariant by $F^{\sharp}/\delta$ i.e $F^{\sharp} T_{\R} = \delta T_{\R} $. Moreover, this current does not charge any leaf of $\mathcal{L}^*$. 
\end{prop}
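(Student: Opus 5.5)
Positivity and the woven structure come directly from the definition \eqref{eq_real_current}. For $\mu$-generic $\underline{w}$, Proposition \ref{prop_coding_single} gives a positively oriented current $[\gamma_{\underline{w}}]$ of integration along a single vertical curve of $\mathcal{L}$. Corollary \ref{cor_def_gamma} gives $\gamma\in\mathcal{F}_\theta'$, so $\underline{w}\mapsto\langle\gamma(\underline{w}),\alpha\rangle$ is measurable and bounded. Each $[\gamma_{\underline{w}}]$ has locally uniformly bounded mass, because vertical curves for $\mathcal{C}$ have uniformly bounded length in any compact set (as in Proposition \ref{prop_bound_mass}). Hence $T_{\R}=\int\gamma\,d\mu$ is a well-defined $1$-current. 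Pairing with a test form $\alpha$ that is positive on upward vertical vectors (a positive form on the cone field) gives $\langle T_{\R},\alpha\rangle\ge 0$. Thus $T_{\R}$ is an average of positive currents along curves of $\mathcal{L}$, i.e. a positive woven current.

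For the invariance, I would apply $F^\sharp$ inside the integral. This is legitimate because $F^\sharp$ of a curve in $\mathcal{L}$ is a finite sum of at most two curves in $\mathcal{L}$ (Theorem \ref{thm_markov}), so it acts continuously on test pairings. Proposition \ref{prop_main_transfer}(i) gives $F^\sharp\gamma=\delta\,L\gamma$ pointwise, hence
\begin{equation*}
F^\sharp T_{\R}=\int F^\sharp\gamma\,d\mu=\delta\int L\gamma\,d\mu=\delta\int \gamma\, d({}^tL\mu)=\delta T_{\R},
\end{equation*}
using that $\mu={}^tD^{-1}m$ is invariant under ${}^tL$. The third equality requires checking that duality holds for current-valued functions. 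This follows by pairing with a fixed test form $\alpha$, which reduces to scalar functions $\langle\gamma,\alpha\rangle\in\mathcal{F}_\theta$, and ${}^tL\mu=\mu$ holds on such functions. One must also check that $\mu$ charges only generic words, i.e. words in which $III_0$ recurs. This is true since $m$ is ergodic Markov with $m(C_{III_0})>0$, so $\gamma$ is defined $\mu$-a.e.

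For the statement that $T_{\R}$ charges no leaf of $\mathcal{L}^*$, note that $\mu$ is built on the alphabet $A=\{I,\dots,IV\}$ only, and that by Theorem \ref{thm_markov} (the block structure in Corollary \ref{cor_count_real_curves}) the types $I,\dots,IV$ never map to the types $\mathcal{V}_{II},\mathcal{V}_{III_\pm}$. Each $\gamma_{\underline{w}}$ is obtained as a limit of components of $F^{-n}(\Lc)$ with prescribed itinerary in $A$, so it passes through a precritical point on $\bIz\cup\bIinf$. Leaves of $\mathcal{L}^*$ meet the Chebyshev intervals at regular points, and by Corollary \ref{cor_L_star_leaf} each regular landing point carries a unique leaf.

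The main obstacle is showing that the limit curve $\gamma_{\underline{w}}$ is not itself a leaf of $\mathcal{L}^*$ and does not accumulate onto one. I would handle this with the lamination property (Theorem \ref{thm_lamination}) together with the recurrence of $III_0$. Along the orbit the curve $\gamma_{\underline{w}}$ passes infinitely often through type $III_0$, i.e. through $\Oinf$, and a leaf of $\mathcal{L}^*$ never does. Since $\mathcal{L}$ is a lamination away from $\Lz\cup\Linf$, two leaves with different codings are disjoint there, so the set of $\underline{w}$ with $\gamma_{\underline{w}}$ equal to a given leaf of $\mathcal{L}^*$ is empty. Restricted to any such leaf, $T_{\R}$ is therefore the integral over a $\mu$-null set and vanishes. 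Fubini over the transverse parameter then shows that $T_{\R}$ restricted to $|\mathcal{L}^*|$ is zero.
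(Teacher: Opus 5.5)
Your proof is correct and follows the paper's own argument. Positivity and the woven structure come from writing $T_{\R}$ as a $\mu$-average of positively oriented leaves, invariance comes from Proposition~\ref{prop_main_transfer}(i) together with ${}^tL\mu=\mu$, and the absence of mass on $\mathcal{L}^*$ comes from the fact that $\mu$ lives on words over $A$. Your $III_0$-recurrence and lamination-disjointness step only spells out what the paper leaves implicit.

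One correction: type $IV$ leaves \emph{can} map forward onto $\mathcal{V}_{III_\pm}$ leaves, since $m_{8,6}=m_{9,6}=1$ in $M_*$, so your statement that types $I,\dots,IV$ never map to the $\mathcal{V}$-types is false in the forward direction. What the block structure actually gives is that $\mathcal{V}$-type leaves only map to $\mathcal{V}$-type leaves, or equivalently that $A$-type leaves have only $A$-type preimages. That is all your argument uses.
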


\begin{rem} Away from the horizontal line through the origin, the current $T_{\R}$ is geometric in the sense of Ruelle and Sullivan \cite{ruelle_sullivan}.
\end{rem}

\begin{proof}
The invariance follows directly from assertion (i) of Proposition \ref{prop_main_transfer} and the fact that the measure is invariant by $ ^t L$. The positivity comes from the fact that it is a sum of currents of integration along positively oriented curves in $\mathcal{L}$. Finally, the fact that it does not charge any leaf of $\mathcal{L}^*$ comes from the fact that the measure $\mu$ puts no mass on any type $\mathcal{V}_{II}$ or $\mathcal{V}_{III,\pm}$ curves.
\end{proof}

For any subset $C\subset A^{\N}$, we denote by $\chi_C$ the characteristic function on that subset. 

\begin{thm} \label{thm_equid_real}
Let $V$ be any vertical curve in $\mathcal{L}$, we consider the  positively oriented current of integration
 $[V]$ on it. Then there exists a constant $M>0$ such that
\begin{equation}
 \lim_{n\rightarrow +\infty} \dfrac{\Fs^n[V]}{\delta^n} =  C T_{\R}.  
\end{equation}
Precisely, if $V$ is a curve of type $a \in A$, then $C = v_a$, where $v_a$ is the coefficient of the normalized eigenvector $v= (v_i)_{i\in A}$ of $M_*$. 
\end{thm}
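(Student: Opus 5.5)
We plan to reduce the statement to the decay of correlations of Theorem \ref{thm_decay_correlation} via the coding of \S\ref{section_coding}. The first step is to write the pullbacks symbolically. If $V$ is a vertical curve of type $a\in A$, Theorem \ref{thm_markov} says that $\Fs^n[V]$ is the sum of the currents of integration on the components of $F^{-n}(V)$. These components are indexed by the admissible words $w_1\cdots w_n a$ of length $n+1$ ending with $a$, each entry of $M_*$ being $0$ or $1$. Let $\gamma_n$ be the cylinder-constant function of Proposition \ref{prop_main_transfer}, built from $V$ instead of a type $III_0$ curve; the proof of (iii) there only uses the Markov property. Iterating assertion (iii) of that proposition then gives, for any compactly supported smooth $1$-form $\alpha$,
\begin{equation*}
\dfrac{1}{\delta^n}\langle \Fs^n[V],\alpha\rangle = \sum_{|w|=n,\ wa \text{ admissible}} \dfrac{1}{\delta^n}\langle [\gamma_{wa,n}],\alpha\rangle ,
\end{equation*}
which is a Birkhoff-type sum of the form $\int (L^n f_n)\, g \,d\mu$. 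Here $f_n=\chi_{C_a}\langle \gamma_n,\alpha\rangle$ (up to the normalisation by $D$), and $g$ is the density encoding the cylinder weights.

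The second step replaces $\gamma_n$ by the limit $\gamma$. By Proposition \ref{prop_flat_norm_est} and assertion (ii) of Proposition \ref{prop_main_transfer}, $\langle\gamma_n,\alpha\rangle\to\langle\gamma,\alpha\rangle$ in $\mathcal{F}_\theta$, with error controlled by $\theta^k$, where $k$ is the number of occurrences of $III_0$ in the word. For $m$-typical words, $k$ grows linearly in $n$ by Lemma \ref{lem_return_decay}. The exceptional words with few returns to $III_0$ have exponentially small $m$-measure, and since every curve has locally bounded mass (Proposition \ref{prop_bound_mass}), their contribution after dividing by $\delta^n$ tends to zero. So it suffices to show that $\delta^{-n}\sum_{|w|=n}\langle\gamma(wa\cdots),\alpha\rangle$ converges to $v_a\int\langle\gamma(w),\alpha\rangle\,d\mu(w)=v_a\langle T_{\R},\alpha\rangle$. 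This uses Corollary \ref{cor_def_gamma}: the limiting curve $\gamma_{\underline w}$ does not depend on the starting curve for generic $\underline w$.

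The third step is the convergence itself. We first reduce to $f$ supported in $C_{III_0}$. For $a\neq III_0$, split the sum according to the last occurrence of $III_0$ before the final letter $a$; this gives a finite decomposition, up to an exponentially small tail. On each piece, apply Theorem \ref{thm_decay_correlation} with $f=\chi_{C_{III_0}}\langle\gamma,\alpha\rangle\in\mathcal{F}_\theta$ and $g$ the indicator of the relevant cylinder, which is in $L^\infty$. This gives convergence to $\int f\,d\mu\int g\,dm$. Summing over pieces reconstructs $\int\langle\gamma,\alpha\rangle d\mu$, multiplied by the Perron--Frobenius weight of the symbol $a$. That weight is $v_a$, the same normalisation that defines $m$ from the eigenvector of $M_*$, so the constant is $C=v_a$.

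Finally, the limit is identified with $v_aT_{\R}$ by testing against all compactly supported $\alpha$ and using the uniform mass bound of Proposition \ref{prop_bound_mass}. For curves in $\mathcal{L}^*$ (the types $\mathcal{V}$), the block $N$ has spectral radius $\sqrt2<\delta$ (Corollary \ref{cor_count_real_curves}). Their pullbacks therefore split into a part of size $O(\sqrt2^{\,n})$, which vanishes after normalisation, and a part feeding into type $IV$, which is handled as above. The main obstacle is the second step: $\gamma$ is H\"older only with respect to the first return to $III_0$, not the shift. Because of this, the uniform-in-$n$ control of $\gamma_n-\gamma$ and the bookkeeping of the decomposition into first-return blocks, where Gou\"ezel's theorem applies, must be done carefully. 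We would first attempt it by inducing on $C_{III_0}$ and writing $L^n$ through the renewal decomposition with the first-return operator $R$.
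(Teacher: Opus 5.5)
Your overall architecture matches the paper's: a transfer-operator representation of $\Fs^n[V]/\delta^n$, replacement of $\gamma_n$ by $\gamma$, a decomposition around the symbol $III_0$, Theorem \ref{thm_decay_correlation}, and the constant $m(C_a)/\mu(C_a)=v_a$. Your second step is in fact more careful than the paper's one-line appeal to convergence in $\mathcal{F}_\theta$. Your treatment of the $\mathcal{V}$-types through the $\sqrt2$ block is also a reasonable addition that the paper does not spell out.

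The problem is in your first and third steps, where the roles of the two functions in the correlation integral are swapped. The pullbacks of $V$ are the curves whose word has the letter $a$ in position $n+1$. So the cylinder constraint is $\chi_{C_a}\circ\tau^n$, and it belongs on the $g$-side: $\langle \Fs^n[V]/\delta^n,\alpha\rangle=\mu(C_a)^{-1}\int\langle L^n\gamma_n,\alpha\rangle\,\chi_{C_a}\,d\mu$. The observable transported by $L^n$ is $\langle\gamma,\alpha\rangle$ on all of $A^{\mathbb N}$, not $\chi_{C_a}\langle\gamma_n,\alpha\rangle$.

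Theorem \ref{thm_decay_correlation} needs that observable, the $f$-side, to be supported in $C_{III_0}$, and $\gamma$ is not. Splitting by the last occurrence of $III_0$ before the final letter $a$ only localizes the point where $L^{n-j}\gamma$ is evaluated, i.e. the $g$-side. So it does not place you in the hypotheses of the theorem. If you then use $f=\chi_{C_{III_0}}\langle\gamma,\alpha\rangle$ on every piece, summing the limits gives a multiple of $\int_{C_{III_0}}\langle\gamma,\alpha\rangle\,d\mu$. That is only the $III_0$-component of $T_{\R}$, not $\langle T_{\R},\alpha\rangle$, so your claim that the pieces reconstruct $\int\langle\gamma,\alpha\rangle\,d\mu$ does not follow.

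The missing step is the decomposition at the other end of the word, by the first hitting time $\varphi$ of $III_0$:
\begin{itemize}
\item Write $\gamma=\sum_{k\le N}\gamma\chi_{\{\varphi=k\}}+\gamma\chi_{\{\varphi>N\}}$.
\item Since $\gamma\chi_{\{\varphi=k\}}$ is supported on $\bigcup_{|\beta|=k-1}C_{\beta III_0}$, the function $L^{k-1}(\gamma\chi_{\{\varphi=k\}})$ is supported in $C_{III_0}$.
\item Apply Theorem \ref{thm_decay_correlation} to $L^{n-k+1}$ of these finitely many functions against $g=\chi_{C_a}$. Any $g\in L^\infty$ is allowed there, so no split at the $a$-end is needed.
\item Bound the tail with Lemma \ref{lem_return_decay}.
\item Use the ${}^tL$-invariance of $\mu$ to get $\sum_{k\le N}\int L^{k-1}(\gamma\chi_{\{\varphi=k\}})\,d\mu=\int\gamma\chi_{\{\varphi\le N\}}\,d\mu$, whose mass differs from $T_{\R}$ by at most $\mu(\{\varphi>N\})$.
\end{itemize}
Your closing remark about inducing on $C_{III_0}$ with the first-return operator points toward this, but your third step as written does not carry it out.
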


\begin{proof} Fix $\epsilon >0$. 
Observe that by Proposition \ref{prop_bound_mass},
 the currents $T_n = \Fs^n [V]/\delta^n$ admit some weak limit along particular subsequence. 
  We follow the notation and construction of Proposition \ref{prop_main_transfer}. 
  The curve $V$ has of certain type $a_1 \in A$. 
  Recall that $\gamma_0$ is the constant on the cylinder $C_{a}$ and equal to $[V]$ on $C_{\alpha}$ and zero elsewhere. 
  Fix a compactly supported smooth $1$-form $\alpha$. 
We have by assertion (iii) of Proposition \ref{prop_main_transfer}: 
\begin{equation*}
\left \langle \dfrac{\Fs^n [V]}{\delta^n} ,\alpha \right \rangle = \langle \dfrac{1}{\mu(C_{a})} \dfrac{\Fs^n}{\delta^n}  \int_{C_{a}} \gamma_0(w) d\mu(w) , \alpha\rangle =  \dfrac{1}{\mu(C_{a})} \int_{C_{a}} \langle L^n \gamma_n , \alpha \rangle d\mu.
\end{equation*}
Moreover, since $\langle \gamma_n ,\alpha\rangle$ converges in $\mathcal{F}_\theta$ toward $\langle \gamma,\alpha \rangle$ and since $L$ is a bounded operator (of norm bounded by $1$ on $L^\infty(m)$), there exists $N_1$ such that for any $n\geqslant N_1$, one has 
\begin{equation*} 
|\langle L^n \gamma_n - L^n\gamma, \alpha \rangle| \leqslant \epsilon.
\end{equation*}
We thus get using the two previous equation: 
\begin{equation} \label{eq_approx_1} 
\Delta_1 = \left  | \left \langle \dfrac{\Fs^n [V]}{\delta^n}  - \dfrac{1}{\mu(C_{a})} \int_{C_{a}} L^n \gamma d\mu , \alpha \right  \rangle \right | \leqslant \epsilon,
\end{equation}
 for all $n \geqslant N_1$. 
Recall that we denoted by $\varphi$ the first return time function to $III_0$. Set $g = \chi_{C_{a}}$, we have for all $N \geqslant 0$ and using the fact that $\gamma = \sum_{k\leqslant N}  \gamma \chi_{\varphi =k} + \gamma \chi_{\varphi > N}$: 
\begin{equation*}
 \int   L^n  \langle\gamma , \alpha\rangle g  d\mu    =  \sum_{k\leqslant N}  \int  (L^{n-k+1}  L^{k-1} \langle \gamma \chi_{\varphi = k} ,  \alpha \rangle) g d\mu  + \int \langle L^n(\gamma \chi_{\varphi > N}) , \alpha \rangle g d\mu.  
\end{equation*}
Observe that $ \left | \int L^n \langle  \gamma \chi_{\varphi > N} , \alpha \rangle g d\mu \right |  =  \left | \int \langle \gamma \chi_{\varphi> N} , \alpha \rangle  g\circ \tau^n d\mu \right | \leqslant C_1|| g||_\infty \mu(\{ \varphi > N \} )$ with $C_1 > 0$ and the previous inequality gives for $N$ large enough: 
\begin{equation} \label{eq_approx_2}
\Delta_2 = \left  | \int  \langle L^n \gamma, \alpha\rangle  g d\mu - \sum_{k\leqslant N}  \int L^{n-k+1} \langle L^{k-1} (\gamma \chi_{\varphi = k}) ,\alpha\rangle g d\mu  \right | \leqslant C_1 \mu(\{\varphi > N \}) \leqslant \epsilon, 
\end{equation}
where we have used the exponential decay of the first return time from Lemma \ref{lem_return_decay} and the fact that $\mu = ^t D^{-1 }m$. 
From now on, $N$ is fixed and chosen so that the previous inequality holds. 
By construction, the functions $ \gamma \chi_{\varphi=k} $ are supported on the union of  cylinders $\cup_{|\beta| = k-1 }C_{\beta III_0}$. As a consequence,  $L^{k-1} \gamma \chi_{\varphi = k} $ is supported in $C_{III_0}$, so Theorem \ref{thm_decay_correlation} gives for all $k \leqslant N$ and all $n \geqslant N_2 \geqslant N$:  
\begin{equation*}
\left | \int L^{n-k+1} \langle L^{k-1} (\gamma \chi_{\varphi =k}), \alpha \rangle g d\mu - \int \langle L^{k-1} (\gamma\chi_{\varphi =k }) , \alpha \rangle d\mu \int g dm \right | \leqslant \dfrac{\epsilon}{ N Tr(M^N)}.  
\end{equation*}
Summing over all $k\leqslant N$ gives for all $n \geqslant N_2$: 
\begin{equation} \label{eq_approx_3}
\Delta_3 = \left  | \sum_{k \leqslant N} \left ( \int L^{n-k+1} \langle L^{k-1} (\gamma \chi_{\varphi = k}), \alpha \rangle g d\mu   -  \int \langle L^{k-1} (\gamma \chi_{\varphi=k}) , \alpha \rangle d\mu \int g dm \right )   \right | \leqslant \epsilon,
\end{equation}
where we used that $\sum_{k\leqslant N} \sum_{|\beta|=k-1} \epsilon/ (N Tr(M^N)) \leqslant \sum_{k\leqslant N}\epsilon  Tr(M^k)/(NTr(M^N)) \leqslant \epsilon $.
Using the invariance of $\mu$ by $^t L$, we have that 
\begin{equation*}
T_{\R} - \sum_{k\leqslant N} \int L^{k-1} (\gamma \chi_{\varphi = k}) d\mu = \int \gamma \mu - \int \gamma \chi_{\varphi \leqslant N} d\mu = \int \gamma \chi_{\varphi > N} d\mu.
\end{equation*}
  In particular, the mass of the difference is controlled: 
  \begin{equation*}
  Mass \left (T_{\R} - \sum_{k\leqslant N} \int  L^{k-1} (\gamma \chi_{\varphi = k}) d\mu \right ) \leqslant \mu (\{ \varphi > N \}) \leqslant \epsilon   
  \end{equation*}
  Pairing with $\alpha$ then yields: 
  \begin{equation} \label{eq_approx_4}
  \Delta_4 = \left ( \int g dm \right ) \left | \langle T_{\R} , \alpha \rangle -  \sum_{k\leqslant N} \int  \langle L^{k-1} (\gamma \chi_{\varphi=k}) , \alpha \rangle d\mu  \right |  \leqslant \epsilon ||\alpha ||_\infty m(C_{a})
  \end{equation}
Combining \eqref{eq_approx_1}, \eqref{eq_approx_2}, \eqref{eq_approx_3} and \eqref{eq_approx_4} together with the triangle inequality gives: 
\begin{equation*}
 \left | \langle \dfrac{\Fs^n [V]}{\delta^n}  - \dfrac{m(C_{a})}{\mu(C_{a})} T_{\R}  ,\alpha \rangle\right | \leqslant \Delta_1 + \dfrac{\Delta_2 + \Delta_3 + \Delta_4}{\mu(C_{a})}  \leqslant \left (1 + \dfrac{2}{m(C_{a})} + ||\alpha||_\infty \right ) \epsilon.
\end{equation*}
The above inequality allows us to conclude as $m(C_{a})/\mu(C_{a}) = v_{a}$. 
\end{proof}

\subsection{Closedness of $T_{\R}$}

Let $\pi \colon X \to \bP^2$ be the morphism induced by the blow-down of the four exceptional divisors in $X$. In this section, we shall view $\pi$ as a real mapping between $X(\R)$ and $\bP^2(\R)$. 
Given a vertical curve $C$ in $\bP^2(\R)$, its strict-transform admits a compatible orientation and we denote by $[\pi^o C]$ the induced current of integration on $X(\R)$. 
The main result in this section concerns the closedness of the strict transform $\pi^o T_{\R}$. 

\begin{cor} \label{cor_closed_TR} The strict transform $\pi^o T_{\R}$ is a closed current in $X(\R)$. 
\end{cor}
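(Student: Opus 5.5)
The plan is to obtain $\pi^o T_{\R}$ as a weak limit of closed currents. By Theorem~\ref{thm_equid_real}, with $V$ of type $a\in A$,
$$T_{\R}=\lim_{n\to\infty} v_a^{-1}\delta^{-n}\Fs^n[V].$$
The boundary operator is weakly continuous, so a weak limit of closed currents is closed. It therefore suffices to show three things: first, that $\pi^o(\delta^{-n}\Fs^n[V])$ converges weakly on $X(\R)$ to $v_a\,\pi^o T_{\R}$; second, that $\partial\bigl(\pi^o\Fs^n[V]\bigr)$ vanishes, or at least vanishes in the limit after dividing by $\delta^n$.

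For the boundaries, I take $V$ to be a single leaf, for instance of type $III_0$. Each component of $\Fs^n[V]$ is then an oriented $\sigma$-symmetric leaf of $\mathcal{L}$ in one of the classes of Theorem~\ref{thm_markov}. The closure of such a leaf in $\bP^2(\R)$ is a closed arc. Its endpoints lie on $\Linf$, at $\binf$, $\binfp$, or at points of $\bIinf$, and where relevant it passes through $\bfz$ or $\bfzp$.

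In $X(\R)$ the strict transform separates the branches at the blown-up points according to their tangent directions. The key step is to check, class by class, that the oriented strict transform of each leaf is either a closed loop in $X(\R)$ or has endpoints that cancel in pairs. The $\sigma$-symmetry is what I expect to produce the cancellation: the upper branch goes to infinity at $[b:1:0]$ and the lower branch leaves toward $\sigma$ of that point, which is the same point of $\Linf(\R)$ in $\bP^2(\R)$. Near $\binf$ and $\binfp$, the two ends of a leaf of type $I$, $II$ or $IV$ should meet the exceptional curve $\Einf$ or $\Einfp$ at the same point, because the tangent directions coincide. The positive orientation (increasing $y$) agrees on both sides once one passes through infinity in $\bP^2(\R)$, so the ends glue into a loop. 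The local charts \eqref{eq_coordinate_blow_up} are the tool for verifying that the tangent directions match. In short, each $\pi^o[\gamma]$ for $\gamma\in\mathcal{L}$ is a closed rectifiable $1$-cycle, hence each $\pi^o\Fs^n[V]$ is closed.

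The main obstacle is the convergence on $X(\R)$, rather than on $\R^2$, near the exceptional divisors and near $\bIz\cup\bIinf$, where the leaves form Cantor bouquets. I plan to handle this with a uniform mass bound near each exceptional curve. Every leaf is vertical for the cone field, and by Proposition~\ref{prop_bound_mass} there are at most $C\delta^n$ leaves. In the blow-up charts I will aim to show that the strict transforms of vertical curves are Lipschitz graphs with uniform constants near $\Efz,\Efzp,\Einf,\Einfp$, since the cone field is bounded by the pencils through those four points. This gives $\delta^{-n}$-normalized masses that are uniformly bounded on all of $X(\R)$, not just on compacts of $\R^2$. Weak compactness then applies.

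To identify the limit, note that it agrees with $v_a\pi^o T_{\R}$ off $\pi^{-1}(\Linf\cup\Lz)$. Then argue that the mass carried by small neighbourhoods of the exceptional divisors and of the strict transforms of $\Lz,\Linf$ tends to zero uniformly in $n$, using the same Lipschitz bound together with the fact that the leaves cross these sets transversally. This shows that no extra mass, and in particular no boundary, concentrates there, and the limit $\pi^o T_{\R}$ is closed.
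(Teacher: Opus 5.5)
Your overall strategy, realizing $\pi^o T_{\R}$ as a weak limit of the closed currents $\delta^{-n}\pi^o\Fs^n[\Lc]$, is the paper's. Your concern about convergence on all of $X(\R)$ is also legitimate: Theorem~\ref{thm_equid_real} only tests against forms compactly supported in $\R^2$, a point the paper passes over. The gap is in the closedness of the approximants. The claim that each $\pi^o[\gamma]$, $\gamma\in\mathcal{L}$, is a closed cycle is false, and both reasons you give for it are wrong.

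\begin{itemize}
\item Since $\sigma[b:1:0]=[b:-1:0]=[-b:1:0]$, the two ends of a $\sigma$-symmetric leaf coincide on $\Linf$ only for $b=0$, i.e.\ at $\Oinf$.
\item Leaves of types $I$, $II$, $IV$ join $\binfp$ to $\binf$. For $I$ and $IV$ this is their definition; for $II$ it holds because $R_4\cup R_5$ reaches $\Linf$ only at $\binfp$. So their strict transforms run from $\Einfp$ to $\Einf$, not back to the same exceptional point.
\item Leaves of type $III_\pm$ join $[b:1:0]$ to $[-b:1:0]$ and, worse, have a corner at $\bfz$. With $s=x-2$, the curve $F^{-2}(\Lc)$ has tangent cone $s(s^2-3y^2)=0$ at $\bfz$. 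Its $III_+$ leaf is the upper half of the branch $s\sim\sqrt3\,y$ together with the lower half of the branch $s\sim-\sqrt3\,y$. In $X(\R)$ its strict transform is two arcs: one from the point $y/(x-2)=1/\sqrt3$ of $\Efz$ up to $[\sqrt2:1:0]$, the other from $[-\sqrt2:1:0]$ up to the point $-1/\sqrt3$ of $\Efz$. Its boundary is therefore nonzero.
\end{itemize}

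The cancellation takes place between different leaves. In $X(\R)$ the strict transform of $F^{-2}(\Lc)$ consists of three oriented loops:
\begin{itemize}
\item the type-$I$ and type-$IV$ leaves, glued at $\Einf$ and $\Einfp$;
\item the upper half of the $III_+$ leaf and the lower half of the $III_-$ leaf, glued at $[\sqrt2:1:0]$ and at the point $1/\sqrt3$ of $\Efz$;
\item the loop symmetric to the previous one.
\end{itemize}
Your observation that the upward orientation extends across $\Linf$ is correct: in the chart $w=1/y$, increasing $y$ means decreasing $w$ on both sides. But it glues different leaves, not the two ends of one leaf.

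What you need is the global statement the paper uses. The current $\pi^o\Fs^n[\Lc]$ is the strict transform of the real algebraic curve $F^{-n}(\Lc)$, which is a union of loops in $X(\R)$ on which the upward orientation is coherent; hence it is closed. This has two consequences for your plan. First, the approximating sequence must start from an algebraic cycle such as $\Lc$, not an arbitrary leaf of type $III_0$, whose strict transform need not be closed. Second, since the individual $\pi^o[\gamma_w]$ are not closed, closedness of $\pi^o T_{\R}$ is a balancing property of $\mu$ between different leaf types (e.g.\ at $\Einf$, type $IV$ against types $I$ and $II$). That balancing is only accessible through the limit.
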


\begin{proof}
Observe that the strict transform by $F_X$ preserves closed curves in $X(\R)$. In particular, any weak limit of the strict transform by $F_X^n$ of $\Lc$ normalized by $\delta^n$ are closed. One observes that the strict transform by  $F_X^n$ of $\Lc$ coincide with the strict transform of $\Fs^n[\Lc]$ by $\pi$. By Theorem \ref{thm_equid_real}, the sequence $\Fs^n[\Lc]/\delta^n$ converges to $T_{\R}$, so the strict transform of $T_{\R}$ by $\pi$ is a limit of closed currents, which remains closed, as required. 
\end{proof}

\section{The real analog of the Sibony current}
\label{section_real_sibony}
We define an analog of the Sibony current $S$, which is a real current whose behaviour mimics the complex one. 
We consider: 
\begin{equation} \label{eq_real_sibony}
S_{\R} = \dfrac{1}{12} \sum_{k \geqslant 0} \dfrac{1}{2^k} \Fs^k [\Lc]
\end{equation}

We state its main features. 

\begin{prop} The current $S_{\R}$ is a real woven current, supported on $\mathcal{L}$ and which satisfies the functional equation: 
\begin{equation}
\left (Id - \dfrac{F^{\sharp}}{2} \right ) S_{\R} = \dfrac{1}{12} [\Lc]. 
\end{equation}  
\end{prop}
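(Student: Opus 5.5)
The plan has three parts: show the defining series converges, identify its support and laminar structure, and obtain the functional equation by telescoping.

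\textbf{Convergence.} We first show that the series defining $S_{\R}$ converges as a current of locally finite mass on $\R^2$ (away from $\Lz$ if needed). By Theorem \ref{thm_markov} and the count in Proposition \ref{prop_bound_mass}, $\Fs^k[\Lc]$ is a sum of positively oriented currents of integration along at most $C_1\delta^k$ vertical curves of $\mathcal{L}$. Each such curve meets a compact set $K$ in length at most $C_K$, since it is vertical for the cone field $\mathcal{C}$. Hence the mass of $\Fs^k[\Lc]$ on $K$ is at most $C_KC_1\delta^k$. Since $\delta<2$ by Theorem \ref{thm_value_dyndeg}, the series $\sum_k 2^{-k}\Fs^k[\Lc]$ converges in mass on compact sets, uniformly, and therefore also weakly.

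\textbf{Support and woven structure.} Each term is a nonnegative combination of currents $[\gamma]$ with $\gamma$ a component of $F^{-k}(\Lc)\cap\R^2$. These components are leaves of $\mathcal{L}$ by Definition \ref{def_cL}. So $S_{\R}=\int[\gamma_t]\,d\eta(t)$ with $\eta$ the atomic measure $\sum_k\sum_{\gamma\subset F^{-k}\Lc}\frac{1}{12\cdot 2^k}\delta_\gamma$ on leaves of $\mathcal{L}$. This is a real woven current supported on $\mathcal{L}$; it is positive because all curves carry the upward orientation fixed in the definition of $\Fs$.

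\textbf{Functional equation.} The main point is to justify commuting $F^\sharp$ with the infinite sum. $F^\sharp$ is defined curve by curve: the image of a vertical curve $\gamma\in\mathcal{L}$ is the sum of the one or two components of its real preimage, oriented positively. Extending $F^\sharp$ linearly to countable nonnegative combinations of such curves, we get
\[
F^\sharp S_{\R}=\frac1{12}\sum_{k\ge0}\frac{1}{2^k}\Fs^{k+1}[\Lc].
\]
To see that this sum is again well defined, note that the mass bound above applies termwise: each term has at most $C_1\delta^{k+1}$ curves, so the resulting series still converges. Then
\[
\tfrac12F^\sharp S_{\R}=\tfrac1{12}\sum_{k\ge1}2^{-k}\Fs^k[\Lc]=S_{\R}-\tfrac1{12}[\Lc],
\]
which is the stated identity.

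\textbf{Main obstacle.} The delicate step is this continuity and linearity of $F^\sharp$ on infinite sums. One has to check that no preimage curve is counted with a wrong sign or lost near the collapsed line $\Lc$, the horizontal axis $\Lz$, or the indeterminacy point $\bfz$. I would handle this using the partition into types in Theorem \ref{thm_markov}, which gives pullbacks curve by curve, together with the uniform local length bound. Together these give domination of the tails, so the formal computation is legitimate.
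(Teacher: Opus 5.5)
Your proof is correct and follows the same route as the paper. The paper likewise gets summability from the $O(\delta^n)$ growth of the number of vertical curves in $\Fs^n[\Lc]$ (Theorem \ref{thm_markov}) together with $\delta<2$, which makes $S_{\R}$ a countable combination of leaves of $\mathcal{L}$, and it obtains the functional equation directly from the definition by the same index shift. Your extra care in checking that $F^{\sharp}$ can be applied term by term to the infinite sum is a step the paper leaves implicit.
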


\begin{proof}
The functional equation follows from the definition of $S_{\R}$. As for the woven property, since $\Lc$ is a vertical curve for the cone field, of type $III_0$, Theorem \ref{thm_markov} shows  that the mass of $\Fs^n [\Lc]$ grows like $\delta^n = o((2 - \epsilon)^n)$, hence the series is summable and the current is woven. 
\end{proof}

We combine the above Proposition with the results from the last section. 
\begin{cor} \label{cor_asymp_SR} For any smooth $1$-form with compact support $\alpha$, the following holds: 
\begin{equation}
\langle S_{\R}, \alpha \rangle = \dfrac{1}{12}  \sum_{k=0}^n   \left \langle \dfrac{\Fs^k}{2^k} [\Lc], \alpha \right \rangle +  \dfrac{  v_{III_0}}{12 (1- (\delta/2))} \dfrac{\delta^n}{2^n} \langle T_{\R}, \alpha\rangle + o \left ( \dfrac{\delta^n}{3^n} \right ),
\end{equation}
where $v_{III_0}$ is the coefficient corresponding to the entry $III_0$ of the eigenvector of $M_*$ for the largest eigenvalue $\delta$. 
\end{cor}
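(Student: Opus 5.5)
The plan is to split the series \eqref{eq_real_sibony} at $n$ and control the tail with a \emph{quantitative} version of Theorem \ref{thm_equid_real}. Write
\begin{equation*}
S_{\R}=\dfrac{1}{12}\sum_{k<n}\dfrac{\Fs^k[\Lc]}{2^k}+\dfrac{1}{12}\sum_{k\geqslant n}\Big(\dfrac{\delta}{2}\Big)^k\,\dfrac{\Fs^k[\Lc]}{\delta^k}.
\end{equation*}
Since $\Lc$ is of type $III_0$, Theorem \ref{thm_equid_real} gives $\delta^{-k}\Fs^k[\Lc]\to v_{III_0}T_{\R}$. Replacing each normalized term of the tail by its limit produces $\frac{v_{III_0}}{12}\sum_{k\geqslant n}(\delta/2)^k\langle T_{\R},\alpha\rangle=\frac{v_{III_0}}{12(1-\delta/2)}(\delta/2)^n\langle T_{\R},\alpha\rangle$. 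This is exactly the stated main term, with the index $k=n$ counted in the tail. So the first finite sum should be read as the partial sum preceding the tail, and I would fix that indexing convention at the start. Everything then reduces to the remainder
\begin{equation*}
\dfrac{1}{12}\sum_{k\geqslant n}\Big(\dfrac{\delta}{2}\Big)^k\Big\langle \dfrac{\Fs^k[\Lc]}{\delta^k}-v_{III_0}T_{\R},\alpha\Big\rangle .
\end{equation*}

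To obtain $o(\delta^n/3^n)$ it suffices to prove a rate of the form
\begin{equation*}
\Big|\Big\langle \delta^{-k}\Fs^k[\Lc]-v_{III_0}T_{\R},\alpha\Big\rangle\Big|\leqslant C_\alpha\,\rho^k\quad\text{with }\rho<2/3 .
\end{equation*}
Summing, the remainder is then $O((\delta\rho/2)^n)=o((\delta/3)^n)$. For this I would rerun the proof of Theorem \ref{thm_equid_real} keeping track of each of the four errors $\Delta_1,\dots,\Delta_4$ as a function of $n$, instead of fixing an $\epsilon$.

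The four errors would be handled as follows.
\begin{itemize}
\item $\Delta_1$ compares $L^n\gamma_n$ with $L^n\gamma$. By Proposition \ref{prop_flat_norm_est} the difference on a cylinder of length $n$ is bounded by $C\theta^{k(n)}$, where $k(n)$ is the number of occurrences of $III_0$ in the word.
\item $\Delta_2$ and $\Delta_4$ are tail terms. Lemma \ref{lem_return_decay} gives the exponential bound $\mu(\{\varphi>N\})\leqslant C\alpha^N$, and I would take $N=\eta n$ for a small fixed $\eta>0$.
\item $\Delta_3$ is the decay of correlations. Because the return time has exponential tails and the first return map has the big image property, the Young-tower and renewal results of Gou\"ezel \cite{gouezel} and Sarig \cite{sarig} upgrade the polynomial bound $Cn^{-\beta}$ used in Theorem \ref{thm_decay_correlation} to an exponential bound $C\lambda^n\|f\|_\theta\|g\|_\infty$ with $\lambda<1$.
\end{itemize}

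The main obstacle is making all of these rates smaller than $2/3$, since only a definite rate below $2/3$ suffices. The decay-of-correlations rate $\lambda$ is governed by the spectral gap of the Markov matrix. The second eigenvalue ratio of $M_*$ on the $6\times6$ block is explicit from the characteristic polynomial in Corollary \ref{cor_count_real_curves}, and I expect it to be well below $2/3$. For the contraction factor $\theta$ I would pass to the Kobayashi expansion along the typical proportion of visits to $III_0$: large-deviation estimates for the Markov chain give $k(n)\geqslant cn$ outside a set of exponentially small measure. I would then choose $\theta$, which is free in $\mathcal{F}_\theta$, together with $\eta$, so that $\max(\theta^{c},\alpha^\eta,\lambda)<2/3$. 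If a direct bound on $M$ from Proposition \ref{prop_expansion_kob} is insufficient, I would try iterating the return map $\mathcal{R}$ a fixed number of times to improve the contraction constant before running the estimate.
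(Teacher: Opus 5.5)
Your splitting of the series and your observation that the term $k=n$ belongs to the tail (the printed statement counts it twice) match the paper's argument exactly. The gap is in the quantitative part. No rate $\rho<2/3$ exists, because the bound $o(\delta^n/3^n)$ is false. The step that fails is your expectation that the second eigenvalue ratio of $M_*$ is well below $2/3$.

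By Corollary \ref{cor_count_real_curves}, the nonzero eigenvalues of the $6\times 6$ block are the roots of $t^3-t^2-2t+1$:
$\delta=2\cos(\pi/7)\approx 1.802$, $\lambda_2=2\cos(5\pi/7)\approx -1.247$ and $\lambda_3=2\cos(3\pi/7)\approx 0.445$.
Hence $|\lambda_2|/\delta\approx 0.692>2/3$, and this rate is actually attained.

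To see this, take $\alpha=\chi(x)\psi(y)\,dy$ with $\psi\geqslant 0$ nonzero and supported in $\{y>0\}$, and $\chi\equiv 1$ on the bounded $x$-range of $|\mathcal{L}|$ over $\mathrm{supp}\,\psi$. Over $\mathrm{supp}\,\psi$, each curve of $\Fs^k[\Lc]$ is a positively oriented graph over the $y$-axis. So $\langle \Fs^k[\Lc],\alpha\rangle=t_k\int\psi$, where $t_k=1,2,4,7,13,23,42,\dots$ is the number of curves (equivalently, the $L$-coefficient of $(F_X^*)^k\{\pi^o\Lc\}$). This sequence satisfies $t_{k+3}=t_{k+2}+2t_{k+1}-t_k$. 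Hence $t_k=a\delta^k+b\lambda_2^k+c\lambda_3^k$ with $b\approx 0.06\neq 0$; indeed $t_k/\delta^k$ oscillates: $1.232, 1.196, 1.233, 1.211, 1.227,\dots$. Your remainder is then exactly
\[
\frac{b\int\psi}{12}\,\frac{(\lambda_2/2)^n}{1-\lambda_2/2}+O\big((\lambda_3/2)^n\big),
\]
and $|\lambda_2|/2=\cos(2\pi/7)\approx 0.623>\delta/3\approx 0.601$.

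So no bookkeeping of $\Delta_1,\dots,\Delta_4$ can give your bound. The decay-of-correlations rate for locally constant observables is at least $|\lambda_2|/\delta$. Nor is $\theta$ free: the contraction in Proposition \ref{prop_flat_norm_est} comes from the Kobayashi expansion constant of Proposition \ref{prop_expansion_kob}, not from the choice of function space.

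The $3^n$ is a misprint. The paper's own proof concludes with $o(\delta^n/2^n)$, which is also what Theorem \ref{thm_slice_SR} uses, and that bound needs no rate at all. Write $R_k=\delta^{-k}\Fs^k[\Lc]-v_{III_0}T_{\R}$. Theorem \ref{thm_equid_real} gives $\langle R_k,\alpha\rangle\to 0$, so the remainder is at most $\frac{(\delta/2)^n}{12(1-\delta/2)}\sup_{k\geqslant n}|\langle R_k,\alpha\rangle|=o((\delta/2)^n)$. Your first paragraph plus this estimate is the complete proof of the corrected statement.
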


\begin{proof}
Let us truncate the series defining $S_{\R}$ at the $n$-th stage, the tail of the series is given by $5/12 \sum_{k \geqslant n} 2^{-k} \Fs^k [\Lc]$. 
By Theorem \ref{thm_equid_real} applied to the curve $\Lc$, one has: 
\begin{equation*} 
 \dfrac{\Fs^k [\Lc]}{\delta^k} = v_{III_0} T_{\R} + R_k,
\end{equation*}
where $R_k$ is a sequence of currents converging weakly to zero.
 Pairing with $\alpha$ in the previous equation together with taking the sum as $k\geqslant n$ yields: 
 \begin{equation*}
\dfrac{1}{12} \sum_{k\geqslant n}  \langle \dfrac{\Fs^k[\Lc]}{2^k} , \alpha\rangle  = \dfrac{1}{12} \sum_{k\geqslant n} \dfrac{\delta^k}{2^k} ( v_{III_0} \langle T_R, \alpha\rangle + \langle R_k ,\alpha\rangle )= \dfrac{v_{III_0}\delta^n	}{12\cdot 2^n (1- (\delta/2))} \langle T_{\R} , \alpha \rangle  + o\left ( \dfrac{\delta^n}{2^n} \right ), 
 \end{equation*}
 where we have used the fact that $\langle R_k , \alpha\rangle$ converges to zero as $k \rightarrow +\infty$. The proof is done. 
\end{proof}

\section{Slicing of real and complex currents}
\label{section_slicing}
\subsection{Slicing of the Green current}

Recall that one hand, one can define the slice by a line $L$ of the Green current as the $dd^c$ of the restriction of $G_F$ to $L$.
In particular, for any $h \in \R$, we let $\nu_h$ be the slice of $T_F$ by the line $\{y=h\}$.  
On the other hand, given any smooth kernel $\psi \colon \R \to \R$, the $(1,1)$-current 
\begin{equation}
R_\psi = \int [y=h] \psi(h) dh
\end{equation}
admits a continuous potential, given by $U_{\psi} =\int_{\R} \log(|y-h|) \psi(h)dh$. So the Bedford-Taylor product $  R_{\psi} \wedge T_F = dd^c (U_{\psi} T_F) $ 
is well-defined. 
This suggests that there exists a relationship between $\nu_h$ and the real smearing. 

\begin{prop} \label{prop_switch} For Lebesgue almost all $h\in \R$, the slice of $T_F$ by the line $\{y=h\}$ exists and one has: 
\begin{equation*}
R_{\psi} \wedge T_F = \int \nu_h \psi(h) dh. 
\end{equation*} 
\end{prop}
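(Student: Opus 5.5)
We plan to argue through local potentials and Fubini. On a neighbourhood $\Omega$ of a compact piece of $\C^2$ (the statement is local, and $X$ and $\bP^2$ agree away from the exceptional divisors, which meet each line $\{y=h\}$ only in finitely many points), write $T_F = dd^c G$ with $G$ plurisubharmonic. Since $\{y=h\}\cap\Omega$ is a smooth disc, the slice $\nu_h := dd^c (G|_{\{y=h\}})$ is defined as soon as $G|_{\{y=h\}}\not\equiv -\infty$. The first step is to show this holds for Lebesgue almost every $h$. Because $G\in L^1_{loc}(\Omega)$, Fubini in the coordinates $(x,y)$ gives $\int |G(x,y)|\, d\lambda(x)<\infty$ for almost every $y$. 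It then only remains to pass from almost every complex $y$ to almost every real $h$. For this we use that $y\mapsto \int_{|x|<r} G(x,y)\,d\lambda(x)$ is subharmonic in $y$, hence locally integrable along any real segment, and that it equals $-\infty$ only on a polar set. A polar set meets $\R$ in a set of zero length. This shows the slice exists for almost every $h\in\R$.

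Next, we would establish the identity by regularisation. Take smooth psh approximants $G_\epsilon\downarrow G$, obtained by convolution on $\Omega$, and set $T_\epsilon = dd^c G_\epsilon$. For a test function $\chi$ with compact support in $\Omega$,
\begin{equation*}
\langle R_\psi\wedge T_\epsilon,\chi\rangle = \int_{\Omega} U_\psi\, T_\epsilon\wedge dd^c\chi = \int_\R \psi(h)\left(\int_{\{y=h\}} G_\epsilon\, dd^c\chi\right) dh .
\end{equation*}
The second equality is the classical smooth identity $dd^c\log|y-h|=[y=h]$ integrated against $\psi(h)\,dh$, which is justified by Fubini since everything is smooth except the integrable logarithm. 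We then let $\epsilon\to0$ on both sides.

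On the left, $U_\psi$ is continuous. The Bedford–Taylor product $dd^c(U_\psi T_\epsilon)$ converges to $dd^c(U_\psi T_F)=R_\psi\wedge T_F$, because $T_\epsilon\to T_F$ weakly and $U_\psi$ is continuous and psh. Concretely, $U_\psi T_\epsilon\to U_\psi T_F$ follows by uniform approximation of $U_\psi$ by smooth functions on compacts.

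On the right, for each $h$ with $G|_{\{y=h\}}\in L^1_{loc}$, monotone convergence gives $\int_{\{y=h\}} G_\epsilon\,dd^c\chi\to\int_{\{y=h\}} G\,dd^c\chi=\langle\nu_h,\chi\rangle$. To pass the limit under $\int dh$, we need a dominating function. We expect this to be the main obstacle. Our plan is to use $G_{\epsilon_0}\ge G_\epsilon\ge G$ for $\epsilon\le\epsilon_0$, which gives the bound $|\int_{\{y=h\}} G_\epsilon\,dd^c\chi|\le C(\sup|G_{\epsilon_0}| + \int_{\{y=h\}}|G|\,|dd^c\chi|)$. We then show that $h\mapsto \int_{\{y=h\},|x|<r}|G|$ is in $L^1_{loc}(\R)$, using the sub-mean value estimate of the first step together with the upper bound of $G$ on compacts. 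Dominated convergence then yields $\langle R_\psi\wedge T_F,\chi\rangle=\int\psi(h)\langle\nu_h,\chi\rangle\,dh$, which is the claim.
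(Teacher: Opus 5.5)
Your argument is correct, but it is organized differently from the paper's.

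The paper works one $h$ at a time. It truncates both potentials, $\max(G_F,-j)$ and $\max(\log|y-h|,-j)$, and invokes Demailly's symmetry of the product for bounded psh functions together with his monotone continuity theorem. This gives $T_F\wedge[y=h]=dd^c\log|y-h|\wedge dd^c G_F$ for that fixed $h$, and the paper then simply integrates this identity against $\psi(h)\,dh$.

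You never compare two definitions of the product at a single $h$. You regularize only $G$ and reduce everything to a smooth integration by parts. You then pass to the limit using continuity of $U_\psi$ on one side, and monotone plus dominated convergence on the other. The domination comes from the fact that $y\mapsto\int_{|x|<r}G(x,y)\,d\lambda(x)$ is subharmonic, hence integrable on real segments. The Riesz decomposition is the precise justification of your ``sub-mean value'' step there.

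Your route supplies rigor exactly where the paper is silent:
\begin{itemize}
\item it actually proves the ``for almost every $h$'' clause, which the paper asserts without argument;
\item it justifies interchanging $\int dh$ with the wedge product, the paper's ``summing over $h$'';
\item it does not need the hypotheses on unbounded loci required by Demailly's continuity results, which the paper never checks for $G_F$.
\end{itemize}

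The paper's route, when its hypotheses hold, buys a pointwise statement: for an individual $h$, the restriction slice $dd^c(G_F|_{\{y=h\}})$ coincides with the intersection product $dd^c\log|y-h|\wedge T_F$. That pointwise form is what the later arguments implicitly lean on, for instance when slices at every $h$ are transported by holonomy. Your method only yields the averaged identity together with a.e.\ existence of the slice.

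A minor remark: in the left-hand limit you only use continuity of $U_\psi$, not plurisubharmonicity. Plurisubharmonicity fails for a signed kernel $\psi$ anyway, so that phrase should simply be dropped.
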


While we think this statement must be standard but for the sake of completeness, we give a proof. 
\begin{proof}
First let us fix $h \in \R$. 
We compare the measure $\nu_h $ with the measure given by $ \nu_h' =dd^c G_F \wedge dd^c \log |y-h|$. 
One first regularize these two functions by setting $G_j = \max (G_F , -j)$ and $u_j = \max (\log |y-h|, -j)$. 
The two sequences of functions $G_j$ and $u_j$ are decreasing plurisubharmonic functions converging toward $G_F$, $\log |y-h|$ respectively as $j\rightarrow +\infty$.
 As each of the functions $G_j, v_j$ are bounded, we have by \cite[Corollary 3.10]{demailly_agbook} the equality: 
 \begin{equation} \label{eq_commute}
 dd^c G_j \wedge dd^c v_j = dd^c v_j \wedge dd^c G_j
 \end{equation}
 for all $j$. 
 Using \cite[Corollary 4.3]{demailly_agbook} twice, the left hand side converges toward $\nu_h=T_F \wedge [y=h]$ while the right hand side converges toward $\nu_h' = dd^c \log |y-h| \wedge dd^c G_F$. 
We have thus shown that $ dd^c G_F \wedge dd^c \log |y-h| = dd^c \log |y-h| \wedge dd^c G_F$. 

\smallskip

Summing over $h$, one obtains: 
\begin{equation}
R_{\psi} \wedge T_F = \int dd^c(\log |y-h|) \wedge T_F \psi(h) dh = \int \nu_h \psi(h) dh.
\end{equation}

\end{proof}

We state now a sliced version of the equidistribution. 
\begin{prop} \label{prop_sliced_equidistribution}
For Lebesgue almost $h \in \R$, for $V = \Lf , \Lc$ one has: 
\begin{equation*}
\lim_{n\rightarrow +\infty}  [L_h] \wedge \dfrac{1}{\dyndeg^n} (F_X^n)^* [V]  = \nu_h,
\end{equation*}
where $L_h$ is the pullback of the  horizontal line $\{ y=h\}$ by $\pi$. 
\end{prop}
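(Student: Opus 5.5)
The plan is to deduce the sliced statement from the global equidistribution (Theorem \ref{thm_equidistribution}) by smearing in $h$ and then extracting an almost-everywhere statement. First I would fix the setup. Write $T_n := \dyndeg^{-n}(F_X^n)^*[V]$. By Theorem \ref{thm_equidistribution}, $T_n \to c\,T_F$ with $c=\deg(V)=1$ for the lines $V=\Lf,\Lc$. Write $T_n = c_n\theta_+ + \eta_n + dd^c u_n$ as in that proof, and choose potentials so that $T_F = \theta_+ + dd^c G_F$ with $G_F$ quasi-psh; then $u_n \to G_F$ in $\Lo$. For each $h$, the slice $[L_h]\wedge T_n$ is the well-defined measure counting the finitely many intersection points. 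It equals $dd^c$ of the restriction of the potential to $L_h$, up to the smooth part. The slices all have uniformly bounded mass, namely $\{L\}\cdot\{T_n\}$, which converges.

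Second, I would prove the smeared convergence. For a smooth compactly supported kernel $\psi\geq0$, the current $R_\psi$ has the continuous potential $U_\psi$. Bedford--Taylor continuity for wedge products with a current of continuous potential, applied to currents converging in $\Lo$ with uniformly bounded quasi-psh potentials, gives
$$R_\psi\wedge T_n \to R_\psi\wedge T_F.$$
Here I would write $R_\psi\wedge T_n = dd^c(U_\psi T_n)$ and use $T_n\to T_F$ weakly, which suffices because $U_\psi$ is continuous. Applying Proposition \ref{prop_switch} to each $T_n$ and to $T_F$ then gives
$$\int [L_h]\wedge T_n\,\psi(h)\,dh \to \int \nu_h\,\psi(h)\,dh .$$
Thus the slices converge to $\nu_h$ weakly as measure-valued functions of $h$.

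Third, and this is the main obstacle, I have to upgrade averaged convergence to convergence for almost every $h$. Weak convergence in $h$ alone does not give pointwise convergence. I would fix a countable dense family of test functions $\chi$ and show that
$$\sum_n \int \bigl|\langle [L_h]\wedge T_n - \nu_h,\chi\rangle\bigr|\,\psi(h)\,dh<\infty$$
along the full sequence, or at least along a geometric subsequence together with monotonicity-type control. Concretely, $\langle [L_h]\wedge(T_n-T_F),\chi\rangle = \int_{L_h}(u_n-G_F)\,dd^c\chi + O(\epsilon_n)$. So it suffices to bound $\int \|u_n-G_F\|_{L^1(L_h)}\psi(h)\,dh \lesssim \|u_n-G_F\|_{\Lo(X)}$ by Fubini, and to show that this decays exponentially.

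The exponential decay should follow from the proof of Theorem \ref{thm_equidistribution}. Indeed, $\eta_n = O((\sqrt2/\dyndeg)^n)$ by Proposition \ref{prop_cohomology}(v). The pullback iteration $u_{n+1} = u_n\circ F/\dyndeg + (\text{correction})$ combined with the volume estimate of Theorem \ref{thm_vol_exponent} and Lemma \ref{lem_kiselman} gives $\|u_n\circ F^k\|_{\Lo}/\dyndeg^k \lesssim (\sigma/\dyndeg)^k$. Summability then gives, by Borel--Cantelli, the convergence $\int_{L_h}(u_n-G_F)\,dd^c\chi\to0$ for almost every $h$ and each $\chi$. Since the family is countable and the masses are bounded, this yields weak convergence of the slices for almost every $h$. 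If exponential $\Lo$ decay turns out to be hard to extract, the fallback is to pass to a subsequence along which Borel--Cantelli applies, and then use the fact that every subsequence has such a further subsequence to obtain the result for almost every $h$ along the full sequence.
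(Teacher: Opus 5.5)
Your Step 2 is the paper's smearing argument, and you correctly identify the real difficulty: passing from $\int [L_h]\wedge T_n\,\psi(h)\,dh\to\int\nu_h\,\psi(h)\,dh$ to convergence for individual $h$. Step 3 does not achieve this.

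The parameter $h$ is real, so the complex lines $L_h$, $h\in\R$, sweep out only the real hypersurface $\{\mathrm{Im}\,y=0\}$, which has zero volume in $X$. Fubini therefore gives no bound $\int_\R\|u_n-G_F\|_{L^1(L_h)}\psi(h)\,dh\lesssim\|u_n-G_F\|_{L^1(X)}$. The left-hand side is an $L^1$ norm against the singular measure $R_\psi\wedge\omega$ carried by that hypersurface. $L^1(X)$ convergence of the potentials does not control it, even with an exponential rate (which the proof of Theorem \ref{thm_equidistribution} does not provide and you only sketch). A summable rate there would need sublevel and volume-exponent estimates for $F^{-k}$ measured by $R_\psi\wedge\omega$, whereas Theorem \ref{thm_vol_exponent} and Lemma \ref{lem_kiselman} concern only Lebesgue volume. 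Smearing over complex $h$ would make Fubini work, but it yields a statement for almost every $h\in\C$, which says nothing about the null set $\R\subset\C$.

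Your fallback also fails. That every subsequence has a further subsequence converging for almost every $h$ is equivalent to convergence in measure in $h$, not a.e.\ convergence of the full sequence. The exceptional null set depends on the subsequence; the typewriter sequence is the standard counterexample.

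The missing idea is to use the real structure of the pullbacks, as the paper does.

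\begin{itemize}
\item For real $h\neq0$, every intersection point of $(F_X^n)^*[V]$ with $L_h$ is real and transverse. This follows from the count in Theorem \ref{thm_magic}, as used in the proof of Theorem \ref{thm_slice_SR}.
\item These points lie on the real pullback curves, which are vertical for $\mathcal{C}$. So $\nu_{n,h}:=[L_h]\wedge\delta^{-n}(F_X^n)^*[V]$ is the image of $\nu_{n,h_0}$ under the holonomy $\sigma_{h_0,h}$. This is the paper's identity $R_\psi\wedge T_n=\int(\sigma_{h_0,h})_*\nu_{n,h_0}\,\psi(h)\,dh$.
\item Every boundary line of $\mathcal{C}$ has slope of absolute value at least $1/2$. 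Hence these curves are graphs $x=\varphi(y)$ with $|\varphi'|\le 2$. The masses of $\nu_{n,h}$ are also bounded uniformly in $n$.
\item Consequently, for each Lipschitz test function $f$, the functions $h\mapsto\langle\nu_{n,h},f\rangle$ are uniformly bounded and equi-Lipschitz.
\item Combined with the convergence in $\mathcal{D}'(\R)$ from your Step 2, Arzel\`a--Ascoli gives locally uniform convergence to a continuous function. This function agrees with $h\mapsto\langle\nu_h,f\rangle$ for almost every $h$.
\item A countable dense family of test functions $f$ then gives the statement.
\end{itemize}

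This equicontinuity is what legitimizes the paper's final step; as you noted, distributional convergence in $h$ alone would not suffice. It also removes any need for a convergence rate of $u_n\to G_F$.
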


\begin{proof}
By Theorem \ref{thm_equidistribution}, the line $C$ equidistributes toward $T_F$ under pullback. Since the pullback $\dfrac{1}{\dyndeg^n} (F_X^n)^* [C]$ and $T_F$ are  positive currents, their coefficients are measures and the equidistribution result implies that the measure coefficients converge weakly.
In particular for any smooth kernel $\varphi$, since the potential $U_{\psi}$ is  continuous, the following limit holds: 
\begin{equation*}
\lim_{n\rightarrow +\infty} U_{\psi} \dfrac{1}{\dyndeg^n} (F_X^n)^* [C]  = U_{\psi} T_F, 
\end{equation*}  
in the sense of currents. 
Taking the $dd^c $ and applying Proposition \ref{prop_switch}, one obtains: 
\begin{equation*}
\lim_{n\rightarrow +\infty }R_{\psi} \wedge \dfrac{1}{\dyndeg^n} (F_X^n)^* [C] = R_{\psi} \wedge T_F = \int \nu_h \psi(h) dh. 
\end{equation*}
The left hand side can now be interpreted geometrically. Namely, each pullback of $C$ is a finite union of curves so defines a complex web. Moreover, these curves  are real symmetric and obey the vertical cone field induced by $\mathcal{C}$.
Denote by $\nu_{n,h}$ the measure $[L_h] \wedge \dfrac{1}{\dyndeg^n} (F_X^n)^* [C]$.
Let $\sigma_{h,h'} \colon L_h \to L_{h'}$ the holonomy from $L_h$ to $L_{h'}$ induced by the real lamination. 
We have for any function $\psi$ whose support is on $\R_+$, for any $h_0$ with $\psi(h_0) \neq 0$. 
\begin{equation}
R_{\psi} \wedge \dfrac{1}{\dyndeg^n} (F_X^n)^* [C] = \int \nu_{n,h} \psi(h) dh = \int (\sigma_{h_0,h})_* \mu_{n,h_0} \psi(h) dh
\end{equation}
   This gives that for all smooth kernel $\psi$, one has: 
   \begin{equation}
\lim_{n\rightarrow +\infty}   \int \psi(h) (\sigma_{h})_* \mu_{n,h_0} dh = \int \psi(h) \nu_h dh,
   \end{equation}
   and we claim that this implies that for almost all $h$, the measures $\nu_{n,h}$ converge to $\nu_h$. 
   Indeed, for any non-negative function $f(x,y) $ on $\R^2$, the previous formula gives 
\begin{equation}
\lim_{n\rightarrow +\infty} \langle \nu_{n,h}, f\rangle = \langle \nu_h, f \rangle  \in \mathcal{D}'(\R),
\end{equation}   
   where the convergence is as distributions in $h$. 
   This implies that the sequence of functions $h \mapsto \langle \nu_{n,h}, f \rangle $ converge almost surely to $h \mapsto \langle \nu_{h}, f \rangle $. Since this is for any non-negative functions $f$, this gives that $\nu_{n,h}$ converges to $\nu_h$ for almost all $h \in \R$, as required.  
\end{proof}

\subsection{Slicing of the real current $T_{\R}$}

The real analysis of $T_{\R}$ yields stronger results. Namely, due to the woven structure of $T_{\R}$, it is well structured and we can intersect it with a current which is transversal to the leaves of $T_{\R}$. 
Like the complex situation, there are instances for which real currents can have a well-defined wedge product. Since we don't have any potential theoretic way, one needs to proceed using theory of Hormander. 
A current can be viewed as a form whose coefficient are distributions. In our case, those coefficients are measures and the wedge product is well-defined if the coefficients can be multiplied. 
In \cite[Theorem 8.2.10]{hormander}, H\"ormander proved that distributions can be multiplied if  their \textit{wavefronts} satisfy a certain condition. 
We will use a special case of this result in the following way.

\begin{lem} Take the function $\gamma \colon A^{\N} \to \mathcal{D}^{1}(\R^2)$ provided by Proposition \ref{prop_coding_single}.  For any $h \in \R$ and any generic word $w \in A^{\N}$, the wedge product $\gamma(w) \wedge_{\R} [L_h]$ is well-defined and equals the dirac mass at the corresponding intersection point. 
\end{lem}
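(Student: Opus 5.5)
The plan is to realise both factors as $1$-currents with measure coefficients and to apply the wavefront criterion of H\"ormander \cite[Theorem 8.2.10]{hormander}. Fix $h\in\R$ and a generic word $w$. By Proposition \ref{prop_coding_single}, $\gamma_{\underline{w}}$ is a vertical curve for the cone field $\mathcal{C}$. By Theorem R.1 and Theorem \ref{thm_invariant_cone_field}, it is then the graph $\{x=\varphi(y)\}$ of a Lipschitz function over the $y$-axis. The horizontal line $L_h$ is horizontal for the cone field, since $\mathcal{C}$ contains the horizontal direction by Definition \ref{def_cone_field}. Consequently $\gamma_{\underline{w}}$ meets $L_h$ in exactly one point $p=(\varphi(h),h)$. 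When $h\neq0$ this intersection is transverse in the cone sense. When $h=0$, $p$ lies on $\bIz$, and we use instead that the vertical cone at $p$ stays bounded away from the horizontal direction (I would handle this by replacing $\mathcal{C}$ with $\mathcal{C}^{(1)}$ and using Lemma \ref{lem_vertical_parabola}).

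The next step is to estimate the wavefront sets. For $[L_h]$, the coefficient is $\delta(y-h)\,dx$, so $WF([L_h])\subset\{(x,h;0,\eta)\}$. For $[\gamma_{\underline{w}}]$, the coefficients are the pushforwards of $dy$ by $y\mapsto(\varphi(y),y)$, namely $\delta(x-\varphi(y))\,\varphi'(y)\,dy$ and $\delta(x-\varphi(y))\,dy$. Since $\varphi$ is only Lipschitz and not smooth, I would not try to compute the wavefront set exactly. Instead I would bound it using the cone condition: the singular directions lie in the conormal cone of $\mathcal{C}_v$. This cone consists of covectors vanishing on some vector of the vertical cone, and it is bounded away from the $\eta$-axis. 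Proving this bound is the main obstacle, because Lipschitz graphs do not in general have conormal wavefront sets.

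To get around this, I would avoid the microlocal statement and prove the product directly. Take mollifiers $\rho_\epsilon$ and consider $[\gamma_{\underline{w}}]_\epsilon\wedge[L_h]$ tested against $\chi\in C_c^\infty$. This is $\int\chi(x,h)\rho_\epsilon*(\delta(x-\varphi(y))dy)|_{y=h}\,dx$. The change of variables $x\mapsto x-\varphi(y)$ is bi-Lipschitz in the slice, with slopes of $\varphi$ bounded by the cone aperture. Using the continuity of $\varphi$, this pairing converges to $\pm\chi(p)$, and the same limit is obtained by mollifying $[L_h]$ instead, or both. This independence of the regularisation is precisely the content of H\"ormander's theorem in the transverse case. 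The sign is fixed by the positive upward orientation, and for both choices of order of the factors it gives $\delta_p$. This is exactly the restricted form in which the result is used, so it suffices.

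Finally, genericity of $w$ is needed only to ensure that $\gamma_{\underline{w}}$ is a single well-defined curve of $\cL$, which holds by Corollary \ref{cor_def_gamma}. It is also needed so that, when $h=0$, the landing point is unique, which follows from Corollary \ref{cor_L_star_leaf}; even when it is not, the branch bouquet at a precritical point has exactly one member with the given code. This yields the claimed Dirac mass.
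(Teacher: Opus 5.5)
Your argument is correct, but it follows a different route from the paper.

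\textbf{The paper's route.} The paper assumes that $\gamma_w$ has a $\mathcal{C}^1$ arc-length parametrization $f$, and takes the arc-length parametrization $g$ of $L_h$ with $f(0)=g(0)=p$. It then uses $(s,t)\mapsto f(s)+g(t)-p$ as local coordinates that straighten both curves at once. In these coordinates the two currents become $\delta(t)\,ds$ and $\delta(s)\,dt$. Their product is the tensor product $\delta(s)\otimes\delta(t)=\delta_{(0,0)}$, which is the example from Dang--H\'elein--Brouder that the paper cites. Pushing forward gives $\delta_p$.

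\textbf{Your route.} You drop the wavefront criterion and check directly that the regularized products converge to $\chi(p)$, whichever factor (or both) is mollified. Your computation uses only two facts: $[L_h]$ has a single component, so only the coefficient $\delta(x-\varphi(y))$ of $[\gamma_w]$ enters; and $\varphi$ is continuous.

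\textbf{Comparison.}
\begin{itemize}
\item Your approach is more robust. It works for the Lipschitz graphs that the cone field actually provides. The paper's proof relies on $\mathcal{C}^1$ regularity of the leaves, which is never established, and on transporting distributions by a change of coordinates that is only $\mathcal{C}^1$, under which wavefront sets are not invariant.
\item Your worry about Lipschitz graphs is justified. At a corner point $p$ of the graph, $\mathrm{WF}([\gamma])$ contains every nonzero covector over $p$. So H\"ormander's condition genuinely fails there, even though the product exists in your sense.
\item The price is that ``well-defined'' means well-defined as a limit of regularizations, not in H\"ormander's sense. That is all Proposition \ref{prop_slice_TR} and Theorem \ref{thm_continuous_slice} actually use.
\end{itemize}

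\textbf{Minor remarks.}
\begin{itemize}
\item The detour through $\mathcal{C}^{(1)}$ and Lemma \ref{lem_vertical_parabola} at $h=0$ is unnecessary, since your computation needs nothing beyond continuity of $\varphi$. That lemma also only concerns points near $\bfz$.
\item The two orders of the factors cannot both give $+\delta_p$, because the wedge product of two $1$-currents is anticommutative. You have to fix one order and one orientation convention, as the paper implicitly does.
\end{itemize}
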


\begin{proof}
Fix a word $w \in A^{\N}$, the curve $\gamma_w$ is by construction vertical for the cone field $\mathcal{C}$, while $L_h$ is horizontal for the cone field. This means that at the intersection point $\gamma_w$ and $L_h$ are transversal. 
Choose two $\mathcal{C}^1$ arc-length parametrization $f,g \colon \R \to \R^2$ of $\gamma_w$ and $L_h$ such that $f(0) = g(0)$ is equal to the intersection point $p$ of $\gamma_w \cap L_h$. 
The map $u (x, y) = f(x) + g(y)$ defines a local diffeomorphism from a neighborhood of $(0,0)$ to a neighborhood $V$ of $p$. 
 Given any smooth function $\varphi$ supported on $V$ which is identically $1$ near $p$, the image of $u^{-1}_* ( \varphi [\gamma_w])$ and $u^{-1}_* (\varphi [L_h])$ are of the form: 
 \begin{equation}
 u^{-1}_* (\varphi [\gamma_w]) = (\varphi \circ u) \delta(y) dx , \ u^{-1}_* (\varphi [L_h]) = (\varphi \circ u) \delta(x) dy, 
 \end{equation}
 where $\delta$ denotes the dirac mass at zero. 
 By \cite[Example 19]{dang_helein_brouder}, the distribution $\delta(x)$ and $\delta(y)$ can be multiplied and their product is $\delta(x,y)$. 
 In particular, this shows that the product of $[\gamma_w] \wedge [L_h]$ coincides with  $u_* (\delta(0,0) dx \wedge dy)$. This is exactly the definition of the dirac mass at $p$.    
\end{proof}

\begin{prop} \label{prop_slice_TR} The wedge product $T_{\R} \wedge [L_h]$ is a well-defined  positive measure, satisfying 
\begin{equation*}
T_{\R} \wedge_{\R} [L_h] = \int \gamma(w) \wedge_{\R} [L_h] d\mu(w).
\end{equation*}
\end{prop}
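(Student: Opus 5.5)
The plan is to \emph{define} the wedge product $T_{\R}\wedge_{\R}[L_h]$ through a regularization of $[L_h]$ and to show that the limit exists and equals the stated integral. Fix $h\in\R$ and smooth compactly supported mollifiers $\rho_\epsilon$ in the $y$-variable. Set $[L_h]_\epsilon=\rho_\epsilon(y-h)\,dy$, a smooth $1$-form. Then $T_{\R}\wedge[L_h]_\epsilon$ is a well-defined signed measure for every $\epsilon$. By the definition \eqref{eq_real_current} and Fubini, for any compactly supported continuous function $\varphi$,
\begin{equation*}
\langle T_{\R}\wedge[L_h]_\epsilon,\varphi\rangle=\int \langle \gamma(w),\varphi\,\rho_\epsilon(y-h)\,dy\rangle\, d\mu(w).
\end{equation*}
Measurability of $w\mapsto \langle\gamma(w),\alpha\rangle$ follows from Corollary \ref{cor_def_gamma}, since these functions lie in $\mathcal{F}_\theta$.

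Next, for each generic $w$, write $\gamma_w$ as the oriented graph $x=u_w(y)$ over the $y$-axis. This is possible because $\gamma_w$ is vertical for the cone field $\mathcal{C}$, so it is a Lipschitz graph, and it is positively oriented. Then
\begin{equation*}
\langle\gamma(w),\varphi\rho_\epsilon(y-h)dy\rangle=\int\varphi(u_w(y),y)\rho_\epsilon(y-h)\,dy\;\longrightarrow\;\varphi(u_w(h),h)=\langle \gamma(w)\wedge_{\R}[L_h],\varphi\rangle
\end{equation*}
as $\epsilon\to0$. The last equality is the preceding Lemma, which identifies $\gamma(w)\wedge_{\R}[L_h]$ with the Dirac mass at $\gamma_w\cap L_h$. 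The pointwise limit is nonnegative when $\varphi\ge0$, because the orientation is upward, so the $dy$-component of $[\gamma_w]$ is a positive measure.

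To pass the limit under $\int d\mu$, I use dominated convergence. The integrands are bounded by $\|\varphi\|_\infty$ uniformly in $w$ and $\epsilon$, since $\int\rho_\epsilon=1$, and $\mu$ is a finite measure. Hence $T_{\R}\wedge[L_h]_\epsilon$ converges weakly to the positive measure $\int \delta_{\gamma_w\cap L_h}\,d\mu(w)$, which has total mass $\mu(A^{\N})$. The limit does not depend on the choice of mollifier, and it agrees with the H\"ormander product on each leaf. This gives both well-definedness and the displayed formula.

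The main obstacle is justifying the graph description uniformly, including near $\Lz$ and near leaves accumulating on $\bIinf$, where the lamination is singular (the Cantor bouquets at precritical points). The mollified integral should not pick up spurious contributions there. For $h\neq0$, a small enough support of $\rho_\epsilon$ keeps us away from $\Lz$, and verticality for the cone field $\mathcal{C}$ holds everywhere in $\R^2\setminus\Lz$. For $h=0$, the leaves still cross $\bIz$ transversally as vertical curves, so the same argument should apply. If needed, I would restrict to the full-measure set of generic $w$ from Proposition \ref{prop_coding_single}.
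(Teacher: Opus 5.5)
Your argument is correct, but it takes a different route from the paper. The paper does not regularize the line. It approximates $\gamma$ by the functions $\gamma_n$ that are constant on cylinders of length $n$. Then $\int\gamma_n\,d\mu$ is a finite sum of integration currents on curves of $\mathcal{L}$, whose slice by $[L_h]$ is a finite sum of Dirac masses, so linearity is automatic; the paper then lets $n\to\infty$ using the mass bound and dominated convergence.

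You instead regularize $[L_h]$ by $\rho_\epsilon(y-h)\,dy$. You then use that each $\gamma_w$ is an upward graph $x=u_w(y)$ over the whole $y$-axis. The pairing becomes an average of $\varphi$ along the leaf, converging to $\varphi(u_w(h),h)$ with the uniform bound $\|\varphi\|_\infty$.

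What your version buys is an honest definition of $T_{\R}\wedge_{\R}[L_h]$, namely the mollifier-independent limit of $T_{\R}\wedge\rho_\epsilon(y-h)\,dy$, for which the formula and positivity become theorems. The paper leaves the left-hand side defined only through its limiting procedure. Its dominated-convergence step also tacitly needs $\gamma_n(w)\wedge_{\R}[L_h]\to\gamma(w)\wedge_{\R}[L_h]$, i.e. convergence of the intersection points $\gamma_{w,n}\cap L_h\to\gamma_w\cap L_h$. That comes from locally uniform convergence of the graphs, not from weak convergence of currents alone.

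What the paper's version buys is that it exhibits the slice as the limit of the finite-level slices $\int\gamma_n(w)\wedge_{\R}[L_h]\,d\mu(w)$. This is exactly the form matched with the complex slices of $\delta^{-n}(F^n)^*[\Lc]$ in Theorem~\ref{thm_continuous_slice}. With your definition you would still need that convergence-of-intersection-points step at that later point.

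Finally, your concern about $\Lz$ and $h=0$ is unnecessary. Your bound uses only that every leaf is a global graph over $y$, which holds because the cone always contains the horizontal direction; it never uses a Lipschitz constant. Continuity of $u_w$ at $h$ then suffices, even at $h=0$ where many leaves pass through $\bfz$.
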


\begin{proof} 
For finite sums, the wedge product with $[L_h]$ is linear, when it is valid. This shows that the approximations $\gamma_n$ which are constant on each cylinder satisfy: 
\begin{equation*}
\int \gamma_n(w) d\mu(w) \wedge_{\R} [L_h] = \int \gamma_n(w)\wedge_{\R} [L_h] d\mu(w). 
\end{equation*}
Since $\gamma_n$ converges toward $\gamma$ and the mass of $\gamma_n$ is bounded by $1$, the dominated convergence implies that the limit is linear, hence: 
\begin{equation*}
T_{\R} \wedge_{\R} [L_h] = \int \gamma(w) d\mu(w) \wedge_{\R} [L_h] = \int \gamma(w) \wedge_{\R} [L_h] d\mu(w), 
\end{equation*}
as required. 
\end{proof}

\begin{thm} \label{thm_continuous_slice} For any $h \in \R$,  let $\nu_h$ be the sliced measure $[L_h] \wedge T_F$. Then the function $h \in \R \mapsto \nu_h$ is continuous. Moreover, one has:
\begin{equation}
T_{\R} \wedge_{\R} [L_h] = T_F \wedge [L_h] = \nu_h. 
\end{equation} 
\end{thm}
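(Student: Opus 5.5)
The plan is to prove the two claims in the opposite order: first identify the real and complex slices for almost every $h$, then show the real slice is continuous in $h$, and finally get the identity for every $h$ by a limiting argument.

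\textbf{Almost-every identification.} Take $V=\Lc$ and compare the complex pullback $\dyndeg^{-n}(F_X^n)^*[\Lc]$ with the real signed pullback $\Fs^n[\Lc]/\dyndeg^n$. Theorem~\ref{thm_magic} (with the remark after it) says the complex pullback is an algebraic curve whose real locus is exactly the union of the real leaves counted by the transpose of $M_*$. I expect each leaf $C$ appearing there to be vertical for $\mathcal{C}$, hence transverse to $L_h$. Granting this, $[C]\wedge[L_h]$ is a single Dirac mass at the real point $C\cap L_h$. The real product $[C]\wedge_{\R}[L_h]$ is the same Dirac mass, once we use the upward orientation supplied by $\Fs$.

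The complex curve may also meet $L_h$ at non-real points. To rule these out, I would count degrees. The number of complex intersection points of $(F_X^n)^*\Lc$ with $L_h$, with multiplicity, equals the intersection number of its class with $\{L_h\}=L-\Einf-\Einfp$ (the line $y=h$ passes through $\pinf$ only). Theorem~\ref{thm_magic} computes this class as $\sum v_{i_k}+\{\Lc\}$. Pairing each $v_i$ with $L$ gives exactly $1$, which matches one real transverse point per leaf. So, away from $h=0$, all intersection points are real and simple. This yields
\[[L_h]\wedge\dyndeg^{-n}(F_X^n)^*[\Lc] = [L_h]\wedge_{\R}\Fs^n[\Lc]/\dyndeg^n .\]

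Now pass to the limit on both sides. On the left, Proposition~\ref{prop_sliced_equidistribution} gives convergence to $\nu_h$ for almost every $h$. On the right, I would use Theorem~\ref{thm_equid_real} together with the transversality of the leaves. Concretely, write the slice as $\int \gamma_n(w)\wedge_{\R}[L_h]\,d\mu(w)$, as in Proposition~\ref{prop_slice_TR}. Then show that the intersection points $\gamma_n(w)\cap L_h$ converge in the Hölder sense, using the flat-norm estimate of Proposition~\ref{prop_flat_norm_est} and the uniform transversality to horizontals. This gives convergence to $v_{III_0}\,T_{\R}\wedge_{\R}[L_h]$, and the constant is matched by the normalization of $T_F$. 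Hence $T_{\R}\wedge_{\R}[L_h]=\nu_h$ for almost every $h$.

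\textbf{Continuity.} By Proposition~\ref{prop_slice_TR},
\[h\mapsto T_{\R}\wedge_{\R}[L_h]=\int \delta_{\gamma(w)\cap L_h}\,d\mu(w).\]
Each leaf $\gamma(w)$ is a Lipschitz graph over the $y$-axis, so the point $\gamma(w)\cap L_h$ moves continuously in $h$. By dominated convergence, this makes the real slice weakly continuous in $h$, including at $h=0$ where all leaves land on $\bIz$. For the complex side, I would use the same truncation argument as in the corollary on continuity of the slices of $S$. The slices $\nu_h$ are continuous in $h$ on the set of full measure, and they agree there with a continuous family. Since $T_F$ has continuous-enough potential along horizontal lines, in the sense of the Bedford–Taylor slice $dd^c(G_F|_{L_h})$, the slice $\nu_h$ depends continuously on $h$ wherever it is defined. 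It therefore equals the continuous extension for every $h$.

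\textbf{Main obstacle.} The hard step is this last one: proving that $\nu_h$ is defined and continuous at every $h$, rather than only almost everywhere. My first attempt would be to show that $G_F$ restricted to $L_h$ converges locally uniformly in $h$. Failing that, I would define $\nu_h$ as the continuous extension of the real slice, justified by the upper semicontinuity of the slices of positive closed currents, and then argue equality from the total mass identity $\{T_F\}\cdot\{L_h\}$.
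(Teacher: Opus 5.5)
Your finite-level identification and the almost-everywhere limit follow the paper. It also uses the count from Theorem~\ref{thm_magic} to show that $(F_X^n)^*\pi^o\Lc$ meets $L_h$ only in real points, writes the finite slices as $\int\gamma_n(w)\wedge_{\R}[L_h]\,d\mu(w)$, and passes to the limit for almost every $h$ with Proposition~\ref{prop_sliced_equidistribution}. There is one slip. For $h\neq 0$ the line $\{y=h\}$ meets $\Linf$ only at $\pinf$, which is not blown up, so its class is $L$, not $L-\Einf-\Einfp$. Pairing with the latter would give $0$ for $v_1,v_2,v_6,v_7$. Since you actually pair with $L$, the count is unaffected.

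The genuine gap is the passage from almost every $h$ to every $h$. This is also the only source of continuity of $h\mapsto\nu_h$, and neither route you suggest closes it.
\begin{itemize}
\item Nothing in the paper gives continuity of the Green potential of $T_F$ along horizontal lines. For a map with indeterminacy and a collapsed curve there is no reason to expect it; the paper only uses $L^1$ and sublevel-set bounds on potentials.
\item Slices of a positive closed $(1,1)$-current are not continuous in the slicing parameter in general.
\item Your fallback fails for the same reason: upper semicontinuity plus equality of total masses does not determine the slice. Take the psh potential $\tfrac12\log(|x|^2+e^{2\phi(y)})$ with $\phi(y)=\sum_k 2^{-k}\log|y-h-2^{-k}|$. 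Then $\phi$ is finite at $h$ and equals $-\infty$ at $h_k=h+2^{-k}\to h$. The slices at $h_k$ are the Dirac mass at $x=0$, while the slice at $h$ is a diffuse probability measure. The masses agree, and the limiting potential $\log|x|$ lies below the true one, exactly as upper semicontinuity permits.
\end{itemize}

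The missing idea is holonomy invariance, which is how the paper concludes.
\begin{itemize}
\item Every real component of $F^{-n}(\Lc)$ is a leaf of $\mathcal{L}$ crossing each horizontal line exactly once, and all intersection points are real.
\item So the finite slices are transported by the holonomy of the lamination: $\nu_{n,h}=(\sigma_{h_0,h})_*\nu_{n,h_0}$.
\item The paper fixes one typical $h_0$ where the a.e.\ identity holds and transports it to every $h$.
\item It then reads off continuity of $\nu_h$ from the real side. That continuity is elementary because the leaves are graphs over the $y$-axis, so no regularity of $G_F$ is needed.
\end{itemize}
Be aware that the paper asserts, rather than proves, that the limiting complex slice $T_F\wedge[L_h]$ itself inherits this holonomy invariance. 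That is, it asserts $\nu_{n,h}\to T_F\wedge[L_h]$ at every $h$ and not only almost every $h$. If you adopt this route, that convergence is the point that still needs an argument.
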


\begin{proof}
We first establish the equality between the sliced measures. 
Recall that for each $n$, the pullback of the complex line $F^{-n}(\Lc(\C))$ intersects $L_h$ at only real points, these points coincide with the intersection of the real preimage $F^{-n}(\Lc) \cap \R^2$ with $L_h$. 
This shows that for all $n \in \N$, one has: 
\begin{equation*}
\dfrac{(F^n)^* [\Lc(\C)]}{\delta^n} \wedge [L_h(\C)] = \int \gamma_n(w) \wedge [L_h] d\mu(w). 
\end{equation*}
By Proposition \ref{prop_sliced_equidistribution}, the limit as $n \rightarrow +\infty$ gives for almost all $h \in \R$: 
\begin{equation*}
T_F \wedge [L_h(\C)] = \int \gamma(w) \wedge_{\R} [L_h] d\mu(w) = T_{\R} \wedge_{\R} [L_h].
\end{equation*}
Fix a typical point $h_0$ for which the previous equality holds. Let $h$ be an arbitrary point in $\R$, since the limit $T_F \wedge [L_h(\C)]$ is also invariant by holonomy from $L_{h_0}$ to $L_h$. We deduce that $T_F \wedge [L_h]$ is the image of $T_F \wedge [L_{h_0}]$ via the holonomy map. Moreover, by construction, the holonomy of $T_{\R} \wedge_{\R} [L_{h_0}]$ is exactly $T_{\R} \wedge_{\R} [L_h]$. 
In particular, the previous formula holds for all $h$. 
Since the right hand side of the equality is continuous in $h$, we deduce that $\nu_h$ is continuous. 
\end{proof}

\subsection{Slicing of the Sibony current}

Following the same notation as in the previous sections, we state a direct consequence of the previous results together with the asymptotic formula of $S_{\R}$. 

\begin{thm} \label{thm_slice_SR} For any $h \in \R$, denote by $L_h$ the horizontal line $\{ y=h\}$, then one has for any continuous function $\psi$ on the real line: 
\begin{multline*}
\langle S \wedge [L_h], \psi\rangle = \langle S_{\R} \wedge_{\R} [L_h],\psi \rangle \\ = \dfrac{1}{12} \sum_{k \leqslant n } \dfrac{1}{2^k} \langle \Fs^k[\Lc] \wedge_{\R} [L_h], \psi \rangle + \dfrac{ v_{III_0}}{12(1-(\delta/2))} \dfrac{\delta^n}{2^n} \int_{A^{\mathbb{N}}} \langle   [L_h]\wedge_{\R}\gamma(w),\psi\rangle d\mu(w)  + o\left ( \dfrac{\delta^n}{2^n}\right ).
\end{multline*}
\end{thm}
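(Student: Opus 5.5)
The statement has two equalities, and I would prove them in order. The first is the identification of the complex slice of the Sibony current with the real slice of $S_{\R}$. The second is the asymptotic expansion, which I would get by slicing Corollary \ref{cor_asymp_SR}.

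For the first equality, I start from Corollary \ref{cor_sibony_series}. It gives $S = \frac{1}{12}\sum_{k\geqslant 0} 2^{-k}\pi_*(F_X^k)^*[\pi^o\Lc]$, and the series converges in mass because the $k$-th term has mass $O(\delta^k/2^k)$.

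For each finite $k$, the curve $\pi_*(F_X^k)^*[\pi^o\Lc]$ is an algebraic curve defined over $\R$. By Theorem \ref{thm_markov} and the invariance of the cone field (Theorem \ref{thm_invariant_cone_field}), its real trace is a union of vertical curves in $\mathcal{L}$, counted with the multiplicities prescribed by the transpose of $M_*$. As already used in the proof of Theorem \ref{thm_continuous_slice}, its complex intersection with $[L_h(\C)]$ consists only of real points. These are exactly the transverse intersections of the real strands with $L_h$, each contributing a Dirac mass. Transversality comes from the vertical/horizontal cone field, so the product is well defined by the lemma preceding Proposition \ref{prop_slice_TR}.

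With the upward orientation and the sign convention in $\Fs$, each strand crosses $L_h$ once, positively. Hence the two pairings agree term by term:
\begin{equation*}
[L_h]\wedge \pi_*(F_X^k)^*[\pi^o\Lc] = \Fs^k[\Lc]\wedge_{\R}[L_h].
\end{equation*}
The remaining point is the multiplicity bookkeeping between $\pi_*F_X^{k*}$ and the signed pullback $\Fs^k$. Both are governed by the same counting of components through Theorem \ref{thm_magic}, and the collapsed curves $\Efzp$ and $\tilde{\Lc}$ lie over $\Lz$ or $\Lc$, so they do not spoil the count.

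To sum over $k$, I use two facts. Each term is a positive measure on $L_h$ whose mass is the number of strands, which is $O(\delta^k)$ by Proposition \ref{prop_bound_mass}. The factor $2^{-k}$ then makes both series converge absolutely in total variation, so slicing commutes with the sum. For the complex side, slicing is continuous under mass-convergent sums of curves (this was the truncation argument in the corollary on continuity of $t\mapsto [y=t]\wedge S$). Together these give $S\wedge[L_h]=S_{\R}\wedge_{\R}[L_h]$ for every $h$, not just almost every $h$, because every finite truncation matches exactly.

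For the asymptotic expansion, I split $S_{\R}$ at stage $n$. The head $\frac{1}{12}\sum_{k\leqslant n}2^{-k}\Fs^k[\Lc]\wedge_{\R}[L_h]$ is kept as it is. For the tail I apply Theorem \ref{thm_equid_real} in sliced form:
\begin{equation*}
\delta^{-k}\Fs^k[\Lc]\wedge_{\R}[L_h]\to v_{III_0}\,T_{\R}\wedge_{\R}[L_h].
\end{equation*}
By Proposition \ref{prop_slice_TR}, the limit equals $v_{III_0}\int\gamma(w)\wedge_{\R}[L_h]\,d\mu(w)$. Summing the geometric factor $\sum_{k>n}(\delta/2)^k$ produces the coefficient $\frac{v_{III_0}}{12(1-\delta/2)}\frac{\delta^n}{2^n}$. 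Since the errors tend to zero, the remainder is $o(\delta^n/2^n)$ by the same Cesàro-type argument as in Corollary \ref{cor_asymp_SR}.

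The main obstacle is justifying the sliced convergence in Theorem \ref{thm_equid_real} for every $h$ and every continuous $\psi$, since that theorem only gives weak convergence against smooth $1$-forms. I would argue in two steps.

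First, I would rerun the proof of Theorem \ref{thm_equid_real} with the test functional $w\mapsto\langle\gamma(w)\wedge_{\R}[L_h],\psi\rangle$. This functional is the value of $\psi$ at the intersection point $\gamma_w\cap L_h$. It is bounded, and it is Hölder on cylinders by the Kobayashi contraction used in Proposition \ref{prop_flat_norm_est}, which controls the horizontal distance between strands with common coding. That is enough to feed Theorem \ref{thm_decay_correlation} in place of pairing with $\alpha$.

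Second, if Hölder control fails near $\Lz$, I would fall back on the approach of Theorem \ref{thm_continuous_slice}. There I would establish the convergence for almost every $h$ via Proposition \ref{prop_sliced_equidistribution}, then transport it to all $h$ using the holonomy of the lamination, which maps slices of both sides to each other.
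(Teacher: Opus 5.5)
The second equality follows the paper's route: slice Corollary~\ref{cor_asymp_SR}, then identify the slice of $T_{\R}$ through Proposition~\ref{prop_slice_TR} and Theorem~\ref{thm_continuous_slice}. Your concern that Theorem~\ref{thm_equid_real} only gives convergence against smooth $1$-forms is legitimate, and the paper passes over it. The genuine gap is in the first equality.

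\textbf{The gap.} You take as given that $\pi_*(F_X^k)^*[\pi^o\Lc]$ meets $L_h(\C)$ only at real points, citing the proof of Theorem~\ref{thm_continuous_slice}. That proof only ``recalls'' this fact. The remark attached to the corollary on continuity of $t\mapsto[y=t]\wedge S$ explicitly defers it (``we will see that the measure is actually supported in $\R\times\{t\}$''). It is established precisely in the proof of the present theorem, so your citation is circular.

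It is also not a formality. A real algebraic curve can meet a real line in pairs of complex conjugate points, which $\wedge_{\R}$ never sees. Then $S\wedge[L_h]$ would carry mass off $\R\times\{h\}$.

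\textbf{How the paper closes it.} The paper uses a count.
\begin{itemize}
\item For $h\neq 0$, $L_h$ avoids the four blown-up points, so $\{\pi^o L_h\}=L$.
\item The number of complex intersections, counted with multiplicity, is therefore $N_{\C}=\big((F_X^k)^*\{\pi^o\Lc\}\cdot L\big)$.
\item By Theorem~\ref{thm_magic}, this class is the sum of the classes attached to the real strands (a strand of type $\varphi(i)$ contributes $v_i$). Each of these has intersection number $1$ with $L$, so $N_{\C}$ equals the number $N_{\R}$ of real strands.
\item Each strand is vertical for the cone field, so it meets $L_h$ exactly once and transversally.
\end{itemize}
The real intersection points therefore already exhaust $N_{\C}$. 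This rules out non-real intersections and intersections of multiplicity greater than one.

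You had Theorem~\ref{thm_magic} in hand but used it only for ``multiplicity bookkeeping''. This count is the decisive step. For $h=0$ one has $\{\pi^o L_0\}=L-\Efz-\Efzp$, so the count would have to be redone locally at $\bfz$ and $\bfzp$.

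\textbf{Two smaller points in the expansion.}

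First, $\sum_{k>n}(\delta/2)^k=(\delta/2)^{n+1}/(1-\delta/2)$, not $(\delta/2)^{n}/(1-\delta/2)$. With the head running to $k=n$, the stated constant double-counts the $k=n$ term. That is an error of exact order $\delta^n/2^n$, so the head should stop at $k=n-1$. The same index slip is already in Corollary~\ref{cor_asymp_SR}, whose tail starts at $k=n$.

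Second, for a merely continuous $\psi$, the map $w\mapsto\langle\gamma(w)\wedge_{\R}[L_h],\psi\rangle$ need not be H\"older. Before invoking Theorem~\ref{thm_decay_correlation}, approximate $\psi$ uniformly by Lipschitz functions. This works because the measures $\delta^{-k}\Fs^k[\Lc]\wedge_{\R}[L_h]$ have uniformly bounded mass.
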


\begin{proof}
The second equality is a direct consequence of Corollary \ref{cor_asymp_SR} together with Theorem \ref{thm_continuous_slice} and Proposition \ref{prop_slice_TR}. We claim that the number of intersection points $N_{\R}$ between the real pullback for $F^n$ of $\Lc$ and the line $L_h$ coincides with the number of complex intersections $N_{\C}$ between the pullback  $(F_X^n)^* \pi^o\Lc$ with the strict transform of $L_h$. 
The number $N_{\C}$ is equal  to the intersection number $((F_X^n)^* \{\pi^o\Lc \}\cdot \{\pi^o L_h\} )$ where the intersection product in computed in $H^{1,1}(X(\C))$  
while the number of vertical curves for the real pullback by $F^n$ is $N_{\R}$ as each vertical curve intersects $L_h$ exactly once. 
By Theorem \ref{thm_magic}, each real curve in $\cL$ yields a complex cohomology class which intersect once the class $\{ \pi^o(L_h)\}$. This shows that $N_{\R} = N_{\C}$. In particular, all intersection between $S$ and $L_h$ are real and we get $S \wedge [L_h] =   [L_h]\wedge_{\R} S_{\R}$, as required.   
\end{proof}

\section{Application to the density of states measures}
\label{section_application}

In this section, we apply all the dynamical results to the initial spectral problem. 
The relationship between the spectral measures $\omega_h$ and the currents is given by the following statement. Recall from \S~\ref{section_sibony} that we have defined the Sibony current $S$, associated with $F$.  

\begin{prop} \label{prop_sibony_slice_spectral} For any $h \in \R$, the spectral measure $\omega_h$ is equal to the slice $S \wedge [L_h]$, where $S$ is the Sibony current associated with $F$.
\end{prop}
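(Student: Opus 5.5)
We prove Proposition \ref{prop_sibony_slice_spectral} by passing to the limit in $n$ on both sides of the identity
\begin{equation*}
\omega_{h,n} = \dfrac{1}{2^n}[\tilde P_n = 0]\wedge [L_h].
\end{equation*}
This identity holds at every finite stage $n$. Indeed, $\tilde P_n(\cdot,h)$ is a polynomial in $x$, and the wedge product of the current of integration on the curve $\{\tilde P_n=0\}$ with the line $L_h$ is the sum of Dirac masses at the roots, counted with multiplicity. One must first check that $L_h$ has no common component with $\{\tilde P_n=0\}$. This holds because, by Theorem \ref{thm_magic} and the structure of the pullbacks of $\Lc$ and $\Lf$, every component of $\{\tilde P_n=0\}$ is a vertical curve for the cone field. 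With the normalization of $\omega_{h,n}$, the identity is then exact.

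The left-hand side converges to $\omega_h$ by definition. On the right, Theorem \ref{thm_sibony_spectral} and Corollary \ref{cor_sibony_series} give convergence of $2^{-n}[\tilde P_n=0]$ to $S$. We must show that slicing commutes with this limit, and we use the explicit decomposition of Proposition \ref{prop_error_term} to do so. It gives
\begin{equation*}
\dfrac{1}{2^n}[\tilde P_n = 0] = \sum_{k=0}^{n-2}\dfrac{c_{n-1-k}}{2^n}\pi_*(F_X^k)^*[\pi^o\Lc] + \dfrac{1}{2^n}\pi_*(F_X^n)^*[\pi^o\Lf].
\end{equation*}
Slicing this with $[L_h]$ is legitimate term by term, since each term is a finite sum of curves transverse to $L_h$.

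We then bound the two pieces separately.

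\textbf{Error term.} The slice of $\pi_*(F_X^n)^*[\pi^o\Lf]$ with $L_h$ has total mass equal to the intersection number $(F_X^n)^*\{\pi^o\Lf\}\cdot\{\pi^o L_h\}$, which is $O(\delta^n)$. After dividing by $2^n$, this slice has mass $O((\delta/2)^n)\to 0$.

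\textbf{Main series.} The slices of $(F_X^k)^*[\pi^o\Lc]$ have mass $O(\delta^k)$. The coefficients satisfy $c_{n-1-k}/2^n \to \frac{1}{12}2^{-k}$ and are dominated by $C2^{-k}$. Dominated convergence on the series therefore gives
\begin{equation*}
\dfrac{1}{2^n}[\tilde P_n=0]\wedge [L_h] \longrightarrow \dfrac{1}{12}\sum_k 2^{-k}\,\pi_*(F_X^k)^*[\pi^o\Lc]\wedge [L_h]
\end{equation*}
in total variation.

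It remains to identify this limit with $S\wedge[L_h]$. Here $S\wedge [L_h]$ is defined through the continuous potential of $L_h$ (a Bedford–Taylor product), or equivalently, as in the corollary on continuity of the slices of $S$, by truncating the series. The truncations $S_N$ have slices converging uniformly to $S\wedge[L_h]$ because the tail mass is summable. The same series therefore represents both limits, which proves the claim. Alternatively, Theorem \ref{thm_slice_SR} identifies $S\wedge[L_h]$ with $S_{\R}\wedge_{\R}[L_h]$, and one can compare real slices directly using the counting argument $N_{\R}=N_{\C}$.

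The main obstacle is justifying that the complex wedge product of the limiting current $S$ with $L_h$ is continuous with respect to this particular approximation. Near $\bfz$, which lies on $L_0$, and near the points at infinity, mass could concentrate. For $h\neq 0$ this is handled by the uniform transversality coming from the cone field, which gives each curve intersection number exactly one with $L_h$. For $h=0$ we would fall back on the continuity in $h$ of the slices, established earlier, together with weak continuity of $\omega_h$ in $h$.
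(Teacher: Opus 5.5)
Your argument is correct, and at the crucial step it is more careful than the paper's. The paper's proof only says three things: $\omega_h$ is the limit of the normalized root-counting measures of $\tilde P_n(\cdot,h)$; $2^{-n}[\tilde P_n=0]\to S$ by Theorem~\ref{thm_sibony_spectral}; hence $\omega_h=S\wedge[L_h]$. The last step tacitly assumes that slicing by a fixed line commutes with weak convergence of currents, which fails in general. You never slice a weak limit. The decomposition of Proposition~\ref{prop_error_term} writes $\omega_{h,n}$ as a dominated series of slices of algebraic curves with masses $O(\delta^k)$. So $\omega_{h,n}$ converges in total variation to $\frac{1}{12}\sum_k 2^{-k}\pi_*(F_X^k)^*[\pi^o\Lc]\wedge[L_h]$, for every $h$ including $h=0$, and even at rate $O((\delta/2)^n)$. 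That limit is $S\wedge[L_h]$ under the truncation definition the paper itself uses when proving continuity of the slices of $S$. As a bonus you obtain the existence of $\lim_n\omega_{h,n}$, which the paper imports from \cite{grigorchuk_zuk_99}. The price is that you rely on the exact bookkeeping of Proposition~\ref{prop_error_term}, not only on Sibony's general convergence theorem.

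A few details should be fixed.

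\begin{itemize}
\item The finite-stage identity needs no cone field; Theorem~\ref{thm_magic} only concerns real loci. $P_n(\cdot,h)$ is a characteristic polynomial, hence of exact degree $2^n$ in $\lambda$. So $L_h$ is never a component of $\{\tilde P_n=0\}$, and no mass sits at $\pinf=L_h\cap\Linf$. Every term of Proposition~\ref{prop_error_term} eventually appears with a positive coefficient, so the same holds for each $\pi_*(F_X^k)^*[\pi^o\Lc]$, and each slice has mass exactly its degree.
\item $[L_h]$ has no continuous potential, since $\log|y-h|=-\infty$ on $L_h$. The Bedford--Taylor product in the paper is for the smeared current $R_\psi$. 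Use the truncation definition of $S\wedge[L_h]$ instead. Alternatively, restrict a quasi-potential of $S$ to $L_h$; this agrees with the truncation by monotone convergence whenever it is not identically $-\infty$.
\item Drop your closing paragraph. Your bounds come from degrees, not from transversality, so $h=0$ needs no special treatment. The suggested fallback via continuity of $h\mapsto\omega_h$ would also use something not available independently of the statement.
\end{itemize}
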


\begin{proof}
Recall that the density of states measures $\omega_h$ is the limit of the counting of certain polynomials $\tilde{P}_n(\cdot, h)$ as $n\rightarrow +\infty$ where $\tilde{P}_n$ is defined in Lemma \ref{lem_recursion}. By Theorem \ref{thm_sibony_spectral}, the integration current on $\tilde{P}_n =0$ normalized by $2^n$ converge toward the Sibony current $S$. In particular, the density of states measure $\omega_h$ coincide with the slide of $S$ with the line $L_h$.  
\end{proof}

\subsection{Almost parallel leaves near the line of fixed points}

We shall describe the regularity  of the leaves of $\mathcal{L}$ as they approach the point $p=(4,1)$. 
Recall that $F^2$ fixes the line $\Lf$ and acts as identity on it. 
The differental of $F^2$ is diagonalizable with $1$ and $8$ as eigenvalues. In particular, the leaves of $\mathcal{L}$ are almost parallel to the line $\Lf$, which is what we are going to show. 

Fix $m$ a point near $p$ and choose some inverse branches $m_n$ of $m$ by $F^2$. Assume that $m_n$ converges to a point on $\Lf$. Consider the set $\mathcal{W}^u(m)$ of points $q$ such that there exists a sequence $x_n$ with $d(x_n,m_n) \rightarrow 0$ for which $F^{2n}(x_n)=q$. 
 
\begin{prop} For all $u$ in a neighborhood of $p$, the set $\mathcal{W}^u(m)$ is a real manifold of dimension $1$. 
\end{prop}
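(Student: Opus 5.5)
The plan is to treat $\Lf$ near $p=(4,1)$ as a curve of fixed points of $F^2$ that is normally repelling. Equivalently, it is normally attracting for the local inverse branch $G:=(F^2)^{-1}$ defined near $p$. I would then identify $\mathcal{W}^u(m)$ with a level set of a $\mathcal{C}^1$ submersion. Since $\Lf$ avoids $\Lc$ and $\Lz$ near $p$, $F^2$ is a local diffeomorphism near $p$, and $G$ is well defined on a small neighborhood $N$ of $p$.

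\textbf{Step 1: normal form.} I would choose local coordinates $(s,t)$ near $p$ in which $\Lf=\{t=0\}$ and $s$ restricts to an affine parameter along $\Lf$. Because $F^2$ is the identity on $\Lf$ and $D F^2$ has eigenvalues $1$ and $8$ at $p$, it takes the form
\[
F^2(s,t)=\bigl(s+t\,a(s,t),\ \lambda(s)\,t+t^2 b(s,t)\bigr),
\]
with $\lambda(s)$ close to $8$ and $a,b$ smooth. Inverting gives
\[
G(s,t)=\bigl(s+t\,\tilde a(s,t),\ \lambda(s)^{-1}t+t^2\tilde b(s,t)\bigr).
\]
Shrinking $N$, we get $|t\circ G|\le \kappa |t|$ with $\kappa<1/7$ and $|s\circ G-s|\le C|t|$. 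In particular $N$ is forward invariant under $G$. Any backward orbit $m_n$ of $m$ that converges to a point of $\Lf$ near $p$ eventually stays in $N$, so after discarding finitely many terms we may assume $m_n=G^n(m)$.

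\textbf{Step 2: the asymptotic projection.} For $x\in N$ I set $\Pi(x)=\lim_n s(G^n x)$. The telescoping bound $|s(G^{n+1}x)-s(G^nx)|\le C\kappa^n|t(x)|$ gives uniform convergence. To see that $\Pi$ is $\mathcal{C}^1$, I would differentiate: $DG^n$ has the cone-field/graph-transform structure with tangential part bounded and normal part contracted by $\kappa^n$. Hence the derivatives $D(s\circ G^n)$ form a Cauchy sequence, with successive differences $O(\kappa^n)$. This is a standard Fenichel/Hadamard--Perron estimate. Moreover $\partial_s\Pi=1+O(|t|)\neq0$, so $\Pi$ is a submersion. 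Its level sets $\{\Pi=c\}$ are therefore $\mathcal{C}^1$ curves transverse to $\Lf$, and each meets $\Lf$ exactly at the point with coordinate $s=c$.

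\textbf{Step 3: identification.} I claim that for $q\in N$, $q\in\mathcal{W}^u(m)$ if and only if $\Pi(q)=\Pi(m)$; Step 2 then gives the proposition.
\begin{itemize}
\item Suppose $\Pi(q)=\Pi(m)$, and put $x_n=G^n(q)$, so that $F^{2n}(x_n)=q$. Both $G^n q$ and $G^n m$ converge to the point of $\Lf$ with coordinate $s=\Pi(m)$, because their $t$-coordinates decay like $\kappa^n$. Hence $d(x_n,m_n)\to0$.
\item Conversely, suppose $F^{2n}(x_n)=q$ and $d(x_n,m_n)\to0$. Then the $x_n$ eventually lie in $N$, so the relevant branch is $G$ and $x_n=G^n(q)$. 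Taking $s$-coordinates in the limit gives $\Pi(q)=\lim s(x_n)=\lim s(m_n)=\Pi(m)$.
\end{itemize}

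The main obstacle is the branch issue in the converse direction. One must make sure that an arbitrary sequence $x_n$ with $F^{2n}(x_n)=q$ and $d(x_n,m_n)\to0$ actually follows the local branch $G$ at every step. The danger is that at some intermediate time the orbit $F^{2k}(x_n)$, $k<n$, could leave $N$ and pass through another preimage. To handle this I would use that $F^2$ is expanding transversally near $\Lf$ and close to the identity tangentially. Then if $x_n\in N_\epsilon\subset N$, with $\epsilon$ small, the forward orbit stays in $N$ as long as it remains $\kappa^{-k}$-close to $m_{n-k}$, and this closeness is guaranteed by $d(x_n,m_n)\to0$ together with the uniform Lipschitz control of $F^2$ on $N$. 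If this direct control fails, I would instead interpret $\mathcal{W}^u(m)$ locally, meaning orbits inside $N$, which is the setting in which the proposition is used later.
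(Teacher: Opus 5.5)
Your argument is correct once $\mathcal{W}^u(m)$ is read through the local inverse branch. It rests on the same mechanism as the paper: $\Lf$ is a curve of fixed points of the local diffeomorphism $F^2$, and it is normally repelling. In fact $\det DF^2\equiv 8$ along $\Lf$, so your $\lambda(s)$ is identically $8$.

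The difference is in execution. The paper restricts $F^2$ to a diffeomorphism $F^{-2}(U)\to U$ and invokes Hirsch--Pugh--Shub for the unstable set of a normally hyperbolic invariant manifold. You instead build the unstable fibration by hand through the asymptotic projection $\Pi=\lim_n s\circ G^n$. Its $\mathcal{C}^1$ convergence comes from the gap between the tangential eigenvalue $1$ and the normal contraction $1/8$ of $G$. The citation is shorter. Your construction makes explicit what Proposition~\ref{prop_C1_convergence} later uses: the fibres are $\mathcal{C}^1$ curves transverse to $\Lf$, each meeting $\Lf$ in exactly one point, so holonomy along them makes sense. One minor point: a coordinate box is not literally forward invariant under $G$, since $s$ may drift by up to $C|t|/(1-\kappa)$, so you need to shrink in the $s$-direction accordingly.

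On the branch issue, your first remedy does not work. The Lipschitz bound keeps $F^{2k}(x_n)$ near $m_{n-k}$ for all $k\le n$ only if $d(x_n,m_n)$ is exponentially small in $n$. The hypothesis $d(x_n,m_n)\to 0$ gives no rate, and nothing prevents the intermediate points from leaving $N$ and re-entering it later. So under the literal global definition the converse inclusion is not established. Your fallback, reading $\mathcal{W}^u(m)$ through the local branch inside $N$, is exactly what the paper's one-line proof does implicitly when it restricts to $F^2\colon F^{-2}(U)\to U$. With that reading your proof is complete.
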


\begin{proof} Fix a neighborhood $U$ of $p$. Locally, $F^2$ restricted to $F^{-2}(U)$ to $U$ is a diffeomorphism. The set $\mathcal{W}^u(m)$ is then an unstable set, which is a manifold (see e.g \cite[Theorem 4.1]{hirsch_pugh_shub}).   
\end{proof}

We fix a ball of radius $\epsilon$ near $p$ and denote by $\mathcal{L}(B(p,\epsilon))$ the leaves of $\mathcal{L}$ intersecting this ball. 

\begin{prop}
\label{prop_C1_convergence}
For some $\epsilon_0> 0$. If a sequence of leaves $\mathcal{L}_{n} \in \mathcal{L}(B(p,\epsilon_0))$ containing some point $x_n \in \mathcal{L}_n$ converging toward $p$, then  $\mathcal{L}_n$  
   converge with respect to the $\mathcal{C}^1$ topology toward the line $\Lf$.   
\end{prop}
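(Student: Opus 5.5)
The idea is to use that $p=(4,1)$ is a point of $\Lf$, which $F^2$ fixes pointwise, with $DF^2(p)$ having eigenvalues $1$ (along $\Lf$) and $8$ (transverse). Leaves of $\mathcal{L}$ are backward invariant: if $\mathcal{L}_n$ is a leaf, $F^{2k}(\mathcal{L}_n)$ is again contained in a leaf, and pulling back is a graph transform. Inverse branches of $F^2$ near $p$ contract the direction transverse to $\Lf$ by about $1/8$ and act almost isometrically along $\Lf$. So a leaf that is vertical for the cone field near $p$ should be pushed, under the graph transform by local inverse branches, toward curves tangent to the eigendirection $\Lf$. This is a standard invariant manifold / $C^1$ section argument: the line $\Lf$ is a normally repelling manifold of fixed points.

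First I fix $\epsilon_0$ and work in linearizing-type coordinates $(s,t)$ near $p$, with $\Lf=\{t=0\}$. In these coordinates $F^2(s,t)=(s+O(t),\,8t+O(t^2,st))$, and $F^2$ is a local diffeomorphism from $F^{-2}(B(p,\epsilon_0))$ onto $B(p,\epsilon_0)$, as in the preceding proposition. By Theorem \ref{thm_invariant_cone_field}, leaves of $\mathcal{L}$ are vertical for the cone field, and near $p$ the direction of $\Lf$ lies inside the vertical cone. Let $\mathcal{C}'$ be the cone of directions $(a,b)$ with $|b|\le \kappa|a|$. Since $D(F^{-2})$ multiplies slopes by roughly $1/8$, the cone $\mathcal{C}'$ is strictly invariant under $DF^{-2}$ on a small ball, and the slopes contract by a factor $\lambda<1$.

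Next I show that a leaf through a point $x_n$ close to $p$ is tangent to $\mathcal{C}'$ on $B(p,\epsilon_0)$, with slope tending to $0$ as $x_n\to p$. For a leaf $\mathcal{L}_n$ through $x_n$, consider the forward images $F^{2k}(\mathcal{L}_n)$. Since $\Lf$ is fixed and the transverse coordinate expands by about $8$, the number $k_n$ of iterates for which $F^{2j}(x_n)$ stays in $B(p,\epsilon_0)$ tends to infinity as $\operatorname{dist}(x_n,\Lf)\to 0$. The image $F^{2k_n}(\mathcal{L}_n)$ is contained in a leaf, which is vertical for $\mathcal{C}$ and lies in a compact region, so its slope relative to $\Lf$ in the $(s,t)$ chart is bounded by a uniform constant $K$. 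Pulling back $k_n$ times by the local inverse branch, I will show the slope of $\mathcal{L}_n$ on the ball is at most $K\lambda^{k_n}+O(\epsilon_0)$. To remove the $O(\epsilon_0)$ term, I use that $DF^{-2}$ at points within distance $\rho$ of $\Lf$ is within $O(\rho)$ of its value on $\Lf$.

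The statement asks for convergence on a ball, which I need to handle separately. The ball contains points far from $\Lf$ relative to $x_n$, where $k$ is small. There I use the lamination property of Theorem \ref{thm_lamination}: leaves do not cross, and $\mathcal{L}_n$ lies between $\Lf$ and a nearby leaf. Combined with the $C^0$ estimate, this gives that $\mathcal{L}_n$ converges to $\Lf$ in $C^0$. Having uniform slope bounds in $\mathcal{C}'$ and a contraction of the graph transform on the space of $C^1$ sections, in the spirit of Hirsch–Pugh–Shub, I upgrade the $C^0$ convergence to $C^1$ convergence.

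The main obstacle is the uniform control of the derivative of the graph transform in the region where $\operatorname{dist}(\cdot,\Lf)$ is comparable to $\epsilon_0$. I would handle it by first proving $C^1$ convergence on shrinking tubes around $\Lf$. I would then note that the portion of $\mathcal{L}_n$ inside $B(p,\epsilon_0)$ lies within any such tube once $n$ is large, by the $C^0$ convergence.
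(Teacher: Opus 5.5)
Your skeleton (first $C^0$ convergence, then an upgrade to $C^1$ by contracting slopes with the local inverse branch $\varphi^{-1}$ of $F^2$ near the normally repelling line of fixed points) is the same as the paper's. The paper gets the slopes from strict contraction of the vertical cone under $F^{-2}$, and the $C^0$ part from holonomy along the unstable curves $\mathcal{W}^u(q)$, $q\in\Lf$. Your $C^1$ step is fine once all of $\mathcal{L}_n\cap B(p,\epsilon_0)$ is known to lie in a thin tube around $\Lf$.

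\textbf{The gap is the $C^0$ step.}
\begin{itemize}
\item The exit-time estimate in your third paragraph only controls $\mathcal{L}_n$ at points whose $F^2$-orbits stay in the ball for about $k_n$ steps. That is only the piece of $\mathcal{L}_n$ lying within $\lesssim\mathrm{dist}(x_n,\Lf)$ of $\Lf$, so there is no ``$C^0$ estimate'' on the ball to combine with.
\item The sandwich ``$\mathcal{L}_n$ lies between $\Lf$ and a nearby leaf'' needs, for every $\eta>0$, a leaf to the left of $\Lf$, disjoint from it, that is $\eta$-close to $\Lf$ across the whole ball. You never produce one. Non-crossing plus a slope bound $K$ does not by itself stop a leaf from passing near $p$ and then peeling away from $\Lf$. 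That uniformity is exactly what is being proved.
\item You also cannot simply quote Theorem~\ref{thm_lamination} for continuity of leaves at $p$. Its proof goes through recurrence to $K_\epsilon$, and points of $\Lf$ are fixed by $F^2$ and lie outside $K_\epsilon$.
\end{itemize}

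\textbf{Short repairs.}
\begin{itemize}
\item[(a)] Build the sandwich dynamically. Take one leaf $\mathcal{L}^0$ crossing the ball to the left of $\Lf$. By backward invariance of $\mathcal{L}=\overline{\cup_n F^{-n}(\Lc)}$, the curves $\varphi^{-j}(\mathcal{L}^0)$ are leaves. Since $\varphi^{-1}$ fixes $\Lf$ and contracts transversally by about $1/8$, they converge uniformly to $\Lf$.
\item[(b)] Better, run your exit-time argument at every point. Use coordinates on a box $\{|s|<a,\ |t|<b\}$ with $F^2(s,t)=(s,8t)$; this linearization is available because the transverse multiplier is identically $8$ along $\Lf$. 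Write a leaf as $t=g(s)$ and take the largest $k$ with $8^k g(s)<b$. Forward invariance and the slope bound give $|g'(s)|\le K8^{-k}\le c\,g(s)$ with $c=8K/b$. Gr\"onwall then gives $g\le g(s_n)e^{c|s-s_n|}$ and $|g'|\le c\,g$ on the whole box, so $C^0$ and $C^1$ convergence come at once.
\item[(c)] Most directly, the cone field already does it. Near $p$ the leaves lie in $R_2^-$, where the horizontal cone is bounded by the $\mathcal{P}_\infty$-line (slope $1/2$, parallel to $\Lf$) and the $\mathcal{P}_0$-line (slope $y/(x-2)$). So the vertical cone there is the thin sector of slopes in $[1/2,\,y/(x-2)]$. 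Write a leaf as $x=2+2y-d(y)$ with $d\ge0$. The cone condition says $0\le d'\le d/y$, so $d$ is nondecreasing and $d/y$ is nonincreasing along the leaf. Hence on a fixed range of heights both $d$ and $d'$ are $O(d(y_n))$, where $y_n$ is the height of $x_n$, and $d(y_n)\to0$.
\end{itemize}
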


\begin{proof} 
Denote by $\varphi$ the local diffeomorphism induced by $F^2$ on a neighborhood of $p$. 
First, the leaves of $\mathcal{L}$ are directed by the vertical cone field $\mathcal{C}$. Secondly, the direction of the cone field avoids a neighborhood of the orthogonal direction to $\Lf$ in a neighborhood of $p$, hence the leaves of $\mathcal{L}$ near $p$ can be viewed as local graphs over the line $\Lf$. 
 Recall that the vertical cone field is  decreasing strictly by $F^{-2}$, hence near the point $p$, it decreases uniformly and this shows that the slope of the leaves of $\mathcal{L}_n$ near $p$ converge uniformly toward the slope of $\Lf$ whenever $\mathcal{L}_n$ converges toward $\Lf$. 

We thus need to show that near $p$, the leaves $\mathcal{L}_n$ converge toward $\Lf$.  
We choose $\epsilon_0> 0$ so that for any leaf $L \in \mathcal{L}(B(p,\epsilon_0))$, $\varphi^{-1}(L)$ also remains in $\mathcal{L}(B(p,\epsilon_0))$. This is possible because $\Lf$ is transversally repelling for $F^2$. 
Since $\mathcal{L}_n$ contains a point $x_n$ converging to $p$, this shows that for $n$ large enough, $\mathcal{L}_n$ belongs to $\mathcal{L}(B(p,\epsilon_0))$. 
Fix $q \in \Lf\cap B(p,\epsilon_0)$.  
Denote by $\chi_{h,n,m}$ the holonomy map from $\mathcal{L}_n$ to $\mathcal{L}_m$ along the unstable foliations. Let $\chi_{L}$ be the holonomy map along the vertical foliation toward the transversal $L= \{ y=1 \}$. Since $x_n$ converges to $p$, then $\chi_L(x_n)$ also converges to $p$. Moreover, for each $n$, take  $y_n$ the intersection point  $\mathcal{W}^u(q) \cap L_n$, when it exists.   We claim  that $y_n $ converges toward $q$. Indeed, $\chi_L(y_n)= \chi_L(x_n)$ converges to $p$. Moreover, take $h$ the restriction $h = (\chi_L)_{| \mathcal{W}^u(q)}$ 
 then we have $ h \circ  \chi_{h,n,m} = \chi_L \circ (\chi_{h,n,m})_{|L} \circ  h$.   
 Applying the previous relation to the point $\alpha =h(y_n ) = h(x_n)$, one obtains: 
 \begin{equation}
 y_m =  \chi_{h,n,m}(y_n) = (h^{-1} \circ \chi_L \circ \chi_{h,n,m}) (h(y_n)) = h^{-1} (\chi_L(x_m))
 \end{equation}
Taking the limit as $m \rightarrow +\infty$ yields that $y_m$ converges toward $h^{-1}(p) =q$. 
 Indeed, for each point $y \in \Lf$, the points $y_n = \mathcal{W}^u(y) \cap \mathcal{L}_n$ (whenever they are defined) converge toward $y$.   
 Hence the sequence of leaves $\mathcal{L}_{n}$ $\mathcal{C}^1$-converge toward $\Lf$.   
\end{proof}

\subsection{Spectral density at the maximal value}

Recall from Definition \ref{def_density} that the density of states measure is the slice of the spectral current by the line $L=\{y=1\}$:
\begin{equation} \label{eq_slice_formula}
\omega_L = S_{\R} \wedge [L]. 
\end{equation}
The farthest point in the support is the point $p=(4,1)$.  
We will compare the previous measure with  the slice $\omega_C = S_{\R} \wedge [C]$ by the invariant curve $C$ defined in the previous section.

For $\alpha \in \{ L,C \}$, the spectral density of the measure $\omega_\alpha$ is by definition: 
\begin{equation} \label{eq_shubin_like}
\eta_\alpha = \limsup_{\epsilon \rightarrow 0^+} \dfrac{\log \omega_\alpha(B(p, \epsilon))}{\log \epsilon}. 
\end{equation}

\begin{thm} \label{thm_spectral_density_unstable} One has: 
\begin{equation}
\eta_C = \lim_{\epsilon \rightarrow 0^+ } \dfrac{\log \omega_C (B(p,\epsilon))}{\log \epsilon} = \dfrac{2}{3}. 
\end{equation}
\end{thm}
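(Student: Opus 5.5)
We will use the local structure of $F^2$ near $p=(4,1)$. The line $\Lf$ is pointwise fixed by $F^2$, and $DF^2(p)$ has eigenvalues $1$ (along $\Lf$) and $8$ (transverse). The curve $C$ is the invariant transversal $\mathcal{W}^u(p)$ constructed above, and $F^2|_C$ is a one-dimensional map fixing $p$ with multiplier $8$. Since $S_{\R}$ is woven and, near $p$, consists of leaves of $\mathcal{L}$ that are $\mathcal{C}^1$-close to $\Lf$ (Proposition \ref{prop_C1_convergence}), the slice $\omega_C$ is well defined. Its mass on a small arc is the $S_{\R}$-weight of the leaves crossing that arc. The plan is to prove a self-similarity relation for $\omega_C$ under $F^2|_C$ and read off the exponent $\log 2^2/\log 8 = 2/3$.

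\textbf{Step 1: the functional equation.} We restrict $(Id - F^{\sharp}/2)S_{\R} = \frac{1}{12}[\Lc]$ to a neighbourhood of $p$. Since $\Lc$ does not pass near $p$, this gives $S_{\R} = \frac14 F^{\sharp 2} S_{\R}$ locally. Near $p$, $F^2$ is a local diffeomorphism, so each leaf near $\Lf$ has a unique local preimage leaf closer to $\Lf$, and the Jacobian sign is constant there, so $F^{\sharp}$ introduces no orientation cancellation. Slicing by $C$ and using $F^2(C)\subset C$ then yields, for a small arc $J\subset C$ with endpoint $p$,
\[
\omega_C\bigl((F^2|_C)^{-1}(J)\bigr) = \tfrac14\, \omega_C(J).
\]
This is where the factor $2^{-2}$ per iterate of $F^2$ comes from.

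\textbf{Step 2: comparison with balls.} Let $g=F^2|_C$. Since $g'(p)=8$ and $g$ is $\mathcal{C}^1$ (indeed analytic, as $C$ is an unstable manifold of an analytic map), there is a Koenigs-type linearization, or at least the bounded-distortion estimate
\[
c\,8^{-n}\le |g^{-n}(J_0)|\le c'\,8^{-n}
\]
for a fixed fundamental arc $J_0=[g^{-1}(q),q]$. Iterating Step 1 gives $\omega_C(g^{-n}(J_0)) = 4^{-n}\omega_C(J_0)$. We must check that $\omega_C(J_0)>0$. This holds because $J_0$ meets $\mathcal{L}$ and leaves through it carry $S_{\R}$-mass: they are preimages of $\Lc$ accumulating near $p$, by Theorem R.6 and the Cantor structure. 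For $8^{-n-1}\lesssim \epsilon\lesssim 8^{-n}$ the ball $B(p,\epsilon)\cap C$ is sandwiched between $\bigcup_{k\ge n+1} g^{-k}(J_0)$ and $\bigcup_{k\ge n-1}g^{-k}(J_0)$. Summing the geometric series gives $\omega_C(B(p,\epsilon))\asymp 4^{-n}$. Taking logarithms, the limit exists and equals $\log 4/\log 8 = 2/3$. One side of $p$ on $C$ may lie in the basin $\mathcal{B}(\infty)$ and carry no mass, which does not affect the estimate.

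\textbf{Main obstacle.} The delicate point is Step 1: justifying that locally near $p$ the pullback $F^{\sharp 2}$ acts on the leaves crossing $C$ exactly as the inverse branch $g^{-1}$, with weight one and no extra contributions. We would use the lamination structure near $\Lf$ (Theorem \ref{thm_lamination} and Proposition \ref{prop_C1_convergence}) together with the local injectivity of $F^2$ near $p$ to show that every leaf meeting $g^{-1}(J)$ is the local preimage of a leaf meeting $J$ and conversely. A secondary check is the positivity $\omega_C(J_0)>0$, which comes from density of preimages of $\Lc$ in $\mathcal{L}$.
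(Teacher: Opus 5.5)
Your proposal is correct and follows essentially the paper's route. You localize $(Id - F^\sharp/2)S_{\R} = \frac{1}{12}[\Lc]$ near $p$ (the paper does this by applying $Id + F^\sharp/2$ and noting that $\Lc$ and $F^{-1}(\Lc)$ avoid $p$) to get $(F^2)^*\omega_C = 4\,\omega_C$ locally, and then combine this with the multiplier $8$ of $F^2$ along the unstable curve $C$ to obtain $\log 4/\log 8 = 2/3$. Your Step 2 (fundamental domain, bounded distortion, geometric series) is a more careful version of the paper's relation $\omega_C([4-8u(\epsilon)\epsilon,4]) = 4\,\omega_C([4-\epsilon,4])$ and its liminf/limsup comparison, and your check that $\omega_C(J_0)>0$ supplies a point the paper leaves implicit.
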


\begin{proof}
Let us consider $\eta'_C = \liminf_{\epsilon \rightarrow 0^+ } \dfrac{\log \omega_C ([4-\epsilon, 4])}{\log \epsilon}$. 
Recall that the current $S_{\R}$ satisfies the functional equation: 
\begin{equation*}
(Id - \dfrac{\Fs}{2} ) S_{\R} = \dfrac{1}{12} [\Lc]. 
\end{equation*}
Applying $Id + \Fs/2$ and slicing by $[C]$, one obtains: 
\begin{equation*}
S_{\R} \wedge_{\R} [C] - \dfrac{\Fs^2 S_{\R} \wedge_{\R} [C] }{4} =\dfrac{1}{12} \left  ( [\Lc] +  \dfrac{\Fs [\Lc]}{2}\right ) \wedge [C]
\end{equation*}
Choosing small enough $\epsilon_0> 0$ so that for all $\epsilon < \epsilon_0$ the arc $C \cap B(p,\epsilon)$ does not contain the points $C\cap \Lc$ and $C\cap F^{-1}(\Lc)$, the previous relation yields: 
\begin{equation} \label{eq_pullback_measure}
(F^2)^* \omega_C( B(p, \epsilon)) = 4 \omega_C(B(p,\epsilon)), 
\end{equation} 
for all $\epsilon < \epsilon_0$. 

Let us now describe the measure $(F_X^2)^* \omega_C$. Consider an neighborhood $U$ of the point $C\cap \Lf$ and denote by $\mathcal{F}(B(p,\epsilon))$ the local curves of $\cup_{n \geqslant 0}(F_X^n)^* \Lc$ contained in $U$ which intersect the arc $C\cap B(p,\epsilon)$.
These curves are contained in the support of the current $S_{\R}$. Moreover, we claim that for each leaf $\mathcal{L}(x) \in \mathcal{F}(\mathcal{B}(p,\epsilon)$, $U \cap F_X^{-2}(\mathcal{L}(x))$ also belongs to $\mathcal{F}(B(p,\epsilon))$. 
Moreover, since $C$ is an unstable manifold, for any $x \in C \cap B(p, \epsilon)$, there exists $y \in C\cap B(p,\epsilon)$ such that $F^2(y) = x$. 
Observe also that the line $\Lf$ is a line of fixed points for $F^2$ and the differential  $F^2$ at the point $p_0=\Lf \cap L$ is diagonal of the form: 
\begin{equation*}
DF^2 (p_0) = \left  ( \begin{array}{ll}
1 & 0 \\
0 & 8
\end{array} \right  ). 
\end{equation*}
As a result, for $\epsilon$ small enough, the leaf $\mathcal{L}(y)$ belongs to $\mathcal{F}(\mathcal{B}(p, \epsilon)$ if and only if $F^2(\mathcal{L}(y))$ belongs to $\mathcal{F}(\mathcal{B}(p, 8 u(\epsilon)\epsilon))$, 
 where $u(\epsilon) = 1 + o(\epsilon)$ as $\epsilon \rightarrow 0$. 
 Such point $y $ belongs to the support of $(F^2)^* \omega_C ([4- \epsilon, 4])$, hence 
\begin{equation*} \label{eq_scaling_pullback}
(F^2)^* \omega_C([4 -\epsilon, 4]) = \omega_C([4 - 8 u(\epsilon) \epsilon, 4]).
\end{equation*}
Using the previous relation together with \eqref{eq_pullback_measure} and the definition of $\eta, \eta'$ respectively, we get:  
\begin{equation*}
8^{\eta} u(\epsilon)^{\eta}\epsilon^{\eta} \leqslant 4 \omega_C([4 - \epsilon, 4]) = \omega_C([4 - 8 u(\epsilon) \epsilon, 4])  \leqslant 8^{\eta'} u(\epsilon)^{\eta'} \epsilon^{\eta'} 
   \end{equation*}
Taking the logarithm and dividing by $\log(\epsilon)$ gives: 
\begin{equation} \label{eq_bound_1}
 \dfrac{  \eta' (\log (8u(\epsilon)) }{\log(\epsilon)} + \eta' \leqslant \dfrac{\log(4)}{\log(\epsilon)} + \dfrac{\log \omega_C([4 - \epsilon, 4])}{\log \epsilon} \leqslant \dfrac{\eta \log (8u(\epsilon))}{\log(\epsilon)} + \eta , 
\end{equation}
 for all $\epsilon < \epsilon_0$. 
 Choosing as sequence $\epsilon_n$ such that
$\lim_{n\rightarrow +\infty} \log \omega_C([4 - \epsilon_n, 4])/ \epsilon_n = \eta'$, the lower bound in the previous inequality gives
$-\eta'  {\log(8 u(\epsilon_n))}/{\log (\epsilon_n)} \geqslant -{\log 4}/{\log (\epsilon_n)}
$. In particular, since $u(\epsilon) \rightarrow 1$ as $\epsilon \rightarrow 0$, we get:
\begin{equation} \label{eq_liminf_bound}
\eta' = \liminf_{\epsilon \rightarrow 0^+} \dfrac{\log \omega_C([4 - \epsilon, 4])}{\log (\epsilon)}   \geqslant \dfrac{\log(4)}{\log(8)} = \dfrac{2}{3}. 
\end{equation}
Similarly, using  \eqref{eq_bound_1} to a sequence $\epsilon_n'$ for which $\lim_{n\rightarrow} \log(\omega([4 - \epsilon_n']))/\log(\epsilon_n') = \eta$, we get $- \eta \log(8 C(\epsilon_n'))/\log(\epsilon_n') \leqslant - \log(4)/\log(\epsilon_n')$. This gives:
 
\begin{equation} \label{eq_limsup_bound}
\eta = \limsup_{\epsilon \rightarrow 0^+} \dfrac{\log \omega_C([4 - \epsilon, 4])}{\log \epsilon} \leqslant \dfrac{\log 4}{\log 8} = \dfrac{2}{3}.
\end{equation} 
  The two inequalities \eqref{eq_liminf_bound} and \eqref{eq_limsup_bound} show that $2/3 \leqslant \eta' \leqslant \eta \leqslant 2/3$, hence the limit as $\epsilon$ converges to zero exists and equal $2/3$, as required. 
\end{proof}

\begin{cor} The density $\eta_L$ also satisfies the relation: 
\begin{equation*}
\eta_L = \lim_{\epsilon\rightarrow 0^+} \dfrac{\log \omega_L([4-\epsilon, 4])}{\log \epsilon}.
\end{equation*}
\end{cor}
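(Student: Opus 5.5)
The plan is to transfer the local dimension computed along the unstable curve $C$ in Theorem \ref{thm_spectral_density_unstable} to the horizontal line $L=\{y=1\}$. The tool is the holonomy of the lamination $\mathcal{L}$ near $p=(4,1)$, which is controlled by Proposition \ref{prop_C1_convergence}.

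First, I would set up the holonomy map $\chi\colon C\cap B(p,\epsilon_0)\to L\cap B(p,\epsilon_0)$ along the leaves of $\mathcal{L}$. Both $C$ and $L$ are horizontal for the cone field $\mathcal{C}$. The leaves are vertical, so each leaf through a point of $C$ near $p$ meets $L$ exactly once, and $\chi$ is well defined on the support of $\omega_C$. Since $S_{\R}$ is a woven current supported on $\mathcal{L}$, its slices on two transversals are pushed to each other by the holonomy. This is the same argument used in Theorem \ref{thm_continuous_slice}, applied away from the finitely many pieces of $\Lc$ and $F^{-1}(\Lc)$ that were excluded by choosing $\epsilon_0$. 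Hence $\chi_*\omega_C=\omega_L$ near $p$, and both measures assign mass to leaves, not to points.

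Second, I would show that $\chi$ is bi-Lipschitz near $p$, and in fact satisfies $|\chi(x)-p|/|x-p|\to c>0$. By Proposition \ref{prop_C1_convergence}, the leaves meeting $B(p,\epsilon)$ converge in $\mathcal{C}^1$ to $\Lf$ as $\epsilon\to0$. Inside $B(p,\epsilon)$ the holonomy is therefore close to the linear projection along the direction of $\Lf$ from $T_pC$ onto $T_pL$. This projection is an isomorphism, because $C$ and $L$ are both transverse to $\Lf$ at $p$: $C$ is tangent to the $8$-eigendirection of $DF^2(p)$, which is transverse to $\Lf$. It follows that for $\epsilon$ small
\[
\chi\bigl(C\cap B(p,\epsilon/K)\bigr)\subset L\cap B(p,\epsilon)\subset \chi\bigl(C\cap B(p,K\epsilon)\bigr)
\]
for a fixed $K$, and indeed with $K=c^{\pm1}(1+o(1))$.

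Finally, I would combine the two steps. We get $\omega_C(B(p,\epsilon/K))\le\omega_L([4-\epsilon,4])\le\omega_C(B(p,K\epsilon))$, where I use that the support of $\omega_L$ lies on the side $x\le4$. Taking logarithms and dividing by $\log\epsilon$, the constant $K$ disappears in the limit, so $\eta_L$ exists as a limit and equals $\eta_C=2/3$.

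The main obstacle is the second step. Proposition \ref{prop_C1_convergence} gives convergence of individual leaves but not directly uniform control of the holonomy derivative. I would first try to derive this from the uniform strict contraction of the vertical cone field under $F^{-2}$ near $p$. That contraction forces all leaves in $B(p,\epsilon)$ to have slopes within $o(1)$ of the slope of $\Lf$, which is enough to give the Lipschitz bounds with constants tending to $c^{\pm1}$.
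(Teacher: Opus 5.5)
Your proposal is correct and follows essentially the same route as the paper: transport $\omega_C$ to $\omega_L$ by the holonomy $\chi$ of $\mathcal{L}$. Then use Proposition \ref{prop_C1_convergence} (leaf slopes tend to $1/2$ near $p$) to show that $\chi$ is differentiable at $p$, with $d(\chi(x),p)/d(x,p)\to\cos\theta+2\sin\theta$, so that this constant disappears after dividing by $\log\epsilon$. Your concern about uniformity is harmless: the sequential statement of Proposition \ref{prop_C1_convergence} already yields the uniform slope bound on a neighbourhood of $p$ by a standard contradiction argument, and this is exactly how the paper uses it.
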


\begin{proof} 
Consider the holonomy $\chi$ along the leaves of $\mathcal{L}$ from $C$ toward the transversal $L$. We have 
$\omega_C(B(p,\epsilon)) = \omega_L (\chi (C \cap B(p,\epsilon)))$. 
We thus need to establish how the restriction of $\chi$ to $C$ deforms the lengths. More precisely, we show that the holonomy is differentiable at $p$.   
Take $x \in C \cap B(p, \epsilon) $ contained in a leaf $\mathcal{L}_x$. By Proposition \ref{prop_C1_convergence},  the slopes of $\mathcal{L}_x$ converges to $1/2$ as $x$ converges to $p$. 
Recall that $C$ is a $\mathcal{C}^1$ curve, so we can consider $-\tan \theta$ the slope of the curve $C$ at $p$ where $0 < \theta<\pi/2$.   
For any $\epsilon > 0$, there exists $t> 0$ such that if $x \in C \cap B(p, t)$, the slope of $\mathcal{L}_x$ when it exists belongs to $[1/2 - \epsilon, 1/2 + \epsilon]$. Take $x\in C \cap B(p,t)$ at distance $r$ to $p$ on the left hand side of $\Lf$, 
\begin{equation*}
\cos(\theta) + \dfrac{2 \sin \theta}{1 + 2\epsilon}\leqslant \dfrac{ d(\chi(x),p)}{r} \leqslant \cos(\theta) + \dfrac{2 \sin \theta}{1 - 2\epsilon}.
\end{equation*}
Taking the limit as $\epsilon \rightarrow 0$, we obtain that the holonomy map is differentiable at $p$. 
We thus obtain:
\begin{align*}
\dfrac{\log\omega_C(B(p,\epsilon))}{\log \epsilon} & = \dfrac{\log \omega_L(\chi(C\cap B(p,\epsilon)))}{\log \epsilon} \\
&=  \dfrac{\log \omega_L(B(p, \chi'(p) u(\epsilon) \epsilon))}{\log \epsilon} \\
& = \dfrac{\log \omega_L([4 - \chi'(p)u(\epsilon) \epsilon, 4])}{\log (\chi'(p) u(\epsilon)\epsilon)} + O\left ( \dfrac{1}{\log (\epsilon)} \right )
\end{align*}
with $u(\epsilon) = 1 + o(\epsilon)$ as $\epsilon \rightarrow 0$. 
Taking the limit as $\epsilon \rightarrow 0$ in the previous equality, the left hand side converges to $2/3$ by Theorem \ref{thm_spectral_density_unstable}, and this shows that the limit in the right hand side converges to the same value.
\end{proof}

\subsection{Support of the density of states measures}

\begin{thm} For any $h \in \R$, the density of states measures $\omega_h$ are supported on a subset of positive length for the Lebesgue measure. 
\end{thm}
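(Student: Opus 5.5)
The plan is to identify the support of $\omega_h$ with the slice $|\mathcal{L}|\cap L_h$, and then quote Theorem \ref{thm_L_positive}, which already says that this slice has positive length for every $h\in\R$.

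First, by Proposition \ref{prop_sibony_slice_spectral} and Theorem \ref{thm_slice_SR}, we have
$$\omega_h = S\wedge[L_h] = S_{\R}\wedge_{\R}[L_h] = \frac{1}{12}\sum_{k\geq0} 2^{-k}\,\Fs^k[\Lc]\wedge_{\R}[L_h].$$
The $k$-th term is a sum of Dirac masses at the points of $F^{-k}(\Lc)\cap L_h$. Each vertical curve meets $L_h$ exactly once and transversally, so all these masses carry positive weight. Hence $\mathrm{supp}\,\omega_h$ is the closure of $\bigcup_k F^{-k}(\Lc)\cap L_h$. By Definition \ref{def_cL} this closure equals $\mathcal{L}\cap L_h$, provided the closure operation commutes with slicing. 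This is where the cone field enters. The leaves are graphs over the $y$-axis, uniformly transverse to $L_h$ (Theorem \ref{thm_invariant_cone_field}). So a point of $L_h$ lying on a limit leaf is a limit of intersection points of approximating pullbacks of $\Lc$ with $L_h$, and conversely. This gives $\mathrm{supp}\,\omega_h = |\mathcal{L}|\cap L_h$.

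Second, we apply Theorem \ref{thm_L_positive}, which states that $|\mathcal{L}|\cap L_h$ has positive length for all $h\in\R$. For $h=0$ the slice contains $\bIz$, so its length is at least $4$. For small $|h|$ the positive length comes from Theorem \ref{thm_positive_measure_subset}. For large $|h|$ we pull back along the intervals of Proposition \ref{prop_good_intervals}.

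The main obstacle is the identification of the support with the full slice. The positive-length set produced in Theorem \ref{thm_L_positive} consists of leaves of $\mathcal{L}^*$, that is, stable manifolds through regular points of $\bIz$. The concern is that $S_{\R}$, like $T_{\R}$ in Proposition \ref{prop_real_current}, might give no weight to such leaves. The answer is that support is a closure notion and does not require these leaves to carry mass. The atoms of $\omega_h$ at $F^{-k}(\Lc)\cap L_h$ accumulate on every point of $\mathcal{L}\cap L_h$, by the Cantor structure (perfectness) established in \S~\ref{section_chebyshev}. Therefore the positive-length set of $\mathcal{L}^*$-leaves lies inside $\mathrm{supp}\,\omega_h$. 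For $h=0$ one can alternatively argue that $S\wedge[L_0]$ is the limit of root-counting measures of the Chebyshev-type polynomials on $\bIz$, whose support is all of $[-2,2]$.
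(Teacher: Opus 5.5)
Your proof is correct and follows the paper's route: identify $\mathrm{supp}\,\omega_h$ with $\mathcal{L}\cap L_h$ through the Sibony current, then invoke Theorem \ref{thm_L_positive}. The only difference is that you justify this identification more carefully, via $S_{\R}$, positivity of the atoms and cone-field transversality, whereas the paper simply asserts $\mathrm{supp}\,S\cap L_h=\mathcal{L}\cap L_h$ from Corollary \ref{cor_sibony_series}; your appeal to perfectness in the last paragraph is redundant, since density of the atoms in $\mathcal{L}\cap L_h$ already follows from your first paragraph together with the definition of $\mathcal{L}$ as a closure.
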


\begin{proof}   By Proposition \ref{prop_sibony_slice_spectral}, the measure $\omega_h$ is equal to $S\wedge [L_h]$ where $S$ is the Sibony current. 
The support of $\omega_h$ is thus the support of $S$ intersected with the line $L_h$. By Corollary \ref{cor_sibony_series}, the support of $S$ is exactly  $\mathcal{L}$. In particular, the support of $\omega_h$ is $\mathcal{L} \cap L_h$, which has positive length by  Theorem \ref{thm_L_positive}. 
\end{proof}

\subsection{Behavior of the density of states measures}

A direct consequence of the main result of Theorem \ref{thm_continuous_slice} yields an error term for the density of states measures. 
Recall from \S~\ref{section_coding} that we have defined an alphabet $A$ and that we have defined a function $\gamma$ from $A^{\mathbb{N}} $ with values in $1$-current supported on vertical curves in $\mathcal{L}$. As described in \S~\ref{section_transfer}, there is a natural measure $\mu$ invariant by the transfer operator induced by the Markov matrix introduced in \S~\ref{section_combinatorics}.  

\begin{thm} For any $h \in \R$, one has for any continuous function $\psi$ on the real line: 
\begin{multline*}
\langle \omega_h , \psi \rangle  = \langle S \wedge [L_h] , \psi \rangle = \langle S_{\R} \wedge_{\R} [L_h], \psi \rangle  = \\ \dfrac{1}{12} \sum_{k \leqslant n } \dfrac{1}{2^k} \langle \Fs^k[\Lc] \wedge_{\R} [L_h], \psi \rangle  + \dfrac{ v_{III_0}}{12(1-(\delta/2))} \dfrac{\delta^n}{2^n} \int_{A^{\mathbb{N}}} \langle \gamma(w) \wedge_{\R} [L_h],\psi \rangle d\mu(w)  + o\left ( \dfrac{\delta^n}{2^n}\right ),
\end{multline*}
as $n\rightarrow +\infty$. 
\end{thm}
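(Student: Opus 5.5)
The statement chains three equalities together with an asymptotic expansion, and I would prove each link from results already in the paper.

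The first equality, $\langle \omega_h,\psi\rangle=\langle S\wedge[L_h],\psi\rangle$, is exactly Proposition \ref{prop_sibony_slice_spectral}. Here $\omega_h$ is the weak limit of the normalized root-counting measures of $\tilde P_n(\cdot,h)$. By Theorem \ref{thm_sibony_spectral} these counting measures are the slices of $2^{-n}[\tilde P_n=0]$ by $L_h$, and these currents converge to $S$. The slices converge as well, because $S$ is purely atomic with a summable series of atoms (Corollary \ref{cor_sibony_series}), so one can pass to the limit in the truncated series term by term.

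The second equality, $S\wedge[L_h]=S_{\R}\wedge_{\R}[L_h]$, I would take from the first part of Theorem \ref{thm_slice_SR}. The key input there is the intersection count in Theorem \ref{thm_magic}: every complex intersection point of $(F_X^k)^*\pi^o\Lc$ with $\pi^o L_h$ is real. Each real vertical leaf meets $L_h$ exactly once, and its cohomology class pairs to $1$ with $\{\pi^oL_h\}$. Applying this term by term in the series $\frac1{12}\sum_k 2^{-k}\pi_*(F_X^k)^*[\pi^o\Lc]$ identifies the complex slice with the real slice of $S_{\R}=\frac1{12}\sum_k2^{-k}\Fs^k[\Lc]$. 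The orientation in $\Fs$ is set up so that each real intersection is counted with sign $+1$, matching the positive complex intersection. Summability is not an issue, since the masses grow only like $\delta^k$ with $\delta<2$.

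For the expansion, I would split $S_{\R}\wedge_{\R}[L_h]$ into the partial sum over $k\le n$ and the tail over $k>n$. By Theorem \ref{thm_equid_real} applied to $V=\Lc$, which is of type $III_0$,
\[
\delta^{-k}\Fs^k[\Lc]\;\longrightarrow\; v_{III_0}\,T_{\R}.
\]
I would slice this convergence by $L_h$, so that
\[
\delta^{-k}\Fs^k[\Lc]\wedge_{\R}[L_h]\;\longrightarrow\; v_{III_0}\,T_{\R}\wedge_{\R}[L_h].
\]
Summing the tail against the geometric weights $(\delta/2)^k$ then gives the main term $\frac{v_{III_0}}{12(1-\delta/2)}(\delta/2)^n\langle T_{\R}\wedge_{\R}[L_h],\psi\rangle$, with an error $o((\delta/2)^n)$. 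Finally, Proposition \ref{prop_slice_TR} rewrites $T_{\R}\wedge_{\R}[L_h]$ as $\int\gamma(w)\wedge_{\R}[L_h]\,d\mu(w)$.

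The main obstacle is this slicing step. Weak convergence of $1$-currents does not automatically give convergence of their slices against a fixed line; this is essentially the content of Theorem \ref{thm_continuous_slice} and Corollary \ref{cor_asymp_SR}, whose error is stated only for test forms. I would handle it through the coding. One writes $\delta^{-k}\Fs^k[\Lc]=\int\gamma_k\,d\mu$ up to the normalizing constant, as in the proof of Theorem \ref{thm_equid_real}. Each $\gamma_k(w)\wedge[L_h]$ is a single Dirac mass at the point where the corresponding leaf crosses $L_h$, by the transversality lemma. By Proposition \ref{prop_flat_norm_est}, these crossing points converge, uniformly on cylinders, to $\gamma(w)\cap L_h$. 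The decay-of-correlation argument of Theorem \ref{thm_equid_real} can then be repeated with the function $w\mapsto\psi(\gamma_k(w)\cap L_h)$ in place of $\langle\gamma_k,\alpha\rangle$. Since $\psi$ is continuous and the leaves are vertical for the cone field, this function is still $\mathcal F_\theta$-Hölder, so the argument goes through. The final identity then holds for every $h$, not only almost every $h$, by the holonomy invariance used in Theorem \ref{thm_continuous_slice}.
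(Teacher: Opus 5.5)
Your three links are the same as the paper's. The first equality is Proposition~\ref{prop_sibony_slice_spectral}. The second is the real/complex intersection count of Theorem~\ref{thm_slice_SR}, which rests on Theorem~\ref{thm_magic}. The expansion comes from Theorem~\ref{thm_equid_real} (through Corollary~\ref{cor_asymp_SR}) together with Proposition~\ref{prop_slice_TR}.

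You argue differently in one place: the passage from test forms to slices. The paper just cites Theorem~\ref{thm_continuous_slice}, which identifies the slice with that of the Green current $T_F$ for almost every $h$ and then uses holonomy. You instead rerun the coding and decay-of-correlations argument of Theorem~\ref{thm_equid_real} on the sliced observable. Your version is pointwise in $h$ from the outset and never uses $T_F$, so it treats more carefully a step the paper passes over quickly. Your final appeal to holonomy is then unnecessary.

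There is, however, an error in the tail computation, and it changes the statement. With the partial sum over $k\le n$, the tail is $\sum_{k>n}$, and
\[
\frac{1}{12}\sum_{k>n}\frac{\delta^k}{2^k}\left(v_{III_0}\langle T_{\R}\wedge_{\R}[L_h],\psi\rangle+o(1)\right)=\frac{v_{III_0}}{12(1-\delta/2)}\,\frac{\delta^{n+1}}{2^{n+1}}\,\langle T_{\R}\wedge_{\R}[L_h],\psi\rangle+o\left(\frac{\delta^n}{2^n}\right).
\]
This is not the coefficient $\frac{v_{III_0}}{12(1-\delta/2)}\frac{\delta^n}{2^n}$ that you and the statement write. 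The difference is $\frac{v_{III_0}}{12}\frac{\delta^n}{2^n}\langle T_{\R}\wedge_{\R}[L_h],\psi\rangle$, which is the $k=n$ term counted twice. It is not $o(\delta^n/2^n)$, because $T_{\R}\wedge_{\R}[L_h]=\int\gamma(w)\wedge_{\R}[L_h]\,d\mu(w)$ is a nonzero positive measure.

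So the identity cannot be proved as displayed. Either the partial sum must stop at $k=n-1$, or the coefficient must be $\frac{v_{III_0}\,\delta}{12(2-\delta)}$. The same slip appears in Corollary~\ref{cor_asymp_SR}: its proof sums the tail over $k\geqslant n$, while its statement already includes $k=n$ in the partial sum. You should prove the corrected version rather than reproduce the coefficient.

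A smaller point: for merely continuous $\psi$, the function $w\mapsto\psi(\gamma_k(w)\cap L_h)$ is not in $\mathcal{F}_{\theta}$. Composing with a function that is only continuous destroys H\"older regularity. Run your argument for Lipschitz $\psi$ and then approximate uniformly. The leaves of $\Fs^k[\Lc]$ are vertical for the cone field, so each crosses $L_h$ exactly once, and there are $O(\delta^k)$ of them by Proposition~\ref{prop_bound_mass}. Hence the measures $\delta^{-k}\Fs^k[\Lc]\wedge_{\R}[L_h]$ and $T_{\R}\wedge_{\R}[L_h]$ have uniformly bounded mass. Replacing $\psi$ by $\psi_\epsilon$ with $\|\psi-\psi_\epsilon\|_\infty<\epsilon$ therefore changes the remainder by only $O(\epsilon\,\delta^n/2^n)$, which is enough.
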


\begin{proof}
The first equality follows from Proposition \ref{prop_sibony_slice_spectral} and the others from Theorem \ref{thm_slice_SR}. 
\end{proof}


\section{Appendix A: The iterated monodromy group of the Basilica}
\label{section_appendix_A}

In this section, we recall the construction of the iterated monodromy group of the Basilica group and the relationship between the density of states and the dynamical map $F$ is explained in \S 

Let us consider the basilica map  $f \colon z \mapsto z^2 -1$. The polynomial map $f$ is a
postcritically  since the orbit of the critical point $0$ 
is finite, $ 0\mapsto -1 \mapsto 0$. Below  we will recap the
construction of the iterated monodromy group associated to $f$,
from the lamination viewpoint, 
and of the corresponding sequence of Schreier  graphs converging to the Julia set of $f$.

\subsection
{Basilica group as the  monodromy group on the Riemann surface lamination}
The {\em natural extension}  (or the {\em inverse limit})  of the basilica map  
is an invertible map $\hat f: \NN \ra \NN$ naturally acting on the space of
backward orbits
$$ \hat z = (\dots z_{-2} \mapsto z_{-1} \mapsto z_0) $$ of $f$. 
The natural projetion $\pi: \NN \ra \hat C$, $\hat z\mapsto z_0$,  semi-conjugates $\hat f$
to $f$. Moreover, $\pi$ is a Cantor set fibration over $\C\sm \{0, -1\}$. 
We let $F_z$ be its fiber over $z\not=0,1$. 

The space $\NN$ has the structure of a Riemann surface lamination
whose leaves  $\LL= \LL(\hat z)$  are isomorphic to the complex plane
$\C$ (see \cite {lyubich_minsky}). 
The projection $\pi$ restricted to any $\LL \sm \pi^{-1}  \{0, -1\}$
is a covering map.

Given a path  $\si\subset \C\sm \{0,1\}$  connecting any two points
$a,b\not=0,1$,  there is a {\em holonomy map} $h_\gamma: F_a \ra F_b$
obtained by lifting $\si$ to the leaves of the lamination $\NN$.
This map depends only on the homotopy class of $\si\subset \C\sm
\{0,-1\}$ rel the endpoints. In particular, we obtain an action of the
fundamental group $\pi_1(\C\sm \{0, -1\}, a)$ on $F_a$. The image of
this action is called the {\em iterated monodromy group} ({\em IMG})
of the Basilica, or just the  {\em Basilica group} $G$. 

\subsection{Action on the coding tree}
Take a base point $\alpha\in \C \sm \Jul$ and connect it to its
preimages, $\alpha^1_0$ and $\alpha^1_1$, by paths $l_0$ and $l_1$. 
Call this tree $\TT^0$ 
Then lift the above  path by $f$ to paths $\l_{ij}^1$ labeled so that $l^1_{ij}$ 
is the lift of $l_j$ that begins at $\alpha^1_i$.  We label the endpoints of
the $l^1_{ij}$  as $\alpha^2_{ij}$. Call the corresponding tree
$\Gamma^1$/

Continue this lifting process: Lift the paths $l^1_{jk}$ to paths
$l^2_{ijk}$ beginning at $\alpha^2_{ij}$,
and label their endpoints as $\alpha^3_{ijk}$. Call the corresponding
graph $\TT^2$. And so on.  In this way we obtain a hierarchical 
sequence of binary trees $\TT^n$, naturally related by inclusions
and the $f$-action.

The above labeling of the vertices of the tree matches with the
natural labeling of the dyadic B\"ottcher  boxes, implying that the
trees converge to the Basilica Julia set $\Jul$.
The Basilica group $G$ naturally acts on these trees
by the monodromy respecting both relations.

We let $\TT_i^{n-1}$ be the subtree of $\TT^n$ rooted at $\alpha_i^1$.
(More generally, $\alpha_i^1$ can be replaced with an arbitrary vertex
$\alpha^m_{i_0\dots i_{m}-1}$.)
The corresponding infinite trees will be denoted $\TT$ and $\TT_i$,
respectively. 

In what follows we always select $\alpha$ as the dividing fixed point
for $f$,  $l_0$ as a simple positively oriented loop in
$\C\sm \{-0, -1\}$ based at $\alpha$,
and $l_1$ as a simple arc connecting $\alpha$ to $-\alpha$
and going above the real axis.

\subsection{Self-similarity of the Basilica group}

The group  $G$ is   self-similar. Indeed, let $b, b'$ be the
preimages of $a$, and let $\si. \si'$ be two paths connecting $b,b'$
to $a$. Then the  fibers  $F_b$ and $F_{b'}$ are related to $F_a$ in
two ways:

\ssk \nin (i) $F_a$ is partitioned into two cylinders,
$C= \hat f(F_b$ and $C'= \hat f (F_{b'}) $;

\ssk \nin (ii)  $F_b$ and $F_{b'}$ can be identified with $F_a$
via the holonomy along the paths $\si$ and $\si'$, respectively. 

\ssk By (i), the IMG action on  $F_a$  induces the action on the above cylinders $C$ and $C'$:
each monodromy  map of $G$ either keeps both cylinders invariant or permutes them.
By (ii), each of these maps can be identified with a self map of
$F_a$. This makes the monodromy action {\em self-similar}. 

In terms of the coding trees, this self-similarity  is described as follows:
Take a loop $\gamma$ representing an element of the fundamental group
$\pi_1(\C\sm \{0,-1\}$, lift it to a path $\gamma'$  in $\C\sm \{0,-1\}$
connecting each $\alpha_i$, $i=0,1,$ to some $\alpha_j$ with $j=j(i)$ ,
and consider the concatenation $l_i\circ \gamma' \circ l_j^{-1}$.
This path represents the element of the fundamental group
acting between the subtrees  $\TT^{n-1}_i$ and $\TT^{n-1}_j$   
(where the latter are identified with $\TT^{n-1}$ by means of the holonomy
along $l_i$ and $l_j$, respectively). 
This action is also denoted $\gamma|\, i$. 

Explicitly,  the corresponding recursive  wreath relations are
described as follows.

Let  $a$ be the positively oriented loop in $\C\sm \{0,-1\}$ going around the point $0$,
and $b$ be such a  loop around  $-1$. The lifts of $a$ are  two loops $a_1, a_2$
based at $-1$ and $1$ respectively. The first one represents the same
$\pi_1$-element as  $b$, while the second (after bringing it back to
$\alpha$)  is trivial. Fix $\alpha > 0$ small enough. 
The loop $b$ lifts to an  arc $b_1$ connecting
	$\alpha$ to $-\alpha$ inside the upper half plane $\{\Im z \geq 0\}$ and to an arc $b_2$ connecting $-\alpha$ to
$\alpha$ in the lower half plane $\{\Im z \geq 0\}$.
 

The concatenation $l_0\circ b_2 \circ l_1^{-1}$ represents the sam
$\pi_1$-element as $a$, while the concatenation $l_1\circ b_1\circ l_0^{-1}$
is trivial (see the picture below).

\begin{figure}[h!]
\def\svgwidth{10cm}
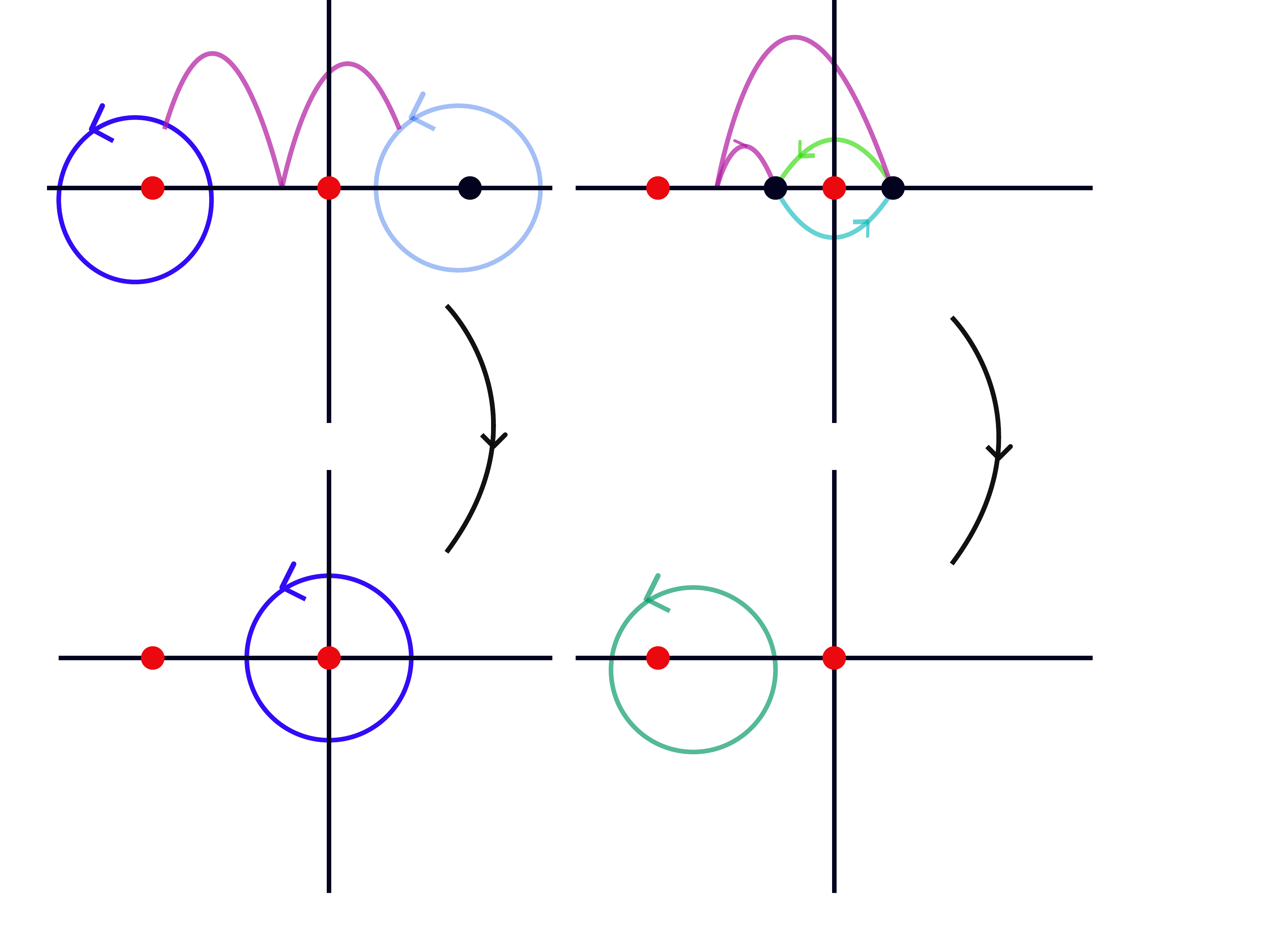
\caption{Figure of the pullbacks by $f$ of the loop $a,b$ around the $0, -1$ (in red)}
\end{figure}

 These observations lead to the following wreath relations
in the Basilica group:

\begin{equation}
a = \left ( \begin{array}{ll}
b & 0 \\
0 & 1 
\end{array} \right ), \quad b = \left ( \begin{array}{ll}
0 & a \\
1 & 0 
\end{array} \right ). 
\end{equation}

%

\subsection{Schreier graphs and some spectral invariants} \label{section_schreier} 

The {\em Schreier graph} on some $G$-orbit  can  identified with the  Julia
set on the corresponding leaf of the lamination (see \cite{cabrera}).
It is approximated by the finite  graphs $\Gamma_n$ with vertices $f^{-n} (a)$
corresponding to the monodromy action on the $n$th  level of the
Cantor hierarchy. The approximating graph of level $n$ is just the
lift of the initial graph $\Gamma_0$ by $f^n$.


\begin{center}
\begin{figure}[h!]
\begin{picture}(400,210)
\letvertex A=(0,110)\letvertex B=(40,110)\letvertex C=(80,110)\letvertex 
D=(100,130)
\letvertex E=(100,90)\letvertex F=(120,110)\letvertex 
G=(140,130)\letvertex H=(140,160)
\letvertex I=(160,110)\letvertex L=(140,90)\letvertex 
M=(140,60)\letvertex N=(170,140)
\letvertex O=(200,150)\letvertex R=(230,140)\letvertex 
S=(240,110)\letvertex T=(230,80)
\letvertex U=(200,70)\letvertex V=(170,80)\letvertex 
P=(200,180)\letvertex Q=(200,210)
\letvertex Z=(200,40)\letvertex J=(200,10)\letvertex 
K=(260,130)\letvertex X=(280,110)
\letvertex W=(260,90)\letvertex g=(260,160)\letvertex 
h=(260,60)\letvertex c=(300,130)
\letvertex Y=(300,90)\letvertex d=(320,110)\letvertex 
e=(360,110)\letvertex f=(400,110)

\tiny{ \put(-7,102){11110} \put(33,102){01110}\put(69,102){00110}
\put(80,128){10110}\put(80,87){10010}\put(101,108){00010}
\put(121,128){01010}\put(121,158){11010} \put(141,108){00000}
\put(121,88){01000}\put(121,58){11000}
\put(152,137){10000}\put(192,143){00100}\put(232,137){10100}
\put(243,108){00001}\put(232,78){10001}\put(192,74){00101}
\put(152,77){10101}\put(182,177){01100} \put(182,207){11100}
\put(182,37){01101}\put(182,7){11101}
\put(263,128){01001}\put(283,108){00011}\put(263,88){01011}
\put(263,157){11001}\put(263,58){11011}
\put(303,128){10011}\put(303,88){$10111=10011^\ast$}\put(316,102){00111}
\put(352,102){01111}\put(390,102){11111}}

\drawvertex(A){$\bullet$}\drawvertex(B){$\bullet$}
\drawvertex(C){$\bullet$}\drawvertex(D){$\bullet$}
\drawvertex(E){$\bullet$}\drawvertex(F){$\bullet$}
\drawvertex(G){$\bullet$}\drawvertex(H){$\bullet$}
\drawvertex(I){$\bullet$}\drawvertex(L){$\bullet$}
\drawvertex(M){$\bullet$}\drawvertex(N){$\bullet$}
\drawvertex(O){$\bullet$}\drawvertex(P){$\bullet$}
\drawvertex(J){$\bullet$}\drawvertex(K){$\bullet$}
\drawvertex(Q){$\bullet$}\drawvertex(R){$\bullet$}
\drawvertex(S){$\bullet$}\drawvertex(T){$\bullet$}
\drawvertex(U){$\bullet$}\drawvertex(V){$\bullet$}
\drawvertex(W){$\bullet$}\drawvertex(X){$\bullet$}
\drawvertex(Y){$\bullet$}\drawvertex(Z){$\bullet$}
\drawvertex(g){$\bullet$}\drawvertex(h){$\bullet$}
\drawvertex(c){$\bullet$}\drawvertex(f){$\bullet$}
\drawvertex(d){$\bullet$}\drawvertex(e){$\bullet$}

\drawundirectedcurvededge(A,B){$b$}\drawundirectedcurvededge(B,A){$b$}
\drawundirectedcurvededge(B,C){$a$}\drawundirectedcurvededge(C,B){$a$}
\drawundirectededge(C,D){$b$} \drawundirectededge(D,F){$b$}
\drawundirectededge(F,E){$b$} \drawundirectededge(E,C){$b$}

\drawundirectededge(F,G){$a$} \drawundirectededge(G,I){$a$}
\drawundirectededge(I,L){$a$} \drawundirectededge(L,F){$a$}

\drawundirectedcurvededge(G,H){$b$}\drawundirectedcurvededge(H,G){$b$}
\drawundirectedcurvededge(L,M){$b$}\drawundirectedcurvededge(M,L){$b$}

\drawundirectededge(I,N){$b$} \drawundirectededge(N,O){$b$}
\drawundirectededge(O,R){$b$} \drawundirectededge(R,S){$b$}
\drawundirectededge(S,T){$b$} \drawundirectededge(T,U){$b$}
\drawundirectededge(U,V){$b$} \drawundirectededge(V,I){$b$}

\drawundirectedcurvededge(O,P){$a$}\drawundirectedcurvededge(P,O){$a$}
\drawundirectedcurvededge(Q,P){$b$}\drawundirectedcurvededge(P,Q){$b$}

\drawundirectedcurvededge(U,Z){$a$}\drawundirectedcurvededge(Z,U){$a$}
\drawundirectedcurvededge(Z,J){$b$}\drawundirectedcurvededge(J,Z){$b$}

\drawundirectededge(S,K){$a$} \drawundirectededge(K,X){$a$}
\drawundirectededge(X,W){$a$} \drawundirectededge(W,S){$a$}
\drawundirectededge(X,c){$b$} \drawundirectededge(c,d){$b$}
\drawundirectededge(d,Y){$b$} \drawundirectededge(Y,X){$b$}

\drawundirectedcurvededge(d,e){$a$}\drawundirectedcurvededge(e,d){$a$}
\drawundirectedcurvededge(e,f){$b$}\drawundirectedcurvededge(f,e){$b$}
\drawundirectedcurvededge(K,g){$b$}\drawundirectedcurvededge(g,K){$b$}
\drawundirectedcurvededge(W,h){$b$}\drawundirectedcurvededge(h,W){$b$}

\drawundirectedloop[l](A){$a$}\drawundirectedloop(D){$a$}\drawundirectedloop[b](E){$a$}
\drawundirectedloop(H){$a$}\drawundirectedloop[b](M){$a$}\drawundirectedloop(Q){$a$}\drawundirectedloop[b](J){$a$}
\drawundirectedloop(g){$a$}\drawundirectedloop[b](h){$a$}\drawundirectedloop(c){$a$}\drawundirectedloop[b](Y){$a$}
\drawundirectedloop[r](f){$a$}\drawundirectedloop[b](V){$a$}\drawundirectedloop(N){$a$}\drawundirectedloop[b](T){$a$}
\drawundirectedloop(R){$a$}
\end{picture}
\caption{Figure of the graph $\Gamma_5$ due to Alfredo Donno}
\end{figure}
\end{center}

Write by $V_n$ the vertices of the graphs $\Gamma_n$ for each $n$. 
Since the edges of the Scheier graphs $\Gamma_n$ are labeled $a,b$, one can denote by $a_n, b_n$ the operator acting on $L^2(V_n)$ induced by the transition along each edge labeled $a,b$ respectively. Observe that each of the operators $a_n, b_n$ are square matrices of size $2^n$.  

\begin{defi} \label{def_density}The density of states measure $\omega$ of the Basilica group is the limit, when it exists of the counting measures $\omega_n$:
\begin{equation*}
\omega = \lim_{n \rightarrow +\infty }\omega_n = \lim_{n\rightarrow +\infty} \sum_{\lambda \in sp(a_n+ a_n^{-1} + b_n+ b_n^{-1})} \delta_{\lambda}.  
\end{equation*}
\end{defi}

  The existence of the limit is established in \cite{grigorchuk_zuk_99}. 

While the support of $\omega$ make sense of the accumulation points of the eigenvalues of the operators $a_n + a_n^{-1} + b_n + b_n^{-1}$, we define a natural higher dimensional counterpart, which is the spectral current, introduced for other groups in \cite{dang_grigorchuk_lyubich}.
For each $n \geqslant 1$, consider  the algebraic curve $$S_n = \{  [\mu_1:\mu_2:\lambda] \in \mathbb{P}^2(\C) | \det(\mu_1(a_n + a_n^{-1}) + \mu_2(b_n + b_n^{-1}) - \lambda) = 0\}. $$ 
One writes by $[S_n]$ the $(1,1)$-current of integration along $S_n$. 

\begin{defi} The spectral current $S$ is the limit in the sense of currents on $\mathbb{P}^2(\C)$, when it exists of the sequence of currents:
\begin{equation*}
S = \lim_{n\rightarrow +\infty} [S_n] 
\end{equation*} 
\end{defi}

\subsection{The Schur-Renormalization map $F$ and the density of states}

The renormalization map $F : \C^2 \dashrightarrow \C^2$ associated to the Basilica group is  given by:
\begin{equation*}
F \colon (\lambda,\mu) \mapsto \left (-2 - \dfrac{\lambda(2 - \lambda)}{\mu^2} , \dfrac{\lambda -2 }{\mu^2} \right ).
\end{equation*}
$F$ in homogeneized coordinates is of the form: 
\begin{equation}
\tilde{F} (\lambda, \mu , w) = \left ( - 2 \mu^2 - 2\lambda w + \lambda^2 , (\lambda - 2 w)w , \mu^2  \right ). 
\end{equation}

Let us denote by $a_n, b_n$ the action of the generators $a,b$ on $L^2(\Gamma_n)$. 
Consider for all $\mu \in \C$, the pencil $$\Delta_{\mu,n} = a_n + a_n^{-1} + \mu (b_n + b_n^{-1}).$$ 
We also define the characteristic polynomials: 
\begin{equation*}
P_{n}(\lambda, \mu) = \det(\Delta_{\mu,n} - \lambda Id). 
\end{equation*}
The polynomials $P_n$ are not homogeneous in $\mu$ and $\lambda$, we can also set $$\tilde{P}_n (\lambda, \mu, w) = w^{2^n} P_{n}(\lambda/w , \mu/w).$$
The recursion formula for the action of $\Delta_{\mu,n}$ between the spaces $L^2(\Gamma_n)$ was carried out in \cite[Lemma 4.2]{grigorchuk_zuk_spectral}, we reproduce the same argument here: 
\begin{lem} \label{lem_recursion} One has for all $n \geqslant 0$: 
\begin{equation} \label{eq_recursion}
P_{n+1}(\lambda, \mu) = \mu^{2^{n+1}} P_n (F(\lambda, \mu)),
\end{equation}
or equivalently: 
\begin{equation*}
\tilde{P}_{n+1}(\lambda, \mu, w)= \tilde{P}_n (\tilde{F}(\lambda, \mu , w)) 
\end{equation*}
where $P_0 = 2 + 2 \mu  - \lambda$ and its zero locus is the line $\Lf$. 
\end{lem}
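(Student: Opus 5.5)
We prove Lemma \ref{lem_recursion} by the Schur complement method of Grigorchuk and \v Zuk, exploiting the self-similar structure of $\Gamma_{n+1}$. We split $L^2(V_{n+1}) = L^2(V_n)\oplus L^2(V_n)$ according to the first letter of the vertex label, i.e. according to the two subtrees $\TT^{n}_0,\TT^n_1$. By the wreath relations
\[
a=\left(\begin{array}{ll} b&0\\ 0&1\end{array}\right),\qquad b=\left(\begin{array}{ll} 0&a\\ 1&0\end{array}\right),
\]
the operators on level $n+1$ become block matrices in the level-$n$ operators:
\[
a_{n+1}+a_{n+1}^{-1}=\left(\begin{array}{cc} b_n+b_n^{-1}&0\\ 0&2\end{array}\right),\qquad
b_{n+1}+b_{n+1}^{-1}=\left(\begin{array}{cc} 0&a_n+1\\ a_n^{-1}+1&0\end{array}\right).
\]

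We then write $\Delta_{\mu,n+1}-\lambda$ in block form. The lower-right block is the scalar $(2-\lambda)\,\mathrm{Id}$, which is invertible for $\lambda\neq 2$. Taking the Schur complement with respect to this block gives
\[
P_{n+1}(\lambda,\mu)=(2-\lambda)^{2^n}\det\!\Big(b_n+b_n^{-1}-\lambda-\tfrac{\mu^2}{2-\lambda}(a_n+1)(a_n^{-1}+1)\Big).
\]
Expanding $(a_n+1)(a_n^{-1}+1)=a_n+a_n^{-1}+2$ turns the argument into
\[
-\tfrac{\mu^2}{2-\lambda}\Big(a_n+a_n^{-1}+\mu'(b_n+b_n^{-1})-\lambda'\Big),
\]
where $\mu' = (\lambda-2)/\mu^2$ and $\lambda'$ is obtained by solving the resulting linear relation. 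This identifies $\lambda'$ with the first component of $F(\lambda,\mu)$, up to the sign and variable conventions of the appendix. Pulling the scalar $\mu^2/(\lambda-2)$ out of a $2^n\times 2^n$ determinant contributes $(\mu^2/(\lambda-2))^{2^n}$. This factor cancels $(2-\lambda)^{2^n}$ up to sign and leaves the power of $\mu$ appearing in \eqref{eq_recursion}. The identity then holds for $\lambda\neq2$, and extends everywhere since both sides are polynomials.

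The base case is a direct computation. $\Gamma_0$ is a single vertex carrying loops labeled $a$ and $b$, so $P_0=2+2\mu-\lambda$, whose zero locus is $\Lf$. The homogeneous version $\tilde P_{n+1}=\tilde P_n\circ\tilde F$ follows by substituting $\lambda/w,\mu/w$ and checking the degree count $2^{n+1}=2\cdot 2^n$.

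The main obstacle is bookkeeping rather than conceptual. We must fix the correct block ordering and sign of $b$ so that the Schur complement reproduces exactly the formula for $F$ with its sign convention $-2-\lambda(2-\lambda)/\mu^2$. We must also track the exact exponent of $\mu$, namely $2^{n+1}$ from the $\mu^2$ factor over $2^n$ coordinates, against the sign factor $(-1)^{2^n}$. We would check both by verifying the case $n=0$ explicitly.
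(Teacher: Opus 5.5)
Your proposal follows the paper's proof. You decompose $L^2(V_{n+1})$ via the wreath recursion, take the Schur complement with respect to the scalar block $(2-\lambda)\,\mathrm{Id}$, and recognise $\frac{\mu^2}{\lambda-2}\bigl(\Delta_{\mu',n}-\lambda'\,\mathrm{Id}\bigr)$. Your block formulas and the expansion $(a_n+1)(a_n^{-1}+1)=a_n+a_n^{-1}+2$ are correct. No convention adjustment is needed for $\lambda'$: solving gives $\lambda'=-2+\lambda(\lambda-2)/\mu^2$, which is exactly the first component of $F$.

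The loose end you flag is real, however, and choosing the block ordering or the form of $b$ cannot close it. The prefactor is $(2-\lambda)^{2^n}\bigl(\mu^2/(\lambda-2)\bigr)^{2^n}=(-1)^{2^n}\mu^{2^{n+1}}$. For $n\geq 1$ this equals $\mu^{2^{n+1}}$, so your argument proves the lemma exactly there. For $n=0$ it equals $-\mu^{2}$. Your proposed $n=0$ check confirms this independently of any convention. Since $a_1=\mathrm{Id}$ and $b_1$ swaps the two vertices, $P_1=(2-\lambda)^2-4\mu^2$, whereas $\mu^2P_0(F(\lambda,\mu))=4\mu^2-(\lambda-2)^2$. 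Thus $P_1=-\mu^2\,P_0\circ F$. With $P_0=2+2\mu-\lambda$ the stated identity is therefore false at $n=0$, and so is $\tilde P_1=\tilde P_0\circ\tilde F$.

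The clean repair is to normalise $P_n=\det(\lambda\,\mathrm{Id}-\Delta_{\mu,n})=(-1)^{2^n}\det(\Delta_{\mu,n}-\lambda\,\mathrm{Id})$, i.e.\ $P_0=\lambda-2-2\mu$, which is the convention $x-2y-2$ of Section 2. Then your computation gives $P_{n+1}=\mu^{2^{n+1}}P_n\circ F$ and $\tilde P_{n+1}=\tilde P_n\circ\tilde F$ for every $n\geq 0$. Alternatively, state the recursion for $n\geq 1$, or up to sign.

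The paper's proof silently drops the same factor when it writes $(2-\lambda)^{2^n}(\mu^2/(\lambda-2))^{2^n}=\mu^{2^{n+1}}$. Only the zero loci $\{P_n=0\}$ enter the later arguments, so the discrepancy is harmless downstream.
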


\begin{proof}
The recursion is of the form: 
\begin{equation}
M_{n+1} = \left ( \begin{array}{ll}
 b_n + b_n^{-1} - \lambda  & \mu + \mu a_n^{-1} \\
 \mu a_n + \mu & 1- \lambda
\end{array} \right ). 
\end{equation}
We calculate the Schur complement with respect to the first corner, it is given by: 
\begin{align*}
S(M_{n+1}) &= b_n + b_n^{-1} - \lambda - \dfrac{\mu^2}{2 - \lambda} \left ( 2 + a_n + a_n^{-1} \right ) \\
  & = \dfrac{\mu^2}{\lambda-2} \left ( a_n+ a_n^{-1} + \dfrac{\lambda-2}{\mu^2}(b_n+ b_n^{-1})  - \left ( \dfrac{\lambda (\lambda -2)}{\mu^2} -2 \right ) \right )\\
   & = \dfrac{\mu^2}{\lambda -2} M_n(F(\lambda,\mu)). 
\end{align*}
Using the relation $\det M_{n+1} = \det S(M_{n+1}) \det (2  - \lambda)I_{2^n}$, we get 
\begin{align*}
\det M_{n+1}(\lambda,\mu) &= (2 - \lambda)^{2^n} \left (\dfrac{\mu^2}{\lambda-2} \right )^{2^n} \det M_n (F(\lambda,\mu)) \\
 & = \mu^{2^{n+1}} \det M_n \circ F, 
\end{align*}
as required. One obtains the second relation using the homogeneity of the determinant. 
\end{proof}

\section{Appendix B: Local expansion of algebraic curves}

In this appendix, we detail the power series expansion of certain explicit curves. 

\begin{lem} \label{lem_expansion_explicit_1} The curve of equation $ x^2 -2 x -4y^2 =0$ admits a power series expansion at zero of the form: 
\begin{equation*}
x = -2 y^2 + \sum_{n\geqslant 2} (-1)^n 2^{n-1} y^{2n}.
\end{equation*}
In particular, its radius of convergence is $1/2$. 
\end{lem}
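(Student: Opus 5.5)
The plan is to solve the defining equation explicitly instead of working with implicit power series. Viewing $x^2-2x-4y^2=0$ as a quadratic in $x$ gives $x = 1 \pm \sqrt{1+4y^2}$. The branch through the origin is the one with the minus sign, so near $y=0$ the curve is the graph
\begin{equation*}
x(y) = 1-\sqrt{1+4y^2}.
\end{equation*}
Here $\sqrt{\cdot}$ denotes the principal branch, which is holomorphic on $\{|4y^2|<1\}$.

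Next I expand with the binomial series. Recall that $\sqrt{1+u}=\sum_{n\ge 0}\binom{1/2}{n}u^n$, and that
\begin{equation*}
\binom{1/2}{n} = \frac{(-1)^{n+1}}{2^{2n-1}\,n}\binom{2n-2}{n-1} \quad \text{for } n\ge 1.
\end{equation*}
Substituting $u=4y^2$ gives
\begin{equation*}
x(y) = \sum_{n\geqslant 1} (-1)^{n}\,\frac{2}{n}\binom{2n-2}{n-1}\, y^{2n} = -2y^2+2y^4-4y^6+10y^8-\cdots,
\end{equation*}
that is, $x(y)=\sum_{n\ge1}(-1)^n\, 2\,\mathrm{Cat}_{n-1}\, y^{2n}$, where $\mathrm{Cat}_k$ denotes the $k$-th Catalan number. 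The leading term $-2y^2$ and the first corrections $2y^4$ and $-4y^6$ agree with the stated formula $(-1)^n2^{n-1}y^{2n}$. Equivalently, the series $\sum_{n\ge1}(-1)^n2^{n-1}y^{2n}$ sums in closed form to $-2y^2/(1+2y^2)$, and that closed form does not satisfy the equation.

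The radius of convergence follows from the location of the singularities. The only singularities of $1-\sqrt{1+4y^2}$ are the branch points $y=\pm i/2$, so the radius is exactly $1/2$. Alternatively, the ratio test applies, since $\mathrm{Cat}_{n}/\mathrm{Cat}_{n-1}\to 4$.

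The only real obstacle is bookkeeping of the coefficients. The direct computation shows that the coefficient of $y^{8}$ is $10$, not $8$, so the closed form in the statement should be corrected to the Catalan form above. What is actually used in Lemma \ref{lem_first_pullback} survives this correction: the boundary curve has the shape $x=-2y^2+O(y^4)$ and is analytic for $|y|<1/2$.
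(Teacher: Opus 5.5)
Your computation is correct, and it shows that the statement as written is false. The curve is $x=1-\sqrt{1+4y^2}=\sum_{n\ge1}(-1)^n\,2\,\mathrm{Cat}_{n-1}\,y^{2n}=-2y^2+2y^4-4y^6+10y^8-28y^{10}+\cdots$, which departs from $(-1)^n2^{n-1}y^{2n}$ at the $y^8$ term.

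The paper uses undetermined coefficients. It writes $x=-2y^2+\sum_{k\ge3}a_ky^k$ and asserts $a_3=0$, $a_4=2$, and $a_k=-2a_{k-2}$ for $k\ge5$. That recursion keeps only the cross term $2a_2a_{k-2}$ of $x^2$. Comparing coefficients of $y^k$ in $x^2-2x=4y^2$ actually gives $a_k=\tfrac12\sum_{i+j=k,\ i,j\ge2}a_ia_j$ for $k\ge3$. The dropped terms, those with $i,j\ge3$, first contribute $\tfrac12a_4^2=2$ at $k=8$, which is exactly what turns $8$ into $10$.

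Your route, solving the quadratic explicitly and expanding with the binomial series, avoids this bookkeeping. It gives the coefficients in closed form and justifies the radius $1/2$ directly from the branch points $\pm i/2$. Note also that the paper's stated series has radius $1/\sqrt2$, since it has poles at $1+2y^2=0$. So its ``in particular'' clause does not even follow from its own formula; the radius $1/2$ holds only for the corrected series.

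There is one slip in your side remark. The stated series sums to $-2y^2+\frac{2y^4}{1+2y^2}=-\frac{2y^2(1+y^2)}{1+2y^2}$, not to $-2y^2/(1+2y^2)$. Also, $\sum_{n\ge1}(-1)^n2^{n-1}y^{2n}$ equals $-y^2/(1+2y^2)$, and it is not the stated series, whose first term is $-2y^2$. The conclusion is unaffected: substituting $x=-2y^2(1+y^2)/(1+2y^2)$ gives $x^2-2x-4y^2=4y^8/(1+2y^2)^2\neq0$, consistent with the error appearing at order $y^8$. You should still fix that sentence.

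Your observation that Lemma \ref{lem_first_pullback} only needs $x=-2y^2+O(y^4)$ and analyticity for $|y|<1/2$ is correct, so the downstream use survives the correction.
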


\begin{proof}
Observe first that we must have $x = -2 y^2 + O(y^3)$. 
Setting $x = -2 y^2 + \sum_{k\geqslant 3} a_k y^k$, the equation $x (-2 + x) = 4 y^2$ yields the conditions: 
\begin{equation*}
\left \lbrace \begin{array}{ll}
a_3 = 0 \\ 
a_4 = 2 \\
a_k = -2 a_{k-2} \text{ if } k\geqslant 5.
\end{array} \right .
\end{equation*}
This yields $a_{2n+1}=0$ for all $n$ and $a_{2n} = 2 (-2)^{n-2}$ for all $n\geqslant 2$ and the lemma is proved. 
\end{proof}

\begin{lem} \label{lem_expansion_explicit_2} The pullback by $F^2$ of the vertical line through $(0,0)$ is an algebraic curve through the point $x_2= \epsilon \sqrt{2}$ with $\epsilon = \pm 1$ which is locally of the form: 
\begin{equation}
x- x_2 =  - \dfrac{3(x_2 - 1)}{3 x_2 -4} y^2 + \sum_{n\geqslant 2} \dfrac{P_n(x_2)}{Q_n(x_2)} y^{2n},
\end{equation}
where $P_n, Q_n$ are polynomials in one variables. 
\end{lem}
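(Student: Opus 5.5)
The approach mirrors the proof of Lemma \ref{lem_expansion_explicit_1}: write down the curve explicitly, then solve for $x$ as a power series in $y$ near the point where the curve meets $\Lz$.

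\textbf{Step 1: equation of the curve.} Take $G=F^2$ from \eqref{eq_G}. The pullback of $\{x=0\}$ is the zero locus of $x\circ G$. After multiplying by $(x-2)^2$ this is
\begin{equation*}
\Phi(x,y):=(x^2-2)(x-2)^2-2y^2(3x^2-6x-4y^2)=0 .
\end{equation*}
Setting $y=0$ gives $(x^2-2)(x-2)^2=0$, so the curve meets $\Lz$ at $x_2=\pm\sqrt2$. Since $\Phi$ is even in $y$, the local branch through $(x_2,0)$ is a function of $y^2$. Put $t=y^2$ and $\Phi(x,y)=\Psi(x,t)$.

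\textbf{Step 2: smoothness of the branch.} We have
\begin{equation*}
\partial_x\Psi(x_2,0)=2x_2(x_2-2)^2\neq 0 .
\end{equation*}
By the holomorphic implicit function theorem, there is a unique analytic $x=x(t)$ with $x(0)=x_2$. This gives the claimed expansion in even powers $y^{2n}$.

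\textbf{Step 3: the first coefficient.} We have $\partial_t\Psi(x_2,0)=-2(3x_2^2-6x_2)=-6x_2(x_2-2)$, so
\begin{equation*}
x'(0)=\frac{3}{x_2-2}.
\end{equation*}
This has to be matched with the stated form $-3(x_2-1)/(3x_2-4)$, using $x_2^2=2$ to rationalize. The obstacle is sign and normalization bookkeeping: the stated constant may correspond to an unreduced form of $\Phi$, or to a different normalization of the line. If the two expressions do not agree after reduction, the computation is redone from \eqref{eq_G} directly, keeping the denominators $(x-2)^2$ unexpanded. In either case the coefficient is a rational function of $x_2$.

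\textbf{Step 4: higher coefficients.} Write $x=x_2+\sum_{n\ge1}a_nt^n$ and compare coefficients of $t^n$ in $\Psi$. At each order $n$ the equation has the form $\partial_x\Psi(x_2,0)\,a_n=R_n(x_2,a_1,\dots,a_{n-1})$, where $R_n$ is a polynomial. Since $\partial_x\Psi(x_2,0)$ is a nonzero polynomial in $x_2$, induction shows that each $a_n=P_n(x_2)/Q_n(x_2)$ with $P_n,Q_n$ polynomials. This is the recursive scheme of Lemma \ref{lem_expansion_explicit_1}, now without a closed form.
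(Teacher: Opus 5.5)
Your proposal is correct and is essentially the paper's argument: clear the denominator $(x-2)^2$ in $x\circ G$, then use evenness in $y$ together with $\partial_x\Phi(x_2,0)=2x_2(x_2-2)^2\neq 0$ to get an analytic branch $x=x(y^2)$, read the first coefficient off the linear terms, and obtain rationality in $x_2$ of the higher coefficients by recursion. The hedge in Step 3 is unnecessary and should be replaced by the one-line check: since $x_2^2=2$ you have $(x_2-1)(x_2-2)=4-3x_2$, hence $-3(x_2-1)/(3x_2-4)=3/(x_2-2)$ exactly, so your value is the stated one (the paper's form just comes from reducing $\partial_x\Phi(x_2,0)=12x_2-16$ and $\partial_{y^2}\Phi(x_2,0)=12(x_2-1)$ with $x_2^2=2$).
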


\begin{proof}
The explicit expression \eqref{eq_G} gives $x \circ G = \frac{x^4-4 x^3-6 x^2 y^2+2 x^2+12 x y^2+8 x+8 y^4-8}{(x-2)^2}$. 
In particular, near $x_2 = \epsilon \sqrt{2}$ with $\epsilon = \pm 1$, the local equation is determined by the zero locus of 
$x \circ G( x_2 + x, y) =0$ which gives:  
\begin{multline*}
 \dfrac{1}{(-2 + \sqrt{2} \epsilon + x)^2} \left ( 4 \sqrt{2} \epsilon x^3-12 \sqrt{2} \epsilon x^2-12 \sqrt{2} \epsilon x y^2+12 \sqrt{2} \epsilon x \right . \\ \left . +12 \sqrt{2} \epsilon y^2+x^4-4 x^3-6 x^2 y^2+14 x^2+12 x y^2-16 x+8 y^4-12 y^2 \right )  = 0
\end{multline*}
In particular, the numerator in the above expression admits a power expansion of the form $ (12 \sqrt{2} \epsilon - 16) x + 12(\epsilon \sqrt{2} -1) y^2 + h.o.t $ and this shows that the local equations are of the form $x = - \dfrac{12 (x_2 -1)}{12 x_2 - 16 } y^2 + O(y^3)$.
Moreover, observe that all the powers in $y$ are even in the implicit equation. This shows that $x$ is an implicit function of $y^2$ which is locally analytic. Moreover, as the implicit function has coefficient that are polynomial in $x_2$, we deduce that the power series in $y^2$ has coefficient that are rational function in $x_2$, as required.
\end{proof}

\begin{lem} Fix a neighborhood $U$ of $\bIz$, there exists a constant $C>0$ such that for any point $q=(x_0,0) \in \bIz$ and for any curve $\gamma$ obtained as the pullback by $F^2$ of a line of slope $\pm 1/2$ through $F^2(q)$, then for any point $(x,y)\in \gamma \cap U$, one has:
\begin{equation}
C^{-1} y^2 \leqslant |x-x_0| \leqslant Cy^2.
\end{equation}   
\end{lem}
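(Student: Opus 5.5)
The plan is to compute the curve $\gamma=F^{-2}(\ell)$ explicitly, where $\ell$ is a line of slope $\pm 1/2$ through $F^2(q)$, and then read off a normal form near $q$. Since $F\circ\sigma=F$, every coordinate of $F$, and hence of $G=F^2$ in \eqref{eq_G}, is a function of $x$ and $y^2$. Write $\ell=\{x-x_1 \mp 2y=0\}$ with $x_1 = x\circ G(q)=x_0^2-2$. Then the pulled-back equation of $\gamma$ has the form
$$\Phi(x,y):=x\circ G - x_1 \mp 2\, y\circ G =0,$$
and $\Phi$ depends only on $(x,s)$ with $s=y^2$. Using \eqref{eq_G},
$$\Phi = (x^2-x_0^2) - \frac{2s(3x^2-6x-4s)}{(x-2)^2} \mp \frac{2s(x^2-2x-4s)}{(x-2)^2}.$$

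First I will treat $q$ away from the critical point $x_0=0$ and from the endpoints $\pm 2$. There $\partial_x\Phi(x_0,0)=2x_0\neq 0$, so the implicit function theorem gives $x=x_0+a(x_0)s+O(s^2)$. The coefficient is
$$a(x_0)=\frac{1}{2x_0}\cdot\frac{2(3x_0^2-6x_0)\pm 2(x_0^2-2x_0)}{(x_0-2)^2},$$
which simplifies to a nonvanishing quantity, namely $4/(x_0-2)$ or $2/(x_0-2)$ depending on the sign. It therefore stays in a compact subset of $\R\setminus\{0\}$ for $x_0\in[-2+\eta,2-\eta]$. By uniformity of the implicit function theorem on compact sets in $x_0$, this gives $C^{-1}y^2\le|x-x_0|\le Cy^2$ on a uniform neighborhood.

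The main obstacle is the degenerate points. Near $x_0=0$, the $x$-derivative vanishes. Here I would rewrite $\Phi=(x-x_0)(x+x_0)-s\,b(x,s)$ with $b(0,0)\neq0$ and argue that, on the branch of $\gamma$ through $(x_0,0)$, the ratio $|x-x_0|/s$ still stays bounded above and below. The difficulty is that the relevant branch is selected by the condition of passing through $q$, which might not hold uniformly. If uniformity genuinely fails near $x_0=0$, I would restrict to $U$ and $q$ bounded away from the critical point, as in the earlier uses such as Lemma \ref{lem_vertical_parabola}.

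Near $x_0=\pm2$, the factor $(x-2)^{-2}$ blows up. For this case I would use the blow-up coordinates on $\Efz$, where $F_X^2$ is regular by Proposition \ref{prop_E0_dynamics}. Near $-2$, the map is regular, and the same implicit function argument applies with $x_0\neq0$. Finally, I will patch the three regimes by compactness of $\bIz$ to obtain a single constant $C$.
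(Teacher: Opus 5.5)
\textbf{Verdict: there is a genuine gap, and it cannot be closed, because the statement is false in its stated uniformity.} Your formula for $\Phi$ and your coefficient $a(x_0)$ are correct. For $q$ in a compact subset of $\bIz\setminus\{(0,0),\bfz\}$, your implicit-function argument does give the estimate on the branch of $F^{-2}(\ell)$ through $q$. You must restrict to that branch: other components, such as $\Lc$ or the conic through $(-x_0,0)$ and $\bfz$, violate the inequality. The gap is the final patching step. No single $C$ exists, because the inequality itself fails near $x_0=2$, and, for slope $-1/2$, as $x_0\to 0^+$. The paper gives no proof of this lemma.

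Closed forms make this transparent. For slope $1/2$, $\Phi=\bigl(x-\frac{4y^2}{x-2}\bigr)^2-x_0^2$, so the branch through $q$ is $x-x_0=\frac{4y^2}{x-2}$. For slope $-1/2$, $\Phi=x^2-x_0^2-\frac{4xy^2}{x-2}$, so $F^{-2}(\ell)=\Lc\cup\{(x-2)(x^2-x_0^2)=4xy^2\}$.

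\textbf{Near $x_0=2$.} Your own computation already shows $\lim_{y\to 0}|x-x_0|/y^2=|a(x_0)|\geq 2/(2-x_0)\to\infty$ on the branch through $q$. Passing to the blow-up of $\bfz$ cannot rescue an inequality that fails in the $(x,y)$ coordinates. Also, $F_X$ is not regular along all of $\Efz$, since $i_0$ is indeterminate.

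\textbf{Near $x_0=0$.} Your assertion $b(0,0)\neq 0$ is wrong: $b(0,0)=0$ for both signs. For slope $-1/2$ one has $b=4x/(x-2)$, which vanishes on $\{x=0\}$, so at $x_0=0$ the cubic splits off the vertical line $\{x=0\}$. This is the line found in Lemma~\ref{lem_first_pullback}. Quantitatively, take $0<x_0<4/3$, $y\neq 0$, and $g(x)=(x-2)(x^2-x_0^2)-4xy^2$. Then $g(0)=2x_0^2>0$, $g(x_0)=-4x_0y^2<0$, and $g'(x)=3x^2-4x-x_0^2-4y^2<0$ on $(0,x_0)$. So the branch through $q$ is a graph $x=\xi(y)$ with $0<\xi(y)<x_0$. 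Hence $|x-x_0|<x_0$ at every height, and the lower bound $C^{-1}y^2\le|x-x_0|$ fails at any fixed $y\neq 0$ once $x_0$ is small.

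\textbf{What your argument does prove.} It proves the corrected statement: a uniform $C$ for $q$ in compact subsets of $\bIz\setminus\{(0,0),\bfz\}$, with $\gamma$ the branch through $q$ and $U$ a thin enough strip. The endpoint $\bfzp$ is harmless, since $\partial_x\Phi=-4$ there. For slope $1/2$ you may even let $x_0\to 0$. Indeed $\Phi=\bigl(x-x_0-\frac{4y^2}{x-2}\bigr)\bigl(x+x_0-\frac{4y^2}{x-2}\bigr)$, and $|x-x_0|=4y^2/|x-2|$ on the relevant factor.
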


\bibliographystyle{amsalpha}
\bibliography{references}

\end{document}